
\documentclass[12pt,leqno]{amsart}

\usepackage{amssymb, mathrsfs} 

\usepackage{tikz}
\usepackage{tikz-cd}
\usetikzlibrary{decorations.pathreplacing}
\usetikzlibrary{decorations.markings}
\usetikzlibrary{calc}

\usepackage{hyperref}
\numberwithin{section}{part}
\numberwithin{equation}{section}
\numberwithin{figure}{section}

\newcommand{\mo}{\mathopen}
\newcommand{\mc}{\mathclose}

\newtheorem{theorem}{Theorem}[section]
\newtheorem{proposition}[theorem]{Proposition}
\newtheorem{lemma}[theorem]{Lemma}
\newtheorem{corollary}[theorem]{Corollary}

\theoremstyle{definition}

\newtheorem{definition}[theorem]{Definition}
\newtheorem{notation}[theorem]{Notation}
\newtheorem{example}[theorem]{Example}
\newtheorem{examples}[theorem]{Examples}
\newtheorem{remark}[theorem]{Remark}
\newtheorem{remarks}[theorem]{Remarks}


\newcommand{\C}{{\mathbb{C}}}
\newcommand{\N}{{\mathbb{N}}}
\newcommand{\R}{{\mathbb{R}}}

\newcommand{\Z}{{\mathbb{Z}}}

\newcommand{\shf}{{\mathcal{F}}}

\newcommand{\shl}{{\mathcal{L}}}

\newcommand{\shn}{{\mathcal{N}}}
\newcommand{\shs}{{\mathcal{S}}}

\newcommand{\shv}{{\mathcal{V}}}

\newcommand{\Cinf}{C^\infty}

\newcommand{\cor}{{\bf k}}

\newcommand{\pt}{\{{\rm pt}\}}
\newcommand{\rpos}{\R_{\geq 0}}
\newcommand{\rspos}{\R_{>0}}
\newcommand{\Rm}{\mathbb{R}^\times}
\newcommand{\Rp}{\mathbb{R}^+}

\renewcommand{\to}[1][]{\xrightarrow[]{#1}}
\newcommand{\from}[1][]{\xleftarrow[]{#1}}
\newcommand{\isoto}[1][]{\xrightarrow[#1]%
{{\raisebox{-.6ex}[0ex][-.6ex]{$\mspace{1mu}\sim\mspace{2mu}$}}}}
\newcommand{\isofrom}[1][]{\xleftarrow[#1]%
{{\raisebox{-.6ex}[0ex][-.6ex]{$\mspace{1mu}\sim\mspace{2mu}$}}}}

\newcommand{\xfrom}[2][]{\xleftarrow[#1]{#2}}

\newcommand{\RR}{\mathrm{R}}

\newcommand{\muhom}{\mu hom}
\newcommand{\Hom}{\mathrm{Hom}}
\newcommand{\RHom}{\RR\mathrm{Hom}}
\newcommand{\Ext}{\mathrm{Ext}}
\renewcommand{\hom}{{\mathcal{H}om}}
\newcommand{\rhom}{{\RR\hom}}

\newcommand{\homeps}{{\mathcal{H}om^\varepsilon}}
\newcommand{\rhomeps}{{\RR\hom^\varepsilon}}
\newcommand{\DD}{\mathrm{D}}
\newcommand{\tens}{\otimes}
\newcommand{\ltens}{\mathbin{\overset{\scriptscriptstyle\mathrm{L}}\tens}}
\newcommand{\epstens}{\otimes^\varepsilon}
\newcommand{\etens}{\mathbin{\boxtimes}}
\newcommand{\letens}{\overset{\mathrm{L}}{\etens}}

\newcommand{\End}{{\operatorname{End}}}
\newcommand{\Aut}{{\operatorname{Aut}}}
\newcommand{\Iso}{{\operatorname{Iso}}}

\newcommand{\sect}{\Gamma}
\newcommand{\rsect}{\mathrm{R}\Gamma}


\newcommand{\oim}[1]{{#1}_*}
\newcommand{\eim}[1]{{#1}_!}
\newcommand{\roim}[1]{\RR{#1}_*}
\newcommand{\reim}[1]{\RR{#1}_!}
\newcommand{\opb}[1]{#1^{-1}}
\newcommand{\epb}[1]{#1^{!}}

\newcommand{\eqdot}{\mathbin{:=}}
\newcommand{\cl}{\colon}
\newcommand{\pointdiag}{\makebox[0mm]{\;\;.}}
\newcommand{\virgdiag}{\makebox[0mm]{\;\;,}}
\newcommand{\scbul}{{\,\raise.4ex\hbox{$\scriptscriptstyle\bullet$}\,}}
\newcommand{\ol}{\overline}
\newcommand{\ul}{\underline}

\newcommand{\id}{\mathrm{id}}

\newcommand{\supp}{\operatorname{supp}}

\DeclareMathOperator{\SSi}{SS}
\newcommand{\Int}{\operatorname{Int}}

\newcommand{\CC}{\mathsf{C}}
\newcommand{\CK}{\mathsf{K}}
\newcommand{\Der}{\mathsf{D}}
\newcommand{\Derb}{\Der^{\mathrm{b}}}
\newcommand{\Derlb}{\Der^{\mathrm{lb}}}

\newcommand{\Dersf}{\Der^{s,f}}

\newcommand{\Mod}{\operatorname{Mod}}

\newcommand{\coker}{\operatorname{coker}}

\newcommand{\im}{\operatorname{im}}

\newcommand{\rc}{{\mathrm{c}}}





\newcommand{\dT}{{\dot{T}}}
\newcommand{\Cor}{{\mathbb{K}}}

\newcommand{\gammaof}{\gamma}

\newcommand{\gammaf}{{\ol\gamma}}
\newcommand{\dbl}{{dbl}}
\newcommand{\Dertp}{\Der_{\tau>0}}
\newcommand{\Dertpn}{\Der_{\tau\geq 0}}

\newcommand{\Derra}{\Der^{r_!\mathrm{a}}_{\tau\geq 0}}

\newcommand{\loc}{\mathsf{Loc}}
\newcommand{\Dloc}{\mathsf{DL}}

\newcommand{\kss}{\mu\mathsf{Sh}}
\newcommand{\kssfunc}{\mathfrak{m}}

\newcommand{\mucirc}{\mathbin{\overset{\scriptscriptstyle\mu}{\circ}}}

\newcommand{\lag}{\mathcal{L}}

\newcommand{\demi}{\frac{1}{2}}
\newcommand{\pdemi}{{\textstyle \frac{1}{2}}}

\newcommand{\hplus}{\mathbin{\widehat +}}

\newcommand{\catc}{\mathcal{C}}
\newcommand{\catd}{\mathcal{D}}

\newcommand{\catt}{\mathcal{T}}

\newcommand{\perf}{\mathsf{E}}
\newcommand{\Orb}{{\mathsf{D}_{/[1]}}}
\newcommand{\OrbL}[1]{{\mathsf{D}_{/[1],#1}}}
\newcommand{\orb}{\mathrm{orb}}

\newcommand{\SSo}{\mathrm{SS}^\orb}
\newcommand{\dSSo}{\smash{\dot{\mathrm{SS}}}^\orb}
\newcommand{\supporb}{\mathrm{supp}^\orb}
\newcommand{\Oloc}{\mathsf{OL}}

\newcommand{\bpmat}{\left( \begin{smallmatrix}}
\newcommand{\epmat}{\end{smallmatrix} \right)}

\newcommand{\cer}{{\operatorname{S^1}}}
\newcommand{\sph}{{\mathbb{S}^2}}
\newcommand{\sphere}{\mathbb{S}}
\newcommand{\RP}{\mathbb{RP}}

\newcommand{\sui}{\medskip\noindent}
\newcommand{\suiv}[1]{\smallskip\noindent{\rm #1}}

\newcommand{\Mat}{\operatorname{Mat}}
\newcommand{\GL}{\operatorname{GL}}

\newcommand{\GO}{\operatorname{O}}
\newcommand{\SO}{\operatorname{SO}}
\newcommand{\Sym}{\operatorname{Sym}}

\newcommand{\mon}{\operatorname{m}}
\newcommand{\sgn}{\operatorname{sgn}}

\newcommand{\de}{\operatorname{e}}
\newcommand{\It}{\mathbb{I}}
\newcommand{\Jj}{\mathbb{J}}
\newcommand{\Uu}{\mathcal{U}}
\newcommand{\GammaE}{\Gamma}

\setlength{\overfullrule}{5pt}
\begin{document}


\title{Sheaves and symplectic geometry of cotangent bundles}

\author{St\'ephane Guillermou}

\thanks{Institut Fourier, CNRS and Universit\'e Grenoble Alpes.  The author is also partially
  supported by the ANR project MICROLOCAL (ANR-15CE40-0007-01). Part of this paper was written
  during a stay at UMI 3457 in Montr\'eal (CNRS - CRM - Universit\'e de Montr\'eal)}

\maketitle

\setcounter{tocdepth}{1}
\tableofcontents

\section*{Introduction}

As the title suggests this paper explains some applications of the microlocal
theory of sheaves of Kashiwara and Schapira to the symplectic geometry of
cotangent bundles.  The main notions of the microlocal theory of sheaves are
Sato's microlocalization, introduced in the 70's, and the notion of microsupport
of a sheaf, introduced by Kashiwara and Schapira in the 80's. These notions were
motivated by the study of modules over the ring of (micro-)differential
operators.  The link with the symplectic geometry was noticed (a deep result
of~\cite{KS85} says that the microsupport of any sheaf is coisotropic) but not
used to study global aspects of symplectic geometry until the papers \cite{NZ09}
of Nadler-Zaslow and~\cite{T08} of Tamarkin.  The paper~\cite{NZ09}, together
with~\cite{N09}, show that the dg-category of constructible sheaves on a real
analytic manifold $M$ is equivalent to the triangulated envelope of a version of
the Fukaya category of $T^*M$.  The paper~\cite{T08} proves non-displaceability
results in symplectic geometry using the properties of the microsupports of
sheaves.  Building on the ideas of this paper it is explained in~\cite{GKS12}
how to associate a sheaf with a Hamiltonian isotopy of a cotangent bundle. In
this paper we go on in this direction and use sheaves to recover some classical
results of symplectic geometry (the Gromov non-squeezing theorem and the
Gromov-Eliashberg rigidity theorem) and a more recent result, which says that a
compact exact Lagrangian submanifold of a cotangent bundle is homotopically
equivalent to the base.  We also prove a result about cusps of curves on the
sphere (Arnol'd three cusps conjecture).

\smallskip

Before we give more details we recall some facts about the microsupport.  Let
$M$ be a manifold of class $C^\infty$ and let $\cor$ be a ring.  We denote by
$\Der(\cor_M)$ the derived category of sheaves of $\cor$-modules over $M$.  The
microsupport $\SSi(F)$ of an object $F$ of $\Der(\cor_M)$ is introduced
in~\cite{KS82}. It is a closed subset of the cotangent bundle $T^*M$, conic for
the action of $\Rp$ on $T^*M$.  It is defined as the closure of the set of
singular directions with respect to $F$, where $(x;\xi) \in T^*M$ is said non
singular if the restriction maps from a neighborhood $B$ of $x$ to
$B \cap \{f<f(x)\}$ induce isomorphisms between $H^iF_x$ and
$\varinjlim_{B\ni x} H^i(B \cap \{f<f(x)\}; F)$, for all functions $f$ with
$df(x) = \xi$ and all $i\in \Z$.  The easiest example is
$\SSi(\cor_N) = T^*_NM$, where $\cor_N$ is the constant sheaf on a submanifold
$N$ of $M$.  In general the microsupport can be a very singular set but it is
coisotropic in some sense (see Theorem~\ref{thm:invol}).  If $M$ is real
analytic and $F$ is constructible, then $\SSi(F)$ is Lagrangian.  Any smooth
conic Lagrangian submanifold of $T^*M$ is locally the microsupport of some sheaf
on $M$.

The microsupport is well-behaved with respect to the standard sheaf operations.
An important example is the composition.  Let $M_i$, $i=1,2,3$, be three
manifolds and let $q_{ij}$ be the projection from $M_1 \times M_2 \times M_3$ to
$M_i \times M_j$.  For $K_1 \in \Der(\cor_{M_1 \times M_2})$ and
$K_2 \in \Der(\cor_{M_2 \times M_3})$ we set
$K_1 \circ K_2 = \reim{q_{13}}( \opb{q_{12}}K_1 \ltens \opb{q_{23}} K_2)$.  We
can define a set theoretic analog of the composition where direct and inverse
images are replaced by the same set operations and the tensor product by the
intersection.  Then, under some geometric hypotheses, we have
$\SSi(K_1 \circ K_2) \subset \SSi(K_1) \circ \SSi(K_2)$.

In~\cite{T08} a sheaf version of the Chekanov-Sikorav theorem (see~\cite{C96}
and~\cite{S87}) is given. A more functorial version is given in~\cite{GKS12} as
follows.  Let $\Phi$ be an $\rspos$-homogeneous Hamiltonian isotopy of
$\dT^*M = T^*M \setminus T^*_MM$.  Then there exists a sheaf $K_\Phi$ on $M^2$
which is invertible for the composition (there exists $K'$ such that
$K_\Phi \circ K' = \cor_{\Delta_M}$) and such that $\dot\SSi(K_\Phi)$ is the
graph of $\Phi$. Here we let $\dot\SSi(F)$ be $\SSi(F)$ with the zero section
removed.  We then have $\dot\SSi(K_\Phi \circ F) = \Phi(\dot\SSi(F))$ for any
$F\in \Der(\cor_M)$ and $F \mapsto K_\Phi \circ F$ is an auto-equivalence of
$\Der(\cor_M)$.  We recall this in Part~\ref{chap:HamIsot}.

Since the microsupport is conic it is rather related with the contact geometry
of the sphere cotangent bundle than the symplectic geometry of the cotangent
bundle.  We can also consider a Legendrian submanifold of the $1$-jet space
$J^1(M)$ as a conic Lagrangian submanifold in $T^*(M\times \R)$ contained in
$\{\tau >0\}$, where we use the coordinates $(t;\tau)$ on $T^*\R$.
In~\cite{T08} Tamarkin also remarks that a sheaf $F$ on $M\times\R$ with a
microsupport in $\{\tau \geq 0\}$ comes with natural morphisms
$\tau_c \colon F \to \oim{T_c}(F)$, where $c\geq 0$ and $T_c$ is the vertical
translation in $M\times\R$, $T_c(x,t) = (x,t+c)$.  A useful invariant of $F$ is
then $\de(F) = \sup \{c\geq 0$; $\tau_c(F) \not= 0\}$.  It is introduced
in~\cite{T08} and used in~\cite{AI17} to obtain displacement energy bounds.  We
use it in Part~\ref{part:nonsqueezing} to prove classical nonsqueezing
results. Our proof is a baby case of the proof of Chiu of a contact nonsqueezing
theorem in~\cite{C17}.

In Part~\ref{part:nonsqueezing} we use some operations on sheaves introduced
in~\cite{KS90} (cut-off lemmas) to reduce the size of a microsupport.  These
operations are compositions with the constant sheaf on a cone.  We recall them
in Part~\ref{chap:cutoff}, where we also prove that a sheaf $F$ whose
microsupport can be decomposed into two disjoint (and unknotted) subsets, say
$\dot\SSi(F) = S_1 \sqcup S_2$, can itself be locally decomposed, up to constant
sheaves, as $F_1 \oplus F_2$ with $\dot\SSi(F_i) = S_i$.

We also use the cut-off results in Part~\ref{part:graphsel} to prove that a
Legendrian submanifold of $J^1(M)$ has a graph selector as soon as it is the
microsupport of a sheaf $F$ satisfying some conditions at infinity.  The graph
selector is given by the boundary of the support of a section of $F$.

In Part~\ref{part:GEthm} we prove the Gromov-Eliashberg rigidity theorem as a
consequence of the involutivity theorem of Kashiwara-Schapira.  The starting
point is very simple.  Let $M$ be a manifold and let $\phi_n$ be a sequence of
homogeneous Hamiltonian isotopies of $\dT^*M$ which converges in $C^0$ norm to a
diffeomorphism $\phi_\infty$ of $\dT^*M$. Let $K_n \in \Der(\cor_{M^2})$ be the
sheaf associated with $\phi_n$ as recalled above.  Then we can consider a kind
of limit $K_\infty$ of $K_n$ and the microsupport of $K_\infty$ is contained in
the graph of $\phi_\infty$. We deduce from the involutivity theorem that this
graph is Lagrangian, hence that $\phi_\infty$ is a symplectic map.  This idea
does not work directly to prove a local statement but we can cut-off the
microsupport (using the cut-off results recalled in Part~\ref{chap:cutoff}) and
make it work.

The main result of this paper is a sheaf theoretic proof that a compact exact
Lagrangian submanifold $L$ of a cotangent bundle $T^*M$ is homotopically
equivalent to $M$. This is done in Parts~\ref{part:orbcat}-\ref{chap:ex_Lag}.
This result was previously obtained with Floer homology methods (see the
beginning of Part~\ref{chap:ex_Lag} for references). However we do not recover
the more precise results of Abouzaid and Kragh, who proved in~\cite{AK18} that
the map $L \to M$ is a simple homotopy equivalence and gave some conditions on
the higher Maslov classes in~\cite{AK16} (we only prove the vanishing of the
first two classes; for the other classes we should use sheaves of spectra --
see~\cite{JT17, Jin19}).

An important tool for our proof is the Kashiwara-Schapira stack
$\kss(\cor_\Lambda)$ of a Lagrangian submanifold $\Lambda$ of a cotangent bundle
$T^*M$.  In~\cite{KS90} Kashiwara and Schapira consider the ``microlocal''
category $\Der(\cor_M; \Omega)$ where $\Omega$ is a subset of $T^*M$.  It is
defined as the quotient of $\Der(\cor_M)$ by the subcategory formed by the $F$
such that $\SSi(F) \cap \Omega =\emptyset$.  When $\Omega$ runs over the open
subsets of $T^*M$ this gives a prestack on $T^*M$ and we consider its associated
stack, say $\kss(\cor_{T^*M})$.  In~\cite{KS90} it is proved that the $\hom$
sheaf in $\kss(\cor_{T^*M})$ is $H^0\muhom$ where $\muhom$ is a variant of
Sato's microlocalization (our stack has a very poor structure because the
triangulated structure does not survive in the stackification and we only obtain
$H^0\muhom$, not $\muhom$).  The stack $\kss(\cor_\Lambda)$ is the substack of
$\kss(\cor_{T^*M})$ formed by the objects with microsupport contained in
$\Lambda$.  One step in the proof of the homotopy equivalence $L \simeq M$ is
the construction of a sheaf representing a given global object of
$\kss(\cor_\Lambda)$, where $\Lambda$ is a conic Lagrangian submanifold of
$T^*(M\times \R)$ deduced from $L$ by adding one variable.  This is done in
Part~\ref{part:quant}.  A similar result is obtained by Viterbo in~\cite{V19}
using Floer homology methods.  More precisely, any given
$\shf \in \kss(\cor_\Lambda)(\Lambda)$ is represented by
$F \in \Der(\cor_{M\times\R})$ such that $F_- := F|_{M \times \{t\}}$, $t\ll0$,
vanishes and $F_+ := F|_{M \times \{t\}}$, $t\gg0$, is locally constant.  Then
we prove that $F \mapsto F_+$ gives an equivalence between the category
$\Der_{[\Lambda],+}$ formed by the $F$ such that $\dot\SSi(F) \subset \Lambda$ and
$F_-\simeq 0$ and the subcategory $\Der_{lc}(\cor_M)$ of $\Der(\cor_M)$ of
locally constant sheaves.  We prove another equivalence between
$\Der_{[\Lambda],+}$ and $\Der_{lc}(\cor_\Lambda)$.  Hence $\Der_{lc}(\cor_M)$ is
equivalent to $\Der_{lc}(\cor_\Lambda)$ and it follows that $L \to M$ is a
homotopy equivalence.  Actually the proof is not so straightforward. We first
prove the fully faithfulness of $F \mapsto F_+$ in Part~\ref{part:quant}, then
use this fully faithfulness in the beginning of Part~\ref{chap:ex_Lag} to prove
some result on the fundamental groups and the vanishing of the first Maslov
class.  For this we need to know that $\kss(\cor_\Lambda)(\Lambda)$ has many
objects. But the first Maslov class is an obstruction to the existence of a
global object in $\kss(\cor_\Lambda)$.  To bypass this problem we first work in
the orbit category of sheaves (see Part~\ref{part:orbcat}) where there is no
such obstruction.  The orbit category contains less information than
$\Der(\cor_M)$ but the above argument works well enough in this framework to
obtain the vanishing of the Maslov class.  Then we can go back to the usual
category of sheaves and we can prove
$\Der_{lc}(\cor_M) \simeq \Der_{lc}(\cor_\Lambda)$, as claimed.

\subsection*{Acknowledgments}

The starting point of this paper is a discussion with Claude Viterbo.  He explained
me his construction of a quantization in the sense of this paper using Floer homology
and asked whether it was possible to obtain it with the methods of algebraic
analysis.  Masaki Kashiwara gave me the idea to glue locally defined simple sheaves
under the assumption that the Maslov class vanishes.  Claire Amiot explained me that
we can use the orbit category to work without this vanishing assumption.  The idea of
applying the involutivity theorem to the $C^0$-rigidity emerged after several
discussions with Claude Viterbo, Pierre Schapira and Vincent Humili\`ere.  The work
on the three cusps conjecture was motivated by discussions with Emmanuel Giroux and
Emmanuel Ferrand.  I also thank Sylvain Courte, Pierre Schapira and Nicolas Vichery
for several remarks and many stimulating discussions.  I thank the anonymous referees
for their careful reading of the paper, for pointing out several mistakes and for
their constructive suggestions, making the exposition more clear.

\part{Microlocal theory of sheaves}

We recall here some results of~\cite{KS90} that we will use   very often.  The notion of microsupport of a sheaf is in particular very
important. We recall its definition and its behaviour under sheaf operations.  We
also recall quickly the definition of Sato's microlocalization and the $\muhom$
functor, later introduced by Kashiwara and Schapira; it will be used in particular
in~\S\ref{chap:KSstack} to define a category of sheaves associated with a Lagrangian
submanifold of a cotangent bundle.

When~\cite{KS90} was written, it was not well understood how to deal with
unbounded derived categories. In particular the theory of microsupport is
written for bounded derived categories of sheaves.  Moreover one of the
fundamental lemmas was proved using an induction on the cohomological degree and
its extension to the unbounded case could be unclear.  However this problem has
been solved in~\cite{RS16} and we will state the results on the microsupport for
unbounded categories (although the sheaves we consider later are always locally
bounded).

\section{Notations}
\label{sec:notations}
We mainly follow the notations of~\cite{KS90}.

\subsubsection*{Geometry}

  Unless otherwise
specified the manifolds we consider here are not ``manifolds with boundary''.  (The reason
is that the definition of the microsupport is not so meaningful at the boundary -- if we
want to deal with a sheaf $F$ on a manifold with boundary $M$, we consider an embedding
$i\colon M \to M^+$, with $M^+$ a usual manifold, and look at the microsupport of the
direct image $\roim{i}(F)$.)  When we say that a submanifold $N$ of $M$ is {\em locally
  closed}, {\em closed} or {\em compact}, we mean that it is locally closed (intersection
of a closed subset and an open subset), closed or compact as a subset of $M$.  We denote
by $\pi_M \cl T^*M\to M$ the cotangent bundle of $M$.   
If $N\subset M$ is a submanifold, we denote by $T^*_NM$ its conormal bundle; it is the
subbundle of the restriction of $T^*M$ over $N$ whose fiber over a point $x\in N$ is the
orthogonal space of $T_xN$, that is, $(T^*_NM)_x = \{\theta \in T^*_xM$; $\langle
T_xN,\theta \rangle = 0\}$.  The zero-section of $T^*M$ will usually be denoted by
$T^*_MM$, or $M$, if there is no ambiguity.  We set $\dT^*M = T^*M\setminus T^*_MM$ and we
denote by $\dot\pi_M\cl\dT^*M\to M$ the projection. For any subset $A$ of $T^*M$ we define
its antipodal $A^a = \{(x;\xi) \in T^*M$; $(x;-\xi) \in A\}$.    We usually denote by $\Delta_M$ the diagonal of $M^2$.

  We denote the normal bundle of $N$ by $T_NM$. It is defined as the
quotient bundle $(N\times_M TM) / TN$, where $N\times_M TM$ is the restriction of the
bundle $TM$ to $N$.

The cotangent bundle $T^*M$ carries an exact symplectic structure.  We denote the
Liouville $1$-form by $\alpha_M$. It is given in local coordinates $(x;\xi)$ by
$\alpha_M = \sum_i \xi_i dx_i$.    The symplectic
structure is then given by the non-degenerate $2$-form $d\alpha_M$.  Since it is
non-degenerate, it induces an isomorphism $H_p \cl T^*_pT^*M \isoto T_pT^*M$,
$p\in T^*M$, such that $\langle v,\theta \rangle = (d\alpha_M)_p(v, H_p(\theta))$,
$v\in T_pT^*M$, $\theta \in T^*_pT^*M$.  We obtain in this way an isomorphism
$H\cl T^*T^*M \isoto TT^*M$, the {\em Hamiltonian isomorphism}.  For the sections of
$T^*T^*M$ it gives $H(dx_i) = -\partial/\partial \xi_i$ and
$H(d\xi_i) = \partial/\partial x_i$.  The Hamiltonian vector field of a function
$h\colon T^*M \to \R$ is $X_h = H(dh)$; in local coordinates we have
$$
X_h(x;\xi) = \sum_i \frac{\partial h}{\partial \xi_i} \frac{\partial}{\partial x_i}
- \frac{\partial h}{\partial x_i} \frac{\partial}{\partial \xi_i} .
$$
A submanifold $\Lambda$ of $T^*M$ is {\em Lagrangian} if, for each $p\in\Lambda$, the
tangent space $T_p\Lambda$ is its own orthogonal with respect to the symplectic
structure of $T_pT^*M$.  If $N\subset M$ is a submanifold, then $T^*_NM$ is a
Lagrangian submanifold of $T^*M$.

Let $f\cl M\to N$ be  a morphism of  manifolds. It induces morphisms
on the cotangent bundles:
\begin{equation}\label{eq:def_derivee_morph}
T^*M \from[f_d] M\times_N T^*N \to[f_\pi] T^*N.
\end{equation}
Let $N\subset M$ be a submanifold and $A\subset M$ any subset. We denote by
$C_N(A) \subset T_NM$ the cone of $A$ along $N$.   We recall its definition in \S\ref{sec:Satomicr} using the
normal deformation of $N$ in $M$; in the case where $M$ is a vector space, $x_0\in N$
and $q\cl M \to T_{N,x_0}M$ denotes the natural quotient map, then
\begin{equation}\label{eq:form_cone1}
C_N(A) \cap T_{N,x_0}M 
= \bigcap_{U} \, \ol{\bigcup_{x\in A\cap (U\setminus \{x_0\})} q([x_0,x))},
\end{equation}
where $U$ runs over the neighborhoods of $x_0$ and $[x_0,x)$ denotes the half
line starting at $x_0$ and containing $x$.

If $A,B$ are two subsets of $M$, we set $C(A,B) = C_{\Delta_M}(A\times B)$.
Identifying $T_{\Delta_M}(M\times M)$ with $TM$ through the first projection,
we consider $C(A,B)$ as a subset of $TM$. If $M$ is a vector space and
$x_0\in M$, we have
\begin{equation}\label{eq:form_cone2}
C(A,B) \cap T_{x_0}M 
= \bigcap_{U} \, \ol{\bigcup_{x\in A \cap U,\; y\in B \cap U,\; x\not=y} q([y,x))},
\end{equation}
where $U$ runs over the neighborhoods of $x_0$.

\subsubsection*{Sheaves}

We consider a commutative unital ring $\cor$ of finite global dimension (we will
use $\cor=\Z$ or $\cor = \Z/2\Z$).  We denote by $\Mod(\cor)$ the category of
$\cor$-modules and by $\Mod(\cor_M)$ the category of sheaves of $\cor$-modules
on $M$.  We denote by $\Der(\cor_M)$ (resp.\ $\Derb(\cor_M)$, $\Derlb(\cor_M)$)
the derived category (resp.\ bounded derived category, locally bounded derived
category) of $\Mod(\cor_M)$. (Hence $\Derlb(\cor_M)$ is the subcategory of
$\Der(\cor_M)$ formed by the $F$ such that $F|_C \in \Derb(\cor_C)$, for any
compact subset $C \subset M$.)  We recall that $\Mod(\cor_M)$ has a fully
faithful embedding (that is, which preserves the $\Hom$ sets) into
$\Der(\cor_M)$ by sending a sheaf to a complex of sheaves concentrated in degree
$0$.  We refer to~\cite{KS90} for results about derived categories and sheaves
and also~\cite{KS06} for the unbounded case.

We recall some standard notations (following mainly~\cite{KS90}).  For a morphism of
manifolds $f\colon M \to N$, we denote by
$\roim{f}, \reim{f} \colon \Der(\cor_M) \to \Der(\cor_N)$ the direct image and proper
direct image functors. We denote by
$\opb{f}, \epb{f} \colon \Der(\cor_N) \to \Der(\cor_M)$ their adjoint functors.  We
thus have adjunctions $(\opb{f}, \roim{f})$ and $(\reim{f}, \epb{f})$.

  For $L \in \Der(\cor)$ we denote by $L_M = \opb{a_M}(L)$ the {\em
  constant sheaf with stalk $L$} on $M$, where $a_M\colon M \to \pt$ is the map to a
point.  For the inclusion $j \cl Z \to M$ of a subset of $M$ and $F\in \Der(\cor_M)$
we often write
$$
F|_Z = \opb{j}F .
$$
We   say that $F \in \Der(\cor_M)$ is {\em locally constant} if any point
of $M$ has a neighborhood $U$ such that $F|_U$ is constant.  If $F$ is concentrated
in degree $0$, we often say ``$F$ is a local system'' instead of ``$F$ is a locally
constant sheaf''.  When $Z$ is locally closed we define
$$
F_Z  = \eim{j}\opb{j}F , \qquad \rsect_Z(F) = \roim{j} \epb{j}F.
$$
When $F= L_M$ is the constant sheaf with stalk $L \in \Der(\cor)$, we set for short
$L_Z = (L_M)_Z = \eim{j}(L_Z)$.  (In case of ambiguity we write $L_{M,Z} = (L_M)_Z$,
but in general the ambient manifold $M$ is understood.)  We recall that, if $Z$ is
closed and $U$ is open, then $\sect(U; \cor_Z) = \{f \colon Z\cap U \to \cor$; $f$ is
locally constant$\}$.  If $Z'$ is locally closed and $Z$ is a closed subset of $Z'$,
we have an exact sequence in $\Mod(\cor_M)$
\begin{equation}
  \label{eq:excisionseq1}
0 \to \cor_{Z'\setminus Z} \to \cor_{Z'} \to \cor_Z \to 0.
\end{equation}

We let $\otimes$ and $\hom$ denote the tensor product and internal $\Hom$ in
$\Mod(\cor_M)$.  We recall that, for $F,G \in \Mod(\cor_M)$, their internal $\Hom$ is
  the sheaf $\hom(F,G)$ whose sections over an
open subset $U$ is given by $\sect(U; \hom(F,G)) = \Hom_{\Mod(\cor_U)}(F|_U, G|_U)$.
The derived functors of $\otimes$ and $\hom$ are denoted $\ltens$ and $\rhom$ (we
sometimes write $F \otimes G$ for $F\ltens G$ when $F$ or $G$ has free stalks over
$\cor$, typically $F \otimes \cor_Z$ for $F \ltens \cor_Z$).  We have an adjunction
$(\ltens, \rhom)$.

We have natural isomorphisms
\begin{equation}
  \label{eq:cohom_support}
F_Z  \simeq F \tens \cor_Z, \qquad \rsect_Z(F) \simeq \rhom(\cor_Z,F) 
\end{equation}
and~\eqref{eq:excisionseq1} gives the excision distinguished triangles, for 
$F\in \Der(\cor_M)$,
\begin{gather*}
  F_{Z'\setminus Z} \to F_{Z'} \to F_Z \to[+1] , \\
  \rsect_Z(F) \to   \rsect_{Z'}(F) \to   \rsect_{Z'\setminus Z}(F) \to[+1] .  
\end{gather*}
If $U$ is open, we let $\rsect(U;-)$ be the derived section functor.  We set
$H^i(U;F) = H^i\rsect(U;F)$.  We let $\sect_c(-)$ be the functor of sections
with compact support and $\rsect_c(-)$ its derived functor.  If
$a_M \colon M \to \pt$ is the map to a point, we thus have
$\rsect(M; F) \simeq \roim{a_M}(F)$ and $\rsect_c(M; F) \simeq \reim{a_M}(F)$.  We
also set
\begin{equation}
  \label{eq:def_HiZ}
  \rsect_Z(U;F) = \rsect(U; \rsect_Z(F)), \quad
  H^i_Z(U;F) = H^i\rsect_Z(U;F) .
\end{equation}

We denote by $\omega_M$ the dualizing complex on $M$.  Since $M$ is a manifold,
$\omega_M$ is actually the orientation sheaf shifted by the dimension, that is,
$\omega_M \simeq or_M[d_M]$.  (In general it is defined by
$\omega_M = \epb{a_M}(\cor_{\pt})$.)    We recall that
the sheaf $or_M$ is locally constant with stalks $\cor$ and, for a connected
orientable open subset $U$ of $M$, $or_M(U)$ is the rank $1$ free module  
$\langle o, o'; o = -o' \rangle$ where $o, o'$ are the two possible orientations of
$U$.  The duality functors are defined by
\begin{equation}\label{eq:dualfct}
\DD_M(\scbul)=\rhom(\scbul,\omega_M), \qquad
\DD'_M(\scbul)=\rhom(\scbul,\cor_M).
\end{equation}
For $f\colon M \to N$ we also use the notation $\omega_{M|N} = \epb{f}(\cor_N)$.  We
have  
$\omega_{M|N} \simeq \omega_M\tens\opb{f}(\DD'_N(\omega_N)) \simeq or_M \tens
\opb{f}(or_N)[d_M-d_N]$ (we remark that $\DD'_N(or_N) \simeq or_N$; indeed there
exists a canonical isomorphism $u \colon or_N \otimes or_N \isoto \cor_N$ such that,
for any $x\in N$ and any choice of orientation $o$ around $x$, we have
$u_x(o\otimes o) = 1$).

For two manifolds $M,N$ and $F\in\Der(\cor_M)$, $G\in\Der(\cor_N)$ we define
$F \letens G \in \Der(\cor_{M\times N})$ by
$$
F \letens G = \opb{q_1}F \ltens \opb{q_2}G ,
$$
where $q_i$ ($i=1,2$) is the $i$-th projection defined on $M\times N$.

We recall some useful facts (see~\cite[\S2, \S3]{KS90}).
\begin{proposition}\label{prop:formulaire}
  Let $f\colon M \to N$ be a morphism of manifolds, $F,G,H \in \Der(\cor_M)$,
  $F',G' \in \Der(\cor_N)$.  Then we have for any $i\in \Z$

  \suiv{(a)} $H^i\RHom(F,G) \simeq \Hom(F,G[i])$,  
  
  \suiv{(b)} $\RHom(\cor_U, F) \simeq \rsect(U;F)$, for $U \subset M$ open,

  \suiv{(c)} $\rsect(U; \rhom(F,G) ) \simeq \RHom(F|_U, G|_U)$, for
  $U \subset M$ open,

  \suiv{(d)} $H^iF$ is the sheaf associated with $V\mapsto H^i(V;F)$,

  \suiv{(e)} $H^i\rhom(F,G)$ is the sheaf associated with
  $V\mapsto \Hom(F|_V, G|_V[i])$,

  \suiv{(f)} $\rhom(F\ltens G, H) \simeq \rhom(F, \rhom(G,H))$,

  \suiv{(g)} $\reim{f}(F\ltens \opb{f}F') \simeq (\reim{f}F) \ltens F'$,
  (projection formula),

  \suiv{(h)}  $\epb{f} \, \rhom(F',G') \simeq \rhom(\opb{f}F', \epb{f}G')$,

  \suiv{(i)} if $f$ is an embedding,
  $\roim{f} \rhom(F,G) \simeq \rhom(\reim{f}F, \roim{f}G)$,
 
  \suiv{(j)} for a Cartesian  diagram 
 \begin{tikzcd}[row sep=5mm]
   M \ar[r, "f"] \ar[d, "g"] & N \ar[d, "g'"] \\
   M' \ar[r, "f'"] & N' \end{tikzcd} we have the base change formulas
 $\opb{f'} \reim{g'}(F') \simeq \reim{g} \opb{f}(F')$
 and  $\epb{f'} \roim{g'}(F') \simeq \roim{g} \epb{f}(F')$.
\end{proposition}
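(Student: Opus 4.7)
The plan is to derive each item from three inputs: the fundamental adjunctions $(\eim{f},\opb{f})$ when $f$ is an open inclusion, $(\opb{f},\roim{f})$, $(\reim{f},\epb{f})$, and $(\ltens,\rhom)$; the universal property that characterizes $H^iF$ as the sheafification of $V \mapsto H^i(V;F)$; and the base-change theorem for proper direct image, which I would take as the deepest ingredient. All equivalences should first be established on objects by a Yoneda argument using $\RHom$ and then upgraded to natural isomorphisms. For the unbounded derived setting one keeps in mind that $\ltens$, $\opb{f}$ and $\reim{f}$ are well-defined on $\Der(\cor_M)$, while $\rhom$, $\roim{f}$ and $\epb{f}$ require the finite cohomological dimension of $\cor$ and of $f$ (for manifolds this is fine).

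For (a), writing $j\cl U\hookrightarrow M$, the open-inclusion adjunction gives
\[
\RHom(\cor_U,F) = \RHom(\eim{j}\cor_U, F) \simeq \RHom(\cor_U, \opb{j}F) = \rsect(U;F|_U) = \rsect(U;F).
\]
Item (b) follows by combining (a) with the tensor-hom adjunction applied to $\cor_V\ltens F \simeq F_V$ and taking $V=U$. Items (c) and (d) are then immediate consequences of (a) and (b): applied for $V$ running over opens they show that $V\mapsto H^i(V;F)$ and $V\mapsto \Hom(F|_V,G|_V[i])$ are the presheaves of sections of $H^iF$ and $H^i\rhom(F,G)$ respectively, so their sheafifications are as claimed (using that cohomology sheafifies to the sheaf itself).

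Item (e) is the derived tensor-hom adjunction: one first checks the equivalence after applying $\RHom(A,-)$ for every $A$, where both sides become $\RHom(A\ltens F\ltens G,H)$ by iterating the plain adjunction, then invokes Yoneda. For (f) (projection formula) I would reduce by adjunction to checking that the natural map is an isomorphism on stalks; via the base-change formula (i) the stalk of $\reim{f}(F\ltens \opb{f}F')$ at $y\in N$ identifies with $\rsect_c(f^{-1}(y); F|_{f^{-1}(y)}\ltens (\text{constant}))$, which equals $\rsect_c(f^{-1}(y);F)\ltens F'_y$. Item (g) is another Yoneda computation: for any $A\in\Der(\cor_M)$,
\[
\RHom\bigl(A,\epb{f}\rhom(F',G')\bigr) \simeq \RHom(\reim{f}A\ltens F',G') \simeq \RHom\bigl(\reim{f}(A\ltens \opb{f}F'), G'\bigr)
\]
where the second equivalence uses (f); this in turn is $\RHom(A,\rhom(\opb{f}F',\epb{f}G'))$, and Yoneda gives (g). For (h), when $f$ is a closed embedding $\reim{f}=\roim{f}$ is fully faithful, so applying $\opb{f}$ and using (g) (which for closed embeddings yields $\opb{f}\rhom \simeq \rhom(\opb{f},\opb{f})$ on the image of $\roim{f}$) reduces both sides to $\rhom(F,G)$ up to $\roim{f}$.

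The main obstacle is (i), the proper base-change formula, on which (f)--(h) depend. I would treat it as the separate foundational result proved in Chapter 2 of Kashiwara--Schapira (reduced, via stalk computations and the Vietoris--Begle-type argument, to the case of a one-point base, i.e.\ to the fact that $\rsect_c$ commutes with restriction to fibers). Once (i) is granted, everything else is formal bookkeeping of adjunctions. The unbounded case, as the author notes, is handled by the extension of \cite{RS16}, which allows the same adjunctions to operate on $\Der(\cor_M)$ without the a priori boundedness assumption used in \cite{KS90}.
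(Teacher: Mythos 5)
The paper gives no proof of this proposition: it is a recollection of standard facts from Kashiwara--Schapira \cite[\S2, \S3]{KS90}, and the author simply cites the source. Your reconstruction is essentially the canonical one (the same adjunction juggling as in KS90), and the dependency graph you lay out --- (a)$\Rightarrow$(b)$\Rightarrow$(c),(d); (i) foundational, then (f),(g),(h) by Yoneda plus the projection formula --- is the right architecture.

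A few places where your sketch is looser than it should be. In (f), your stalk-wise reduction via (i) is not quite a \emph{reduction}: the ``last step'' $\rsect_c(f^{-1}(y);F\ltens(\text{const}))\simeq\rsect_c(f^{-1}(y);F)\ltens F'_y$ is itself exactly the projection formula for $f^{-1}(y)\to\pt$, so you have reduced to the point case rather than eliminated the question; that base case then needs its own argument (resolve $F'_y$ by frees, reduce to $F'_y=\cor$, use that $\rsect_c$ commutes with filtered colimits). In (h), the statement says $f$ is an embedding, not necessarily closed. Your argument handles the closed case (with the minor slip that you should apply $\epb{f}$ rather than $\opb{f}$ from (g) --- although for complexes supported on a closed $f(M)$ the two agree, which is presumably what you meant); the remaining ingredient is that both sides of (h) are supported on $f(M)$ and that the formula also holds for open embeddings, so that the locally closed case follows by factoring $f$. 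For an open embedding $j$ one checks directly, via the adjunctions $(\eim{j},\opb{j})$ and $(\opb{j},\roim{j})$ and $\opb{j}\roim{j}\simeq\id$, that $\RHom(A,\roim{j}\rhom(F,G))\simeq\RHom(\opb{j}A\ltens F,G)\simeq\RHom(A,\rhom(\eim{j}F,\roim{j}G))$ for all $A$; you should include that. Finally, in (c) the assertion is not quite ``immediate from (a)'': what you actually need is the standard fact that the sheafification of $V\mapsto H^i\rsect(V;F)$ is $H^iF$ (e.g.\ because this presheaf agrees with the sheaf $H^iF$ on stalks, or by reducing to $F$ K-injective). With these points tightened, your write-up faithfully reproduces the KS90 formalism.
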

The adjunction between $\ltens$ and $\rhom$ together with
$\cor_U \otimes \cor_{\ol U} \simeq \cor_U$ give
$\Hom(\cor_U, \DD'(\cor_{\ol U})) \simeq \Hom(\cor_U,\cor_M) \simeq H^0(U;
\cor_M)$. The canonical section of this last group gives a morphism
$\cor_U \to \DD'(\cor_{\ol U})$. Similarly we have a natural morphism
$\cor_{\ol U} \to \DD'(\cor_U)$. In the following case they are isomorphisms.

If the inclusion $U \subset M$ is locally homeomorphic to the inclusion
$\mo]-\infty,0\mc[ \times \R^{n-1} \subset \R^n$ (for example, if $\partial U$ is
smooth), then we have the first isomorphism in~\eqref{eq:dual_bordlisse} below.
Indeed, to prove that some morphism is an isomorphism we can work locally and thus
assume that $U = \mo]-\infty,0\mc[ \times \R^{n-1} \subset \R^n$.  For a point
$x\in \partial U$ and an open ball $B$ around $x$, we have
\begin{align*}
&\rsect(B; \DD'(\cor_U)) \simeq \RHom( \cor_U|_B, \cor_M|_B) \\
&\hspace{3.3cm}  \simeq  \RHom( \cor_{U\cap B}, \cor_B) 
  \simeq \rsect(U\cap B; \cor_B) \simeq \cor
\end{align*}
using~(b) and (c). It follows that $(\DD'(\cor_U))_x \simeq (\cor_{\ol U})_x$.  This
also holds for $x\in U$ or $x\in M\setminus \ol U$ and we obtain the claim.  The
second isomorphism in~\eqref{eq:dual_bordlisse} follows from the first one applied to
$M\setminus \ol U$ and the exact sequence
$0 \to \cor_{M \setminus \ol U} \to \cor_M \to \cor_{\ol U} \to 0$.
\begin{equation}
  \label{eq:dual_bordlisse}
 \cor_{\ol U} \isoto  \DD'(\cor_U), \qquad \cor_U \isoto \DD'(\cor_{\ol U}).
\end{equation}

\section{Microsupport}
\label{sec:microsupport}

\subsection{Definition and first properties}

We recall the definition of the microsupport (or singular support) $\SSi(F)$ of
a sheaf $F$, introduced by M.~Kashiwara and P.~Schapira in~\cite{KS82}
and~\cite{KS85}.

Let $F\in \Der(\cor_M)$ and $p= (x_0;\xi_0) \in T^*M$ be given.  We choose a
real $C^1$-function $\phi$ on $M$ satisfying $d\phi(x_0) = \xi_0$ and we
consider the restriction morphism ``in the direction $p$'' for a given degree
$i\in \Z$:
\begin{equation}
\label{eq:restr_defSS}
H^i F_{x_0} \simeq \varinjlim_U H^i(U;F)
\to  \varinjlim_U H^i(U \cap \{x;\, \phi(x) < \phi(x_0)\}; F) ,
\end{equation}
where $U$ runs over the open neighborhoods of $x_0$.  We are interested in the
points $p$ where this morphism is not an isomorphism (for some $\phi$ and $i$).
Taking the cone of the restriction morphism we obtain the following definition.

\begin{definition}{\rm (see~\cite[Def.~5.1.2]{KS90})}
\label{def:SSF}
Let $F\in \Der(\cor_M)$. We define $\SSi(F) \subset T^*M$ as the closure of the
set of points $(x_0;\xi_0) \in T^*M$ such that there exists a real
$C^1$-function $\phi$ on $M$ satisfying $d\phi(x_0) = \xi_0$ and
$(\rsect_{\{x;\, \phi(x)\geq \phi(x_0)\}} (F))_{x_0} \not\simeq 0$.

We set $\dot\SSi(F) = \SSi(F) \cap \dT^*M$.
\end{definition}

The following properties are easy consequences of the definition: \\
(a) the microsupport is closed and $\rspos$-conic, that is,
invariant by the action of  $(\rspos,\times)$ on $T^*M$, \\
(b) $\SSi(F)\cap T^*_MM =\pi_M(\SSi(F))=\supp(F)$, \\
(c) the microsupport satisfies the triangular inequality: if $F_1\to F_2\to
F_3\to[+1]$ is a distinguished triangle in $\Der(\cor_M)$, then
$\SSi(F_i)\subset\SSi(F_j)\cup\SSi(F_k)$ for all $i,j,k\in\{1,2,3\}$ with
$j\not=k$.

\begin{notation}\label{not:cat_micsupp_fixe}   
  Following~\cite[\S6.1]{KS90}, for an $\rspos$-conic
  subset $S$ of $T^*M$, we denote by $\Der_S(\cor_M)$ the full subcategory of
  $\Der(\cor_M)$ formed by the $F$ with $\SSi(F) \subset S$.  We also set
  $\Der_{[S]}(\cor_M) = \Der_{S \cup T^*_MM}(\cor_M)$.  We denote by
  $\Der_{(S)}(\cor_M)$ the full subcategory of $\Der(\cor_M)$ formed by the $F$ for
  which there exists a neighborhood $\Omega$ of $S$ in $T^*M$ such that
  $\SSi(F) \cap \Omega\subset S$.  By the property~(c) above, all these subcategories
  are triangulated.  
\end{notation}

\begin{example}\label{ex:microsupport}
  (i) If $F$ is a non-zero local system on a connected manifold $M$, then
  $\SSi(F)=T^*_MM$, the zero-section. Conversely, if $\SSi(F)\subset T^*_MM$,
  then the cohomology sheaves $H^i(F)$ are local systems, for all $i\in \Z$.
  (We say that $F$ is {\em locally constant} for short.)  This follows for
  example from Proposition~\ref{prop:iminvproj} below.

  \smallskip\noindent (ii) If $N$ is a closed submanifold (we only mean $N$ is a closed
  subset of $M$ -- see~\S\ref{sec:notations}) of $M$ and $F=\cor_N$, then
  $\SSi(F)=T^*_NM$.

\smallskip\noindent
(iii) Let $U \subset M$ be an open subset with smooth boundary. Then 
\begin{align*}
\SSi(\cor_U) & = (U\times_MT^*_MM) \cup T^{*,out}_{\partial U}M, \\
\SSi(\cor_{\ol U}) & = (U\times_MT^*_MM) \cup (T^{*,out}_{\partial U}M)^a ,
\end{align*}
where $T^{*,out}_{\partial U}M = \{(x;\lambda df(x))$; $f(x) = 0$, $\lambda\leq 0\}$, if
$U = \{f>0\}$ and $df\not=0$ on $\partial U$ (see Fig.~\ref{fig:SSdisque}).    To remember the directions of the
microsupports we can take $\phi = f$ in Definition~\ref{def:SSF} and use $\rsect_{\{x;\,
  \phi(x)\geq 0\}} (F) \simeq F$ for both $F = \cor_U$ and $F = \cor_{\ol U}$ (since
$\supp(F) \subset \{x;\, \phi(x)\geq 0\}$).  This implies $(\rsect_{\{x;\, \phi(x)\geq
  0\}} (\cor_U))_{x_0} \simeq (\cor_U)_{x_0} \simeq 0$ and $(\rsect_{\{x;\, \phi(x)\geq
  0\}} (\cor_{\ol U}))_{x_0} \simeq (\cor_{\ol U})_{x_0} \simeq \cor$, for $x_0 \in
\partial U$, and both formulas are consistent with the above description of the
microsupport.

\begin{figure}[ht]
\newcommand{\monellipse}{ellipse [x radius=15mm, y radius=8mm]}
\begin{tikzpicture}[decoration={markings,
mark=between positions 0 and 1 step 0.0715
with { \draw[->](0,0) -- (0,-0.2);}} ]

\node at (0,0) {$\SSi(\cor_U)$};
\fill [fill=gray!20] (2.7,0) \monellipse ;
\draw [decorate] (2.7,0) \monellipse ;
\draw [dotted] (2.7,0) \monellipse ;

\end{tikzpicture}
\hspace{1cm}
\begin{tikzpicture}[decoration={markings,
mark=between positions 0 and 1 step 0.0715
with { \draw[->](0,0) -- (0,0.2);}} ]

\node at (0,0) {$\SSi(\cor_{\ol U})$};
\fill [fill=gray!20] (2.7,0) \monellipse ;
\draw [postaction={decorate,draw}] (2.7,0) \monellipse ;

\end{tikzpicture}
\caption{We identify covectors with vectors to draw the microsupport. The microsupport of
  the constant sheaf on an open (resp. closed) subset with smooth boundary points outward
  (resp. inward). }
 \label{fig:SSdisque}
\end{figure}

\smallskip\noindent (iv) Let $\lambda$ be a closed convex cone with vertex at
$0$ in $E=\R^n$.  By~\cite[Prop.~5.3.1]{KS90} we have
$\SSi(\cor_\lambda) \cap T^*_0E = \lambda^\circ$, where $\lambda^\circ$ is the
polar cone of $\lambda$:
\begin{equation}\label{eq:def_polar_cone}
\lambda^\circ = \{\xi\in E^*; \; \langle v,\xi \rangle \geq 0
 \text{ for all $v\in \lambda\}$.}
\end{equation}
For a given $a>0$ we define $h_a \cl E \to E$, $x\mapsto ax$.  We clearly have
$\opb{h_a}(\cor_\lambda) \simeq \cor_\lambda$. Hence $\SSi(\cor_\lambda)$ is invariant by
the map induced by $h_a$ on $T^*E$.  Since the microsupport is closed, we deduce the rough
bound $\SSi(\cor_\lambda) \subset E \times \lambda^\circ$ by letting $a\to 0$ (see
Fig.~\ref{fig:SScone}).

\begin{figure}[ht]
  \begin{tikzpicture}[decoration={markings,
mark=between positions 0 and 1 step 0.082
with { \draw[->](0,0) -- (0,0.2);}} ]

\coordinate (A) at (0,0); \coordinate (B) at (-1,2);
\coordinate (C) at (1,2); \coordinate (D) at (.2,.1); 
\fill [fill=gray!20] (B) -- (A) -- (C) -- cycle;
\draw (B)--(A)--(C);

\draw [postaction={decorate,draw}] (B)--(A)--(C);
\fill [fill=black] (A) -- (D) arc[radius=.22cm, start angle=27, end angle=153];
\end{tikzpicture}
\caption{At the vertex $\SSi(\cor_\lambda)$ is the polar cone of $\lambda$.}
\label{fig:SScone}
\end{figure}

\sui(v) Let $F\in \Der(\cor_\R)$ be such that $\dot\SSi(F) = \{(0;\xi)$; $\xi > 0\}$.  On
one hand, it follows from~(i) that $F|_{\R\setminus \{0\}}$ is locally constant, hence
$F|_{U_\pm}$ is constant, where $U_\pm = \{\pm x >0\}$.  In particular $(\rsect_{\{x;\,
  \phi(x)\geq \phi(x_0)\}} (F))_{x_0} \simeq 0$ for any $x_0\not= 0$ and any function
$\phi$ with $d\phi(x_0)\not=0$.  On the other hand,   by the
definition of the microsupport there exist points $x_1$ arbitrarily close to $0$ (maybe
equal to $0$) together with functions $\phi$ such that $d\phi(x_1) >0$ and
$(\rsect_{\{x;\, \phi(x)\geq \phi(x_1)\}} (F))_{x_1} \not\simeq 0$.  We have seen that the
case $x_1 \not=0$ is excluded.  Hence $(\rsect_{\{x;\, \phi(x)\geq \phi(0)\}} (F))_0
\not\simeq 0$ for some function $\phi$ with $d\phi(0)>0$. This means $(\rsect_ZF)_0 \not=
0$, where $Z = \ol{U_+}$. Let us set $B_r = \mo]-r,r[$.  Since $F|_{U_\pm}$ is constant,
$\rsect(B_r; \rsect_ZF)$ is independent of $r>0$.  Hence we have in fact $(\rsect_ZF)_0
\simeq \rsect(\R; \rsect_ZF)$.  Let us set $E = (\rsect_ZF)_0$. By the adjunction
$(\opb{a},\roim{a})$ where $a\colon \R \to\pt$ is the projection, the isomorphism $E\isoto
\rsect(\R; \rsect_ZF) = \roim{a}\rsect_ZF$ gives a morphism $u \colon E_\R \to \rsect_ZF$.
By the adjunction $(\ltens, \rhom)$ we obtain from $u$ another morphism $v \colon E_Z \to
F$.  We have $(\rsect_Z(E_Z))_0 \simeq E$ and the morphism $v$ induces an isomorphism
$(\rsect_Z(E_Z))_0 \isoto (\rsect_ZF)_0$.  In other words, defining $G$ by a distinguished
triangle $G \to E_Z \to[v] F \to[+1]$, we have $(\rsect_ZG)_0 \simeq 0$.  By the triangle
inequality we deduce that $\dot\SSi(G) = \emptyset$, hence $G$ is a constant sheaf on $\R$
and we can write $G = E'_\R$ for some $E'\in \Der(\cor)$.  In conclusion there exist $E,
E' \in \Der(\cor)$ and a distinguished triangle
$$
E'_\R \to E_Z \to F \to[+1] .
$$
Conversely a sheaf $F$ defined by such a distinguished triangle satisfies
$\dot\SSi(F) = \{(0;\xi)$; $\xi > 0\}$ and $(\rsect_ZF)_0 \simeq E$.
\end{example}

\subsection{Functorial operations}
\label{sec:funct_op_SS}

\begin{proposition}{\rm (See~\cite[Prop.~5.4.4]{KS90}.)}
\label{prop:oim} 
Let $f\cl M\to N$ be a morphism of manifolds and let $F\in\Der(\cor_M)$.
We assume that $f$ is proper on $\supp(F)$. Then $\SSi(\reim{f}F)\subset
f_\pi\opb{f_d}\SSi(F)$, with equality when $f$ is a closed embedding (see the beginning
of~\S\ref{sec:notations} for ``closed'').
\end{proposition}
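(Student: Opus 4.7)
The plan is to verify the microsupport bound by testing the defining criterion pointwise on $T^*N$. Given $(y_0;\eta_0)\notin f_\pi\opb{f_d}\SSi(F)$, I want to show $(y_0;\eta_0)\notin\SSi(\reim{f}F)$; for the equality when $f$ is a closed embedding I also need the reverse inclusion. The standard strategy is to factor $f$ as $M\xrightarrow{\Gamma_f}M\times N\xrightarrow{p}N$, where $\Gamma_f\cl x\mapsto(x,f(x))$ is a closed embedding and $p$ is the second projection. Since both $f_\pi\opb{f_d}$ and $\reim{f}$ behave well under composition, it suffices to treat the two cases separately. The statement is local on $N$, so I may also assume $f$ is proper, using the hypothesis on $\supp(F)$.

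In the closed embedding case $i\cl M\hookrightarrow N$ one has $\reim{i}=\roim{i}$, and Proposition~\ref{prop:formulaire}(h) gives, for any closed $Z\subset N$, the identity $\rsect_Z\roim{i}F\simeq\roim{i}\rsect_{\opb{i}(Z)}F$. Taking $\psi$ on $N$ with $d\psi(y_0)=\eta_0$ and setting $\phi=\psi\circ i$ yields $d\phi(x_0)={}^tdi_{x_0}(\eta_0)$ at any $x_0\in\opb{i}(y_0)$, and a canonical isomorphism
\begin{equation*}
(\rsect_{\{\psi\geq\psi(y_0)\}}\roim{i}F)_{y_0}\simeq(\rsect_{\{\phi\geq\phi(x_0)\}}F)_{x_0}.
\end{equation*}
Reading this in both directions gives the two opposite inclusions, hence equality.

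For the projection $p\cl M\times N\to N$, pick $\psi$ on $N$ with $d\psi(y_0)=\eta_0$ and set $\phi=\psi\circ p$, so that $d\phi(x,y_0)=(0,\eta_0)$ for every $x$. The hypothesis reads $(x,y_0;0,\eta_0)\notin\SSi(F)$ for every $(x,y_0)\in\supp(F)$. The base change formula (Proposition~\ref{prop:formulaire}(i)) together with properness of $p$ on $\supp(F)$ gives
\begin{equation*}
(\rsect_{\{\psi\geq\psi(y_0)\}}\reim{p}F)_{y_0}\simeq\rsect\bigl(\opb{p}(y_0);\,\rsect_{\{\phi\geq\psi(y_0)\}}(F)|_{\opb{p}(y_0)}\bigr),
\end{equation*}
and each stalk of $\rsect_{\{\phi\geq\psi(y_0)\}}(F)$ along $\opb{p}(y_0)\cap\supp(F)$ vanishes by assumption.

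The main obstacle is the passage from pointwise vanishing along the compact fiber $\opb{p}(y_0)\cap\supp(F)$ to vanishing of the global section on that fiber, which is where genuine microlocal content enters. This is handled by the neighborhood formulation of the microsupport condition (the microlocal Morse / propagation lemma of \cite{KS90}): the condition $(x,y_0;d\phi(x,y_0))\notin\SSi(F)$ persists in a conic neighborhood, so one can perturb $\phi$ and propagate the vanishing along the Hamiltonian flow of $\phi$, gluing local vanishings over a finite cover of the compact fiber. This yields the vanishing of the right-hand side, hence of $(\rsect_{\{\psi\geq\psi(y_0)\}}\reim{p}F)_{y_0}$, and therefore the required exclusion $(y_0;\eta_0)\notin\SSi(\reim{p}F)$ after the same argument is applied at nearby points to get a neighborhood on which the defining vanishing holds.
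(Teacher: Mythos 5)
Your overall plan—factoring $f$ through the graph embedding $\Gamma_f\cl M\hookrightarrow M\times N$ followed by the projection $p\cl M\times N\to N$, and observing that $f_\pi\opb{f_d}$ and $\reim{f}$ both behave functorially under composition—is exactly the route taken in \cite[Prop.~5.4.4]{KS90}, which this paper cites. The closed embedding case is handled correctly.

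However, in the projection case you have misidentified where the content lies. You write that ``the main obstacle is the passage from pointwise vanishing along the compact fiber $\opb{p}(y_0)\cap\supp(F)$ to vanishing of the global section on that fiber,'' and you invoke the microlocal Morse / propagation lemma and Hamiltonian flow to bridge the gap. There is no such gap. Once you know that $G:=\rsect_{\opb{p}\{\psi\geq\psi(y_0)\}}(F)$ has vanishing stalk $G_{(x,y_0)}\simeq 0$ at \emph{every} point of $\opb{p}(y_0)$, it follows immediately that $G|_{\opb{p}(y_0)}\simeq 0$ in the derived category of the fiber (a complex of sheaves with vanishing stalks is zero, since taking stalks is exact and commutes with cohomology), and hence that $\rsect(\opb{p}(y_0);G|_{\opb{p}(y_0)})\simeq 0$. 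No propagation, no gluing, no Hamiltonian flow. The stalk vanishing itself is obtained directly from the definition of $\SSi(F)$ applied to the single test function $\phi=\psi\circ p$, which satisfies $d\phi(x,y_0)=(0,\eta_0)$ at each fiber point. For points $(x,y_0)\notin\supp(F)$ the stalk vanishes trivially.

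What you do need, and leave implicit, is the observation that the complement of $p_\pi\opb{p_d}\SSi(F)$ is \emph{open}, so that the vanishing argument above applies not only at $(y_0;\eta_0)$ but on a whole neighborhood, as required by Definition~\ref{def:SSF}. This is where the properness hypothesis earns its keep a second time: since $p$ is proper on $\supp(F)$ and $\opb{p_d}\SSi(F)$ is a closed subset of $\supp(F)\times T^*N$, its image $p_\pi\opb{p_d}\SSi(F)$ is closed in $T^*N$. Your final sentence gestures at ``applying the same argument at nearby points,'' but making this precise requires stating the closedness of $p_\pi\opb{p_d}\SSi(F)$ explicitly; without it the reader cannot tell whether you have actually excluded a neighborhood or just the single point.
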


\begin{example}
\label{ex:loccst_submfd}
With the notations of Proposition~\ref{prop:oim} we assume that $f$ is an
embedding.  Let $F \in \Der(\cor_N)$ be such that $\SSi(F) \subset
T^*_MN$. Then there exists a locally constant $G \in \Der(\cor_M)$ such that $F
\simeq \roim{f}G$.  Indeed, since $\SSi(F) \cap T^*(N\setminus M) = \emptyset$,
we have $F|_{N\setminus M} \simeq 0$, by the property~(b) after
Definition~\ref{def:SSF}.  Hence $F \simeq \roim{f}G$ where $G = \opb{f}F$.  Then
$\SSi(F) = f_\pi\opb{f_d}\SSi(G)$ by Proposition~\ref{prop:oim} and we deduce
$\SSi(G) \subset T^*_NN$.  Now the result follows from
Example~\ref{ex:microsupport}-(i).
\end{example}

We  recall some notations of~\cite[Def.~6.2.3]{KS90}.

Let $M$ and $N$ be two manifolds and $f \cl M \to N$ a morphism.  Let
$\Gamma_f \subset M\times N$ be the graph of $f$. We have
$T^*_{\Gamma_f}(M\times N) \simeq M \times_N T^*N$.
If $X$ is a manifold and $\Lambda$ a Lagrangian submanifold of $T^*X$, the
Hamiltonian isomorphism identifies $T_\Lambda T^*X$ with $T^*\Lambda$. Applying
this to $X = M\times N$ and $\Lambda = T^*_{\Gamma_f}(M\times N)$,
we obtain a natural identification
$$
T_{T^*_{\Gamma_f}(M\times N)} T^*(M\times N)
\simeq T^*(M \times_N T^*N) .
$$
We also remark that $T^*M$ has a natural embedding in $T^*(M \times_N T^*N)$ as
$T^*M = T^*(M \times_N T^*_NN)$.  For a conic subset $A \subset T^*N$ we set
\begin{equation}
\label{eq:def_fsharpA}
f^\sharp(A) = T^*M \cap C_{T^*_{\Gamma_f}(M\times N)}(T^*M \times A) .  
\end{equation}

\begin{example}
\label{ex:fsharp_embed}
  Let $f$ be the embedding of $\R^m$ in $\R^{n+m}$. We take coordinates
  $(x',x'';\xi',\xi'')$ on $T^*\R^{n+m}$ such that $\R^m = \{x'=0\}$. Then
  $(x''_\infty,\xi''_\infty) \in f^\sharp(A)$ if and only if there exists a
  sequence $\{(x'_n,x''_n;\xi'_n,\xi''_n)\}$ in $A$ such that $x'_n \to 0$,
  $x''_n \to x''_\infty$, $\xi''_n \to \xi''_\infty$ and $|x'_n| \, |\xi'_n| \to
  0$.
\end{example}

\begin{example}
\label{ex:fsharp_nonchar}
With the notations~\eqref{eq:def_derivee_morph} we say that $f$ and $A$ are {\em
  non-characteristic}  if
$$
\opb{f_\pi}(A)\cap f_d^{-1}(T^*_MM) \subset M\times_NT^*_NN .
$$
If $A \subset T^*N$ is closed conic and $f$  and $A$ are non-characteristic, then
$f^\sharp(A) = f_d\opb{f_\pi}(A)$.
\end{example}

\begin{theorem}{\rm (See~\cite[Cor.~6.4.4]{KS90}.)}
\label{thm:iminv} 
Let $f \cl M \to N$ be a morphism of manifolds and let $F \in
\Der(\cor_N)$. Then, using the notation $f^\sharp$ of~\eqref{eq:def_fsharpA}
(see also Example~\ref{ex:fsharp_nonchar}),
$$
\SSi(\opb{f}F) \subset f^\sharp(\SSi(F)) 
\qquad \text{and} \qquad
\SSi(\epb{f}F) \subset f^\sharp(\SSi(F)) .
$$
If $f$ is smooth, these inclusions are equalities.  If $f$ is non-characteristic
for $\SSi(F)$, then the natural morphism
$$
\opb{f}F \tens \omega_{M|N} \to \epb{f}(F)
$$
is an isomorphism, where $\omega_{M|N} \simeq or_M \tens \opb{f}(or_N)[d_M-d_N]$
is the relative dualizing complex.
\end{theorem}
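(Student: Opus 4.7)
My strategy is the classical factorization: write $f = p_N \circ i_f$ where $i_f \colon M \to M\times N$, $x \mapsto (x,f(x))$, is a closed embedding and $p_N \colon M\times N \to N$ is the second projection, which is smooth. Since both $\opb{f}$ and $\epb{f}$ decompose along this factorization and since $f^\sharp$ is well behaved under composition, it suffices to prove the inclusion separately for a smooth submersion and for a closed embedding. The non-characteristic isomorphism then follows by checking it factor by factor (trivial on $p_N$, substantive on $i_f$), and the natural map itself is built for any morphism from the projection formula applied to the counit $\reim{f}\epb{f}\cor_N \to \cor_N$, using $\epb{f}\cor_N = \omega_{M|N}$.

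\textbf{Smooth case.} For a submersion $g \colon X \to Y$, I would work locally and reduce to $g$ a trivial projection $Y\times Z \to Y$. Then $\opb{g}F \simeq F \letens \cor_Z$, and testing against functions of the form $\phi(y,z) = \psi(y)$ in Definition~\ref{def:SSF} shows directly that $\SSi(\opb{g}F) = \opb{g_\pi}g_d(\SSi(F))$, which equals $g^\sharp(\SSi(F))$ by Example~\ref{ex:fsharp_nonchar}. The reverse inclusion uses the adjunction unit $F \to \roim{g}\opb{g}F$ combined with Proposition~\ref{prop:oim} to extract non-vanishing of $\SSi(F)$ from non-vanishing of $\SSi(\opb{g}F)$. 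The isomorphism $\opb{g}F \otimes \omega_{X|Y} \isoto \epb{g}F$ for a smooth morphism is standard and reduces $\epb{g}$ to $\opb{g}$.

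\textbf{Closed embedding case — the main obstacle.} For $i \colon M \hookrightarrow N$ closed and $F \in \Der(\cor_N)$, I need $\SSi(\opb{i}F) \subset i^\sharp(\SSi(F))$. Pick $(x_0;\xi_0) \notin i^\sharp(\SSi(F))$ and a $C^1$-function $\phi$ on $M$ with $d\phi(x_0) = \xi_0$; the goal is to show $\bigl(\rsect_{\{\phi \geq \phi(x_0)\}} \opb{i}F\bigr)_{x_0} \simeq 0$. The plan is to extend $\phi$ to a function $\tilde\phi$ on a neighborhood $U$ of $x_0$ in $N$ and to exploit the cone description of $i^\sharp$ in~\eqref{eq:def_fsharpA} to arrange that the family of covectors $d\tilde\phi(y) + \eta$, for $y$ close to $M$ and $\eta$ covectors along the transverse direction of controlled size, avoids $\SSi(F)$ on $U$. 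A careful choice of a one-parameter family of extensions, combined with base change and the excision triangles recalled in the preliminaries, reduces the computation of $\bigl(\rsect_{\{\phi \geq \phi(x_0)\}} \opb{i}F\bigr)_{x_0}$ to sections of $F$ over domains where the non-characteristic control forces vanishing, via propagation along integral curves of $-d\tilde\phi$. The corresponding statement for $\epb{i}F$ either follows by the same argument with dualized inequalities, or by invoking $\epb{i}F \simeq \DD_M(\opb{i}\DD_N F)$ together with $\SSi(\DD F) = \SSi(F)^a$ and the invariance of $i^\sharp$ under antipodal.

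\textbf{Non-characteristic isomorphism.} When $f$ is non-characteristic for $\SSi(F)$, the hypothesis on $i_f$ becomes that $i_f^\sharp$ agrees with $i_{f,d}\opb{i_{f,\pi}}$, i.e.\ the conormal direction cannot limit into $\SSi(p_N^{-1}F)$. Under this transversality, I would verify the natural morphism $\opb{i_f}G \otimes \omega_{M|M\times N} \to \epb{i_f}G$ is an isomorphism for $G = \opb{p_N}F$ by a local Morse argument on a transverse slice: near any $x_0 \in M$, a codimension-$d_N$ transverse disk $D$ meets $M$ in $\{x_0\}$, and the non-characteristic condition guarantees that $F|_D$ is (up to shift) a constant sheaf at $x_0$, so the comparison of $\opb{}$ and $\epb{}$ reduces to a computation on a disk, where it is standard. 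Combining with the smooth factor, we obtain the global isomorphism.
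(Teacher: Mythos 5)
The paper does not prove this theorem — it recalls it verbatim from [KS90, Cor.~6.4.4], so there is no in-paper proof to compare against. I will therefore assess your sketch on its own terms. Your overall factorization $f = p_N\circ i_f$ is indeed the approach of Kashiwara–Schapira, and your handling of the smooth factor is essentially right (although proving the reverse inclusion directly by pulling back test functions $\psi$ along the submersion, as you also do, is cleaner than going through the adjunction unit and Proposition~\ref{prop:oim}).

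Two genuine gaps remain. First, the closed-embedding case, which is where all the technical weight of the theorem sits, is described only as ``careful choice of a one-parameter family of extensions, combined with base change and the excision triangles \dots via propagation along integral curves of $-d\tilde\phi$.'' The difficulty this glosses over is that $i^\sharp$ involves a genuine limit: as recalled in Example~\ref{ex:fsharp_embed}, $(x_0;\xi_0)\in i^\sharp(\SSi(F))$ allows sequences $(x'_n,x''_n;\xi'_n,\xi''_n)\in\SSi(F)$ with $|\xi'_n|\to\infty$ provided $|x'_n|\,|\xi'_n|\to 0$. Knowing $(x_0;\xi_0)\notin i^\sharp(\SSi(F))$ therefore gives no \emph{uniform} conic neighbourhood in $T^*N$ near $x_0$ that avoids $\SSi(F)$, and a naive propagation along a single extension $\tilde\phi$ will not close the argument. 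In [KS90] this is precisely what forces the detour through the refined cut-off lemma and the direct-image estimate for open embeddings (stated here as Theorem~\ref{thm:oim_open} and proved via the Fourier–Sato transform), and your sketch does not touch this machinery. Relatedly, the chaining $\SSi(\opb{i_f}\opb{p_N}F)\subset i_f^\sharp(p_N^\sharp(\SSi(F)))$ needs to be compared with $f^\sharp(\SSi(F))$: in general one only has an inclusion $(g\circ h)^\sharp\subset h^\sharp\circ g^\sharp$, so you must verify equality in the graph factorization; this is true, but it is not automatic.

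Second, your argument for the non-characteristic isomorphism contains a concrete error: the claim that the non-characteristic condition forces ``$F|_D$ to be (up to shift) a constant sheaf at $x_0$'' for a transverse slice $D$ is false. Take $N=\R^2$, $M=\R\times\{0\}$, and $F$ with $(0;1,1)\in\SSi(F)$: the non-characteristic condition at $(0,0)$ (which only forbids covectors $(0,\eta_2)$ with $\eta_2\neq 0$ from $\SSi(F)$) is satisfied, yet $F|_{\{0\}\times\R}$ can have nontrivial microsupport at $0$. The non-characteristic condition controls $\SSi(F)\cap T^*_MN$, not the microsupport of $F$ restricted to a transverse slice, and conflating the two begs precisely the question the theorem is answering. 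The cleaner route to the isomorphism $\opb{i}F\otimes\omega_{M|N}\isoto\epb{i}F$ for the embedding $i: M\hookrightarrow M\times N$ is Sato's distinguished triangle~\eqref{eq:SatoDTmuhom0} applied to $\mu_M(G)$ with $G=\opb{p_N}F$: the non-characteristic hypothesis gives $\supp\mu_M(G)\subset\SSi(G)\cap T^*_M(M\times N)\subset 0_{M\times N}$, so the ``microlocal'' term of the triangle vanishes over $\dot T^*_M(M\times N)$ and the triangle collapses to the desired isomorphism; alternatively one runs the microlocal Morse lemma upstairs on $N$ after choosing a tubular neighbourhood, rather than on the undefined slice restriction. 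Your duality-based alternative $\epb{i}F\simeq\DD_M(\opb{i}\DD_NF)$ is fine, but only once $F$ is cohomologically constructible, so it cannot replace the argument in the generality stated.
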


\begin{proposition}{\rm (See~\cite[Prop.~5.4.5]{KS90}.)}
\label{prop:iminvproj} 
Let $N, I$ be manifolds. We assume that $I$ is contractible.
Let $f \cl N\times
I \to N$ be the projection.  Let $F\in\Der(\cor_{N\times I})$.  Then
$\SSi(F)\subset T^*N \times T^*_II$ if and only if $\opb{f} \roim{f} (F) \isoto
F$.
\end{proposition}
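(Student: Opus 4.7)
The plan is to reduce the statement to two microlocal ingredients: the equality case of Theorem~\ref{thm:iminv} for smooth morphisms (easy direction) and a propagation-type argument from the very Definition~\ref{def:SSF} of the microsupport plus contractibility of $I$ (hard direction).

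For the implication $\opb{f}\roim{f}F\isoto F\Rightarrow \SSi(F)\subset T^*N\times T^*_I I$, I would set $G=\roim{f}F\in\Der(\cor_I)$ and apply Theorem~\ref{thm:iminv} to the smooth morphism $f$. Smoothness yields the equality $\SSi(\opb{f}G)=f^\sharp(\SSi(G))$, and a direct unpacking of the definition of $f^\sharp$ for the projection shows that every element of $f^\sharp(\SSi(G))$ has trivial covector in the fiber direction of $f$, giving the desired inclusion.

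For the converse, the hypothesis $\SSi(F)\subset T^*N\times T^*_I I$ says that every covector of the form $(x_0,t_0;0,\tau)$ with $\tau\neq 0$ lies outside $\SSi(F)$. Applying Definition~\ref{def:SSF} to the functions $\phi(x,t)=\pm t$ (whose differentials are exactly these excluded covectors) yields the stalkwise vanishings $(\rsect_{\{\pm t\geq \pm t_0\}}F)_{(x_0,t_0)}\simeq 0$ at every point. Combining both signs, I would conclude that for each $x\in N$ the restriction $F|_{\{x\}\times I}$ has trivial nonzero microsupport, hence is locally constant on $I$ by Example~\ref{ex:microsupport}(i), and therefore constant since $I$ is contractible.

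To promote this fiberwise local-constancy to the global isomorphism $\opb{f}\roim{f}F\isoto F$, I would study the counit morphism $\alpha\colon\opb{f}\roim{f}F\to F$ stalkwise via Proposition~\ref{prop:formulaire}(c). A Morse-type propagation argument applied to $\phi(x,t)=t$ on each cylinder $U\times I$ ($U$ an open neighborhood of $x_0$ in $N$) produces, using the absence of $d\phi$ in $\dot\SSi(F)$, the isomorphisms $\rsect(U\times I;F)\isoto \rsect(U\times\{t_0\};F)$; taking colimits over shrinking $U$ yields $(\opb{f}\roim{f}F)_{(x_0,t_0)}\isoto F_{(x_0,t_0)}$. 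The main obstacle is precisely this globalization step: the microsupport hypothesis gives only local-at-a-point vanishings, and upgrading them to sectional statements on a non-compact fiber $I$ requires invoking contractibility of $I$ (to reduce to any fixed slice) together with a careful exhaustion by compact subintervals, essentially reproducing the content of \cite[Prop.~5.2.3]{KS90}.
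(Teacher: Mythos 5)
Your sketch correctly isolates the two ingredients --- the easy direction via $f^\sharp$ for a smooth projection, and the hard direction via fiberwise constancy of $F$ plus a propagation argument --- but it does not actually prove the hard direction, and that is where the entire content of the proposition lies. A preliminary remark: the statement as printed has a misprint; $f$ should be the projection $N\times I\to N$, not to $I$, since otherwise $\opb{f}\roim{f}F$ would be constant in the $N$-direction while the hypothesis $\SSi(F)\subset T^*N\times T^*_II$ constrains the $I$-direction. Your formula $G=\roim{f}F\in\Der(\cor_I)$ inherits that misprint, but the rest of your first paragraph (``trivial covector in the fiber direction of $f$'') shows you have the intended statement in mind; with $\Der(\cor_I)$ replaced by $\Der(\cor_N)$, the easy direction is correct and is indeed an immediate unwinding of the equality case of Theorem~\ref{thm:iminv}.

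The genuine gap is precisely the globalization you flag at the end. Passing from the stalkwise vanishings $(\rsect_{\{\pm t\geq\pm t_0\}}F)_{(x_0,t_0)}\simeq 0$ to the sectional isomorphisms $\rsect(U\times I;F)\to\rsect(U\times J;F)$ is not a routine application of Corollary~\ref{cor:Morse}: that corollary requires $\phi$ to be proper on $\supp F$, which fails for $\phi(x,t)=t$ on an open cylinder $U\times I$, both because $\supp F$ need not be proper over $N$ and because $I$ itself is non-compact. The actual proof relies on the non-characteristic deformation lemma (\cite[Prop.~2.7.2]{KS90}), applied after reducing the general contractible $I$ to a ball in $\R^n$ and then, by induction on $n$, to $I=\R$, where one also truncates $F$ in the $N$-direction so that the compactness hypotheses of that lemma become verifiable. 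Your citation of \cite[Prop.~5.2.3]{KS90} (the cut-off lemma for the $\gamma$-topology) points at a cousin of the right result rather than at the propagation mechanism itself. In short, you have correctly located the obstacle but not crossed it; since the paper gives no proof of this proposition --- it merely cites \cite[Prop.~5.4.5]{KS90} --- that step is exactly what a proof would have to supply.
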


\begin{example}\label{ex:SS=conormal_hypersurface}
  We set $M = \R^n$, $S = \{x_n = 0\}$, $Z = \{x_n \geq 0\}$ and
  $\Lambda = \{(x_1,\dots,x_{n-1}, 0; \ul 0, \xi_n)$; $\xi_n>0\}$.  Let
  $F \in \Der(\cor_M)$ be such that $\dot\SSi(F) = \Lambda$.  Then there exist
  $E, E' \in \Der(\cor)$ and a distinguished triangle
  $$
  E'_M \to E_Z \to F \to[+1] .
$$
Indeed, we can identify $M$ with a product $I \times N$, where $I = \R^{n-1}$
and $N = \R$.  Then $\Lambda = T^*_II \times \{(0;\xi_n)$; $\xi_n>0\}$.  By
Proposition~\ref{prop:iminvproj} we can write $F \simeq \opb{f} G$ for some
$G \in \Der(\cor_N)$, where   $f \cl N\times I \to N$ is the projection.  By
Theorem~\ref{thm:iminv} we must have $\dot\SSi(G) = \{(0;\xi_n)$; $\xi_n>0\}$
and we conclude with Example~\ref{ex:microsupport}-(v).
\end{example}

\begin{example}[Conic sheaves]\label{ex:conicsheaves}
  (i)   We set $S^{n-1} = (\R^n\setminus\{0\})/\rspos$ and let
  $q \colon \R^n\setminus\{0\} \to S^{n-1}$ be the quotient map.  We say that
  $F\in \Der(\cor_{\R^n})$ is {\em conic} if there exists $G\in \Der(\cor_{S^{n-1}})$
  such that $F|_{\R^n \setminus\{0\}} \simeq \opb{q}G$.  By
  Proposition~\ref{prop:iminvproj} $F$ is conic if and only if, for any
  $x\not=0 \in \R^n$, we have $\SSi(F) \cap T^*_x\R^n \subset (T^*_l\R^n)_x$, where
  $l=\R \cdot x$ is the line generated by $x$.

  \sui(ii) In Example~\ref{ex:microsupport}-(iv) we have given a bound for
  $\SSi(\cor_\lambda)$ when $\lambda$ is a closed convex cone in $\R^n$, namely
  $\SSi(\cor_\lambda) \subset \R^n \times \lambda^\circ$ and
  $\SSi(\cor_\lambda) \cap T^*_0\R^n = \lambda^\circ$, where $\lambda^\circ$ is
  the polar cone of $\lambda$.  Since $\cor_\lambda$ is conic, the above
  discussion also implies
  $\SSi(\cor_\lambda) \cap T^*_x\R^n \subset (T^*_l\R^n)_x$, where
  $l=\R \cdot x$, for any $x\not=0 \in \R^n$.

  We remark that $\Int(\lambda^\circ) = \{\xi\in (\R^n)^*$;
  $\langle v,\xi \rangle > 0$ for all $v\not=0 \in \lambda\}$ (if $\lambda$
  contains a line, then $\Int(\lambda^\circ) = \emptyset$).  In particular, for
  $x\not=0 \in \lambda$, we have
  $\Int(\lambda^\circ) \cap (T^*_l\R^n)_x = \emptyset$ and we deduce the more
  precise bound
  \begin{equation}
    \label{eq:borneSScone_bord}
  \SSi(\cor_\lambda) \cap T^*(\R^n\setminus\{0\}) \subset (\R^n\setminus\{0\})
  \times \partial\lambda^\circ,
  \end{equation}
  where
  $\partial\lambda^\circ = \lambda^\circ \setminus \Int(\lambda^\circ)$.
\end{example}

Let $\delta_M \cl M \to M^2$ be the diagonal embedding.  For conic subsets $A,B
\subset T^*M$ we set
\begin{equation}
\label{eq:def_hplus}
A \hplus B = \delta_M^\sharp(A\times B) .
\end{equation}
In local coordinates $A \hplus B$ is the set of $(x;\xi)$ such that there exist
two sequences $(x_n;\xi_n)$ in $A$ and $(y_n;\eta_n)$ in $B$ such that $x_n, y_n
\to x$, $\xi_n+\eta_n \to \xi$ and $|x_n-y_n| |\xi_n| \to 0$ when $n\to \infty$.

\begin{example}
\label{ex:hplus_noncar}
If $A,B \subset T^*M$ are closed conic subsets such that $A^a \cap B \subset
T^*_MM$, then $A \hplus B = A + B$.
\end{example}

For the definition of \emph{cohomologically constructible} we refer to~\cite[\S
3.4]{KS90}.  An example of a cohomologically constructible sheaf $F$ is given by
the case where $F$ is constructible with respect to a Whitney stratification
(that is, the restriction of $F$ to each stratum is locally constant and of
finite rank).

\begin{theorem}{\rm (See~\cite[Cor.~6.4.5]{KS90}.)}
\label{thm:SSrhom} 
Let $F,G \in\Der(\cor_M)$. Then, using the notation $\hplus$
of~\eqref{eq:def_hplus} (see also Example~\ref{ex:hplus_noncar}),
\begin{align*}
\SSi(F \ltens G) &\subset \SSi(F) \hplus \SSi(G) ,  \\
\SSi(\rhom(F,G)) &\subset \SSi(F)^a \hplus \SSi(G) , \\
\SSi(\DD'F ) &= \SSi(F)^a .
\end{align*}
If   we assume that $\SSi(F)\cap\SSi(G)\subset T^*_MM$ and that
$F$ is cohomologically constructible, then the natural morphism
$\DD'F \ltens G \to \rhom(F,G)$ is an isomorphism.
\end{theorem}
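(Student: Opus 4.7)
The plan is to deduce the three microsupport bounds from Theorem~\ref{thm:iminv} by expressing the internal operations $\otimes$, $\rhom$, $\DD'$ as pull-backs of suitable external operations along the diagonal embedding $\delta_M \colon M \to M^2$. Explicitly, $F \ltens G \simeq \opb{\delta_M}(F \letens G)$, and $\rhom(F,G)$ admits a similar description in terms of $\epb{\delta_M}$ applied to an external $\mathcal{H}om$ complex on $M^2$, while $\DD' F$ is the case $G = \cor_M$ of $\rhom$.

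The first step is a K\"unneth-type bound $\SSi(F \letens G) \subset \SSi(F) \times \SSi(G)$ inside $T^*(M^2) \simeq T^*M \times T^*M$, together with the analogous bound $\SSi(F)^a \times \SSi(G)$ for the external $\mathcal{H}om$. I would prove these directly from Definition~\ref{def:SSF}, testing with functions of separated form $\phi(x,y) = \phi_1(x) + \phi_2(y)$ and using a K\"unneth argument to split sections of the external product over bi-half-spaces as tensor products of sections on each factor; the antipode on the first factor in the $\mathcal{H}om$ case reflects the contravariance in $F$, which reverses the sign of $d\phi_1$. Applying Theorem~\ref{thm:iminv} to $\delta_M$ and unwinding the definition~\eqref{eq:def_hplus} of $\hplus$ then yields the first two stated inclusions.

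For the equality $\SSi(\DD' F) = \SSi(F)^a$, the inclusion $\subset$ is the $G = \cor_M$ case of the $\rhom$ bound, since $\SSi(\cor_M) = T^*_MM$ and $A \hplus T^*_MM = A$ for any closed conic $A \subset T^*M$. For the reverse inclusion I would use the identity $\rsect_Z(\DD' F) \simeq \rhom(F, \rsect_Z \cor_M)$ for closed half-spaces $Z = \{\phi \leq \phi(x_0)\}$: a nonzero stalk in this expression, corresponding to a microsupport direction of $F$, translates via the change of sign $\phi \mapsto -\phi$ into a nonzero stalk for the dual test on $\DD' F$ in the antipodal direction.

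For the last isomorphism, the non-characteristic hypothesis $\SSi(F) \cap \SSi(G) \subset T^*_MM$ together with Example~\ref{ex:hplus_noncar} lets us replace the $\hplus$ sums above by ordinary Minkowski sums, so the microsupports of $\DD' F \ltens G$ and $\rhom(F,G)$ are both well controlled. To show that the natural morphism $\DD' F \ltens G \to \rhom(F,G)$ is an isomorphism I would work locally at a point $x_0$ and use cohomological constructibility of $F$ to reduce, by a d\'evissage along a Whitney stratification adapted to $F$, to the case $F = \cor_Z$ for $Z$ a locally closed cell; for such $F$ the identity can be checked directly from Proposition~\ref{prop:formulaire}~(e)--(h) combined with~\eqref{eq:dual_bordlisse}. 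The main obstacle is the K\"unneth bound on $\SSi(F \letens G)$, the foundational step on which everything else depends.
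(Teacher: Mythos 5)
The paper does not actually prove this theorem; it is quoted from \cite[Cor.~6.4.5]{KS90}, so there is no internal proof to compare against. Your overall plan — express $\ltens$, $\rhom$, $\DD'$ as pull-backs along $\delta_M$ of external K\"unneth-type operations on $M^2$, bound the external microsupports, and push through Theorem~\ref{thm:iminv} to produce $\delta_M^\sharp = \hplus$ — is indeed the route Kashiwara--Schapira take, and it is the right skeleton.

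Two steps as you have sketched them would not go through, though. First, the K\"unneth bound $\SSi(F \letens G) \subset \SSi(F)\times\SSi(G)$ cannot be obtained by testing only with separated functions $\phi(x,y)=\phi_1(x)+\phi_2(y)$. Definition~\ref{def:SSF} demands that $(\rsect_{\{\phi\geq\phi(z_0)\}}(F\letens G))_{z_0}$ vanish for \emph{every} $C^1$ function with the prescribed differential at $z_0$, and a generic such $\phi$ on $M\times M$ does not split; one has to argue that separated test functions are ``cofinal'' (via a convolution/cut-off or propagation argument), which is exactly the content of \cite[Prop.~5.4.1--5.4.2]{KS90} and is the nontrivial point. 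Second, your d\'evissage for the final isomorphism $\DD'F\ltens G \to \rhom(F,G)$ rests on a misreading of the hypothesis: \emph{cohomologically constructible} is strictly weaker than constructible with respect to a Whitney stratification (the paper even presents the latter merely as an example of the former), so there need be no stratification adapted to $F$ and no reduction to $F=\cor_Z$ for cells $Z$. The actual proof uses the defining limit/colimit stabilization properties of cohomological constructibility together with the non-characteristic condition $\SSi(F)\cap\SSi(G)\subset T^*_MM$ to check the morphism on stalks. Finally, your sign-change argument for $\SSi(F)^a \subset \SSi(\DD'F)$ correctly records the identity $\rsect_Z(\DD'F)\simeq\rhom(F,\rsect_Z\cor_M)$ but stops short of showing the relevant stalk is \emph{non}-zero; converting a microsupport direction of $F$ into non-vanishing here requires an actual duality between the microlocal germs of $F$ and $\DD'F$ at antipodal covectors, not just the formal adjunction.
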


\begin{remark}\label{rem:SSexttens}
  Let $M,N$ be manifolds and let $q_1$, $q_2$ be the projections from
  $M\times N$ to $M$, $N$.  For $F \in\Der(\cor_M)$ and $G \in \Der(\cor_N)$
  Theorem~\ref{thm:SSrhom} implies
  $\SSi(\rhom(\opb{q_1} F, \opb{q_2}G )) \subset \SSi(F)^a \times \SSi(G)$ and
  $\SSi(\opb{q_1} F \ltens \opb{q_2}G ) \subset \SSi(F) \times \SSi(G)$.
\end{remark}

Using the $\hplus$ operation we can give a version of Proposition~\ref{prop:oim}
for an open embedding. We only state the case where the boundary of the open
subset is smooth (see~\cite{KS90} for the general case).

\begin{theorem}{\rm (See~\cite[Thm.~6.3.1]{KS90}.)}
\label{thm:oim_open} 
Let $j\colon U \hookrightarrow M$ be the embedding of an open subset with a
smooth boundary.  Let $F \in \Der(\cor_U)$.  Then
$\SSi(\roim{j}F) \subset \SSi(F) \hplus \SSi(\cor_U)^a$ and
$\SSi(\reim{j}F) \subset \SSi(F) \hplus \SSi(\cor_U)$.
\end{theorem}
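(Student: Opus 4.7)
The two inclusions come together via the following pair of identities: from the projection formula combined with $\opb{j}\roim{j}\simeq\id$ (valid since $j$ is open) one has $\reim{j}F\simeq\roim{j}F\otimes\cor_U$, and from the adjunction $\rhom(\reim{j}A,G)\simeq\roim{j}\rhom(A,\epb{j}G)$ with $\epb{j}=\opb{j}$ one has $\roim{j}F\simeq\rhom(\cor_U,\reim{j}F)$. Applying Theorem~\ref{thm:SSrhom} yields the circular estimates $\SSi(\reim{j}F)\subset\SSi(\roim{j}F)\hplus\SSi(\cor_U)$ and $\SSi(\roim{j}F)\subset\SSi(\cor_U)^a\hplus\SSi(\reim{j}F)$. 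These by themselves do not prove the theorem, but they tie the two sides together and suggest working with just one of them.

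Moreover, the two stated inclusions are dual. From $\DD'\reim{j}F=\rhom(\reim{j}F,\cor_M)\simeq\roim{j}\rhom(F,\epb{j}\cor_M)=\roim{j}\DD'_UF$, together with $\SSi(\DD'(-))=\SSi(-)^a$ (Theorem~\ref{thm:SSrhom}), $\DD'\cor_U\simeq\cor_{\ol U}$ from~\eqref{eq:dual_bordlisse}, and the resulting identity $\SSi(\cor_{\ol U})=\SSi(\cor_U)^a$, the inclusion for $\reim{j}(\DD'_UF)$ converts into the inclusion for $\roim{j}F$. So the plan is to prove $\SSi(\reim{j}F)\subset\SSi(F)\hplus\SSi(\cor_U)$.

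Away from $\partial U$ this is straightforward: on $U$, $\reim{j}F|_U=F$ and $\SSi(\cor_U)\cap T^*U=T^*_UU$, so the right-hand side restricts to $\SSi(F)$; on $M\setminus\ol U$, $\reim{j}F$ vanishes. The content lies at points $x_0\in\partial U$. There I would work in local coordinates $(x_1,\dots,x_n)$ with $x_0=0$ and $U=\{x_n<0\}$, using the smoothness of $\partial U$. In these coordinates, Example~\ref{ex:microsupport}(iii) gives an explicit description of $\SSi(\cor_U)$ as the zero section over $U$ together with the outward conormal ray $\{(x',0;\ul 0,\xi_n):\xi_n\geq 0\}$ over $\partial U$. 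The strategy is then to verify the bound directly from Definition~\ref{def:SSF}: for a test function $\phi$ with $d\phi(x_0)=\xi_0$, compute $(\rsect_{\{\phi\geq\phi(x_0)\}}\reim{j}F)_{x_0}$ by means of an excision triangle splitting the sublevel set along $\partial U$, and match the resulting vanishing criterion to the cone-product characterization of $\hplus$ given by~\eqref{eq:form_cone2}.

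The matching is the main obstacle. The non-membership $(x_0;\xi_0)\notin\SSi(F)\hplus\SSi(\cor_U)$ forbids, via~\eqref{eq:form_cone2}, certain paired sequences in $\SSi(F)$ and $\SSi(\cor_U)$ with prescribed convergence rates, and I need to translate this into a Morse-theoretic vanishing for $F$ on shrinking families of sublevel sets intersected with $U$ that is strong enough to kill $(\rsect_{\{\phi\geq\phi(x_0)\}}\reim{j}F)_{x_0}$. The smoothness of $\partial U$ is used crucially: it provides the explicit outward-conormal description of $\SSi(\cor_U)|_{\partial U}$ and controls the geometry of the sublevel sets $\{\phi\geq\phi(x_0)\}$ near the boundary, so that the interior contributions (coming from $\SSi(F)$) and the boundary contributions (coming from the conormal ray in $\SSi(\cor_U)$) combine cleanly. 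The antipodal inclusion for $\roim{j}F$ then follows by the duality argument above, the $(-)^a$ reflecting precisely the reversal $\SSi(\cor_{\ol U})=\SSi(\cor_U)^a$.
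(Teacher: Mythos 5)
The overall plan — reduce the $\roim{j}$ estimate to the $\reim{j}$ estimate by the duality formula $\DD'_M(\reim{j}F)\simeq\roim{j}(\DD'_U F)$, check the estimate away from $\partial U$ is trivial, and then verify directly at boundary points — is a reasonable scaffolding, but it leaves exactly the substantive step unaddressed. You call the matching at $\partial U$ ``the main obstacle'' and then describe only what you \emph{would} need to do (split $\rsect_{\{\phi\geq\phi(x_0)\}}\reim{j}F$ by excision along $\partial U$, and somehow deduce vanishing from the non-membership in $\SSi(F)\hplus\SSi(\cor_U)$ via~\eqref{eq:form_cone2}), without doing it. That step is the whole content of the theorem, and it is genuinely hard: $\hplus$ is defined via paired sequences with constraints on the rate $|x_n-y_n|\,|\xi_n|\to 0$, and turning that into a Morse-type vanishing for a family of truncations of $F$ near $\partial U$ requires a quantitative microlocal tool, not just the definition. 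In Kashiwara--Schapira (and in the cut-off machinery this paper itself develops in \S\ref{sec:Globalcut-off} and \S\ref{sec:gamma_top}) this is done by a \emph{microlocal cut-off lemma}: one first replaces $F$ near the point of interest by a sheaf of the form $P_\gamma(F')$ whose microsupport is confined to a cone $V\times\gamma^{\circ a}$, and then the direct image from a half-space is an explicit convolution with $\cor_{\widetilde\gamma}$ for which the $\hplus$-bound can be computed directly (cf.~\eqref{eq:SSPgamF}--\eqref{eq:SSPgamQgam}). Your outline makes no contact with any device of this kind, so it does not constitute a proof.

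A secondary issue in the duality reduction: to pass from the $\reim{j}$-estimate to the $\roim{j}$-estimate you need to substitute $G=\DD'_U F$ and then identify $\roim{j}(\DD'_U\DD'_U F)$ with $\roim{j}F$, which uses biduality $\DD'_U\DD'_U F\simeq F$. That isomorphism fails for an arbitrary object of $\Der(\cor_U)$ (it holds for cohomologically constructible sheaves). One can patch this — for example by instead proving the two estimates independently, or by first establishing the $\roim{j}$ estimate and only then deducing the $\reim{j}$ one by $\reim{j}F\simeq(\roim{j}F)\tens\cor_U$ together with Theorem~\ref{thm:SSrhom} — but as written the duality step is not valid for the stated generality $F\in\Der(\cor_U)$.

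So: the reduction to a boundary-local model problem is correctly identified, and the coordinate normalization $U=\{x_n<0\}$ together with Example~\ref{ex:microsupport}(iii) is the right starting point; but you should bring in the cut-off functor $P_\gamma$ (or the $\gamma$-topology picture of Proposition~\ref{prop:cut-offgamma}) to actually carry out the estimate, rather than trying to argue ``bare hands'' from Definition~\ref{def:SSF}.
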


The next result follows immediately from Proposition~\ref{prop:oim} and
Example~\ref{ex:microsupport}~(i).   It is a
particular case of a microlocal Morse result (see~\cite[Cor.~5.4.19]{KS90}), the
classical theory corresponding to the constant sheaf $F=\cor_M$.
\begin{corollary}\label{cor:Morse}
Let $F\in\Der(\cor_M)$, let $\phi\cl M\to\R$ be a function of class $C^1$ and
assume that $\phi$ is proper on $\supp(F)$.  Let $a<b$ in $\R$ and assume that
$d\phi(x)\notin\SSi(F)$ for $a\leq \phi(x)<b$. Then the natural morphisms
$\rsect(\opb{\phi}(\mo]-\infty,b[);F) \to \rsect(\opb{\phi}(\mo]-\infty,a[);F)$
and
$\rsect_{\opb{\phi}([b,+\infty[)}(M;F) \to \rsect_{\opb{\phi}([a,+\infty[)}(M;F)$
are isomorphisms.
\end{corollary}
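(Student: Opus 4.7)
The plan is to push $F$ down via $\phi$ to a sheaf on $\R$ and reduce both assertions to a single one-dimensional vanishing. Set $G := \reim{\phi}F \in \Der(\cor_\R)$; since $\phi$ is proper on $\supp(F)$ we have $G \simeq \roim{\phi}F$, and the standard adjunctions give restriction-compatible identifications
$$\rsect(\opb{\phi}(V); F) \simeq \rsect(V; G), \qquad \rsect_{\opb{\phi}(Z)}(M; F) \simeq \rsect_Z(\R; G)$$
for any open $V \subset \R$ and closed $Z \subset \R$. Applying $\rhom(-, G)$ to the short exact sequence $0 \to \cor_{\mo]-\infty,a\mc[} \to \cor_{\mo]-\infty,b\mc[} \to \cor_{\mo[a,b\mc[} \to 0$, and analogously to $0 \to \cor_{\mo[b,+\infty\mc[} \to \cor_{\mo[a,+\infty\mc[} \to \cor_{\mo[a,b\mc[} \to 0$, both assertions of the corollary become equivalent to the single vanishing $\rsect_{\mo[a,b\mc[}(\R; G) \simeq 0$.

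Next, Proposition~\ref{prop:oim} gives $\SSi(G) \subset \phi_\pi\opb{\phi_d}\SSi(F)$: a point $(t;\tau) \in T^*\R$ lies in this set iff some $x \in \opb{\phi}(t)$ satisfies $(x; \tau\, d\phi(x)) \in \SSi(F)$. For $\tau > 0$, the $\R^+$-invariance of $\SSi(F)$ reduces this condition to $(x; d\phi(x)) \in \SSi(F)$, which the hypothesis excludes when $t \in [a, b)$. Hence
$$\SSi(G) \cap (\mo[a,b\mc) \times \rspos) = \emptyset.$$

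To conclude, I would argue that this one-sided microsupport bound forces $\rsect_{\mo[a,b\mc[}(\R; G) \simeq 0$ by connectedness on $[a, b]$. Let $T := \{c \in [a, b] : \rsect_{\mo[c,b\mc[}(\R; G) \simeq 0\}$, which contains $b$. At any $c' \in [a, b)$, Definition~\ref{def:SSF} applied to the coordinate function at $c'$ translates $(c'; +1) \notin \SSi(G)$ into the stalk identity
$$(\rsect_{\mo[c',+\infty\mc[} G)_{c'} \simeq \varinjlim_{\epsilon \to 0^+} \rsect_{\mo[c',c'+\epsilon\mc[}(\R; G) = 0.$$
Working one cohomological degree at a time on the locally bounded sheaf $G$ converts this filtered-colimit vanishing into a finite-$\epsilon$ vanishing $\rsect_{\mo[c',c'+\epsilon\mc[}(\R; G) = 0$ for some $\epsilon > 0$, and the excision triangle $\rsect_{\mo[c'+\epsilon,b\mc[} \to \rsect_{\mo[c',b\mc[} \to \rsect_{\mo[c',c'+\epsilon\mc[} \to[+1]$ propagates the vanishing from $c' + \epsilon \in T$ down to $c'$. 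Combined with routine closedness of $T$ under decreasing limits, this yields $T = [a, b]$, hence $a \in T$.

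The principal obstacle is the passage from colimit-level vanishing to finite-$\epsilon$ vanishing: this is a one-sided, one-dimensional form of the local-constancy statement of Example~\ref{ex:microsupport}(i), and constitutes the essential content of the microlocal Morse lemma.
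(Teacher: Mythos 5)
Your reduction is the right one and is carried out correctly up to the final step: pushing forward to $G=\roim{\phi}F\simeq\reim{\phi}F$ on $\R$, identifying all four functors with $\RHom(\cor_{V},G)$ and $\RHom(\cor_{Z},G)$, using excision to reduce both assertions to the single vanishing $\rsect_{[a,b)}(\R;G)\simeq 0$, and extracting $\SSi(G)\cap\bigl([a,b)\times\R_{>0}\bigr)=\emptyset$ from Proposition~\ref{prop:oim} are all valid. (One small slip: the second short exact sequence is reversed; since $[a,b)$ is the \emph{open} and $[b,+\infty)$ the \emph{closed} part of $[a,+\infty)$, the correct sequence is $0\to\cor_{[a,b)}\to\cor_{[a,+\infty)}\to\cor_{[b,+\infty)}\to 0$. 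This does not affect the reduction, only the direction of the arrows.)

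The gap is in the step you flag yourself: "working one cohomological degree at a time \dots converts this filtered-colimit vanishing into a finite-$\varepsilon$ vanishing." This implication is false as an abstract statement about filtered systems of abelian groups. The vanishing of $\varinjlim_{\varepsilon\to 0^+} H^j\rsect_{[c',c'+\varepsilon)}(\R;G)$ says only that every class dies after further restriction, not that some term of the system is zero; there is no bound on the degrees or the ranks of $G$ that could save this (and in fact $F$, hence $G$, is not assumed bounded in the statement). What is actually needed here is the \emph{non-characteristic deformation lemma} (Prop.~2.7.2 of~\cite{KS90}), applied to the family $U_t=\opb{\phi}(]-\infty,t[)$: its hypotheses are precisely the compactness of $\supp(F)\cap\ol{U_t\setminus U_s}$ (your properness assumption) and the stalk vanishing $(\rsect_{[c'',+\infty)}G)_{c''}=0$ for \emph{every} $c''\in[a,b)$ simultaneously, and its proof is a genuine propagation/compactness argument, not a degree-chase. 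Your $T$-set scheme is morally that argument, but the inductive step "$c'+\varepsilon\in T\Rightarrow c'\in T$" is exactly the content of the lemma and cannot be established locally at $c'$ as you suggest. The paper itself does not attempt to prove this — it simply cites the microlocal Morse lemma, Cor.~5.4.19 of~\cite{KS90}, whose proof rests on Prop.~2.7.2. So your proposal correctly reconstructs the reduction steps but leaves the one nontrivial ingredient unproved, and the sketch you offer for bridging it would not work.
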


Here is another useful consequence of the properties of the microsupport which
appears in~\cite{N15}.
\begin{corollary}\label{cor:homFtGt_indpdtt}
  Let $M$ be a manifold and $I$ an open interval of $\R$.  Let $F$,
  $G \in \Der(\cor_{M\times I})$. We assume
  \begin{itemize}
  \item [(i)] the projection $\supp(F) \cap \supp(G) \to I$ is proper,
  \item [(ii)] $F$, $G$ are non-characteristic for all maps
    $i_t \colon M \times \{t\} \to M\times I$, $t\in I$, that is,
    $\dot\SSi(A) \cap (T^*_MM \times T^*_tI) = \emptyset$ for $A=F$, $G$,
  \item [(iii)] setting $\Lambda_t = i_t^\sharp(\dot\SSi(F))$ and
    $\Lambda'_t = i_t^\sharp(\dot\SSi(G))$, we have
    $\Lambda_t \cap \Lambda'_t = \emptyset$ for all $t\in I$.
  \end{itemize}
  Then $\RHom(\opb{i_t}F, \opb{i_t}G)$ is independent of $t$.
\end{corollary}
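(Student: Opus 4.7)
The plan is to set $H = \rhom(F, G) \in \Der(\cor_{M \times I})$ and analyze its proper direct image $L = \roim{p} H$ along the projection $p \colon M \times I \to I$; the stalks of $L$ will turn out to compute $\RHom(\opb{i_t} F, \opb{i_t} G)$, and the main point is to show $\dot\SSi(L) = \emptyset$, so that $L$ is locally constant on the simply connected interval $I$ and its stalks are pairwise isomorphic in $\Der(\cor)$.

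The key microlocal step is to bound $\SSi(H)$. By Theorem~\ref{thm:SSrhom}, $\SSi(H) \subset \SSi(F)^a \hplus \SSi(G)$, and I claim this set contains no point of the form $((x,t); (0, \tau))$ with $\tau \neq 0$. Unpacking the definition of $\hplus$ yields sequences $((x_n, s_n); (\alpha_n, \beta_n)) \in \SSi(F)^a$ and $((y_n, t_n); (\gamma_n, \delta_n)) \in \SSi(G)$ whose base points converge to $(x, t)$, with $(\alpha_n + \gamma_n, \beta_n + \delta_n) \to (0, \tau)$ and $|(x_n, s_n) - (y_n, t_n)| \cdot |(\alpha_n, \beta_n)| \to 0$. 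If $(\alpha_n, \beta_n)$ stays bounded, extract a limit $(\alpha_\infty, \beta_\infty)$ to get, by closedness, $((x, t); (-\alpha_\infty, -\beta_\infty)) \in \SSi(F)$ and $((x, t); (-\alpha_\infty, \tau - \beta_\infty)) \in \SSi(G)$; if unbounded, use the $\R^+$-conicity of the microsupports to rescale by $|(\alpha_n, \beta_n)|$ and extract $((x, t); (-\alpha_\infty, -\beta_\infty)) \in \dot\SSi(F) \cap \dot\SSi(G)$ with $(\alpha_\infty, \beta_\infty)$ a unit vector. In each configuration, either the $M$-component $\alpha_\infty$ is zero --- in which case at least one of the surviving $I$-components is non-zero, contradicting hypothesis~(ii) --- or $\alpha_\infty \neq 0$, in which case $(x; -\alpha_\infty)$ lies in $\Lambda_t \cap \Lambda'_t$, contradicting hypothesis~(iii).

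Two consequences follow. First, $i_t$ is non-characteristic for $\SSi(H)$, and combining Theorem~\ref{thm:iminv}, Proposition~\ref{prop:formulaire}(g), hypothesis~(ii), and the invertibility of the relative dualizing complex $\omega_{M|M \times I}$ (a cohomological shift of $\cor_M$), one obtains $\opb{i_t} H \simeq \rhom(\opb{i_t} F, \opb{i_t} G)$. Base change (Proposition~\ref{prop:formulaire}(i)) applied to the Cartesian square formed by $i_t$, $p$, and the inclusion $\{t\} \hookrightarrow I$ then gives $L_t \simeq \rsect(M; \opb{i_t} H) \simeq \RHom(\opb{i_t} F, \opb{i_t} G)$. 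Second, Proposition~\ref{prop:oim} applies thanks to the properness assumption~(i) and yields $\SSi(L) \subset \{(t; \tau) : \exists x \text{ with } ((x, t); (0, \tau)) \in \SSi(H)\}$, which the estimate above forces into the zero section of $T^*I$. By Example~\ref{ex:microsupport}(i) the cohomology sheaves of $L$ are local systems; since $I$ is simply connected they are constant, so the stalks $L_t$ are all isomorphic.

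The main obstacle is the microlocal case analysis inside the $\hplus$ argument: one has to juggle the bounded versus unbounded covector sequences and carefully track when limit components vanish, in order to see that hypotheses~(ii) and~(iii) together suffice to rule out every possible bad limit.
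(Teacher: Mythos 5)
Your proof is correct and follows essentially the same route as the paper: set $H = \rhom(F,G)$, bound $\SSi(H)$, deduce non-characteristicity for $i_t$, apply the compatibility of $\rhom$ with $\opb{i_t}$ via $\epb{i_t}$, then push forward to $I$ and show the result is locally constant. The only cosmetic difference is that the paper first observes $\dot\SSi(F)\cap\dot\SSi(G)=\emptyset$ and invokes the simplification $\hplus=+$ of Example~\ref{ex:hplus_noncar}, whereas you unpack the $\hplus$ definition directly by the bounded/unbounded covector case analysis; both lead to the same microsupport estimate.
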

\begin{proof}
  We   recall that in the
  non-characteristic case we have $i_t^\sharp = {i_t}_d {i_t}_\pi^{-1}$ (see
  Example~\ref{ex:fsharp_nonchar}).  We set $H = \rhom(F,G)$.  Since $\Lambda := \SSi(F)$
  and $\Lambda' :=\SSi(G)$ are non-characteristic for $i_t$ and $\Lambda_t \cap \Lambda'_t
  = \emptyset$, we can see that $\Lambda \cap \Lambda' \subset T^*_{M\times I}(M\times I)$
  and then $\SSi(H) \subset \Lambda^a + \Lambda'$.  We can see moreover that $\Lambda^a +
  \Lambda'$ is also non-characteristic for $i_t$.  Hence Theorem~\ref{thm:iminv} gives
  $\opb{i_t}G \simeq \epb{i_t}G [1]$ and $\opb{i_t}H \simeq \epb{i_t}H [1]$. Then it
  follows from Proposition~\ref{prop:formulaire}-(h) that $\rhom(\opb{i_t}F, \opb{i_t}G)
  \simeq \opb{i_t}H$.  By the base change formula we have $\rsect(M; \opb{i_t}H) \simeq
  (\reim{q}H)_t$, where $q \colon M \times I \to I$ is the projection.  Hence it is enough
  to check that $\reim{q}H$ is locally constant, that is, $\SSi(\reim{q}H) \subset
  T^*_II$.  By Proposition~\ref{prop:oim} this follows from the fact that $\SSi(H)$ is
  non-characteristic for each $i_t$.
\end{proof}

\begin{remark}\label{rem:homFtGt_indpdtt}
  The particular case $F = \cor_{M\times I}$ of
  Corollary~\ref{cor:homFtGt_indpdtt} gives the following. Let
  $G \in \Der(\cor_{M\times I})$.  We assume that $G$ is non-characteristic for
  all maps $i_t \colon M \times \{t\} \to M\times I$, $t\in I$, and that the map
  $ \supp(G) \to I$ is proper. Then the sections $\rsect(M; \opb{i_t}G)$ are
  independent of $t$.
\end{remark}

\subsection{Constructibility}
\label{sec:constructibility}

  We say a few words about the notion of constructibility for sheaves
and its relation with the microsupport.

A stratification $\Sigma = \{\Sigma_i\}_{i\in I}$ of a manifold $M$ is a partition
$M = \bigsqcup_{i \in I} \Sigma_i$ by locally closed subsets such that the closure of
each stratum is a union of strata.  We will always assume that our stratifications
are locally finite (any compact subset meets finitely many strata).  We say that
$F\in \Der(\cor_M)$ is {\em weakly constructible} with respect to $\Sigma$ if
$F|_{\Sigma_i}$ is locally constant for each $i\in I$.  If $\cor$ is a Noetherian
ring, we say that $F$ is constructible with respect to $\Sigma$ if
$F \in \Derb(\cor_M)$, $F$ is weakly constructible and, for each $x\in M$ and
$j\in \Z$, the stalk $H^j(F)_x$ is finitely generated over $\cor$.  We recall an
important property of stratifications introduced by Kashiwara and Schapira because it
helps to bound the microsupport of sheaves.
\begin{definition}[Def.~8.3.19 of~\cite{KS90}]
  \label{def:mustrat}
  A stratification $\Sigma = \{\Sigma_i\}_{i\in I}$ of $M$ satisfies the {\em
    $\mu$-condition} if the strata are locally closed submanifolds and
  $\Lambda_\Sigma \hplus \Lambda_\Sigma = \Lambda_\Sigma$, where
  $\Lambda_\Sigma = \bigsqcup_{i\in I} T^*_{\Sigma_i}M$.
\end{definition}
In a subanalytic framework, the $\mu$-condition implies Whitney's conditions (a) and
(b) for any two strata (see Exercise~VIII.12 of~\cite{KS90}).  In Chapter 8
of~\cite{KS90} it is proved that, if $M$ is real analytic, any stratification by
subanalytic subsets can be refined into a subanalytic stratification satisfying the
$\mu$-condition.  The following proposition appears in the same chapter but the proof
does not require analyticity.

\begin{proposition}[Prop.~8.4.1 in~\cite{KS90}]
  \label{prop:mustratif}
  Let $\Sigma$ be a stratification of $M$ satisfying the $\mu$-condition.  Then
  $F \in \Der(\cor_M)$ is weakly constructible with respect to $\Sigma$ if and
  only if $\SSi(F) \subset \Lambda_\Sigma$.  
\end{proposition}
\begin{proof}
  (i) We first assume that $\SSi(F) \subset \Lambda_\Sigma$.  Let $\Sigma_i$ be a
  stratum and let $U$ be a neighborhood of $\Sigma_i$ such that $\Sigma_i$ is closed
  in $U$.  By Theorem~\ref{thm:SSrhom} we have
  $\SSi(F_{\Sigma_i}) \cap T^*U \subset (\Lambda_\Sigma \hplus T^*_{\Sigma_i}M) \cap
  T^*U$.  Since
  $\Lambda_\Sigma \hplus T^*_{\Sigma_i}M \subset \Lambda_\Sigma \hplus \Lambda_\Sigma
  = \Lambda_\Sigma$ and $F_{\Sigma_i}$ vanishes outside $\Sigma_i$ we deduce
  $\SSi(F_{\Sigma_i}) \cap T^*U \subset T^*_{\Sigma_i}M \cap T^*U$.  It then follows
  from Example~\ref{ex:loccst_submfd} that
  $F|_{\Sigma_i} (= F_{\Sigma_i}|_{\Sigma_i})$ is locally constant.

  \sui(ii) Now we assume that $F$ is weakly constructible with respect to $\Sigma$.
  Since the problem is local on $M$ we can assume that the stratification is finite,
  say $\Sigma = \{\Sigma_1,\dots, \Sigma_N\}$.  We also assume that
  $\dim(\Sigma_i) \geq \dim(\Sigma_{i+1})$. Hence
  $\Sigma^k = \bigsqcup_{i=1}^k \Sigma_i$ is an open subset of $M$.  We prove by
  induction on $k$ that
  $\SSi(F) \cap \pi_M^{-1}(\Sigma^k) \subset \Lambda_\Sigma\cap
  \pi_M^{-1}(\Sigma^k)$.  The case $k=1$ is clear because $F|_{\Sigma_1}$ is locally
  constant.  Let us assume that the claim is proved for $k$.  We have the excision
  distinguished triangle
  $F_{\Sigma^k} \to F|_{\Sigma^{k+1}} \to F_{\Sigma_{k+1}} \to[+1]$ over
  $\Sigma^{k+1}$. We recall that $F_{\Sigma^k} \simeq \eim{j}\oim{j}(F)$, where $j$
  is the inclusion of $\Sigma^k$ in $\Sigma^{k+1}$. By Theorem~\ref{thm:oim_open},
  the induction hypothesis and the $\mu$-condition, we have
  $\SSi(F_{\Sigma^k}) \cap \pi_M^{-1}(\Sigma^{k+1}) \subset \Lambda_\Sigma\cap
  \pi_M^{-1}(\Sigma^{k+1})$.  Since $F|_{\Sigma_{k+1}}$ is locally constant and
  $\Sigma_{k+1}$ is smooth and closed in $\Sigma^{k+1}$ we have
  $\SSi(F_{\Sigma^{k+1}}) \cap \pi_M^{-1}(\Sigma^{k+1}) \subset T^*_{\Sigma^{k+1}}M
  \cap \pi_M^{-1}(\Sigma^{k+1})$.  Now the result follows from the triangular
  inequality for the microsupport.
\end{proof}

\begin{corollary}\label{cor:tensor_prod_construct}
  Let $\Sigma$ be a stratification of $M$ satisfying the $\mu$-condition and let
  $F, G \in \Derb(\cor_M)$ be given. We assume that $F$ and $G$ are weakly
  constructible with respect to $\Sigma$. Then $F\ltens G$ and $\rhom(F,G)$ are
  also weakly constructible with respect to $\Sigma$.
\end{corollary}
\begin{proof}
  This follows from Proposition~\ref{prop:mustratif} and
  Theorem~\ref{thm:SSrhom} (the case of $F\ltens G$ also follows from the fact
  that $\ltens$ commutes with the inverse image and does not need the
  $\mu$-condition).
\end{proof}

\begin{remark}
  \label{rem:tensor_prod_construct}
  (a) If $M$ is of dimension $1$, a (locally finite) stratification $\Sigma$ is
  the data of a discrete set of points, say $\Sigma^0$, and the connected
  components of $M\setminus \Sigma^0$.  Then $F \in \Derb(\cor_M)$ is weakly
  constructible with respect to $\Sigma$ if and only if
  $F|_{M\setminus \Sigma^0}$ is locally constant (indeed the condition of being
  locally constant at the points of $\Sigma^0$ is empty).  Since $\ltens$ and
  $\rhom$ both commute with the restriction to an open subset,
  Corollary~\ref{cor:tensor_prod_construct} is obvious in the case of dimension
  $1$.

  \sui (b) More generally, let us assume that the stratification $\Sigma$
  consists of a discrete set of disjoint closed (not only locally closed)
  submanifolds and the connected components of their complement. We see easily
  that $\Sigma$ satisfies the $\mu$-condition.  Moreover
  Proposition~\ref{prop:mustratif} follows directly from
  Proposition~\ref{prop:iminvproj}.
\end{remark}

If $M$ is a real analytic manifold and $\Lambda \subset \dT^*M$ is an $\rspos$-conic
real analytic Lagrangian submanifold, Corollary~8.3.22 of~\cite{KS90} says that there
exists a stratification $\Sigma$ of $M$ satisfying the $\mu$-condition such that
$\Lambda \subset \Lambda_\Sigma$.  We can deform a $\Cinf$ Lagrangian submanifold
into an analytic one and obtain the next proposition.  We thank Sylvain Courte for
his suggestions about this result.
\begin{proposition}
  \label{prop:deformeversmustrat}
  Let $M$ be a manifold and $\Lambda \subset \dT^*M$ an $\rspos$-conic closed
  Lagrangian submanifold (both $M$ and $\Lambda$ of class $\Cinf$).  Then there
  exists an $\rspos$-homogeneous Hamiltonian isotopy
  $\Psi\cl \dT^*M \times [0,1] \to \dT^*M$ such that $\Psi_0= \id$ and
  $\Psi_1(\Lambda) \subset \Lambda_\Sigma$, for some stratification $\Sigma$ of $M$
  satisfying the $\mu$-condition.  Moreover $\Psi_1(\Lambda)$ can be chosen
  arbitrarily close to $\Lambda$.
\end{proposition}
\begin{proof}
  We reduce to the real analytic case, following the ideas in~\cite{KL00}, and apply
  results of~\cite{KS90}.

  \sui(i) We set $\bar\Lambda = \Lambda/\rspos$ and $S^*M = \dT^*M/\rspos$.  Then
  $\bar\Lambda$ is a Legendrian submanifold of $S^*M$ and the contact version of the
  Weinstein neighborhood theorem gives a contact embedding $j\colon U \to S^*M$ of
  class $\Cinf$, where $U$ is an open neighborhood of $\bar\Lambda$ in
  $J^1(\bar\Lambda) = T^*\bar\Lambda \times\R$.  We denote by $\xi_{\bar\Lambda}$ and
  $\xi_M$ the standard contact structures of $J^1(\bar\Lambda)$ and $S^*M$ and by
  $\alpha_{\bar\Lambda}$ and $\alpha_M$ the standard contact forms.

  We know by~\cite{W36} that there exist real analytic structures on $\bar\Lambda$
  and $M$ compatible with their $\Cinf$ structures.  They induce real analytic
  structures on $J^1(\bar\Lambda)$, $S^*M$ and $\alpha_{\bar\Lambda}$, $\alpha_M$ are
  analytic.  We know also that we can find a $C^r$ map
  $\tilde \jmath \colon U \times [0,1] \to S^*M$, for any integer $r$, such that,
  setting $j^s = \tilde \jmath|_{U\times\{s\}}$, we have: $j^0 = j$, $j^1$ is an
  analytic open embedding and the $j^s$ are as close as required to $j$ in the
  compact open $C^r$-topology.

  On $U$ we thus have a family of contact structures $\xi^s = j^{s*}(\xi_M)$, with
  $\xi^0  = j^*(\xi_M) = \xi_{\bar\Lambda}|_U$ and $\xi^1$ analytic.  For each given
  $s$ we consider a linear interpolation between $\xi^0$ and $\xi^s$:
  $$
  \alpha^s_t = (1-t)\alpha_{\bar\Lambda} + t\,j^{s*}(\alpha_M),
\qquad \xi^s_t = \ker(\alpha^s_t).
$$
We remark that the $\alpha^1_t$ are analytic.  Choosing $j^s$ $C^r$-close to $j$, we
can assume that $j^{s*}(\alpha_M)$ is $C^{r-1}$-close to $\alpha_{\bar\Lambda}$.  By
Gray's theorem, for each $s$ we can find an isotopy $\varphi^s_t$, $t\in [0,1]$, such
that $(\varphi^s_t)^*(\xi^s_t) = \xi^0$ near $\bar\Lambda$ as we recall in~(ii) (see
for example Theorem~2.2.2 of~\cite{G08}).

\sui(ii) For a given $s\in [0,1]$, we define the vector field $X^s_t$, $t\in [0,1]$,
on $U$ uniquely determined by
\begin{equation}\label{eq:deformeversmustrat1}
X^s_t \in \xi^s_t \quad\text{ and }\quad
X^s_t \lrcorner \, d\alpha^s_t =-{\dot\alpha}^s_t|_{\xi^s_t}
\end{equation}
(recall that $d\alpha^s_t |_{\xi^s_t}$ is non-degenerate).  We remark that $X^1_t$ is
analytic.  We also remark that $d\alpha^s_t$ is close to $d\alpha_{\bar\Lambda}$ and
${\dot\alpha}^s_t = j^{s*}(\alpha_M)-\alpha_{\bar\Lambda}$ is close to $0$.  Hence
$X^s_t$ is as close to $0$ as required and the flow $\varphi^s$ of
$\{X^s_t\}_{t\in [0,1]}$ is defined on a set $V \times [0,1]$ where $V$ is some
neighborhood of $\bar\Lambda$ in $U$.  Moreover $\varphi^1$ is analytic.

Now we check that $(\varphi^s_t)^*(\xi^s_t) =\xi^0$, where
$\varphi^s_t = \varphi^s|_{V\times\{t\}}$.  By~\eqref{eq:deformeversmustrat1} we have
$X^s_t \lrcorner d\alpha^s_t+{\dot\alpha}^s_t = f^s_t\alpha^s_t$ for some function
$f^s_t$ and
\begin{align*}
  \frac{d}{dt}((\varphi^s_t)^*(\alpha^s_t)\wedge \alpha_{\bar\Lambda})
  &=(\varphi^s_t)^*(L_{X^s_t}(\alpha^s_t)+ {\dot\alpha}^s_t)\wedge\alpha_{\bar\Lambda}\\
  &=(\varphi^s_t)^*(f^s_t\alpha^s_t)\wedge \alpha_{\bar\Lambda}\\
  &= (f^s_t\circ \varphi^s_t) ((\varphi^s_t)^*(\alpha^s_t)\wedge \alpha_{\bar\Lambda}).
\end{align*}  
Since $(\varphi^s_0)^*(\alpha_{\bar\Lambda})\wedge \alpha_{\bar\Lambda}=0$, we obtain
$(\varphi^s_t)^*(\alpha^s_t)\wedge \alpha_{\bar\Lambda}=0$ for all $t\in[0,1]$, which
implies $(\varphi^s_t)^*(\xi^s_t) =\xi^0$.

\sui(iii) By construction the map $\psi^s = j^s \circ \varphi^s_1 \colon V \to S^*M$
is contact.  Moreover $\psi^1$ is analytic.  Hence
$\bar\Lambda^s = \psi^s(\bar\Lambda)$ defines a Legendrian deformation of
$\bar\Lambda$ with $\bar\Lambda^1$ analytic.  The existence of the required
stratification for $\bar\Lambda^1$ is given by Corollary~8.3.22 of~\cite{KS90}.  Now
a Legendrian deformation can be lifted to an ambient contact isotopy, which is the
same as an $\rspos$-homogeneous Hamiltonian isotopy of $\dT^*M$.
\end{proof}

\section{Sato's microlocalization}
\label{sec:Satomicr}

We quickly review the definition of the specialization and microlocalization functors
as introduced in~\cite{KS90}.  We first recall the notion of normal deformation.  Let
$M$ be a manifold and $N$ a closed submanifold of $N$.  The normal deformation of $N$
in $M$ is a manifold $\widetilde{M}_N$ together with three maps
$$
s \cl T_NM \to \widetilde{M}_N,
\qquad
p \cl \widetilde{M}_N \to M,
\qquad
t \cl \widetilde{M}_N \to \R,
$$
such that
\begin{equation*}
\left\{ \,
\begin{minipage}[c]{10cm}
 $s$ is an embedding and $\im(s) = \opb{t}(0)$,

 \smallskip $p(\im(s)) = N$ and $p\circ s$ is the projection $T_NM \to N$,
 
 \smallskip
 $\opb{p}(M\setminus N ) \simeq (M\setminus N ) \times (\R \setminus \{0\})$,

 \smallskip $\opb{t}(\R \setminus \{0\}) \simeq M \times (\R \setminus \{0\})$,

 \smallskip $p|_{\opb{t}(u)} \cl \opb{t}(u) \to M$ is a diffeomorphism, for all
 $u\not= 0$.
\end{minipage}
\right.
\end{equation*}
We   can define $\widetilde{M}_N$ as an open subset of
some blow-up as follows.  The blow-up $B_{N \times \{0\}}(M\times\R)$ is set
theoretically the union of $U = (M\times \R) \setminus (N \times \{0\})$ and
$P(T_NM \times \R)$, the projectivization of the vector bundle
$T_NM \times \R \to N$.  We consider $(M \setminus N) \times \{0\}$ as a subset of
$U$ and $P(T_NM \times \{0\})$ as a subset of $P(T_NM \times \R)$. We set
$$
\widetilde{M}_N = B_{N \times \{0\}}(M\times\R) \setminus
( ((M \setminus N) \times \{0\}) \cup P(T_NM \times \{0\}) ) .
$$
Now $B_{N \times \{0\}}(M\times\R)$ comes with a map to $M\times \R$ and this map
induces the maps $p$, $t$.  The difference
$P(T_NM \times \R) \setminus P(T_NM \times \{0\})$ is identified with $T_NM$ and this
gives $s$.

We set $\Omega = \opb{t}(]0,+\infty[)$.  We let $j\colon \Omega \to \widetilde{M}_N$
be the inclusion and set $p_+ = p \circ j$.  We have the following formula for the
cone of a subset $A \subset M$ along $N$: $C_N(A) = \opb{s}(\ol{\opb{p_+}(A)})$.  The
sheaf counterpart of the cone construction is the following specialization functor.

\begin{definition}{\rm (See~\cite[Def.~4.2.2]{KS90}.)}
\label{def:specialtion}
With the above notations we define the functor $\nu_N \cl \Der(\cor_M) \to
\Der(\cor_{T_NM})$ by $\nu_N(F) = \opb{s} \roim{j} \opb{p_+}(F)$.
\end{definition}

The sheaf $\nu_N(F)$ is {\em conic}, that is, invariant by the multiplicative
action of $\rspos$ on the fibers of $T_NM$. We can deduce a sheaf over $T^*_NM$
using the {\em Fourier-Sato transform} defined as follows.
\begin{definition}{\rm (See~\cite[Def.~3.7.8]{KS90}.)}
\label{def:FourierSato}
Let $q_i$ be the $i^{th}$ projection from $T_NM \times_N T^*_NM$ and let $P
\subset T_NM \times_N T^*_NM$ be the subset $P = \{(\nu,\xi)$; $\langle \nu, \xi
\rangle \leq 0\}$.  For $F \in \Der(\cor_{T_NM})$ we define $F^\wedge \in
\Der(\cor_{T^*_NM})$ by $F^\wedge = \reim{q_2}(\opb{q_1}F \tens \cor_P)$.
\end{definition}

In~\cite{KS90} the Fourier-Sato transform is actually defined for general vector
bundles. It is proved that it gives an equivalence between conic sheaves on a
vector bundle and conic sheaves on its dual.

\begin{definition}{\rm (See~\cite[Def.~4.3.1]{KS90}.)}
\label{def:microlocalization}
The microlocalization functor $\mu_N \cl \Der(\cor_M) \to \Der(\cor_{T^*_NM})$
is defined by $\mu_N(F) = (\nu_N(F))^\wedge$.
\end{definition}

If $V \subset T^*_NM$ is a convex open cone, we have, using
notation~\eqref{eq:def_HiZ},  
$$
H^i(V; \mu_N(F)) \simeq \varinjlim_{U,Z} H^i_{Z\cap U} (U;F),
$$
where $U$ runs over the open subsets of $M$ containing $\pi_M(V)$ and $Z$ over
the closed subsets of $M$ such that $C_N(Z) \subset V^\circ$ (recall that
$V^\circ$ is the polar cone of $V$).

In~\cite{KS90} we also find a generalization of Sato's microlocalization which
will be important when we consider the Kashiwara-Schapira stack.  Let $\Delta_M$
be the diagonal of $M\times M$. Let $q_1,q_2\cl M\times M \to M$ be the
projections.  We identify $T^*_{\Delta_M}(M\times M)$ with $T^*M$ through the
first projection. 
\begin{definition}{\rm (See~\cite[Def.~4.4.1]{KS90}.)}
\label{def:muhom}
For $F,G\in \Der(\cor_M)$ we define $\muhom (F,G) \in \Der(\cor_{T^*M})$ by
\begin{equation}\label{eq:def_muhom}
\muhom (F,G) = \mu_{\Delta_M}(\rhom(\opb{q_2}F,\epb{q_1}G)) .
\end{equation}
\end{definition}

For a submanifold $N$ of $M$ and $i\colon T^*_NM \to T^*M$ the inclusion, we have
$\oim{i}\mu_N(G) \simeq \muhom(\cor_N,G)$, for any $G\in \Der(\cor_M)$. The functor
$\muhom$ is a refinement of the functor $\rhom$ in view of the following properties:
\begin{align}
\label{eq:proj_muhom_oim}
\roim{\pi_M} \muhom (F,G) &\simeq \rhom (F,G)  , \\
\label{eq:proj_muhom}
\reim{\pi_M} \muhom (F,G) &\simeq 
\opb{\delta_M} \rhom(\opb{q_2}F,\opb{q_1}G) ,
\end{align}
where $\delta_M\cl M \to M\times M$ is the diagonal embedding.  For a conic
sheaf $H$ on $T^*M$ (or on any vector bundle) we have a natural isomorphism
$\reim{\pi_M}(H) \simeq \roim{\pi_M}\rsect_M(H)$ (where $M$ is here the zero
section of $T^*M$) and the natural morphism
$\reim{\pi_M}(H) \to \roim{\pi_M}(H)$ coincides with the morphism deduced from
$\rsect_M(H) \to H$.  The excision distinguished triangle associated with the
inclusion $M \subset T^*M$ then gives a version of {\em Sato's distinguished
  triangle}:
\begin{equation}\label{eq:SatoDTmuhom0}
  \begin{split}
  \opb{\delta_M} \rhom(\opb{q_2}F & , \opb{q_1}G) \to \rhom (F,G) \\
&\to \roim{\dot\pi_M{}} (\muhom (F,G) |_{\dT^*M}) \to[+1].
\end{split}
\end{equation}
If $F$ is cohomologically constructible, then the first term
of~\eqref{eq:SatoDTmuhom0} is isomorphic to $\DD'(F) \ltens G$ by
Theorem~\ref{thm:SSrhom} and we obtain
\begin{equation}\label{eq:SatoDTmuhom1}
\DD'(F) \ltens G  \to \rhom (F,G) 
\to \roim{\dot\pi_M{}} (\muhom (F,G) |_{\dT^*M}) \to[+1].
\end{equation}

\begin{proposition}{\rm(Cor.~6.4.3 of~\cite{KS90}.)}
\label{prop:SSmuhom}
Let $F,G\in \Der(\cor_M)$. Then
\begin{gather}
\label{eq:suppmuhom}
\supp(\muhom (F,G)) \subset \SSi(F)\cap \SSi(G), \\
\label{eq:SSmuhom}
\SSi(\muhom(F,G) ) \subset (H^{-1}(C(\SSi(G) , \SSi(F) )))^a ,
\end{gather}
where $H$ is the Hamiltonian isomorphism.
\end{proposition}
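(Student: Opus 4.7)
The plan is to unwind the definition $\muhom(F,G) = \mu_{\Delta_M}(H)$ with $H = \rhom(\opb{q_2}F, \epb{q_1}G)$, first bound $\SSi(H)$ by combining Theorems~\ref{thm:iminv} and~\ref{thm:SSrhom}, and then feed this bound into general estimates on the support and microsupport of the microlocalization functor $\mu_N$ along a submanifold.

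\textbf{Bounding $\SSi(H)$.} Since the projections $q_i\colon M\times M\to M$ are smooth, Theorem~\ref{thm:iminv} gives $\SSi(\opb{q_2}F) = T^*_MM\times \SSi(F)$ and $\SSi(\epb{q_1}G) = \SSi(G)\times T^*_MM$ (the latter coincides with $\SSi(\opb{q_1}G)$ up to the locally constant shift $\omega_{M\times M|M}$). Their intersection is contained in the zero section, so by Example~\ref{ex:hplus_noncar} the $\hplus$-operation reduces to an ordinary sum and Theorem~\ref{thm:SSrhom} yields
\begin{equation*}
\SSi(H)\subset\{((x,y);\,\xi,-\eta)\,:\,(x;\xi)\in\SSi(G),\ (y;\eta)\in\SSi(F)\}.
\end{equation*}

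\textbf{Support inclusion.} I will use the general estimate $\supp(\mu_N\Phi)\subset T^*_N X\cap\SSi(\Phi)$, which follows from the limit formula $H^i(V;\mu_N\Phi)\simeq\varinjlim_{U,Z} H^i_{Z\cap U}(U;\Phi)$ recalled in the excerpt together with the characterization of $\SSi(\Phi)^c$ by vanishing of local cohomology in directions where $d\phi \notin \SSi(\Phi)$. Applied with $N=\Delta_M$ and $\Phi=H$, a point of $T^*_{\Delta_M}(M\times M)$ has the form $((x,x);\xi,-\xi)$, and by the bound above it lies in $\SSi(H)$ if and only if $(x;\xi)\in\SSi(F)\cap\SSi(G)$. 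Through the identification $T^*_{\Delta_M}(M\times M)\simeq T^*M$ via the first projection this gives exactly~\eqref{eq:suppmuhom}.

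\textbf{Microsupport inclusion and main obstacle.} The crucial input for~\eqref{eq:SSmuhom} is a general microsupport bound of the form $\SSi(\mu_N\Phi)\subset (H^{-1}(C_{T^*_NX}(\SSi(\Phi))))^a$, where the Hamiltonian isomorphism $H$ is used to identify $T_{T^*_NX}(T^*X)\simeq T^*(T^*_NX)$. Granting this, I plug in $\Phi=H=\rhom(\opb{q_2}F,\epb{q_1}G)$ and apply the bound on $\SSi(H)$: the normal cone along $T^*_{\Delta_M}(M\times M)=\{((x,x);\xi,-\xi)\}$ of the set $\{((x,y);\xi,-\eta):(x;\xi)\in\SSi(G),(y;\eta)\in\SSi(F)\}$ is, under the identification $T^*_{\Delta_M}(M\times M)\simeq T^*M$ via the first projection, exactly $C_{\Delta_{T^*M}}(\SSi(G)\times\SSi(F))=C(\SSi(G),\SSi(F))\subset T(T^*M)$. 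Transporting through $H^{-1}$ and taking the antipodal produces $(H^{-1}(C(\SSi(G),\SSi(F))))^a$, as claimed. The hard part is the general microsupport bound for $\mu_N$ itself: it rests on analogous bounds for the specialization functor $\nu_N$ (obtained by applying the microsupport estimates of Proposition~\ref{prop:oim} and Theorem~\ref{thm:iminv} to the maps $s,\,p,\,j$ of the deformation $\widetilde{M}_N$) combined with the compatibility of the Fourier--Sato transform with microsupport on the dual conic bundles. Carefully tracking the two independent sign conventions --- the antipodal built into the identification $T^*_{\Delta_M}(M\times M)\simeq T^*M$ and the sign in the Hamiltonian isomorphism --- so that the output matches the stated $(\cdot)^a$ is the most delicate bookkeeping step.
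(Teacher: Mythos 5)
Your proposal is correct and follows essentially the same route as the cited reference (the paper itself gives no proof — it simply quotes Cor.~6.4.3 of~\cite{KS90}): bound $\SSi(\rhom(\opb{q_2}F,\epb{q_1}G))$ using Theorems~\ref{thm:iminv} and~\ref{thm:SSrhom} together with Example~\ref{ex:hplus_noncar}, then feed this into the general support and microsupport estimates for $\mu_N$ (Prop.~6.4.2 of~\cite{KS90}), with the Hamiltonian isomorphism arising from the identification $T_{T^*_N X}T^*X\simeq T^*(T^*_NX)$. One small overclaim: you only have a one-way inclusion $\SSi(H)\subset\Sigma$, so the sentence asserting that $((x,x);\xi,-\xi)$ lies in $\SSi(H)$ \emph{if and only if} $(x;\xi)\in\SSi(F)\cap\SSi(G)$ should read ``only if''; this is all the containment~\eqref{eq:suppmuhom} needs, and the rest of your argument only uses that direction.
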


When $F=G$, the inclusion~\eqref{eq:suppmuhom} is an equality. More precisely,
by~\eqref{eq:proj_muhom_oim}, $\id_F \in \Hom(F,F)$ gives a global section of
$\muhom$, say
\begin{equation}
\label{eq:def_idmuF}
\id^\mu_F \in H^0(T^*M; \muhom(F,F)) 
\end{equation}
and~\cite[Cor.~6.1.3]{KS90} says that  
\begin{equation}
\label{eq:suppmuhom=SS}
\supp(\id^\mu_F) = \supp \muhom (F,F) = \SSi(F).   
\end{equation}
An important consequence of~\eqref{eq:SSmuhom} and~\eqref{eq:suppmuhom=SS} is
the following involutivity theorem.

\begin{theorem}[Thm.~6.5.4 of~\cite{KS90}]
\label{thm:invol}
Let $M$ be a manifold and $F\in \Der(\cor_M)$.  Then $S = \SSi(F)$ is a
coisotropic subset of $T^*M$ in the sense that $C_p(S)$ contains the symplectic
orthogonal of $C_p(S,S)$, for all $p\in S$.
\end{theorem}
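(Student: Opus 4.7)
The plan is to apply the general microlocal machinery---particularly the microsupport bound~\eqref{eq:SSmuhom} and the support identity~\eqref{eq:suppmuhom=SS}---to the sheaf $G := \muhom(F,F)$, viewed as a sheaf on the symplectic manifold $T^*M$, and then to convert the resulting inclusion of cones into the coisotropy condition via the Hamiltonian isomorphism $H$.

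First I record two facts about $G$: by~\eqref{eq:suppmuhom=SS}, $\supp(G) = S := \SSi(F)$; and by~\eqref{eq:SSmuhom}, $\SSi(G) \subset (H^{-1}(C(S,S)))^a$ inside $T^*T^*M$.  Next I establish the bridging lemma that for any $\mathcal{G} \in \Der(\cor_X)$ with support $A = \supp(\mathcal{G})$ and any $p \in A$, the polar cone $C_p(A)^\circ \subset T^*_p X$ is contained in $\SSi(\mathcal{G}) \cap T^*_p X$.  The proof is a direct application of Definition~\ref{def:SSF}: for $\xi_0$ in the interior of $C_p(A)^\circ$, the linear function $\phi(x) = \langle \xi_0, x - p \rangle$ satisfies $\phi \geq 0$ on $A$ near $p$ (else some Bouligand tangent direction would violate $\xi_0 \in C_p(A)^\circ$), so $\mathcal{G}$ vanishes on $\{\phi < 0\}$ near $p$ and $(\rsect_{\{\phi \geq 0\}}\mathcal{G})_p \simeq \mathcal{G}_p \neq 0$, forcing $(p,\xi_0) \in \SSi(\mathcal{G})$; closure of $\SSi(\mathcal{G})$ then extends to all of $C_p(A)^\circ$.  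Applying the lemma with $\mathcal{G} = G$ and combining with the two facts above, at each $p \in S$ I obtain
\[
C_p(S)^\circ \subset (H^{-1}(C_p(S,S)))^a = H^{-1}(C_p(S,S)),
\]
the equality using the $\pm$-symmetry of the paratangent cone $C_p(S,S)$; equivalently $H(C_p(S)^\circ) \subset C_p(S,S)$ inside $T_p T^*M$.

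Finally I take symplectic orthogonals to extract coisotropy.  The identity $V^\perp = H(V^\circ)$ for a $\pm$-symmetric cone $V$ (a direct consequence of the defining property of $H$, namely $\omega(H(\alpha),\cdot) = \alpha$) gives
\[
C_p(S,S)^\perp \subset H(C_p(S)^\circ)^\perp = (C_p(S)^\circ)^\circ,
\]
and the bipolar $(C_p(S)^\circ)^\circ$ is the closed convex conic hull of $C_p(S)$.  The main obstacle I anticipate is exactly this last step: one must argue that the linear subspace $C_p(S,S)^\perp$ actually lies in $C_p(S)$, not merely in its convex conic hull.  I expect this to require either a refinement of the bridging lemma producing a bound directly in terms of $C_p(S,S)^\circ$ rather than $C_p(S)^\circ$, or an argument exploiting the special structure of the Bouligand cone of a microsupport (which in particular is $\R^+$-conic along the fibers of $\pi_M$ and obeys the triangle inequality for distinguished triangles).
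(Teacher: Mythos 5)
You identify the right ingredients---applying \eqref{eq:SSmuhom} and \eqref{eq:suppmuhom=SS} to $\muhom(F,F)$ is exactly what the paper points to---and your bridging lemma is essentially correct, provided it is run on the section $\id^\mu_F$ of $H^0\muhom(F,F)$ rather than on the complex itself. The point is that \eqref{eq:suppmuhom=SS} asserts that the \emph{germ of $\id^\mu_F$ is nonzero} at every point of $S=\SSi(F)$; for a mere complex $G$, $p\in\supp(G)$ does not guarantee $G_p\neq 0$ (take $\cor_{\mo]0,+\infty\mc[}$ at $0$), and your argument uses $(\rsect_{\{\phi\geq 0\}}G)_p\simeq G_p\neq 0$. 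Also, when $\xi_0$ is only in the relative interior of $C_p(A)^\circ$ the separation step needs the closedness of the microsupport to extend to all of $C_p(A)^\circ$; you say this, and it is fine. Combining with \eqref{eq:SSmuhom} one does get $H(C_p(S)^\circ)\subset C_p(S,S)$.

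The last step is where the plan genuinely fails, and you have put your finger on exactly the right spot. Dualizing gives $C_p(S,S)^\perp\subset H(C_p(S)^\circ)^\perp$, and the right-hand side is the annihilator of $C_p(S)^\circ$ (your identity $V^\perp=H(V^\circ)$ is being applied to $V=H(C_p(S)^\circ)$, which is not $\pm$-symmetric, so what you actually land in is the largest linear subspace of the bipolar $(C_p(S)^\circ)^\circ$, not the bipolar). Either way, the only thing the chain sees is the closed convex conic hull of $C_p(S)$, because $C_p(S)^\circ$ by definition cannot distinguish $C_p(S)$ from its convex hull; and there is no formal reason for the largest linear subspace of that hull to lie inside the non-convex Bouligand cone $C_p(S)$ (three rays in general position in $\R^2$ convex-generate all of $\R^2$ yet contain no line). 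So the argument as written proves a strictly weaker statement. Closing it requires an input that sees more than the polar cone of the support---Kashiwara--Schapira do this with a separate deformation/propagation argument that targets $C_p(S)$ directly---and the paper itself just cites Thm.~6.5.4 of \cite{KS90} without reproducing that step, so there is nothing in the paper to reach for either. Your plan correctly locates the obstruction but does not yet supply the missing idea.
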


When $\Lambda \subset T^*M$ is a Lagrangian submanifold we have
$H^{-1}(T\Lambda) = T^*_\Lambda T^*M$.  Hence~\eqref{eq:suppmuhom},
\eqref{eq:SSmuhom} and Example~\ref{ex:loccst_submfd} give the following result.

\begin{corollary}
\label{cor:muhom_loccst}
Let $\Lambda$ be a conic Lagrangian submanifold of $\dT^*M$.  Let
$F,G\in \Der(\cor_M)$. We assume that there exists a neighborhood $\Omega$ of
$\Lambda$ such that $\SSi(F)\cap \Omega \subset \Lambda$ and
$\SSi(G)\cap \Omega \subset \Lambda$.  Then $\muhom(F,G)|_\Omega$ is supported
on $\Lambda$ and is locally constant on $\Lambda$.
\end{corollary}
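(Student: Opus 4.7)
The corollary is essentially a direct combination of the three ingredients cited in the paragraph preceding it, so the plan is to apply them in order. The support claim is the easy half: by \eqref{eq:suppmuhom} we have $\supp(\muhom(F,G)) \subset \SSi(F) \cap \SSi(G)$, and restricting to $\Omega$ and using the hypotheses $\SSi(F) \cap \Omega \subset \Lambda$ and $\SSi(G) \cap \Omega \subset \Lambda$ gives $\supp(\muhom(F,G)|_\Omega) \subset \Lambda$.

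For local constancy on $\Lambda$, the plan is to show that $\SSi(\muhom(F,G)|_\Omega)$ is contained in $T^*_\Lambda T^*M$ (possibly after shrinking $\Omega$) and then invoke Example \ref{ex:loccst_submfd} with the inclusion $\Lambda \hookrightarrow T^*M$ (restricted to $\Omega$). The key estimate comes from \eqref{eq:SSmuhom}:
\[
\SSi(\muhom(F,G)) \subset \bigl(H^{-1}(C(\SSi(G), \SSi(F)))\bigr)^a.
\]
Near $\Lambda$, both $\SSi(F)$ and $\SSi(G)$ are contained in the smooth submanifold $\Lambda$, so by the very definition of the normal cone (see \eqref{eq:form_cone2}) the secants considered lie in $\Lambda$, and their limits at a point $p \in \Lambda$ land in $T_p \Lambda$; hence $C(\SSi(G), \SSi(F)) \subset T\Lambda$ in a neighborhood of $\Lambda$.

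The next step uses the Lagrangian hypothesis: since $\Lambda$ is Lagrangian in $T^*M$, the Hamiltonian isomorphism $H$ identifies $T^*_\Lambda T^*M$ with $T\Lambda$, so $H^{-1}(T\Lambda) = T^*_\Lambda T^*M$, which moreover is stable under the antipodal map. Combining, we get $\SSi(\muhom(F,G)|_\Omega) \subset T^*_\Lambda T^*M$ after possibly shrinking $\Omega$ to a tubular neighborhood of $\Lambda$. Finally, Example \ref{ex:loccst_submfd} applied to the closed embedding $\Lambda \hookrightarrow \Omega$ shows that $\muhom(F,G)|_\Omega$ is the direct image of a locally constant sheaf on $\Lambda$, which is what is meant by ``locally constant on $\Lambda$''.

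The only mildly delicate point in this plan is the cone computation $C(\SSi(G),\SSi(F)) \subset T\Lambda$ near $\Lambda$; it is really a statement about smooth submanifolds (closed subsets of a smooth $\Lambda$ have their normal cone along the diagonal contained in $T\Lambda$), and one should be careful to shrink $\Omega$ so that the hypotheses force the relevant secants to remain in $\Lambda$. Once this is in place, everything else is formal from the cited facts.
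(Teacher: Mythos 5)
Your proof is correct and takes essentially the same approach as the paper, which simply points the reader at the three ingredients you have unpacked: the support bound~\eqref{eq:suppmuhom}, the microsupport bound~\eqref{eq:SSmuhom} combined with $H^{-1}(T\Lambda) = T^*_\Lambda T^*M$ for $\Lambda$ Lagrangian, and Example~\ref{ex:loccst_submfd}. The only point worth phrasing more carefully is that the normal cone $C(\SSi(G),\SSi(F))$ is a local notion in $T^*M$: at each $p\in\Lambda\subset\Omega$, the closed sets $\SSi(F),\SSi(G)$ are contained in $\Lambda$ on some neighborhood of $p$ inside $\Omega$, so $C_p(\SSi(G),\SSi(F))\subset C_p(\Lambda,\Lambda)=T_p\Lambda$, and away from $\Lambda$ the sheaf already vanishes by the support bound; no further shrinking of $\Omega$ is needed beyond ensuring $\Lambda$ is closed in $\Omega$ (e.g.\ a tubular neighborhood) so that Example~\ref{ex:loccst_submfd} applies as stated.
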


By the following result we can see $\muhom$ as a microlocal version of
$\rhom$. Let $p\in T^*M$ be a given point. By the triangular inequality the full
subcategory $N_p$ of $\Der(\cor_M)$ formed by the $F$ such that $p\not\in
\SSi(F)$ is triangulated and we can set $\Der(\cor_M;p) = \Der(\cor_M)/N_p$
(see the more general Definition~6.1.1 of~\cite{KS90}).  The functor $\mu
hom(\cdot,\cdot)_p$ induces a bifunctor on $\Der(\cor_M;p)$ and we have

\begin{theorem}[Theorem~6.1.2 of~\cite{KS90}]
\label{thm:germemuhom}
  For all $F,G \in \Der(\cor_M)$, the morphism $\Hom_{\Der(\cor_M;p)}(F,G) \to
  H^0(\muhom(F,G))_p$ is an isomorphism.
\end{theorem}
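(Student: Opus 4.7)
The plan is to construct the natural comparison map from standard $\rhom$-data, verify that it descends to the localization using the support estimate~\eqref{eq:suppmuhom}, and then prove bijectivity through the direct-limit description of sections of $\mu_N$ recalled after Definition~\ref{def:microlocalization}.

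First I would build the map. From~\eqref{eq:proj_muhom_oim}, $\roim{\pi_M}\muhom(F,G) \simeq \rhom(F,G)$, so applying $\rsect$ and taking $H^0$ yields $\Hom(F,G) \simeq H^0(T^*M; \muhom(F,G))$, and composing with the germ morphism at $p$ gives $\rho \colon \Hom(F,G) \to H^0(\muhom(F,G))_p$.

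Next, $\rho$ descends to the localization. If $u \colon F' \to F$ has cone $H$ with $p \notin \SSi(H)$, applying $\muhom(-,G)$ to the triangle $F' \to F \to H \to[+1]$ produces a triangle in which the third term satisfies $\supp \muhom(H,G) \subset \SSi(H)$ by~\eqref{eq:suppmuhom}, so $\muhom(H,G)_p = 0$ and $\muhom(F,G)_p \isoto \muhom(F',G)_p$. The analogous argument in the second variable then shows that $\rho$ factors through a map $\bar\rho \colon \Hom_{\Der(\cor_M;p)}(F,G) \to H^0(\muhom(F,G))_p$.

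To prove $\bar\rho$ is bijective, I would exploit the formula
\[
H^0(V; \muhom(F,G)) \simeq \varinjlim_{U,Z} H^0_{Z\cap U}\bigl(U; \rhom(\opb{q_2}F, \epb{q_1}G)\bigr),
\]
obtained by applying the description following Definition~\ref{def:microlocalization} to $\muhom(F,G) = \mu_{\Delta_M}(\rhom(\opb{q_2}F, \epb{q_1}G))$, with $V$ a conic neighborhood of $p$, $U$ running over open neighborhoods of $\pi_M(V)$ in $M^2$, and $Z$ over closed subsets with $C_{\Delta_M}(Z) \subset V^\circ$. Such a local cohomology class corresponds by adjunction to a morphism $(\opb{q_2}F)_{Z \cap U} \to \epb{q_1}G|_U$ on $M^2$; pulling back along $\delta_M$ and using Proposition~\ref{prop:formulaire}-(g,i) together with Theorem~\ref{thm:SSrhom} turns it into a roof $F \xleftarrow{s} F' \to G$ in $\Der(\cor_M)$. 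The condition $C_{\Delta_M}(Z) \subset V^\circ$ ensures, via the estimates of Theorem~\ref{thm:SSrhom} applied to $\cor_{U \setminus Z}$ on $M^2$, that the cone of $s$ has microsupport avoiding $p$, so the roof represents a class in $\Hom_{\Der(\cor_M;p)}(F,G)$, producing the inverse of $\bar\rho$.

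The main obstacle is this last step: translating abstract sections of $\muhom(F,G)$ into honest roofs in the localized category, and checking that this assignment is well-defined on colimit classes. Concretely, one must verify (i) that the microsupport of the cone of $s$ truly misses $p$ as a consequence of the geometric condition $C_{\Delta_M}(Z) \subset V^\circ$ (this is where the definition of $C_N$ in~\eqref{eq:form_cone1} and the polar-cone computation of Example~\ref{ex:microsupport}-(iv) are used), and (ii) that refining the pair $(U,Z)$ in the direct system yields an equivalent roof, which is a cofinality check using excision and the triangular inequality for microsupport. Once these geometric compatibilities are in place, the two vanishing statements (injectivity from $\bar\rho(\phi) = 0$ implies a witnessing roof killing $\phi$; surjectivity from the inverse construction above) follow directly.
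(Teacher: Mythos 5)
This theorem is a citation: the paper does not prove it, it quotes Theorem~6.1.2 of Kashiwara--Schapira, so the comparison is really against the argument in~\cite{KS90}, whose engine is the refined cut-off lemma (Lemma~6.1.5 there, recalled here as Proposition~\ref{prop:cut-off2}) rather than the raw direct-limit formula you invoke. Your outline — build the map via~\eqref{eq:proj_muhom_oim} and the germ morphism, descend to $\Der(\cor_M;p)$ using the support estimate~\eqref{eq:suppmuhom}, then prove bijectivity by unwinding the colimit description of $\mu_N$ — is the right shape, and the two obstacles you list at the end are genuinely the content of the theorem.

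However, there is a step that would fail as written: ``pulling back along $\delta_M$''. From a morphism $(\opb{q_2}F)_{Z\cap U} \to \epb{q_1}G$ on $M^2$, applying $\opb{\delta_M}$ does \emph{not} land you on $G$: the target becomes $\opb{\delta_M}\epb{q_1}G \simeq G \otimes \omega_M$ (since $\epb{q_1}G \simeq \opb{q_1}G \otimes(\cor_M\boxtimes\omega_M)$), so you pick up an unwanted $\omega_M$-twist and never obtain a roof $F \xleftarrow{s} F' \to G$. Using $\epb{\delta_M}$ instead removes the twist on the right-hand side via Proposition~\ref{prop:formulaire}-(g), but then $\epb{\delta_M}$ of the left-hand side is hard to control. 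The correct move is to bypass $\delta_M$ entirely and use the adjunction $(\reim{q_1},\epb{q_1})$: the datum $(\opb{q_2}F)_{Z\cap U}\to \epb{q_1}G$ is equivalent to $F'\eqdot\reim{q_1}\bigl((\opb{q_2}F)_{Z\cap U}\bigr)\to G$, which is exactly the right arm of the roof, and the left arm $s\colon F'\to F$ comes from the restriction $\cor_{Z\cap U}\to\cor_{\Delta_M\cap U}$ (arranged so that $\Delta_M\cap U\subset Z$). In other words $F'$ is a kernel-style convolution of $F$ with $\cor_{Z\cap U}$; this is precisely the structure that the cut-off operators $P_\gamma$, $Q_\gamma$ package cleanly. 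Your remaining obstacles (i) and (ii) then become the statements of Propositions~\ref{prop:cut-off1}--\ref{prop:cut-off2} and a cofinality check, which is how~\cite{KS90} actually proceeds. So: right strategy, but the construction of the roof needs the $\reim{q_1}$-adjunction, not a pullback along the diagonal.
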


We also give the following useful consequence of Theorem~\ref{thm:invol}.
\begin{corollary}
\label{cor:micsuppLagrlisse}
Let $\Lambda$ be a conic connected Lagrangian submanifold of $\dT^*M$.  Let $F
\in \Der(\cor_M)$ be such that $\dot\SSi(F) \not= \emptyset$ and $\dot\SSi(F)
\subset \Lambda$.  Then $\dot\SSi(F) = \Lambda$.
\end{corollary}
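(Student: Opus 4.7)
The plan is to show that $\dot\SSi(F)$ is both open and closed in $\Lambda$; the connectedness of $\Lambda$ together with the assumption $\dot\SSi(F) \neq \emptyset$ will then force $\dot\SSi(F) = \Lambda$. Closedness in $\Lambda$ is immediate, since $\SSi(F)$ is closed in $T^*M$ by definition, so $\dot\SSi(F) = \SSi(F) \cap \dT^*M$ is closed in $\dT^*M$ and a fortiori in $\Lambda$.

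To prove openness, I would invoke the microlocal identity section $\id^\mu_F$. Take $\Omega = \dT^*M$, which is an open conic neighborhood of $\Lambda$ in $T^*M$; by hypothesis $\SSi(F) \cap \Omega = \dot\SSi(F) \subset \Lambda$, so Corollary~\ref{cor:muhom_loccst}, applied with $G = F$, shows that $\muhom(F,F)|_\Omega$ is supported on $\Lambda$ and locally constant on $\Lambda$. Combining~\eqref{eq:def_idmuF} and~\eqref{eq:suppmuhom=SS}, the canonical global section $\id^\mu_F \in H^0(T^*M; \muhom(F,F))$ satisfies $\supp(\id^\mu_F) = \SSi(F)$.

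Restricted to $\Lambda$, $\id^\mu_F$ is therefore a global section of a locally constant sheaf. On any connected open subset of $\Lambda$ trivializing this local system the section is constant, so it either vanishes identically or never vanishes. Consequently $\supp(\id^\mu_F) \cap \Lambda = \dot\SSi(F)$ is open in $\Lambda$; combined with closedness this gives the clopen conclusion and finishes the proof. The argument is essentially formal once Corollary~\ref{cor:muhom_loccst} is in hand: the substantive input is the local constancy of $\muhom(F,F)$ along $\Lambda$, and no single step should be hard.
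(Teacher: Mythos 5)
Your proof is correct, but it proceeds along a genuinely different route than the paper's. The paper argues by contradiction via the involutivity theorem~\ref{thm:invol}: assuming $U = \Lambda \setminus \dot\SSi(F)$ is nonempty, it produces a boundary point $p$ of $U$ at which the tangent cone $C_p(\SSi(F))$ is trapped in a closed half-space of $T_p\Lambda$; combined with $C_p(\SSi(F),\SSi(F)) \subset T_p\Lambda$ and the Lagrangian condition, this violates coisotropy of $\SSi(F)$ at $p$. You instead argue directly that $\dot\SSi(F)$ is clopen in $\Lambda$, using $\supp(\id^\mu_F) = \SSi(F)$ (equation~\eqref{eq:suppmuhom=SS}) and the local constancy of $\muhom(F,F)$ along $\Lambda$ (Corollary~\ref{cor:muhom_loccst}). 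The two approaches share the same deep inputs — the bound~\eqref{eq:SSmuhom} on $\SSi(\muhom)$ and the identity~\eqref{eq:suppmuhom=SS} are precisely what the paper uses to deduce Theorem~\ref{thm:invol} — but your argument bypasses involutivity entirely and is, for a smooth Lagrangian $\Lambda$, arguably the more economical path. The paper's route has the advantage of exhibiting the result explicitly as a corollary of the general coisotropy theorem, which is how it is framed in the surrounding text, whereas yours exploits the Lagrangian rigidity at the level of $\muhom$ more directly. One small point worth making explicit in your write-up: since a submanifold is only locally closed, the openness argument should be run locally — near each $p \in \dot\SSi(F)$ pick a neighborhood $V$ in $\dT^*M$ with $V\cap\Lambda$ contractible and closed in $V$, so that $\muhom(F,F)|_V$ is a direct image of a constant complex on $V\cap\Lambda$ and the clopen dichotomy for $\supp(\id^\mu_F|_V)$ applies. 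This is implicit in what you wrote but deserves the sentence.
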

\begin{proof}
  Arguing by contradiction we assume that $U = \Lambda \setminus \dot\SSi(F)$
  is non empty.  The set $U$ is open in $\Lambda$ with a non empty boundary
  $\partial U$. We choose a chart $V$ in $\Lambda$ around a point of $\partial
  U$ and we choose a point $p_0 \in V \cap U$.  Let $B$ be the open ball in $V$
  with center $p_0$ and maximal radius such that $B \cap \dot\SSi(F) =
  \emptyset$. Then $\partial B \cap \dot\SSi(F)$ is non empty and we let $p$ be
  any of its points.

  Since $\dot\SSi(F) \subset \Lambda$ and $\Lambda$ is smooth, we have
  $C_p(\SSi(F), \SSi(F)) \subset C_p(\Lambda, \Lambda) = T_p\Lambda$.  Since
  $\Lambda$ is Lagrangian, it follows that the symplectic orthogonal of
  $C_p(\SSi(F), \SSi(F))$ contains $T_p\Lambda$.  On the other hand
  $C_p(\SSi(F))$ is contained in $C_p(V \setminus B)$ which is a half space of
  $T_p\Lambda$. Hence $C_p(\SSi(F))$ does not contain $T_p\Lambda$ and this
  contradicts Theorem~\ref{thm:invol}.
\end{proof}

\section{Simple sheaves}
\label{sec:simplesheaves}

Let $\Lambda$ be a closed conic Lagrangian submanifold of $\dT^*M$.  We recall
the definition of simple and pure sheaves along $\Lambda$ and give some of their
properties.  We first recall some notations from~\cite{KS90}.  For a function
$\varphi\cl M\to \R$ of class $C^\infty$ we define
\begin{equation}
\label{eq:def_Lambdaphi}
  \Lambda_\varphi = \{(x; d\varphi(x)); \; x\in M\}  .
\end{equation}
We notice that $\Lambda_\varphi$ is a closed Lagrangian submanifold of $T^*M$.
For a given point $p=(x;\xi) \in \Lambda \cap \Lambda_\varphi$ we have the
following Lagrangian subspaces of $T_p(T^*M)$
\begin{equation}\label{eq:def_lambdas_p}
\lambda_0(p) = T_p(T_x^*M), \qquad 
\lambda_\Lambda(p) = T_p\Lambda, \qquad
\lambda_\varphi(p) = T_p\Lambda_\varphi.
\end{equation}
We recall the definition of the inertia index (see for example \S A.3
in~\cite{KS90}).  Let $(E,\sigma)$ be a symplectic vector space and let
$\lambda_1, \lambda_2, \lambda_3$ be three Lagrangian subspaces of $E$. We
define a quadratic form $q$ on $\lambda_1\oplus \lambda_2\oplus \lambda_3$ by
$q(x_1,x_2,x_3) = \sigma(x_1,x_2) + \sigma(x_2,x_3) + \sigma(x_3,x_1)$ and
\begin{equation}\label{eq:def_inertia_ind}
  \tau_E(\lambda_1, \lambda_2, \lambda_3) = \sgn(q)
\end{equation}
where $\sgn(q)$ is the signature of $q$, that is, $p_+-p_-$, where
$p_\pm$ is the number of $\pm 1$ in a diagonal form of $q$.  We set
\begin{equation*}
\tau_{\varphi} = \tau_{p,\varphi} 
= \tau_{T_pT^*M}(\lambda_0(p), \lambda_\Lambda(p), \lambda_\varphi(p)).
\end{equation*}

\begin{proposition}[Proposition~7.5.3 of~\cite{KS90}]
\label{prop:inv_microgerm}
Let $\varphi_0, \varphi_1\cl M\to \R$ be functions of class $C^\infty$, let
$p=(x;\xi) \in \Lambda$ and let $F\in \Der_{(\Lambda)}(\cor_M)$ (see
Notation~\ref{not:cat_micsupp_fixe}).  We assume that $\Lambda$ and
$\Lambda_{\varphi_i}$ intersect transversely at $p$, for $i=0,1$.  Then there exists
an isomorphism
$$
(\rsect_{\{ \varphi_1 \geq \varphi_1(x) \}}(F))_x
\simeq  (\rsect_{\{ \varphi_0 \geq \varphi_0(x) \}}(F))_x 
[\pdemi (\tau_{\varphi_0} - \tau_{\varphi_1})] .
$$
\end{proposition}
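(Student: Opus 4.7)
My strategy is to interpolate between $\varphi_0$ and $\varphi_1$, track the stalk along the interpolation, and identify the total shift with a sum of local jumps in the inertia index. Choose a smooth family $(\varphi_s)_{s\in[0,1]}$ joining $\varphi_0$ to $\varphi_1$ with $d\varphi_s(x)=\xi$ for every $s$, so that $p\in\Lambda\cap\Lambda_{\varphi_s}$ throughout. After a small perturbation of the higher-order part of the family (fixing the endpoints and the $1$-jet at $x$), I may assume that the path of Lagrangian planes $s\mapsto \lambda_{\varphi_s}(p)$ in the Lagrangian Grassmannian of $T_p T^*M$ crosses the non-transversality stratum with $\lambda_\Lambda(p)$ only at finitely many values $s_1<\cdots<s_k$, each a codimension-one crossing.

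On each sub-interval where $\Lambda$ and $\Lambda_{\varphi_s}$ meet transversally at $p$, both $\tau_{\varphi_s}$ and the stalk are constant. For $\tau_{\varphi_s}$, this is the local constancy of the inertia index on triples of pairwise transverse Lagrangian planes; the required transversalities $\lambda_0(p)\cap\lambda_\Lambda(p)=0$ and $\lambda_0(p)\cap\lambda_\varphi(p)=0$ follow from the conic-Lagrangian nature of $\Lambda$ and from $\Lambda_{\varphi_s}$ being a graph over the zero section. For the stalks, introduce
\[
  Z=\{(y,s)\in M\times [0,1] \mid \varphi_s(y)\geq \varphi_s(x)\}
\]
and $G=\rsect_Z(\opb{\pi_1}F)\in\Der(\cor_{M\times[0,1]})$, where $\pi_1$ is the first projection. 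Combining Example~\ref{ex:microsupport}(iii) for $\SSi(\cor_Z)$, Remark~\ref{rem:SSexttens} for $\SSi(\opb{\pi_1}F)$, and Theorem~\ref{thm:SSrhom} for $\SSi\,\rhom(\cor_Z,\opb{\pi_1}F)$, together with the hypothesis $\dot\SSi(F)\subset\Lambda$ near $\Lambda$ and the transversality of $\Lambda_{\varphi_s}$ and $\Lambda$ at $p$, I obtain that $\SSi(G)$ has no non-zero component in the $ds$-direction at $(x,s)$. Proposition~\ref{prop:iminvproj} then shows that $G$ is locally constant in $s$, whence the stalk $(\rsect_{\{\varphi_s\geq\varphi_s(x)\}}F)_x$ is constant on this sub-interval.

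At each critical time $s_j$ the failure of transversality is one-dimensional. A local normal-form reduction — straightening $\Lambda$ to the zero section of a smaller cotangent bundle, and diagonalizing the Hessians of $\varphi_s-\varphi_{s_j}$ along the direction of failed transversality — reduces the jump to a standard one-variable Morse-type transition. In this local model a direct computation of the two stalks shows that the cohomological degree shifts by exactly one as $s$ crosses $s_j$, while the signature of the quadratic form defining $\tau$ changes by $\mp 2$; the local jump equals $\tfrac{1}{2}(\tau_{\varphi_{s_j^-}}-\tau_{\varphi_{s_j^+}})$. Telescoping over $j=1,\ldots,k$ produces the announced global shift $\tfrac{1}{2}(\tau_{\varphi_0}-\tau_{\varphi_1})$.

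The main obstacle is this explicit jump calculation at the $s_j$'s: it requires choosing coordinates adapted simultaneously to $\Lambda$, to the degenerating $\Lambda_{\varphi_s}$, and to $F$ microlocally — essentially reducing $F$, near $p$, to a model sheaf with microsupport in $\Lambda$ for which the stalk shift can be computed by hand — and a careful sign check to match the cohomological shift with the signature change of the quadratic form on the triple of Lagrangian planes. The homotopy-invariance step is much softer, relying only on microsupport estimates and Proposition~\ref{prop:iminvproj}.
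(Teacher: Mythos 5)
Your strategy is sound and shares the two essential ingredients with the Kashiwara--Schapira argument that the paper cites and effectively re-derives in Propositions~\ref{prop:SSmicrogerm} and~\ref{prop:monodr=sign}: a constancy step obtained from microsupport estimates plus Proposition~\ref{prop:iminvproj}, followed by a local normal-form computation that identifies the degree shift with the change in the inertia index. The organization, however, is genuinely different. You interpolate along a one-parameter family $(\varphi_s)$ and localize the shift at finitely many codimension-one wall crossings; the paper instead proves at once that the microgerm functor $m_\Lambda(F)$ is locally constant over the whole open set $U_\Lambda$ of the Lagrangian Grassmannian (Proposition~\ref{prop:SSmicrogerm}, via clean-intersection estimates for $\muhom$, Lemma~\ref{lem:lagr-clean-inter}, and a local Hamiltonian isotopy, Lemma~\ref{lem:isotopie_graphes}), and then reduces everything to a single computation at a \emph{generic} point of $\Lambda$, where $\Lambda=T^*_NM$ and $F$ is microlocally a constant sheaf $L_N$. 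Your route is more elementary in the constancy step, but it pays more at the crossings: each $s_j$ may live over a point of $\Lambda$ that is not generic for the projection $\Lambda\to M$, so the normal-form straightening of $\Lambda$ and the microlocal replacement of $F$ by a model sheaf (Lemma~\ref{lem:simple_local}) must both be carried out at a possibly non-generic point, whereas the paper's reduction to generic points sidesteps this. Two points worth making explicit in a write-up: the identification $G_{(x,s)}\simeq(\rsect_{\{\varphi_s\geq\varphi_s(x)\}}F)_x$ is not tautological --- it uses that $i_s\colon M\hookrightarrow M\times[0,1]$ is non-characteristic for $\SSi(G)$, which is exactly the transversality condition you already invoke for the iminvproj step; and the claim that the jump is one degree while $\tau\mapsto\tau\mp 2$ at each transverse crossing is precisely the $\rsect_{\{\varphi\geq 0\}}(\cor_N)_x\simeq\cor[-\beta]$ count carried out explicitly in the proof of Proposition~\ref{prop:monodr=sign}; as you note, that is where the real content of the proposition lives.
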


\begin{definition}[Definition~7.5.4 of~\cite{KS90}]
\label{def:simple_pure}
In the situation of Proposition~\ref{prop:inv_microgerm} we say that $F$ is pure
at $p$ if $(\rsect_{\{ \varphi_0 \geq \varphi_0(x)\}}(F))_x$ is concentrated in
a single degree and free, that is,
$(\rsect_{\{ \varphi_0 \geq \varphi_0(x)\}}(F))_x \simeq L[d]$, for some free
module $L\in \Mod(\cor)$ and $d\in \Z$. The half integer
\begin{equation}
  \label{eq:def_shift_simple}
  d +\pdemi d_M +\pdemi \tau_{\varphi_0}
\end{equation}
is called the shift of $F$. If $L\simeq \cor$, we say that $F$ is simple at $p$.

If $F$ is pure (resp. simple) at all points of $\Lambda$ we say that it is pure
(resp. simple) along $\Lambda$.

We denote by $\Dersf_{[\Lambda]}(\cor_M)$ the full subcategory of
$\Der_{[\Lambda]}(\cor_M)$ formed by the $F$ such that $F$ is simple along
$\dot\SSi(F)$ and the stalks of $F$ at the points of $M \setminus \dot\pi_M(\Lambda)$
are finitely generated.  
\end{definition}

\begin{proposition}[Cor.~7.5.7 in~\cite{KS90}]
\label{prop:pure_conn_comp}
We assume that $\Lambda$ is connected and $F\in \Der_{(\Lambda)}(\cor_M)$ is pure
at some $p\in \Lambda$. Then $F$ is pure along $\Lambda$. Moreover the
$L\in \Mod(\cor)$ in the above definition is the same at every point.
\end{proposition}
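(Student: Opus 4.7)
\emph{Plan.} I would argue by showing the subset $V \subset \Lambda$ of points where $F$ is pure with the same free module $L$ (as at the given point $p$) is both open and closed; since $\Lambda$ is connected and $V$ is nonempty by hypothesis, this would give $V = \Lambda$, yielding both assertions simultaneously. Non-trivial is showing $V$ is open (closedness follows by the symmetric argument).

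The main tool is the local constancy of $\muhom(F,F)$ along $\Lambda$. By Corollary~\ref{cor:muhom_loccst} applied to $G = F$, the sheaf $\muhom(F,F)$ is supported on $\Lambda$ and locally constant there in some neighborhood. Combined with Theorem~\ref{thm:germemuhom}, this yields that for each $k$ the module $\Hom_{\Der(\cor_M;q)}(F,F[k]) \simeq H^k(\muhom(F,F))_q$ is locally constant in $q \in \Lambda$. Hence for any $q \in \Lambda$ sufficiently close to $p$, the graded endomorphism ring of the microgerm at $q$ matches that at $p$: it is concentrated in degree zero and equal to $\End(L)$.

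Next I would fix a smooth path $q_s$ in $\Lambda$ with $q_0 = p$ and a smooth family of test functions $\varphi_s$ with $d\varphi_s(x_s) = \xi_s$, where $q_s = (x_s;\xi_s)$, and $\Lambda_{\varphi_s} \pitchfork \Lambda$ at $q_s$; such a family exists locally since transversality is an open condition. By Proposition~\ref{prop:inv_microgerm} the microgerm $E_s = (\rsect_{\{\varphi_s \geq \varphi_s(x_s)\}}(F))_{x_s}$ is well-defined up to a shift by $\pdemi (\tau_{\varphi_0} - \tau_{\varphi_s})$; since the three Lagrangian subspaces $\lambda_0, \lambda_\Lambda, \lambda_{\varphi_s}$ vary smoothly and pairwise transversally, the integer $\tau_{\varphi_s}$ is locally constant in $s$. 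Combining this with the Ext computation above and upper semi-continuity of the ranks of the cohomology modules of $E_s$, one forces $E_s$ to be concentrated in a single degree $d_s$ with free stalk of rank equal to that of $L$: any jump of rank between degrees would contradict the vanishing of $\Ext^k(E_s,E_s)$ for $k \ne 0$ established above.

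The delicate step, and the main obstacle I expect, is upgrading this numerical information into an actual isomorphism $E_s \simeq L[d_s]$ of microgerms, since knowing the graded Ext algebra of a complex does not by itself determine its quasi-isomorphism type. The cleanest way to close this gap is to exploit the canonical identity section $\id^\mu_F \in H^0(T^*M; \muhom(F,F))$ of~\eqref{eq:def_idmuF}: its support is exactly $\SSi(F)$ by~\eqref{eq:suppmuhom=SS}, and its local constancy along $\Lambda$ (as a section of a locally constant sheaf) provides globally coherent parallel transport between the microgerms at nearby points of $\Lambda$. Via Theorem~\ref{thm:germemuhom} this transport realizes as an isomorphism in $\Der(\cor_M;q_s)$, which together with the rank and purity data gives $E_s \simeq L[d_s]$ and closes the argument.
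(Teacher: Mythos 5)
The paper itself gives no proof — the statement is a direct citation of Cor.~7.5.7 of~\cite{KS90} — and the remark immediately following points to Proposition~\ref{prop:SSmicrogerm} (local constancy of the microgerm sheaf $m_\varphi(F)$ on $U_\Lambda$) as the more precise result the paper establishes instead. So you are supplying an argument the paper omits, and the relevant comparison is to that construction and to the argument in~\cite{KS90}.

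Your open--closed reduction is fine, and the observation that local constancy of $\muhom(F,F)|_\Lambda$ forces $\Ext^\bullet(E_q,E_q)$ to be concentrated in degree zero near $p$ is correct and useful. But the closing steps have two genuine gaps. First, ``upper semi-continuity of the ranks of the cohomology modules of $E_s$'' is asserted with no mechanism to justify it: the microgerms $E_s$ are a pointwise-defined family, and without packaging them as stalks of a constructible sheaf (which is precisely what $m_\varphi(F)$ of Definition~\ref{def:microgerm} does) nothing makes their ranks vary semi-continuously. Second, the parallel transport of $\id^\mu_F$ does not compare $E_p$ with $E_q$. The germ of $\id^\mu_F$ at $q$ is $\id_{E_q} \in \End(E_q)$; the locally constant structure of $H^0\muhom(F,F)|_\Lambda$ identifies $\End(E_p)$ with $\End(E_q)$ as $\cor$-modules and carries $\id_{E_p}$ to $\id_{E_q}$, which is tautological --- the stalks of $\muhom(F,F)$ carry no information about morphisms \emph{between} distinct microgerms, so you cannot extract an isomorphism $E_p \simeq E_q$ from them. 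The clean way to close the gap is to use Lemma~\ref{lem:simple_local}: near $p$ there is a simple $F_0$ with $F \simeq F_0 \ltens L_M$ in $\Der(\cor_M;\Omega)$, where $L = \muhom(F_0,F)_p$. The microgerm of $F_0$ at any $q$ near $p$ is $\cor[d_q]$, so the microgerm of $F$ at $q$ is $\cor[d_q]\ltens L \simeq L[d_q]$; since $\muhom(F_0,F)|_\Lambda$ is locally constant (Corollary~\ref{cor:muhom_loccst}), $L$ is independent of $q$. Purity at $p$ then forces $L$ to be a shift of a free module, and this gives purity with the same $L$ at every nearby $q$ in one stroke, bypassing both the semi-continuity and the transport step entirely. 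Alternatively, Proposition~\ref{prop:SSmicrogerm} accomplishes the same by exhibiting a locally constant sheaf whose stalks over a local section of $\sigma_\Lambda\colon U_\Lambda\to\Lambda$ are exactly the microgerms.
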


\begin{remark}
  In Proposition~\ref{prop:SSmicrogerm} below we will give a more precise result
  than Propositions~\ref{prop:inv_microgerm} and~\ref{prop:pure_conn_comp}.
\end{remark}

If $\cor$ is a field, we know that $F$ is pure along $\Lambda$ if and only if
$\muhom(F,F)|_{\dT^*M}$ is concentrated in degree $0$ and $F$ is simple along
$\Lambda$ if and only if the natural morphism
$\cor_\Lambda \to \mu hom(F,F)|_{\dT^*M}$ induced by~\eqref{eq:proj_muhom_oim}
and the section $\id_F$ of $\rhom(F,F)$ is an isomorphism:
\begin{equation}
\label{eq:carc_Fsimple}
\cor_\Lambda  \isoto \muhom(F,F) .
\end{equation}
For coefficients in a field the property~\eqref{eq:carc_Fsimple} could be a
definition of a simple sheaf.

\begin{example}
\label{ex:shift}
(1)   Let $N$ be a submanifold of $M$ of codimension $d$.
Then $\cor_N$ is simple with shift $\frac12 d$. To see that $\cor_N$ is simple we can
assume $M= \R^n$, $N = \R^{n-d}\times \{0\}$ and $p = (x_0;\xi_0)$ with $x_0 = 0$,
$\xi_0 = (0,\dots,0,1)$.  We choose the function
$\varphi_0(x) = x_n +\sum_{i=1}^{n-1} x_i^2$. Then
$\rsect_{\{ \varphi_0 \geq 0\}}(\cor_N) \simeq \cor_N$ and
$(\rsect_{\{ \varphi_0 \geq 0\}}(\cor_N))_{x_0} \simeq \cor$.  The computation of the
shift is not difficult but lengthy. We refer to Example~7.5.5 of~\cite{KS90}.

\sui(2) In Example~\ref{ex:microsupport}~(iii) the sheaves $\cor_U$ and
$\cor_{\ol U}$ are simple; $\cor_U$ has shift $-1/2$ and $\cor_{\ol U}$ has shift
$1/2$. In fact both compare to $\cor_{\partial U}$, which has shift $1/2$ by (1),
through the distinguished triangle
$\cor_U \to \cor_{\ol U} \to \cor_{\partial U} \to[+1]$.  Choosing $\varphi_0$ with
$d\varphi_0 \in \dot\SSi(\cor_U)$ we deduce from the triangle
$\rsect_{\{ \varphi_0 \geq \varphi_0(0)\}}(\cor_U) \simeq \rsect_{\{ \varphi_0 \geq
  \varphi_0(0)\}}(\cor_{\partial U})[-1]$.  The argument is similar for
$\cor_{\ol U}$. We also refer to Example~7.5.5 of~\cite{KS90}.   

\sui(3)
For $i \in \N$ we let $\Lambda_i \subset \Lambda$ be the set of points such that
the rank of $d\pi_M|_\Lambda$ is $(\dim M -1 -i)$.  For a generic closed conic
Lagrangian connected submanifold $\Lambda$ in $\dT^*M$, $\Lambda_0$ is an open
dense subset of $\Lambda$ and, for a given simple sheaf $F \in
\Der_{(\Lambda)}(\cor_M)$, the shift of $F$ at $p$ is locally constant on
$\Lambda_0$ and changes by $1$ when $p$ crosses $\Lambda_1$.

  These properties correspond exactly to the
definition of a {\em Maslov potential} for $\Lambda$.  We recall that the {\em Maslov
  class} of $\Lambda$ (an element of $H^1(\Lambda;\Z_\Lambda)$ is the obstruction to
the existence of a Maslov potential for $\Lambda$.  Hence, if there exists a simple
sheaf $F \in \Der_{(\Lambda)}(\cor_M)$, then the Maslov class of $\Lambda$ vanishes and a
Maslov potential is given by the shift of $F$.
\end{example}

We can also compute the stalks of $\muhom$ for sheaves in
$\Der_{(\Lambda)}(\cor_M)$.  Let $p=(x;\xi) \in \Lambda$ and
$\varphi_0\cl M\to \R$ be as in Proposition~\ref{prop:inv_microgerm}.  For
$F,G\in \Der_{(\Lambda)}(\cor_M)$, we have
\begin{equation}\label{eq:stalk_muhom}
\muhom(F,G)_p \simeq
\RHom( (\rsect_{Z_0}(F))_x , (\rsect_{Z_0}(G))_x ) ,
\end{equation}
where $Z_0 = \{ \varphi_0 \geq \varphi_0(x) \}$.

Now we prove that a simple sheaf belongs to $\Dersf_{[\Lambda]}(\cor_M)$ as soon as
its stalk at some given point is finitely generated.  We set
\begin{align*}
Z_\Lambda = \{ &x\in \dot\pi_M(\Lambda);\;
\text{there exist a neighborhood $W$ of $x$ and a} \\
&\text{smooth hypersurface $S \subset W$ such that
$\Lambda \cap T^*W \subset T^*_SW$} \} .
\end{align*}
The transversality theorem implies the following result.

\begin{lemma}\label{lem:def_chemin}
Let $x,y\in M \setminus \dot\pi_M(\Lambda)$.
Let $I$ be an open interval containing $0$ and $1$.
Then there exists a $C^\infty$ embedding $c \cl I \to M$ such that $c(0) = x$,
$c(1) = y$ and $c([0,1])$ only meets $\dot\pi_M(\Lambda)$ at points of
$Z_\Lambda$, with a transverse intersection.
\end{lemma}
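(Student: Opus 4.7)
The plan is to construct $c$ as an arbitrarily small $C^\infty$ perturbation of any smooth path joining $x$ to $y$, using Thom's transversality theorem. The geometric input needed is that the ``bad'' locus $B \eqdot \dot\pi_M(\Lambda) \setminus Z_\Lambda$ has real codimension at least $2$ in $M$, whereas $Z_\Lambda$ is by its very definition locally a smooth hypersurface of $M$.

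First I would decompose $B = B_1 \cup B_2$, where $B_1$ is the \emph{caustic} --- points of $\dot\pi_M(\Lambda)$ near which $\dot\pi_M \cl \Lambda/\rspos \to M$ fails to be an immersion --- and $B_2$ is the \emph{self-intersection locus} --- points whose preimage in $\Lambda/\rspos$ contains at least two points whose local hypersurface branches in $M$ are distinct. Since $\dim(\Lambda/\rspos) = \dim M - 1$, the Thom-Boardman stratification of the smooth map $\dot\pi_M|_{\Lambda/\rspos}$ exhibits $B_1$ as contained in a countable union of smooth submanifolds of $M$ of codimension $\geq 2$: the image of a Thom-Boardman stratum of rank $\leq \dim M - 2$ has dimension at most $\dim M - 2$. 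Multijet transversality applied to the immersive part of $\dot\pi_M|_{\Lambda/\rspos}$ similarly stratifies $B_2$ by smooth submanifolds of $M$ of codimension $\geq 2$, the self-intersections of an immersed hypersurface being generically of codimension $1$ in the hypersurface, hence codimension $2$ in $M$.

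Then I would pick any smooth embedding $c_0 \cl I \to M$ with $c_0(0)=x$ and $c_0(1)=y$ (which exists because $x$ and $y$ necessarily lie in the same connected component of $M$) and apply Thom's transversality theorem to the space of $C^\infty$ maps $I \to M$ with those prescribed values at $0$ and $1$. A generic small perturbation $c$ of $c_0$ remains an embedding (embeddings form an open subset in the $C^\infty$-topology on a compact arc), is transverse on $[0,1]$ to every stratum of $Z_\Lambda$, and is transverse to every stratum of $B$; since each stratum of $B$ has codimension $\geq 2$ in the $1$-dimensional source, transversality forces $c([0,1])$ to miss $B$ entirely, while transverse intersection with the hypersurface $Z_\Lambda$ is exactly the conclusion of the lemma. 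The hard part is the rigorous codimension-$2$ estimate for the self-intersection locus $B_2$ in the $C^\infty$ (as opposed to subanalytic) category, which genuinely requires multijet transversality rather than a bare Sard-type argument in order to control tangential contact between distinct sheets of $\Lambda$ projecting over the same point of $M$.
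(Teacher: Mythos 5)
Your overall approach — small $C^\infty$ perturbation of a connecting arc, justified by Thom's transversality theorem — is exactly what the paper invokes (the paper's entire proof is the sentence ``The transversality theorem implies the following result''), so you are on the same track and in fact supply far more detail than the source.

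However, the detail you supply contains a genuine gap. Your codimension estimates for the bad locus $B = \dot\pi_M(\Lambda)\setminus Z_\Lambda$ appeal to the Thom--Boardman stratification (for $B_1$) and to multijet transversality (for $B_2$), but \emph{both of these describe the structure of a generic map}, and the map $\dot\pi_M|_{\Lambda/\rspos}$ is fixed by the data of the lemma, with no genericity assumed on $\Lambda$. For a non-generic smooth Lagrangian the sets $\Sigma^{i_1,\dots,i_k}$ need not be submanifolds, the immersive part of $\dot\pi_M|_{\Lambda/\rspos}$ need not be a generic immersion, and its self-intersection locus need not sit inside any countable union of $C^1$ submanifolds of codimension $\geq 2$. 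You are making assertions about a given map that are theorems about almost every map; nothing in the hypotheses lets you replace $\Lambda$ by a nearby generic one (the lemma is used for the specific $\Lambda = \dot\SSi(F)$, and replacing $\Lambda$ would change $F$).

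There is a cleaner way to dispose of $B_1$ that avoids any stratification of the caustic: apply parametric transversality to the smooth \emph{map} $\dot\pi_M|_{\Lambda/\rspos}\colon \Lambda/\R_{>0}\to M$ itself, rather than to its image. If $c$ is transverse to this map, then at any $t$ with $c(t)=\dot\pi_M(p)$ the dimension count $1 + \operatorname{rank}(d\dot\pi_M)_p \geq \dim M$ forces $d\dot\pi_M$ to have full rank $\dim M -1$ at $p$, so $p$ lies in the immersive part and the corresponding branch is a smooth hypersurface crossed transversally. The caustic is thus avoided automatically, with no appeal to Thom--Boardman. What remains is exactly the self-tangency/double-point issue you identify for $B_2$: one must know that distinct branches of $\Lambda$ over a point of the path project to the same hypersurface germ. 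Your instinct that this requires a two-point (fiber-product) transversality argument is correct, but as written it is applied to $\Lambda$ rather than to $c$, and for a fixed non-generic $\Lambda$ the fiber product $(\Lambda/\R_{>0})\times_M (\Lambda/\R_{>0})\setminus \Delta$ need not be a manifold of the expected dimension. This is the one place where the argument, as given, does not close; the paper leaves it unaddressed as well.
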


\begin{lemma}\label{lem:stalk_simple_sh}
    Let
  $F\in \Der_{[\Lambda]}(\cor_M)$ be a simple sheaf along $\Lambda$.  We set
  $U = M\setminus \dot\pi_M(\Lambda)$.  We assume that $M$ is connected and that
  there exists $x_0\in U$ such that the $\cor$-module $\bigoplus_{i\in \Z}H^iF_{x_0}$
  is finitely generated.  Then, for any $x\in U$, the $\cor$-module
  $\bigoplus_{i\in \Z}H^iF_x$ is finitely generated.  In other words $F$ belongs to
  $\Dersf_{[\Lambda]}(\cor_M)$.
\end{lemma}
\begin{proof}
Let $x\in U$ and let $I$ be an open interval containing $0$ and $1$.  
By Lemma~\ref{lem:def_chemin} we can choose a $C^\infty$ path
$\gamma\cl I \to M$ such that $\gamma(0) = x_0$, $\gamma(1) = x$ and
$\gamma([0,1])$ meets $\dot\pi_M(\Lambda)$ at finitely many points, all
contained in $Z_\Lambda$ and with a transverse intersection.
We denote these points by $\gamma(t_i)$, where $0< t_1< \cdots < t_k <1$.

Since $F$ is locally constant on $U$, the stalk $F_{\gamma(t)}$ is constant for $t\in
\mo]t_i,t_{i+1}[$.   Near a point $x_i = \gamma(t_i)$ we
have a hypersurface $S$ of $M$ such that $\Lambda \subset T^*_SM$.  Let us first assume
that $\Lambda$ is one half of $T^*_SM$.  Using Example~\ref{ex:SS=conormal_hypersurface}
and the fact that $F$ is simple, there exist $d\in \Z$, $E' \in \Der(\cor)$ and a
distinguished triangle, in some neighborhood of $x_i$,
$$
E'_M \to \cor_Z[d] \to F \to[+1] ,
$$
where   $Z$ is one of the closed half-spaces bounded by $S$.  It
follows that the stalks $F_{\gamma(t_i-\varepsilon)}$ and $F_{\gamma(t_i+\varepsilon)}$
differ by $\cor[d]$ or $\cor[d+1]$ for $\varepsilon>0$ small enough.  Now we assume that
$\Lambda = T^*_SM$.  The argument in Example~\ref{ex:SS=conormal_hypersurface} works in
the same way to reduce the situation to Example~\ref{ex:microsupport}-(v), which also
works in the same way and gives a distinguished triangle $F' \to \cor_Z[d] \to F
\to[+1]$. Now $F'$ is no longer constant, but $\dot\SSi(F') = \dot\SSi(F) \setminus
\dot\SSi(\cor_Z[d])$ is half of $T^*_SM$. So we are back to the previous case.
\end{proof}

\section{Composition of sheaves}

We will use several times a usual operation associated with sheaves called
``composition'' or ``convolution''. We refer to~\cite[\S 3.6]{KS90} for more
details, or~\cite[\S 1.6, \S 1.10]{GKS12}.

Let $M_i$, $i=1,2,3$, be three manifolds.  We denote by $q_{ij}$ the projection
from $M_1 \times M_2 \times M_3$ to $M_i \times M_j$.  For
$K_1 \in \Der(\cor_{M_1 \times M_2})$ and $K_2 \in \Der(\cor_{M_2 \times M_3})$
we denote by $K_1 \circ K_2 \in \Der(\cor_{M_1 \times M_3})$ the {\em
  composition} of $K_1$ and $K_2$:
\begin{equation}
  \label{eq:def_compo_faisceaux}
K_1 \circ K_2 =  \reim{q_{13}}( \opb{q_{12}}K_1 \ltens \opb{q_{23}} K_2) .
\end{equation}
The Fourier-Sato transform of Definition~\ref{def:FourierSato} is an example of
composition of sheaves.

Using the base change formula we see that the composition product is associative
in the sense that, for another manifold $M_4$ and
$K_3 \in \Der(\cor_{M_3 \times M_4})$, we have a natural isomorphism
$(K_1 \circ K_2) \circ K_3 \simeq K_1 \circ (K_2 \circ K_3)$.  If $M_1=M_2$, the
constant sheaf on the diagonal $K_1 = \cor_{\Delta_{M_1}}$ is a left unit for
this product: we have $\cor_{\Delta_{M_1}} \circ K_2 \simeq K_2$ for any
$K_2 \in \Der(\cor_{M_2 \times M_3})$. Similarly, if $M_2=M_3$, the sheaf
$\cor_{\Delta_{M_2}}$ is a right unit.

The base change formula gives a useful expression for the stalks of the
composition. For $(x,z) \in M_1 \times M_3$ we have
\begin{equation}
  \label{eq:germ_compo}
  (K_1 \circ K_2)_{(x,z)}
  \simeq \rsect_c(M_2; (K_1|_{\{x\} \times M_2}) \ltens (K_2|_{ M_2 \times \{z\}})) .
\end{equation}
For $K_1 \in \Der(\cor_{M_1 \times M_2})$ we have a natural candidate for an
inverse, denoted $K_1^{-1}$ and defined as follows.  Let
$q_2 \cl M_1 \times M_2 \to M_2$ be the projection and
$v \cl M_1 \times M_2 \isoto M_2 \times M_1$ the swap isomorphism.  We recall
that $\epb{q_2}(\cor_{M_2}) \simeq \omega_{M_1}\etens \cor_{M_2}$.  We define
\begin{equation}
  \label{eq:def_inv_compo}
  K_1^{-1} = \opb{v} \rhom(K_1 , \epb{q_2}(\cor_{M_2}))
  \in \Der(\cor_{M_2 \times M_1}) .
\end{equation}

Let $\delta_2 \cl M_2 \to M_2 \times M_2$ and
$\delta'_2 \cl M_1 \times M_2 \to M_2 \times M_1 \times M_2$ be the diagonal
embeddings. The base change formula
$\opb{\delta_2} \circ \reim{q_{23}} \simeq \reim{q_2} \circ \opb{{\delta'_2}}$
implies
$$
\opb{\delta_2} (K_1^{-1} \circ K_1) \simeq
\reim{q_2} (K_1 \ltens \rhom(K_1 , \epb{q_2}(\cor_{M_2})))  .
$$
Using the contraction $K_1 \ltens \rhom(K_1 , L) \to L$ and the adjunction
morphisms for $(\reim{q_2}, \epb{q_2})$ and $(\opb{\delta_2}, \roim{\delta_2})$
we deduce the first morphism in~\eqref{eq:Kinv_compo_K} below; the second
morphism is obtained in the same way.
\begin{equation}
  \label{eq:Kinv_compo_K}
  K_1^{-1} \circ K_1 \to \cor_{\Delta_{M_2}}, \qquad
 K_1 \circ  K_1^{-1} \to \cor_{\Delta_{M_1}} .
\end{equation}

Using the bounds given by Proposition~\ref{prop:oim} and
Theorem~\ref{thm:iminv}, \ref{thm:SSrhom} for the behaviour of the microsupport
under sheaves operations we obtain the following result.  We denote by $p_{ij}$
the projections from $T^*(M_1 \times M_2 \times M_3)$ similar to the
$q_{ij}$. We also define $a_2$ on $T^*(M_1 \times M_2)$ by
$a_2(x,y;\xi,\eta) = (x,y;\xi,-\eta)$.
\begin{lemma}
  \label{lem:SScompo}
  Let $K_1 \in \Der(\cor_{M_1 \times M_2})$ and
  $K_2 \in \Der(\cor_{M_2 \times M_3})$ be given. We assume that $q_{13}$ is
  proper on $\opb{q_{12}}\supp(K_1)\cap\opb{q_{23}}\supp(K_2)$ and
  $\opb{p_{12}}\opb{a_2}\SSi(K_1) \cap \opb{p_{23}}\SSi(K_2) \cap (T^*_{M_1}M_1\times
  T^*M_2\times T^*_{M_3}M_3)$ is contained in the zero-section of
  $T^*(M_1 \times M_2 \times M_3)$.  Then
  \begin{equation}
  \label{eq:def_compo_ensembles}
  \SSi(K_1 \circ K_2) \subset \SSi(K_1) \circ^a \SSi(K_2) ,
  \end{equation}
  where the operation $\circ^a$ for $A_1 \subset T^*(M_1 \times M_2)$ and
  $A_2 \subset T^*(M_2 \times M_3)$ is defined by
  $A_1 \circ^a A_2 = p_{13}(\opb{p_{12}}\opb{a_2}(A_1) \cap \opb{p_{23}}(A_2)
  )$.
\end{lemma}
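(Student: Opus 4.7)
The strategy is to unravel the definition $K_1\circ K_2=\reim{q_{13}}(\opb{q_{12}}K_1\ltens\opb{q_{23}}K_2)$ and apply the three microsupport estimates from Proposition~\ref{prop:oim}, Theorem~\ref{thm:iminv} and Theorem~\ref{thm:SSrhom} in turn.

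First, since $q_{12}$ and $q_{23}$ are projections, they are smooth submersions; hence Theorem~\ref{thm:iminv} applies and gives
\[
\SSi(\opb{q_{12}}K_1)=\opb{p_{12}}\SSi(K_1),\qquad
\SSi(\opb{q_{23}}K_2)=\opb{p_{23}}\SSi(K_2),
\]
that is, the microsupports of $F_1:=\opb{q_{12}}K_1$ and $F_2:=\opb{q_{23}}K_2$ are obtained simply by forgetting the covector on the missing factor.

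Second, I check that the non-characteristic hypothesis stated in the lemma is exactly what is needed to convert the tensor-product bound of Theorem~\ref{thm:SSrhom} into a sum via Example~\ref{ex:hplus_noncar}. A direct computation in coordinates shows that $\SSi(F_1)^a\cap\SSi(F_2)$ consists of points $(x_1,x_2,x_3;\xi_1,\xi_2,\xi_3)$ with $\xi_1=\xi_3=0$, $(x_1,x_2;0,-\xi_2)\in\SSi(K_1)$ and $(x_2,x_3;\xi_2,0)\in\SSi(K_2)$; this is precisely $\opb{p_{12}}\opb{a_2}\SSi(K_1)\cap\opb{p_{23}}\SSi(K_2)\cap(T^*_{M_1}M_1\times T^*M_2\times T^*_{M_3}M_3)$, which by hypothesis lies in the zero-section. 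Example~\ref{ex:hplus_noncar} then yields
\[
\SSi(F_1\ltens F_2)\subset \opb{p_{12}}\SSi(K_1)+\opb{p_{23}}\SSi(K_2).
\]

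Third, the support of $F_1\ltens F_2$ is contained in $\opb{q_{12}}\supp(K_1)\cap\opb{q_{23}}\supp(K_2)$, on which $q_{13}$ is proper by assumption, so Proposition~\ref{prop:oim} applies to $\reim{q_{13}}(F_1\ltens F_2)$ and gives
\[
\SSi(K_1\circ K_2)\subset (q_{13})_\pi\opb{(q_{13})_d}\bigl(\opb{p_{12}}\SSi(K_1)+\opb{p_{23}}\SSi(K_2)\bigr).
\]
Finally I identify the right-hand side with $\SSi(K_1)\circ^a\SSi(K_2)$: applying $\opb{(q_{13})_d}$ forces the $M_2$-covector component to vanish, which imposes $\xi_2^{(1)}+\xi_2^{(2)}=0$ on the two summands; writing $\xi_2^{(2)}=\xi_2$ and $\xi_2^{(1)}=-\xi_2$ identifies the condition on $\SSi(K_1)$ with membership in $\opb{a_2}\SSi(K_1)$, and then $(q_{13})_\pi$ is the projection $p_{13}$ followed by forgetting $x_2$, yielding exactly $p_{13}(\opb{p_{12}}\opb{a_2}\SSi(K_1)\cap\opb{p_{23}}\SSi(K_2))$. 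The main (and only) delicate point is bookkeeping the sign coming from $a_2$ in the tensor-product formula, which is a routine coordinate computation rather than a deep obstacle.
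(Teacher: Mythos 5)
Your proposal is correct and follows the same route the paper indicates in the sentence preceding the lemma: apply Theorem~\ref{thm:iminv} for the inverse images, Theorem~\ref{thm:SSrhom} with Example~\ref{ex:hplus_noncar} for the tensor product, and Proposition~\ref{prop:oim} for the proper direct image; the paper just does not spell out the computation you carried through. One notational slip worth correcting: taken literally, $\opb{p_{12}}\SSi(K_1)$ is a preimage under the projection $p_{12}\colon T^*(M_1\times M_2\times M_3)\to T^*(M_1\times M_2)$ and hence equals $\SSi(K_1)\times T^*M_3$, whereas Theorem~\ref{thm:iminv} gives $\SSi(\opb{q_{12}}K_1)=(q_{12})_d\opb{(q_{12})_\pi}\SSi(K_1)=\SSi(K_1)\times T^*_{M_3}M_3$ (zero covector on the missing factor, not an arbitrary one) — your coordinate computation in the next paragraph uses the correct description ($\xi_3=0$), so the slip does not propagate, but the phrase ``forgetting the covector on the missing factor'' should read ``adding the zero covector on the missing factor.''
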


We will also use a relative version of the composition.  For a manifold $I$ we
denote by $q_{ijI}$ the projections from $M_1 \times M_2 \times M_3 \times I$ to
$M_i \times M_j \times I$ similar to the $q_{ij}$.  For
$K_1 \in \Der(\cor_{M_1 \times M_2 \times I})$ and
$K_2 \in\Der(\cor_{M_2 \times M_3 \times I})$ we set
\begin{equation}
  \label{eq:def_compo_faisceaux_rel}
K_1\circ|_I K_2 =
\reim{q_{13I}}(\opb{q_{12I}}K_1\ltens\opb{q_{23I}}K_2) .
\end{equation}
The definition is chosen so that  
$(K_1\circ|_I K_2)|_{M_1 \times M_3 \times \{t\}} \simeq K_{1,t} \circ K_{2,t}$
for all $t\in I$, where $K_{1,t} = K_1|_{M_1 \times M_2 \times \{t\}}$,
$K_{2,t} = K_2|_{M_2 \times M_3 \times \{t\}}$.

The previous results generalize to the relative setting (see~\cite{GKS12}).  In
particular we can define
$K_1^{-1} = \opb{v} \rhom(K_1 , \epb{q_2}(\cor_{M_2 \times I}))$ (an object of
$\Der(\cor_{M_2 \times M_1 \times I})$) and we have natural morphisms
\begin{equation}
  \label{eq:Kinv_compo_K-rel}
  K_1^{-1} \circ|_I K_1 \to \cor_{\Delta_{M_2} \times I}, \qquad
 K_1 \circ|_I  K_1^{-1} \to \cor_{\Delta_{M_1} \times I} .
\end{equation}
We also have $K_1^{-1}|_{M_1 \times M_2 \times \{t\}} \simeq K_{1,t}^{-1}$.

\begin{remark}\label{rem:comp=foncteur}
  For $K_1 \in \Der(\cor_{M_1 \times M_2})$ the composition
  $\Phi_{K_1} \colon F \mapsto K \circ F$ is a functor
  $\Der(\cor_{M_2}) \to \Der(\cor_{M_1})$ and we have
  $\Phi_{K_1 \circ K_2} \simeq \Phi_{K_1} \circ \Phi_{K_2}$.  When
  $M_1 = M_2 = M$, $\Phi_{\cor_{\Delta_M}}$ is the identity functor.  When
  $M_1= M_2 =M_3 =M$, if $K_1 \circ K_2 \simeq \cor_{\Delta_M}$, then
  $K_2 \circ K_1 \simeq \cor_{\Delta_M}$ and $\Phi_{K_1}$, $\Phi_{K_2}$ are
  mutually inverse equivalences of categories.
\end{remark}

\part{Sheaves associated with Hamiltonian isotopies}
\label{chap:HamIsot}

In this part we recall the main result of~\cite{GKS12}, which, following ideas of
Tamarkin in~\cite{T08}, gives a sheaf version of the Chekanov-Sikorav theorem about
generating functions (see~\cite{C96} and~\cite{S87}).  Let
$\Psi\cl J^1(N) \times I \to J^1(N)$ be a contact isotopy of the $1$-jet bundle of
some manifold $N$.  The Chekanov-Sikorav theorem says that, if a Legendrian
submanifold $L$ of $J^1(N)$ has a generating function, so does $\Psi_s(L)$, for any
$s\in I$.  The sheaf version is more functorial. We can associate a sheaf $K_\Psi$ on
$N^2 \times I$ with $\Psi$.  This sheaf acts by composition on $\Der(\cor_N)$ and
induces equivalences of categories, $F \mapsto (K_\Psi|_{N^2 \times \{s\}}) \circ F$.
We state this result with homogeneous Hamiltonian isotopies of $\dT^*N$ (which is the
same thing as contact isotopies of the sphere bundle of $T^*N$). We recall how it
implies the non homogeneous case and give some complementary remarks.

\section{Homogeneous case}

Let $N$ be a manifold and $I$ an open ball of $\R^d$ containing $0$ (in general
$I$ will be an open interval of $\R$ containing $0$).  We consider a homogeneous
Hamiltonian isotopy $\Psi\cl \dT^*N \times I \to \dT^*N$ of class $\Cinf$. For
$s\in I$, $p\in \dT^*N$ we set $\Psi_s(p) = \Psi(p,s)$.  Hence
$\Psi_0 = \id_{\dT^*N}$ and, for each $s\in I$, $\Psi_s$ is a symplectic
diffeomorphism such that $\Psi_s(x;\lambda\xi) = \lambda\cdot \Psi_s(x;\xi)$,
for all $(x;\xi) \in \dT^*N$ and $\lambda>0$.  We let
$\Lambda_{\Psi_s} \subset \dT^*N^2$ be the twisted graph of $\Psi_s$, that is,
\begin{equation}
\label{eq:def_twistedgraph}
\Lambda_{\Psi_s} = \{(\Psi(x,\xi,s), (x;-\xi) ); \;(x;\xi) \in \dT^*N \} .
\end{equation}
We can see that there exists a unique conic Lagrangian submanifold
$\Lambda_\Psi \subset \dT^*(N^2 \times I)$, described
in~\eqref{eq:def_twistedgraph_global} below, such that
$\Lambda_{\Psi_s} = i_s^\sharp(\Lambda_\Psi)$, for all $s\in I$, where $i_s$ is
the embedding $N^2\times \{s\} \to N^2 \times I$.  Our homogeneous Hamiltonian
isotopy $\Psi$ is the flow of some $h \cl \dT^*N \times I \to \R$ and there is a
unique such $h$ which is homogeneous of degree $1$ in the variable $\xi$.  Then
we have
\begin{equation}
\label{eq:def_twistedgraph_global}
\begin{split}
\Lambda_{\Psi} = \{(\Psi(x,\xi,s), (x;-\xi),
 (s; -h(\Psi(x&,\xi,s),s))); \\
 &(x;\xi) \in \dT^*N, \; s\in I \} .
\end{split}
\end{equation}
We also remark that $\Lambda_{\Psi}$ is non-characteristic for the inclusion
$i_{s}$, for any $s\in I$.  We recall that $\Derlb(\cor_M)$ is the subcategory
of $\Der(\cor_M)$ of locally bounded complexes.

\begin{theorem}[Theorem.~3.7 and Remark~3.9 of~\cite{GKS12} --
  see also Proposition~ 3.16 of~\cite{T08}]
\label{thm:GKS}
There exists a unique $K_\Psi \in\Derlb(\cor_{N^2\times I})$ such that
$\dot\SSi(K_\Psi) \subset \Lambda_\Psi$ and
$K_\Psi|_{N^2\times \{0\}} \simeq \cor_{\Delta_N}$.  Moreover we have
$\dot\SSi(K_\Psi) = \Lambda_\Psi$, $K_\Psi$ is simple along $\Lambda_\Psi$, both
projections $\supp(K_\Psi) \to N\times I$ are proper and the
morphisms~\eqref{eq:Kinv_compo_K-rel} are isomorphisms:
$$
  K_\Psi^{-1} \circ|_I K_\Psi \isoto \cor_{\Delta_N \times I}, \qquad
 K_\Psi \circ|_I  K_\Psi^{-1} \isoto \cor_{\Delta_N \times I} .
$$
\end{theorem}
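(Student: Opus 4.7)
The plan is to prove uniqueness first, use it to reduce existence to a local Chekanov--Sikorav construction, then derive the remaining claims. For uniqueness, it suffices to show that any $F \in \Derlb(\cor_{N^2 \times I})$ with $\dot\SSi(F) \subset \Lambda_\Psi$ and $F|_{N^2 \times \{0\}} \simeq 0$ must vanish; applied to the cone of a candidate morphism between two solutions, this yields $K_\Psi$ up to unique isomorphism. The central observation is that the covector $ds$ at any base point is never contained in $\Lambda_\Psi$: from~\eqref{eq:def_twistedgraph_global}, covectors in $\Lambda_\Psi$ have $N^2$-component $(\Psi_\xi\text{-part}, -\xi)$ with $\xi \neq 0$, so in particular their projection to $T^*(N^2)$ is nonzero. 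Hence $ds \notin \SSi(F)$ everywhere, and the microlocal Morse lemma (Corollary~\ref{cor:Morse}), applied locally in $N^2$ to handle properness, forces $F \simeq 0$ throughout $N^2 \times I$.

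For existence, near any point of $N^2 \times \{0\}$ the Lagrangian $\Lambda_\Psi$ admits a generating function $S(x, y, \sigma, s)$ with auxiliary fiber variables $\sigma$, and the sheaf $\reim{p}\, \cor_{\{S \geq 0\}}$ (where $p$ projects out $\sigma$) is a local candidate with the expected microsupport, simple of the correct shift. Uniqueness applied on overlaps forces these local candidates to glue, producing $K_\Psi$ on some $N^2 \times J$ with $0 \in J \subset I$. To extend throughout $I$, I would iterate: assuming $K_\Psi$ has been constructed up to $s_0$, quantize locally near $0$ the shifted isotopy $\Psi_{s_0 + \cdot} \circ \Psi_{s_0}^{-1}$ and compose via $\circ$ with $K_\Psi|_{s = s_0}$; uniqueness identifies the result with the previous construction on the overlap, and finitely many such steps cover $I$.

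From the construction plus uniqueness, the remaining properties follow: $\dot\SSi(K_\Psi) = \Lambda_\Psi$ by Corollary~\ref{cor:micsuppLagrlisse} (since $\Lambda_\Psi$ is connected and $K_\Psi \not\simeq 0$); simplicity is clear at $s = 0$ and propagates by Proposition~\ref{prop:pure_conn_comp}; the properness of both projections $\supp(K_\Psi) \to N \times I$ is inherited from the local generating function construction and from the fact that $\Lambda_\Psi$ projects so that its fibers over $N \times I$ are graphs of diffeomorphisms. Invertibility is obtained by quantizing $\Psi^{-1}$ to get $K_{\Psi^{-1}}$ and observing that $K_\Psi \circ|_I K_{\Psi^{-1}}$ satisfies the hypotheses of the theorem for the trivial isotopy with initial datum $\cor_{\Delta_N}$; uniqueness gives $K_\Psi \circ|_I K_{\Psi^{-1}} \simeq \cor_{\Delta_N \times I}$, matching~\eqref{eq:Kinv_compo_K-rel}, and similarly in the reverse order.

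The main obstacle I expect is the uniqueness step, specifically justifying Corollary~\ref{cor:Morse} without any a priori properness assumption on $\supp(F)$: one must cut off $F$ using open subsets of $N^2$ whose boundaries are non-characteristic for $\Lambda_\Psi$ and patch the resulting local vanishing statements, for which the transversality of $ds$ to $\Lambda_\Psi$ and the non-characteristic behaviour of $\Lambda_\Psi$ with respect to each $i_s$ are essential.
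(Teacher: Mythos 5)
The paper itself does not reprove this theorem — it is cited from \cite{GKS12} — but Remarks~\ref{rem:thm_isot} give some of the key steps, and the cited proof uses the same overall strategy you describe (uniqueness by a microlocal non-characteristicity argument, local existence by generating functions near $s=0$, extension by composition, then the auxiliary properties). So your outline is the right one. However, there are two concrete gaps.

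The main one is in the uniqueness step. You correctly observe that $\pm ds \notin \Lambda_\Psi$ everywhere, and you anticipate the properness obstacle for Corollary~\ref{cor:Morse}. But the proposed fix — cut off by an open $U \subset N^2$ whose boundary is non-characteristic for $\Lambda_\Psi$ — does not obviously close the gap. Forming $F_{\ol U \times I}$ produces new microsupport bounded by $\Lambda_\Psi \hplus T^{*}_{\partial U\times I}(N^2\times I)$, and even if $T^{*}_{\partial U\times I}(N^2\times I) \cap \Lambda_\Psi^a \subset 0_{N^2\times I}$ (so $\hplus$ is an actual sum), the sum of a covector $(\eta,-\xi;-h)$ in $\Lambda_\Psi$ with $(\nu_1,\nu_2;0)$ in the boundary conormal has $N^2$-component zero whenever $(\nu_1,\nu_2)=(-\eta,\xi)$, in which case you get a nonzero multiple of $ds$ in the microsupport of $F_{\ol U\times I}$ and Corollary~\ref{cor:Morse} no longer applies. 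To avoid this you need the stronger condition that the conormal ray of $\partial U$ is never opposite to the $T^*N^2$-component of $\Lambda_\Psi$, for every $s\in I$ simultaneously — a genuinely nontrivial transversality requirement that you should state and verify (generically it holds, but the argument needs writing out). The argument in~\cite{GKS12} sidesteps this: rather than proving directly that the cone $C$ of a comparison morphism vanishes, one shows that the convolution $K_1^{-1}\circ|_I K_2$ has microsupport in $T^*_{\Delta_N\times I}(N^2\times I)$, whose $T^*I$-component is \emph{identically zero}, so Proposition~\ref{prop:iminvproj} applies with no properness or cut-off discussion at all. This is the mechanism behind Remark~\ref{rem:thm_isot}(3), and it is cleaner because it reduces the uniqueness to the trivial isotopy.

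The second gap is smaller: you say simplicity ``is clear at $s=0$ and propagates by Proposition~\ref{prop:pure_conn_comp},'' but what is clear at $s=0$ is the simplicity of the \emph{restriction} $K_\Psi|_{s=0}\simeq\cor_{\Delta_N}$, which is a sheaf on $N^2$, not the simplicity of $K_\Psi$ at the points of $\Lambda_\Psi$ lying over $s=0$. To pass from one to the other you need the compatibility of simplicity with non-characteristic restriction, as in~\cite[Cor.~7.5.13]{KS90}; this is exactly what Remark~\ref{rem:thm_isot}(2-b) supplies. Finally, the properness of both projections $\supp(K_\Psi)\to N\times I$ is asserted as ``inherited from the local generating function construction'' — but the local construction only controls the support on short time intervals, and one must argue that the iterated compositions preserve properness; this deserves an explicit argument rather than an assertion.
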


\begin{remarks}\label{rem:thm_isot}
  (1) The equivalence between $\dot\SSi(K_\Psi) \subset \Lambda_\Psi$ and
  $\dot\SSi(K_\Psi) = \Lambda_\Psi$ follows from
  Corollary~\ref{cor:micsuppLagrlisse}.

\sui(2-a)
The fact that $K_\Psi$ is simple along $\Lambda_\Psi$ is not explicitly stated
in~\cite{GKS12} (although it is used in the proof of the Arnol'd conjecture
about the intersection of the zero section of a cotangent bundle which its
image under a Hamiltonian isotopy).  However it is easily deduced from the
construction of $K_\Psi$ which is obtained as a composition of constant sheaves
over open subsets with smooth boundaries. Such sheaves are simple and a
composition of simple sheaves is simple by~\cite[Thm.~7.5.11]{KS90}.

\sui(2-b) We can also check the simplicity without going back to the
construction of $K_\Psi$ and using only the properties
$\dot\SSi(K_\Psi) = \Lambda_\Psi$ and
$K_\Psi|_{N^2\times \{0\}}\simeq \cor_{\Delta_N}$.  Indeed, by
Proposition~\ref{prop:pure_conn_comp} it is enough to check the simplicity at
one point of $\Lambda_\Psi$.  The map
$i_{s,\pi} \cl T^*N^2 \times T^*_s I \to T^*(N^2 \times I)$ is transverse to
$\Lambda_\Psi$, for any $s\in I$, and the projection
$i_{s,d} \cl \Lambda_\Psi \cap (T^*N^2 \times T^*_s I ) \to \Lambda_{\Psi_s}$ is
a bijection.  By~\cite[Cor.~7.5.13]{KS90} it follows that the type of
$K_\Psi|_{N^2\times \{s\}}$ at a point $p \in \Lambda_{\Psi_s}$ is the same as
the type of $K_\Psi$ at the point $p' \in \Lambda_{\Psi}$ such that
$i_{s,d}(p') = p$.  For $s=0$ we deduce from
$K_\Psi|_{N^2\times \{0\}}\simeq \cor_{\Delta_N}$ that $K_\Psi$ is simple at any
point of $\Lambda_\Psi \cap (T^*N^2 \times \{0\})$.

\sui(3) We can define the isotopy $\Psi'\cl \dT^*N \times I \to \dT^*N$ by
$\Psi'_s = \Psi_s^{-1}$ for all $s\in I$.  Using
Lemma~\ref{lem:SScompo} we see that
$\dot\SSi(K_\Psi \circ|_I K_{\Psi'}) \subset T^*_{\Delta_N \times I}(N^2\times
I)$.  Since
$(K_\Psi \circ|_I K_{\Psi'})|_{N^2\times \{0\}} \simeq \cor_{\Delta_N}$,
Proposition~\ref{prop:iminvproj} gives
$K_\Psi \circ|_I K_{\Psi'} \simeq \cor_{\Delta_N \times I}$.  The uniqueness of the
inverse then implies that $K_\Psi^{-1} \simeq K_{\Psi'}$.

\sui(4) By Remark~\ref{rem:comp=foncteur} the composition with
$K_{\Psi,s} = K_\Psi|_{N \times \{s\}}$ gives an equivalence of categories
$\Der(\cor_N) \to \Der(\cor_N)$, $F \mapsto K_{\Psi,s} \circ F$.  Its inverse is
given by $G \mapsto (K_{\Psi,s})^{-1} \circ G$; with the notations of~(3) we
have $(K_{\Psi,s})^{-1} \simeq K_{\Psi',s}$.
\end{remarks}

\begin{remark}\label{rem:thm_isot2}
  The link with the Chekanov-Sikorav theorem is as follows.  Let $M$ be a
  manifold and let $\Lambda \subset J^1(M)$ be a Legendrian submanifold which
  admits a generating function $f\colon M\times \R^d \to \R$ quadratic at
  infinity.  We define the epigraph of $f$,
  $\Gamma_f^+ \subset M\times\R^d\times\R$, by $\Gamma_f^+ = \{(x,v,t)$;
  $t\geq f(x,v)\}$. We let $q\colon M\times\R^d\times\R \to M\times\R$ be the
  projection and set $F_f = \reim{q}(\cor_{\Gamma_f^+})$.  Then we can check
  that $\dot\SSi(F_f) = \Lambda$, where we identify $\Lambda$ with a conic
  Lagrangian submanifold of $\dT^*(M\times \R)$ (this follows directly from
  Proposition~\ref{prop:oim} if $q$ is proper on $\Gamma_f^+$; in general we
  have to check that $q|_{\Gamma_f^+}$ has a good behaviour at infinity and we
  need some hypotheses on $f$ -- quadratic at infinity for example).
  
  Now we assume to be given a Legendrian isotopy $\Psi$ of $J^1(M)$, that we
  identify with a homogeneous Hamiltonian isotopy of $\dT^*(M\times \R)$.  The
  Chekanov-Sikorav theorem says that, up to adding extra variables, we can
  modify $f$ into $f_s$, such that $f_s$ is a generating function of
  $\Psi_s(\Lambda)$.  Then we have $F_{f_s} \simeq K_{\Psi,s} \circ F_f$.
\end{remark}

\begin{example}\label{ex:faisceau_flotgeod}
  The easiest illustration of Theorem~\ref{thm:GKS} is the sheaf associated with
  the (normalized) geodesic flow of $\dT^*\R^n$.  It is Example~3.10
  of~\cite{GKS12} and we recall it briefly.  We set $I = \R$ and define
  $\Psi\colon \dT^*\R^n \times I \to \dT^*\R^n$, by
  $\Psi_s(x;\xi) = (x + s \frac{\xi}{||\xi||}; \xi)$.  It is the Hamiltonian
  flow of $h(x;\xi) = ||\xi||$  
    (it is $-||\xi||$ in~\cite{GKS12}, so our example
  is the same up to inverting time). For $s>0$ we define
  $U_s = \{(x,y) \in \R^{2n}$; $||x-y|| < s\}$.  Using
  Example~\ref{ex:microsupport}~(iii) we see directly that
  $\Lambda_{\Psi_s} = \dot\SSi(\cor_{U_s})$.  The formula~\eqref{eq:germ_compo}
  gives
  $$
  (\cor_{U_s} \circ \cor_{U_t})_{(x,z)} \simeq \rsect_c(\R^n; \cor_{B(x,s)}
  \otimes \cor_{B(z,t)}) \simeq \rsect_c(\R^n; \cor_{B(x,s) \cap B(z,t)}) ,
  $$
  where $B(x,s)$ is the open ball of radius $s$ centered at $x$.  The intersection
  $B(x,s) \cap B(z,t)$ is empty or homeomorphic to an $n$-ball and we deduce
  $(\cor_{U_s} \circ \cor_{U_t})_{(x,z)} \simeq \cor[-n]$ if $||x-z|| < t+s$.  We
  cannot deduce a sheaf from its stalks.  However we can make the same computation
  for a small ball instead of the stalks (it is a similar computation and the sets
  involved are cylinders instead of balls).  Alternatively we have a bound for the
  microsupport of a composition (Lemma~\ref{lem:SScompo}) which says in our case that
  $\cor_{U_s} \circ \cor_{U_t}$ is locally constant on $U_{s+t}$, hence constant
  since $U_{s+t}$ is contractible.  It follows
  $\cor_{U_s} \circ \cor_{U_t} \simeq \cor_{U_{s+t}}[-n]$.  Defining
  $K_s = \cor_{U_s}[n]$ we thus have $K_s \circ K_t \simeq K_{s+t}$ and we can expect
  that the sheaf associated with $\Psi$ is given on $\{s>0\}$ by $K = \cor_U[n]$,
  where $U = \{(x,y,s) \in \R^{2n+1}$; $s>0$, $||x-y|| < s\}$.

  What about negative times?  For $s>0$ we have
  $$
  K_s^{-1} \simeq \opb{v} \rhom(K_s , \epb{q_2}(\cor_{\R^n}))
  \simeq \opb{v} \rhom(K_s , \cor_{\R^{2n}}[n] )
  \simeq  \cor_{\ol{U_s}}
  $$
  (see~\eqref{eq:def_inv_compo}   for the definition of
  $K_s^{-1}$ and use~\eqref{eq:dual_bordlisse}).    The
  composition $K_s^{-1} \circ K$ is a sheaf on $\R^{2n} \times \rspos$. We shift
  it by $s$ to the left and get a sheaf, say $L(s)$, on
  $\R^{2n} \times \mo]-s,+\infty[$, whose restriction to $\R^{2n} \times \rspos$
  is $K$. We can compute its restriction to $\R^{2n} \times \mo]-s,0]$ and find
  $L(s)|_{\R^{2n} \times \mo]-s,0]} \simeq \cor_{i(\ol{U_s})}$ where
  $i(x,y,t) = (x,y,-t)$.  The microsupport of $L(s)$ is again given by
  Lemma~\ref{lem:SScompo} and thus is the graph of $\Psi$. Since
  $L(s)|_{\R^{2n} \times \{0\}} \simeq \cor_{\Delta_{\R^n}}$ we deduce that
  $L(s) \simeq K_\Psi$. We can take $s$ arbitrarily big and obtain
  \begin{equation}
    \label{eq:faisceau_flotgeod}
    \left\{
      \begin{aligned}
        K_\Psi|_{\R^{2n} \times \mo]0,+\infty[} &\simeq \cor_U[n] , \\
       K_\Psi|_{\R^{2n} \times \mo]-\infty,0]} &\simeq \cor_Z ,
      \end{aligned} \right.
  \end{equation}
  where $U = \{(x,y,s) \in \R^{2n+1}$; $s>0$, $||x-y|| < s\}$ and
  $Z = \{(x,y,s) \in \R^{2n+1}$; $s\leq 0$, $||x-y|| \leq -s\}$.  We remark that
  we have a distinguished triangle
  \begin{equation}
    \label{eq:dt_faisceau_flotgeod}
    \cor_U[n] \to K_\Psi \to \cor_Z \to[u] \cor_U[n+1]
   \end{equation} 
   which is not split, that is, $K_\Psi \not\simeq \cor_Z \oplus \cor_U[n]$.
   Indeed this would imply $\SSi(K_\Psi) = \SSi(\cor_U) \cup \SSi(\cor_Z)$. But
   Example~\ref{ex:microsupport}~(iv) says that $\SSi(\cor_Z)$ is bigger than
   the graph of $\Psi$ at the points $T^*\R^{2n+1}$ above
   $\Delta_{\R^n} \times \{0\}$.  In particular the morphism $u$
   in~\eqref{eq:dt_faisceau_flotgeod} is non zero.  The computation
   \begin{align*}
   \rhom(\cor_Z, \cor_U) &\simeq \rhom(\cor_Z, \DD'(\cor_{\ol U}))
   \simeq \rhom(\cor_Z \otimes \cor_{\ol U}, \cor_{\R^{2n+1}})  \\
   &\simeq \rhom(\cor_{\Delta_{\R^n} \times \{0\}}, \cor_{\R^{2n+1}}) \simeq
   \cor_{\Delta_{\R^n} \times \{0\}}[-n-1]
   \end{align*}
   shows that $\Hom(\cor_Z, \cor_U[n+1]) \simeq \cor$. Hence, if $\cor$ is a
   field, we have only one non trivial distinguished triangle
   like~\eqref{eq:dt_faisceau_flotgeod} up to isomorphism (see
   Lemma~\ref{lem:classif_extensions} below).
\end{example}

For a conic subset $A$ of $\dT^*N$ we let $\Der_{[A]}(\cor_N)$ be the full
subcategory of $\Der(\cor_N)$ formed by the $F$ with $\dot\SSi(F) \subset A$
(see Notation~\ref{not:cat_micsupp_fixe}).  Using the notation $\circ^a$
of~\eqref{eq:def_compo_ensembles} we define $A' \subset \dT^*(N \times I)$ by
$A' = \Lambda_\Psi \circ^a A$.  More explicitly we have
\begin{equation}
  \label{eq:imageA_par_isotopie}
A' = \{ (\Psi(x,\xi,s), (s; -h(\Psi(x,\xi,s),s)));
\; (x;\xi) \in A ,\; s\in I\} .
\end{equation}
We see that $A'$ is non-characteristic for the inclusion $i_{s}$, for any
$s\in I$.  Moreover $i_s^\sharp (A') = \Psi_s(A)$.  We obtain an inverse image
\begin{equation}
  \label{eq:restr_NI_Ns}
  \opb{i_{s}} \cl \Der_{[A']}(\cor_{N \times I}) \to \Der_{[\Psi_s(A)]}(\cor_N) .
\end{equation}
We can deduce from Theorem~\ref{thm:GKS} that it is an equivalence
(see~\cite[\S3.4]{GKS12}), with an inverse induced by $K_\Psi$ (however the
categories involved in~\eqref{eq:restr_NI_Ns} and the functor $\opb{i_{s}}$ only
depend on the sets $\Psi_s(A)$, $A'$, not on $\Psi$ itself):

\begin{corollary}
  \label{cor:isot_equivcat}
  {\rm(i)} For any $s \in I$ the composition $F \mapsto K_{\Psi,s} \circ F$
  induces an equivalence of categories
  $\Der_{[A]}(\cor_N) \isoto \Der_{[\Psi_{s}(A)]}(\cor_N)$, where
  $K_{\Psi,s} = K_\Psi|_{N \times \{s\}}$.

  \suiv{(ii)} For any $s \in I$ the inverse image functor~\eqref{eq:restr_NI_Ns}
  is an equivalence of categories, with an inverse given by
  $F \mapsto K_\Psi \circ K_{\Psi,s}^{-1} \circ F$.  In particular, for any
  $F, G \in \Der(\cor_N)$, we have isomorphisms
  \begin{equation}
    \label{eq:restr_NI_Nspleinfid}
    \begin{split}
\Hom(F,G)  &\isofrom \Hom(K_\Psi \circ F, K_\Psi \circ G) \\
&\isoto    \Hom(K_{\Psi,s} \circ F, K_{\Psi,s} \circ G) 
    \end{split}
  \end{equation}
  and  
  $\rsect(N; G) \isofrom \rsect(N\times I; K_\Psi \circ G) \isoto \rsect(N;
  K_{\Psi,s} \circ G)$.
\end{corollary}

\begin{remark}
  The functor~\eqref{eq:restr_NI_Ns} and Corollary~\ref{cor:isot_equivcat} do
  not depend on the whole isotopy $\Psi$ but only on the deformation
  $\Psi_s(A)$, $s\in I$, of $A$.  It is only required that this deformation is
  given by {\em some} Hamiltonian isotopy.
\end{remark}

\begin{remark}
  \label{rem:corGKSparam}
  Instead of composing with $K_\Psi$ on the left in
  Corollary~\ref{cor:isot_equivcat} we can compose on the right. We can also add
  parameters, that is, consider $F,G$ on a product $N\times P$ or $P\times N$
  for some other manifold $P$.  We only quote the following formulas for later
  use: let $F, G \in \Der(\cor_{N\times P})$,
  $F', G' \in \Der(\cor_{P\times N})$ be given; then the restriction at time
  $0 \in I$ gives isomorphisms
  \begin{align}
    \label{eq:restr_NI_Nspleinfid2}
    \Hom_{\Der(\cor_{N\times P \times I})}(K_\Psi \circ F, K_\Psi \circ G)
    &\isoto \Hom_{\Der(\cor_{N\times P})}(F,G) ,  \\
    \label{eq:restr_NI_Nspleinfid3}
    \Hom_{\Der(\cor_{P\times N \times I})}(F' \circ K_\Psi, G' \circ K_\Psi)
    &\isoto \Hom_{\Der(\cor_{P\times N})}(F',G') .
  \end{align}
\end{remark}

\section{Local behaviour}

We check here that the restriction of $K_{\Psi,s} \circ F$ to an open subset $V$
of $N$ only depends on $F|_U$ for some bigger open subset $U$.

\begin{lemma}
  \label{lem:restr_isotopie}
  Let $U$, $V$ be two open subsets of $N$ and let $s \in I$ be given.  We assume
  that $\Psi_t^{-1}(\dT^*V) \subset \dT^*U$ for all $t\in [0,s]$.  Then, for any
  $F \in \Der(\cor_N)$, the morphism $F_U \to F$ induces an isomorphism
  $$
(K_\Psi \circ F_U)|_{V \times [0,s]} \isoto (K_\Psi \circ F)|_{V \times [0,s]} .
$$
In particular $(K_{\Psi,s} \circ F_U)|_V \isoto (K_{\Psi,s} \circ F)|_V$.
\end{lemma}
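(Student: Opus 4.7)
The strategy is to reduce the claim to a vanishing statement and derive it from a microsupport computation followed by a propagation argument. Applying $K_\Psi \circ (-)$ to the excision distinguished triangle $F_U \to F \to F_{N\setminus U} \to[+1]$ yields a distinguished triangle of sheaves on $N\times I$, so it suffices to show that the sheaf $H := K_\Psi \circ F_{N\setminus U}$ vanishes on $V \times [0,s]$. Setting $G = F_{N\setminus U}$, we have $\supp(G) \subset N \setminus U$, and hence $\SSi(G) \cap \dT^*U = \emptyset$.

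The first key step is to bound $\dSSi(H)$ at points over $V \times [0,s]$. By the relative version of Lemma~\ref{lem:SScompo}, any $(y,t;\eta,\tau) \in \dSSi(H)$ arises from some $(z;\zeta) \in \SSi(G)$ together with $(y,z,t;\eta,-\zeta,\tau) \in \SSi(K_\Psi)$. By Theorem~\ref{thm:GKS}, $\dSSi(K_\Psi) = \Lambda_\Psi$; the zero-section case for $\SSi(K_\Psi)$ forces $(\eta,\tau) = 0$, while the $\Lambda_\Psi$-case, via the explicit formula~\eqref{eq:def_twistedgraph_global}, gives $\eta \ne 0$ together with $(z;\zeta) = \Psi_t^{-1}(y;\eta)$. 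For $(y,t) \in V \times [0,s]$ and $\eta \ne 0$, the hypothesis $\Psi_t^{-1}(\dT^*V) \subset \dT^*U$ forces $(z;\zeta) \in \dT^*U$, contradicting $\SSi(G) \cap \dT^*U = \emptyset$. Thus $\dSSi(H)$ has no point over $V \times [0,s]$.

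The final step promotes this microsupport vanishing to $H|_{V \times [0,s]} \simeq 0$, and this is where the main technical obstacle lies: Proposition~\ref{prop:iminvproj} needs the microsupport bound on an \emph{open} contractible product $V' \times J$. To manufacture such an open interval $J \supset [0,s]$, I would pass to relatively compact opens $V' \Subset V$. The $\rspos$-homogeneity of $\Psi$ reduces the condition $\Psi_t^{-1}(\dT^*V') \subset \dT^*U$ to a condition on the compact sphere bundle over $\ol{V'}$, and continuity of $\Psi^{-1}$ combined with this compactness produces an open $J \supset [0,s]$ on which the condition persists for $V'$. The first step, applied to $V'$ and $J$, then gives $\dSSi(H|_{V' \times J}) = \emptyset$, so Proposition~\ref{prop:iminvproj} shows that $H|_{V' \times J}$ is the pullback along $V' \times J \to V'$ of a sheaf on $V'$; in particular $H|_{V' \times \{t\}} \simeq H|_{V' \times \{0\}}$ for all $t \in J$. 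Since $K_{\Psi,0} \simeq \cor_{\Delta_N}$ and the $t=0$ case of the hypothesis forces $V \subset U$ (assuming $\dim N \ge 1$; the zero-dimensional case being trivial), one has $H|_{V' \times \{0\}} \simeq G|_{V'} = 0$, whence $H|_{V' \times J} = 0$. Letting $V'$ exhaust $V$ yields $H|_{V \times [0,s]} = 0$, and restriction at $t = s$ gives the final assertion.
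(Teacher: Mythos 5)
Your proof is correct and follows essentially the same strategy as the paper's: reduce to the vanishing of $K_\Psi \circ F_{N\setminus U}$ over $V \times [0,s]$, bound the microsupport away from the zero section via the explicit form of $\Lambda_\Psi$ and the hypothesis on $\Psi_t^{-1}$, and then propagate the vanishing from $t=0$ by local constancy. The one noteworthy difference is cosmetic: where you explicitly pass to $V' \Subset V$ and use compactness to enlarge $[0,s]$ to an open interval $J$ before invoking Proposition~\ref{prop:iminvproj}, the paper instead phrases the slice-wise statement ($G_t|_V$ locally constant for each $t$) and then leans on the non-characteristicity of $\SSi(G)$ for the inclusions $i_t$ to conclude $\dot\SSi(G)=\emptyset$; your handling of the closed-interval subtlety is more explicit.
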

\begin{proof}
  Let us set $G = K_\Psi \circ F_{N\setminus U} \in \Der(\cor_{N \times I})$ and
  $G_t = G|_{N\times \{t\}}$.  We have a distinguished triangle
  $K_\Psi \circ F_U \to K_\Psi \circ F \to G \to[+1]$ and the assertion of the
  lemma is equivalent to $G|_{V \times [0,s]} \simeq 0$.  Since
  $G_0|_V \simeq 0$ it is enough to see that $G|_{V \times [0,s]}$ is locally
  constant.

  We first remark that $G_t|_V$ is locally constant for each $t \in [0,s]$.
  Indeed
  $\dot\SSi(G_t) = \Psi_t(\dot\SSi(F_{N\setminus U})) \subset \Psi_t(
  \dT^*N\setminus \dT^*U)$ and the hypothesis $\dT^*V \subset \Psi_t(\dT^*U)$
  implies $\dot\SSi(G_t) \cap \dT^*V = \emptyset$.  It follows that $G$ is
  locally on $V\times [0,s]$ of the form
  $G \simeq \opb{q}G'$ for some
  $G' \in \Der(\cor_I)$, where $q\colon N\times I \to I$ is the projection.
  Hence $\SSi(G|_{V\times [0,s]}) \subset T^*_NN \times T^*I$.  On the other
  hand $G$ is non-characteristic for the inclusion of $N \times \{t\}$ in
  $N \times I$, for any $t\in I$. Hence $\dot\SSi(G) =\emptyset$ as required.
\end{proof}

\begin{proposition}\label{prop:restr_morph_isot}
  Let $F,G \in \Der(\cor_N)$. The restriction morphism
  $$
\opb{i_0} \rhom(K_\Psi \circ F, K_\Psi \circ G) \to \rhom(F,G)
$$
is an isomorphism.  
\end{proposition}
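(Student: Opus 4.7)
The plan is to derive the isomorphism from two applications of the non-characteristic inverse image formula (Theorem~\ref{thm:iminv}). First I would check that both $K_\Psi \circ F$ and $K_\Psi \circ G$ are non-characteristic for the inclusion $i_0 \colon N \hookrightarrow N \times I$. By Lemma~\ref{lem:SScompo} their nonzero microsupports lie in the sets $A_F', A_G' \subset \dT^*(N\times I)$ of the form~\eqref{eq:imageA_par_isotopie}; the covector $N$-component at every point of such a set is the $\xi$-part of $\Psi(x,\xi,s)$, which is nonzero since $\Psi$ preserves $\dT^*N$, so $A_F'$ and $A_G'$ miss the conormal $T^*_N N \times T^*_0 I$. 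Combining Proposition~\ref{prop:formulaire}(g), Theorem~\ref{thm:iminv} and the identification $\opb{i_0}(K_\Psi \circ H) \simeq K_{\Psi,0} \circ H \simeq H$ (for $H = F$ or $G$, using $K_{\Psi,0}\simeq \cor_{\Delta_N}$) yields
\begin{equation*}
\epb{i_0}\rhom(K_\Psi \circ F, K_\Psi \circ G) \simeq \rhom(F,G) \tens \omega_{N|N\times I}.
\end{equation*}

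The key step is then to prove that $\shf \eqdot \rhom(K_\Psi \circ F, K_\Psi \circ G)$ is itself non-characteristic for $i_0$. By Theorem~\ref{thm:SSrhom} we have $\dot\SSi(\shf) \subset A_F'^a \hplus A_G'$, and I would verify directly from the definition of $\hplus$ that any point $(x_\infty, 0; 0, \tau_\infty)$ in this set has $\tau_\infty = 0$. Unwinding produces sequences $(y_n, s_n; \eta_n, -h(y_n,\eta_n,s_n)) \in A_F'$ and $(z_n, t_n; \zeta_n, -h(z_n,\zeta_n,t_n)) \in A_G'$ such that $(y_n,s_n), (z_n, t_n) \to (x_\infty, 0)$, $\zeta_n - \eta_n \to 0$, and $|(y_n - z_n, s_n - t_n)|\cdot|(\eta_n, h(y_n,\eta_n,s_n))| \to 0$. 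Using the $1$-homogeneity of $h$ in $\eta$ to write $h(y,\eta,s) = |\eta|\,h(y,\hat\eta,s)$ and decomposing $\eta_n - \zeta_n$ into radial and spherical parts gives $|\eta_n|-|\zeta_n|\to 0$ and $|\eta_n|\,|\hat\eta_n - \hat\zeta_n|\to 0$; combined with the $\hplus$ bounds $|\eta_n|\cdot|y_n-z_n|, |\eta_n|\cdot|s_n-t_n|\to 0$ and Lipschitz continuity of $h$ in $(y,\hat\eta,s)$, this forces $h(y_n,\eta_n,s_n) - h(z_n,\zeta_n,t_n) \to 0$, hence $\tau_\infty = 0$.

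Granted this, a second application of Theorem~\ref{thm:iminv} to $\shf$ gives $\opb{i_0}\shf \tens \omega_{N|N\times I} \simeq \epb{i_0}\shf$; cancelling the invertible factor $\omega_{N|N\times I}$ (a shift by $-\dim I$) against the isomorphism of the previous paragraph produces $\opb{i_0}\shf \simeq \rhom(F,G)$, and a final naturality check identifies this with the restriction morphism of the proposition. The hard part is the non-characteristic property for $\shf$: the point-set intuition (at $\xi_F = \xi_G$ the $\tau$-components $h(x,\xi_F,0)$ and $h(x,\xi_G,0)$ are visibly equal) would be conclusive if Theorem~\ref{thm:SSrhom} produced an ordinary $+$ bound, but since it only produces the weaker $\hplus$ bound one must carry the argument through limits, using the homogeneity of $h$ in an essential way.
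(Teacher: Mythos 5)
Your proposal is correct in outline, and it is in fact precisely the route the paper anticipates and then deliberately declines to take: just before the proof, the paper remarks that one ``could argue as in the proof of Corollary~\ref{cor:homFtGt_indpdtt}'' and that ``the main step is then to check that $(A')^a \hplus A'$ is non-characteristic for $i_0$,'' before announcing ``here is another proof avoiding this computation.'' You carry out that computation. The paper's proof instead works at the level of germs: it expresses both sides of the restriction morphism as filtered colimits of $\Hom$-groups over shrinking neighborhoods, uses Lemma~\ref{lem:restr_isotopie} (that $K_\Psi\circ F|_{V\times J}$ only depends on $F|_U$ for suitable $U\supset V$) together with the full faithfulness isomorphism~\eqref{eq:restr_NI_Nspleinfid} to produce a commutative ladder intertwining the two colimit systems, and concludes that the colimits agree. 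That route never touches $\hplus$. What your approach buys is that it makes Proposition~\ref{prop:restr_morph_isot} a literal instance of the Corollary~\ref{cor:homFtGt_indpdtt} mechanism, at the cost of the delicate microsupport estimate; the paper's route is softer but more ad hoc. Your estimate is essentially right: the key points are that $1$-homogeneity of $h$ makes $h(z,\cdot,t)$ uniformly Lipschitz in $\eta$ on compacta, so $|\eta_n-\zeta_n|\to 0$ controls $h(z_n,\eta_n,t_n)-h(z_n,\zeta_n,t_n)$, while the $\hplus$ constraint gives $|(y_n,s_n)-(z_n,t_n)|\cdot|\eta_n|\to 0$, which after factoring $h(y,\eta,s)=|\eta|\,h(y,\hat\eta,s)$ controls $h(y_n,\eta_n,s_n)-h(z_n,\eta_n,t_n)$. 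Two things you should fill in to make the argument complete: (a) the $\hplus$ bound of Theorem~\ref{thm:SSrhom} involves the full $\SSi$, not just $\dot\SSi$, so you must also dispose of the terms where one or both sequences sit in the zero section of $T^*(N\times I)$ (these are easy, since $h$ extends by $0$ continuously to the zero section); (b) the isomorphism you construct by composing Proposition~\ref{prop:formulaire}-(g) with two applications of the non-characteristic comparison map $\opb{f}(\cdot)\ltens\omega_{M|N}\to\epb{f}(\cdot)$ must be identified with the canonical restriction morphism of the statement---this does follow from the compatibility of the comparison map with the $(\ltens,\rhom)$ adjunction, but it is a genuine diagram chase and not a tautology.
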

Using the bound $A'$ of~\eqref{eq:imageA_par_isotopie} with $A = \dT^*N$ we
could argue as in the proof of Corollary~\ref{cor:homFtGt_indpdtt}. The main
step is then to check that $(A')^a \hplus A'$ is non-characteristic for $i_0$;
here is another proof avoiding this computation.

\begin{proof}
  We set $F' = K_\Psi \circ F$, $G' = K_\Psi \circ G$.  It is enough to check
  that the restriction morphism induces an isomorphism in each degree of the
  stalks at any point $x \in N$.  We recall that
  $H^0(U;\rhom(F,G)) \simeq \Hom(F|_U, G|_U) \simeq \Hom(F_U,G)$. Hence
  \begin{equation}
    \label{eq:restr_morph_isot1}
    \begin{split}
      (H^k\rhom(F,G))_x
      &\simeq \varinjlim_{x \in U} \Hom(F|_U, G[k]|_U),  \\
      (H^k\rhom(F',G'))_{(x,0)}
      &\simeq \varinjlim_{(x,0) \in U \times J} \Hom(F'|_{U \times J}, G'[k]|_{U \times J}) ,
    \end{split}
  \end{equation}
  where $U$ runs over the open neighborhoods of $x \in N$ and $J$ over the
  neighborhoods of $0 \in I$.  For such $U$ and $J$, with $J$ contractible, we
  have
  $\Hom(K_\Psi \circ F_U|_{N\times J}, G'[k]|_{N\times J}) \isoto \Hom(F_U,
  G[k])$ by~\eqref{eq:restr_NI_Nspleinfid}.  If $V \subset U$ satisfies
  $\Psi_t^{-1}(\dT^*V) \subset \dT^*U$ for all $t\in J$, then
  $K_\Psi \circ F_U|_{V\times J} \simeq K_\Psi \circ F|_{V\times J}$ by
  Lemma~\ref{lem:restr_isotopie}. We deduce a natural morphism
  $\Hom(F_U, G[k]) \to \Hom(F'|_{V\times J}, G'[k]|_{V\times J})$ which gives a
  commutative diagram
  $$
  \begin{tikzcd}[column sep=2cm]
    \Hom(F'|_{U\times J}, G'[k]|_{U\times J}) \ar[d] \ar[r]
    & \Hom(F|_U, G[k]|_U) \ar[d]
    \ar[dl, start anchor=south west, end anchor=north east]  \\
   \Hom(F'|_{V\times J}, G'[k]|_{V\times J}) \ar[r]  & \Hom(F|_V, G[k]|_V) .
  \end{tikzcd}
  $$
  It follows that the two limits in~\eqref{eq:restr_morph_isot1} are isomorphic,
  which proves the proposition.  
\end{proof}

Let $A$ be a conic subset of $\dT^*N$ and let $A' \subset T^*(N\times I)$ be as
in~\eqref{eq:imageA_par_isotopie}. Let $U$ be an open subset of $N$.  We would
like to find a neighborhood $V$ of $U\times \{0\}$ in $N\times I$ such that the
inverse image by the inclusion of $U$ in $V$ is an equivalence of categories
$\Der_{[A' \cap T^*V]}(\cor_V) \isoto \Der_{[A \cap T^*U]}(\cor_U)$. We do not know
if such a $V$ always exists and give a weaker statement which will be sufficient
for our purposes.
 
\begin{proposition}\label{prop:equiv_local_isotopie}
  Let $A \subset \dT^*N$ and $A' \subset T^*(N\times I)$ be as
  in~\eqref{eq:imageA_par_isotopie}. Let $j \colon U \to N$ be the inclusion of
  an open subset.

  \suiv{(i)} Let $F \in \Der_{[A \cap T^*U]}(\cor_U)$. Then there exist a neighborhood
  $V$ of $U\times \{0\}$ in $N\times I$ and $G \in \Der_{[A' \cap T^*V]}(\cor_V)$
  such that $F \simeq G|_{U\times \{0\}}$.
  
  \suiv{(ii)} Let $V$ be a neighborhood of $U\times \{0\}$ in $N\times I$ and
  $G \in \Der_{[A' \cap T^*V]}(\cor_V)$. Then there exists a smaller neighborhood
  $V'$ of $U\times \{0\}$ such that $G|_{V'} \simeq (K_\Psi \circ F)|_{V'}$,
  where $F = \reim{j}(G|_{U\times \{0\}})$.  In particular, for
  $G, G' \in \Der_{[A' \cap T^*V]}(\cor_V)$ the inverse image by the inclusion
  gives an isomorphism
  $$
\rhom(G,G')|_{U\times \{0\}} \isoto \rhom(G|_{U\times \{0\}},G'|_{U\times \{0\}})
  $$
  and, if $G|_{U\times \{0\}} \simeq G'|_{U\times \{0\}}$, then there exists a
  smaller neighborhood $V''$ of ${U\times \{0\}}$ such that
  $G|_{V''} \simeq G'|_{V''}$.
\end{proposition}
\begin{proof}
  (i)  We set $F' = \reim{j}F$ and
  $G' = K_\Psi \circ F'$.  We have the rough bounds
  $\dot\SSi(F') \subset A \cup \pi_N^{-1}(\partial U)$ and
  $\dot\SSi(G') \subset A' \cup \dot\pi_{N\times \R}^{-1}(Z)$ where
  $Z = \bigsqcup_{s\in I} Z_s \times \{s\}$ and
  $Z_s = \pi_N(\Psi_s(\dot\pi_{N\times \R}^{-1}(\partial U)))$.  Then
  $V = N\times\R \setminus Z$ is a neighborhood of $U \times \{0\}$ and
  $G = G'|_V$ has the required property.

  \sui(ii) We let $k \colon V \to N\times I$ be the inclusion and set
  $G_1 = \reim{k}G$, $H = K_\Psi^{-1} \circ|_I \, G_1$.  Then
  $H \in \Der(\cor_{N\times I})$ and
  $H_s \simeq K_{\Psi,s}^{-1} \circ (G_1|_{N\times \{s\}})$.  Using the notation
  $\Lambda_\Psi \circ^a-$ (see~\eqref{eq:imageA_par_isotopie}) we have
  $\SSi(H) \subset \Lambda_{\Psi^{-1}} \circ^a|_I \SSi(G_1)$.  Then
  \begin{align*}
    \dot\SSi(H) &\subset \Lambda_{\Psi^{-1}} \circ^a|_I \dot\SSi(G_1) \\
    &\subset
      \Lambda_{\Psi^{-1}} \circ^a|_I  (A' \cup \pi_{N\times I}^{-1}(\partial V)) 
      \subset (A \times T^*_II) \cup B ,
  \end{align*}
  where $B = \Lambda_{\Psi^{-1}} \circ^a|_I \pi_{N\times I}^{-1}(\partial V)$.
  We remark that $W = (N\times I) \setminus \pi(B)$ is a neighborhood of
  $U\times \{0\}$. We let $V_1 \subset V \cap W \cap (U\times I)$ be a smaller
  neighborhood such that the fibers of the projection $p \colon V_1 \to U$ are
  intervals.  Then $\dot\SSi(H|_{V_1}) \subset A \times T^*_II$ and
  Proposition~\ref{prop:iminvproj} implies that $H|_{V_1} \simeq \opb{p}(F)$ for
  some $F \in \Der(\cor_U)$.  Hence $K_\Psi^{-1} \circ|_I \, \reim{k}G$ is
  isomorphic to $\reim{j}F \etens \cor_I$ on some neighborhood of $U$ in
  $N\times I$.  It follows easily that $\reim{k}G$ is isomorphic to
  $K_\Psi \circ|_I (\reim{j}F \etens \cor_I) \simeq K_\Psi \circ \reim{j}F$ on
  some smaller neighborhood $V'$.

  In the same way $G'$ can be written $G'|_{V'} \simeq (K_\Psi \circ F')|_{V'}$,
  maybe up to shrinking $V'$.  Now the last assertions of the proposition follow
  from Proposition~\ref{prop:restr_morph_isot} and
  Corollary~\ref{cor:isot_equivcat}.  
\end{proof}

\section{Non homogeneous case}

Now we consider the case of a non homogeneous Hamiltonian isotopy on the
cotangent bundle $T^*M$ of some manifold $M$.  We reduce this case to the
homogeneous framework by a common trick of adding one variable. Let $(x;\xi)$
and $(t;\tau)$ be the coordinates on $T^*M$ and $T^*\R$.  We define
$\rho_M \cl T^*M\times\dT^*\R \to T^*M$ by
\begin{equation}\label{eq:def_rho}
\rho_M(x,t;\xi,\tau) = (x;\xi/\tau).  
\end{equation}
The fibers of $\rho_M$ have dimension $2$.  They are stable by the translation
in the $t$ variable and are conic, that is, stable by the action of $\rspos$ on
the fibers of $\dT^*(M\times \R)$.  Let $\Phi\cl T^*M\times I \to T^*M$ be a
Hamiltonian isotopy of class $\Cinf$.  We assume that $\Phi$ has compact
support, that is, there exists a compact subset $C\subset T^*M$ such that
$\Phi(p,s) = p$ for all $p\in T^*M\setminus C$ and all $s\in I$.  Then $\Phi$ is
the Hamiltonian flow of a function $h\cl T^*M \times I \to \R$ such that $h$ is
locally constant outside $C \times I$.  In particular, if $M$ does not have a
connected component diffeomorphic to the circle $\cer$, then any Hamiltonian
isotopy on $T^*M$ with compact support can be defined by a Hamiltonian function
$h$ with compact support.

\begin{proposition}[See Prop.~A.6 of~\cite{GKS12}]
\label{prop:homog_isot}
Let $\Phi\cl T^*M\times I \to T^*M$ be a Hamiltonian isotopy with compact
support and let $h\cl T^*M \times I \to \R$ be a function with Hamiltonian flow
$\Phi$.

\suiv{(i)} Let $h' \cl \dT^*(M\times\R) \times I \to \R$ be the Hamiltonian
function given by $h'((x,t;\xi,\tau),s) = \tau h((x; \xi/\tau),s)$.  Then the
flow $\Phi'$ of $h'$ is a homogeneous Hamiltonian isotopy
$\Phi'\cl \dT^*(M\times \R) \times I \to \dT^*(M\times \R)$ whose restriction to
$T^*M\times \dT^*\R \times I$ gives the commutative diagram
\begin{equation}\label{eq:diag-PhiPsi}
\begin{tikzcd}[column sep=2cm]
T^*M\times \dT^*\R \times I \rar{\Phi'} \dar{\rho_M \times\id_I}
                              & T^*M\times \dT^*\R \dar{\rho_M}  \\
T^*M\times I\rar{\Phi}              & T^*M \pointdiag 
\end{tikzcd}
\end{equation}

\suiv{(ii)} The isotopy $\Phi'$ preserves the subset $\{\tau >0\}$ of
$\dT^*(M\times\R)$ and commutes with the vertical translations $T_c$ of
$\dT^*(M\times\R)$ given by $T_c(x,t;\xi,\tau) = (x,t+c;\xi,\tau)$, for all
$c\in\R$.  Defining $q\cl (M\times \R)^2\times I \to M^2 \times \R\times I$,
$(x,t,x',t',s) \mapsto (x,x',t-t',s)$, the graph $\Lambda_{\Phi'}$ defined
in~\eqref{eq:def_twistedgraph_global} satisfies
$\Lambda_{\Phi'} = q_d \opb{q_\pi} (\Lambda'_h)$, where
$\Lambda'_h \subset \dT^*(M^2 \times \R \times I)$ is given by
$\Lambda'_h = q_\pi \opb{q_d} (\Lambda_{\Phi'})$.
\end{proposition}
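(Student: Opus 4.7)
My plan is to read all the assertions directly off the Hamilton equations of $h'$ in local coordinates, and then deduce the descent of $\Lambda_{\Phi'}$ through $q$ from the translation-invariance of $h'$.

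First, I would verify that $h'(x,t;\xi,\tau,s) = \tau h(x;\xi/\tau,s)$ is well-defined and smooth on the open set $\{\tau\ne 0\}\subset T^*(M\times\R)\times I$, and note the scaling $h'(x,t;\lambda\xi,\lambda\tau,s) = \lambda h'(x,t;\xi,\tau,s)$ for $\lambda>0$, which makes $h'$ positively homogeneous of degree one in the fiber. Writing $\eta=\xi/\tau$ for the variable on which $h$ depends, a direct computation of partial derivatives would then yield the Hamilton equations
\[
\dot x_i = \tfrac{\partial h}{\partial\eta_i}, \quad
\dot\xi_i = -\tau\,\tfrac{\partial h}{\partial x_i}, \quad
\dot\tau = 0, \quad
\dot t = h(x;\eta,s) - \eta\cdot\tfrac{\partial h}{\partial\eta},
\]
with all right-hand sides evaluated at $(x;\eta,s)$. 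The compact support hypothesis on $\Phi$ makes $X_{h'}$ tame along each $\rho_M$-fiber, so $\Phi'$ is globally defined on $\{\tau\ne 0\}\times I$, and the homogeneity of $h'$ guarantees that it is a homogeneous Hamiltonian isotopy on $\dT^*(M\times\R)\times I$.

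Next, I would harvest the geometric consequences: $\dot\tau = 0$ gives preservation of $\{\tau>0\}$; $\partial h'/\partial t = 0$ makes $X_{h'}$ invariant under the $T_c$-action, so $\Phi'$ commutes with $T_c$; and the substitution $\eta=\xi/\tau$ turns the equations for $\dot x$ and $\dot\eta = \dot\xi/\tau = -\partial h/\partial x$ into exactly the Hamilton ODE of $h$ at $(x;\eta)$. Since $\rho_M$ reads off $(x;\eta)$, this proves the commutativity of~\eqref{eq:diag-PhiPsi}.

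Finally, for the last identity, a generic point of $\Lambda_{\Phi'}$ coming from $y=(x,t;\xi,\tau)$ at time $s$ has, by~\eqref{eq:def_twistedgraph_global}, two $T^*(M\times\R)$-factors whose $dt$-components are $\tau$ (at $\Phi'(y,s)$, since $\dot\tau=0$) and $-\tau$ (from the $(x,t;-\xi,-\tau)$ entry), whose sum vanishes. This is exactly the condition that the covector lies in the image of $q^*$, where the submersion $q$ has one-dimensional fibers generated by the diagonal translation $(t,t')\mapsto(t+c,t'+c)$. The commutation $\Phi'\circ T_c = T_c\circ\Phi'$ then makes $\Lambda_{\Phi'}$ saturated for the fibers of $q$, so it descends to a well-defined subset of $T^*(M^2\times\R\times I)$, which must coincide with $q_\pi\opb{q_d}(\Lambda_{\Phi'}) = \Lambda'_h$, yielding both identities simultaneously. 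The main obstacle is careful sign-tracking to confirm that the $dt$-components really cancel and that the resulting descent matches the formula for $\Lambda'_h$; everything else reduces to the coordinate computation of the first step.
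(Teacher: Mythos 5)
Your coordinate computation for (i) is correct and gives exactly what is needed: the Hamilton equations you write down for $h'$ show $\dot\tau = 0$, $\dot\eta = \dot\xi/\tau = -\partial h/\partial x$ (since $\dot\tau=0$), $\dot x = \partial h/\partial\eta$, so that $(x;\eta)$ follows $\Phi$ and the diagram~\eqref{eq:diag-PhiPsi} commutes on $\{\tau\neq 0\}$. There is, however, a genuine gap in your claim that "the homogeneity of $h'$ guarantees that it is a homogeneous Hamiltonian isotopy on $\dT^*(M\times\R)\times I$". Homogeneity of $h'$ on $\{\tau\neq 0\}$ does not, by itself, extend anything across the locus $\{\tau=0,\;\xi\neq 0\}\subset\dT^*(M\times\R)$, and $h'$ as written is undefined there. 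The extension uses the compact-support hypothesis in an essential way: since $h$ is locally constant outside a compact set of $T^*M$, for $\xi$ bounded away from $0$ and $|\tau|$ small the point $\xi/\tau$ leaves that compact set, so $h(x;\xi/\tau,s)$ becomes a locally constant function $c(x,s)$, hence $h' = \tau\,c(x,s)$ extends smoothly (indeed linearly in $\tau$) across $\tau=0$ with value $0$, and the resulting flow on $\{\tau=0\}$ is a pure translation in $t$. This is the substance of Prop.~A.6 of~\cite{GKS12}, which the paper simply cites for (i); if you want to re-prove it you must supply this extension argument rather than appeal to homogeneity alone.

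For (ii), your argument is the same as the paper's, and it is correct. The chain is: $\partial h'/\partial t = 0$ gives $\Phi'\circ T_c = T_c\circ\Phi'$; $\dot\tau = 0$ gives $\Lambda_{\Phi'}\subset\Sigma:=\{\tau+\tau'=0\}$, which is exactly $\operatorname{im}(q_d)$ since the fibers of $q$ are generated by $\partial_t+\partial_{t'}$; and the $T_c$-commutation (together with $t$-independence of $h'$) makes $\Lambda_{\Phi'}$ saturated along the characteristic foliation of $\Sigma$, which is the cotangent lift of the diagonal $t$-translation, so it descends via $q_\pi$ and lifts back via $q_d\opb{q_\pi}$ to give both identities. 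You make the saturation step more explicit than the paper does (the paper bundles it into "symplectic reduction" after noting the $T_c$-commutation), which is a welcome clarification since without saturation $q_d\opb{q_\pi}(\Lambda'_h)$ would in general strictly contain $\Lambda_{\Phi'}$.
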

\begin{proof}
  (i) is Prop.~A.6 of~\cite{GKS12}.

  \sui(ii) Since $h'$ does not depend on $t$, the isotopy $\Phi'$ commutes with
  the Hamiltonian flow of $\tau$ which is $T_c$. The flow $\Phi'$ also preserves
  the variable $\tau$, that is, $\Lambda_{\Phi'}$ is contained in
  $\Sigma \eqdot \{\tau+\tau'=0\}$.  Then $\Sigma = \operatorname{im} q_d$ and
  the quotient map to the symplectic reduction of $\Sigma$ is $q_\pi$.  Hence we
  can write $\Lambda_{\Phi'} = q_d \opb{q_\pi} (\Lambda'_h)$, where
  $\Lambda'_h = q_\pi \opb{q_d} (\Lambda_{\Phi'})$.
\end{proof}

\begin{corollary}\label{cor:quantconic}
  Let $\Phi\cl T^*M\times I \to T^*M$ be a Hamiltonian isotopy with compact
  support and let $h\cl T^*M \times I \to \R$ be a function with Hamiltonian
  flow $\Phi$.  Then there exists a unique
  $K \in \Derlb(\cor_{M^2 \times \R \times I})$ such that
  $\dot\SSi(K) = \Lambda'_h$ and
  $K|_{M^2 \times \R \times \{0\}} \simeq \cor_{\Delta_M \times \{0\}}$, where
  $\Lambda'_h$ is defined in Proposition~\ref{prop:homog_isot}.
\end{corollary}
\begin{proof}
  We use the notations of Proposition~\ref{prop:homog_isot}.  By
  Proposition~\ref{prop:iminvproj} the inverse image functor $\opb{q}$ gives an
  equivalence between $\{K' \in \Derlb(\cor_{M^2 \times \R \times I})$;
  $\dot\SSi(K') = \Lambda'_h\}$ and
  $\{K \in \Derlb(\cor_{(M \times \R)^2 \times I})$;
  $\dot\SSi(K) = \Lambda_{\Phi'}\}$.  Then the existence and uniqueness of $K$
  follows from Theorem~\ref{thm:GKS}.
\end{proof}

\part{Cut-off lemmas}
\label{chap:cutoff}

In this part we recall several results of~\cite{KS90} which are called ``(dual)
(refined) cut-off lemmas''.  We also give another version which removes some
convexity hypotheses.  We apply these results to decompose a sheaf with respect to a
partition of its microsupport (see Proposition~\ref{prop:cut-off_split_local2}
below).  The starting point is the following problem: for a given sheaf
$F\in \Der(\cor_V)$, with $V = \R^n$, and an open cone $C \subset T^*_0V = V^*$, find
a sheaf $G$ such that $\SSi(G) = \SSi(F) \setminus (\SSi(F) \cap C)$ and $F \simeq G$
in the quotient category of $\Der(\cor_V)$ by $\{H$;
$(\SSi(H) \cap T^*_0V) \subset \ol C \}$.  As we have seen in
Corollary~\ref{cor:micsuppLagrlisse} the involutivity theorem~\ref{thm:invol} says
this problem has no solution in general.  However, for a given neighborhood $\Omega$
of $\SSi(F) \cap \partial C$ in $T^*V$, it is possible to find $G$ with
$\SSi(G) = (\SSi(F) \setminus (\SSi(F) \cap C)) \cup \Omega$ near $T^*_0V$ (see
Proposition~\ref{prop:microcutofflevrai1} below).  We first discuss a weaker version
of this problem where we don't ask that
$\SSi(G) = \SSi(F) \setminus (\SSi(F) \cap C)$ but we only ask for a ``cut-off''
functor $F \mapsto P(F)$ so that $\SSi(P(F))$ is contained in
$V \times (V^*\setminus C)$. There are several ways to define such a $P$ and we give
four variations on the cut-off functors of~\cite{KS90} in~\eqref{eq:defPgam}.  If we
want $P$ to be a projector (see Remark~\ref{rem:proj}) these are in fact the only
possible choices (see~\S\ref{sec:Tamproj} for a quick discussion).

One version of the cut-off lemma says that the category of sheaves on $V$ with
microsupport in some convex cone $\gamma$ is equivalent to the category of sheaves on
$V$ endowed with another topology. We use it to prove that $\SSi(H^i(F))$ is
contained in the convex hull of $\SSi(F)$ (see Corollary~\ref{cor:SSHiF} below --
this will be used to construct a graph selector in Part~\ref{part:graphsel}).

In the last paragraph we consider a relative version of the cut-off functor and
recall some its properties already considered by Tamarkin which we will use to
recover non-squeezing results in~\S\ref{sec:nonsqueezing}.

\section{Global cut-off}
\label{sec:Globalcut-off}

Let $V$ be a vector space of dimension $n$ and let $\gamma \subset V$ be a closed
convex cone (with vertex at $0$). We denote by $\gamma^a = -\gamma$ its opposite cone
and by $\gamma^\circ \subset V^*$ its polar cone (see~\eqref{eq:def_polar_cone}).  We
also define $\widetilde\gamma = \{(x,y)\in V^2$; $x-y \in \gamma\}$.  Let
$q_i \cl V^2 \to V$, $i=1,2$, be the projection to the $i^{th}$ factor and let
$\Delta_V$ be the diagonal of $V^2$.  The following functors are introduced
in~\cite{KS90}:
\begin{equation}
  \label{eq:defPgam}
  \begin{alignedat}{2}
P_\gamma &\cl \Der(\cor_V) \to \Der(\cor_V), & \quad
F & \mapsto \roim{q_2}( \cor_{\widetilde\gamma} \tens \opb{q_1} F) , \\
Q_\gamma &\cl \Der(\cor_V) \to \Der(\cor_V), & \quad
F & \mapsto \reim{q_2}( \rhom(\cor_{\widetilde\gamma^a} , \epb{q_1} F)) , \\
P'_\gamma &\cl \Der(\cor_V) \to \Der(\cor_V), & \quad
F & \mapsto \reim{q_2}( \rhom(\cor_{\widetilde\gamma^a \setminus \Delta_V}[1], \epb{q_1} F)) , \\
Q'_\gamma &\cl \Der(\cor_V) \to \Der(\cor_V), & \quad
F &\mapsto \roim{q_2}( \cor_{\widetilde\gamma \setminus \Delta_V}[1] \tens \opb{q_1} F) .
\end{alignedat}
\end{equation}
We will mainly use the first three functors and introduce $Q'_\gamma$ because these
functors come in pairs (adjoint pairs or pairs of projectors -- see
\S\ref{sec:Tamproj}).  We will see the effect of these functors on the microsupport
in~\eqref{eq:SSPgamQgam} and Propositions~\ref{prop:cut-off1}, \ref{prop:cut-off2}
and~\ref{prop:cut-off_split} below.  For $\gamma = \{0\}$, we have
$\widetilde{\{0\}} = \Delta_V$ and $P_{\{0\}}(F) \simeq Q_{\{0\}}(F) \simeq F$.
Using the distinguished triangle
$\cor_{\widetilde\gamma} \to \cor_{\Delta_V} \to \cor_{\widetilde\gamma \setminus
  \Delta_V}[1] \to[+1]$ (and the same with $\gamma^a$) we obtain morphisms of
functors
\begin{equation}
\label{eq:morph_func_gam}
\begin{alignedat}{2}
 u_\gamma&\cl P_\gamma \to \id, &\qquad v_\gamma&\cl \id \to Q_\gamma , \\
 u'_\gamma&\cl P'_\gamma \to \id, &  v'_\gamma&\cl \id \to Q'_\gamma 
\end{alignedat}
\end{equation}
and, for any $F \in \Der(\cor_V)$, the distinguished triangles
\begin{equation}
\label{eq:dist_tri_gam}
\begin{gathered}
P_\gamma(F) \to[\;u_\gamma(F)\;] F \to[\;v'_\gamma(F)\;] Q'_\gamma(F) \to[+1] , \\
P'_\gamma(F) \to[\;u'_\gamma(F)\;] F \to[\;v_\gamma(F)\;] Q_\gamma(F) \to[+1] .
\end{gathered}
\end{equation}

The     functors $P_\gamma$,
$Q_\gamma$, $P'_\gamma$, $Q'_\gamma$ have similarities with the composition
operation~\eqref{eq:def_compo_faisceaux}.  Let us recall the composition: for three
manifolds $X,Y,Z$ and $K \in \Der(\cor_{X\times Y})$, $L \in \Der(\cor_{Y\times Z})$
\begin{equation*}
K \circ L =  \reim{q_{13}}( \opb{q_{12}}K \tens \opb{q_{23}} L) ,
\end{equation*}
where $q_{ij}$ is the projection from $X\times Y \times Z$ to the $i\times j$-factor.
Taking $X=\pt$ and $Y=Z=V$, we have, with the notations of~\eqref{eq:defPgam},
$F \circ \cor_{\widetilde\gamma} = \reim{q_2}( \cor_{\widetilde\gamma} \tens
\opb{q_1} F)$, which is almost $P_\gamma(F)$ up to the change from $\reim{q_2}$ to
$\roim{q_2}$.  The following lemma gives hypothesis on $F$ so that both functors are
the same.
\begin{lemma}
\label{lem:Pgamma-conv}
Let $F\in \Der(\cor_V)$.  We assume that $\supp(F)$ is compact or that there exists a
linear function $\xi\colon V \to \R$ such that
$\gamma \setminus \{0\} \subset \xi^{-1}(\mo]-\infty,0[)$ and
$\supp(F) \subset \xi^{-1}([a,\infty[)$, for some $a\in\R$.  Then
$P_\gamma(F) \simeq F \circ \cor_{\widetilde\gamma}$ and
$\supp(P_\gamma(F) ) \subset \xi^{-1}([a,\infty[)$.
\end{lemma}
\begin{proof}
  As was already remarked, the difference between $P_\gamma(F)$ and
  $F \circ \cor_{\widetilde\gamma}$ is the change from $\roim{q_2}$ to $\reim{q_2}$
  in~\eqref{eq:defPgam}.  Hence it is enough to prove that the projection $q_2$ is
  proper on $S = \supp(\cor_{\widetilde\gamma} \tens \opb{q_1} F)$ to deduce the
  isomorphism. This is clear when $\supp(F)$ is compact. We assume the other
  hypothesis.

  Let $||\cdot||$ be some Euclidean norm on $V$.  Since $\gamma$ is a closed cone, we
  actually have $\gamma \subset \{v\in V;\; \xi(v) \leq -c ||v||\}$ for some $c>0$.
  Hence
  $S \subset \{(v_1,v_2); \; \xi(v_1-v_2) \leq -c||v_1-v_2||,\; \xi(v_1)\geq a\}$.
  For a given $v_2\in V$ we see that
  $S \cap q_2^{-1}(\{v_2\}) \subset \{v_1; \; ||v_1-v_2|| \leq c^{-1}(\xi(v_2)-a)\}$
  is compact. Hence $P_\gamma(F) \simeq F \circ \cor_{\widetilde\gamma}$.

  Moreover, if $(P_\gamma(F))_{v_2} \not=0$, we must have
  $S \cap q_2^{-1}(\{v_2\}) \not=\emptyset$, hence $\xi(v_2) -a \geq 0$. It follows
  that $\supp(P_\gamma(F) ) \subset \xi^{-1}([a,\infty[)$.
\end{proof}

We can also reformulate $Q_\gamma$ and $P'_\gamma$ with the composition.  Since
$\widetilde\gamma^a = d^{-1}(\gamma^a)$ with $d(x,y) = x-y$, we have
$\cor_{\widetilde\gamma^a} \simeq \opb{d}(\cor_{\gamma^a})$ and
Theorem~\ref{thm:iminv} gives
$\SSi(\cor_{\widetilde\gamma^a}) \subset \{(x,y;\xi,-\xi)\}$. We also have
$\SSi(\epb{q_1} F)) \subset \{(x,y;\xi,0)\}$.  Hence the microsupports of
$\cor_{\widetilde\gamma^a}$ and $\epb{q_1} F$ do not intersect outside the zero
section and Theorem~\ref{thm:SSrhom} implies
\begin{equation}
  \label{eq:Qgamma_conv0}
\DD'(\cor_{\widetilde\gamma^a}) \ltens \epb{q_1} F \isoto
\rhom(\cor_{\widetilde\gamma^a} , \epb{q_1} F) .
\end{equation}
Choosing an orientation for $V$ we have $\epb{q_1} F \simeq\opb{q_1} F[n]$ and we
deduce the first isomorphism below; the second one is proved in the same way, using
$\widetilde\gamma^a \setminus \Delta_V = d^{-1}(\gamma^a\setminus \{0\})$,
\begin{equation}
  \label{eq:Qgamma_conv}
  \begin{split}
  Q_\gamma(F) &\simeq F \circ (\DD'(\cor_{\widetilde\gamma^a})[n]) , \\
  P'_\gamma(F) &\simeq F \circ (\DD'(\cor_{\widetilde\gamma^a\setminus \Delta_V})[n-1]) .
\end{split}
\end{equation}
We deduce the following adjunction properties:
\begin{lemma}\label{lem:PgammaQgammaadj}
  For any $F,G \in \Der(\cor_V)$ we have
  $\Hom(Q_\gamma(F),G) \simeq \Hom(F,P_\gamma(G))$ and
  $\Hom(P'_\gamma(F),G) \simeq \Hom(F,Q'_\gamma(G))$.  
\end{lemma}
\begin{proof}
  We only prove the first isomorphism, the second one being similar.
  Using~\eqref{eq:Qgamma_conv} we see $Q_\gamma$ as a composition of left adjoint
  functors and we have
  \begin{align*}
    \Hom(Q_\gamma(F),G)
    &\simeq \Hom( \reim{q_2}( (\DD'(\cor_{\widetilde\gamma^a})[n]) \ltens \opb{q_1}(F),G) \\
    &\simeq \Hom(F, \roim{q_1}( \rhom( (\DD'(\cor_{\widetilde\gamma^a})[n], \epb{q_2}G))) .
  \end{align*}
  By a microsupport argument similar to the proof of~\eqref{eq:Qgamma_conv0} we
  obtain
  $\DD'(\DD'(\cor_{\widetilde\gamma^a})[n]) \ltens \epb{q_2}G \isoto \rhom(
  (\DD'(\cor_{\widetilde\gamma^a})[n], \epb{q_2}G)$.  Since
  $\cor_{\widetilde\gamma^a}$ is cohomologically constructible,
  $\DD'(\DD'(\cor_{\widetilde\gamma^a})) \simeq \cor_{\widetilde\gamma^a}$ and
  we obtain
  $$
  \Hom(Q_\gamma(F),G) \simeq \Hom(F, \roim{q_1}( \cor_{\widetilde\gamma^a}
  \otimes \opb{q_2}G)) .
  $$
  We conclude with the remark that the automorphism $(x,y) \mapsto (y,x)$ of $V^2$
  interchanges $q_1$, $q_2$ and $\widetilde\gamma^a$, $\widetilde\gamma$.
\end{proof}

\begin{examples}
  (i) The   easiest example is the image of
  the skyscraper sheaf $\cor_{\{x\}}$ for a given $x\in V$. Using
  Lemma~\ref{lem:Pgamma-conv} and the formula~\eqref{eq:germ_compo} for the
  stalks of a composition, we find
  $P_\gamma(\cor_{\{x\}}) \simeq \cor_{ x -\gamma}$.  If
  $\Int(\gamma) \not=\emptyset$, we have
  $\DD'(\cor_{\widetilde\gamma^a}) \simeq \cor_{\Int(\widetilde\gamma^a)}$
  by~\eqref{eq:dual_bordlisse}.  Hence~\eqref{eq:Qgamma_conv} gives
  $Q_\gamma(F) \simeq F \circ \cor_{\Int(\widetilde\gamma^a)}[n]$.
  By~\eqref{eq:germ_compo} again
  $Q_\gamma(\cor_{\{x\}}) \simeq \cor_{ \Int(x +\gamma)}[n]$.  We illustrate
  these results in Fig.~\ref{fig:PQgamma1} (for a sheaf of the type $L_Z$, $Z$
  locally closed, we draw the set $Z$ and label it with $L$).

\begin{figure}[ht]
  \begin{tikzpicture}
\foreach \i/\j/\bord in {0/1/black, 6/-1/black, 9/1/dashed}
{
  \begin{scope}[xshift= \i cm, yscale=\j]
\coordinate (A) at (0,0); \coordinate (B) at (-1,-2);
\coordinate (C) at (1,-2);
\fill [fill=gray!20] (B) -- (A) -- (C) -- cycle;
\draw [\bord] (B)--(A)--(C);
\end{scope}
};
\node at (-.5,0) {$\gamma$};
\node at (3,0) {$x$ $\bullet$};
\node at (5.5,-.3) {$P_\gamma(\cor_{\{x\}})$};
\node at (6,1) {$\cor$};
\node at (8.5,.3) {$Q_\gamma(\cor_{\{x\}})$};
\node at (9,-1.3) {$\cor[n]$};
  \end{tikzpicture}
\caption{}   \label{fig:PQgamma1}
\end{figure}

The morphism $v_\gamma(\cor_{\{x\}})$ is given by the inverse image of $1$ in the
sequence of isomorphisms:
  \begin{align*}
  \Hom(\cor_{\{x\}}&, \cor_{ \Int(x +\gamma)}[n]) \simeq \Hom(\cor_{\{x\}},
    \DD(\cor_{x +\gamma}))  \\
    &\simeq \Hom(\cor_{\{x\}} \otimes \cor_{x +\gamma},
  \cor_V[n]) \simeq H^{n}_{\{x\}}(\cor_V) \simeq \cor .    
  \end{align*}

  \sui(ii) Here are other easy examples in dimension $1$.  We set
  $\gamma = \mo]-\infty,0]$.
  $$
  \setlength{\arraycolsep}{3mm}
  \renewcommand{\arraystretch}{1.2}
  \begin{array}{c|cccc}
    F & P_\gamma(F) & Q_\gamma(F) & P'_\gamma(F)  & Q'_\gamma(F) \\[1mm]
    \hline
    \cor_{[0,1]} & \cor_{[0,+\infty[} & (\cor_{\mo]-\infty,0[})[1]
              & \cor_{\mo]-\infty,1]} &  (\cor_{]1,+\infty[})[1]\\
    \cor_{[0,1[} & \cor_{[0,1[} & \cor_{[0,1[} & 0 & 0\\
    \cor_{]0,1]} & 0 & 0 & \cor_{]0,1]} & \cor_{]0,1]} \\
    \cor_{]0,1[} & (\cor_{[1,+\infty[})[-1]  & \cor_{\mo]-\infty,1[}
                 &  (\cor_{\mo]-\infty,0]})[-1] & \cor_{]0,+\infty[} 
  \end{array}
  $$
  It is easy to check here the properties stated in
  Proposition~\ref{prop:cut-off_split}: if $(t;\tau)$ are the coordinates on $T^*\R$
  we have $\dot\SSi( P_\gamma(F)) = \dot\SSi(F) \cap \{\tau >0\}$,
  $\dot\SSi( P'_\gamma(F)) = \dot\SSi(F) \cap \{\tau <0\}$ and $F$ is decomposed ``up
  to a constant sheaf'' as $P_\gamma(F) \oplus P'_\gamma(F)$
  (triangle~\eqref{eq:cut-off_split}); for example we have the triangle
  $$
  \cor_\R \to \cor_{[0,+\infty[} \oplus \cor_{\mo]-\infty,1]}
  \to \cor_{[0,1]} \to[+1] .
  $$
\end{examples}

We use the identifications $T^*_xV = V^*$ for any $x\in V$ and $T^*V = V\times
V^*$.  
\begin{lemma}\label{lem:bornesPFQp}
  Let $F \in \Der(\cor_V)$. We assume that $F$ has a compact support.  Then, for
  any $x\in V$,
\begin{align}
\label{eq:SSPgamF}
\SSi(P_\gamma(F)) \cap T^*_xV
\subset p_2(\SSi(F) \cap \SSi(\cor_{x+\gamma})^a),  \\
\label{eq:SSQgamF}
\SSi(Q_\gamma(F)) \cap T^*_xV
\subset p_2(\SSi(F) \cap \SSi(\cor_{x+\gamma^a})),  
\end{align}
where $p_2 \cl T^*V \to T^*_xV$ is the projection.
\end{lemma}
\begin{proof}
  We only prove the first inclusion, the proof of the second one being similar.
  We set $G = \cor_{\widetilde\gamma} \tens \opb{q_1} F$, hence
  $P_\gamma(F) = \roim{q_2}(G)$. Using $\widetilde\gamma = d^{-1}(\gamma)$ with
  $d(x,y) = x-y$, we see by Theorems~\ref{thm:iminv} and~\ref{thm:SSrhom}:
  \begin{gather*}
    \SSi(\cor_{\widetilde\gamma})
    = \{(x,y;\xi,-\xi);\; (x-y;\xi) \in \SSi(\cor_\gamma) \}, \\
    \SSi(G) \subset  \SSi(\cor_{\widetilde\gamma}) + (\SSi(F) \times T^*_VV) .
  \end{gather*}
  Since $\supp(F)$ is compact, Proposition~\ref{prop:oim} implies that
  $\SSi(P_\gamma(F))$ is bounded by $p_2'( \SSi(G) \cap (T^*_VV\times T^*V))$
  where $p_2' \colon T^*V^2 \to T^*V$ is the second projection.  A point
  $(x_1,x_2;0,\eta)$ of $T^*_VV\times T^*V$ belongs to $\SSi(G)$ if and only if
  there exist $(x_1,x_2;\xi,-\xi) \in \SSi(\cor_{\widetilde\gamma})$ and
  $(x_1;\xi_1) \in \SSi(F)$ such that $(0,\eta) = (\xi,-\xi)+(\xi_1,0)$.  In
  other words $(x_1-x_2;-\eta) \in \SSi(\cor_\gamma)$ and
  $(x_1;\eta) \in \SSi(F)$.  Now, if $x_2=x$ is fixed, we obtain
  $(x_1;\eta) \in \SSi(\cor_{x+\gamma})^a \cap \SSi(F)$ and the result follows.
\end{proof}

By Example~\ref{ex:microsupport}
$\SSi(\cor_{x+\gamma^a}) \subset V \times \gamma^{\circ a}$ and
Lemma~\ref{lem:bornesPFQp} gives
$\SSi(P_\gamma(F))\cup \SSi(Q_\gamma(F)) \subset V \times \gamma^{\circ a}$ if $F$
has compact support; but this   result holds without the compactness
hypothesis:

\begin{lemma}
  For any $F\in \Der(\cor_V)$ we have  
\begin{equation}
\label{eq:SSPgamQgam}
\SSi(P_\gamma(F))\cup \SSi(Q_\gamma(F)) \subset V \times \gamma^{\circ a} . 
\end{equation}
\end{lemma}
\begin{proof}
  We prove the result for $P_\gamma(F)$, the case of $Q_\gamma(F)$ being similar.  We
  set $G = \cor_{\widetilde\gamma} \tens \opb{q_1} F$ as in the proof of
  Lemma~\ref{lem:bornesPFQp}. We have already seen
  $\SSi(G) \subset \SSi(\cor_{\widetilde\gamma}) + (\SSi(F) \times T^*_VV)$ (this
  didn't require the compactness of $\supp(F)$) and we deduce the rough bound
  $\SSi(G) \subset T^*V \times \gamma^{\circ a}$.  If $q_2$ were proper on $\supp(G)$
  we could conclude with Proposition~\ref{prop:oim}. In our case we can use a
  compactification of $V$.  We choose a diffeomorphism $\R \isoto \mo]0,1[$ and
  deduce $V = \R^n \simeq \mo]0,1[^n$ and then
  $\varphi \colon V^2 \isoto \mo]0,1[^n\times V$.  We let
  $j\colon \mo]0,1[^n\times V \hookrightarrow \R^n\times V$ be the inclusion.  Since
  $\varphi$ is proper, we can use Proposition~\ref{prop:oim} and we obtain
  $\SSi(\roim{\varphi} (G)) \subset T^*]0,1[^n \times \gamma^{\circ a}$.  Then
  Theorem~\ref{thm:oim_open} gives
  $\SSi(\roim{j}\roim{\varphi} (G)) \subset T^*V \times \gamma^{\circ a}$.  Since
  $q_2$ is proper on $[0,1]^n \times V$, we can apply Proposition~\ref{prop:oim}
  again and, using $\roim{q_2}\roim{j}\roim{\varphi} (G) \simeq \roim{q_2}(G)$, we
  obtain the result.
\end{proof}

The first cut-off results say roughly that the bound~\eqref{eq:SSPgamQgam}
characterizes sheaves of the type $P_\gamma(F)$ or $Q_\gamma(F)$ (see
Propositions~\ref{prop:cut-off1} and~\ref{prop:cut-off2}).

For a given subset $\Omega$ of $T^*V$, a morphism $a\cl F \to G$ in $\Der(\cor_V)$ is
said to be an {\em isomorphism on $\Omega$} if $\SSi(C(a)) \cap \Omega = \emptyset$,
where $C(a)$ is given by the distinguished triangle $F\to[a] G \to C(a) \to[+1]$.
This implies that $\SSi(F) \cap \Omega = \SSi(G) \cap \Omega$.    We will need the following composition result:
\begin{lemma}\label{lem:compo_isosurOmega}
  If $a\cl F \to G$, $b\colon G \to H$ are isomorphisms on $\Omega$, then so is
  $b\circ a$.  
\end{lemma}
\begin{proof}
  The octahedron axiom implies that we have a distinguished triangle
  $C(a) \to C(b\circ a) \to C(b) \to C(a)[1]$, where $C(a)$, $C(b)$, $C(b\circ a)$
  are the cones of $a$, $b$, $b\circ a$.  Now the result follows from the triangular
  inequality for the microsupport.
\end{proof}

The   next two propositions are
taken from~\cite{KS90} where they are stated using the $\gamma$-topology (see
\S\ref{sec:gamma_top}).  The link between the $\gamma$-topology and the functors
$P_\gamma$, $Q_\gamma$ is also explained in Proposition~3.5.4 of~\cite{KS90}.

\begin{proposition}[see~\cite{KS90} Prop. 5.2.3]
\label{prop:cut-off1}
For $F\in \Der(\cor_V)$ we let $u_\gamma(F)\cl P_\gamma(F) \to F$ be the
morphism in~\eqref{eq:morph_func_gam}. Then
\begin{itemize}
\item [(i)] $u_\gamma(F)$ is an isomorphism on $V \times \Int(\gamma^{\circ
    a})$,
\item [(ii)] $u_\gamma(F)$ is an isomorphism if and only if $\SSi(F) \subset V
  \times \gamma^{\circ a}$.
\end{itemize}
\end{proposition}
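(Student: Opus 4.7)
The ``only if'' direction of~(ii) is immediate: if $u_\gamma(F)$ is an isomorphism then $F \simeq P_\gamma(F)$, so $\SSi(F) \subset V \times \gamma^{\circ a}$ by~\eqref{eq:SSPgamQgam}. For the other assertions I would analyse the cone $C = C(u_\gamma(F))$. Applying $\roim{q_2}(- \ltens \opb{q_1}F)$ to the short exact sequence $0 \to \cor_{\widetilde\gamma\setminus\Delta_V} \to \cor_{\widetilde\gamma} \to \cor_{\Delta_V} \to 0$ identifies $C[-1] \simeq \roim{q_2}(\cor_{\widetilde\gamma\setminus\Delta_V} \ltens \opb{q_1}F)$. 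Since the statements are local on $V$, I may assume $F$ has compact support, so that $q_2$ is proper on the relevant supports; proper base change then gives the stalk formula $(P_\gamma F)_y \simeq \rsect(y+\gamma;F)$, with $(u_\gamma F)_y$ the restriction from $y+\gamma$ to the germ at $y$, and
\begin{equation*}
  C_y[-1] \simeq \rsect\bigl(V;\; F \ltens \cor_{(y+\gamma)\setminus\{y\}}\bigr).
\end{equation*}

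For~(i), fix $p_0 = (y_0;\xi_0)$ with $\xi_0 \in \Int\gamma^{\circ a}$ and a $C^1$ test function $\phi$ with $\phi(y_0) = 0$ and $d\phi(y_0) = \xi_0$. Commuting $\rhom(\cor_{\{\phi \geq 0\}}, \cdot)$ past $\roim{q_2}$ via Proposition~\ref{prop:formulaire}(g) and performing the same proper base change yields, as a germ at $y_0$,
\begin{equation*}
  \bigl(\rsect_{\{\phi \geq 0\}} C\bigr)_{y_0}[-1]
  \simeq \rsect\bigl(V;\; F \ltens \cor_{((y_0+\gamma)\setminus\{y_0\})\cap\{\phi\geq 0\}}\bigr).
\end{equation*}
By the first-order expansion $\phi(y_0 + v) = \langle v, \xi_0\rangle + o(|v|)$ combined with $\langle v, \xi_0\rangle < 0$ for every $v \in \gamma \setminus \{0\}$ (the defining property of $\Int\gamma^{\circ a}$), the set $((y_0 + \gamma) \setminus \{y_0\}) \cap \{\phi \geq 0\}$ is empty in a neighbourhood of $y_0$; the germ above then vanishes and $p_0 \notin \SSi(C)$.

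For the ``if'' direction of~(ii), assume $\SSi(F) \subset V \times \gamma^{\circ a}$ and prove the vanishing $\rsect(V; F \ltens \cor_{(y+\gamma)\setminus\{y\}}) = 0$ for each $y$ by propagation. Pick $\xi^* \in \Int\gamma^\circ$ (non-empty because $\gamma$ is a proper closed convex cone); since $\gamma^\circ \cap \gamma^{\circ a} = \{0\}$, the linear $\phi(x) = \langle x - y, \xi^*\rangle$ has $d\phi \equiv \xi^* \notin \SSi(F)$ on all of $V$. Corollary~\ref{cor:Morse} then forces $\rsect(\{\phi < c\}; F)$ to be independent of $c \in \R$, hence zero since $F$ has compact support. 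Because $\phi > 0$ on $(y+\gamma) \setminus \{y\}$ while $\phi(y) = 0$, an excision argument using the triangles attached to $\{y\} \subset y+\gamma$ and $y+\gamma \subset \{\phi \geq 0\}$ collapses $\rsect(V; F \ltens \cor_{(y+\gamma)\setminus\{y\}})$ onto a subquotient of $\rsect_{\{\phi \geq 0\}}(V; F)$, which vanishes by the same Morse argument.

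The main technical obstacle is the commutation between $\rsect_{\{\phi \geq 0\}}$, $\roim{q_2}$, and the tensor with $\opb{q_1}F$ used in step~(i); once Proposition~\ref{prop:formulaire}(g) and proper base change have been carefully invoked, both~(i) and the ``if'' direction of~(ii) reduce to the two elementary geometric facts $\Int\gamma^{\circ a} \cap \gamma = \{0\}$ and $\gamma^\circ \cap \gamma^{\circ a} = \{0\}$ that hold for any proper closed convex cone~$\gamma$.
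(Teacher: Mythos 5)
Your overall strategy --- identify $C[-1] \simeq \roim{q_2}(\cor_{\widetilde\gamma\setminus\Delta_V}\ltens\opb{q_1}F)$ and reduce to the geometric facts that $\Int(\gamma^{\circ a})$ pairs strictly negatively with $\gamma\setminus\{0\}$ and that $\Int(\gamma^\circ)\cap\gamma^{\circ a}=\emptyset$ --- correctly isolates the decisive inputs; these are exactly what the $\gamma$-topology proof of KS90 (which the paper cites without reproducing) encodes. But the execution has a genuine gap at the crucial step. The display
$(\rsect_{\{\phi \geq 0\}} C)_{y_0}[-1] \simeq \rsect\bigl(V; F \ltens \cor_{((y_0+\gamma)\setminus\{y_0\})\cap\{\phi\geq 0\}}\bigr)$
is not a correct germ formula. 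Commuting $\rhom(\cor_{\{\phi\geq 0\}},-)$ past $\roim{q_2}$ by adjunction places the support constraint on $\opb{q_2}\{\phi\geq 0\}=V\times\{\phi\geq 0\}$, i.e.\ in the target ($y$-)variable, not in the ($x$-)variable where $y_0+\gamma$ sits; and since $\phi(y_0)=0$, proper base change to the fibre $V\times\{y_0\}$ trivialises that constraint. Moreover Prop.~\ref{prop:formulaire}(g) concerns $\epb{f}$, not $\opb{f}$, so it computes a costalk, not the ordinary germ. One can check directly that the formula fails: take $V=\R$, $\gamma=[0,\infty)$, $F=\cor_{[0,1]}$, so that $P_\gamma(F)=\cor_{(-\infty,1]}$ and $C=\cor_{(-\infty,0)}[1]$; with $y_0=0$, $\xi_0=-1\in\Int(\gamma^{\circ a})$ and the nonlinear test function $\phi(y)=-y+10y^2$, the right-hand side is $\rsect(\R;\cor_{[1/10,1]})=\cor$, while the left-hand side is the germ at $0$ of $\rhom(\cor_{(-\epsilon,0]},\cor_{(-\epsilon,0)})$, which vanishes (the extension $\cor_{(-\epsilon,0)}\to\cor_{(-\epsilon,0]}\to\cor_{\{0\}}$ is non-split). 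With a linear $\phi$ the set happens to be globally empty, which is what hides the error. Finally, the inference ``the set is empty near $y_0$, so the germ vanishes'' only makes sense for a germ functor, whereas the displayed right-hand side is the global $\rsect(V;-)$: these are different objects.

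Two further points. The localisation to compact support is not automatic: $P_\gamma$ is not a local operator in $V$ (its stalk $(P_\gamma F)_y\simeq\rsect(y+\gamma;F)$ sees $F$ along the whole unbounded cone $y+\gamma$), and truncating $F$ spoils the hypothesis $\SSi(F)\subset V\times\gamma^{\circ a}$ at the boundary of the cut-off; the paper's bound~\eqref{eq:SSPgamQgam} is stated only under a compact-support hypothesis precisely for this reason, and the ``only if'' direction of~(ii) for general $F$ already requires the uncutoff bound $\SSi(P_\gamma F)\subset V\times\gamma^{\circ a}$, which is part of KS90 Prop.~5.2.3 and is proved there through the $\gamma$-topology. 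For the ``if'' direction of~(ii), your Morse argument correctly gives $\rsect_{\{\phi\geq 0\}}(V;F)=0$ and $\rsect(V;F)=0$, but the ``excision collapse'' of $\rsect(V;F_{(y+\gamma)\setminus\{y\}})$ onto a subquotient of $\rsect_{\{\phi\geq 0\}}(V;F)$ is asserted rather than carried out: the excision triangles attached to $\{y\}\subset y+\gamma\subset\{\phi\geq 0\}$ simply restate that $\rsect(y+\gamma;F)\to F_y$ should be an isomorphism, and a direct Morse estimate on $F_{y+\gamma}$ along $\phi$ is obstructed because $\SSi(\cor_{y+\gamma})$ at the vertex $y$ is the whole cone $\gamma^\circ$, which contains $\rpos\,\xi^*$. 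One needs a genuinely different additional ingredient to close the argument --- either the projector identity $\roim{\phi_\gamma}\circ\opb{\phi_\gamma}\simeq\id$ from the $\gamma$-topology as in KS90, or part~(i) combined with the involutivity Theorem~\ref{thm:invol} to force $\SSi(C)\subset V\times\partial(\gamma^{\circ a})$ and then propagate.
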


\begin{proposition}[see~\cite{KS90} Lem. 6.1.5]
\label{prop:cut-off2}
We assume that $\gamma$ is proper (that is, $\gamma$ contains no line) and
$\Int(\gamma) \not= \emptyset$.  Let $F\in \Der(\cor_V)$.  We assume that $F$ has
compact support.  Then the morphism $v_\gamma(F) \cl F \to Q_\gamma(F)$
in~\eqref{eq:morph_func_gam} is an isomorphism on $V \times \Int(\gamma^{\circ a})$.
\end{proposition}

\begin{remark}\label{rem:proj}
  (1) The bound~\eqref{eq:SSPgamQgam} and Proposition~\ref{prop:cut-off1}-(ii) show
  that $u_\gamma(P_\gamma(F)) \colon P_\gamma(P_\gamma(F)) \to P_\gamma(F)$ is an
  isomorphism.  In other words the morphism of functors
  $u_\gamma \colon P_\gamma \to \id_{\Der(\cor_V)}$ induces an isomorphism
  $P_\gamma \circ P_\gamma \isoto P_\gamma$.  The functor $P_\gamma$ is then called a
  {\em projector} (see for example~\cite[\S4.1]{KS06}).  More precisely it is the
  projector to the subcategory $\Der_{V \times \gamma^{\circ a}}(\cor_V)$ of
  $\Der(\cor_V)$ formed by the $F$ such that
  $\SSi(F) \subset V \times \gamma^{\circ a}$ (Notation~\ref{not:cat_micsupp_fixe}).
  By the dual statements of~\cite[Prop.~4.1.3, 4.1.4]{KS06} the functor
  $R\colon \Der(\cor_V) \to \Der_{V \times \gamma^{\circ a}}(\cor_V)$ induced by
  $P_\gamma$ is both right adjoint and left inverse to the inclusion $\iota$ of
  $\Der_{V \times \gamma^{\circ a}}(\cor_V)$ and we have
  $P_\gamma \simeq \iota \circ R$.

  We will see this again in a slightly different form in \S\ref{sec:gamma_top} where
  $\Der_{V\times\gamma^{\circ a}}(\cor_V)$ is identified with the category of sheaves
  on $V_\gamma$, which is the space $V$ with the ``$\gamma$-topology'' (see
  Proposition~\ref{prop:cut-offgamma} -- then $\iota$, $R$ correspond to
  $\opb{\phi_\gamma}$, $\roim{\phi_\gamma}$).

  \sui(2) Since $Q_\gamma$ is left adjoint to $P_\gamma$, it is also a projector.
  More precisely, $Q_\gamma \circ Q_\gamma$ is adjoint to
  $P_\gamma \circ P_\gamma \simeq P_\gamma$ and by uniqueness of adjoints we have
  $Q_\gamma \circ Q_\gamma \simeq Q_\gamma$.  Instead of a morphism
  $P_\gamma \to \id$ we have here a morphism $\id \to Q_\gamma$.
  By~\cite[Prop.~4.1.3, 4.1.4]{KS06} $Q_\gamma$ is then a projector to some full
  subcategory, say $\catd$, of $\Der(\cor_V)$.  Using the decomposition
  $P_\gamma \simeq \iota \circ R$ in~(1) and $R \circ \iota \simeq \id$ we can prove
  $\Hom(P_\gamma(F), P_\gamma(G)) \isoto \Hom(P_\gamma(F), G)$, for any
  $F,G \in \Der(\cor_V)$.  We deduce
  $\Hom(Q_\gamma(P_\gamma(F)),G) \isoto \Hom(P_\gamma(F), G)$, proving that
  $Q_\gamma \circ P_\gamma \simeq P_\gamma$.  This implies that
  $\Der_{V\times\gamma^{\circ a}}(\cor_V)$ is stable by $Q_\gamma$.  Hence we have
  $\Der_{V\times\gamma^{\circ a}}(\cor_V) \subset \catd$.  In the same way we have
  $P_\gamma \circ Q_\gamma \simeq Q_\gamma$ and we obtain finally
  $\Der_{V\times\gamma^{\circ a}}(\cor_V) = \catd$.
  
  Hence $Q_\gamma$ is also a projector to $\Der_{V\times\gamma^{\circ
      a}}(\cor_V)$. The difference with $P_\gamma$ is that we could write
  $Q_\gamma \simeq \iota \circ L$ where $L$ is {\em left} adjoint to the embedding
  $\iota$.

  \sui(3) Since, by~(2), $Q_\gamma$ is a projector to
  $\Der_{V\times\gamma^{\circ a}}(\cor_V)$, we have as in
  Proposition~\ref{prop:cut-off1}-(ii): $v_\gamma(F)$ is an isomorphism if and only
  if $\SSi(F) \subset V \times \gamma^{\circ a}$, for any $F\in \Der(\cor_V)$ and
  without the additional hypothesis that $\Int(\gamma) \not= \emptyset$. However we
  will only use the statement of Proposition~\ref{prop:cut-off2}.
\end{remark}

A consequence of Proposition~\ref{prop:cut-off1}-(i)   is the
bound
\begin{equation}\label{eq:borne_SSPgammaF}
\SSi(P_\gamma(F)) \cap (V \times \Int(\gamma^{\circ a})) = \SSi(F) \cap
 (V \times \Int(\gamma^{\circ a})) .
\end{equation}
However $\SSi(P_\gamma(F)) \cap (V \times \partial\gamma^{\circ a})$ could be bigger
than $\SSi(F) \cap (V \times \partial\gamma^{\circ a})$.  If we assume that
$\dot\SSi(F)$ does not meet $V \times \partial\gamma^{\circ a}$ and $\supp(F)$ is
compact, then $\SSi(P_\gamma(F))$ does not meet $V \times \partial\gamma^{\circ a}$
by~\eqref{eq:SSPgamF}.  Hence~\eqref{eq:SSPgamQgam} and~\eqref{eq:borne_SSPgammaF}
actually imply $\SSi(P_\gamma(F)) = \SSi(F) \cap (V \times \Int(\gamma^{\circ a}))$.
In Proposition~\ref{prop:cut-off_split} below we see the similar bound
$\dot\SSi(P'_\gamma(F)) = \dot\SSi(F) \cap (V \times (V \setminus \gamma^{\circ a}))$
and that $F$ is split as $P_\gamma(F) \oplus P'_\gamma(F)$ up to a constant sheaf.

Actually,   in Proposition~\ref{prop:cut-off_split} we give a weaker hypothesis
than $\SSi(F) \cap (V \times \partial\gamma^{\circ a}) = \emptyset$.  We want to
split $F$ only on some open subset $W$ of $V$ and we can replace
$V \times \partial\gamma^{\circ a}$ by some smaller set depending on $W$, namely
$\bigcup_{x\in W} S_x^\gamma$ where $S_x^\gamma$ is defined below (remark that
$\bigcup_{x\in V} S_x^\gamma = V \times \partial\gamma^{\circ a}$).  This more
precise result is used in the proof of Proposition~\ref{prop:cut-off_split_local0}.

The motivation to introduce $S_x^\gamma$ is the following.  We first want to ensure
that $\dot\SSi(P_\gamma(F)) \cap T^*_xV$ coincides with
$\dot\SSi(F) \cap (V \times \Int(\gamma^{\circ a})) \cap T^*_xV$.
By~\eqref{eq:SSPgamQgam} and Proposition~\ref{prop:cut-off1} it is enough that
$\dot\SSi(P_\gamma(F)) \cap T^*_xV \cap (\{x\} \times ( \partial\gamma^{\circ a})) =
\emptyset$, where
$\partial\gamma^{\circ a} = \gamma^{\circ a} \setminus \Int\gamma^{\circ a}$.
Using~\eqref{eq:SSPgamF} this holds if we assume that $\dot\SSi(F)$ does not meet
$(V\times \partial\gamma^{\circ a}) \cap \dot\SSi(\cor_{x+\gamma})^a$, which is half
of $S_x^\gamma$. The other half is introduced to bound
$\dot\SSi(Q_\gamma(F)) \cap T^*_xV$ in the same way, using~\eqref{eq:SSQgamF}.

Let $\gamma \subset V$ be a closed convex proper cone.  For $x\in V$ we define
$S_x^\gamma \subset T^*V$ by  
\begin{equation}\label{eq:def_Sgammax}
  \begin{split}
S_x^\gamma  & = (\dot\SSi(\cor_{x + \gamma})^a \cup \dot\SSi(\cor_{x+\gamma^a}) )
 \setminus (\{x\}\times \Int(\gamma^{\circ a})) \\
& = (\dot\SSi(\cor_{x + \gamma})^a \cup \dot\SSi(\cor_{x+\gamma^a}) )
\cap (V\times \partial\gamma^{\circ a}),
\end{split}
\end{equation}
where the second equality follows from~\eqref{eq:borneSScone_bord}. 

\begin{figure}[ht]
  \begin{tikzpicture}[decoration={border, segment length=1.8mm, amplitude=2mm, angle=90}]

\begin{scope}
\coordinate (A) at (0,0); \coordinate (B) at (-1,-2);
\coordinate (C) at (1,-2);
\fill [fill=gray!20] (B) -- (A) -- (C) -- cycle;
\draw (B)--(A)--(C);
\end{scope}

\begin{scope}[xshift= 2.5cm]
\coordinate (A) at (0,0);
\coordinate (B) at (-1,-2); \coordinate (C) at (1,-2);
\coordinate (D) at (.2,.1); 
\draw [postaction={decorate,draw}] (B)--(A)--(C);
\fill [fill=black] (A) -- (D) arc[radius=.22cm, start angle=27, end angle=153]
-- cycle;
\end{scope}

\begin{scope}[xshift= 5cm]
\coordinate (A) at (0,0);
\coordinate (B) at (-1,2); \coordinate (C) at (1,2);
\coordinate (D) at (.2,.1); 
\draw [postaction={decorate,draw}] (B)--(A)--(C);
\fill [fill=black] (A) -- (D) arc[radius=.22cm, start angle=27, end angle=153]
-- cycle;
\end{scope}

\begin{scope}[xshift= 9cm]
\draw [postaction={decorate,draw}] (-1,-2) -- (1,2);
\draw [postaction={decorate,draw}] (-1,2) -- (1,-2);  
\end{scope}

\node at (-.5,-2.5) {$\gamma$};
\node at (2.5,-2.5) {$\dot\SSi(\cor_{x + \gamma})^a$};
\node at (5.5,-2.5) {$\dot\SSi(\cor_{x+\gamma^a})$};
\node at (9,-2.5) {$S_x^\gamma$};
\node at (2.1,0) {$x$};
\node at (4.6,0) {$x$};
\node at (8.6,0) {$x$};
  \end{tikzpicture}
\caption{}   \label{fig:Sxgamma}
\end{figure}

In Fig.~\ref{fig:Sxgamma} we have pictured $\dot\SSi(\cor_{x + \gamma})^a$,
$\dot\SSi(\cor_{x+\gamma^a})$ and $S_x^\gamma$ in dimension $2$ (we identify
$T^*\R^2$ with $T\R^2$; we note that $\dot\SSi(\cor_{x + \gamma})^a$ and
$\dot\SSi(\cor_{x+\gamma^a})$ contain $\{x\}\times \Int(\gamma^{\circ a})$).  We can
give an easy description of $S_x^\gamma$ when $\gamma$ has a smooth boundary away
from $0$, as in the following example (this is in fact the only case we need).
  For $n>1$ we write the coordinates in $\R^n$ as $x=(x',x_n)$.
We define $\gamma = \{ (x',x_n) \in V;\; x_n \leq - \| x'\| \}$ and
$C = \{ (x',x_n) \in V;\; x_n = \pm \| x'\| \}$.  Then
$(x_0 + \gamma) \cup (x_0+\gamma^a)$ has boundary $C_{x_0} = x_0+C$, which is a
smooth hypersurface except at $x_0$.  We set $C'_{x_0} = C_{x_0} \setminus \{x_0\}$
and define $\Lambda = \ol{ T^*_{C'_{x_0}}\R^n} \cap \dT^*\R^n$. Then $\Lambda$ is a
smooth closed conic Lagrangian submanifold of $\dT^*\R^n$ with two connected
components and $S_{x_0}^\gamma$ is the component with $\xi_n >0$.

We deduce from~\eqref{eq:SSPgamF}, \eqref{eq:SSQgamF} and
Example~\ref{ex:microsupport}-(iv) that, if $F \in \Der(\cor_V)$ has compact
support, then, for $G= P_\gamma(F)$ or $G= Q_\gamma(F)$,
\begin{equation}
\label{eq:SSPgamF3}
\dot\SSi(G) \cap (T^*_xV \setminus \Int(\gamma^{\circ a}))
\subset p_2(\dot\SSi(F) \cap S_x^\gamma ) .  
\end{equation}
Since $u_\gamma(F)$ and $v_\gamma(F)$ are isomorphisms on $V \times
\Int(\gamma^{\circ a})$, we also have
\begin{equation}
\label{eq:SSPgamF2}
\SSi(G) \cap (V \times \Int(\gamma^{\circ a}))
= \SSi(F) \cap (V \times \Int(\gamma^{\circ a}))  .
\end{equation}

\begin{proposition}\label{prop:cut-off_split}
Let $F\in \Der(\cor_V)$ be such that $\supp(F)$ is compact
and let $W\subset V$ be an open subset such that
\begin{equation}\label{eq:hypSSF-split-ponct}
  S_x^\gamma \cap  \dot\SSi(F) = \emptyset  \qquad \text{for any $x\in W$.}
\end{equation}
Then $v_\gamma(F) \circ u_\gamma(F)|_W \cl P_\gamma(F)|_W \to Q_\gamma(F)|_W$ is an
isomorphism on $\dT^*W$ and we have a distinguished triangle in $\Der(\cor_W)$
\begin{equation}\label{eq:cut-off_split}
P_\gamma(F)|_W \oplus P'_\gamma(F)|_W 
 \to[\;(u_\gamma(F), u'_\gamma(F) )\;] F|_W \to L \to[+1],
\end{equation}
where $L \in \Der(\cor_W)$ is locally constant and
\begin{align*}
\dot\SSi(P_\gamma(F)|_W) &=
  \dot\SSi(F) \cap (W \times \Int(\gamma^{\circ a})) , \\
\dot\SSi(P'_\gamma(F)|_W) 
 &=  \dot\SSi(F) \cap (W \times (V \setminus \gamma^{\circ a})) .
\end{align*}
\end{proposition}
\begin{proof}
  (i)   Let $G$ be
  $P_\gamma(F)|_W$ or $Q_\gamma(F)|_W$.  By~\eqref{eq:SSPgamF3}
  and~\eqref{eq:hypSSF-split-ponct} we have
  $\dot\SSi(G) \subset W \times \Int(\gamma^{\circ a})$.  By~\eqref{eq:SSPgamF2}
  we deduce
$$
\dot\SSi(G) =  \dot\SSi(F) \cap (W \times \Int(\gamma^{\circ a})) .
$$

\sui(ii) We define $L\in \Der(\cor_W)$  by the distinguished triangle
\begin{equation}\label{eq:cut-off_split-pf1}
P_\gamma(F)|_W \to[v_\gamma(F) \circ u_\gamma(F)] Q_\gamma(F)|_W
 \to[a] L \to[+1].
\end{equation}
By~(i) we have $\dot\SSi(L) \subset W \times \Int(\gamma^{\circ a})$.   On the other hand
$v_\gamma(F) \circ u_\gamma(F)$ is an isomorphism on
$W \times \Int(\gamma^{\circ a})$ by Propositions~\ref{prop:cut-off1}
and~\ref{prop:cut-off2} and Lemma~\ref{lem:compo_isosurOmega}.  Hence
$\SSi(L) \cap (W \times \Int(\gamma^{\circ a})) = \emptyset$ and we find
$\dot\SSi(L) = \emptyset$. This proves the first assertion.

\sui(iii) The distinguished triangle of the proposition follows from the
triangles~\eqref{eq:dist_tri_gam}, \eqref{eq:cut-off_split-pf1} and
Lemma~\ref{lem:tr_dist_somme} applied with $A = P_\gamma(F)|_W$, $B= F|_W$,
$C = P'_\gamma(F)|_W$, $C' = Q_\gamma(F)|_W$, $D=L$.

Since $v_\gamma(F)$ is an isomorphism on $V \times \Int(\gamma^{\circ a})$, the
second triangle in~\eqref{eq:dist_tri_gam} gives
$\dot\SSi(P'_\gamma(F)|_W)\cap (W \times \Int(\gamma^{\circ a})) = \emptyset$.  It
follows that $\dot\SSi(P'_\gamma(F)|_W)$ and $\dot\SSi(P_\gamma(F)|_W)$ are
disjoint.   Hence~\eqref{eq:cut-off_split} implies
$$
\dot\SSi(F|_W) = \dot\SSi(P'_\gamma(F)|_W) \sqcup \dot\SSi(P_\gamma(F)|_W) .
$$
Since~\eqref{eq:hypSSF-split-ponct} implies in particular
$\dot\SSi(F) \cap (W \times \partial(\gamma^{\circ a})) = \emptyset$, the
triangular inequality for the microsupport implies the last equalities of the
proposition.
\end{proof}

\begin{lemma}
\label{lem:tr_dist_somme}
We assume to be given a morphism $a \cl A \to B$ in a triangulated category and
two distinguished triangles
\begin{equation*}
C \to[c] B \to[c'] C' \to[+1], 
\qquad
A \to[c' \circ a] C' \to D \to[+1] .
\end{equation*}
Then there exists a distinguished triangle
$$
A \oplus C \to[(a,c)] B \to D \to[+1].
$$
\end{lemma}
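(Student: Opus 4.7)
The plan is to apply the octahedral axiom twice, to two different compositions, and then conclude via TR3.

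First I would apply the octahedral axiom to the composition $A \to[\mathrm{inj}_1] A \oplus C \to[(a,c)] B$, whose composite is $a$. The three input distinguished triangles are the split triangle $A \to A \oplus C \to C \to[+1]$, the defining triangle $A \oplus C \to[(a,c)] B \to T \to[+1]$ of $T := \operatorname{cone}((a,c))$, and the triangle $A \to[a] B \to[i] K \to[+1]$ defining $K := \operatorname{cone}(a)$. The octahedral axiom produces a distinguished triangle $C \to[h] K \to T \to[+1]$, and the octahedral commutativity forces $h \circ \mathrm{pr}_2 = i \circ (a,c)$; evaluating on $(0,y) \in A \oplus C$ identifies $h = i \circ c$.

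Second I would apply the octahedral axiom to $A \to[a] B \to[c'] C'$, whose composite is $c' \circ a$. The input triangles are $A \to[a] B \to[i] K \to[+1]$, the rotation $B \to[c'] C' \to C[1] \to[+1] B[1]$ of the first given triangle (whose connecting map is $-c[1] \colon C[1] \to B[1]$), and the second given triangle $A \to[c'a] C' \to D \to[+1]$. The octahedral axiom yields a distinguished triangle $K \to D \to C[1] \to[+1] K[1]$ in which the connecting morphism $C[1] \to K[1]$ equals, by the octahedral commutativity, the composition $i[1] \circ (-c[1]) = -(i \circ c)[1]$. Inverse rotation (which absorbs the sign) then produces the distinguished triangle $C \to[i \circ c] K \to D \to[+1] C[1]$.

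The two distinguished triangles $C \to[i \circ c] K \to T \to[+1]$ and $C \to[i \circ c] K \to D \to[+1]$ share their first two objects and their first arrow. Applying TR3 with the identities on $C$ and $K$ produces an isomorphism $T \isoto D$ (the triangulated five-lemma ensures the third arrow of a morphism of triangles is an isomorphism when the other two are). Substituting $D$ for $T$ in the triangle defining $T$ yields the desired distinguished triangle $A \oplus C \to[(a,c)] B \to D \to[+1] (A \oplus C)[1]$.

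The main obstacle I anticipate is bookkeeping the rotation signs carefully enough to confirm that both octahedral applications produce the same first map $C \to K$; once this agreement is verified, the conclusion is a direct application of TR3.
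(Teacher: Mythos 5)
Your proof is correct, but it takes a genuinely different route from the paper. You apply the octahedral axiom twice — first to the composition $A \to[\mathrm{inj}_1] A \oplus C \to[(a,c)] B$ (composite $a$), producing $C \to[ic] K \to T \to[+1]$ where $K = \operatorname{cone}(a)$, and then to $A \to[a] B \to[c'] C'$ (composite $c'\circ a$), which after one inverse rotation yields $C \to[ic] K \to D \to[+1]$; you then match the two triangles with TR3 and the five-lemma. The paper uses only a single octahedron, applied to the other evident factorization $C \to[u] A\oplus C \to[(a,c)] B$, whose composite is $c$ itself: since the cone of $c$ is $C'$, this directly produces a triangle $A \to[w] C' \to E \to[+1]$ (with $E = \operatorname{cone}((a,c))$), and precomposing the octahedral commutativity $w\circ v = c'\circ(a,c)$ with $\mathrm{inj}_1$ identifies $w = c'\circ a$; comparing with the given triangle $A \to[c'a] C' \to D \to[+1]$ gives $E \simeq D$ at once. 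So the paper avoids the auxiliary cone $K$, the second octahedron, and the rotation-sign bookkeeping that you correctly flag as the delicate point, while your version has the minor virtue of exhibiting $\operatorname{cone}(a)$ explicitly. One small remark on presentation: your phrase ``evaluating on $(0,y) \in A\oplus C$'' is informal in a general triangulated category — the rigorous step is to precompose the identity $h\circ\mathrm{pr}_2 = i\circ(a,c)$ with $\mathrm{inj}_2 \colon C \to A\oplus C$, which is what you in fact mean.
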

\begin{proof}
  We define $E$ by the distinguished triangle $A \oplus C \to[(a,c)] B \to E
  \to[+1]$. We also have the canonical triangle $C \to[u] A\oplus C \to[v] A
  \to[+1]$ where $u = (0,\id_C)$ and $v = (\id_A,0)$.  Since $(a,c) \circ u = c
  \cl C \to B$, the octahedron axiom applied to this last two triangles and the
  first one in the statement gives a distinguished triangle
$$
A \to[w] C' \to E \to[+1]
$$
such that $w \circ v = c' \circ (a,c)$. This implies $w = c' \circ a$. Hence $E
\simeq D$ and the lemma follows.
\end{proof}

\section{Local cut-off - special case}
\label{sec:localcutoff-special}

In this section we want to give local versions of Propositions~\ref{prop:cut-off1}
and~\ref{prop:cut-off_split}.  For example the conclusion $P_\gamma(F) \simeq F$
requires a bound for $\SSi(F)$ on $V$; in the same way the
hypothesis~\eqref{eq:hypSSF-split-ponct} requires a knowledge of $\SSi(F)$ on some
unbounded set.  Here we only assume that we know $\SSi(F)$ over some open subset $U$
of $V$ and we will give results similar to the conclusions of the mentioned
propositions over some smaller open subset $W \subset U$.

In fact we will apply Propositions~\ref{prop:cut-off1} and~\ref{prop:cut-off_split}
to a sheaf $F_Z$ for some locally closed subset $Z \subset U$ such that
$\ol Z \subset U$ ($F_Z$ is considered as a sheaf on $V$ by extension by $0$).  Then
$\SSi(F_Z)$ coincides with $\SSi(F)$ over $\Int(Z)$ and is empty over
$V\setminus\ol{Z}$. However, over $\ol Z \setminus \Int(Z)$, we only have the bound
$\SSi(F_Z) \subset \SSi(F) \hplus \SSi(\cor_Z)$.  We check in this section that we
can choose $Z$ so that $F_Z$ satisfies the hypotheses of
Propositions~\ref{prop:cut-off1} and~\ref{prop:cut-off_split}.  This is done
in~\cite[Prop. 6.1.4]{KS90} for a convex cone $\gamma$.  In
Proposition~\ref{prop:cut-off_split_local2} we will generalize this result to a more
general cone, using Theorem~\ref{thm:GKS} to reduce the situation to the case of a
convex cone. Since we can as well reduce to a very special cone, we only explain a
particular case of~\cite[Prop. 6.1.4]{KS90} (the proof is not different from that
of~\cite{KS90} but the exposition is easier). 

We write $V = V' \times \R$, where $V'=\R^{n-1}$. We take coordinates
$x=(x',x_n)$ on $V$ and we endow $V'$ and $V$ with the natural Euclidean
structure.  For $c>0$ we let $\gamma_c \subset V$ be the cone
\begin{equation}\label{eq:defgammac}
\gamma_c = \{ (x',x_n) \in V;\; x_n \leq - c \, \| x'\| \}.
\end{equation}

\subsubsection*{Precised cut-off}

  We begin with an analog of Propositions~\ref{prop:cut-off1},
\ref{prop:cut-off2}: we give a functor $R$ similar to $P_\gamma$ or $Q_\gamma$ which
cuts the microsupport along $\{\xi_n\leq0\}$, but locally defined (on $\Der(\cor_U)$
instead of $\Der(\cor_V)$, with $U$ open in $V$) and with a bound for $\SSi(R(F))$ up
to the boundary $\{\xi_n=0\}$.  However $R(F)$ is only defined on a smaller subset
$W \subset U$ and we don't have the ideal bound
$\SSi(R(F)) \subset \SSi(F) \cap \{\xi_n\leq0\}$, but only
$\SSi(R(F)) \subset (\SSi(F) \cap \{\xi_n\leq0\}) \cup B'$, where $B'$ is an
arbitrary neighborhood of $\SSi(F) \cap \{\xi_n=0\}$ chosen in advance.

\begin{proposition}\label{prop:microcutofflevrai0}
  Let $B\subset V^*$ be a closed conic subset and let $B'\subset V^*$ be a conic
  neighborhood of $(B\setminus\{0\}) \cap \{ \xi_n = 0\}$. Let $U\subset V$ be a
  neighborhood of $0$.  Then there exist an open neighborhood $W$ of $0$ in $U$ and a
  functor $R \cl \Der(\cor_U) \to \Der(\cor_W)$, together with a morphism of functors
  $(\cdot)|_W \to R$, such that, for any $F\in \Der(\cor_U)$,
\begin{itemize}
\item [(a)] if $\SSi(F) \subset U \times B$, then
  $$
  \SSi(R(F)) \subset W \times ((B \setminus \{ \xi_n \geq 0\}) \cup B') ,
  $$
\item [(b)] if $\SSi(F) \subset U \times \{ \xi_n \leq 0\}$, then
  $F|_W \isoto R(F)$.
\end{itemize}
\end{proposition}

Since the proof is a bit technical we explain the idea.  We first cut $F$ by some
open set $Y$ with $\ol Y \subset U$ (so that $F_Y$ can be consider as a sheaf on $V$
which is $0$ outside $\ol Y$) and apply $Q'_\gamma$ to $F_Y$.
Using~\eqref{eq:SSPgamF} we have
$\SSi(Q'_\gamma(F_Y)) \cap T^*_xV \subset p_2(\SSi(F_Y) \cap
\SSi(\cor_{x+\gamma})^a)$ and we know that
$\SSi(F_Y) \subset \SSi(F) \hplus \SSi(\cor_Y)$.  Of course this last subset could be
much bigger than $\SSi(F)$, but it turns out that, if $\SSi(\cor_Y)$ is close enough
to $\SSi(\cor_{x+\gamma})$ (at the points $x'$ where both
$\SSi(\cor_Y) \cap T^*_{x'}V$ and $\SSi(\cor_{x+\gamma}) \cap T^*_{x'}V$ are non
trivial), then $\SSi(Q'_\gamma(F_Y))$ is not too much bigger than $\SSi(F)$. This
relies on Lemma~\ref{lem:borne_cutoff_ruse} because, at the interesting points $x'$,
$\SSi(\cor_Y) \cap T^*_{x'}V$ and $\SSi(\cor_{x+\gamma}) \cap T^*_{x'}V$ are
half-lines.

The examples we choose for $Y$ and $\gamma$ with the required property are for $Y$ a
vertical cylinder
\begin{equation}\label{eq:defYrh}
    Y_r^h = \{ (x',x_n) \in V;\; \| x'\| < r, \; |x_n| < h \} 
\end{equation}
and for $\gamma$ a cone $\gamma_c$ with big slope $c$ such that $\partial\gamma_c$
meets $\partial Y_r^h$ along its vertical part, which means that $h>rc$ (as in
Fig.~\ref{fig:Ygamma}).

\begin{figure}[ht]
\begin{tikzpicture}[decoration={border, segment length=1.8mm, amplitude=2mm, angle=90}]

\coordinate (A) at (0,0);
\coordinate (B) at (-1,-2); \coordinate (C) at (1,-2);
\coordinate (D) at (.2,-.1); 
\draw [postaction={decorate,draw}] (C)--(A)--(B);
\fill [fill=black] (A) -- (D) arc[radius=.22cm, start angle=-27, end angle=-153]
-- cycle;

\coordinate (A) at (-.5,2.5); \coordinate (B) at (.5,2.5);
\coordinate (C) at (.5,-2.5); \coordinate (D) at (-.5,-2.5);
\draw  [dotted] (A) -- (B) -- (C) -- (D) -- cycle;
\draw  [decorate] (A) -- (B) -- (C) -- (D) -- cycle;
\fill [fill=black] (A) -- +(0,.2) arc[radius=2mm, start angle=90, end angle=180]
-- cycle;
\fill [fill=black] (B) -- +(0,.2) arc[radius=2mm, start angle=90, end angle=0]
-- cycle;
\fill [fill=black] (C) -- +(0,-.2) arc[radius=2mm, start angle=-90, end angle=0]
-- cycle;
\fill [fill=black] (D) -- +(0,-.2) arc[radius=2mm, start angle=-90, end angle=-180]
-- cycle;

\end{tikzpicture}
\caption{}   \label{fig:Ygamma}
\end{figure}

\begin{lemma}
  \label{lem:borne_cutoff_ruse}
  Let  
  $B,B'$ be as in Proposition~\ref{prop:microcutofflevrai0}.  Let $\sphere$ be the
  unit sphere of $V^*$. Then there exists $\varepsilon>0$ such that, for any
  $(p_1,p_2) \in (\sphere \cap \{ \xi_n = 0\}) \times \sphere$ with
  $||p_1-p_2||<\varepsilon$, we have
  $$
  (B+ \rpos\cdot (-p_1)) \cap (\rspos\cdot p_2) \subset B'.
  $$
  (In other words, if the intersection is non empty, then $p_2 \in B'$.)
\end{lemma}
This lemma is an elaboration on the following remark: for $p\in V^*\setminus B$ we
have $p \not\in (B+\rpos(-p))$ and hence
$(B+ \rpos\cdot (-p)) \cap (\rspos\cdot p) = \emptyset$.  It follows that, for any
$p \in (\sphere \cap \{ \xi_n = 0\})$, we have
$(B+ \rpos\cdot (-p)) \cap (\rpos\cdot p) \subset (B \cap \{ \xi_n = 0\})$.  In the
lemma $(p_1,p_2)$ is near the diagonal and $B \cap \{ \xi_n = 0\}$ is replaced by its
neighborhood $B'$.
\begin{proof}
  For a subset $C$ of the sphere $\sphere$, we set
  $C_{\sphere,\varepsilon} = \{v\in \sphere$; there exists $v'\in C$,
  $||v-v'||<\varepsilon\}$.  For a conic subset $D$ of $V^*$ we set
  $D_\varepsilon = \rspos\cdot ( D\cap \sphere)_{\sphere,\varepsilon}$.  We remark
  that $D_\varepsilon = \{av_1+bv_2$; $v_1 \in D\cap \sphere$, $v_2\in \sphere$,
  $||v_1-v_2||<\varepsilon$, $a,b\geq 0\} \setminus \{0\}$.
 
 A point $p' \in (B+ \rspos\cdot (-p_1)) \cap (\rspos\cdot p_2)$ is written
 $p' = p - ap_1 = bp_2$, with $p\in B$, $a,b\geq0$.  Then $p = ap_1+bp_2$ belongs to
 $B \cap \{\xi_n=0\}_\varepsilon$.  The point $q = \frac1{||p||}p$ belongs to the arc
 $p_1p_2$ on $\sphere$, hence $||q-p_2||<\varepsilon$ and then
 $p_2 \in (B \cap \{\xi_n=0\}_\varepsilon)_\varepsilon$.  Now, for $\varepsilon$
 small enough this last set is contained in $B'$.
\end{proof}

\begin{proof}[Proof of Proposition~\ref{prop:microcutofflevrai0}]
  (i) We will use the functors $P_\gamma$, $Q'_\gamma$ for a cone $\gamma = \gamma_c$
  to be chosen in~(iii) and the distinguished triangle~\eqref{eq:morph_func_gam}
  $P_\gamma(F) \to F \to Q'_\gamma(F) \to[+1]$.
  We have the bound, with the same proof as~\eqref{eq:SSPgamF},
  $$
\SSi(Q'_\gamma(F)) \cap T^*_xV \subset p_2(\SSi(F) \cap S'^\gamma_x) ,
$$
where $S'^\gamma_x = (\SSi(\cor_{(x+\gamma) \setminus \{x\}}))^a$ and
$p_2 \colon T^*V \to T^*_xV \simeq V^*$ is the projection.

\sui(ii) We prove that
$S'^\gamma_x \subset V\times(V^* \setminus \Int(\gamma^{\circ a}))$.    More precisely $S'^\gamma_x = (\SSi(\cor_{x+\gamma}))^a$
outside $T^*_xV$ (which is obvious from the definition of $S'^\gamma_x$) and
$S'^\gamma_x \cap T^*_xV = V^* \setminus \Int(\gamma^{\circ a})$. Indeed this can be
computed directly from the definition of the microsupport, or we can
use~\cite[Lem.~3.7.10, Thm.~5.5.5]{KS90} as follows. The Fourier-Sato transform of a
conic sheaf $G \in \Der(\cor_V)$ is $G^\wedge \in \Der(\cor_{V^*})$ and, identifying
$T^*V = V\times V^* =T^*V^*$, we have $\SSi(G^\wedge) = \SSi(G)$.  In particular
$\SSi(G) \cap T^*_0V = \supp(G^\wedge)$.  We also have
$(\cor_\lambda)^\wedge \simeq \cor_{\Int(\lambda^\circ)}$ for any proper closed
convex cone $\lambda$, in particular $\lambda=\gamma$ and $\lambda=\{0\}$. Hence the
exact sequence
$0 \to \cor_{\gamma\setminus\{0\}} \to \cor_\gamma \to \cor_{\{0\}} \to 0$ gives
$(\cor_{\gamma\setminus\{0\}})^\wedge \simeq \cor_{V^* \setminus \Int(\gamma^\circ)}$
and the result follows.

\sui(iii) We will choose $c,r,h$ such that $h>rc$ as explained
after~\eqref{eq:defYrh}. Hence $\partial\gamma_c$ meets $\partial Y_r^h$ along its
vertical part and the intersection, say $I$, is the sphere
$\{ (x',x_n) \in V;\; \| x'\| = r, \; x_n = -cr \}$.  Let $y\in I$,
$p_1, p_2 \in T^*_yV$ be the unit length vectors in $T^*_yV \cap \SSi(\cor_{Y_r^h})$,
$T^*_yV \cap S'^{\gamma_c}_0$.  Then $||p_1-p_2|| \leq c^{-1}$.  Let $\varepsilon>0$
such that the conclusion of Lemma~\ref{lem:borne_cutoff_ruse} holds.  We choose
$c> 2 \varepsilon^{-1}$ and $h,r$ small enough so that $Y_r^h \subset U$.  We define
$R_0(F) = Q'_{\gamma_c}(F_{Y_r^h})$. We have a natural morphism $F \to R_0(F)$.

Let $W \subset Y_r^h$ be the set of $x$ such that $\partial(x+\gamma_c)$ meets
$\partial Y_r^h$ along its vertical part and, for any
$y\in \partial(x+\gamma_c) \cap \partial Y_r^h$, the unit length vectors
$p_1 \in T^*_yV \cap \SSi(\cor_{Y_r^h})$, $p_2 \in T^*_yV \cap S'^{\gamma_c}_x$ satisfy
$||p_1-p_2|| < \varepsilon$.  The set $W$ is open and contains $0$ by construction.

By Theorem~\ref{thm:SSrhom} we have
$\SSi(F_{Y_r^h}) \subset (V \times B) \hplus \SSi(\cor_{Y_r^h})$.  We can use $+$
instead of $\hplus$ in the last expression because the term $V\times B$ is a
product. By~(i) and~(ii) we have
\begin{align*}
  \SSi(R_0(F)) \cap T^*_xV
  & \subset p_2(((V \times B) + \SSi(\cor_{Y_r^h})) \cap S'^{\gamma_c}_x) \\
  & \subset p_2((V \times B)  \cap (V\times (V^* \setminus \Int(\gamma_c^{\circ a}))) \\
  & \qquad \cup p_2(((V \times B) + \dot\SSi(\cor_{Y_r^h})) \cap S'^{\gamma_c}_x) .
\end{align*}
By Lemma~\ref{lem:borne_cutoff_ruse} and by our definition of $W$ we have
$p_2(((V \times B) + \dot\SSi(\cor_{Y_r^h})) \cap S'^{\gamma_c}_x) \subset B'$ if
$x\in W$.  Hence
$ \SSi(R_0(F)) \cap T^*_xV \subset (B \cap (V^* \setminus \Int(\gamma_c^{\circ a}) )
\cup B'$.  Since
$V^* \setminus \Int(\gamma_c^{\circ a}) = \{ \xi_n \geq c^{-1}||\xi'||\}$, we have
$B \cap (V^* \setminus \Int(\gamma_c^{\circ a})) \subset ((B \setminus \{ \xi_n \geq
0\}) \cup B')$ for $c$ big enough.  Now we set $R(F) = R_0(F)|_W$ and we obtain~(a).

\sui(iv) We prove that $P_{\gamma_c}(F_{Y_r^h})|_W \simeq 0$, which implies~(b) by
the triangle given in~(i).  By~\eqref{eq:germ_compo} we have
$(P_{\gamma_c}(F_{Y_r^h}))_y \simeq \rsect(V; F_{E_y})$ where
$E_y = Y_r^h \cap (y+\gamma_c)$.  If we restrict to the open subset $\{x_n<h\}$, the
microsupports of $F$, $\cor_{Y_r^h}$ and $\cor_{y+\gamma_c}$, for $y\in V$, are all
contained in $\{ \xi_n \leq 0\}$.  For $y\in W$ we have $\ol{E_y} \subset \{x_n<h\}$,
hence $\SSi(F_{E_y}) \subset \{ \xi_n \leq 0\}$ by Theorem~\ref{thm:SSrhom}.  Since
$\supp(F_{E_y}) \subset \ol{Y_r^h}$ is compact, we deduce by
Corollary~\ref{cor:Morse} that $\rsect(V; F_{E_y}) \simeq 0$ (taking $\phi(x) = x_n$
in the corollary we obtain
$\rsect(\phi^{-1}(\mo]-\infty,2h\mc[); F_{E_y}) \isoto
\rsect(\phi^{-1}(\mo]-\infty,-2h\mc[); F_{E_y}) \simeq0$). Hence
$P_{\gamma_c}(F_{Y_r^h})|_W \simeq 0$ as claimed.
\end{proof}

\subsubsection*{Local splitting}

Recall the cone $\gamma_c$ of~\eqref{eq:defgammac} and the subset
$S_x^{\gamma_c} \subset T^*V$ of~\eqref{eq:def_Sgammax}. We have
$\pi_V(S_0^{\gamma_c}) = \{(x',x_n;\xi',\xi_n); \; |x_n| = c \, ||x'||\}$ and, for a non
zero $y = (y',y_n) \in \pi_V(S_0^{\gamma_c})$ we have
\begin{equation}\label{eq:fibreS0gamma}
S_0^{\gamma_c} \cap T^*_yV
= \{(\lambda \tilde y', -c^{-1} \lambda \tilde y_n);\; \lambda y_n >0\} ,
\end{equation}
where $(\tilde y', \tilde y_n) \in V^*$ corresponds to $y$ through the identification
$V' \simeq (V')^*$ given by the Euclidean product.  We will use a cylinder similar to
$Y_r^h$
\begin{equation}\label{eq:defZrh}
  \begin{split}
    Z_r^h &=  \{ (x',x_n) \in V;\; \| x'\| \leq r, \;  -h \leq x_n < 0 \} \\
    & \qquad  \sqcup \{ (x',x_n) \in V;\; \| x'\| < r, \;  0 \leq x_n < h \} 
  \end{split}
\end{equation}
which is locally closed (and not open) and such that $\SSi(\cor_{Z_r^h})$ is close to
$(S_0^{\gamma_c})^a$ along $\partial Z_r^h \cap \pi_V(S_0^{\gamma_c})$ for $c$
big.   Indeed over the vertical
boundary of $Z_r^h$, that is, for $y = (y',y_n)$ with $\| y'\| = r$ and
$0< |y_n| < h$ we have, by Example~\ref{ex:microsupport}-(iii),
\begin{equation}\label{eq:borne_SSZrh}
  \SSi(\cor_{Z_r^h}) \cap T_y^*V
= \{(\lambda \tilde y', 0);\; \lambda y_n >0\} ,
\end{equation}
where $\tilde y' \in (V')^*$ is as in the description of $S_0^{\gamma_c}$ (in
Fig.~\ref{fig:Zgamma} we have pictured $\SSi(\cor_{Z_r^h})$ and $S_0^{\gamma_c}$).

\begin{figure}[ht]
\begin{tikzpicture}[decoration={border, segment length=1.8mm, amplitude=2mm, angle=90}]

\draw [postaction={decorate,draw}] (-1,-2) -- (1,2);
\draw [postaction={decorate,draw}] (-1,2) -- (1,-2);

\coordinate (A) at (-.5,2.5); \coordinate (B) at (.5,2.5);
\coordinate (C) at (.5,-2.5); \coordinate (D) at (-.5,-2.5);
\draw  [decorate] (-.5,0) -- (A) -- (B) -- (.5,0);
\draw  [dotted] (-.5,0) -- (A) -- (B) -- (.5,0);
\draw  [postaction={decorate,draw}] (-.5,0) -- (D) -- (C) -- (.5,0);
\fill [fill=black] (A) -- +(0,.2) arc[radius=2mm, start angle=90, end angle=180]
-- cycle;
\fill [fill=black] (B) -- +(0,.2) arc[radius=2mm, start angle=90, end angle=0]
-- cycle;
\fill [fill=black] (C) -- +(0,.2) arc[radius=2mm, start angle=90, end angle=180]
-- cycle;
\fill [fill=black] (D) -- +(.2,0) arc[radius=2mm, start angle=0, end angle=90]
-- cycle;
\fill [fill=black] (-.5,0) -- +(.2,0) arc[radius=2mm, start angle=0, end angle=180]
-- cycle;
\fill [fill=black] (.5,0) -- +(.2,0) arc[radius=2mm, start angle=0, end angle=180]
-- cycle;

\end{tikzpicture}
\caption{}   \label{fig:Zgamma}
\end{figure}

\begin{proposition}
\label{prop:cut-off_split_local0}
Let $U \subset V$ be an open subset containing $0$ and let $c>0$ be given.  Then
there exist an open neighborhood $W$ of $0$ in $U$ and two functors
$P, P' \cl \Der(\cor_U) \to \Der(\cor_W)$ together with morphisms of functors
$u \cl P \to (\cdot)|_W$, $u' \cl P' \to (\cdot)|_W$, such that, for any
$F\in \Der(\cor_U)$ satisfying
\begin{equation}\label{eq:hypSSF_borne_cutoff0}
\SSi(F) \subset U\times (\gamma_{c}^{\circ} \cup \gamma_{c}^{\circ a} ) ,
\end{equation}
we have
\begin{align*}
\dot\SSi(P(F)) &= \dot\SSi(F|_W) \cap (W \times \gamma_c^\circ ), \\
\dot\SSi(P'(F)) &= \dot\SSi(F|_W) \cap (W \times \gamma_c^{\circ a} )
\end{align*}
and the object $L \in  \Der(\cor_W)$ given by the distinguished triangle
$$
P(F) \oplus P'(F) \to[\,(u(F), \, u'(F))\,] F|_W \to L \to[+1]
$$
satisfies $\dot\SSi(L) = \emptyset$.
\end{proposition}
\begin{proof}
  (i) As in the proof of Proposition~\ref{prop:microcutofflevrai0} we assume to be
  given $d,r,h>0$ such that $h>rd$. Hence $\pi_V(S_0^{\gamma_d})$ meets
  $\partial Z_r^h$ along its vertical part, where $Z_r^h$ is defined
  in~\eqref{eq:defZrh}, and the intersection is the union of the two spheres
  $\{ (x',x_n) \in V;\; \| x'\| = r, \; x_n = \pm dr \}$.  Along these two spheres
  $S_0^{\gamma_d}$ and $\SSi(\cor_{Z_r^h})$ are the two half lines described
  in~\eqref{eq:fibreS0gamma} and~\eqref{eq:borne_SSZrh} (so they make an angle
  $\arctan(d^{-1})$).  Using Lemma~\ref{lem:borne_cutoff_ruse} with  
\begin{equation}
  \label{eq:defgammacopopa}
B=  \gamma_{c}^{\circ} \cup \gamma_{c}^{\circ a}
  = \{ (\xi',\xi_n); \; c\, |\xi_n| \leq \| \xi'\| \}.
\end{equation}
and $B' = \emptyset$ we obtain
\begin{equation}
  \label{eq:S0partialgammac}
S_0^{\gamma_d} \cap (\SSi(\cor_{Z_r^h}) + V\times (\gamma_{c}^{\circ}
\cup \gamma_{c}^{\circ a} )) = \emptyset 
\end{equation}
for $d$ big enough, whatever $r,h$.  We fix such a $d$ and then choose $r,h$ such
that $h>rd$ (as already assumed) and small enough so that $\ol{Z_r^h} \subset U$.

\sui(ii) We define $W$ as the set of $x \in \Int(Z_r^h)$ such that
$\pi_V(S_x^{\gamma_d})$ meets $\partial Z_r^h$ along its vertical part
and~\eqref{eq:S0partialgammac} holds with $S_0^{\gamma_d}$ replaced by
$S_x^{\gamma_d}$.  Hence $W$ is an open neighborhood of $0$.

For $F\in \Der(\cor_U)$ we can extend $F\tens \cor_{Z_r^h}$ by $0$ as an object of
$\Der(\cor_V)$.  We define $P, P'$ by $P(F) = P_{\gamma_{d}}(F\tens \cor_{Z_r^h})|_W$
and $P'(F) = P'_{\gamma_{d}}(F\tens \cor_{Z_r^h})|_W$.  The functors $u,u'$ are
induced by $u_{\gamma_{d}}$, $u'_{\gamma_{d}}$ (we remark that $W\subset Z_r^h$,
hence $(F\tens \cor_{Z_r^h})|_W = F|_W$).  If $F$
satisfies~\eqref{eq:hypSSF_borne_cutoff0}, then
\begin{equation}\label{eq:borne_SS_cutoff2}
  \SSi(F\tens \cor_{Z_r^h}) \subset
  (\SSi(\cor_{Z_r^h}) + V\times (\gamma_{c}^{\circ} \cup \gamma_{c}^{\circ a} ))
\end{equation}
by Theorem~\ref{thm:SSrhom}.  Hence $F\tens \cor_{Z_r^h}$ satisfies the
hypothesis~\eqref{eq:hypSSF-split-ponct} of Proposition~\ref{prop:cut-off_split}
by~\eqref{eq:S0partialgammac} (with $S_0^{\gamma_{d}}$ replaced by
$S_x^{\gamma_{d}}$).  Now the result follows from
Proposition~\ref{prop:cut-off_split}
\end{proof}

\section{Local cut-off - general case}

Here we extend the results of \S\ref{sec:localcutoff-special} replacing the cones
$\{\xi_n\leq0\}$ or $\gamma_c^\circ$ by more general ones.  We deduce
Propositions~\ref{prop:microcutofflevrai1}, \ref{prop:cut-off_split_local2} from
Propositions~\ref{prop:microcutofflevrai0}, \ref{prop:cut-off_split_local0}.  The
process of reduction to the results of \S\ref{sec:localcutoff-special} is the same
for both propositions and we only prove the second one.  (Moreover
Proposition~\ref{prop:microcutofflevrai1} is only a precised version
of~\cite[Prop.~6.1.4]{KS90} but Proposition~\ref{prop:cut-off_split_local2} is new.)

\begin{proposition}\label{prop:microcutofflevrai1}
  Let $U$ be an open subset of $V = \R^n$ and let $A \subset V^* \setminus\{0\}$ be
  an open cone.  We assume that there exists a homotopy
  $\psi\colon (V^* \setminus\{0\}) \times [0,1] \to V^* \setminus\{0\}$ of class
  $C^1$ and homogeneous of degree $1$ such that $\psi_1(A) = \{\xi_n>0\}$.  Let
  $B\subset V^*$ be a closed conic subset and $B'\subset V^*$ be a conic neighborhood
  of $B \cap \partial A$.  Let $x_0\in U$ be given.  Then there exist a neighborhood
  $W$ of $x_0$ and a functor $R \cl \Der(\cor_U) \to \Der(\cor_W)$, of the form
  $R(F) = K \circ F$ for some $K\in \Der(\cor_{W\times U})$, together with a morphism
  of functors $(\cdot)|_W \to R$, such that, for any $F\in \Der(\cor_U)$,
\begin{itemize}
\item [(i)] if $\SSi(F) \subset U \times B$, then
  $$
  \SSi(R(F)) \subset W \times ((B \setminus A) \cup B') ,
  $$
\item [(ii)] if $\dot\SSi(F) \cap (U \times A) = \emptyset$, then
  $F|_W \isoto R(F)$.
\end{itemize}
\end{proposition}

\begin{proposition}
\label{prop:cut-off_split_local2}
Let $U$ be an open subset of $V = \R^n$ and let $A$, $A' \subset V^* \setminus\{0\}$
    be two
disjoint closed conic subsets.  We assume that there exists a homotopy
$\psi\colon (V^* \setminus\{0\}) \times [0,1] \to V^* \setminus\{0\}$ of class $C^1$
and homogeneous of degree $1$ such that $\psi_1(A)$ and $\psi_1(A')$ are separated by
some hyperplane of $V^*$.  Let $x_0\in U$ be given.  Then there exist a neighborhood
$W$ of $x_0$ in $U$ and two functors $P, P' \cl \Der(\cor_U) \to \Der(\cor_W)$
together with morphisms of functors $u \cl P \to (\cdot)|_W$,
$u' \cl P' \to (\cdot)|_W$ such that, for any $F\in \Der(\cor_U)$ satisfying
\begin{equation}\label{eq:hypSSF_borne_cutoff02}
\dot\SSi(F) \subset U\times (A \sqcup A') ,
\end{equation}
we have
\begin{align*}
\dot\SSi(P(F)) &= \dot\SSi(F|_W) \cap (W \times A ), \\
\dot\SSi(P'(F)) &= \dot\SSi(F|_W) \cap (W \times A' )
\end{align*}
and the object $L \in  \Der(\cor_W)$ given by the distinguished triangle
$$
P(F) \oplus P'(F) \to[(u(F), u'(F))] F|_W \to L \to[+1]
$$
satisfies $\dot\SSi(L) = \emptyset$.
\end{proposition}
\begin{proof}
  (i) Let $c>0$ be given.  We recall the notation $\gamma_c$ of~\eqref{eq:defgammac}.
  Up to a linear change of coordinates in $\R^n$ we can assume that $x_0 = 0$, that
  the homotopy $\psi$ in the hypothesis is defined on some open interval $I$
  containing $[0,1]$ and that $\psi_1(A) \subset \gamma_c^\circ$,
  $\psi_1(A') \subset \gamma_c^{\circ a}$.  We first extend $\psi$ to a homogeneous
  Hamiltonian isotopy $\Phi \cl \dT^*U \times I \to \dT^*U$ such that
  $\Phi_t(\dT^*_0U) = \dT^*_0U$ for all $t \in I$ and
$$
\Phi_1( \{0\} \times A) \subset \{0\} \times \gamma_c^\circ ,  
\qquad
\Phi_1( \{0\} \times A') \subset \{0\} \times \gamma_c^{\circ a} .
$$
To see that such a $\Phi$ exists we choose local coordinates $(x;\xi)$ around $0$ and
write
$\frac{\partial \psi}{\partial t}(\xi,t) = \sum_{i=1}^n a_i(\xi,t) \partial_{\xi_i}$
(for $(\xi,t) \in V^*\times I$). Then we choose a Hamiltonian function
$h \cl \dT^*U \times I \to \R$ homogeneous of degree $1$ in $\xi$ and with support in
$C \times V^*$ for some compact subset $C$ of $U$ such that, near $0$, we have
$h(x;\xi) = - \sum_{i=1}^n a_i(\xi,t) x_i$.  Then the Hamiltonian flow $\Phi$ of $h$
satisfies the above relations.

\newcommand{\Rphi}{R_\Phi}

We let $\Rphi \cl \Der(\cor_U) \to \Der(\cor_U)$,
$F \mapsto K_{\Phi,1} \circ F$, be the equivalence of categories given by
Corollary~\ref{cor:isot_equivcat}.  We have in particular
$\dot\SSi(\Rphi(F)) = \Phi_1(\dot\SSi(F))$ for all $F \in \Der(\cor_U)$.

\sui (ii) We can find a neighborhood $U_1$ of $0$ such that
$$
\Phi_1( U \times A) \cap \dT^*U_1 \subset U_1 \times \gamma_{2c}^\circ ,
\quad
\Phi_1( U \times A') \cap \dT^*U_1 \subset U_1 \times \gamma_{2c}^{\circ a} .
$$
Applying Proposition~\ref{prop:cut-off_split_local0} (with $U_1$ and $2c$ instead of
$U$ and $c$) we find a neighborhood of $0$, say $W_1$, and functors
$P_1, P'_1 \cl \Der(\cor_{U_1}) \to \Der(\cor_{W_1})$ together with morphisms of
functors $u_1 \cl P_1 \to (\cdot)|_{W_1}$, $u'_1 \cl P'_1 \to (\cdot)|_{W_1}$
satisfying the conclusion of Proposition~\ref{prop:cut-off_split_local0}.

\sui (iii) We let $W$ be an open neighborhood of $0$ such that
$\Phi_t(\dT^*W) \subset \dT^*W_1$ for all $t\in [0,1]$.  Let $j$ denote the
inclusion of $W_1$ in $U$. We define the functor $P$ by
$$
P(F) = (\Rphi^{-1}\eim{j} P_1(\Rphi(F)|_{U_1}) )|_W
$$
and $P'$ from $P'_1$ by the same formula.  We let $u$, $u'$ be the morphisms of
functors induced by $u_1$, $u_1'$.  By
Proposition~\ref{prop:cut-off_split_local0} we have a distinguished triangle on
$W_1$
$$
P_1(\Rphi(F)|_{U_1}) \oplus P'_1(\Rphi(F)|_{U_1}) \to \Rphi(F)|_{W_1}
\to L \to[+1],
$$
where $L$ is locally constant on $W_1$.  Since $\Phi_1(\dT^*W) \subset \dT^*W_1$
we see that $\Rphi^{-1}(\eim{j}(L))|_W$ is locally constant.  Applying
$(\Rphi^{-1}(\eim{j}(-)))|_W $ to the distinguished triangle, we find that
$G = (\Rphi^{-1}((\Rphi(F))_{W_1}))|_W$ is isomorphic to $P(F) \oplus P'(F)$ up
to a locally constant sheaf on $W$.

It only remains to check that $G$ is isomorphic to $F|_W$.
Lemma~\ref{lem:restr_isotopie} applied with $\Psi_t = \Phi_t^{-1}$, $U = W_1$ and
$V=W$ gives $\Rphi^{-1}(H_{W_1})|_W \simeq \Rphi^{-1}(H)|_W$ for any $H$.  Hence
$G \simeq (\Rphi^{-1}(\Rphi(F)))|_W \simeq F|_W$, as required.
\end{proof}

\section{Cut-off and $\gamma$-topology}
\label{sec:gamma_top}

In~\cite{KS90} the Proposition~\ref{prop:cut-off1} has another formulation in terms
of the {\em $\gamma$-topology}.  It gives directly a decomposition of the projector
$P_\gamma$ as in Remark~\ref{rem:proj}-(1).  As in the previous paragraphs, let
$\gamma \subset V$ be a closed convex cone. The $\gamma$-topology is defined
in~\cite{KS90} as follows.  We say that an open subset $\Omega$ of $V$ is
$\gamma$-stable if $x+y \in \Omega$ for all $(x,y) \in \Omega \times \gamma$.  The
$\gamma$-stable open subsets define a topology on $V$ and we denote by $V_\gamma$
this topological space.  The identity map induces a continuous map
$\phi_\gamma \colon V \to V_\gamma$.    Then Propositions~3.5.3, 3.5.4 and~5.2.3 of~\cite{KS90} give

\begin{proposition}\label{prop:cut-offgamma}
  {\rm (i)} For any $G \in \Der(\cor_{V_\gamma})$ the adjunction morphism
  $G \to \roim{\phi_\gamma} \opb{\phi_\gamma} G$ is an isomorphism.

  \suiv{(ii)} For $F \in \Der(\cor_V)$ the adjunction morphism
  $\opb{\phi_\gamma} \roim{\phi_\gamma} F \to F$ is an isomorphism if and only
  if $\SSi(F) \subset V\times \gamma^{\circ a}$.

  In particular for any $G \in \Der(\cor_{V_\gamma})$ we have
  $\SSi(\opb{\phi_\gamma} G) \subset V\times \gamma^{\circ a}$.

  \suiv{(iii)} There exists an isomorphism of functors
  $P_\gamma \simeq \opb{\phi_\gamma} \circ \roim{\phi_\gamma}$ and the morphism
  $u_\gamma$ of~\eqref{eq:morph_func_gam} corresponds to the adjunction morphism.
\end{proposition}
In other words, using the notation $\Der_{V\times\gamma^{\circ a}}(\cor_V)$ of
Notation~\ref{not:cat_micsupp_fixe}, the functors $\opb{\phi_\gamma}$ and
$\roim{\phi_\gamma}$ give mutually inverse equivalences of categories between
$\Der(\cor_{V_\gamma})$ and $\Der_{V\times\gamma^{\circ a}}(\cor_V)$.

In Proposition~\ref{prop:cut-offgamma} the conditions on the microsupport are
global on $V$. However we can also consider local situations using the following
lemma.

\begin{lemma}\label{lem:local_cond_SSF}
  Let $U$ be an open subset of $V$ and $F \in \Der(\cor_U)$.  We assume that
  $\SSi(F) \subset U \times \gamma^{\circ a}$.  Let $x_0 \in U$. Then there
  exist a neighborhood $U'$ of $x_0$ in $U$ and $G\in \Der(\cor_V)$ such that
  $G|_{U'} \simeq F|_{U'}$ and $\SSi(G) \subset V \times \gamma^{\circ a}$.
\end{lemma}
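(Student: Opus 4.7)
The plan is to construct $G$ as a ``$\gamma$-localization'' of the extension by zero of $F$, using the equivalence of Proposition~\ref{prop:cut-offgamma}. Let $j \colon U \hookrightarrow V$ denote the open embedding and set
\[
G := \opb{\phi_\gamma}\roim{\phi_\gamma}(\reim{j} F) \in \Der(\cor_V).
\]
Since $G$ is the pullback under $\phi_\gamma \colon V \to V_\gamma$ of the object $\roim{\phi_\gamma}(\reim{j} F) \in \Der(\cor_{V_\gamma})$, the last sentence of Proposition~\ref{prop:cut-offgamma} immediately yields $\SSi(G) \subset V \times \gamma^{\circ a}$.

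To check that $G|_{U'} \simeq F|_{U'}$ for some neighborhood $U'$ of $x_0$, I actually aim for the stronger statement $G|_U \simeq F$ (so one can take $U' = U$). Consider the Cartesian square of topological spaces
\[
\begin{tikzcd}
U \arrow[r, "j"] \arrow[d, "\phi_\gamma^U"'] & V \arrow[d, "\phi_\gamma"] \\
U_\gamma \arrow[r, "i_\gamma"'] & V_\gamma
\end{tikzcd}
\]
in which $U_\gamma$ carries the $\gamma$-topology induced from $V_\gamma$ and the vertical maps are the identity on underlying sets. Combining base change with $\opb{j}\reim{j} \simeq \id$ gives $\opb{j}G \simeq \opb{\phi_\gamma^U}\roim{\phi_\gamma^U} F$, and the hypothesis $\SSi(F) \subset U \times \gamma^{\circ a}$ combined with the analogue of Proposition~\ref{prop:cut-offgamma}(ii) for the open $U$ (whose proof is identical to the case of $V$) forces the right-hand side to be $F$. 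Hence $G|_U \simeq F$.

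The main obstacle is justifying the base change $\opb{i_\gamma}\roim{\phi_\gamma} \simeq \roim{\phi_\gamma^U}\opb{j}$, since $\phi_\gamma$ is only a continuous change of topology and not a proper or open map. The cleanest verification is a direct stalk-and-section computation: for any usual open $W \subset V$ the smallest $\gamma$-open of $V$ containing $W$ is $W + \gamma$, so $(\opb{\phi_\gamma} H)(W) \simeq H(W+\gamma)$ for $H \in \Der(\cor_{V_\gamma})$. Applying the analogous formula in $U_\gamma$ and using the fact that, for $W \subset U$, the set $(W+\gamma)\cap U$ is the smallest $\gamma$-open of $U$ containing $W$, reduces the base change identity to a set-theoretic tautology.
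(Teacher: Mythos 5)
The approach does not work, and the gap is at the crucial base change step. You try to deduce $G|_U \simeq F$ from a base change isomorphism $\opb{i_\gamma}\roim{\phi_\gamma}\reim{j} \simeq \roim{\phi_\gamma^U}\opb{j}$, but this is false; the square you draw is not one in which base change for $\roim{}$ holds (the map $i_\gamma : U_\gamma \to V_\gamma$ is not an open embedding, since $U$ is usual-open but not $\gamma$-open, and neither vertical arrow is proper). A concrete counterexample: take $V = \R$, $\gamma = \mo]-\infty,0]$, $U = \mo]0,1[$, $F = \cor_U$. Then $\SSi(F) \subset U \times [0,\infty) = U \times \gamma^{\circ a}$, so the hypothesis holds. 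But $G = \opb{\phi_\gamma}\roim{\phi_\gamma}(\reim{j}F) = P_\gamma(\cor_{\mo]0,1[})$ has stalk $(P_\gamma F)_y \simeq \rsect(\R; \cor_{\mo]0,1[} \otimes \cor_{\mo]-\infty,y]})$, which vanishes for $y<1$ and is $\cor[-1]$ for $y \geq 1$; thus $G|_U = 0 \not\simeq F$, and in fact $G|_{U'} \not\simeq F|_{U'}$ for \emph{any} nonempty open $U' \subset U$. The "set-theoretic tautology" you invoke fails precisely because $\rsect(W+\gamma;\reim{j}F) \neq \rsect((W+\gamma)\cap U; F)$ when $W+\gamma$ leaves $U$: the left-hand side sees the extension by zero at the boundary of $U$ and is in general strictly smaller (in my example, $0$ versus $\cor$). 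The underlying phenomenon is that $\reim{j}F$ picks up microsupport along the outward conormal to $\partial U$, and the projector $P_\gamma$ then kills sections all along $\gamma$-directions emanating from that boundary, which propagates arbitrarily deep into $U$.

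The paper's proof is genuinely local, and that locality is what you cannot escape here: one first tensors $F$ with the constant sheaf on a truncated cone $C = (x-\gamma)\cap\{y; \langle\xi,x-y\rangle<t\}$ with $x_0\in\Int C$ and $\ol C \subset U$. The cone is shaped so that $\SSi(\cor_C) \subset V\times\gamma^{\circ a}$, whence $\SSi(F_C) \subset V\times\gamma^{\circ a}$ by Theorem~\ref{thm:SSrhom}; since $F_C$ has compact support, it extends by zero with no new microsupport. So the correct order of operations is: first cut the \emph{support} with a well-shaped compact set (making extension by zero harmless), rather than extending by zero first and then trying to correct the resulting microsupport with the cut-off projector, which is what destroys $F$ near the boundary. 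A secondary issue: the formula $(\opb{\phi_\gamma}H)(W) \simeq H(W+\gamma)$ describes only a presheaf, and the sheafification implicit in $\opb{\phi_\gamma}$ need not agree with it on sections; this deserves justification even if the main argument were repaired.
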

\begin{proof}
  For $x \in V$, $\xi \in \gamma^{\circ a}$ and $t>0$ we define the truncated cone
  $C = C_{x,\xi,t} = (x-\gamma) \cap \{y$; $\langle \xi, y-x \rangle <t\}$ with
  vertex at $x$.  We have $\SSi(\cor_C) \subset V \times \gamma^{\circ a}$ by
  Example~\ref{ex:microsupport} and Theorem~\ref{thm:SSrhom}.

  We can find $x$, $\xi$, $t$ such that $x_0 \in \Int(C)$ and
  $\ol{C} \subset U$.  By Theorem~\ref{thm:SSrhom} the sheaf $G = F_C$ satisfies
  $\SSi(G) \subset U \times \gamma^{\circ a}$.  Since $G$ has a compact support
  we can extend it by $0$ on $V$ and we still have
  $\SSi(G) \subset V \times \gamma^{\circ a}$. Setting $U' = \Int(C)$, we also
  have $G|_{U'} \simeq F|_{U'}$.
\end{proof}

We thank Pierre Schapira for the following useful result.

\begin{corollary}\label{cor:SSHiF}
  {\rm (i)} Let $U$ be an open subset of $V$ and $F \in \Der(\cor_U)$.  We
  assume that $\SSi(F) \subset U \times \gamma^{\circ a}$. Then
  $\SSi(H^iF) \subset U \times \gamma^{\circ a}$ for all $i\in \Z$.

  \suiv{(ii)} Let $M$ be a manifold and $F \in \Der(\cor_M)$.  Let
  $C \subset T^*M$ be the convex hull of $\SSi(F)$ in the sense that $C$ is the
  intersection of all closed conic subsets $S$ of $T^*M$ which contain $\SSi(F)$
  and are fiberwise convex ($S\cap T^*_xM$ is convex for any $x\in M$).  We
  assume that $C$ does not contain any line.  Then $\SSi(H^iF) \subset C$ for
  all $i\in \Z$.
\end{corollary}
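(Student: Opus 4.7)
The plan is to prove (i) first via the $\gamma$-topology and then derive (ii) by a fiberwise Hahn-Banach separation that reduces each bad point to an instance of (i).

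For the global part of (i), if $F\in\Der(\cor_V)$ satisfies $\SSi(F)\subset V\times\gamma^{\circ a}$, then Proposition~\ref{prop:cut-offgamma}~(ii) gives $F\simeq\opb{\phi_\gamma}G$ with $G\eqdot\roim{\phi_\gamma}F\in\Der(\cor_{V_\gamma})$. The underived inverse image $\opb{\phi_\gamma}$ is exact on sheaves of modules, so it commutes with taking cohomology sheaves: $H^iF\simeq\opb{\phi_\gamma}(H^iG)$. Applying the last assertion of Proposition~\ref{prop:cut-offgamma} to $H^iG$ yields $\SSi(H^iF)\subset V\times\gamma^{\circ a}$. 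To localize, for any $F\in\Der(\cor_U)$ with $\SSi(F)\subset U\times\gamma^{\circ a}$ and any $x_0\in U$, Lemma~\ref{lem:local_cond_SSF} provides a neighborhood $U'\subset U$ of $x_0$ and some $G\in\Der(\cor_V)$ with $\SSi(G)\subset V\times\gamma^{\circ a}$ and $G|_{U'}\simeq F|_{U'}$. The global case applied to $G$, combined with the commutation of $H^i$ with restriction, gives $\SSi(H^iF)\cap T^*U'\subset U'\times\gamma^{\circ a}$; varying $x_0$ proves (i).

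For (ii), fix $p_0=(x_0;\xi_0)\in T^*M$ with $p_0\notin C$; it suffices to show $p_0\notin\SSi(H^iF)$. The fiber $K\eqdot C\cap T^*_{x_0}M$ is a closed convex cone with vertex $0$, and it is proper: any line through $0$ in $K$ would be a line contained in $C$, contradicting the hypothesis. By Hahn-Banach separation in $T^*_{x_0}M$ applied to the proper closed convex cone $K$ and the ray $\rpos\xi_0$, there is a closed convex proper cone $\Gamma\subset T^*_{x_0}M$ with $K\setminus\{0\}\subset\Int(\Gamma)$ and $\xi_0\notin\Gamma$. Choosing local coordinates near $x_0$ we identify a chart with an open subset of $V=\R^n$ and write $\Gamma=\gamma^{\circ a}$ for a suitable closed convex cone $\gamma\subset V$.

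The remaining point, which is the main technical step, is to find an open neighborhood $U$ of $x_0$ such that $\SSi(F)\cap T^*U\subset U\times\gamma^{\circ a}$. Since $C$ is closed and conic, this follows from a compactness argument on the cosphere bundle: if no such $U$ existed, a sequence $(x_n;\xi_n)\in C$ with $x_n\to x_0$ and $\xi_n$ normalized would accumulate on a point of $K\setminus\Int(\Gamma)$, contradicting $K\setminus\{0\}\subset\Int(\Gamma)$. Then part (i) applied to $F|_U$ yields $\SSi(H^iF)\cap T^*U\subset U\times\gamma^{\circ a}$, and because $\xi_0\notin\gamma^{\circ a}$ we conclude $p_0\notin\SSi(H^iF)$, as required.
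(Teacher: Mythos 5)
Your proof is correct and follows essentially the same route as the paper: part (i) via the equivalence $F\simeq\opb{\phi_\gamma}\roim{\phi_\gamma}F$ from Proposition~\ref{prop:cut-offgamma} combined with the localization Lemma~\ref{lem:local_cond_SSF}, then part (ii) by choosing, near a point $x_0$, a proper closed convex cone slightly enlarging the fiber $C\cap T^*_{x_0}M$ and a small neighborhood $U$ over which $\SSi(F)$ is contained in that cone, and invoking (i). One small remark: what you describe as a consequence of ``Hahn-Banach separation'' (a closed convex proper cone $\Gamma$ with $K\setminus\{0\}\subset\Int(\Gamma)$ and $\xi_0\notin\Gamma$) is not literally Hahn-Banach, which only produces a separating hyperplane; the construction is still elementary (e.g. projectivize $K$ to a compact convex set in an affine chart, take an $\varepsilon$-fattening avoiding the image of $\xi_0$, and pull back to a cone), and the paper glosses over the same point, so this is a terminological nitpick rather than a gap.
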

\begin{proof}
  (i) The statement is local on $U$. Let $x_0 \in U$ be given and let $U'$, $G$
  be given by Lemma~\ref{lem:local_cond_SSF}.  By
  Proposition~\ref{prop:cut-offgamma} there exists
  $G' \in \Der(\cor_{V_\gamma})$ such that $G \simeq \opb{\phi_\gamma} G'$
  ($G' = \roim{\phi_\gamma} G$).  Since $\opb{\phi_\gamma}$ is exact we have
  $H^iG \simeq \opb{\phi_\gamma} H^iG'$. By Proposition~\ref{prop:cut-offgamma}
  again we deduce $\SSi(H^iG) \subset V \times \gamma^{\circ a}$. Since
  $G|_{U'} \simeq F|_{U'}$, this gives the required bound for $\SSi(H^iF)$ near
  $x_0$.

  \sui(ii) The statement is local on $M$ and we can assume that $M$ is open in a
  vector space $V$.  Then $T^*M = M \times V^*$. For a given $x\in M$ we can
  find a closed convex cone $\delta$ in $V^*$, contained in an arbitrarily small
  neighborhood of $C \cap (\{x\} \times V^*)$, and a neighborhood $U$ of $x$
  such that $\SSi(F|_U) \subset U \times \delta$.  Then the result follows
  from~(i).
\end{proof}

\begin{remark}\label{rem:supportgammatop}
  1) Let $U$ be an open subset of $V$ and $F \in \Mod(\cor_U)$.  We assume that
  $\SSi(F) \subset U \times \gamma^{\circ a}$. Then, for any $s\in F(U)$,
  $\supp(s)$ is ``locally $\gamma$-closed'', that is, for any $x_0 \in U$, there
  exists a neighborhood $B$ of $x_0$ (for the usual topology) such that
  $B\cap \supp(s) = B \cap Z$ for some $Z\subset V$ which is closed for the
  $\gamma$-topology.

  Indeed, we consider $U'$, $G$ given by Lemma~\ref{lem:local_cond_SSF}.  Then
  $s|_{U'}$ is identified with a section of $G$.  By
  Proposition~\ref{prop:cut-offgamma} there exists
  $G' \in \Mod(\cor_{V_\gamma})$ such that $G \simeq \opb{\phi_\gamma} G'$.
  Since $G_{x_0} \simeq G'_{x_0}$, there exist a section $s'$ of $G'$ (over
  some $\gamma$-open set containing $x_0$) and a neighborhood $U''$ of $x_0$ in
  $V$ such that $s|_{U''}$ is the section of $G$ induced by $s'$. In particular
  $s_x \simeq s'_x$ for all $x\in U''$ and we get
  $\supp(s)\cap U'' = \supp(s')\cap U''$. Since $\supp(s')$ is closed in
  $V_\gamma$ we are done.
  
  \sui 2) If a subset $Z$ of $V$ is locally $\gamma$-closed, then, for any
  $x_0\in Z$, there exists a neighborhood $B$ of $0$ in $V$ such that
  $x_0-y \in Z$ for all $y \in B\cap \gamma$.
\end{remark}

\section{Remarks on projectors - Tamarkin projector}
\label{sec:Tamproj}

We have seen in Remark~\ref{rem:proj} that the functors
$P_\gamma, Q_\gamma \colon \Der(\cor_V) \to \Der(\cor_V)$ are projectors to the
subcategory $\Der_{V \times \gamma^{\circ a}}(\cor_V)$ of $\Der(\cor_V)$ and induce
adjoints to the embedding
$\iota \colon \Der_{V \times \gamma^{\circ a}}(\cor_V) \to \Der(\cor_V)$; namely
$P_\gamma$ induces the right adjoint to $\iota$ and $Q_\gamma$ the left adjoint.  In
this paragraph we give a similar interpretation to $P'_\gamma$, $Q'_\gamma$ and give
a relative version of $P'_\gamma$.

It turns out that our projectors come in pairs.  Using~\cite[Ex. 10.15]{KS06}
or~\cite[Prop. 4.21]{GS11}, we can deduce from the first triangle
in~\eqref{eq:dist_tri_gam} that $Q'_\gamma$ is a projector to the right orthogonal of
$\Der_{V \times \gamma^{\circ a}}(\cor_V)$:
\begin{equation}
  \label{eq:def_right_orthog}
\begin{split}
\Der_{V \times \gamma^{\circ a}}^{\perp, r}(\cor_V)
= \{F \in \Der(\cor_V); &\; \Hom(G,F) \simeq 0  \\
&\text{ for all $G\in \Der_{V \times \gamma^{\circ a}}(\cor_V)$} \} .
\end{split}
\end{equation}
Indeed, applying $P_\gamma$ to~\eqref{eq:dist_tri_gam} and using
$P_\gamma \circ P_\gamma \simeq P_\gamma$ we obtain
$P_\gamma \circ Q'_\gamma \simeq 0$. Then, putting $F = Q'_\gamma(G)$
in~\eqref{eq:dist_tri_gam} we have $Q'_\gamma \isoto Q'_\gamma \circ Q'_\gamma$.
This proves that $Q'_\gamma$ is a projector. Using
$\Hom(P_\gamma(F), P_\gamma(G)) \isoto \Hom(P_\gamma(F), G)$, (see
Remark~\ref{rem:proj}) we deduce $\Hom(P_\gamma(F), Q'_\gamma(G)) \simeq 0$ and we
can deduce that the image of $Q'_\gamma$ is indeed
$\Der_{V \times \gamma^{\circ a}}^{\perp, r}(\cor_V)$. More precisely $Q'_\gamma$
induces a left adjoint to the embedding of
$\Der_{V \times \gamma^{\circ a}}^{\perp, r}(\cor_V)$.

In the same way, we can see that $P'_\gamma$ is also a projector, with image the
left orthogonal of $\Der_{V \times \gamma^{\circ a}}(\cor_V)$:
\begin{equation}
  \label{eq:def_left_orthog}
\begin{split}
\Der_{V \times \gamma^{\circ a}}^{\perp, l}(\cor_V)
= \{F \in \Der(\cor_V); &\; \Hom(F,G) \simeq 0  \\
&\text{ for all $G\in \Der_{V \times \gamma^{\circ a}}(\cor_V)$} \} 
\end{split}
\end{equation}
and that it induces a right adjoint to the embedding of
$\Der_{V \times \gamma^{\circ a}}^{\perp, l}(\cor_V)$.

The relation between $P'_\gamma$, $Q'_\gamma$ is not the same as the relation between
$P_\gamma$, $Q_\gamma$.  In Remark~\ref{rem:proj} we deduced that $Q_\gamma$ was a
projector from the fact that its was adjoint to $P_\gamma$.  To deduce that they have
the same image we used the more precise fact: $P_\gamma$ induces the {\em right}
adjoint to $\iota$ and $P_\gamma$ is the {\em right} adjoint to $Q_\gamma$.  Now
$P'_\gamma$ induces the right adjoint to the embedding of its image but $P'_\gamma$
is left adjoint to $Q'_\gamma$; so we cannot conclude that $P'_\gamma$ and
$Q'_\gamma$ have the same image.

The fact that the embedding $\iota$ has both a left and a right adjoint comes from
the property that $\Der_{V \times \gamma^{\circ a}}(\cor_V)$ has arbitrary small sums
and products and $\iota$ commutes with sums and products (this is the Brown
representability theorem -- see for example~\cite[\S14.2]{KS06}).  However
$\Der_{V \times \gamma^{\circ a}}^{\perp, l}(\cor_V)$ is not stable by infinite
products even for $V=\R$, $\gamma = [0,+\infty[$ (see
Example~\ref{ex:proj_infi_prod}) and we cannot apply Brown theorem to find a new
projector.

\begin{example}\label{ex:proj_infi_prod}
  In the case $V=\R$, $\gamma = [0,+\infty[$, we have, for any $x\in\R$,
  $P'_\gamma(\cor_{[x,+\infty[}) \simeq \cor_{[x,+\infty[}$. Let us check that
  $P'_\gamma(F) \not\simeq F$, where $F = \prod_{i\in\N}\cor_{[-i,+\infty[}$, thus
  proving that $\Der_{V \times \gamma^{\circ a}}^{\perp, l}(\cor_V)$ is not stable by
  infinite products.
  
  We compute the stalks $(P'_\gamma(F))_x$ using~\eqref{eq:Qgamma_conv}
  and~\eqref{eq:germ_compo}. We find $(P'_\gamma(F))_x \simeq \rsect_c(\R; F_{I_x})$,
  with $I_x = \mo]-\infty,x]$.  Using the exact sequence
  $0 \to G \to \cor^\N_\R \to F \to 0$, with $G = \prod_{i\in\N}\cor_{]-\infty,-i[}$,
  and $\rsect_c(\R; L_{I_x}) \simeq 0$ for any constant sheaf $L$, we obtain
  $(P'_\gamma(F))_x \simeq \rsect_c(\R;G_{I_x})[1]$.  Now
  $G \simeq \bigoplus_{i\in\N}\cor_{]-\infty,-i[}$ because this sum is locally
  finite.  The functors $\rsect_c(\R;\cdot)$ and $(\cdot)_{I_x}$ commute with direct
  sums. Hence $\rsect_c(\R; G_{I_x}) \simeq \cor^{(\N)}$ for $x>0$.  On the other
  hand we have $F_x \simeq \cor^\N$ for $x>0$. Hence $P'_\gamma(F) \not\simeq F$.
\end{example}

It should be noted that the cut-off functor $P_\gamma$ is defined in~\cite{KS90} in a
relative situation where $V$ is replaced by $M\times V$ for some manifold $M$.  In
this general setting $P_\gamma$ is defined on $\Der(\cor_{M\times V})$ and projects
onto the subcategory $\Der_{T^*M \times (V \times \gamma)}(\cor_{M\times V})$ formed
by the $F$ such that $\SSi(F) \subset T^*M \times (V \times \gamma^{\circ a})$
(Notation~\ref{not:cat_micsupp_fixe}).  We have
$P_\gamma(F) = \roim{q_2}( \cor_{M\times \widetilde\gamma} \tens \opb{q_1} F)$, where
$q_1, q_2$ are now the projections $M \times V^2 \to M \times V$ and
$\widetilde\gamma$ is defined as in~\eqref{eq:defPgam}.

\medskip

For later use we quote some properties of $P_\gamma$ already considered by Tamarkin
in~\cite{T08} in the case $V=\R$, $\gamma = [0,+\infty[$ or
$\gamma = \mo]-\infty,0]$.  We introduce coordinates $(t;\tau)$ on $T^*\R$ and local
coordinates $(x,t;\xi,\tau)$ on $T^*(M\times\R)$.  We denote for short by
$\{\tau \geq 0\}$ or $\{\tau > 0\}$ the subsets of $T^*(M\times\R)$ defined by the
corresponding conditions on $\tau$.  In this relative setting the projector
$P_{]-\infty,0]} \cl \Der(\cor_{M\times \R}) \to \Der_{\{\tau \geq 0\}}(\cor_{M\times
  \R})$ can be rewritten as 
\begin{equation}
\label{eq:projcutoffconvol}
P_{]-\infty,0]} (F) 
= \roim{s}( \opb{q_1}(F) \tens \opb{q_2}(\cor_{[0,+\infty[})) ,
\end{equation}
where   $s, q_1 \cl M \times \R^2 \to M \times \R$
and $q_2 \cl M \times \R^2 \to \R$ are defined by $s(x,t_1,t_2) = (x, t_1+t_2)$,
$q_1(x,t_1,t_2) = (x, t_1)$ and $q_2(x,t_1,t_2) = t_2$.

In~\cite{T08} Tamarkin was rather interested in the projector $P'_{[0,+\infty[}$,
defined on the category $\Der(\cor_{M\times \R})$ with image
\begin{equation}
  \label{eq:def_taupos_orthog}
\begin{split}
\Der_{\{\tau \leq 0\}}^{\perp, l}(\cor_{M\times \R})
= \{F \in \Der(\cor_{M\times \R}); &\; \Hom(F,G) \simeq 0  \\
&\text{ for all $G\in \Der_{\{\tau \leq 0\}}(\cor_{M\times \R})$} \} 
\end{split}
\end{equation}
as noted in~\eqref{eq:def_left_orthog}.  Using~\eqref{eq:Qgamma_conv} and noticing
that
$\DD'(\cor_{]-\infty,0]^a \setminus\{0\}}) \simeq \DD'(\cor_{]-\infty,0[}) \simeq
\cor_{]-\infty,0]}$, we see that $P_{]-\infty,0]}$ and $P_{[0,+\infty[}'$ have very
similar expressions.  With the notations of~\eqref{eq:projcutoffconvol} we can
rewrite
\begin{equation*}
P_{[0,+\infty[}'(F) 
= \reim{s}( \opb{q_1}(F) \tens \opb{q_2}(\cor_{[0,+\infty[})) .
\end{equation*}
Using this formula we find the bound
$\SSi(P_{[0,+\infty[}'(F)) \subset \{\tau \geq 0\}$, like $\SSi(P_{]-\infty,0]}(F))$
in~\eqref{eq:SSPgamF}. This proves that
$\Der_{\{\tau \leq 0\}}^{\perp, l}(\cor_{M\times \R})$ is in fact contained in
$\Der_{\{\tau \geq 0\}}(\cor_{M\times \R})$.

An important remark of Tamarkin is that the functors $P_{]-\infty,0]}$ or
$P_{[0,+\infty[}'$ have a natural morphism not only to the identity functor but also
to the direct image functor $\oim{T_c}$ for $c\geq 0$, where $T_c$ denotes the
translation along $\R$
$$
T_c \cl M \times \R \to M \times \R, \qquad (x,t) \mapsto (x,t+c) .
$$
In other words the objects $F$ of $\Der_{\{\tau \geq 0\}}(\cor_{M\times \R})$ or
$\Der_{\{\tau \leq 0\}}^{\perp, l}(\cor_{M\times \R})$ come with a natural morphism
$\tau_c(F) \colon F \to \oim{T_c}(F)$ for $c\geq 0$.

To define $\tau_c$ we first remark that we have an isomorphism of functors
$\oim{T_c} \circ P_{]-\infty,0]} \simeq P_{]-\infty,0]} \circ \oim{T_c}$ and that
\begin{equation}
\label{eq:projcutoffconvol_transl}
\oim{T_c} \circ P_{]-\infty,0]} (F) 
\simeq \roim{s}( \opb{q_1}(F) \tens \opb{q_2}(\cor_{[c,+\infty[})) .
\end{equation}
For $c\geq 0$ the natural morphism $\cor_{[0,+\infty[} \to \cor_{[c,+\infty[}$
induces the morphism of functors~\eqref{eq:morph_id_transl} and hence the
morphism~\eqref{eq:morph_id_transl2} for sheaves in the image of
$P_{]-\infty,0]}$:
\begin{align}
\label{eq:morph_id_transl}
  \tau_c &\cl P_{]-\infty,0]} \to \oim{T_c} \circ P_{]-\infty,0]} ,  \\
\label{eq:morph_id_transl2}
  \tau_c(F) &\cl F \to \oim{T_c}(F), \qquad
              \text{for $F\in \Der_{\{\tau \geq 0\}}(\cor_{M\times \R})$.}              
\end{align}
Tamarkin then used this morphism $\tau_c$ to give non-displaceability conditions on
subsets of $T^*M$.  We will consider a variant of Tamarkin criterion by looking at
the minimal $c$ such that $\tau_c(F)=0$ (see~\S\ref{sec:nonsqueezing}).

\begin{remark}\label{rem:autresproj}
  Assuming $\gamma \subset V$ is a closed convex cone which contains no line, we can
  also easily build projectors which cut the microsupport by the complement of
  $\Int(\gamma^\circ)$.  Let us set
  $Z_\gamma = V \times (V^* \setminus \Int(\gamma^\circ))$ and let
  $\Der_{Z_\gamma}(\cor_V)$ be the subcategory of $\Der(\cor_V)$ of sheaves with
  microsupport contained in $Z_\gamma$.  We can generalize Tamarkin's construction in
  higher dimension and define projectors from $\Der(\cor_V)$ to itself with image the
  subcategory $\Der_{Z_\gamma}(\cor_V)$ or its left or right orthogonal.  For example
  it is proved in~\cite[Prop. 4.21]{GS11} that
  $L_\gamma \colon F \mapsto F \circ \cor_{\widetilde\gamma^a}$ is a projector on
  $\Der(\cor_V)$ with image $\Der_{Z_\gamma}^{\perp, l}(\cor_V)$.  We also know that
  $\Der_{Z_\gamma}^{\perp, l}(\cor_V)$ is contained in
  $\Der_{V \times \gamma^{\circ}}(\cor_V)$, which is the image of $P_{\gamma^a}$ (we
  can prove that $\SSi(L_\gamma(F)) \subset V \times \gamma^{\circ}$
  like~\eqref{eq:SSPgamQgam}, or we refer to {\em loc. cit.}). We recall that
  $P_{\gamma^a}$ and $L_\gamma$ often coincide (see Lemma~\ref{lem:Pgamma-conv}).  Of
  course, when $V = \R$ we have $Z_\gamma = V \times \gamma^{\circ a}$,
  $L_\gamma = P'_\gamma$ and we recover Tamarkin's projector (see
  around~\eqref{eq:def_taupos_orthog}).
\end{remark}

\part{Constructible sheaves in dimension $1$}

In this part we apply Gabriel's theorem to describe the constructible sheaves on
the real line and the circle with coefficients in a field $\cor$.

\section{Gabriel's theorem}
\label{sec:Gabrielthm}

We give a quick reminder of a part of Gabriel's theorem that we will use in this
part.  We follow the presentation of Brion's lecture~\cite{B12} on the subject
and we refer the reader to~\cite{B12} for further details.  In this section
$\cor$ is a field.

A {\em quiver} is a finite directed graph, that is, a quadruple $Q = (Q_0$,
$Q_1$, $s, t)$ where $Q_0$, $Q_1$ are finite sets (the set of vertices,
resp. arrows) and $s, t \cl Q_1 \to Q_0$ are maps assigning to each arrow its
source, resp. target.  A {\em representation} of a quiver $Q$ consists of a
family of $\cor$-vector spaces $V_i$ indexed by the vertices $i\in Q_0$,
together with a family of linear maps
$f_\alpha \cl V_{s(\alpha)} \to V_{t(\alpha)}$ indexed by the arrows
$\alpha\in Q_1$.  For a representation $(\{V_i\}, f_\alpha)$ the dimension
vector is $(\dim V_i)_{i \in Q_0}$.  The space $\R^{Q_0}$ of dimension vectors
is endowed with the Tits form defined by
$q_Q(\ul{d}) = \sum_{i\in Q_0} d_i^2 - \sum_{\alpha\in Q_1} d_{s(\alpha)}
d_{t(\alpha)}$.

A quiver is of {\em finite orbit type} if it has only finitely many isomorphism
classes of representations of any prescribed dimension vector.  Gabriel's
theorem describes the quivers of finite orbit type.  It says that they are the
quivers with a positive defined Tits form and also says that this is equivalent
to be of type $A,D,E$.

Another part of Gabriel's theorem gives the structure of the representations of
the quivers of finite type. A representation is said {\em indecomposable} if it
cannot be split as the sum of two non zero representations. A representation $V$
is {\em Schur} if $\Hom(V,V) \simeq \cor \cdot \id_V$.

\begin{theorem}[see Theorem 2.4.3 in~\cite{B12}]
\label{thm:gabriel}
Assume that the Tits form $q_Q$ is positive definite. Then:
\\
(i) Every indecomposable representation is Schur and has no non-zero
self-extensions.
\\
(ii) The dimension vectors of the indecomposable representations are exactly
those $\ul{d} \in \N^{Q_0}$ such that $q_Q(\ul{d}) = 1$.
\\
(iii) Every indecomposable representation is uniquely determined by its
dimension vector, up to isomorphism.
\end{theorem}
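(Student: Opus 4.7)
The plan is to follow the classical geometric approach to Gabriel's theorem based on the interplay between the Tits form and the Euler--Ringel form. First I would establish the key algebraic identity: for any representations $V,W$ of $Q$, the two-term projective resolution of $V$ (which exists because $kQ$ has global dimension at most $1$ for an acyclic quiver) yields
$$\dim \Hom(V,W) - \dim \Ext^1(V,W) = \sum_{i\in Q_0} d_i e_i - \sum_{\alpha\in Q_1} d_{s(\alpha)} e_{t(\alpha)},$$
where $\ul d, \ul e$ are the dimension vectors of $V, W$. Specializing to $W = V$ gives the crucial identity
$$\dim \End(V) - \dim \Ext^1(V,V) = q_Q(\ul d_V).$$

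Next I would set up the geometric framework. The representation variety $\Rep(Q,\ul d) = \bigoplus_{\alpha\in Q_1}\Hom(k^{d_{s(\alpha)}},k^{d_{t(\alpha)}})$ is an affine space on which $G = \prod_i GL_{d_i}$ acts linearly, with orbits parametrizing isomorphism classes of representations of dimension vector $\ul d$. Since the stabilizer of $V$ is $\Aut(V)$, open in $\End(V)$, we get $\dim O_V = \dim G - \dim \End(V)$ and hence the orbit codimension in $\Rep(Q,\ul d)$ equals $\dim\Ext^1(V,V)$.

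For (i), given an indecomposable $V$, Fitting's lemma gives that $\End(V)$ is local, so over algebraically closed $k$ we have $\End(V) = k\cdot\id_V \oplus N$ with $N$ nilpotent. The core of the proof is to show $\Ext^1(V,V)=0$ and $N=0$. Positive-definiteness of $q_Q$ gives $q_Q(\ul d)\geq 1$ for $\ul d\neq 0$, so combined with the Euler identity we obtain $\dim\End(V)\geq 1+\dim\Ext^1(V,V)$. A nonzero class $\xi\in\Ext^1(V,V)$ would produce a one-parameter family of non-isomorphic extensions $E_{t\xi}$ with the same dimension vector; using the Krull--Schmidt decomposition of the generic member and the bound $\dim\End \geq q_Q(\ul d)$ applied to each indecomposable summand of $E_{t\xi}$, one deduces a numerical contradiction with the strictly subadditive behaviour of $q_Q$ on nontrivial decompositions $\ul d = \ul d' + \ul d''$. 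This forces $\Ext^1(V,V)=0$, whence $\dim\End(V) = q_Q(\ul d)$ and then $N=0$, giving both the Schur property and (as a bonus) the constraint $q_Q(\ul d_V) = 1$ of part~(ii).

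For (ii) and (iii) the strategy is to exploit reflection functors $s_i^\pm$ in the sense of Bernstein--Gelfand--Ponomarev: at each sink (resp.\ source) of $Q$ one obtains an equivalence between appropriate subcategories of $\Rep(Q)$ and $\Rep(s_iQ)$, sending indecomposable non-simple $V$ to an indecomposable with dimension vector obtained by applying the simple reflection $s_i$ to $\ul d_V$. Since $q_Q$ is positive-definite, the associated Weyl group acts transitively on the finite set $\{\ul d\in\N^{Q_0} : q_Q(\ul d)=1\}$ of positive roots, and every such root is conjugate to a standard basis vector $e_i$. Transporting the (unique up to isomorphism) simple representation $S_i$ by the corresponding composition of inverse reflection functors produces an indecomposable with any prescribed root dimension vector, and the uniqueness in (iii) is preserved at each reflection step since simples are unique and reflection functors are equivalences on the relevant subcategories. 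The main obstacle is the representation-theoretic bookkeeping for the reflection functors, namely verifying their compatibility with indecomposability and with the Weyl group action on dimension vectors; once this is in place, (i), (ii) and (iii) follow from the Euler-form identity together with the orbit codimension formula.
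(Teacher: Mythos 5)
The paper does not actually prove this statement: Theorem~\ref{thm:gabriel} is cited from Brion's lecture notes (Theorem~2.4.3 of~\cite{B12}) and used as a black box, so there is no in-paper proof to compare against. Your outline, however, can be assessed on its own terms.

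Your overall architecture --- the Euler--Ringel identity $\dim\End(V)-\dim\Ext^1(V,V)=q_Q(\ul d_V)$, the orbit codimension formula $\operatorname{codim}O_V=\dim\Ext^1(V,V)$, and BGP reflection functors to establish (ii) and (iii) --- is the correct and classical route, and the formulas you wrote down are right. The problem is in your argument for~(i). You claim that a nonzero class $\xi\in\Ext^1(V,V)$ ``produces a one-parameter family of non-isomorphic extensions $E_{t\xi}$ with the same dimension vector.'' This cannot be literally true: an extension of $V$ by $V$ has dimension vector $2\ul d$, not $\ul d$, so the objects $E_{t\xi}$ do not live in $\Rep(Q,\ul d)$ and cannot be used to produce infinitely many orbits there. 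What you presumably intend is the deformation-theoretic reading (a nonzero normal vector to $O_V$ in $\Rep(Q,\ul d)$ gives nearby representations not isomorphic to $V$), but even granting that, the ``numerical contradiction with the strictly subadditive behaviour of $q_Q$'' is not spelled out, and this is exactly where the work is. More seriously, the extension/deformation argument only addresses the $\Ext^1(V,V)=0$ half of~(i); it says nothing about killing the nilradical $N\subset\End(V)$, i.e.\ about showing $V$ is a brick. The standard way to close that gap is via the Happel--Ringel sub-brick lemma: if an indecomposable $V$ is not a brick, it contains a brick $W$ with $\Ext^1(W,W)\neq 0$, and then $q_Q(\ul d_W)=1-\dim\Ext^1(W,W)\leq 0$ contradicts positive-definiteness. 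Your sketch never invokes anything equivalent, so as written the proof of~(i) does not go through.

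A cleaner alternative you already have at hand: once the reflection-functor machinery of your paragraph on (ii)/(iii) is in place, part~(i) is a corollary. Every indecomposable is carried to a simple $S_i$ by a chain of reflection functors; these are equivalences on the relevant subcategories and therefore preserve $\End$ and $\Ext^1$; and $S_i$ trivially satisfies $\End(S_i)=\cor$ and $\Ext^1(S_i,S_i)=0$ (there are no loops). This avoids the separate and delicate geometric argument for~(i) entirely. One small imprecision elsewhere: the Weyl group does not act on the set of \emph{positive} roots --- simple reflections send some positive roots to negative ones --- so the correct statement is that every positive root is $W$-conjugate to a simple root, and the BGP construction is precisely the device that keeps the intermediate dimension vectors nonnegative along the reflecting sequence.
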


\begin{remark}
  \label{rem:Gabriel_An}
  We   are actually only interested in
  quivers of type $A_m$, that is, quivers whose underlying graph is the linear
  graph with $m$ vertices and $m-1$ edges.  In this case we have
  $Q_0 = \{0,\ldots,m-1\}$ and we find
\begin{equation*}
q_Q(\ul{d}) = \frac{1}{2} 
( d_0^2 + \sum_{i=1}^{m-1} (d_i - d_{i-1})^2 + d_{m-1}^2) .  
\end{equation*}
Hence a dimension vector $\ul{d}$ satisfies the condition $q_Q(\ul{d}) = 1$ if
and only if there exist $i\leq j \in \{0,\ldots,m-1\}$ such that $d_k = 1$ if
$i\leq k \leq j$ and $d_k = 0$ else.
\end{remark}

\section{Constructible sheaves on the real line}
\label{sec:conshR}

We apply the results of Section~\ref{sec:Gabrielthm} to sheaves on $\R$ with
coefficients in a field $\cor$.

Let $\ul{x} = \{x_1 < \cdots < x_n \}$ be a finite family of points in $\R$.  We
denote by $\Mod_{\ul{x}}(\cor_\R)$ the category of constructible sheaves on $\R$ with
respect to the stratification induced by $\ul{x}$.  Setting $x_0 = -\infty$ and
$x_{n+1} = +\infty$, a sheaf $F$ belongs to $\Mod_{\ul{x}}(\cor_\R)$ if and only if
the stalks $F_y$ are finite dimensional for all $y\in \R$ and the restrictions
$F|_{]x_k,x_{k+1}[}$ are constant for $k = 0,\ldots,n$.
If $I$ is an interval of $\R$ and $x_1,\dots,x_n \in I$ we define in the same
way $\Mod_{\ul{x}}(\cor_I)$.

We say that a sheaf $F$ on $\R$ is constructible if, for any $n>0$,
$F|_{\mo]-n,n[} \in \Mod_{\ul{x}}(\cor_{\mo]-n,n[})$ for some finite family
$\ul{x} = \{x_1 < \cdots < x_k \}$ of $\mo]-n,n[$.  We denote by
$\Mod_{\rc}(\cor_\R)$ the category of constructible sheaves on $\R$.

A sheaf $F \in \Mod_{\ul{x}}(\cor_\R)$ is determined by the data of the spaces of
sections
\begin{equation}
\label{eq:faisc_carq}
\begin{alignedat}{2}
  V_{2i+1} &= F(]x_i,x_{i+2}[) \isoto F_{x_{i+1}}, 
&\quad  &\text{for $i=0,\ldots,n-1$,} \\
V_{2i} &= F(]x_i,x_{i+1}[), &\quad &\text{for $i=0,\ldots,n$,}
\end{alignedat}
\end{equation}
together with the restriction maps $V_{2i+1} \to V_{2i}$ and
$V_{2i+1} \to V_{2i+2}$ for $i=0,\ldots,n-1$.  Conversely, any such family of
vector spaces $\{V_i\}_{i=0}^{2n}$   and linear maps defines a sheaf in
$\Mod_{\ul{x}}(\cor_\R)$.  Hence~\eqref{eq:faisc_carq} gives an equivalence
between $\Mod_{\ul{x}}(\cor_\R)$ and the category of representations of the
quiver $Q = (Q_0$, $Q_1$, $s, t)$ of type $A_{2n+1}$ where
$Q_0 = \{0,\ldots,2n\}$ and there is exactly one arrow in $Q_1$ from $2i-1$ to
$2i-2$ and from $2i-1$ to $2i$, for $i=1,\ldots,n$.

Since $Q$ is of type $A_{2n+1}$, we can apply Gabriel's theorem and
Remark~\ref{rem:Gabriel_An}. Hence the indecomposable representations of $Q$ are
in bijection with the dimension vectors $\ul{d}$ such that $d_k = 1$ if
$i\leq k \leq j$ and $d_k = 0$ else, for some $i\leq j \in \{0,\ldots,2n\}$.
Through the equivalence~\eqref{eq:faisc_carq} the corresponding sheaves in
$\Mod_{\ul{x}}(\cor_\R)$ are the constant sheaves $\cor_I$ on the intervals $I$
with ends $-\infty$, $x_1,\dots, x_n$ or $+\infty$ (the intervals can be open,
closed or half-closed).

Gabriel's theorem gives the following decomposition result for constructible
sheaves with compact support.  Extensions of Gabriel's theorem gives the general
case (see for example Theorem~1.1 in~\cite{CB14}).  We can also deduce the
general case from the case of compact support; this is done
in~\cite[Thm.~1.17]{KS18} and we reproduce the proof below.

\begin{corollary}
\label{cor:cons_sh_R}
We recall that $\cor$ is a field. Let $F \in \Mod_\rc(\cor_\R)$.  Then there
exist a locally finite family of distinct   intervals $\{I_a\}_{a\in A}$
and integers $\{n_a\}_{a\in A}$ such that
\begin{equation}
\label{eq:cons_sh_R1}
F \simeq \bigoplus_{a\in A} \cor^{n_a}_{I_a} .
\end{equation}
Moreover this decomposition is unique in the following sense.  If we have
another decomposition $F \simeq \bigoplus_{b\in B} \cor^{m_b}_{J_b}$
like~\eqref{eq:cons_sh_R1}, then there exists a bijection
$\sigma \cl A \isoto B$ such that $J_{\sigma(a)} = I_a$ and
$m_{\sigma(a)} = n_a$ for all $a\in A$.
\end{corollary}
\begin{proof}   (i) For an integer $n\geq 1$ we set
  $U_n = \mo]-n,n[$.  Identifying $U_n$ with $\R$, the restriction $F|_{U_n}$
  belongs to $\Mod_{\ul{x}}(\cor_\R)$ for some finite set of points $\ul{x}$.
  Through the equivalence~\eqref{eq:faisc_carq} (and coming back to $U_n$)
  Gabriel's theorem gives a decomposition
  $F|_{U_n} \simeq \bigoplus_{c\in C_n} \cor_{I^n_c}$, where $C_n$ is a finite
  set of intervals of $U_n$. Each factor $\cor_{I^n_c}$ has multiplicity $1$,
  but we may have   $I^n_c = I^n_d$ for $c,d \in C_n$.  This
  decomposition is unique by~(i) of Theorem~\ref{thm:gabriel} and by the
  Krull-Schmidt theorem.

  The uniqueness implies that we can find an   injective map
  $\gamma_n \colon C_n \to C_{n+1}$, for any $n\geq 1$, such that
  $I^n_c = U_n \cap I^{n+1}_{\gamma_n(c)}$ for any $c \in C_n$.  We set
  $C = \varinjlim C_n$.  For $c \in C$, represented by $\tilde c \in C_n$ for
  some $n$, we set $I_c = \bigcup_{m\geq n} I^m_{\gamma_n^m(\tilde c)}$, where
  $\gamma_n^m = \gamma_{m-1} \circ \dots \circ \gamma_n$.  We then have
  $I_c \cap U_m = I^m_{\gamma_n^m(\tilde c)}$.  We remark that the obvious map
  $C_n \to C$ is injective, for any $n$.

  \sui (ii) Let $c \in C$ be given. We claim that we can find
  $i_c \cl \cor_{I_c} \to F$ and $p_c \cl F \to \cor_{I_c}$ such that
  $p_c \circ i_c = \id_{\cor_{I_c}}$. For $n\geq 1$  we define
\begin{align*}
E_n &= \Hom(\cor_{I_c}|_{U_n}, F|_{U_n}) 
\times \Hom(F|_{U_n}, \cor_{I_c}|_{U_n}) ,  \\
E'_n &= \{(i,p) \in E_n; \; p\circ i = \id_{\cor_{I_c}|_{U_n}} \}.
\end{align*}
The restriction morphisms induce $e_m^n \cl E_m \to E_n$ for $m\geq n$.  We
clearly have $e_m^n(E'_m) \subset E'_n$. We remark that $E'_n \not=\emptyset$
because $\cor_{I_c \cap U_n}$ is a direct summand of $F|_{U_n}$.  Let us prove
that $e_m^n(E'_m) = e_m^n(E_m) \cap E'_n$, for any $m\geq n$. This is clear when
$I_c \cap U_n = \emptyset$ (in this case $E_n = E'_n = \{(0,0)\}$).  If
$I_c \cap U_n \not= \emptyset$, then we have  
$$
\cor \simeq \Hom(\cor_{I_c}|_{U_m}, \cor_{I_c}|_{U_m})
\to[r] \Hom(\cor_{I_c}|_{U_n}, \cor_{I_c}|_{U_n}) \simeq \cor ,
$$
and the restriction map $r$ is an isomorphism. In particular
$r(u) = \id_{\cor_{I_c}|_{U_n}}$ implies $u = \id_{\cor_{I_c}|_{U_m}}$ and we
get $e_m^n(E'_m) = e_m^n(E_m) \cap E'_n$.  Since $F$ is constructible, the
spaces $E_n$ are all finite dimensional.  Hence, for a given $n$, the sequence
$\{e_m^n(E_m)\}_{m\geq n}$ stabilizes and it follows that
$\{e_m^n(E'_m)\}_{m\geq n}$ also stabilizes.  We set
$E_n'^\infty = \bigcap_{m\geq n} e_m^n(E'_m)$.  The maps $e_m^n$ induce
surjective maps $E_m'^\infty \to E_n'^\infty$, for all $m \geq n$.  Finally we
have $E_n'^\infty \not= \emptyset$ for all $n$; indeed
$E_n'^\infty = e_m^n(E'_m)$, for some $m$ big enough, and we have
$E'_m\not=\emptyset$.  Hence $\{E_n'^\infty)\}_{n \geq 1}$ is a projective
system of non empty sets, with surjective structural maps. It follows that
$\varprojlim_n E_n'^\infty \not= \emptyset$ and any element in this limit is a
sequence of compatible pairs of morphisms $(i_n,p_n) \in E'_n$ which glue into a
pair $(i_c,p_c)$ as claimed.

\sui(iii) Let $n \geq 1$ be an integer.  We claim that we can write
$F \simeq F_1 \oplus \bigoplus_{a \in A_1} \cor_{I_a}$, where
$F_1|_{U_n} \simeq 0$ and $A_1$ is some finite family of intervals of $\R$.

If $F|_{U_n} \simeq 0$, the claim is trivial. If not, we pick $c\in C$ such that
$I_c \cap U_n \not= \emptyset$.  Using $(i_c,p_c)$ found in~(ii) we write
$F \simeq \cor_{I_c} \oplus F^1$.  Then $F^1$ is constructible.  If
$F^1 \simeq 0$ we are done. If not, we apply the same argument to $F^1$ and
write $F^1|_{U_n} \simeq \cor_{I_1} \oplus F^2$ for some interval $I_1$ meeting
$U_n$ (the interval $I_1$ is in the family $C$ associated with $F$ but we do not
need to know that).  We go on with $F^2$ and write inductively
$F \simeq \cor_{I_c} \oplus \cor_{I_1}\oplus \cdots \oplus \cor_{I_k} \oplus
F^{k+1}$, where the $I_j$'s meet $U_n$, as long as $F^k|_{U_n} \not\simeq 0$.
Since $F$ is constructible, the space $\Hom(F|_{U_n}, F|_{U_n})$ is finite
dimensional.  Since the $I_j$'s meet $U_n$, the elements $\id_{\cor_{I_j}}$ give
a free family in $\Hom(F|_{U_n}, F|_{U_n})$.  Hence the process will stop after
finitely many steps and the claim is proved.

\sui(iv) Using~(iii) we can write inductively for $k\geq 1$,
$$
(D_k) \qquad F \simeq F_k \oplus \bigoplus_{i=1}^k G_i,  
$$
where $F_k|_{U_k} \simeq 0$, $G_i = \bigoplus_{a \in A_i} \cor_{I_a}$, where
$A_i$ is a finite family of intervals of $\R$ and $G_i|_{U_{i-1}} \simeq 0$ for
$i\geq 2$.  Indeed~(iii) applied with $F$ and $n=1$ gives the first step
and~(iii) applied with $F_k$ and $n=k+1$ gives the $(k+1)^{th}$ step. This
inductive construction also makes the decompositions ($D_k$) compatible in the
sense that the projection $u_i \colon F \to G_i$ deduced from ($D_k$) for
$k\geq i$ is actually independent of $k$.  Since $F_k|_{U_k} \simeq 0$, the sum
$\sum_{i=1}^k u_i |_{U_k} \colon F|_{U_k} \to \bigoplus_{i=1}^k G_i|_{U_k}$ is
an isomorphism.  We set $G = \bigoplus_{i=1}^\infty G_i$ and define
$u = \sum_{i=1}^\infty u_i \colon F \to G$.  This last sum makes sense because,
over each interval $U_k$, only the terms $u_i$ for $i=1,\dots,k$ are non zero.
Since $u|_{U_k}$ is an isomorphism for each $k$, $u$ itself is an isomorphism.
Putting together the intervals which appear several times (only finitely many
times by constructibility) we obtain a decomposition of $F$ as stated in the
corollary.

\sui(v)   To prove the uniqueness statement we first consider a bounded
interval $I_a$, for $a\in A$.  We choose an interval $U_n$ such that
$\ol{I_a} \subset U_n$.  Then $\cor_{I_a}$ appears in the decomposition of
$F|_{U_n}$ with the same multiplicity as in $F$. By the uniqueness of the
decomposition of $F|_{U_n}$ we deduce that $I_a = J_b$ for some $b\in B$ and
$m_b = n_a$.  Defining $A' = \{a\in A$; $I_a$ is bounded$\}$ and $B'$ similarly,
we thus have a bijection between $A'$ and $B'$ with respects the multiplicities.

We are left with an isomorphism
$\bigoplus_{a\in A\setminus A'} \cor^{n_a}_{I_a} \simeq \bigoplus_{b\in B\setminus
  B'} \cor^{m_b}_{J_b}$. Now, an unbounded interval $I$ with one end $x$ is
determined by $I \cap U_n$, as soon as $x\in U_n$.  Defining $A'' = \{a\in A$; $I_a$
is unbounded and not equal to $\R\}$ and $B''$ similarly, we can then identify $A''$
with $B''$ as in the bounded case.  Now we are left with the summand $\cor_\R$,
indexed by, say $a_0 \in A$, $b_0\in B$, and we have
$\cor_\R^{n_{a_0}} \simeq \cor_\R^{m_{b_0}}$. Hence $n_{a_0} = m_{b_0}$ and this
concludes the proof.
\end{proof}

We also remark that~(i) of Theorem~\ref{thm:gabriel} is given in our case by the
following easy result, which we quote for later use, and the fact that
$\Ext^1(\cor_I,\cor_I) \simeq 0$.
\begin{lemma}
\label{lem:morph_deux_int0}
Let $I,J$ be two intervals of $\R$.  Then
$$
\Hom(\cor_I,\cor_J) \simeq
\begin{cases}
  \cor  & \text{if $I\cap J \not= \emptyset$ and $I\cap J$ is closed in $I$} \\
  & \hspace{\fill} \text{and open in $J$,} \\
  0 & \text{else.}
\end{cases}
$$
In particular, if $I$ and $J$ are distinct, then we have   $\Hom(\cor_I,\cor_J) \simeq 0$ or
$\Hom(\cor_J,\cor_I) \simeq 0$.
\end{lemma}

\section{Constructible sheaves on the circle}
\label{sec:conshcercle}

In this section we extend Corollary~\ref{cor:cons_sh_R} to the circle.  Like in
the case of $\R$ the result is also a particular case of quiver representation
theory and Auslander-Reiten theory (see for example~\cite{R84} \S3.6 p.153 and
Theorem~5 p.158).  However it is quicker to prove the facts we need than recall
these general results.

We denote by $\cer$ the circle and we let $e \cl \R \to \cer \simeq \R/2\pi\Z$ be the
quotient map.  We use the coordinate $\theta$ on $\cer$ defined up to a multiple of
$2\pi$.  We also denote by $T \cl \R \to \R$ the translation $T(x) = x+2\pi$.  We
recall that $\cor$ is a field.  We say that $F\in \Mod(\cor_\cer)$ is constructible
if $F|_I$ is constructible in the sense of \S\ref{sec:conshR} for any arc
$I\subset \cer$. We denote by $\Mod_\rc(\cor_\cer)$ the category of such sheaves.

Since $e$ is a covering map, we have an isomorphism of functors $\epb{e} \simeq
\opb{e}$, hence an adjunction $(\eim{e},\opb{e})$.  For any $n\in \Z$ we have
$e\circ T^n = e$, hence natural isomorphisms of functors $\opb{(T^n)} \opb{e}
\simeq \opb{e}$ and $\eim{e} T^n_! \simeq \eim{e}$.  For $G \in
\Mod(\cor_\R)$ the isomorphism $\eim{e}( T^n_! (G)) \isoto \eim{e}(G)$
gives by adjunction $i_n(G) \cl T^n_! (G) \to \opb{e} \eim{e}(G)$.  For
$x\in \R$ we have $( \opb{e} \eim{e}(G))_x \simeq (\eim{e}(G))_{e(x)} \simeq
\bigoplus_{n\in \Z} G_{T^n(x)}$. We deduce that the sum of the $i_n(G)$ gives an
isomorphism
\begin{equation}
\label{eq:iminvimdire}
\bigoplus_{n\in \Z} T^n_*(G) \isoto \opb{e} \eim{e}(G) .
\end{equation}

Let $I$ be a bounded interval of $\R$.  We have $\eim{e}(\cor_I) \isoto
\oim{e}(\cor_I)$.  Let $A_I$ be the algebra $A_I = \Hom(\oim{e}(\cor_I),
\oim{e}(\cor_I))$.  The adjunction $(\opb{e},\oim{e})$ gives a morphism $\opb{e}
\oim{e}(\cor_I) \to \cor_I$. Using also the adjunction $(\eim{e},\opb{e})$ we
obtain a natural morphism
\begin{equation}
\label{eq:defvarepsilon_I}
\varepsilon_I \cl A_I \simeq \Hom(\cor_I, \opb{e} \oim{e}(\cor_I))
\to  \Hom(\cor_I, \cor_I) \simeq \cor .
\end{equation}

\begin{lemma}
\label{lem:hom-imdir-interv}
Let $I$ be a bounded interval of $\R$ and let $A_I$ be the algebra
$A_I = \Hom(\oim{e}(\cor_I), \oim{e}(\cor_I))$.  Then the morphism
$\varepsilon_I$ defined in~\eqref{eq:defvarepsilon_I} is an algebra morphism and
$\ker(\varepsilon_I)$ is a nilpotent ideal of $A_I$.  Moreover, a morphism
$u \in A_I$ is an isomorphism if and only if $\varepsilon_I(u) \not= 0$,

More precisely, if $I$ is closed or open, then $\varepsilon_I$ is an
isomorphism.  If $I$ is half-closed, say $I = [a,x[$ or $I = \mo]x,a]$ and we
set $E_a = I \cap \opb{e}(e(a))$, then the identification
$(\oim{e}(\cor_I))_{e(a)} \simeq \cor^{E(a)}$ induces a morphism
$$
A_I \to \Hom(\cor^{E(a)}, \cor^{E(a)}),
\qquad \varphi \mapsto \varphi_{e(a)}
$$
which identifies $A_I$ with the subalgebra of matrices generated by the standard
nilpotent matrix of order $|E(a)|$.
\end{lemma}
\begin{proof}
  Using $\opb{e} \oim{e}(\cor_I) \simeq \bigoplus_{n\in \Z} T^n_*(\cor_I)$
  and Lemma~\ref{lem:morph_deux_int0}, the cases $I$ closed or open are obvious.
  If $I$ is half-closed of length $l$, we find $A_I \simeq \cor^{|E(a)|}$ as a
  vector space.  Assuming $I = [a,x[$ (the case $]x,a]$ is similar), a basis of
  $A_I$ is given by the morphisms $e_*(u_n)$, for $n=0,\ldots, |E(a)|-1$, where
  $u_n \cl \cor_I \to \cor_{T^n(I)}$ is the natural morphism and we use the
  natural identification $\phi_n \cl e_*(\cor_{T^n(I)}) \simeq e_*(\cor_I)$.
  At the level of stalks $\phi_n$ identifies the summands $(\cor_I)_{a+2\pi k}$
  of $(e_*(\cor_I))_{e(a)}$ and $(\cor_{T^n(I)})_{a+2\pi (k+n)}$ of
  $(e_*(\cor_{T^n(I)}))_{e(a)}$.  We obtain that $(e_*(u_n))$ acts on $
  \cor^{E(a)}$ by $(s_1,s_2,\ldots) \mapsto (s_{1+n},s_{2+n},\ldots)$. We deduce
  that the image of $A_I$ in $\End(\cor^{E(a)})$ is as claimed in the lemma.

  The characterization of the isomorphisms then follows from the structure of
  $A_I$.
\end{proof}

\begin{lemma}
\label{lem:interv_facteur}
Let $F\in \Mod(\cor_\cer)$. Let $I$ be a bounded interval of $\R$ such that
$\cor_I$ is a direct summand of $\opb{e}(F)$. Then $\oim{e}(\cor_I)$ is a direct
summand of $F$.
\end{lemma}
\begin{proof}
  We let $i_0 \cl \cor_I \to \opb{e}(F) \simeq \epb{e}(F)$ and
  $p_0 \cl \opb{e}(F) \to \cor_I$ be morphisms such that
  $p_0 \circ i_0 = \id_{\cor_I}$ and we denote by
  $i_0' \cl \oim{e}(\cor_I) \simeq \eim{e}(\cor_I) \to F$ and
  $p_0' \cl F \to \oim{e}(\cor_I)$ their adjoint morphisms.  In general
  $p'_0 \circ i'_0 \not= \id_{\oim{e}(\cor_I)}$ but it is enough to see that
  $p'_0 \circ i'_0$ is an isomorphism. For this we use
  Lemma~\ref{lem:hom-imdir-interv}.  Let us compute
  $\varepsilon_I(p_0' \circ i_0')$.  Let $a\cl \opb{e} \oim{e}(\cor_I) \to \cor_I$
  and $b \cl \cor_I \to \opb{e} \eim{e}(\cor_I)$ be the adjunction morphisms. Then
$$
\varepsilon_I(p_0' \circ i_0') = a \circ
  \opb{e}(p_0' \circ i_0') \circ b = p_0 \circ i_0 = \id_{\cor_I}
$$
and we deduce that $p_0' \circ i_0'$ is an isomorphism.
\end{proof}

\begin{lemma}
\label{lem:interv_facteur2}
Let $F\in \Mod_\rc(\cor_\cer)$.  We assume that $F$ is not locally constant.
Then there exists a bounded interval $I$ of $\R$ such that $\cor_I$ is a direct
summand of $\opb{e}(F)$.
\end{lemma}
\begin{proof}
  By Corollary~\ref{cor:cons_sh_R} there exist a locally finite family of
  intervals $\{I_a\}_{a\in A}$ and integers $\{n_a\}_{a\in A}$ such that
  $\opb{e}(F) \simeq \bigoplus_{a\in A} \cor^{n_a}_{I_a}$.  Since $F$ is not
  locally constant, one of these intervals, say $I$, is not $\R$.  Let $T$ be
  the translation $T(x) = x+2\pi$. Since $\opb{T}\opb{e}(F) \simeq \opb{e}(F)$,
  the intervals $T^n(I)$ also appear in the decomposition, for all $n\in \Z$.
  If $I$ were not bounded, this would contradict the constructibility of $F$.
\end{proof}

\begin{proposition}
\label{prop:cons_sh_cercle}
Let $F \in \Mod_\rc(\cor_\cer)$.  Then there exist a finite family
$\{(I_a,n_a)\}_{a \in A}$, of bounded intervals and integers, and a locally
constant sheaf of finite rank $L\in \Mod(\cor_\cer)$ such that
\begin{equation}
\label{eq:cons_sh_cer1}
F \simeq L \oplus \bigoplus_{a\in A} \oim{e}(\cor^{n_a}_{I_a}) .
\end{equation}
\end{proposition}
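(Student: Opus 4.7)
The plan is to iteratively peel off direct summands of the form $\oim{e}(\cor_I)$ using Lemmas~\ref{lem:interv_facteur} and~\ref{lem:interv_facteur2}, controlled by a non-negative integer invariant that strictly decreases at each step.

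First I fix a finite subset $S \subset \cer$ such that $F|_{\cer \setminus S}$ is locally constant, and introduce the invariant
$$d(F) = \sum_{s \in S} \dim_\cor F_s,$$
which is a non-negative integer by constructibility. The goal is to show that if $F$ is not locally constant, then we can split off a non-zero summand of the form $\oim{e}(\cor_I)$ with $I$ bounded, in a way that produces another sheaf in $\Mod_\rc(\cor_\cer)$ constructible with respect to the same $S$ and of strictly smaller invariant.

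Assume $F$ is not locally constant. Lemma~\ref{lem:interv_facteur2} yields a bounded interval $I \subset \R$ such that $\cor_I$ is a direct summand of $\opb{e}F$. Because $F$ is $S$-constructible, $\opb{e}F$ is constructible with respect to $\opb{e}(S)$, so the uniqueness in Corollary~\ref{cor:cons_sh_R} forces the endpoints of $I$ to lie in $\opb{e}(S)$; hence $e(\partial I) \subset S$ and $\oim{e}(\cor_I)$ is itself $S$-constructible. Lemma~\ref{lem:interv_facteur} then gives $F \simeq \oim{e}(\cor_I) \oplus F'$ with $F' \in \Mod_\rc(\cor_\cer)$ still $S$-constructible. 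Since $I$ is bounded, $\partial I$ is non-empty and $\oim{e}(\cor_I)$ has a non-zero stalk at some $s \in e(\partial I) \subset S$, so $d(F') < d(F)$.

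Iterating finitely many times terminates, producing $F \simeq L \oplus \bigoplus_{j=1}^{n} \oim{e}(\cor_{J_j})$ where $L$ is locally constant (otherwise the iteration would continue). Because the stalks of $L$ are bounded by those of $F$, $L$ has finite rank. Collecting equal intervals yields the desired form $F \simeq L \oplus \bigoplus_{a \in A} \oim{e}(\cor^{n_a}_{I_a})$ with $A$ finite. The only point requiring care is that the iteration stays within a single fixed stratification $S$, so that $d$ really is finite and strictly decreases; this is guaranteed by the observation above that $e(\partial I) \subset S$. Everything else follows immediately from the two preceding lemmas.
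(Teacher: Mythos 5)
Your overall strategy — peel off summands $\oim{e}(\cor_I)$ via Lemmas~\ref{lem:interv_facteur2} and~\ref{lem:interv_facteur}, and descend on a rank invariant — is exactly the paper's, but your choice of invariant has a genuine gap. You set $d(F) = \sum_{s \in S} \dim_\cor F_s$, summing only over the singular points. The step ``\emph{$\oim{e}(\cor_I)$ has a non-zero stalk at some $s \in e(\partial I)$}'' is false in general: if $I = \mo]a,b[$ is a bounded \emph{open} interval of length $< 2\pi$, then $\opb{e}(S) \cap I = \emptyset$ (the only candidates $a, b$ are not in $I$, and all other translates miss $[a,b]$), so $\oim{e}(\cor_I)_s = 0$ for every $s \in S$. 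In that case $d(F') = d(F)$ and the descent stalls. Relatedly, $d$ fails as a base case: for $F = \cor_J$ with $J$ an open arc of $\cer \setminus S$, one has $d(F) = 0$ while $F$ is neither zero nor locally constant.

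The correct invariant (and the one the paper uses) sums over one chosen point in \emph{each} stratum of the stratification, including the open arcs, not just the points of $S$. With that definition, $\oim{e}(\cor_I)$ for $I$ bounded always contributes a positive amount — at worst through the interior of $e(I)$, which meets some open stratum — so the quantity strictly decreases and vanishes exactly when $F = 0$. Everything else in your argument (the endpoints $e(\partial I) \subset S$ by uniqueness in Corollary~\ref{cor:cons_sh_R}, $S$-constructibility of $F'$, finite rank of $L$, collecting equal intervals) is sound.
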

\begin{proof}
  (i) We choose a finite stratification $\{\Sigma_i\}_{i\in I}$ of $\cer$ such
  that $F$ is constructible with respect to $\{\Sigma_i\}$.  We choose one point
  $\theta_i \in \Sigma_i$ for each $i\in I$ and set
  $r(F) = \sum_{i\in I} \dim(F_{\theta_i})$.  We prove the proposition by
  induction on $r(F)$. If $r(F) = 0$, then $F\simeq 0$ and the result is clear.

  \sui (ii) We assume $r(F)\not=0$.  If $F$ is locally constant, the result is
  clear.  Else, by Lemmas~\ref{lem:interv_facteur2} and~\ref{lem:interv_facteur}
  there exists a bounded interval $I$ of $\R$ such that
  $F \simeq \oim{e}(\cor_I) \oplus F'$ for some $F' \in \Mod_\rc(\cor_\cer)$.
  Then $r(F') < r(F)$ and the induction proceeds.
\end{proof}

\section{Cohomological dimension $1$}
\label{sec:smallcohomdim}

The decomposition results for sheaves in dimension $1$ extend to the derived
category.  For $M = \R$ or $M = \cer$ we denote by $\Derb_\rc(\cor_M)$ the full
subcategory of $\Derb(\cor_M)$ formed by the $F$ such that
$H^iF \in \Mod_\rc(\cor_M)$ for all $i\in \Z$.  We first recall a well-known
decomposition result (see for example~\cite[Ex.~13.22]{KS06}).

\begin{lemma}
\label{lem:compl_scinde}
Let $\catc$ be an abelian category and $X \in \Derb(\catc)$ a complex such that
$\Ext^k(H^iX , H^jX) \simeq 0$ for all $i,j \in \Z$ and all $k\geq 2$.  Then
$X$ is split, that is, there exists an isomorphism
$X \simeq \bigoplus_{i \in \Z} H^iX [-i]$ in $\Derb(\catc)$.
\end{lemma}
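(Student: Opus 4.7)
The plan is to proceed by induction on the amplitude of $X$, using the standard truncation triangles and the fact that each triangle is split as soon as its connecting morphism vanishes.

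Since $X \in \Derb(\catc)$ we have $H^iX = 0$ outside some interval $[a,b]$, and I shall induct on $b-a$. The base case $b=a$ gives $X \simeq H^a(X)[-a]$ directly. For the inductive step I consider the canonical truncation distinguished triangle
\[
  \tau_{\leq b-1} X \to X \to H^bX[-b] \to[c] \tau_{\leq b-1} X[1],
\]
where the cohomology sheaves of $\tau_{\leq b-1}X$ agree with those of $X$ in degrees $\leq b-1$ and vanish in higher degree. By the induction hypothesis applied to $\tau_{\leq b-1}X$, which has strictly smaller amplitude and inherits the Ext vanishing hypothesis (its cohomology objects are a subfamily of those of $X$), we already have
\[
  \tau_{\leq b-1} X \simeq \bigoplus_{i \leq b-1} H^iX[-i].
\]

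The key step is then to show that the connecting morphism $c$ is zero, which will split the triangle and give the desired decomposition of $X$. Using the splitting above we compute
\[
  \Hom\bigl(H^bX[-b],\, \tau_{\leq b-1}X[1]\bigr)
  \simeq \bigoplus_{i\leq b-1} \Ext^{b-i+1}\bigl(H^bX, H^iX\bigr).
\]
For each $i \leq b-1$ the exponent $b-i+1$ is at least $2$, so every summand vanishes by the hypothesis of the lemma. Hence $c = 0$, the triangle splits, and one obtains
\[
  X \simeq \tau_{\leq b-1}X \oplus H^bX[-b] \simeq \bigoplus_{i\in\Z} H^iX[-i],
\]
completing the induction.

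There is essentially no obstacle beyond keeping the bookkeeping straight: the only delicate point is that the induction hypothesis must be applied to $\tau_{\leq b-1}X$ (not to a random sub-object), and one must observe that the Ext-vanishing hypothesis depends only on the collection $\{H^iX\}_{i\in\Z}$ and hence transfers without change to the truncation. Everything else is the general principle that a triangle $A \to B \to C \to A[1]$ splits iff the connecting class in $\Hom(C, A[1])$ vanishes, which is the only categorical input needed.
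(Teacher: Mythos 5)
Your proof is correct; it is the standard argument for this well-known fact, and the paper itself gives no proof but instead cites it as an exercise from Kashiwara--Schapira's \emph{Categories and Sheaves}. The induction on amplitude via truncation triangles, together with the observation that a distinguished triangle splits precisely when the connecting class vanishes and that the relevant $\Hom$-group decomposes into $\Ext^{b-i+1}(H^bX, H^iX)$ with $b-i+1\geq 2$, is exactly what is needed.
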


This applies in particular to constructible sheaves in dimension $1$.  Indeed,
if $M = \R$ or $M = \cer$ and $\cor$ is a field, we have $\Ext^k(F, G) \simeq 0$
for all $F,G \in \Mod_\rc(\cor_M)$ and for all $k\geq 2$.  We deduce:
\begin{lemma}
\label{lem:faiscdim1_compl_scinde}
Let $M = \R$ or $M = \cer$ and let $\cor$ be a field.  Then, for all
$F \in \Derb_\rc(\cor_M)$ we have $F \simeq \bigoplus_{i \in \Z} H^iF [-i]$ in
$\Derb_\rc(\cor_M)$.
\end{lemma}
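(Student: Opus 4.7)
The strategy is to apply Lemma~\ref{lem:compl_scinde} with $\catc = \Mod_\rc(\cor_M)$, so that $\Derb_\rc(\cor_M) \simeq \Derb(\catc)$. Its hypothesis then asks that $\catc$ have global dimension at most $1$, i.e.\ $\Ext^k(F,G)=0$ for all $F,G \in \Mod_\rc(\cor_M)$ and $k \geq 2$; this is precisely the vanishing asserted in the paragraph preceding the lemma, so the whole task is to justify it.

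For $M=\R$, Corollary~\ref{cor:cons_sh_R} writes every $F$ (and every $G$) as a locally finite direct sum $\bigoplus_a \cor^{n_a}_{I_a}$ for intervals $I_a$. Because $\supp\cor_{I_a}$ is at worst a half-line, only finitely many summands of any other such decomposition meet it, so $\RHom$ commutes with these decompositions in the relevant computations. I am thus reduced to showing $\Ext^k(\cor_I,\cor_J)=0$ for $k \geq 2$ and arbitrary intervals $I,J \subset \R$. For this I would resolve $\cor_I$ by two terms of the form $\cor_\R$ or $\cor_U$ for open $U \subset \R$: for example, $0 \to \cor_{\mo]-\infty,a\mc[ \,\sqcup\, \mo]b,+\infty\mc[} \to \cor_\R \to \cor_{[a,b]} \to 0$ for closed bounded $I$, chained with $0 \to \cor_{\mo]a,b\mc[} \to \cor_{\mo]a,b\mc]} \to \cor_{\{b\}} \to 0$ (and its analogues) to cover half-closed intervals and points. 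By Proposition~\ref{prop:formulaire}(a), $\RHom(\cor_U,G) \simeq \rsect(U;G)$ for $U \subset \R$ open; since $U$ is a disjoint union of open intervals, each of cohomological dimension $\leq 1$, this lies in degrees $\leq 1$, and similarly $\RHom(\cor_\R,G)=\rsect(\R;G)$. The long exact sequences of $\Ext$ associated with the two-term resolutions then force $\Ext^k(\cor_I,\cor_J) = 0$ for $k \geq 2$.

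The case $M=\cer$ is parallel. By Proposition~\ref{prop:cons_sh_cercle} one reduces to sheaves of the form $\oim{e}\cor_I$, with $I \subset \R$ a bounded interval, and to locally constant sheaves of finite rank. For the former, $\eim{e}\cor_I \simeq \oim{e}\cor_I$ since $I$ is bounded, and the adjunction $(\eim{e},\opb{e})$ together with~\eqref{eq:iminvimdire} gives
\begin{equation*}
  \RHom(\oim{e}\cor_I,\oim{e}\cor_J) \simeq \RHom\bigl(\cor_I,\opb{e}\eim{e}\cor_J\bigr) \simeq \bigoplus_{n\in\Z} \RHom\bigl(\cor_I,\oim{(T^n)}\cor_J\bigr),
\end{equation*}
where the sum is finite because the bounded support of $\cor_I$ meets only finitely many translates $T^n(J)$; this reduces to the $\R$-case treated above. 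For locally constant $L,L'$ of finite rank, $\RHom(L,L') \simeq \rsect(\cer; \rhom(L,L'))$ lies in degrees $\leq 1$ since $\cer$ is a compact $1$-manifold.

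The main obstacle, I expect, is not the algebra but the bookkeeping needed to control $\RHom$ with respect to the locally finite (possibly infinite) direct sums appearing on $\R$; this is handled by the observation that each interval $I_a$ has bounded or half-bounded support, so that only finitely many summands ever contribute to any given $\RHom$. Once that is in place, everything reduces to chasing long exact sequences of $\Ext$ through the elementary two-term resolutions of interval sheaves listed above.
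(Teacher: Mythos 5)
Your overall strategy — reduce to the vanishing $\Ext^k(F,G)=0$ for $k\geq 2$ and $F,G\in \Mod_\rc(\cor_M)$, then apply Lemma~\ref{lem:compl_scinde} — is exactly what the paper intends; the paper does not spell out the $\Ext$-vanishing at all, only asserts it, with the implicit justification being that the flabby dimension of a $1$-manifold is $1$ and that over a field flabby $=$ injective, so every sheaf has injective dimension $\leq 1$. (A variant of this fact is invoked in the proof of Proposition~\ref{prop:morph-orb}.) Your careful handling of local finiteness of the direct sums, and the reduction on $\cer$ via $\opb{e}\eim{e}$ and~\eqref{eq:iminvimdire}, are fine.

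However, the crucial step has a genuine gap. From the short exact sequence $0 \to \cor_U \to \cor_\R \to \cor_{[a,b]} \to 0$ (with $U=\R\setminus[a,b]$ open), applying $\RHom(\cdot,G)$ gives the long exact sequence
\begin{equation*}
\cdots \to \Ext^1(\cor_\R,G) \to \Ext^1(\cor_U,G) \to \Ext^2(\cor_{[a,b]},G) \to \Ext^2(\cor_\R,G)=0 .
\end{equation*}
Knowing only that $\RHom(\cor_U,G)$ and $\RHom(\cor_\R,G)$ live in degrees $\leq 1$ gives $\Ext^k(\cor_{[a,b]},G)=0$ for $k\geq 3$, not $k\geq 2$; the term $\Ext^2(\cor_{[a,b]},G)$ is the cokernel of the restriction $H^1(\R;G)\to H^1(U;G)$, and you have not argued that this vanishes. (It does, but that requires either the flabby-dimension argument or a direct check of surjectivity.) For half-closed and open intervals you can dodge this by expressing $\cor_I$ as a \emph{subobject} of an open-interval sheaf with open quotient — e.g.\ $0\to \cor_{\mo]a,b]}\to \cor_{\mo]a,+\infty\mc[}\to \cor_{\mo]b,+\infty\mc[}\to 0$ gives the vanishing from degree $2$ on directly — but for a closed interval there is no monomorphism $\cor_{[a,b]}\hookrightarrow \cor_U$ with $U$ open, so the quotient sequence you wrote is the only option and the degree-$2$ term still needs a separate argument. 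The cleanest route remains the flabby-dimension fact, which you should cite rather than try to re-derive by hand.

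Minor: on $\cer$ you should also say a word about $\RHom$ between a locally constant sheaf and $\oim{e}\cor_J$ (use the projection formula to pull $L^\vee$ back to $\R$, or the local finiteness argument again), though this is easy once noticed.
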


Using Corollary~\ref{cor:cons_sh_R} and Proposition~\ref{prop:cons_sh_cercle}
we obtain immediately:
\begin{corollary}
\label{cor:sheaves_dim1}
let $\cor$ be a field. Let $M$ be $\R$ or $\cer$ and let $F \in \Derb(\cor_M)$
be a constructible object.
\begin{itemize}
\item [(i)] If $M=\R$, then there exist a locally finite
 family of intervals $\{I_a\}_{a\in A}$ and integers
$\{n_a\}_{a\in A}$, $\{d_a\}_{a\in A}$ such that
\begin{equation}
\label{eq:der_cons_sh_R}
F \simeq \bigoplus_{a\in A} \cor^{n_a}_{I_a}[d_a] .
\end{equation}
\item [(ii)] If $M=\cer$, then there exist a finite family of bounded intervals
  $\{I_a\}_{a\in A}$, integers $\{n_a\}_{a\in A}$, $\{d_a\}_{a\in A}$ and $L\in
  \Derb(\cor_\cer)$ with locally constant cohomology sheaves of finite rank
  such that
\begin{equation}
\label{eq:der_cons_sh_cer}
F \simeq L \oplus \bigoplus_{a\in A} \oim{e}(\cor^{n_a}_{I_a})[d_a] ,
\end{equation}
where $e \cl \R \to \cer \simeq \R/2\pi\Z$ is the quotient map.
\end{itemize}
\end{corollary}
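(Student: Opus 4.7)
The plan is essentially a direct combination of the two ingredients stated just before the corollary: the splitting of bounded complexes in small cohomological dimension (Lemma~\ref{lem:faiscdim1_compl_scinde}) and the decomposition of constructible sheaves (Corollary~\ref{cor:cons_sh_R} for $\R$, Proposition~\ref{prop:cons_sh_cercle} for $\cer$). So both parts follow the same two-step pattern: first split $F$ across cohomological degrees, then decompose each cohomology sheaf.

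For part (i), let $F \in \Derb_{\rc}(\cor_\R)$. Since $\cor$ is a field and $\R$ has cohomological dimension one for constructible sheaves, Lemma~\ref{lem:faiscdim1_compl_scinde} gives an isomorphism $F \simeq \bigoplus_{i\in\Z} H^iF[-i]$, with only finitely many nonzero terms because $F$ is bounded. Each $H^iF$ is an object of $\Mod_\rc(\cor_\R)$, so by Corollary~\ref{cor:cons_sh_R} it can be written as $H^iF \simeq \bigoplus_{a\in A_i} \cor^{n_a^i}_{I_a^i}$ for a locally finite family of intervals. Setting $A = \bigsqcup_{i\in\Z} A_i$ and, for $a\in A_i$, declaring $d_a = i$, $n_a = n_a^i$, $I_a = I_a^i$, yields the decomposition~\eqref{eq:der_cons_sh_R}. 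Local finiteness of the family $\{I_a\}_{a\in A}$ follows because the sum is finite in $i$ and each $A_i$ is locally finite.

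For part (ii), the argument is identical in shape. Again Lemma~\ref{lem:faiscdim1_compl_scinde} gives $F \simeq \bigoplus_{i\in\Z} H^iF[-i]$ with finitely many nonzero summands. Each $H^iF$ lies in $\Mod_\rc(\cor_\cer)$, so Proposition~\ref{prop:cons_sh_cercle} provides a decomposition $H^iF \simeq L_i \oplus \bigoplus_{a\in A_i} \oim{e}(\cor^{n_a^i}_{I_a^i})$ with $L_i$ a locally constant sheaf of finite rank and $A_i$ finite. Collecting, define
\[
L \eqdot \bigoplus_{i\in\Z} L_i[-i] \in \Derb(\cor_\cer),
\]
which is bounded and has locally constant cohomology sheaves of finite rank (namely $H^iL \simeq L_i$). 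Reindexing the remaining summands as in part (i) gives~\eqref{eq:der_cons_sh_cer}.

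There is no real obstacle: the content is entirely in the two preceding results, and the only thing to verify is that the finiteness/local finiteness conditions are preserved after reindexing, which is immediate since $F$ is bounded.
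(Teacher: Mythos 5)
Your proposal is correct and follows exactly the route the paper indicates: split $F$ across cohomological degrees via Lemma~\ref{lem:faiscdim1_compl_scinde}, then apply Corollary~\ref{cor:cons_sh_R} (resp.\ Proposition~\ref{prop:cons_sh_cercle}) to each $H^iF$ and reindex. The paper states this corollary ``we obtain immediately'' from those two ingredients, and your write-up is simply the unpacking of that remark.
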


The next lemma is related with the results of this section and was already used
in Example~\ref{ex:faisceau_flotgeod}.  This is a classical result
(see~\cite[Ex.~13.20, 13.21]{KS06}).  Let $\catc$ be an Abelian category.  For
$A\in \Der(\catc)$ we let $\Aut(A) \subset \Hom(A,A)$ be the isomorphism group
of $A$. For another $B\in \Der(\catc)$ the product $\Aut(A) \times \Aut(B)$ acts
on $\Hom(A,B)$ by composition.  We recall that we have truncation functors
$\tau_{\leq n}, \tau_{\geq n} \colon \Derb(\catc) \to \Derb(\catc)$ together
with morphisms of functors $c_n$ which yield for any $X \in \Derb(\catc)$ and
$n\in \Z$ a distinguished triangle  
\begin{equation}
\label{eq:dt_ext}
\tau_{\leq n} (X)  \to
X \to \tau_{\geq n+1} (X) \to[c_n(X)] \tau_{\leq n} (X)[1].
\end{equation}
For $A$, $B \in \catc$ and $n \geq 1$, we let $E^n_{A,B} \subset \Derb(\catc)$
be the full subcategory of objects $X$ such that $H^0(X) \simeq B$,
$H^n(X) \simeq A$ and $H^i(X) \simeq 0$ for $i\not= 0,n$.  For $X \in E^n_{A,B}$
and isomorphisms $a\colon A[-n] \isoto \tau_{\geq 1} (X)$,
$b \colon B \isoto \tau_{\leq 0} (X)$, we define  
$\phi_{a,b}(X) = (b[1])^{-1} \circ c_0(X) \circ a \in \Hom(A[-n], B[1]) \simeq
\Hom(A,B[n+1])$.

\begin{lemma}
\label{lem:classif_extensions}
Let $\catc$ be an abelian category and let $A$, $B \in \catc$ and $n \geq 1$ be
given.  Let $\bar E^n_{A,B}$ be the set of isomorphism classes in $E^n_{A,B}$.
  For a given $X\in E^n_{A,B}$ and isomorphisms
$a \cl A[-n] \isoto \tau_{\geq 1} (X)$, $b \cl B \isoto \tau_{\leq 0} (X)$, the image
of $\phi_{a,b}(X)$ in $\Hom(A,B[n+1]) / \Aut(A) \times \Aut(B)$ is independent of the
choice of $a$ and $b$ and yields a bijection between $\bar E^n_{A,B}$ and
$\Hom(A,B[n+1]) / \Aut(A) \times \Aut(B)$.
\end{lemma}
\begin{proof}
  (i) Changing $a$ and $b$ modifies $(b[1])^{-1} \circ c_0(X) \circ a$ by the
  action of an element in $\Aut(A) \times \Aut(B)$. This proves that the image
  of $(b[1])^{-1} \circ c_0(X) \circ a$ in the quotient only depends on $X$.

  \sui (ii) If $u\cl X \to Y$ is an isomorphism in $E^n_{A,B}$, then
  $\tau_{\geq 1} (u)$ and $\tau_{\leq 0} (u)[1]$ are isomorphisms and make a
  commutative square with $c_0(X)$ and $c_0(Y)$.  Hence $c_0(X)$ is conjugate to
  $c_0(Y)$ through $\Aut(A) \times \Aut(B)$.  This defines a map
  $\bar c_0 \cl \bar E^n_{A,B} \to \Hom(A,B[n+1]) / \Aut(A) \times \Aut(B)$.

  \sui (iii) Let $X,Y \in E^n_{A,B}$ be given. If $\bar c_0(X) = \bar c_0(Y)$, then
  there exists $(\alpha, \beta)$ such that the square~$(S)$ below commutes:
$$
\begin{tikzcd}
\tau_{\leq 0} (X)  \ar[r] \ar[d, "\beta{[-1]}", "\wr"'] & X \ar[r] \ar[d,dashed]
& \tau_{\geq 1} (X) \ar[r, "c_0(X)"] \ar[d, "\alpha", "\wr"'] 
\ar[dr, phantom, "(S)"]
& \tau_{\leq 0} (X)[1] \ar[d, "\beta", "\wr"'] \\
\tau_{\leq 0} (Y)  \ar[r]  & Y \ar[r] 
& \tau_{\geq 1} (Y)  \ar[r, "c_0(Y)"']  & \tau_{\leq 0} (Y)[1] .
\end{tikzcd}
$$
By the axioms of triangulated categories, we deduce that $X \simeq Y$.  Hence
$\bar c_0$ is injective.

\sui (iv) For $\phi \in \Hom(A,B[n+1])$ we define $X_\phi$ such that $X_\phi[1]$ is
the cone of $\phi$.  Then $\bar c_0(X_\phi) = [\phi]$ in
$\Hom(A,B[n+1]) / \Aut(A) \times \Aut(B)$.  Hence $\bar c_0$ is surjective.
\end{proof}

\part{Graph selectors}
\label{part:graphsel}

Let $M$ be a manifold and let $\Lambda \subset J^1(M)$ be a closed Legendrian
submanifold.  We let $\Gamma$ be the projection of $\Lambda$ on $M\times \R$.  A
graph selector for $\Lambda$ is a continuous function $\varphi\colon M \to \R$ whose
graph is contained in $\Gamma$.  If $\Lambda$ is generic, then $\Gamma$ is a union of
transverse immersed hypersurfaces outside a set of codimension $1$. In this case
$\varphi$ is differentiable on a dense open set where the graph of $d\varphi$ is
contained in the Lagrangian projection of $\Lambda$ in $T^*M$.  We see $\Lambda$ as a
closed conic Lagrangian submanifold of $\dT^*(M \times\R)$ as follows.  We choose
coordinates $(t;\tau)$ on $T^*\R$ and denote by $T^*_{\tau>0}(M \times\R)$ the open
set $\{\tau>0\}$ in $T^*(M \times\R)$.  Then the quotient by the multiplicative
$\rspos$-action in the fibers gives an identification
$J^1(M) \simeq (T^*_{\tau>0}(M \times\R))/\rspos$ and we abusively write $\Lambda$
for its inverse image by the quotient map; hence
$\Lambda \subset T^*_{\tau>0}(M \times\R)$.  In this short section we prove that
$\Lambda$ has a graph selector as soon as it is the microsupport of a sheaf $F$
satisfying some conditions at infinity; the graph of $\varphi$ is then the boundary
of the support of a section of $F$.  Using Theorem~\ref{thm:quant_canon} below, we
recover Theorem~1.2 of~\cite{AOO18} which says that a compact exact Lagrangian
submanifold of a cotangent bundle has a graph selector.

\medskip

For a map $\varphi \cl M \to \R$ we set $\Gamma_\varphi = \{t = \varphi(x)\}$
and $\Gamma^+_\varphi = \{t \geq \varphi(x)\}$.  We assume in this section that
$M$ is connected. For $F \in \Der(\cor_{M\times\R})$ we consider the following
condition:
\begin{equation}
  \label{eq:cond_graphsel}
  \begin{minipage}{10cm}
    there exists $A>0$ such that $\supp(F) \subset M \times [-A,+\infty[$ and
    $\dot\SSi(F) \subset T^*_{\tau>0}(M \times [-A,A])$.
  \end{minipage}
\end{equation}

We recall that the category of sheaves $\Mod(\cor_{M\times\R})$ is embedded in its
derived category $\Der(\cor_{M\times\R})$ as the subcategory of complexes
concentrated in degree $0$.  We remark the notion of support doesn't make sense for a
class $s\in H^i(M\times\R;F)$ for a general $F\in \Der(\cor_{M\times\R})$, but makes
sense for $s\in H^0(M\times\R;F) = F(M\times\R)$ when $F \in \Mod(\cor_{M\times\R})$.

\begin{proposition}
\label{prop:graphselector}
Let $F \in \Mod(\cor_{M\times\R})$ and let $s\in F(M\times \R)$ be a non-zero
section.  We assume that $F$ satisfies~\eqref{eq:cond_graphsel}.  Then there exists a
unique map $\varphi \cl M \to \R$ such that $\supp(s) = \Gamma_\varphi^+$ and this
map is continuous.  More precisely, for a given chart $U$ in $M$, which is identified
with a ball of $\R^n$ with coordinates $(x;\xi)$ on $T^*U$, if we have a bound
$\dot\SSi(F) \cap T^*(U\times \R) \subset \{ \tau \geq C ||\xi||\}$ for some $C>0$,
where $||\xi|| = (\sum_{i=1}^n \xi_i^2)^{\frac12}$, then the map $\varphi$ is
$C^{-1}$-Lipschitz on $U$, that is, $|\varphi(x) - \varphi(y)| \leq C^{-1}||x-y||$,
for any $x,y\in U$, where $||\cdot||$ is again the norm induced by the coordinates.
\end{proposition}
The link between the Lipschitz condition and the assumption on the microsupport
can also be found in~\cite{Vic13} and~\cite{J18}.

\begin{proof}
  (i) To prove that $\supp(s)$ is of the form $\Gamma_\varphi^+$ it is enough to
  check that $\supp(s) \cap (\{x\}\times \R)$ is an interval of the form
  $[a,+\infty[$ for any $x\in M$.  Let $z = (x_0,t_0) \in M\times\R$ such that
  $s_z \not=0$.  We set $i\colon \R \to M\times\R$, $t\mapsto (x_0,t)$ and
  $G = \opb{i}F$. Then $\SSi(G) \subset \{\tau\geq0\}$ by the hypotheses on $F$ and
  by Theorem~\ref{thm:iminv}. The section $s$ induces a section $s'$ of $G$ over $\R$
  and we have $s'_t = s_{i(t)}$. Hence $\supp(s') = \opb{i}(\supp(s))$ and this is
  non empty since $s_z\not=0$.

  By Corollary~\ref{cor:Morse} the restriction map
  $H^0(]a,c[; G) \to H^0(]b,c[; G)$ is an isomorphism for any $a\leq b <c$.  It
  follows that $s'_t=0$ implies $s'_{t_1} = 0$ for all $t_1 \leq t$. Indeed, we
  can find $a,b,c$ such that $a<t_1$, $t_1<b<t<c$ and $s'|_{]b,c[} = 0$.  Then
  $s'|_{]a,c[} = 0$ and in particular $s'_{t_1} = 0$.  Since $\supp(s')$ is
  closed, non empty and contained in $[A,+\infty[$, this proves that it must be
  an interval of the form $[a,+\infty[$, as required.
  
  It remains to check that, for any $x\in M$, there exists $t$ such that
  $s_{(x,t)}\not= 0$. The hypothesis on $\SSi(F)$ implies that $F|_V$, where
  $V = M \times \mo]A,+\infty[$, is a locally constant sheaf. Since $M$ is
  connected, the support of any global section of $F|_V$ is either empty or $V$.
  We have seen that $\supp(s)$ contains $\{x_0\} \times [a,+\infty[$ for some
  $a$. Hence it must contain $V$.  Finally we obtain that
  $\supp(s) = \Gamma_\varphi^+$ for some function $\varphi \colon M \to \R$.

  \sui(ii) Now we assume that we are given a chart $U$ and $C>0$ as in the second
  part of the statement; hence
  $\dot\SSi(F) \cap T^*(U\times\R) \subset (U\times\R) \times \{ \tau \geq C
  ||\xi||\}$. The chart $U$ is identified with an open ball in $\R^n$ and we let
  $\gamma \subset \R^{n+1}$ be the closed convex cone $\{ t \leq -C^{-1} ||x||\}$.
  We have $\gamma^{\circ a} = \{ \tau \geq C ||\xi||\}$.  By
  Remark~\ref{rem:supportgammatop} $\Gamma_\varphi^+ = \supp(s)$ is locally
  $\gamma$-closed, that is, for any $z \in U\times\R$, there exists a neighborhood
  $B$ of $z$ (for the usual topology) such that $B\cap \Gamma_\varphi^+ = B \cap Z$
  for some $Z\subset \R^{n+1}$ which is closed for the $\gamma$-topology.  By
  definition this means that
  $\R^{n+1} \setminus Z = (\R^{n+1} \setminus Z) + \gamma$; it implies
  $Z = Z + \gamma^a$.

  Let $x\in U$ be given and $z=(x,\varphi(x))$. Hence there exists a neighborhood $B$
  of $z$ such that $B\cap \Gamma_\varphi^+$ contains $B_1 = B \cap (z+\gamma^a)$ and
  is contained in $B\setminus B_2$, where $B_2 = B \cap (z + \gamma)$.  Setting
  $W = p(B_1) \cap p(B_2)$ we thus have
  $|\varphi(x) - \varphi(y)| \leq C^{-1} ||x-y||$ for all $y\in W$ and we can see
  that $W$ is a neighborhood of $x$ in $U$.  Since this holds for any $x\in U$, we
  can deduce that $\varphi$ is $C^{-1}$-Lipschitz on $U$. Indeed for given
  $x,x' \in U$, the segment $[x,x']$ is contained in $U$ ($U$ is a ball) and we can
  find finitely many points $x_i \in [x,x']$ with neighborhoods $W_i$ as above,
  $i=1,\dots,N$, such that $[x,x'] \subset \bigcup_{i=1}^N W_i$.  Then the result
  follows from the triangular inequality.   

  \sui(iii) Since $\dot\SSi(F)$ is conic and closed in $\dT^*(M\times\R)$ and
  contained in $\{\tau>0\}$, for any compact subset $K$ of some coordinate chart
  of $M\times\R$ we can find $C>0$ such that
  $\dot\SSi(F) \cap \opb{\pi_{M\times\R}}(K) \subset K \times \{ \tau \geq C
  ||\xi||\}$. By~(ii) it follows that $\varphi$ is continuous everywhere.
\end{proof}

  In order to apply
Proposition~\ref{prop:graphselector} in the situation of
Corollary~\ref{cor:graphselector} we have to replace a complex of sheaves
$F\in \Der(\cor_{M\times\R})$ by a sheaf in $\Mod(\cor_{M\times\R})$ -- we will take
$H^0F$ -- and ensure that our sheaf has a section.  The next lemma implies that
$H^0F$ still satisfies the hypothesis of the proposition and
Lemma~\ref{lem:graphselector2} deals with the sections.

\begin{lemma}
\label{lem:graphselector1}
Let $F \in \Der(\cor_{M\times\R})$ which satisfies~\eqref{eq:cond_graphsel}.
Then, for any $i\in \Z$, the sheaf $H^iF$ also
satisfies~\eqref{eq:cond_graphsel} and moreover we have
$\pi_{M\times\R}(\dot\SSi(H^iF)) \subset \pi_{M\times\R}(\dot\SSi(F))$.
\end{lemma}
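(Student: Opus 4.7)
The plan is to prove both the microsupport condition~\eqref{eq:cond_graphsel} for $H^iF$ and the projection inclusion by combining Corollary~\ref{cor:SSHiF} with a basic upper-semicontinuity argument in the sphere bundle $\dT^*(M\times\R)/\R_{>0}$. The support inclusion $\supp(H^iF) \subset \supp(F) \subset M\times[-A,+\infty[$ is immediate, so only the statements about $\dot\SSi(H^iF)$ require work.

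Both claims are local in $M\times\R$, so I would work in a coordinate chart identifying an open piece of $M\times\R$ with an open subset of a vector space $V = \R^{n+1}$ with coordinates $(x,t)$ and dual coordinates $(\xi,\tau)$. Fix a point $p_0 = (x_0,t_0)$. Since $\dot\SSi(F)$ is closed in $\dT^*(M\times\R)$ and $\R_{>0}$-invariant, its image $\Sigma$ in the sphere bundle is closed, and the sphere bundle projects properly to $M\times\R$. The hypothesis $\dot\SSi(F) \subset \{\tau>0\}$ together with the fact that the fiber of $\Sigma$ at $p_0$ is compact in the open subset $\{\tau>0\}$ of the sphere bundle yields a neighborhood $U$ of $p_0$ and a constant $c>0$ with
\[
\dot\SSi(F|_U) \subset U \times \{\tau \geq c\,\|\xi\|\}.
\]

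The closed convex cone $\delta = \{(\xi,\tau) : \tau \geq c\,\|\xi\|\}$ is proper (contains no line), so Corollary~\ref{cor:SSHiF}(i) (applied in the chart, after possibly passing through Lemma~\ref{lem:local_cond_SSF} to realize $F|_U$ globally with the same microsupport bound) gives $\SSi(H^iF|_U) \subset U \times \delta$, and since $\delta \setminus \{0\} \subset \{\tau>0\}$ we obtain $\dot\SSi(H^iF|_U) \subset U \times \{\tau>0\}$. Letting $p_0$ vary gives $\dot\SSi(H^iF) \subset \{\tau>0\}$ globally. For the projection inclusion, I would run the sphere-bundle argument contrapositively: if $p_0 \notin \pi_{M\times\R}(\dot\SSi(F))$, the fiber of $\Sigma$ at $p_0$ is empty, so by properness there is a neighborhood $U$ of $p_0$ with $\dot\SSi(F|_U) = \emptyset$; by Example~\ref{ex:microsupport}(i) the cohomology sheaves of $F|_U$ are then locally constant, so $(H^iF)|_U$ is locally constant, and again by Example~\ref{ex:microsupport}(i) we conclude $p_0 \notin \pi_{M\times\R}(\dot\SSi(H^iF))$. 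Combining the projection inclusion (which supplies the bound $\pi_{M\times\R}(\dot\SSi(H^iF)) \subset M\times[-A,A]$) with $\dot\SSi(H^iF) \subset \{\tau>0\}$ yields $\dot\SSi(H^iF) \subset T^*_{\tau>0}(M\times[-A,A])$, completing~\eqref{eq:cond_graphsel} for $H^iF$.

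The only step requiring any care is the sphere-bundle upper-semicontinuity producing $c>0$ and $U$, but this is standard given properness of the sphere bundle projection and closedness of the image of $\dot\SSi(F)$ there; everything else is a direct invocation of Corollary~\ref{cor:SSHiF} together with the fact that vanishing microsupport characterizes locally constant cohomology sheaves.
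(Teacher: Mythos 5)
Your proof is correct, and it is more careful than the paper's. The paper's proof consists essentially of the observation that $x \notin \pi_{M\times\R}(\dot\SSi(G))$ if and only if $G$ is locally constant near $x$, from which it deduces both the support condition and the projection inclusion (since $F$ locally constant implies $H^iF$ locally constant). But that observation by itself does not give $\dot\SSi(H^iF) \subset \{\tau>0\}$, and the paper's two-line argument does not explain how to obtain that part of~\eqref{eq:cond_graphsel}. You fill this gap explicitly: by closedness and conicity of $\dot\SSi(F)$, its image in the sphere bundle is closed and lies in the open set $\{\tau>0\}$, so near any base point it is trapped in a proper closed convex cone $\delta = \{\tau \geq c\,\|\xi\|\}$; Corollary~\ref{cor:SSHiF}(i) (with $\gamma$ chosen so that $\gamma^{\circ a} = \delta$) then bounds $\SSi(H^iF)$ by the same cone, giving $\dot\SSi(H^iF) \subset \{\tau>0\}$. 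Both proofs use the same characterization of $\pi_{M\times\R}(\dot\SSi(-))$ for the projection inclusion, so the convexity step you supply is precisely the ingredient the paper leaves implicit. One small simplification: the detour through Lemma~\ref{lem:local_cond_SSF} is unnecessary, since Corollary~\ref{cor:SSHiF}(i) is already formulated for an arbitrary open subset $U$ of the ambient vector space.
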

\begin{proof}  
  (i) Let us check~\eqref{eq:cond_graphsel} for $H^iF$.  We have the general
  inclusion $\supp(H^iF) \subset \supp(F)$.  Corollary~\ref{cor:SSHiF} gives
  $\dot\SSi(H^iF)) \subset T^*_{\tau>0}(M \times\R)$.  Since $\dot\SSi(F)$ is
  contained in $T^*(M \times [-A,A])$, $F$ is locally constant outside
  $M \times [-A,A]$ and so is $H^iF$. Hence $\dot\SSi(H^iF)$ is contained in
  $T^*(M \times [-A,A])$ and we have~\eqref{eq:cond_graphsel}.

  \sui(ii) Now we prove the last assertion.  We recall that, for any
  $G \in \Der(\cor_{M\times\R})$, a point $x$ is not in
  $\pi_{M\times\R}(\dot\SSi(G))$ if and only if $G$ is constant in a neighborhood of
  $x$ (see Example~\ref{ex:microsupport}-(i)).  Now, if $F$ is constant, so is
  $H^iF$.  Hence, if $x \not\in \pi_{M\times\R}(\dot\SSi(F))$, we have
  $x \not\in \pi_{M\times\R}(\dot\SSi(H^iF))$, as required.
\end{proof}

\begin{lemma}
\label{lem:graphselector2}
Let $F \in \Der(\cor_{M\times\R})$ which satisfies~\eqref{eq:cond_graphsel}.
Then the restriction morphism
$\rsect(M\times \R; F) \to \rsect(M\times \mo]B,+\infty[; F)$ is an
isomorphism, for any $B\in \R$.
\end{lemma}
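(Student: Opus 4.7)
The plan is to use the excision distinguished triangle
\[\rsect_{M\times]-\infty,B]}(M\times\R;F) \to \rsect(M\times\R;F) \to \rsect(M\times]B,+\infty[;F) \to[+1]\]
to reduce the claim to the vanishing $\rsect_{M\times]-\infty,B]}(M\times\R;F)\simeq 0$. For $B<-A$ this is immediate from the support condition in~\eqref{eq:cond_graphsel}, which forces $F\otimes\cor_{M\times]-\infty,B]}\simeq 0$.

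For $B\geq -A$ the key tool is the microlocal Morse lemma, Corollary~\ref{cor:Morse}, applied with $\phi(x,t)=-t$. Its differential is everywhere $-dt$, corresponding to the covector $(0,-1)$ with $\tau=-1<0$, so the hypothesis $\dot\SSi(F)\subset T^*_{\tau>0}(M\times[-A,A])$ gives $d\phi(x,t)\notin\SSi(F)$ for every $(x,t)\in M\times\R$. The open-set form of Morse then produces isomorphisms
\[\rsect(M\times]B_1,+\infty[;F)\isoto\rsect(M\times]B_2,+\infty[;F)\quad\text{for all }B_1<B_2,\]
since $\phi^{-1}(]-\infty,c[)=M\times]-c,+\infty[$. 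Taking the inverse limit as $B_1\to-\infty$ against the standard identification $\rsect(M\times\R;F)\simeq\varprojlim_{B_1\to-\infty}\rsect(M\times]B_1,+\infty[;F)$ (for an increasing union of open sets exhausting $M\times\R$) collapses the system of isomorphisms to a single isomorphism $\rsect(M\times\R;F)\isoto\rsect(M\times]B,+\infty[;F)$, as desired.

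The main obstacle is that Corollary~\ref{cor:Morse} as stated requires $\phi$ to be proper on $\supp(F)$; here $\phi^{-1}([c,d])\cap\supp(F)\subset M\times([c,d]\cap[-A,+\infty[)$, which is compact only when $M$ itself is compact (or $\supp(F)$ is compact in the $M$-direction). In the compact case the argument goes through verbatim. In the general case one must bypass the propriety requirement: a natural route is to argue pointwise from the microsupport, noting that $(x_0,B;0,-1)\notin\SSi(F)$ forces $(\rsect_{\{t\leq B\}}F)_{(x_0,B)}\simeq 0$, so any section of $F$ supported in $M\times]-\infty,B]$ has vanishing germ along $M\times\{B\}$ and hence is already supported in $M\times]-\infty,B-\varepsilon(x)]$ for some positive function $\varepsilon$; combining this with a compact exhaustion of $M$ (on each compact piece $\varepsilon$ is bounded below and the Morse argument applies) and iterating drives the effective support strictly below $-A$, where $F$ itself vanishes.
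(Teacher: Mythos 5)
Your setup via the excision triangle and the use of Corollary~\ref{cor:Morse} with $\phi(x,t)=-t$ is the right instinct, and you are right that this works verbatim when $M$ is compact and that the obstruction in general is that $\phi$ is not proper on $\supp(F)$. However, the workaround you sketch does not close the gap. The paper's device is to push forward along the projection $p\cl M\times\R \to M$ and then invoke base change: since $\supp(\rsect_Z F)\subset M\times[-A,B]$, $p$ is proper on this support, and $(\roim{p}\rsect_Z F)_x \simeq \rsect(\R;\opb{i_x}\rsect_Z F)$ where $i_x(t)=(x,t)$; on each fiber the support is compact, so the microlocal Morse lemma applies with no properness issue. This reduction to one variable is what you are missing, and without it your ``effective support'' argument does not constitute a proof: it treats the derived object $\rsect_Z F$ as an ordinary sheaf whose sections can be chased pointwise, ignoring contributions from higher cohomology; moreover, even at the level of an ordinary sheaf, the vanishing of germs along $M\times\{B\}$ only yields $\supp(s)\subset M\times\mo]-\infty,B[$, and this set may approach $\{t=B\}$ as one goes to infinity in $M$, so the iteration never lowers the support by a uniform amount. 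The ``compact exhaustion of $M$'' idea also introduces boundary-conormal contributions to the microsupport that your argument does not control. The fix is precisely the fibrewise reduction: once you project to $M$ and base-change to $\{x\}\times\R$, the compactness lives entirely in the $t$-direction, where~\eqref{eq:cond_graphsel} hands it to you for free.
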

We remark that if $M$ were compact, this would follow directly from
Corollary~\ref{cor:Morse}.
\begin{proof}
  We set $Z = M\times \mo]-\infty,B]$ and $U = M\times \mo]B,+\infty[$.  We have
  to prove that
  $r \colon \rsect(M\times \R; F) \to \rsect(M\times \R; \rsect_U(F))$ is an
  isomorphism. The cone of $r$ is $\rsect(M\times\R; \rsect_Z(F))$.  Let
  $p \colon M\times\R \to M$ be the projection.  It is enough to prove
  $\roim{p}\rsect_Z(F) \simeq 0$.

  We remark that $p$ is proper on $\supp(\rsect_Z(F))$.  For a given $x\in M$
  and $i_x \colon \R \to M\times\R$, $t\mapsto (x,t)$, we set
  $G = \opb{i_x}\rsect_Z(F)$. Then the base change formula gives
  $(\roim{p}\rsect_Z(F))_x \simeq \rsect(\R; G)$.  By Theorems~\ref{thm:SSrhom}
  and~\ref{thm:iminv} we have $\SSi(G) \subset \{\tau\geq 0\}$. Since $G$ has
  compact support, we have $\rsect(\R; G) \isoto \rsect(]a,+\infty[; G)$ for
  some $a\in\R$.  By Corollary~\ref{cor:Morse} we also have
  $\rsect(]a,+\infty[; G) \isoto \rsect(]b,+\infty[; G)$ for $a\leq b$ and this
  vanishes for $b\gg0$ since $\supp(G)$ is compact.  
\end{proof}

\begin{corollary}
  \label{cor:graphselector}
  Let $\Lambda \subset T^*_{\tau>0}(M \times\R)$ be a closed conic Lagrangian
  submanifold.  We assume that $\Lambda \subset T^*_{\tau>0}(M \times [-A,A])$
  for some $A$ and that there exists $F \in \Der(\cor_{M\times\R})$ such that
  $\dot\SSi(F) = \Lambda$, $\supp(F) \subset M \times [-A,+\infty[$ and
  $F|_{M\times \mo]A,+\infty[} \simeq \cor_{M\times \mo]A,+\infty[}$.  Then
  $\Lambda$ has a graph selector: there exists a continuous map
  $\varphi \colon M \to \R$ such that
  $\Gamma_\varphi \subset \pi_{M\times\R}(\Lambda)$.

  Moreover, as in Proposition~\ref{prop:graphselector}, for a given chart $U$ in $M$,
  which is identified with a ball of $\R^n$ with coordinates $(x;\xi)$ on $T^*U$, if
  we have a bound $\Lambda \cap T^*(U\times \R) \subset \{ \tau > C ||\xi||\}$ for
  some $C>0$, then $\varphi$ is $C^{-1}$-Lipschitz on $U$.
\end{corollary}
\begin{proof}
  (i) The sheaf
  $F|_{M\times \mo]A,+\infty[} \simeq \cor_{M\times \mo]A,+\infty[} \simeq
  H^0F|_{M\times \mo]A,+\infty[}$ has a section over $M\times \mo]A,+\infty[$,
  say $s$, corresponding to $1\in\cor$.  By Lemma~\ref{lem:graphselector1}
  $H^0F$ satisfies~\eqref{eq:cond_graphsel} and by
  Lemma~\ref{lem:graphselector2} the section $s$ can be extended to
  $s \in H^0(M\times \R; H^0F)$.  Proposition~\ref{prop:graphselector}
  associates a continuous function $\varphi$ to $s$ and we have
  $\Gamma_\varphi \subset \pi_{M\times\R}(\Lambda)$ by
  Lemma~\ref{lem:graphselector1} again.

  \sui(ii) Let $\Xi \subset T^*(M\times\R)$ be the convex hull of $\Lambda$ in
  the sense that $\Xi$ is the intersection of all closed conic subsets $S$ of
  $T^*M$ which contain $\Lambda$ and are fiberwise convex.  For a local chart as
  in the statement we also have $\Xi \cap T^*U \subset \{ \tau > C ||\xi||\}$.
  By Corollary~\ref{cor:SSHiF} we have $\dot\SSi(H^0F) \subset \Xi$.  Hence the
  Lipschitz constant given in Proposition~\ref{prop:graphselector} for $H^0F$ is
  the same as the one given in the current corollary.
\end{proof}

An example of a sheaf satisfying the hypotheses of Corollary~\ref{cor:graphselector}
is given by Corollary~\ref{cor:isot_equivcat} as follows.  We start with
$\Lambda_0 = T^*_{M\times \{0\}}(M\times\R)$ which is the microsupport of   $F^0 = \cor_{M\times [0,+\infty[}$.  Let $\Phi_t$, $t\in I$, where
$I$ is an open interval containing $[0,1]$, be a homogeneous Hamiltonian isotopy of
$\dT^*(M\times\R)$ which preserves $\dT^*_{\tau>0}(M\times\R)$ (for example the
homogeneous lift of a Hamiltonian isotopy of $T^*M$, whose support is proper over
$M$, or the lift of a contact isotopy of $J^1M$).  Corollary~\ref{cor:isot_equivcat}
gives a sheaf $F$ on $M\times \R \times I$ with $F_0 = F^0$ and
$\dot\SSi(F_t) = \Phi_t(\Lambda_0)$, where $F_t = F|_{M\times\R \times\{t\}}$.  We
remark that $\dot\SSi(F) \cap T^*N = \emptyset$ for
$N = M\times \mo]B,+\infty\mc[ \times [0,1]$ and $B\gg 0$.  Hence $F|_N$ is locally
constant. Since
$F_0|_{M\times \mo]B,+\infty\mc[} \simeq \cor_{M\times \mo]B,+\infty\mc[}$, we deduce
that $F_1|_{M\times \mo]B,+\infty\mc[} \simeq \cor_{M\times \mo]B,+\infty\mc[}$.  So
Corollary~\ref{cor:graphselector} implies that $\Phi_1(\Lambda_0)$ has a graph
selector.

More generally, using Corollary~\ref{cor:graphselector} and
Theorem~\ref{thm:quant_canon} below, we recover Theorem~1.2 of~\cite{AOO18}:

\begin{corollary}
  Let $\Lambda \subset T^*_{\tau>0}(M \times\R)$ be a closed conic Lagrangian
  submanifold which is the conification of a compact exact Lagrangian
  submanifold of $T^*M$. Then $\Lambda$ has a graph selector.
\end{corollary}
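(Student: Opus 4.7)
The plan is to reduce the statement directly to Corollary~\ref{cor:graphselector} by producing a sheaf on $M \times \R$ whose microsupport is the given conic Lagrangian $\Lambda$ and which satisfies the boundary conditions required by that corollary. All the geometric content is then absorbed into the existence of such a sheaf, which is furnished by the quantization theorem (Theorem~\ref{thm:quant_canon}) established later in the paper.

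First I would fix notation for the conification: let $L \subset T^*M$ be a compact exact Lagrangian submanifold with primitive $f \cl L \to \R$ satisfying $df = \alpha_M|_L$, and let $\Lambda \subset \dT^*_{\tau>0}(M \times \R)$ be the associated homogeneous conic Lagrangian, given in local coordinates by $\{(x, -f(x,\xi);\, \tau\xi, \tau);\ (x;\xi)\in L,\ \tau>0\}$. Since $L$ is compact, $f$ is bounded, so there exists $A>0$ with $\Lambda \subset T^*_{\tau>0}(M \times [-A,A])$; this verifies the first geometric hypothesis of Corollary~\ref{cor:graphselector}.

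Next I would invoke Theorem~\ref{thm:quant_canon} to obtain an object $F \in \Der(\cor_{M \times \R})$ with $\dot\SSi(F) = \Lambda$, with $F|_{M \times \mo]-\infty,-A\mc[} \simeq 0$, and with $F|_{M \times \mo]A,+\infty\mc[}$ locally constant. Since $\Lambda$ lives in $\{\tau>0\}$ and has no part over $|t|>A$, the locally constant behaviour for $t$ large positive and the vanishing for $t$ large negative both follow from the bound on $\SSi(F)$ together with Proposition~\ref{prop:iminvproj}; the content of the quantization theorem is the existence of $F$ with $\dot\SSi(F) = \Lambda$ itself. After the normalization provided by the quantization (choice of a simple sheaf with prescribed behaviour at $+\infty$), we may arrange $F|_{M \times \mo]A,+\infty\mc[} \simeq \cor_{M \times \mo]A,+\infty\mc[}$. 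With these three conditions in hand, Corollary~\ref{cor:graphselector} applies and produces a continuous graph selector $\varphi \cl M \to \R$ with $\Gamma_\varphi \subset \pi_{M\times\R}(\Lambda)$, which is the desired conclusion.

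The main obstacle is of course not in this short reduction but in the input: the construction of a sheaf $F$ on $M\times\R$ quantizing a general exact Lagrangian $L$ is precisely the core content of Parts~\ref{part:orbcat}--\ref{chap:ex_Lag}, and in particular of Theorem~\ref{thm:quant_canon}. The subtle point there is that, while $\Lambda$ is only defined up to the ambiguity in the primitive $f$ and may have non-trivial Maslov class, a genuine sheaf representative with $\dot\SSi(F) = \Lambda$ and the prescribed vanishing at $t\to -\infty$ has to be built globally; the only nontrivial verification once we have $F$ is that its restriction to $\{t > A\}$ is not merely a local system but identifies with the constant sheaf, which is built into the normalization of the quantization (one arranges $F_+ \simeq \cor_M$ by the freedom in choosing the primitive of the Liouville form, equivalently by a vertical translation in the $t$-variable). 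Given this input, no further analytic work is needed: the Lipschitz estimate at each point $x_0$ is inherited directly from the last statement of Corollary~\ref{cor:graphselector}.
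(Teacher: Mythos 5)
Your reduction is correct and takes the same route the paper intends: one feeds the quantization $F$ produced by Theorem~\ref{thm:quant_canon} (which gives $\dot\SSi(F)=\Lambda$, $F_-\simeq 0$, $F_+\simeq\cor_M$) directly into Corollary~\ref{cor:graphselector}, after noting that compactness of the exact Lagrangian forces $\Lambda\subset T^*_{\tau>0}(M\times[-A,A])$ for some $A$.

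One small correction to your parenthetical: the normalization $F_+\simeq\cor_M$ in Theorem~\ref{thm:quant_canon} is \emph{not} achieved by adjusting the primitive of the Liouville form, nor by a vertical translation in $t$. Both operations translate $\Lambda$ vertically, and translating $F$ along with it leaves $F_+=F|_{M\times\{t\}}$ ($t\gg0$) unchanged as a sheaf on $M$. In the paper's proof of Theorem~\ref{thm:quant_canon}, one first produces a simple $F^0$ with $F^0_+\simeq L[d]$ for some rank-one local system $L$ on $M$ (via Proposition~\ref{prop:restr-inf-corps}), and then replaces $F^0$ by $F^0\otimes\opb{p}L^{\otimes-1}[-d]$ with $p\colon M\times\R\to M$ the projection. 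Since you invoke Theorem~\ref{thm:quant_canon} only as a black box, this misattribution of the mechanism does not compromise your argument.
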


\part{The Gromov nonsqueezing theorem}
\label{part:nonsqueezing}

In this part we use the microlocal theory of sheaves to give a proof of the famous
Gromov nonsqueezing theorem (see~\cite{Gr85}), which says that there is no
symplectomorphism of $\R^{2n}$ which sends the ball of radius $R$ into a cylinder
$D_r \times \R^{2n-2}$, where $D_r$ is the disc of radius $r$, if $r<R$.  There is
already a proof with generating functions by Viterbo~\cite{V92} and it is no wonder
that we can also find a proof with sheaves.  The proof we give is inspired by the
papers of Chiu~\cite{C17}, to define a projector associated with a square
in~\S\ref{sec:cutoffcarre}, and Tamarkin~\cite{T08} to define a displacement energy
(see~Definition~\ref{def:inv_nonsqueezing} -- this displacement energy is also used
in~\cite{AI17}).  It is in fact a baby case of the main result of~\cite{C17} which is
the nonsqueezing in the contact setting (see also~\cite{Z18} for a survey of
Tamarkin's and Chiu's results and~\cite{F16} for another proof).  We also give a non
squeezing result for a Lagrangian submanifold of the ball; this is related with a
result of Th\'eret in~\cite{T99}.

\section{Cut-off in fiber and space directions}
\label{sec:cutoffcarre}

In this section we prove Corollary~\ref{cor:annulation_tauc} which says that if a
sheaf $F$ on $\R^n$ has its microsupport contained in some prescribed cone, then the
natural morphism $\tau_c(F)$ of~\eqref{eq:morph_id_transl2} vanishes for $c$ big
enough.  This will be used in the next section to recover classical non squeezing
results.  The idea is to construct explicitly a sheaf $K_\infty$ on $\R^{2n}$ such
that $F \circ K_\infty \isoto F$ for $F$ as above and check that $\tau_c(K_\infty)$
vanishes.

In Part~\ref{chap:cutoff} we have seen several functors on the category of
sheaves whose effect is to change the microsupport and make it avoid some given
set.  For a vector space $V$ and a closed convex cone $\gamma \subset V$, we
have seen $P_\gamma \cl \Der(\cor_V) \to \Der(\cor_V)$ which is a projector, in
the sense that $P_\gamma \circ P_\gamma \simeq P_\gamma$, and satisfies
$\SSi(P_\gamma(F)) \subset V \times \gamma^{\circ a}$.  More usual functors
reduce the support of a sheaf, for example $\Der(\cor_M) \to \Der(\cor_M)$,
$F \mapsto F_Z$, for a locally closed set $Z \subset M$.  This functor is also a
projector and satisfies $\SSi(F_Z) \subset \ol{Z} \times V^*$.  If $Z$ is
closed, a sheaf satisfying $\SSi(F) \subset Z \times \gamma^{\circ a}$ is stable
by both functors $P_\gamma$ and $(\cdot)_Z$, hence by the composition
$Q \colon F \mapsto P_\gamma(F_Z)$.  This $Q$ is not a projector but we will see
in a special case that $Q^{\circ i}$ converges when $i\to\infty$ and gives a
projector (in fact we will work with a variant of $Q$).

The construction we give in this section is in fact a baby case of a
construction by Chiu in~\cite{C17} who defines a projector corresponding to a
subset $C$ of $T^*\R^{n+1}$ which is the cone over a ball in $T^*\R^n$; here we
do the case where $C$ is the cone over a square $[-1,1]^2$ in
$T^*\R = \R\times \R^*$.  We will use this projector to recover nonsqueezing
results in the symplectic case.

\smallskip

It will be more convenient to use a composition functor
$F \mapsto F \circ \cor_{\widetilde\gamma}$ rather than $P_\gamma$ because the
composition is associative.  This is similar to the Tamarkin projector
of~\S\ref{sec:Tamproj}: see Remark~\ref{rem:autresproj} where this functor is denoted
$L_{\gamma^a}$.  We recall that, for a manifold $M$ and a conic subset
$A\subset T^*M$, $\Der_A(\cor_M)$ is the full subcategory of $\Der(\cor_M)$ formed by
the $F$ with $\SSi(F) \subset A$ (see Notation~\ref{not:cat_micsupp_fixe}).  The
composition $-\circ \cor_{\widetilde\gamma}$ is still a projector and its image is
$\Der_{A_\gamma}^{\perp, l}(\cor_{V})$, the left orthogonal of
$\Der_{A_\gamma}(\cor_{V})$, where
$A_\gamma = V \times (V^*\setminus \Int(\gamma^{\circ a}))$.  We have
$\Der_{A_\gamma}^{\perp, l}(\cor_{V}) \subset \Der_{V \times \gamma^{\circ
    a}}(\cor_{V})$.  (See Remark~\ref{rem:autresproj} and~\cite[Prop.~4.10]{GS11}.)
However we don't use the results of~\cite{GS11}; the microsupport estimates will rely
on the properties of $P_\gamma$ and Lemma~\ref{lem:Pgamma-conv}.

\smallskip

We introduce some notations.  Let $n\geq 2$ be given.  We consider the vector
spaces $V' = \R^{n-2}$, $V = \R^2 \times V'$ and put coordinates $(x;\xi)$ on
$T^*V$.  We let $\gamma \subset V$ be the cone
$$
\gamma = \{(x) \in \R^n; \; x_2 \leq -|x_1| ,\;  x_3=\cdots=x_n=0\}.
$$
Hence $\gamma^\circ \subset V^*$ is given by $\gamma^\circ = \{ (\xi) \in \R^n$;
$\xi_2 \leq -|\xi_1|\}$.  We recall the notation
$\widetilde\gamma = \{(x,y)\in V^2$; $x-y \in \gamma\}$
of~\S\ref{sec:Globalcut-off}.  We let $Z \subset V$ be the open strip
$Z = \mo]-1,1\mc[ \times \R^{n-1}$.  Let
$R_\gamma, R_Z \colon \Der(\cor_V) \to \Der(\cor_V)$ be the functors
$R_\gamma(F) = F \circ \cor_{\widetilde\gamma}$ and $R_Z(F) = F_Z$.  They come
with natural morphisms $R_\gamma(F) \to F$ and $R_Z(F) \to F$.  We remark that
$R_Z$ can also be written as a composition
$R_Z(F) \simeq F \circ \cor_{\delta_V(Z)}$, where $\delta_V$ is the diagonal
embedding.  We define $R = R_Z \circ R_\gamma \circ R_Z$ and we find
\begin{equation}
  \label{eq:def_foncteurR}
  R(F) \simeq  F \circ \cor_W ,
\end{equation}
where
\begin{align*}
W &= \widetilde\gamma \cap  (Z\times Z)  \\
  & = \{(x,y) \in \R^{2n}; \;  y_2-x_2 \geq |x_1-y_1| , \\
 & \hspace{3cm}  |x_1|<1, \; |y_1|<1, \;  x_i=y_i, \; i=3,\ldots,n \} .
\end{align*}
Since $W \cap \Delta_V$ is closed in $W$ and open in $\Delta_V$ we have a
natural morphism $\cor_W \to \cor_{\Delta_V}$ which gives a morphism of functors
$R \to \id$.  If a sheaf satisfies $R_\gamma(F) \isoto F$ and $F_Z \isoto F$,
then $R(F) \isoto F$ and $R^{\circ i}(F) \isoto F$ for all $i\in \N$.  We now
compute $R^{\circ i}$; of course this is the composition with
$$
K_i = \cor_W \circ \cdots \circ \cor_W \qquad \text{($i$ factors $\cor_W$).}
$$
For $c\in \R$ we define $f, s, T_c \cl V \to V$ by
\begin{align*}
  f(y) &= (-y_1, y_2+2, y_3,\ldots, y_n) \\
  s(y) &= (y_1, -y_2, y_3,\ldots, y_n) \\
  T_c(y) &= (y_1, y_2+c, y_3,\ldots, y_n).
\end{align*}

Let $Z_\pm \subset \R^2$ be the closed half planes
$Z_\pm = \{ y_2 \geq \pm y_1\}$.  We have natural morphisms
$u_\pm \colon \cor_{\R^2} \to \cor_{Z_\pm}$ and we define
$L \in \Der(\cor_{\R^2})$ by
\begin{equation}
  \label{eq:def_L_faisceau_dble_cone}
L = 0 \to \cor_{\R^2} \to \cor_{Z_+} \oplus \cor_{Z_-} \to 0 ,  
\end{equation}
where $\cor_{\R^2}$ is in degree $0$.  We have
$H^0L \simeq \cor_{\R^2 \setminus (Z_+ \cup Z_-)}$,
$H^1L \simeq \cor_{Z_+ \cap Z_-}$ and a distinguished triangle
$\cor_{\R^2 \setminus (Z_+ \cup Z_-)} \to L \to \cor_{Z_+ \cap Z_-}[-1] \to
\cor_{\R^2 \setminus (Z_+ \cup Z_-)}[1]$.  (In fact we have already met $L$
in~\eqref{eq:dt_faisceau_flotgeod}: we have
$L \simeq K_\Psi|_{\{0\} \times \R^2}$ in the case $n=1$.)

\begin{lemma}\label{lem:calcul_K2}
  We set $C_1 = W \cap (\id_V \times f)( (\id_V \times s)(\Int(W)))$ and
  $W_2 = (\id_V \times f)(W)$.  The sheaf $K_2 = \cor_W \circ \cor_W$ appears in
  a distinguished triangle
  \begin{equation}
    \label{eq:calcul_K2}
    \cor_{C_1} \to[u] K_2 \to \cor_{W_2}[-1] \to \cor_{C_1}[1]
  \end{equation}
  and, in a small enough neighborhood of $\{y_1 = -x_1$,
  $y_2 = x_2+2\} \times \Delta_{V'}$, we have
  $K_2 \simeq \opb{p}(L) \etens \cor_{\Delta_{V'}}$, with
  $p \colon \R^4 \to \R^2$, $(x_1,x_2,y_1,y_2) \mapsto (x_1+y_1, y_2-x_2-2)$.
  Moreover $\cor_{C_1} \circ \cor_W \isoto \cor_{C_1}$.
\end{lemma}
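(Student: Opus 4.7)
My plan is to compute $K_2 = \cor_W \circ \cor_W$ directly from the germ formula~\eqref{eq:germ_compo},
\[
(K_2)_{(x,y)} \simeq \rsect_c(V;\, \cor_{A_x \cap B_y}),
\]
with $A_x = \{z : (x,z)\in W\}$ (the upward cone from $x$ cut by the open strip $\{|z_1|<1\}$) and $B_y = \{z : (z,y)\in W\}$ (the downward analogue). Since $W$ forces the $V'$-coordinates to agree throughout, everything reduces to planar geometry in the $(z_1,z_2)$-plane. The Minkowski rhombus $(x-\gamma)\cap(y+\gamma)$ has width $y_2-x_2$ and side vertices at $z_1 = (x_1+y_1\pm(y_2-x_2))/2$; three cases arise once we intersect with the open strip: (a) both vertices inside $\{|z_1|<1\}$, so $A_x\cap B_y$ is closed, compact and contractible and $\rsect_c\simeq\cor[0]$ --- this is the locus $C_1$; (b) both vertices strictly outside $\{|z_1|\leq 1\}$, so the closure adds two closed vertical segments and the excision triangle gives $\rsect_c\simeq\cor[-1]$ --- this is $W_2$; (c) exactly one vertex cut off, in which case excision identifies $\cor\to\cor$ and $\rsect_c=0$. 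Outside $\widetilde\gamma$ or off the diagonal $x_i=y_i$ for $i\geq 3$, the intersection is empty. Checking the boundary strata, we conclude $H^0K_2\simeq\cor_{C_1}$, $H^1K_2\simeq\cor_{W_2}$ and $H^iK_2=0$ for $i\neq 0,1$; the triangle~\eqref{eq:calcul_K2} is then the canonical truncation triangle $\tau_{\leq 0}K_2\to K_2\to(\tau_{\geq 1}K_2)[1]\to[+1]$.

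For the local identification near $\Sigma := \{y_1=-x_1,\, y_2=x_2+2\}\times\Delta_{V'}$, the diagonal constraint on the $V'$-coordinates factors out a $\cor_{\Delta_{V'}}$-tensorand (via Proposition~\ref{prop:iminvproj} applied in the $V'$-direction), reducing the problem to the four-dimensional slice with coordinates $(x_1,x_2,y_1,y_2)$. In a small enough neighborhood of $\Sigma$, the cone-bottom constraint $y_2-x_2\geq|x_1-y_1|$ is automatic, so $C_1$ and $W_2$ locally coincide with the pullbacks by $p$ of $\{v<-|u|\}$ and $\{v\geq|u|\}$, matching $H^0L$ and $H^1L$. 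Hence $K_2$ and $\opb{p}(L)\etens\cor_{\Delta_{V'}}$ have the same cohomology sheaves on a contractible neighborhood $\Omega$ of $\Sigma$; by Lemma~\ref{lem:classif_extensions} their isomorphism classes are classified by $\Ext^2(\cor_{W_2},\cor_{C_1})|_\Omega$ modulo automorphisms. An adjunction-based computation using the excision triangle attached to $Z_+\cap Z_-\subset \R^2$ shows this $\Ext^2$ is free of rank one, so the non-split extension is unique up to isomorphism. Non-splitness of $\opb{p}(L)$ follows from $\rsect(\R^2;L)\simeq\cor[-1]$ (a short computation directly from~\eqref{eq:def_L_faisceau_dble_cone}), which rules out $L\simeq\cor_{\{v<-|u|\}}\oplus\cor_{\{v\geq|u|\}}[-1]$; non-splitness of $K_2$ can be checked either by an analogous global-section computation or via the microsupport bound $\SSi(K_2)\subset\SSi(\cor_W)\circ^a\SSi(\cor_W)$ from Lemma~\ref{lem:SScompo}, whose right-hand side is strictly smaller than $\SSi(\cor_{C_1}\oplus\cor_{W_2}[-1])$ along the central stratum (at such a point, the two pieces of conormal to $C_1$ and $W_2$ are opposite half-lines, and only one of them lies in the set-theoretic composition).

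The final isomorphism $\cor_{C_1}\circ\cor_W\isoto\cor_{C_1}$ is an easier direct germ computation: the fibre $\{z:(x,z)\in C_1,\,(z,y)\in W\}$ is a closed compact contractible rhombus when $(x,y)\in C_1$ and empty otherwise --- the strict upper bound inherited from $C_1$ prevents any second-level reflection off the strip walls --- so the canonical morphism induced by $\cor_W\to\cor_{\Delta_V}$ is a stalkwise isomorphism. The main obstacle is the non-splitness verification in the local model of the middle step: the $\Ext^2$-computation itself is routine adjunction, but simultaneously pinning down both $\opb{p}(L)$ and $K_2$ as genuinely non-split, so that Lemma~\ref{lem:classif_extensions} identifies them, is the technical heart of the proof.
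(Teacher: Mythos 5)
Your germ-by-germ computation of $K_2$ and the use of the truncation triangle is a genuine alternative to the paper's argument, which instead cuts the fibre polytope $A$ into $A'$ and $A^\pm$ (closure and the two boundary pieces on $\{y_1=\pm1\}$) and pushes forward to obtain an explicit two-term complex $\cor_W\to\cor_{q(A^+)}\oplus\cor_{q(A^-)}$ quasi-isomorphic to $K_2$. That resolution immediately \emph{is} the complex~\eqref{eq:def_L_faisceau_dble_cone} after a coordinate change, so the local identification near $\Sigma$ comes for free; by contrast your route forces you to match cohomology sheaves, compute $\Ext^2$, and verify non-splitness on both sides before invoking Lemma~\ref{lem:classif_extensions}. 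This can work, but it is where your proof currently has two concrete gaps.

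First, the global-sections test you propose does not detect non-splitness: a direct computation using the excision triangles for $Z_\pm$ shows $\rsect(\R^2;\cor_{\R^2\setminus(Z_+\cup Z_-)})\simeq 0$ and $\rsect(\R^2;\cor_{Z_+\cap Z_-})\simeq\cor$, so the split object $\cor_{\R^2\setminus(Z_+\cup Z_-)}\oplus\cor_{Z_+\cap Z_-}[-1]$ has $\rsect\simeq\cor[-1]$ as well, exactly like $L$. Your alternate argument via the microsupport bound of Lemma~\ref{lem:SScompo} (the paper's own method, cf.\ Example~\ref{ex:faisceau_flotgeod}) does work, so you should drop the global-sections argument and rely only on that.

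Second, the final germ computation for $\cor_{C_1}\circ\cor_W$ is not justified as written. At $(x,y)\in C_1$, the binding upper bound on the fibre $\{z:(x,z)\in C_1,\ (z,y)\in W\}$ comes from the closed condition $z_2\leq y_2-|z_1-y_1|$ in $(z,y)\in W$, \emph{not} from the open condition of $C_1$ on $(x,z)$: the triangle inequality $|x_1+z_1|\leq|x_1+y_1|+|z_1-y_1|$ together with $2-y_2-x_2>|x_1+y_1|$ shows the $W$-bound is always strictly smaller. So the top and bottom edges are both closed, and the issue is whether the closed rhombus $\{x_2+|x_1-z_1|\leq z_2\leq y_2-|z_1-y_1|\}$ really lies inside the open strip $\{|z_1|<1\}$; if its side vertices at $z_1=(x_1+y_1\pm(y_2-x_2))/2$ escape the strip, the truncated fibre has two open vertical edges and $\rsect_c$ is $\cor[-1]$ rather than $\cor$. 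Your phrase \emph{``the strict upper bound inherited from $C_1$ prevents any second-level reflection off the strip walls''} does not establish this containment, and it is not obvious from the defining inequalities of $C_1$ alone. The paper avoids the issue entirely by using the microsupport bound $\SSi(\cor_{C_1})\subset T^*V\times(V\times\gamma^{\circ a})$ together with relative compactness of $C_1$ and Proposition~\ref{prop:cut-off1} to conclude $R_\gamma(\cor_{C_1})\simeq\cor_{C_1}$, hence $\cor_{C_1}\circ\cor_W=R_Z R_\gamma R_Z(\cor_{C_1})\simeq\cor_{C_1}$. If you want to keep the direct germ computation you must actually verify the vertex estimate; otherwise adopt the microsupport argument.
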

\begin{proof}
(i)  Let us forget the variables $x_i$, $y_i$ for $i\geq 3$.  Let
  $q \colon \R^6 \to \R^4$ be the projection
  $q(\ul x, \ul y, \ul z) = (\ul x, \ul z)$. Then $K_2 = \reim{q}(\cor_A)$ where
  $A = (W \times \R^2) \cap (\R^2 \times W)$.  For a subset $E \subset \R^6$ and
  $(\ul x, \ul z) \in \R^4$ we set
  $E_{(\ul x, \ul z)} = E \cap \opb{q}(\ul x, \ul z)$.  If $\ul x \not\in Z$ or
  $\ul z \not\in Z$ we have $A_{(\ul x, \ul z)} = \emptyset$.  This proves that
  $(K_2)_{Z\times Z} \isoto K_2$.  Now we assume $(\ul x, \ul z) \in Z^2$ and we
  find $A_{(\ul x, \ul z)} = \{\ul y \in \R^2$; $y_2-x_2 \geq |x_1-y_1|$,
  $|y_1|<1$, $z_2 - y_2 \geq |y_1 - z_1|\}$.  Then $A_{(\ul x, \ul z)}$ is a
  bounded convex polytope, but the cohomology of $\cor_{A_{(\ul x, \ul z)}}$
  depends on $(\ul x, \ul z)$ because we have two types of boundary conditions
  (open or closed).

  \sui(ii) We define $A'$, $A^\pm \subset \opb{q}(Z^2) \subset \R^6$ by their
  fibers $A'_{(\ul x, \ul z)} = \ol{A_{(\ul x, \ul z)}}$ and
  $A^\pm_{(\ul x, \ul z)} = \partial A_{(\ul x, \ul z)} \cap \{y_1 = \pm1\}$.
  We have an exact sequence
  $0 \to \cor_A \to \cor_{A'} \to \cor_{A^+} \oplus \cor_{A^-} \to 0$.  Now the
  fibers of $A'$, $A^\pm$ are always compact convex polytopes.  For any such
  polytope $B$ we have $\reim{q}(\cor_B) \simeq \cor_{q(B)}$ and we deduce a
  resolution of $K_2$ as the complex
  $0 \to \cor_{q(A')} \to[d_1] \cor_{q(A^+)} \oplus \cor_{q(A^-)} \to 0$.  We
  have $q(A') = W$ and
\begin{equation*}
  q(A^\pm)  = \{(x,z) \in \R^4; \;
              z_2-x_2 \geq 2 \mp(x_1+z_1) ,  |x_1|<1, \; |z_1|<1 \} .
\end{equation*}
Hence $\ker(d_1) \simeq \cor_{C_1}$, $\coker(d_1) = \cor_{W_2}$ and, up to a
change of coordinates, we have the complex~\eqref{eq:def_L_faisceau_dble_cone}
defining $L$.

\sui(iii) To prove the last assertion we can compute $\cor_{C_1} \circ \cor_W$
directly or use the fact that $(\cor_{C_1})_Z \simeq \cor_{C_1}$, because
$C_1 \subset Z^2$, and $R_\gamma(\cor_{C_1}) \simeq \cor_{C_1}$, because $C_1$
is relatively compact and
$\SSi(\cor_{C_1}) \subset T^*V \times (V \times \gamma^{\circ a})$.  We deduce
$\cor_{C_1} \circ \cor_W \simeq \cor_{C_1}$ and this proves the lemma.
\end{proof}

We can see that $-\circ \cor_W$ commutes with the translation $T_c$.  More
precisely
$T_{c*}(F) \circ \cor_W \simeq T_{c*}(F \circ \cor_W) \simeq F \circ
\cor_{(\id_V \times T_c)(W)}$.  The same holds for the map $f$ and, since
$W_2 = (\id_V \times f)(W)$, we can deduce $\cor_{W_2} \circ \cor_W$ from the
triangle~\eqref{eq:calcul_K2}. We find a similar triangle
\begin{equation}
    \label{eq:td_calcul_Ki1}
\cor_{C_2} \to \cor_{W_2} \circ \cor_W \to \cor_{W_3}[-1] \to \cor_{C_2}[1] ,
\end{equation}
where $C_2 = (\id_V \times f)(C_1)$, $W_3 = (\id_V \times f)(W_2)$.  Since
$\cor_{C_1} \circ \cor_W \isoto \cor_{C_1}$, applying $-\circ \cor_W$
to~\eqref{eq:calcul_K2} gives the triangle
\begin{equation}
    \label{eq:td_calcul_Ki2}
  \cor_{C_1} \to K_3 \to \cor_{W_2} \circ \cor_W[-1] \to  \cor_{C_1}[1] .
\end{equation}
The triangle~\eqref{eq:td_calcul_Ki1} implies that
$H^0(\cor_{W_2} \circ \cor_W) \simeq \cor_{C_2}$ and
$H^1(\cor_{W_2} \circ \cor_W) \simeq \cor_{W_3}$.  Then~\eqref{eq:td_calcul_Ki2}
gives $H^0(K_3) \simeq \cor_{C_1}$, $H^1(K_3) \simeq \cor_{C_2}$ and
$H^2(K_3) \simeq \cor_{W_3}$.  Now we can compute $\cor_{W_3} \circ \cor_W$ in
the same way and an induction gives the following result.

\begin{proposition}\label{prop:calcul_Ki}
  For $i\geq 1$ we define $C_i = (\id_V \times f)^i(C_1)$ and
  $W_i = (\id_V \times f)^{i-1}(W)$.  Then, for any $n\geq 1$, 
  \begin{itemize}
  \item [(i)] we have a distinguished triangle
    $$
    \cor_{C_1} \to[u] K_{n+1} \to (\id_V \times f)_*(K_n)[-1] \to
    \cor_{C_1}[1],
    $$
  \item [(ii)] $K_n$ is concentrated in degrees $0,\dots, n-1$ and
    $H^iK_n \simeq \cor_{C_{i+1}}$, for $i=0,\dots, n-2$,
    $H^{n-1}K_n \simeq \cor_{W_n}$,
  \item [(iii)] setting
    $\Delta^t_i := \{y_1 = (-1)^ix_1, \; y_2 = x_2+2i\} \times \Delta_{V'}$ and
    $p_i \colon \R^4 \to \R^2$,
    $(x_1,x_2,y_1,y_2) \mapsto (x_1-(-1)^i y_1, y_2-x_2-2i)$, we have, in a
    small enough neighborhood of $\Delta^t_i$ and for $i=1,\dots, n-1$:
   $$
   K_n \simeq (\opb{p_i}(L) \etens \cor_{\Delta_{V'}})[1-i],
    $$
  \item [(iv)] setting $Y_i = \{y_2-x_2 \geq 2i \}$, we have, for
    $i=1,\dots, n-1$, $(K_n)_{Y_i} \simeq (\id_V \times f)^i_*(K_{n-i})[-i]$.
\end{itemize}
\end{proposition}

\begin{lemma}\label{lem:rhomKnKn}
  We set $S_n = \supp(K_n) = \ol{W_n} \cup \bigcup_{i=1}^{n-1} \ol{C_i}$.  Then we
  have an isomorphism $\cor_{S_n} \isoto \rhom(K_n,K_n)$ mapping $1$ to the identity
  morphism.
\end{lemma}
\begin{proof}
  (i) The statement is local on $V^2$.  In a neighborhood of a point $x_0$ which
  is away from the sets $\Delta^t_i$ defined in Proposition~\ref{prop:calcul_Ki}
  our sheaf $K_n$ is up to shift a constant sheaf on some subset $Y$ of $V^2$
  with $Y = C_i$ or $Y = W_n$.  Since the sets $C_i$ and $W_n$ are locally
  closed convex, we get $\rhom(K_n,K_n) \simeq \cor_{S_n}$ near $x_0$.

  \sui(ii) In a neighborhood of some $\Delta^t_i$ the lemma is reduced to the proof
  of $\rhom(L,L) \simeq \cor_D$, where $D = \supp(L)$.  Let us set $H = \rhom(L,L)$.
  Since the morphism $\cor_D \to H$ is given, it only remains to check
  $H_x \simeq \cor$ at any point $x \in D$.  This is clear for $x\not=0$.  We can
  compute $H_0$ directly from the definition~\eqref{eq:def_L_faisceau_dble_cone}.
  Alternatively we can remark that $\SSi(L) = \Lambda_\Psi \circ^a \dT^*_0\R$ (see
  the notation~\eqref{eq:imageA_par_isotopie}), where $\Psi$ is the Hamiltonian
  isotopy of $\dT^*\R$ defined by $\Psi_s(x;\xi) = (x-s\,\xi/|\xi|; \xi)$.  It
  follows from Corollary~\ref{cor:isot_equivcat} that the restriction morphisms
  $\RHom(L,L) \to \RHom(L_s, L_s)$, where $L_s = L|_{\R \times \{s\}}$ are all
  isomorphisms.  We deduce that $\rsect(\R^2;H) \simeq \cor$.  Since $H$ is a conic
  sheaf, we have $H_0 \simeq \rsect(\R^2;H)$, hence $H_0 \simeq \cor$.
\end{proof}

We define an increasing sequence of open subsets
$U_i = \{y_2-x_2 < 2i \} \subset \R^{2n}$, $i\in \Z$.  By
Lemma~\ref{lem:rhomKnKn} the natural morphism $u_j \colon K_{j+1} \to K_j$,
induced by $\cor_W \to \cor_{\Delta_V}$, gives an isomorphism
$K_{j+1}|_{U_i} \isoto K_j|_{U_i}$ if $j>i$.  Hence we can define a sheaf
$K_\infty \in \Der(\cor_{\R^{2n}})$ such that
$K_\infty|_{U_i} \simeq K_j|_{U_i}$ if $j>i$, for example as follows.  We set
$U'_i = U_i \setminus \ol{U_{i-1}}$ and $U''_i = U_i \setminus \ol{U_{i-2}}$.
The natural restriction morphism and $u_j$ induce
$v_i \colon (K_{i+1})_{U'_{i-1}} \to (K_{i+1})_{U''_i}$ and
$v'_i \colon (K_{i+1})_{U'_{i-1}} \to \oplus (K_i)_{U''_{i-1}}$ whose
restrictions to $U'_{i-1}$ are isomorphisms.  Now we define $K_\infty$ by the
distinguished triangle
\begin{equation}
  \label{eq:defkinf}
\bigoplus_{i\in \Z} (K_i)_{U'_{i-2}} \to[v] \bigoplus_{i\in \Z} (K_i)_{U''_{i-1}}
\to K_\infty  \to[+1] ,
\qquad
v = \bigoplus_i \begin{pmatrix} v_{i-1}\\ v'_{i-1} \end{pmatrix} .
\end{equation}

\begin{proposition}\label{prop:rhomKinfKinf}
  We set $S_\infty = \supp(K_\infty) = \bigcup_{i=1}^{\infty} \ol{C_i}$ and
  $Y_i = \{y_2-x_2 \geq 2i \}$.  Then we have
  $\rhom(K_\infty,K_\infty) \simeq \cor_{S_\infty}$ and
  $(K_\infty)_{Y_i} \simeq \rsect_{\Int(Y_i)}(K_\infty) \simeq (\id_V \times
  f)^i_*(K_\infty)[-i]$, for $i\geq 1$.  In particular
  \begin{equation}
    \label{eq:rhomkinf}
    \RHom(K_\infty, T_{4*} (K_\infty)) \simeq \cor[2] .
  \end{equation}
\end{proposition}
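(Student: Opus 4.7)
The plan is to establish the three claimed isomorphisms in turn and then combine them for~\eqref{eq:rhomkinf}. First I will deduce $\rhom(K_\infty,K_\infty)\simeq \cor_{S_\infty}$ and $(K_\infty)_{Y_i}\simeq (\id_V\times f)^i_*(K_\infty)[-i]$ by passing to the limit from the corresponding statements for the $K_n$. By construction~\eqref{eq:defkinf} we have compatible isomorphisms $K_\infty|_{U_j}\simeq K_n|_{U_j}$ for all $n>j$, and both $\rhom(-,-)$ and $(-)_{Y_i}$ commute with restriction to the open sets $U_j$ (using Proposition~\ref{prop:formulaire}(g) applied with $\epb{}=\opb{}$ for open inclusions). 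Lemma~\ref{lem:rhomKnKn} then gives $\rhom(K_\infty,K_\infty)|_{U_j}\simeq\cor_{S_n}|_{U_j}$, and using the inclusions $\ol{W_n}\subset\{y_2-x_2\geq 2(n-1)\}$ and $\ol{C_k}\subset\{2(k-1)\leq y_2-x_2\leq 2k\}$ we see $S_n\cap U_j=S_\infty\cap U_j$ for $n\geq j+1$, so the local isomorphisms patch to the first assertion. Similarly, Proposition~\ref{prop:calcul_Ki}(iv) gives $(K_n)_{Y_i}|_{U_j}\simeq (\id_V\times f)^i_*(K_{n-i})[-i]|_{U_j}$; since $(\id_V\times f)^{-i}(U_j)=U_{j-i}$, the right hand side equals $(\id_V\times f)^i_*(K_{n-i}|_{U_{j-i}})[-i]=(\id_V\times f)^i_*(K_\infty|_{U_{j-i}})[-i]$ for $n$ large, i.e. the restriction to $U_j$ of $(\id_V\times f)^i_*(K_\infty)[-i]$, and we patch.

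For the third isomorphism $(K_\infty)_{Y_i}\simeq\rsect_{\Int Y_i}(K_\infty)$, I will invoke Theorem~\ref{thm:SSrhom}. Since $\partial Y_i$ is smooth, \eqref{eq:dual_bordlisse} yields $\cor_{Y_i}\simeq\DD'(\cor_{\Int Y_i})$, hence $(K_\infty)_{Y_i}\simeq\DD'(\cor_{\Int Y_i})\ltens K_\infty$. The natural morphism $\DD'(\cor_{\Int Y_i})\ltens K_\infty\to\rhom(\cor_{\Int Y_i},K_\infty)=\rsect_{\Int Y_i}(K_\infty)$ is then an isomorphism, provided $\cor_{\Int Y_i}$ is cohomologically constructible (immediate) and $\SSi(\cor_{\Int Y_i})\cap\SSi(K_\infty)\subset T^*_{V^2}V^2$. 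The nonzero part of $\SSi(\cor_{\Int Y_i})$ on $\partial Y_i$ is the outward conormal ray $\rspos(dx_2-dy_2)$, so it suffices to check that this ray is absent from $\SSi(K_\infty)|_{\partial Y_i}$. By Proposition~\ref{prop:calcul_Ki}(iii), near $\Delta^t_i$ we have $K_\infty\simeq(\opb{p_i}L\etens\cor_{\Delta_{V'}})[1-i]$; direct computation using $H^0L\simeq\cor_{\{y_2<-|y_1|\}}$ and $H^1L\simeq\cor_{\{y_2\geq|y_1|\}}$, together with Example~\ref{ex:microsupport}(iv), shows that $\SSi(L)\cap T^*_0\R^2\subset\{\eta_2\geq|\eta_1|\}$, which excludes the direction $(0,-\lambda)$ for $\lambda>0$; a short calculation with the differential of $p_i$ shows this is precisely the direction whose pullback is $\rspos(dx_2-dy_2)$. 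Away from $\Delta^t_i$ on $\partial Y_i$, the sheaf $K_\infty$ is locally the constant sheaf on an interior point or a smooth face of some $\ol{C_k}$, and the same conclusion follows from a direct reading of the conormal directions of the faces of $\ol{C_k}$, none of which is $\rspos(dx_2-dy_2)$.

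Finally, to deduce~\eqref{eq:rhomkinf}, observe that $f^2=T_4$ as self-maps of $V$, so $T_{4*}(K_\infty)=(\id_V\times f)^2_*(K_\infty)\simeq (K_\infty)_{Y_2}[2]$ by the second isomorphism with $i=2$. Combining with the third isomorphism and the adjunction $(\opb{j},\roim{j})$ for the open inclusion $j\colon\Int Y_2\hookrightarrow V^2$,
\[
\RHom(K_\infty,T_{4*}K_\infty)\simeq \RHom(K_\infty,\rsect_{\Int Y_2}(K_\infty))[2]\simeq\RHom(K_\infty|_{\Int Y_2},K_\infty|_{\Int Y_2})[2].
\]
By the first isomorphism this equals $\rsect(\Int Y_2;\cor_{S_\infty\cap\Int Y_2})[2]=H^*(S_\infty\cap\Int Y_2;\cor)[2]$. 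From the strip inclusions $\ol{C_k}\subset\{2(k-1)\leq y_2-x_2\leq 2k\}$ we get $S_\infty\cap\Int Y_2=\bigcup_{k\geq 3}(\ol{C_k}\cap\{y_2-x_2>4\})$, an infinite staircase of convex pieces glued pairwise along codimension-$1$ strata, hence contractible, giving $\cor[2]$. The main obstacle is the microsupport verification in the third isomorphism, since it requires combining the explicit local description of $K_\infty$ near $\Delta^t_i$ from Proposition~\ref{prop:calcul_Ki}(iii) with a careful conormal calculation at the face corners of $\ol{C_k}$ meeting $\partial Y_i$.
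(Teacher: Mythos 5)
Your proof is correct and, for the first two isomorphisms and for \eqref{eq:rhomkinf}, follows the paper's outline: the paper simply quotes Lemma~\ref{lem:rhomKnKn} and Proposition~\ref{prop:calcul_Ki}(iv), then for \eqref{eq:rhomkinf} writes $T_4 = (\id_V\times f)^2$ and manipulates $\rhom$ directly (pulling $\rsect_{\Int(Y_2)}$ outside and using $\rsect_{\Int(Y_2)}(\cor_{S_\infty})\simeq\cor_{S_\infty\cap Y_2}$), while you instead pass through the $(\opb{j},\roim{j})$-adjunction to restrict to $\Int Y_2$; these are essentially the same computation.

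The genuinely different step is your verification of $(K_\infty)_{Y_i}\simeq\rsect_{\Int(Y_i)}(K_\infty)$. The paper extracts a single global bound, $\SSi(K_\infty)\subset\{\xi_2+\eta_2\geq 0\}$, from the inductive triangles of Proposition~\ref{prop:calcul_Ki}(i), and concludes in one line that $\dot\SSi(K_\infty)$ misses $\SSi(\cor_{\Int(Y_i)})$. Your approach instead does a pointwise microsupport check: you use the local model $K_\infty\simeq(\opb{p_i}L\etens\cor_{\Delta_{V'}})[1-i]$ near $\Delta^t_i$, compute $\SSi(L)\cap T^*_0\R^2\subset\{\eta_2\geq|\eta_1|\}$ from Example~\ref{ex:microsupport}(iv), and trace the direction $\rspos(dx_2-dy_2)$ back through $\opb{p_i}$; away from $\Delta^t_i$ you fall back on the face conormals of $\ol{C_k}$. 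This is considerably more work but is also more robust: note that the ray $\rspos(dx_2-dy_2)$ sits exactly on the wall $\xi_2+\eta_2=0$, so the paper's quoted inequality does not by itself rule it out --- one must additionally use that the contributions to $\SSi(K_\infty)$ along that wall are constrained to $\eta=-\xi$ with $\xi\in\gamma^\circ$ (plus $dx_1,dy_1$ directions), which excludes $\xi=(0,1,\ul 0)$. Your explicit computation near $\Delta^t_i$ effectively supplies exactly this refinement, which is arguably left implicit in the paper. What you lose is the economy of a one-line bound; what you gain is an argument that actually exhibits why the conormal of $\partial Y_i$ is absent.
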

\begin{proof}
  The first assertion follows from Lemma~\ref{lem:rhomKnKn}.
  Proposition~\ref{prop:calcul_Ki}-(iv) gives
  $(K_\infty)_{Y_i} \simeq (\id_V \times f)^i_*(K_\infty)[-i]$.  The inductive
  description of $K_n$ in~(i) of the same proposition shows that
  $\SSi(K_\infty) \subset \{\xi_2+\eta_2 \geq 0\}$.  Hence $\dot\SSi(K_\infty)$
  does not meet $\SSi(\cor_{\Int(Y_i)})$ and Theorem~\ref{thm:SSrhom} gives
  $(K_\infty)_{Y_i} \simeq \rsect_{\Int(Y_i)}(K_\infty)$.  For the last
  assertion we remark that $T_4 = (\id_V \times f)^2$.  Hence
  \begin{align*}
    \rhom(K_\infty, T_{4*} (K_\infty))
    &\simeq \rhom(K_\infty, \rsect_{\Int(Y_2)}(K_\infty)[2]) \\
    &\simeq \rsect_{\Int(Y_2)}\rhom(K_\infty, K_\infty[2]) \\
    &\simeq \rsect_{\Int(Y_2)}(\cor_{S_\infty})[2] \\
    &\simeq (\cor_{S_\infty \cap Y_2})[2] 
  \end{align*}
  and the result follows.  
\end{proof}

The morphism $\cor_W \to \cor_{\Delta_V}$ gives by iteration
$K_i \to \cor_{\Delta_V}$ and then $K_\infty \to \cor_{\Delta_V}$.  In
particular for any $F\in \Der(\cor_V)$ we have a natural morphism
$F \circ K_\infty \to F$.

We recall that we used the composition functor
$F \mapsto F \circ \cor_{\widetilde\gamma}$ rather than the cut-off functor
$P_\gamma$ since the composition is associative.  The difference between these two
functors is a switch between proper and usual direct image in the
definition~\eqref{eq:def_compo_faisceaux} of the composition.  Here we will also use
the usual direct image. For later use we introduce a space of parameters which is a
manifold $N$.  Let $q_1$, $q_2\colon N\times V^2 \to N\times V$ and
$q_3 \colon N\times V^2 \to V^2$ be the projections.  For
$F \in \Der(\cor_{N\times V})$ we define $F \circ K_\infty$,
$F \circ_{np} K_\infty \in \Der(\cor_{N\times V})$ by
\begin{align}
  \label{eq:rappel_compo}
F \circ K_\infty &= \reim{q_2}( \opb{q_1}F \otimes \opb{q_3}K_\infty ) , \\
  \label{eq:def_compo_np}
  F \circ_{np} K_\infty
                 &= \roim{q_2}( \opb{q_1}F \otimes \opb{q_3}K_\infty ) .
\end{align}

\begin{proposition}\label{prop:faisceauKinfstable}
  Let $F\in \Der(\cor_{N\times V})$. We assume
  $\SSi(F) \subset T^*N \times (V \times \gamma^{\circ a})$ and
  $\supp(F) \subset N \times \ol Z$ (recall $Z= \mo]-1,1\mc[ \times \R^{n-1}$).  Then
  $F \circ_{np} K_\infty \isoto F$.

  If, moreover, there exists $x_2^0 \in \R$ such that
  $\supp(F) \subset \{x_2 \geq x_2^0\}$, then $F \circ K_\infty \isoto F$.  
\end{proposition}
\begin{proof}
  The second assertion is a particular case of the first one, since the
  hypothesis gives that the map $q_2$ in~\eqref{eq:def_compo_np} is proper on
  the support of $\opb{q_1}F \otimes \opb{q_3}K_\infty$.  However we first prove
  the second assertion in~(i) and~(ii) below and we deduce the first one
  in~(iii).

  \sui (i) To prove that the morphism $F \circ K_\infty \to F$ is an
  isomorphism, it is enough to see that the restrictions to $\{p\}\times V$, for
  any $p \in N$, are isomorphisms.  Using the base change formula and the bound
  of Theorem~\ref{thm:iminv} for $\SSi(F|_{\{p\}\times V})$, this means that we
  can assume that $N$ is a point.
  
  By Proposition~\ref{prop:cut-off1} we have $P_\gamma(F) \isoto F$, where
  $P_\gamma$ is defined in~\eqref{eq:defPgam} by
  $P_\gamma(F) = \roim{q_2}( \cor_{\widetilde\gamma} \tens \opb{q_1} F)$.  Since
  $\supp(F) \subset \{x_2 \geq x_2^0\}$ we can replace $\roim{q_2}$ by
  $\reim{q_2}$ in this expression and we find
  $F \circ \cor_{\widetilde\gamma} \isoto F$.

  We also have $F_Z \isoto F$.  Indeed, the support condition implies
  $F \simeq F_{\ol Z}$ and $\rsect_{\{\varphi \geq 0\}}F \simeq F$ where
  $\varphi(x) = 1+x_1$.  Now the microsupport condition implies
  $(\rsect_{\{\varphi \geq 0\}}F)_x \simeq 0$ if $x \in \{x_1=-1\}$.  Hence
  $F|_{\{x_1=-1\}} \simeq 0$. The same result holds for $x_1=1$ and we get
  $F|_{\partial Z} \simeq 0$. Finally $F_Z \isoto F$ as claimed.

  We deduce $F \circ \cor_W \isoto F$.  Iterating $i$ times we get
  $F \circ K_i \isoto F$ for all $i\in \N$.

  \sui(ii) We recall that $K_\infty|_{U_i} \simeq K_j|_{U_i}$ if $j>i$, where
  $U_i = \{y_2-x_2 < 2i \} \subset \R^{2n}$, $i\in \Z$.  For any sheaf $K$ on
  $V^2$ supported in $V^2 \setminus U_i$ we have
  $\supp(F \circ K) \subset \{x_2 \geq x_2^0 + 2i\}$.  Using the triangle
  $(K_\infty)_{U_i} \to K_\infty \to (K_\infty)_{V^2 \setminus U_i} \to[+1]$ and
  the same one with $K_\infty$ replaced by $K_i$, we deduce that, for any given
  $x' \in V$, if $j>i> x'_2 - x_2^0$, we have
  $(F \circ K_\infty)_{x'} \isoto (F \circ (K_\infty)_{U_i} )_{x'} \simeq (F
  \circ (K_j)_{U_i} )_{x'} \isofrom (F \circ K_j)_{x'}$.  Hence
  $(F \circ K_\infty)_{x'} \simeq F_{x'}$ which proves that the morphism
  $F \circ K_\infty \to F$ is an isomorphism.

  \sui(iii) Now we deduce the first assertion.  The idea is to write $F$ as the
  limit of $F_{Z_k}$, $k\in \N$, where $Z_k = \{x_2 \geq -k\}$, and check that
  $-\circ_{np} \opb{q_3}K_\infty$ commutes with limits.  Since
  $F_{Z_k}\circ_{np} \opb{q_3}K_\infty \simeq F_{Z_k}\circ \opb{q_3}K_\infty$,
  the result follows.

  Since we work in the derived category (and not the dg-derived category), we
  cannot use limits. Instead we use the distinguished triangle
  $F \to \prod_{k\in \N} F_{Z_k}\to[u] \prod_{k\in \N} F_{Z_k}\to[+1]$, where
  $u = \id -s$ and $s$ is the product of the natural morphisms
  $F_{Z_k} \to F_{Z_{k-1}}$.  It is then enough to check that
  $-\circ_{np} \opb{q_3}K_\infty$ commutes with products.  The
  definition~\eqref{eq:def_compo_np} is the composition of the three functors
  $\opb{q_1}$, $\otimes$ and $\roim{q_2}$.  Since $q_1$ is a submersion we have
  $\opb{q_1} \simeq \epb{q_1}[-n]$ and this is a right adjoint. Hence
  $\opb{q_1}$ commutes with products; the same holds for $\roim{q_2}$.  The
  tensor product is not a right adjoint, but we can write
  $\opb{q_1}F \otimes \opb{q_3}K_\infty \simeq \rhom(\DD'(\opb{q_3}K_\infty),
  \opb{q_1}F)$ and
  $\opb{q_1}F_{Z_k} \otimes \opb{q_3}K_\infty \simeq
  \rhom(\DD'(\opb{q_3}K_\infty), \opb{q_1}F_{Z_k})$.  Indeed the microsupports
  of $F$, $\cor_{Z_k}$ and $K_\infty$ are all contained in $\{\xi_2 >0\}$ (away
  from the zero section) and $K_\infty$ is constructible.  Hence the claimed
  isomorphisms follow from several applications of Theorem~\ref{thm:SSrhom}.

  Now we obtain
  $F \circ_{np} \opb{q_3}K_\infty \simeq \roim{q_2}(\rhom(
  \DD'(\opb{q_3}K_\infty), \epb{q_1}F[-n]))$ (and the same with $F_{Z_k}$
  instead of $F$) and this is a composition of right adjoint functors. Hence it
  commutes with products and the result follows.
\end{proof}

For a real number $c$ we let $T_c \cl N \times V \to N\times V$ be the
translation along the direction $y_2$ of $V$,
$T_c(p,y_1,\ldots,y_n) = (p,y_1,y_2+c, y_3,\ldots,y_n)$ ($p\in N$).  For
$F \in \Der(\cor_{N\times V})$ such that $\SSi(F) \subset \{ \eta_2 \geq 0\}$ we
let $\tau_c(F) \cl F \to \oim{T_c}(F)$ be the
morphism~\eqref{eq:morph_id_transl2}.

\begin{corollary}
\label{cor:annulation_tauc}
We have $\tau_c(K_\infty) = 0$ for all $c \geq 4$.  In particular, for any
manifold $N$ and $F \in \Der(\cor_{N\times V})$ such that
$\SSi(F) \subset T^*N \times (V \times \gamma^{\circ a})$ and
$\supp(F) \subset N \times \ol Z$, we have $\tau_c(F) = 0$ for all $c \geq 4$.
\end{corollary}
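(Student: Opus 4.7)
The plan is to first establish $\tau_c(K_\infty) = 0$ for $c \geq 4$ and then deduce the assertion for general $F$ by convolving with $F$.

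For the first step, I interpret $\tau_c(K_\infty)$ as the Tamarkin translation morphism in the $y_2$ direction of the second factor of $V^2 = V \times V$, matching the translation $T_{4*}K_\infty = (\id_V \times f)^2_*(K_\infty)$ from Proposition~\ref{prop:rhomKinfKinf}. To see that this makes sense, I observe that $\SSi(\cor_W) \subset \{\eta_2 \geq 0\}$ in this second-factor cotangent: $W$ is the intersection of the two closed half-spaces $\{y_2 - x_2 \geq \pm(x_1 - y_1)\}$ (whose outward conormals at smooth boundary points have $\eta_2 > 0$) with the open slabs $\{|x_1| < 1\}$, $\{|y_1| < 1\}$ (whose microsupports have $\eta_2 = 0$), so Theorem~\ref{thm:SSrhom} yields the bound. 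This bound is preserved under composition of kernels by Lemma~\ref{lem:SScompo}, so $\SSi(K_\infty) \subset \{\eta_2 \geq 0\}$. Now Proposition~\ref{prop:rhomKinfKinf} gives
\begin{equation*}
\RHom(K_\infty, T_{4*}K_\infty) \simeq \cor[2],
\end{equation*}
which is concentrated in degree $-2$, hence $\Hom(K_\infty, T_{4*}K_\infty) = 0$ and in particular $\tau_4(K_\infty) = 0$. For $c > 4$, the factorization $\cor_{[0,\infty[} \to \cor_{[4,\infty[} \to \cor_{[c,\infty[}$ of canonical morphisms yields $\tau_c(K_\infty) = \tau_{c-4}(T_{4*}K_\infty) \circ \tau_4(K_\infty) = 0$.

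For the second step, Proposition~\ref{prop:faisceauKinfstable} gives $F \circ_{np} K_\infty \isoto F$. Writing $S_c \colon V^2 \to V^2$ for $(x,y) \mapsto (x, y + c e_2)$, the diffeomorphism $\widetilde T_c = \id_N \times S_c$ of $N \times V^2$ covers $T_c$ via $q_2$ and $S_c$ via $q_3$, so base change together with $\widetilde T_{c*}$ being exact yields
\begin{equation*}
T_{c*}(F \circ_{np} K_\infty) \simeq F \circ_{np} S_{c*}K_\infty.
\end{equation*}
Since $\tau_c$ is induced by convolution with $\cor_{[0,\infty[} \to \cor_{[c,\infty[}$ in the $y_2$ direction, and this operation affects only the second factor of $V^2$, it commutes with the composition $F \circ_{np} -$ (which integrates over the first factor) by Fubini. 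This gives the naturality identity
\begin{equation*}
\tau_c(F \circ_{np} K_\infty) = F \circ_{np} \tau_c(K_\infty),
\end{equation*}
and combining with the first step yields $\tau_c(F) = 0$.

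The main technical obstacle is the naturality identity in the second step, which asserts that the two ways of producing $\tau_c(F) \colon F \to T_{c*}F$ --- either directly from the Tamarkin projector on $N \times V$, or by transporting $\tau_c(K_\infty)$ through the isomorphism $F \simeq F \circ_{np} K_\infty$ --- agree. This reduces to commuting the direct and inverse images appearing in the definitions of $\circ_{np}$ and of $P_{[0,\infty[}$ via base change along the relevant Cartesian squares, together with the associativity of sheaf convolution; some care is needed because $\circ_{np}$ uses $\roim{}$ rather than $\reim{}$, but this is precisely what is needed for the Tamarkin projector (itself defined with $\roim{}$) to commute with it.
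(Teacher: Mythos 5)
Your proof is correct and follows essentially the same route as the paper's: vanishing of $\tau_4(K_\infty)$ from $\RHom(K_\infty, T_{4*}K_\infty) \simeq \cor[2]$, then propagation to general $F$ via the isomorphism $F \circ_{np} K_\infty \isoto F$ and the compatibility of $\tau_c$ with $\circ_{np}$. Your cocycle factorization $\tau_c = \tau_{c-4}(T_{4*}K_\infty) \circ \tau_4(K_\infty)$ to pass from $c=4$ to $c>4$ is a clean way to make precise what the paper dispatches with the terse "if $c\geq 4$" referencing only the $c=4$ computation.
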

\begin{proof}
  By~\eqref{eq:rhomkinf} we have $\Hom(K_\infty, T_{c*} K_\infty) \simeq 0$ if
  $c\geq 4$ and a fortiori $\tau_c(K_\infty)=0$.  The second assertion then
  follows from the isomorphism $F \circ_{np} K_\infty \isoto F$ of
  Proposition~\ref{prop:faisceauKinfstable} and the fact that $\tau_c(F)$
  coincides with $\id_F \circ_{np} \tau_c(K_\infty)$.
\end{proof}

\section{Nonsqueezing results}
\label{sec:nonsqueezing}

Here we use Corollary~\ref{cor:annulation_tauc} to prove classical nonsqueezing
results in the symplectic case.  The morphism $\tau_c$ of~\eqref{eq:morph_id_transl},
introduced by Tamarkin in~\cite{T08}, gives the following invariant, that we can call
a {\em displacement energy}.  We refer to~\cite{AI17} where a similar (more refined)
invariant is used to obtain bounds on the displacement energy of some subsets of a
cotangent bundle and to~\cite{Z18} for a survey of Tamarkin's and Chiu's results.  We
put coordinates $(t;\tau)$ on $T^*\R$. For a manifold $M$ we thus consider
$\{\tau\geq 0\}$ as a subset of $T^*(M\times\R)$ and we denote by
$\Der_{\tau \geq 0}(\cor_{M\times \R})$ the category of sheaves $F$ on $M\times \R$
with $\SSi(F) \subset \{\tau\geq 0\}$.

\begin{definition}
  \label{def:inv_nonsqueezing}
  Let $M$ be a manifold and $F\in \Der_{\tau \geq 0}(\cor_{M\times \R})$.  We
  set $\de(F) = \sup \{c\geq 0$; $\tau_c(F) \not= 0\}$.
\end{definition}
We check in Proposition~\ref{prop:deFinvariant} below that $\de(F)$ is invariant
by Hamiltonian isotopies of $T^*M$ with compact support.  We can reformulate
Corollary~\ref{cor:annulation_tauc} as follows: for any
$F \in \Der(\cor_{N\times V})$ such that
$\SSi(F) \subset (N \times \ol Z) \times (V \times \gamma^{\circ a})$ we have
$\de(F) \leq 4$.

\begin{remark}\label{rem:tauc_fonctoriel}
  The morphism $\tau_c(F)$ for $F\in \Der_{\tau \geq 0}(\cor_{M\times \R})$
  is functorial in $M$: if $N \subset M$ is a submanifold, then
  $\tau_c(F|_{N \times \R}) = (\tau_c(F))|_{N \times \R}$.  This implies
  $\de(F|_{N \times \R}) \leq \de(F)$.
\end{remark}

\subsection{Invariance of the displacement energy}

Let $M$ be a manifold and let $h\cl T^*M \times I \to \R$ be a function of class
$\Cinf$.  We assume that its Hamiltonian flow $\Phi\cl T^*M\times I \to T^*M$ is
defined and has compact support.  As in Proposition~\ref{prop:homog_isot} we
associate with $h$ a homogeneous Hamiltonian isotopy
$\Phi'\cl \dT^*(M\times \R) \times I \to \dT^*(M\times \R)$ lifting $\Phi$ and we let
$K_{\Phi'} \in \Der(\cor_{(M \times \R)^2 \times I})$ be the sheaf given by
Theorem~\ref{thm:GKS}.  The isotopy $\Phi'$ preserves the subset $\{\tau \geq 0\}$ of
$\dT^*(M\times\R)$ and commutes with the transpose derivative of $T_c$ acting on
$\dT^*(M\times\R)$ (also denoted $T_c$ abusively, so
$T_c(x,t;\xi,\tau) = (x,t+c;\xi,\tau)$) for all $c\in\R$.  In other words
$\Phi' = T_{-c} \circ \Phi' \circ T_c$ and the functor $K_{\Phi'} \circ -$ coincides
with $\oim{T_{-c}} \circ (K_{\Phi'} \circ -) \circ \oim{T_c}$.  We obtain: for any
$F \in \Der_{\tau \geq 0}(\cor_{M\times \R})$ we have
$K_{\Phi'} \circ F \in \Der_{\tau \geq 0}(\cor_{M\times \R \times I})$ and an
isomorphism
$\alpha \colon K_{\Phi'} \circ \oim{T_c}F \simeq \oim{T_c}(K_{\Phi'} \circ F)$.  We
can then ask if the image of $\tau_c(F)$ by the composition functor
$K_{\Phi'} \circ -$ coincides with $\tau_c(K_{\Phi'} \circ F)$ through $\alpha$.  To
see this we recall the equivalence~\eqref{eq:restr_NI_Ns} of
Corollary~\ref{cor:isot_equivcat}.

We use~\eqref{eq:restr_NI_Ns} with $N = M\times\R$,
$A = \{\tau\geq 0\}\setminus T^*_{M\times\R}(M\times\R) \subset T^*(M\times\R)$ and
$A' \subset T^*(M\times\R \times I)$ associated with $A$ by $\Phi'$ in the sense
of~\eqref{eq:imageA_par_isotopie}.  We let $A''$ be the union of $A'$ and the zero
section.  Then $A''$ is half of the hypersurface of $T^*(M\times\R \times I)$ defined
by the graph of the lift of $h$:
$A'' = \{ ((x,t;\xi,\tau), (s; -\tau h(x,\xi/\tau,s)));\; \tau \geq 0 \}$.  However we
do not need a precise expression of $A'$, we only need to know that
$i_s^\sharp(A') = \Phi'_s(A)$ coincides with $A$, for all $s\in I$, where $i_s$ is
the inclusion of $M\times \R \times \{s\}$ in $M\times \R \times I$.
Corollary~\ref{cor:isot_equivcat} says that, for any $s\in I$, the inverse image
functor
\begin{equation}
  \label{eq:opbis_equiv}
  \opb{i_s} \colon  \Der_{A''}(\cor_{M\times \R \times I})
  \to \Der_{\tau\geq 0}(\cor_{M\times \R})
\end{equation}
is an equivalence of categories.  Now we have $A'' \subset \{\tau\geq 0\}$ and
any $G \in \Der_{A''}(\cor_{M\times \R \times I})$ also comes with the morphism
$\tau_c(G) \colon G \to \oim{T_c}G$. By the functoriality of $\tau_c$
(Remark~\ref{rem:tauc_fonctoriel}) we have
$\tau_c(\opb{i_s} (G)) = \opb{i_s}(\tau_c(G))$.  Since $\opb{i_s}$ is an
equivalence, we deduce that, for any $s\in I$, $\tau_c(\opb{i_s} (G))$ vanishes
if and only if $\tau_c(G)$ vanishes.  We thus get the invariance of the
displacement energy:

\begin{proposition}
  \label{prop:deFinvariant}
  Let $\Phi'\cl \dT^*(M\times \R) \times I \to \dT^*(M\times \R)$ be a
  homogeneous Hamiltonian isotopy lifting some Hamiltonian isotopy of $T^*M$ in
  the sense of Proposition~\ref{prop:homog_isot}.  Let
  $F \in \Der_{\tau \geq 0}(\cor_{M\times \R})$.  Then
  $\de(F) = \de(K_{\Phi'} \circ F) = \de(K_{\Phi',s} \circ F)$, for any
  $s\in I$.
\end{proposition}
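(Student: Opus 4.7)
The plan is to reduce everything to the equivalence given by Corollary~\ref{cor:isot_equivcat} applied to the conic subset $A = \{\tau \geq 0\}$ of $T^*(M \times \R)$, following the outline already sketched in the paragraphs preceding the statement. The hypotheses on $\Phi'$ coming from Proposition~\ref{prop:homog_isot} give that $\Phi'$ preserves $\{\tau > 0\}$ and commutes with the dual vertical translations $T_c^*$; in particular the set
$A' = \{((x,t;\xi,\tau), (s;-\tau h(x,\xi/\tau,s))) ; \tau \geq 0\} \subset T^*(M \times \R \times I)$
satisfies $A' \subset \{\tau \geq 0\}$, is $T_c^*$-invariant, and $i_s^\sharp(A') = \Phi'_s(A) = A$ for every $s \in I$.

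First I would set $G = K_{\Phi'} \circ F$, a sheaf on $M \times \R \times I$. By Lemma~\ref{lem:SScompo} (and the fact that $K_{\Phi'}$ has microsupport $\Lambda_{\Phi'}$ propagating $A$ into $A'$), $G \in \Der_{A'}(\cor_{M \times \R \times I})$. Since $A' \subset \{\tau \geq 0\}$, the Tamarkin morphism $\tau_c(G) \colon G \to \oim{T_c} G$ from~\eqref{eq:morph_id_transl2} is defined, where $T_c$ acts by $(x,t,s) \mapsto (x, t+c, s)$; the $T_c^*$-invariance of $A'$ ensures $\oim{T_c} G$ is still in $\Der_{A'}$, so $\de(G)$ makes sense.

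Next I would invoke Corollary~\ref{cor:isot_equivcat}(ii) to get that, for every $s \in I$, the inverse image
$\opb{i_s} \colon \Der_{A'}(\cor_{M\times \R \times I}) \to \Der_{\tau \geq 0}(\cor_{M\times \R})$
is an equivalence of categories. Since $i_s$ is the inclusion of a submanifold and the Tamarkin morphism is built from $\opb{p_1}, \opb{p_2}, \roim{s}$ in a way that commutes with inverse image along a product embedding (this is exactly the functoriality content of Remark~\ref{rem:tauc_fonctoriel}), we obtain $\opb{i_s}(\tau_c(G)) = \tau_c(\opb{i_s} G)$ as morphisms in $\Der_{\tau \geq 0}(\cor_{M \times \R})$. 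Because $\opb{i_s}$ is an equivalence it is conservative on morphisms, so $\tau_c(G) = 0$ if and only if $\tau_c(\opb{i_s} G) = 0$. Consequently $\de(G) = \de(\opb{i_s} G)$ for every $s$.

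Finally I would identify the two endpoints of this constant function of $s$. At $s = 0$, Theorem~\ref{thm:GKS} gives $K_{\Phi',0} \simeq \cor_{\Delta_{M \times \R}}$, so $\opb{i_0} G \simeq K_{\Phi',0} \circ F \simeq F$. At arbitrary $s$, base change gives $\opb{i_s} G \simeq K_{\Phi',s} \circ F$. Therefore $\de(F) = \de(G) = \de(K_{\Phi',s} \circ F)$, which is exactly the proposition. The main obstacle is the bookkeeping check that $\tau_c$ is genuinely natural under $\opb{i_s}$; this is straightforward from the definition~\eqref{eq:projcutoffconvol_transl} of $\tau_c$ as the convolution-pushforward of the canonical map $\cor_{[0,+\infty[} \to \cor_{[c,+\infty[}$, and from proper base change along $i_s$, so no new microlocal input beyond Corollary~\ref{cor:isot_equivcat} is required.
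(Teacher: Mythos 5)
Your proof follows the paper's own argument essentially line for line: you reduce to the equivalence $\opb{i_s}\colon \Der_{A'}(\cor_{M\times\R\times I}) \to \Der_{\tau\geq 0}(\cor_{M\times\R})$ from Corollary~\ref{cor:isot_equivcat}, invoke the functoriality of $\tau_c$ under restriction (Remark~\ref{rem:tauc_fonctoriel}) to transport the vanishing of $\tau_c$ across the equivalence, and identify the endpoints via $K_{\Phi',0}\simeq\cor_{\Delta}$. Your explicit reminder that the naturality of $\tau_c$ comes from the description~\eqref{eq:projcutoffconvol_transl} and proper base change is a reasonable gloss, but the route is the same as the paper's.
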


\subsection{Nonsqueezing for a flying saucer}

We put the natural Euclidean structure on $\R^n$ and $T^*\R^n \simeq \R^{2n}$
and we denote by $B_1(E)$ and $S_1(E)$ the closed unit ball and unit sphere of
an Euclidean space $E$.  We first define a subset $\Lambda_0$ of $S_1(T^*\R^n)$
which is the image of a Legendrian of $J^1(\R^n)$ whose front projection in
$\R^{n+1}$ is a flying saucer with conic points.

We define $\Lambda_0$ as the union of the graphs of the differential of two
functions $f_1$, $f_2 \cl B_1(\R^n) \to \R$.  We choose these functions to be
rotation invariant with a differential belonging to the unit sphere of
$T^*\R^n$.  In other words we write $f_i(x) = g_i(||x||)$ for a function $g_i$
such that $r^2 + (g_i'(r))^2 =1$.  This determines $f_i$ up to a constant and we
find $f_1(x) = \int_0^{||x||} \sqrt{1-u^2} du$ and $f_2(x) = \pi/2 - f_1(x)$.
We let $W_0 = \{ f_1(||x||) \leq t < f_2(||x||) \}$ be the region in $\R^{n+1}$
bounded by the graphs of $f_1$, $f_2$ and define
\begin{equation}
  \label{eq:def_Lagr_soucvol}
\Lambda_0 = \rho_{\R^n}(\dot\SSi(\cor_{W_0})) \subset T^*\R^n .  
\end{equation}
The functions $f_i$ are not differentiable at $0$ and $W_0$ has a conic point at
$(0,0)$ and $(0,\pi/2)$:
$$
\SSi(\cor_{W_0}) \cap T^*_{(0,0)} \R^{n+1}  = \SSi(\cor_{W_0}) \cap T^*_{(0,\pi)}  \R^{n+1}
= \{(\xi, \tau); \; \tau \geq ||\xi|| \} 
$$
and we have $df_{i,x} = g_i'(||x||) \cdot \frac{x}{||x||}$ for a non zero $x$.  We
deduce
\begin{equation}
  \label{eq:def_Lagr_soucvol2}
    \left\{  \begin{aligned}
\Lambda_0 &=  L_0 \cup B_1(T^*_0\R^n) , \\
L_0 &:= \{(x;\xi) \in S_1(T^*\R^n); \; \text{$\xi$ is a scalar multiple of $x$} \} .
      \end{aligned} \right.
  \end{equation}
  We remark that $L_0$ is the image of $S_1(T^*_0\R^n)$ by the characteristic
  flow of $S_1(T^*\R^n)$. More precisely we have $S_1(T^*\R^n) = \{h=1\}$ where
  $h(x;\xi) = ||x||^2+ ||\xi||^2$.  The flow of
  $X_h = 2 \sum_i \xi_i \frac{\partial}{\partial x_i} - x_i
  \frac{\partial}{\partial \xi_i}$ has orbits of period $\pi$.  Identifying
  $T^*\R^n$ with $\C^n$ by $(x;\xi) \mapsto x+i\xi$ the flow is given by
  $\Phi_{h,s}(x+i\xi) = \exp(2si) \cdot (x+i\xi)$.  For an orbit $\gamma$ of the
  flow the action is $A= \int_\gamma \alpha$ where $\alpha = \sum \xi_i dx_i$ is
  the Liouville form. Here we find $A = \pi$ for all orbits in $S_1(T^*\R^n)$.
  We thus have
\begin{equation}
  \label{eq:formuleL0}
  L_0 = \{\exp(2si) \cdot (0;\xi) \in T^*\R^n \simeq \C^n; \;
  s \in [0,\pi], \; ||\xi|| = 1 \}  .
\end{equation}
If we smooth our functions $f_i$ near the origin (in which case the front looks
like a flying saucer), we obtain an approximation of $\Lambda_0$ by an immersed
Lagrangian sphere with one double point.

\begin{proposition}
  \label{prop:nonsqueezLagr}
  Let $r < 1/\sqrt 2$ be given and let $D_r \subset \R^2$ be the closed disc of
  radius $r$.  There is no Hamiltonian isotopy
  $\Phi \cl \R^{2n}\times I \to \R^{2n}$ such that
  $\Phi_1(\Lambda_0) \subset D_r \times \R^{2n-2}$, where $\Lambda_0$ is defined
  in~\eqref{eq:def_Lagr_soucvol}.
\end{proposition}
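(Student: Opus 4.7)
The plan is to associate with $\Lambda_0$ a sheaf $F_0 \in \Der_{\tau \geq 0}(\cor_{\R^n \times \R})$ whose displacement energy $\de(F_0)$ equals $\pi/2$, and then combine the invariance of $\de$ under Hamiltonian isotopy (Proposition~\ref{prop:deFinvariant}) with a Chiu-type cut-off estimate for sheaves whose microsupport sits in a cylinder $D_r \times \R^{2n-2}$ to obtain $\de(F_1) \leq \pi r^2$, contradicting $r < 1/\sqrt{2}$.

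The natural candidate is $F_0 := \cor_{W_0}$. Using the short exact sequence $0 \to \cor_{W_0} \to \cor_{\{t \geq f_1(\|x\|)\}} \to \cor_{\{t \geq f_2(\|x\|)\}} \to 0$ together with Example~\ref{ex:microsupport}(iii) applied to the two smooth boundary hypersurfaces, one checks that $\dot\SSi(F_0) \subset \{\tau \geq 0\}$ and that $\rho_{\R^n}(\dot\SSi(F_0) \cap \{\tau > 0\}) = \Lambda_0$ (the conic vertices of $W_0$ contribute in the zero section or in $\{\tau = 0\}$). The restriction to the central fiber is $F_0|_{\{0\}\times \R} \simeq \cor_{[0, \pi/2)}$, so by Remark~\ref{rem:tauc_fonctoriel} we get $\de(F_0) \geq \pi/2$; conversely, the restriction to any fiber $\{x\} \times \R$ is $\cor_{[f_1(\|x\|), f_2(\|x\|))}$, whose displacement energy $\pi/2 - 2f_1(\|x\|)$ is at most $\pi/2$, so $\tau_{\pi/2}(F_0)$ vanishes on every stalk and therefore $\de(F_0) = \pi/2$.

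Next, lift $\Phi$ via Proposition~\ref{prop:homog_isot} to a $\rspos$-homogeneous Hamiltonian isotopy $\Phi'$ of $\dT^*(\R^n \times \R)$ preserving $\{\tau > 0\}$, construct the quantization $K_{\Phi'}$ using Theorem~\ref{thm:GKS}, and set $F_1 := K_{\Phi', 1} \circ F_0$. Proposition~\ref{prop:deFinvariant} gives $\de(F_1) = \pi/2$, and $\dot\SSi(F_1) \cap \{\tau > 0\}$ is the conic lift of $\Phi_1(\Lambda_0) \subset D_r \times \R^{2n-2}$; equivalently $\dot\SSi(F_1) \cap \{\tau > 0\} \subset \{x_1^2 + (\xi_1/\tau)^2 \leq r^2\}$. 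Adapting Section~\ref{sec:cutoffcarre} by replacing the square $[-1, 1]^2 \subset T^*\R$ with the disc $D_r$, in the style of Chiu~\cite{C17}, one then builds a disc projector $K_\infty^{(r)}$ such that the analogue of Proposition~\ref{prop:faisceauKinfstable} yields $F_1 \circ_{np} K_\infty^{(r)} \isoto F_1$ and such that $\RHom(K_\infty^{(r)}, T_{c*} K_\infty^{(r)}) \simeq 0$ for $c \geq \pi r^2$. Hence $\de(F_1) \leq \pi r^2$, and combined with $\de(F_1) = \pi/2$ this forces $r \geq 1/\sqrt{2}$, the desired contradiction.

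The hard part is the construction of $K_\infty^{(r)}$ with the sharp constant $\pi r^2$ (the symplectic area of $D_r$). A direct rescaling of the square projector of Section~\ref{sec:cutoffcarre} only yields $\de(F_1) \leq 4r^2$, hence nonsqueezing only for $r < \sqrt{\pi/8}$, short of the stated bound. One reaches the area $\pi r^2$ by iterating the convolution with the characteristic sheaf of a rotationally symmetric ``cone over $D_r$'' region in $(\R^n \times \R)^2$ and verifying the analogues of Lemma~\ref{lem:calcul_K2} and Proposition~\ref{prop:rhomKinfKinf} in that rotated setting; the shift $c$ appearing in the analogue of Corollary~\ref{cor:annulation_tauc} then equals $\pi r^2$, reflecting the Euclidean area of the disc rather than of its bounding square.
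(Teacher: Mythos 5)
Your overall strategy — quantize $\Lambda_0$ as $F_0=\cor_{W_0}$, lift $\Phi$, set $F_1 = K_{\Phi',1}\circ F_0$, invoke Proposition~\ref{prop:deFinvariant}, and derive a contradiction from a cut-off bound on $\de(F_1)$ — is the paper's. Your lower bound $\de(F_0)\geq\pi/2$ via the central fiber is correct and is all the paper uses; your claimed upper bound (from vanishing on stalks of $\tau_{\pi/2}(F_0)$) is not sound reasoning, since a morphism in $\Der(\cor)$ can be nonzero while inducing zero on every stalk, but this gap is harmless here.

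The real gap is in the final step. You correctly observe that rescaling the square projector to $[-r,r]^2$ only yields $\de(F_1)\leq 4r^2$, which gives a contradiction only for $r<\sqrt{\pi/8}$, and you infer from this that a sharp disc projector $K^{(r)}_\infty$ with constant $\pi r^2$ is required. That inference is wrong, and this is where you miss the paper's key idea. The invariant $\de$ is preserved under \emph{any} Hamiltonian isotopy of $T^*\R^n$, not only the given $\Phi$. Since $r<1/\sqrt2$ gives $\pi r^2<\pi/2$, one can pick $a$ with $\pi r^2 < 4a^2 < \pi/2$, and then (because $\operatorname{area}(D_r)<\operatorname{area}(\mo]-a,a\mc[^2)$) there is a compactly supported Hamiltonian isotopy $\Psi$ of $\R^{2n}$, acting essentially only in the first $T^*\R$ factor, carrying the compact set $\Phi_1(\Lambda_0)$ into $\mo]-a,a\mc[^2\times T^*\R^{n-1}$. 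Replacing $\Phi$ by $\Psi\circ\Phi$ costs nothing by Proposition~\ref{prop:deFinvariant}, and now the \emph{square} estimate from Corollary~\ref{cor:annulation_tauc}, rescaled by $a$, already gives $\de(F_1)\leq 4a^2 < \pi/2$, yielding the contradiction without any disc projector. This two-dimensional area-preserving reduction is precisely why the square cut-off alone reaches the constant $1/\sqrt2$ rather than $\sqrt{\pi/8}$. Your alternative — building a rotationally symmetric projector with sharp constant $\pi r^2$ in the spirit of Chiu — is a plausible line, but it is a substantial development that you only gesture at (the analogues of Lemma~\ref{lem:calcul_K2} and Proposition~\ref{prop:rhomKinfKinf} are asserted, not proved), and as written your argument is incomplete. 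The paper's route is strictly simpler and is the one to internalize: exploit the full Hamiltonian invariance of $\de$ before applying the cut-off.
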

\begin{proof}
  (i) Let us assume that such an isotopy $\Phi$ exists.  We can find an isotopy
  $\Psi$ of $\R^{2n} = T^*\R^n$ such that
  $\Psi_1(D_r \times \R^{2n-2}) \subset \mo]-a,a\mc[^2 \times T^*\R^{n-1}$, for
  some $a$ with $(2a)^2 < \pi /2$.  Hence we can as well assume that
  $\Phi_1(\Lambda_0) \subset \mo]-a,a\mc[^2 \times T^*\R^{n-1}$.  We can also
  assume that $\Phi$ has compact support. We let $\Phi'$ be a homogeneous
  isotopy of $T^*\R^{n+1}$ lifting $\Phi$ as in
  Proposition~\ref{prop:homog_isot} and we let
  $K_{\Phi'} \in \Der(\cor_{\R^{2n+2} \times I})$ be the sheaf associated with
  $\Phi'$ by Theorem~\ref{thm:GKS}.

  \sui(ii) We set $F_0 = \cor_{W_0}$ and define $F_1 = K_{\Phi',1} \circ F_0$.
  Hence $\dot\SSi(F_1) = \Phi'_1(\dot\SSi(F_0))$ and $F_1$ has compact support.  In
  particular $\rho_{\R^n}(\dot\SSi(F_1)) = \Phi_1(\Lambda_0)$.  By
  Proposition~\ref{prop:deFinvariant} we have $\de(F_0) = \de(F_1)$.

  \sui(iii) Let $\Gamma_1 = \pi_{\R^{n+1}}(\dot\SSi(F_1))$ be the projection of
  $\dot\SSi(F_1)$ to the base.  Since
  $\rho_{\R^n}(\dot\SSi(F_1)) \subset \mo]-a,a\mc[^2 \times T^*\R^{n-1}$, we
  have $\Gamma_1 \subset Z_a = \mo]-a,a\mc[ \times \R^n$. Hence $F_1$ is locally
  constant outside $Z_a$. Since it has compact support, it has to vanish outside
  $Z_a$.  Hence $F_1$ satisfies the hypotheses of
  Corollary~\ref{cor:annulation_tauc}, up to a rescaling of $Z$ and $\gamma$ by
  $a$ and we obtain $\de(F_1) \leq 4a^2 < \pi/2$.

  On the other hand
  $\rhom(F_0, \oim{T_c}(F_0)) \simeq \cor_{\ol{W_0 \cap T_c(W_0)}}$ for
  $c\in [0,\pi/2[$.  Since the topology of $\ol{W_0 \cap T_c(W_0)}$ is unchanged
  when $c$ runs over $[0,\pi/2[$ we deduce that $\de(F_0) = \pi/2$.  We thus
  have a contradiction.
\end{proof}

\subsection{Nonsqueezing for $L_0$}

By~\eqref{eq:def_Lagr_soucvol2} the Lagrangian subset $\Lambda_0$ of
$B_1(T^*\R^n)$ consists of two parts, $B_1(T^*_0\R^n)$ and $L_0$ which can be
identified with the image of $S_1(T^*_0\R^n)$ by the geodesic flow.  The part
$B_1(T^*_0\R^n)$ corresponds to the microsupport of $\cor_{W_0}$ at the conic
points $(\ul 0,0)$ and $(\ul 0,\pi/2)$.  We can define another sheaf $G_0$ on
$\R^{n+1}$ such that $\rho_{\R^n}(\dot\SSi(G_0)) = L_0$.  Let us recall the
sheaf $K_\Psi \in \Der(\cor_{\R^{2n+1}})$ of Example~\ref{ex:faisceau_flotgeod}
associated with the normalized geodesic flow of $\dT^*\R^n$.  It fits in the
distinguished triangle~\eqref{eq:dt_faisceau_flotgeod} which is an extension
between constant sheaves on two opposite cones (one closed, the other open). If
we restrict $K_\Psi$ to a slice $V_0 = \R^n \times \{0\} \times \R$, we obtain a
picture similar to the following.  We define $W_1 = T_{\pi/2}(W_0)$.  Then there
exists a diffeomorphism $f$ from a neighborhood $\Omega$ of $(\ul 0,\pi/2)$ in
$\R^{n+1}$ to a neighborhood of $0$ in $\R^{n+1}$ such that
$f(W_0) = U \cap V_0$ and $f(W_1) = Z \cap V_0$ (with the notations $U$, $Z$
of~\eqref{eq:dt_faisceau_flotgeod}).  Then $\opb{f}(K_\Psi|_{V_0})[-n]$ extends
as a sheaf $F$ which fits in a distinguished triangle
\begin{equation}
\label{eq:def_exten_pt_conique}
\cor_{W_0}\to F \to \cor_{W_1}[-n] \to \cor_{W_0}[1] .  
\end{equation}
The microsupport of $F$ is the image of $\SSi(K_\Psi|_{V_0})$ by $df$ and we
find $\rho_{\R^n}(\dot\SSi(F|_\Omega)) \subset L_0$, as required.

However $F$ still has a too big microsupport at the conic points $(0,0)$ and
$(0,\pi)$. We can repeat the above gluing process at these points and
iterate. We obtain a sheaf $G_0 \in \Derlb(\cor_{\R^{n+1}})$, in the same way we
defined $K_\infty$ (see~\eqref{eq:defkinf}), with the following property
\begin{equation}\label{eq:defG0soupvol}
\begin{minipage}[c]{10cm}
  $G_0|_{\Omega_k} \simeq (T_{k\frac\pi2})_*(F)[-kn]|_{\Omega_k}$, for any $k\in\Z$,
  where $F$ is defined in~\eqref{eq:def_exten_pt_conique} and
  $\Omega_k = \R^n \times \mo]k \frac\pi2, (k+2)\frac\pi2[$
\end{minipage}
\end{equation}
and we have $\rho_{\R^n}(\dot\SSi(G_0)) = L_0$.

\begin{proposition}
  \label{prop:nonsqueezLagr2}
  Proposition~\ref{prop:nonsqueezLagr} holds with $\Lambda_0$ replaced by $L_0$.
\end{proposition}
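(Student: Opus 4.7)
The plan is to mimic the proof of Proposition~\ref{prop:nonsqueezLagr}, replacing $F_0 = \cor_{W_0}$ by $G_0$ from~\eqref{eq:defG0soupvol}. Assuming $\Phi$ exists, by composing with a Hamiltonian isotopy as in the proof of Proposition~\ref{prop:nonsqueezLagr} we may arrange $\Phi_1(L_0) \subset \mo]-a, a\mc[^2 \times \R^{2n-2}$ with $(2a)^2 < \pi/2$, and assume that $\Phi$ has compact support. We lift $\Phi$ to a homogeneous $\Phi'$ on $\dT^*(\R^n \times \R)$ via Proposition~\ref{prop:homog_isot}, let $K_{\Phi'}$ be the sheaf provided by Theorem~\ref{thm:GKS}, and set $G_1 = K_{\Phi',1} \circ G_0$. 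Then $\dot\SSi(G_1) = \Phi'_1(\dot\SSi(G_0))$ and $\rho_{\R^n}(\dot\SSi(G_1)) = \Phi_1(L_0)$, so the base projection $\pi_{\R^{n+1}}(\dot\SSi(G_1))$ is contained in $\mo]-a, a\mc[^2 \times \R^{n-1}$.

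The key new step (replacing the compact-support argument used for $F_1$) is to verify that $\supp(G_1) \subset \mo]-a, a\mc[^2 \times \R^{n-1}$. Set $\Omega = \R^{n+1} \setminus (\mo]-a, a\mc[^2 \times \R^{n-1})$. The above bound on the microsupport gives $\dot\SSi(G_1) \cap T^*\Omega = \emptyset$, so each $H^i(G_1|_\Omega)$ is locally constant on $\Omega$. From~\eqref{eq:defG0soupvol} and the inclusion $\ol{W_0} \cup \ol{W_1} \subset B_1(\R^n) \times \R$ we have $G_0 = 0$ outside $B_1(\R^n) \times \R$; moreover, outside the compact base projection of $\supp\Phi$ one has $K_{\Phi'} \simeq \cor_{\Delta_{\R^n \times \R}}$, hence $G_1 \simeq G_0$. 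Therefore $G_1$ vanishes on a nonempty open subset of $\Omega$, and since $\Omega$ is connected for $n \geq 2$ (its complement is a cylinder over a square), each locally constant $H^i(G_1|_\Omega)$ vanishes, so $G_1|_\Omega \simeq 0$. Hence $\supp(G_1) \subset [-a,a]^2 \times \R^{n-1}$, which is contained in a strip of width $2a$; the rescaled version of Corollary~\ref{cor:annulation_tauc} then yields $\de(G_1) \leq 4a^2 < \pi/2$.

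To conclude, it remains to check that $\de(G_0) \geq \pi/2$. The identification $G_0|_{\R^n \times \mo]-\pi/2, \pi/2\mc[} \simeq F$ from~\eqref{eq:defG0soupvol} and the functoriality of $\tau_c$ applied to the distinguished triangle~\eqref{eq:def_exten_pt_conique} reduce the question to showing $\tau_c(\cor_{W_0}) \neq 0$ for $c \in [0, \pi/2)$, which holds because $W_0 \cap T_c(W_0) \neq \emptyset$. Invariance of the displacement energy under Hamiltonian isotopies (Proposition~\ref{prop:deFinvariant}) then gives $\pi/2 \leq \de(G_0) = \de(G_1) \leq 4a^2 < \pi/2$, a contradiction. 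The hard part is the support bound for $G_1$: the substitute for the compact-support step in Proposition~\ref{prop:nonsqueezLagr} is the combination of the explicit vanishing of $G_0$ off $B_1(\R^n) \times \R$ together with the connectedness of $\Omega$.
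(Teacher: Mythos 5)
Your overall strategy — supply a support bound for $G_1$ and establish $\de(G_0)\geq\pi/2$, then run the contradiction exactly as in Proposition~\ref{prop:nonsqueezLagr} — matches the paper, and noticing that the compact-support step of the original proof no longer applies (since $G_0$, unlike $\cor_{W_0}$, is supported on an infinite strip) is a genuine improvement over the paper's terse \emph{``the rest works the same.''} Your mechanism for the support bound is also the right one (explicit vanishing of $G_0$ outside $B_1(\R^n)\times\R$, $G_1\simeq G_0$ off the compactly based isotopy, propagate by local constancy). But the set $\Omega$ is wrong: the bound $\rho_{\R^n}(\dot\SSi(G_1))\subset \mo]-a,a\mc[^2\times T^*\R^{n-1}$ only constrains $(x_1,\xi_1/\tau)$, and under $\pi_{\R^{n+1}}$ the fiber coordinate $\xi_1/\tau$ is lost, so the base projection of $\dot\SSi(G_1)$ lies only in the slab $\{|x_1|<a\}$, not in the square cylinder $\mo]-a,a\mc[^2\times\R^{n-1}$. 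Hence $G_1$ is locally constant on the disconnected set $\{|x_1|\geq a\}$, not on your connected $\Omega$, and your claim $\dot\SSi(G_1)\cap T^*\Omega=\emptyset$ is false. The intended conclusion survives — each half-space $\{\pm x_1\geq a\}$ is connected and meets the region $\{\,\|x\|\geq R\,\}$ where $G_1=G_0=0$, so $G_1$ vanishes there — but what you actually get, and what Corollary~\ref{cor:annulation_tauc} requires, is the slab bound $\supp(G_1)\subset[-a,a]\times\R^n$, not the cylinder bound you state.

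The serious gap is in the estimate $\de(G_0)\geq\pi/2$. The triangle $\cor_{W_0}\to F\to\cor_{W_1}[-n]$ of~\eqref{eq:def_exten_pt_conique} is \emph{not} split (this is exactly what is proved around~\eqref{eq:dt_faisceau_flotgeod}), so functoriality of $\tau_c$ does not let you deduce $\tau_c(F)\neq 0$ from $\tau_c(\cor_{W_0})\neq 0$: the composite $\cor_{W_0}\to \oim{T_c}\cor_{W_0}\to\oim{T_c}F$ could perfectly well vanish when the first arrow is not a split inclusion. Moreover, Remark~\ref{rem:tauc_fonctoriel} only gives compatibility of $\tau_c$ with restriction to sets of the form $N\times\R$, not to open subsets such as $\R^n\times\mo]0,\pi[$ (and note the relevant strip is $\mo]0,\pi[$, not $\mo]-\pi/2,\pi/2[$), so the local identification $G_0\simeq F$ does not transport the problem. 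The paper avoids both difficulties: restrict to the line $\{0\}\times\R$, where Remark~\ref{rem:tauc_fonctoriel} does apply; there the higher Ext group vanishes (cohomological dimension one, Corollary~\ref{cor:sheaves_dim1}), so $G_0|_{\{0\}\times\R}\simeq\bigoplus_{k\in\Z}\cor_{[k\pi/2,(k+1)\pi/2[}[-kn]$ splits as a direct sum, and the summand $\cor_{[0,\pi/2[}$ immediately gives $\de(G_0|_{\{0\}\times\R})\geq\pi/2$. You should replace your triangle argument with this one-dimensional direct-sum computation.
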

\begin{proof}
  It is enough to prove that $\de(G_0) \geq \pi/2$; the rest of the proof of
  Proposition~\ref{prop:nonsqueezLagr} works the same.  By
  Remark~\ref{rem:tauc_fonctoriel} we have
  $\de(G_0) \geq \de(G_0|_{\{x_0\} \times \R})$ for any $x_0 \in \R^n$.  The
  above description of $G_0$ implies
  $G_0|_{\{0\} \times \R} \simeq \bigoplus_{k\in \Z} \cor_{[k \frac\pi2,
    (k+1)\frac\pi2[} [-kn]$.  Since $\de(\cor_{[a,b[}) = b-a$ we obtain
  $\de(G_0) \geq \pi/2$, as claimed.
\end{proof}

\subsection{Nonsqueezing for the ball}

We set $B = \Int(B_1(\R^n))$ and $S = S_1(T^*\R^n)$.  For $y \in B$, let
$L_y \subset S$ be the image of $T^*_y\R^n \cap S$ by the characteristic flow of $S$.
Then $L_y$ is a Lagrangian submanifold of $T^*\R^n$.  To prove the nonsqueezing for
the ball we use the Lagrangian submanifolds $L_y$, $y \in B$, in family.  We first
prove that there exists a sheaf $G$ on $B \times \R^{n+1}$ such that
$\rho_{\R^n}(\dot\SSi(G_y)) = L_y$, where $G_y = G|_{\{y\} \times \R^{n+1}}$.

\smallskip

Let us describe the family $L_y$. For a given $y\in B$ we can see $L_y$ as
in~\eqref{eq:formuleL0} as the image of a map
\begin{equation*}
  f_y \colon (\R/\pi\Z) \times S_r(T^*_y\R^n) \to T^*\R^n, \; (s,\xi) \mapsto\exp(2si) \cdot (y;\xi) ,
\end{equation*}
where $r = \sqrt{1-||y||^2}$.  We set
$L^0 = \bigsqcup_{y\in B} L_y \subset \R^n \times T^*\R^n$.  We can describe a
Lagrangian submanifold $L \subset (T^*\R^n)^2$ above $L^0$ as follows.  In
$T^*\R^{2n}$ we consider the hypersurface $Z = T^*\R^n \times S$ and the Lagrangian
submanifold $T^*_\Delta\R^{2n}$.  Then $L^1 = Z \cap T^*_\Delta\R^{2n}$ is isotropic
and its image $L$ by the characteristic flow of $Z$ is still isotropic, hence
Lagrangian since it is of dimension $2n$.  We can see that
$L \cap (T^*B \times T^*\R^n)$ maps bijectively onto $L^0$ through
$T^*B \times T^*\R^n \to B \times T^*\R^n$.  Like $L_y$ the Lagrangian $L$ is the
image of a map
\begin{equation}
\label{eq:plongt_L}
\begin{aligned}
  f \colon (\R/\pi\Z) \times S &\to (T^*\R^n)^2, \\
  (s,(y;\xi)) &\mapsto ((y;-\xi), \exp(2si) \cdot (y;\xi)) .  
\end{aligned}
\end{equation}
The important point is that $f$ is injective whereas $f_y$ is not.  Let
$L' \subset J^1(\R^{2n}) = (T^*\R^{2n}) \times \R$ be a Legendrian lift of $L$.  Then
$L' \to L$ is an infinite cyclic cover. A loop on $L$ given by a characteristic flow
line lifts to a path $l$ on $L'$ with $l(1) = T_\pi(l(0))$, where $T_\pi$ is as
before the translation in the last variable by $\pi$.  Hence $T_\pi(L') = L'$.  Since
$f$ is injective we have
\begin{equation}
  \label{eq:Lpasautointersection}
  T_c(L') \cap L' = \emptyset \quad\text{ for } 0<c<\pi.
\end{equation}
Moreover, we remark in~\eqref{eq:plongt_L} that, if $y\in \partial B$, then $\xi=0$
(because $(y;\xi) \in S$).  Hence above $\partial B \times \R^n$ we have
\begin{equation}
  \label{eq:LsurbordB}
  L' \cap ((\partial B \times \R^n) \times_{\R^{2n}} J^1(\R^{2n}) )
  \subset T^*_BB \times T^*\R^n \times \R .
\end{equation}

For $y\in B$ we let $L'_y \subset J^1(\{y\} \times \R^n)$ be the projection of
$L' \cap (\{y\} \times \R^n) \times_{\R_{2n}} J^1(\R^{2n})$ to
$J^1(\{y\} \times \R^n)$.  Then $L'_y$ is a Legendrian lift of $L_y$.  For $y\in B$
these manifolds $L'_y$ are all diffeomorphic to $L'_0$ and there exists a $\Cinf$ map
$u \colon B \times L'_0 \to J^1(\R^n)$ such that $u(\{y\} \times L'_0) = L'_y$.  We
can lift this map $u$ to a contact isotopy (see for example Theorem~2.6.2
of~\cite{G08}). Indeed this is possible for a family of compact Legendrian manifolds.
However, since $T_\pi(L'_y) = L'_y$ for all $y\in B$, we can find a family $L''_y$,
$y\in B$, of compact Legendrian submanifolds of $(T^*\R^n) \times (\R/\pi\Z)$ such
that $L'_y$ is a covering of $L''_y$. Then we find a contact isotopy of
$(T^*\R^n) \times (\R/\pi\Z)$ and lift it to $J^1\R^n$.  We thus have a contact
isotopy $\Phi \colon B \times J^1(\R^n) \to J^1(\R^n)$ such that
$\Phi_y(L'_0) = L'_y$, for any $y\in B$.

\smallskip

A Legendrian submanifold of $J^1(M)$ gives a conic Lagrangian submanifold of
$\{\tau>0\} \subset T^*(M\times\R)$.  Hence $L'$, $L'_y$ give conic Lagrangian
submanifolds of $\dT^*\R^{2n+1}$ or $\dT^*\R^{n+1}$ that we also denote $L'$,
$L'_y$.  The equivalence of categories~\eqref{eq:restr_NI_Ns} of
Corollary~\ref{cor:isot_equivcat} applied with $I=B$, $A_0=L$, $A' = L'$ gives a
unique sheaf $G \in \Der_{[L']}(\cor_{B \times \R^{n+1}})$ such that
$G|_{\{0\} \times \R^{n+1}} \simeq G_0$, where $G_0$ is defined
in~\eqref{eq:defG0soupvol}.

\begin{remark}
  A sheaf similar to $G$ is constructed in another way in~\cite{C17} and is
  shown to be a projector from sheaves on $\R^{n+1}$ to sheaves with a
  microsupport contained in $\rho_{\R^n}^{-1}(B_1(T^*\R^n))$.
\end{remark}

\begin{proposition}
  \label{prop:nonsqueezBall}
  Let $r < 1$ be given and let $D_r \subset \R^2$ be the closed disc of radius
  $r$.  There is no Hamiltonian isotopy $\Phi \cl \R^{2n}\times I \to \R^{2n}$
  such that $\Phi_1(B_1(\R^{2n})) \subset D_r \times \R^{2n-2}$.
\end{proposition}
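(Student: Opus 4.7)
I would follow the same outline as the proof of Proposition~\ref{prop:nonsqueezLagr2}, replacing the single sheaf $G_0$ associated with $L_0$ by the parameterized family $G \in \Der(\cor_{B \times \R^{n+1}})$ with $\dot\SSi(G) = L'$ constructed above. The upgrade from the radius bound $1/\sqrt{2}$ to $1$ will come from the fact that the family $G$ records the full period $\pi$ of the Hamiltonian flow of $h(x;\xi) = \|x\|^2+\|\xi\|^2$ on $S_1(T^*\R^n)$, whereas a single fiber $G_y$ only sees the half-period $\pi/2$.

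\paragraph{Setup and transport.} Assume for contradiction a compactly-supported Hamiltonian isotopy $\Phi$ with $\Phi_1(B_1(\R^{2n})) \subset D_r \times \R^{2n-2}$, $r<1$. Since $\pi r^2 < \pi$, one can find $a>0$ with $4a^2 < \pi$ and a symplectomorphism $\Psi$ of $\R^{2n}$ such that $\Psi_1 \circ \Phi_1(B_1(\R^{2n})) \subset \mo]-a, a\mc[^2 \times T^*\R^{n-1}$, and replace $\Phi$ by $\Psi\circ\Phi$. Lift $\Phi$ to a homogeneous Hamiltonian isotopy $\Phi'$ of $\dT^*\R^{n+1}$ via Proposition~\ref{prop:homog_isot}, let $K_{\Phi'}$ be the associated sheaf from Theorem~\ref{thm:GKS}, and set $\tilde G := K_{\Phi',1} \circ G$, composition performed in the $\R^{n+1}$-variable while keeping $y\in B$ as a parameter. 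Since each $L_y$ ($y\in B$) is contained in $B_1(\R^{2n})$, every image $\Phi_1(L_y)$ lies in $\mo]-a, a\mc[^2 \times T^*\R^{n-1}$, so $\dot\SSi(\tilde G) \subset B \times \rho_{\R^n}^{-1}(\mo]-a, a\mc[^2 \times T^*\R^{n-1})$.

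\paragraph{Comparing two estimates of $\de$.} By Proposition~\ref{prop:deFinvariant} applied parameter-wise (through the equivalence of Corollary~\ref{cor:isot_equivcat}), I would have $\de(\tilde G) = \de(G)$. The support and microsupport conditions above, after rescaling by the factor $a$ in the $x_1$-direction, would put $\tilde G$ in the setup of Corollary~\ref{cor:annulation_tauc} applied globally on $B \times \R^{n+1}$, yielding $\de(\tilde G) \leq 4a^2 < \pi$. The other direction is $\de(G) \geq \pi$; combining the two inequalities would give $\pi \leq \de(G) = \de(\tilde G) < \pi$, the desired contradiction.

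\paragraph{The main obstacle.} The content of the proof is the lower bound $\de(G) \geq \pi$. The orbits of $h$ on $S_1(T^*\R^n)$ have period $\pi$ and action $\int_\gamma \alpha = \pi$, so the Legendrian $L'$ satisfies $T_\pi(L') = L'$, which should induce a natural isomorphism $T_{\pi*}G \simeq G[2n]$; the claim to verify is that the Tamarkin morphism $\tau_c(G) \colon G \to T_{c*}G$ is non-zero for every $c \in [0,\pi)$. The fiberwise estimate $\de(G_y) = \pi/2$, visible from the decomposition $G_0|_{\{0\}\times\R} \simeq \bigoplus_k \cor_{[k\pi/2,(k+1)\pi/2)}[-kn]$, alone would only recover $r < 1/\sqrt{2}$; the extra factor of two must come from the fact that $G$ is glued non-trivially across the base $B$ in a way dictated by the $\pi$-periodic Lagrangian $L$, so that $\tau_c(G)$ can remain non-zero past $c = \pi/2$ even though its restriction to each fiber over $B$ vanishes. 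Carrying this out rigorously — identifying $\tau_\pi(G)$, up to scalar, with the period-$\pi$ isomorphism $T_{\pi*}G \simeq G[2n]$ — is where essentially all the work of the proof will lie.
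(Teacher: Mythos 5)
Your plan correctly identifies the reduction to $\de(G) \geq \pi$, the transport/rescaling step, and even the essential geometric ingredient — that the injectivity of the embedding $f$ in~\eqref{eq:plongt_L} (as opposed to the non-injectivity of each $f_y$) is what should buy the full period $\pi$ rather than $\pi/2$. But you then say that "carrying this out rigorously ... is where essentially all the work of the proof will lie" without carrying it out, and the mechanism you gesture at is not the one the paper uses.

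The paper's actual argument is a continuation argument. One forms the compositions $u_c = \roim{j}(\tau_c(G)) \circ u \in H_c := \Hom(\reim{j}G, \oim{T_c}\roim{j}G)$ for the inclusion $j \colon B\times\R^{n+1} \hookrightarrow \R^{2n+1}$, where $u_\varepsilon \neq 0$ for small $\varepsilon > 0$ follows from the fiberwise computation of Proposition~\ref{prop:nonsqueezLagr2}. The key step is to show that the restriction map $H_\varepsilon \to H_c$ is an isomorphism for all $0 < \varepsilon \leq c < \pi$. This follows from the microsupport estimate: injectivity of $f$ gives $L' \cap T^*_c(L') = \emptyset$ for $0 < c < \pi$, hence $\dot\SSi(\reim{j}G)$ and $\dot\SSi(\oim{T_c}\roim{j}G)$ are disjoint, so the hypotheses of Corollary~\ref{cor:homFtGt_indpdtt} are nearly met; the one missing hypothesis (properness of the support projection) is replaced using the $\pi$-periodicity $\oim{T_\pi}G \simeq G[-2n]$, which forces $\rhom(\reim{j}G, \oim{T_c}\roim{j}G)$ to be periodic and allows the variant Lemma~\ref{lem:homFtGt_indpdtt-period} to apply. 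Thus the periodicity is not used to "identify $\tau_\pi(G)$ with the period isomorphism" — that identification is neither stated nor needed, and it is not clear how to make it precise, since $\tau_c$ is a canonical morphism rather than an arbitrary element of $H_c$, and there is no a priori reason it should match a chosen periodicity isomorphism up to scalar. The paper's route is cleaner: constancy of $H_c$ in $c$ plus $u_\varepsilon \neq 0$ gives $u_c \neq 0$ for $c<\pi$, hence $\tau_c(G) \neq 0$ for $c<\pi$. This is the concrete content you would need to supply to close the gap.
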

\begin{proof}
  It is enough to prove that $\de(G) \geq \pi$; the rest of the proof of
  Proposition~\ref{prop:nonsqueezLagr} works the same (recall that
  Corollary~\ref{cor:annulation_tauc} works with a parameter space $N$ which is
  $B$ in our case).  We set $\Omega = B\times \R^{n+1} $ and let
  $j \colon \Omega \to \R^{2n+1}$ be the inclusion.  We let
  $u \in \Hom( \reim{j}G, \roim{j}G)$ be the natural morphism.  It is enough to
  prove that $u_c := \roim{j}( \tau_c(G)) \circ u$ is non zero for
  $0 \leq c< \pi$.

  As in the proof of Proposition~\ref{prop:nonsqueezLagr2} we have $u_c \not=0$
  for small $c$ (in fact for $c<\pi/2$).  We have
  $u_c \in H_c := \Hom(\reim{j}G, \oim{T_c}\roim{j}G)$.  It is thus enough to
  see that the morphism $H_\varepsilon \to H_c$ is an isomorphism for
  $0 < \varepsilon \leq c< \pi$.

  We set $L_\Omega = \SSi(\cor_\Omega)$.  We recall that $L_\Omega$ is the union of
  the zero section over $\Omega$ and one half of $T^*_{\partial \Omega}\R^{2n+1}$.
  By Theorem~\ref{thm:oim_open} we have
  $\dot\SSi(\reim{j}G) \subset L' \hplus L_\Omega$ and
  $\dot\SSi(\roim{j}G) \subset L' \hplus L_\Omega^a$.  Hence
  $\dot\SSi(\reim{j}G) \cap \dot\SSi(\oim{T_c}\roim{j}G) = \emptyset$ for
  $0 < c< \pi$: this follows from~\eqref{eq:Lpasautointersection} above $\Omega$ and
  from ~\eqref{eq:Lpasautointersection} and~\eqref{eq:LsurbordB} above
  $\partial \Omega$.  The hypotheses of Corollary~\ref{cor:homFtGt_indpdtt} are then
  almost satisfied except for the compactness of $\supp(\reim{j}G)$.  However $G$ is
  $\pi$-periodic up to shift in the sense that $\oim{T_\pi}(G) \simeq G[-2n]$ and we
  deduce that $\rhom(\reim{j}G, \oim{T_c}\roim{j}G)$ is $\pi$-periodic. Hence we can
  apply Lemma~\ref{lem:homFtGt_indpdtt-period} below which a variant of
  Corollary~\ref{cor:homFtGt_indpdtt}.
\end{proof}

\begin{lemma}
  \label{lem:homFtGt_indpdtt-period}
  Let $M$ be a manifold and $J$ an open interval of $\R$.  Let $F$,
  $G \in \Der(\cor_{M\times \R \times J})$. We assume
  \begin{itemize}
  \item [(i)] the projection $\supp(F) \cap \supp(G) \to \R \times J$ is proper,
  \item [(ii)] $F$, $G$ are non-characteristic for all maps
    $i_u \colon M\times \R \times \{u\} \to M\times \R \times J$, $u\in J$, that is,
    $\dot\SSi(A) \cap (T^*_{M\times\R}(M\times\R) \times T^*_tJ) = \emptyset$ for
    $A=F$, $G$,
  \item [(iii)] setting $\Lambda_u = i_u^\sharp(\dot\SSi(F))$ and
    $\Lambda'_u = i_u^\sharp(\dot\SSi(G))$, we have
    $\Lambda_u \cap \Lambda'_u = \emptyset$ for all $u\in J$,
  \item [(iv)] $H := \rhom(F,G)$ is $a$-periodic for some $a\in\R$ in the sense
    that $\opb{T_a}H \simeq H$, where $T_a$ is the translation
    $T_a(x,t,u) = (x,t+a,u)$.
  \end{itemize}
  Then $\RHom(\opb{i_u}F, \opb{i_u}G)$ is independent of $u \in J$.
\end{lemma}
\begin{proof}
  As in the proof of Corollary~\ref{cor:homFtGt_indpdtt} we set $H = \rhom(F,G)$
  and the microsupport estimates imply
  $\rhom(\opb{i_u}F, \opb{i_u}G) \simeq \opb{i_u}H$ for all $u\in J$.  Hence
  $\RHom(\opb{i_u}F, \opb{i_u}G) \simeq \rsect(M\times \R;\opb{i_u}H)$.

  Let $e \colon \R \to \cer = \R/a\Z$ be the projection.  We also write $e$ for
  $\id_M \times e$ or $\id_M \times e \times \id_J$.  Since $H$ is periodic
  there exists $H' \in \Der(\cor_{M\times \cer \times J})$ such that
  $H \simeq \opb{e}H'$. Since $e$ is a covering map, we have
  $\oim{e} \opb{e}H' \simeq H' \tens L_{M\times \cer \times J}$, where
  $L_{M\times \cer \times J}$ is the local system
  $L_{M\times \cer \times J} = \oim{e}(\cor_{M\times \R \times J})$. In the same
  way $\oim{e} \opb{e} \opb{j_u}H' \simeq\opb{j_u} H' \tens L_{M\times \cer}$,
  where $j_u$ is the inclusion of $M\times \cer \times \{u\}$.  Finally
  \begin{align*}
    \RHom(\opb{i_u}F, \opb{i_u}G)
    &\simeq \rsect(M\times \R;\oim{e} \opb{e} \opb{j_u}H') \\
    &\simeq \rsect(M\times \cer; \opb{j_u}( H' \tens L_{M\times \cer \times J})).
  \end{align*}
  Let $q\colon M\times \cer \times J\to J$ be the projection. Then $q$ is proper
  on $\supp(H')$ and the base change formula gives
  $\RHom(\opb{i_u}F, \opb{i_u}G) \simeq (\roim{q}(H' \tens L_{M\times \cer
    \times J}))_u$.  Hence it is enough to see that
  $\roim{q}(H' \tens L_{M\times \cer \times J})$ is locally constant, which is
  proved by microsupport estimates as in the proof of
  Corollary~\ref{cor:homFtGt_indpdtt} (we remark that $H'$ and
  $H' \otimes L_{M\times \cer \times J}$ have the same microsupport since
  $L_{M\times \cer \times J}$ is locally constant).
\end{proof}

\part{The Gromov-Eliashberg theorem}
\label{part:GEthm}

The Gromov-Eliashberg theorem (see~\cite{E87, Gr86}) says that the group of
symplectomorphisms of a symplectic manifold is $C^0$-closed in the group of
diffeomorphisms. This can be translated into a statement about the Lagrangian
submanifolds which are graphs of symplectomorphisms.  It can be deduced from the
Gromov nonsqueezing theorem but we want to stress the relation with the
involutivity theorem of Kashiwara-Schapira (stated here as
Theorem~\ref{thm:invol}).

Let us explain the idea of the proof.  We assume for a while a stronger assumption
than the Gromov-Eliashberg theorem: let $M$ be a manifold and let $\phi_n$ be a
sequence of homogeneous Hamiltonian isotopies of $\dT^*M$ which converges in $C^0$
norm to a diffeomorphism $\phi_\infty$ of $\dT^*M$. Let $K_n \in \Der(\cor_{M^2})$ be
the sheaf associated with $\phi_n$ by Theorem~\ref{thm:GKS}. Hence $\dot\SSi(K_n)$ is
$\Gamma_{\phi_n}$, the graph of $\phi_n$, and $H^0(M^2; K_n) \simeq \cor$.  We define
a kind of limit $K_\infty$ by the distinguished triangle
$\bigoplus_{n\in \N} K_n \to \prod_{n\in \N} K_n \to K_\infty \to[+1]$.  Then we can
check that $\dot\SSi(K_\infty) \subset \Gamma_{\phi_\infty}$ and
$H^0(M^2; K_\infty) \simeq \prod_{k\in \N} \cor / \bigoplus_{k\in \N} \cor$.  Hence
$K_\infty$ is not the zero sheaf. If we could prove moreover that $K_\infty$ is not
locally constant, we would deduce from the involutivity theorem that
$\Gamma_{\phi_\infty}$ is coisotropic, then that $\phi_\infty$ is a symplectic map.

The Gromov-Eliashberg theorem is a local non homogeneous version of the previous
(unproved) statement and the $\phi_n$ are only symplectic diffeomorphisms.  It is not
difficult to modify the $\phi_n$ away from a given point to turn them into
Hamiltonian isotopies and make them homogeneous by adding a variable.  Our sheaves
$K_n$ live now on $M^2 \times\R$.  The problem is that the convergence of
$\Gamma_{\phi_n}$ to $\Gamma_{\phi_\infty}$ is now true only in a neighborhood of a
point and we have no control on $\Gamma_{\phi_n}$ away from this point; more
precisely we have a subset $\Gamma'_n$ of $\Gamma_{\phi_n}$ such that $\Gamma'_n$
converges to a subset of $\Gamma_{\phi_\infty}$.  We use a cut-off lemma of
Part~\ref{chap:cutoff} to split $K_n$ in a small ball $B$ as
$K_n = K'_n \oplus K''_n$, with $\SSi(K'_n) \subset \Gamma'_n$.  The main difficulty
is to prove that the above ``limit'' of $K'_n$ is not locally constant.  For this we
restrict $K_n$ to a line $D = \{x_0\} \times \R$ and decompose $K_n$ as a sum of
constant sheaves on intervals using Corollary~\ref{cor:sheaves_dim1}, say
$K_n|_D \simeq \bigoplus_{a\in A_n} \cor_{I_a^n}[d_a^n]$.  To ensure that $K'_n$ does
not vanish when $n\to \infty$, we prove that there are intervals $I_a^n$ bigger than
$B\cap D$ as follows.  Let $\pi$ be the projection from $T^*(M^2 \times\R)$ to the
base.  If an interval $I_a^n$ is contained in $B$, the splitting
$K_n = K'_n \oplus K''_n$ prevents it from having one end in $\pi(\Gamma'_n)$ and the
other in $\pi(\Gamma_{\phi_n} \setminus \Gamma'_n)$.  In other word, the intervals
with exactly one end in $\pi(\Gamma'_n)$ are big.  Hence it is enough to see that the
projection of $\Gamma'_n$ to $M^2$ is of degree one to find a big interval.

\section{The involutivity theorem}

The main tool in our proof of the Gromov-Eliashberg theorem is the involutivity
theorem of~\cite{KS90}.  We recall its statement (see Theorem~6.5.4
of~\cite{KS90} restated here as Theorem~\ref{thm:invol}).  For a manifold $N$, a
subset $S$ of $N$ and $p\in N$, we use the notations $C_p(S)$,
$C_p(S,S) = C(S,S) \cap T_pN$ of~\eqref{eq:form_cone1} and\eqref{eq:form_cone2}
for the tangent cones of $S$ at $p$.  Let $M$ be a manifold, $\cor$ any
coefficient ring and $F\in \Der(\cor_M)$.  The involutivity theorem says that
the microsupport $S = \SSi(F)$ of $F$ is a coisotropic subset of $T^*M$ in the
sense that $(C_p(S,S))^{\perp \omega_p} \subset C_p(S)$, for all $p\in S$.  We
quote the following lemmas.
\begin{lemma}\label{lem:inclu_cois}
Let $X$ be a symplectic manifold and let $S \subset S'$ be locally closed
subsets of $X$. Let $p\in S$. We assume that $S$ is coisotropic at $p$.
Then $S'$ is also coisotropic at $p$.
\end{lemma}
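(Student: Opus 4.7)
The plan is a direct unwinding of definitions using two monotonicity properties. First I would note from the formulas~\eqref{eq:form_cone1} and~\eqref{eq:form_cone2} that the tangent cone operations are monotone under inclusion: if $A \subset A'$ and $B \subset B'$, then $C_p(A) \subset C_p(A')$ and $C_p(A,B) \subset C_p(A',B')$. In particular, the hypothesis $S \subset S'$ immediately gives $C_p(S) \subset C_p(S')$ and $C_p(S,S) \subset C_p(S',S')$.

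Next I would invoke the elementary fact that taking the symplectic orthogonal reverses inclusions: if $V \subset V'$ are (not necessarily linear) subsets of the symplectic vector space $T_pX$, then $(V')^{\perp \omega_p} \subset V^{\perp \omega_p}$. Applying this to $C_p(S,S) \subset C_p(S',S')$ yields
\[
\bigl(C_p(S',S')\bigr)^{\perp \omega_p} \subset \bigl(C_p(S,S)\bigr)^{\perp \omega_p}.
\]

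Finally I would chain the three inclusions together, using the hypothesis that $S$ is coisotropic at $p$:
\[
\bigl(C_p(S',S')\bigr)^{\perp \omega_p} \subset \bigl(C_p(S,S)\bigr)^{\perp \omega_p} \subset C_p(S) \subset C_p(S'),
\]
which is exactly the coisotropicity of $S'$ at $p$. There is essentially no obstacle here: the lemma is a formal consequence of monotonicity of $C_p(\cdot)$ and $C_p(\cdot,\cdot)$ together with the order-reversing behavior of the symplectic orthogonal. The only point worth being careful about is that $C_p(S,S)$ and $C_p(S',S')$ need not be linear subspaces, so one uses the symplectic orthogonal as defined for arbitrary subsets (namely $V^{\perp \omega_p} = \{u \in T_pX; \omega_p(u,v)=0 \text{ for all } v\in V\}$), for which the reversal under inclusion still holds trivially.
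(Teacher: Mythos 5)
Your proof is correct and is essentially the paper's own proof: the paper also just writes down the chain $(C_p(S',S'))^{\perp\omega_p} \subset (C_p(S,S))^{\perp\omega_p} \subset C_p(S) \subset C_p(S')$, while you add a brief justification of the two monotonicity facts being used.
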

\begin{proof}
This is obvious since we have the inclusions\\
$(C_p(S',S'))^{\perp\omega_p} \subset (C_p(S,S))^{\perp\omega_p} \subset
C_p(S) \subset C_p(S')$.
\end{proof}

We recall the map $\rho_M \cl T^*M\times\dT^*\R \to T^*M$,
$(x,t;\xi,\tau) \mapsto (x;\xi/\tau)$, defined in~\eqref{eq:def_rho}.
\begin{lemma}\label{lem:invol_invol}
  Let $M$ be a manifold and $S\subset T^*M$ a locally closed subset.  Let
  $p\in S$ and $q\in \opb{\rho_M}(p)$.  Then $S$ is coisotropic at $p$ if and
  only if $\opb{\rho_M}(S)$ is coisotropic at $q$.
\end{lemma}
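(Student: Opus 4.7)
The plan is to reduce the statement to linear algebra at the point $q$, by exploiting that $\rho_M$ is a submersion whose vertical bundle is a symplectic subbundle of $T(T^*(M\times\R))$.

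First I would establish the identities for tangent cones. Since $\rho_M \colon T^*M \times \dT^*\R \to T^*M$ is a submersion (source of dimension $2n+2$, target $2n$), it is locally a projection. Picking submersion coordinates around $q$ in which $\rho_M$ becomes $(y,u) \mapsto y$ with $y \in \R^{2n}$, $u\in \R^2$, the set $\tilde S := \opb{\rho_M}(S)$ becomes $S \times U$ for some neighborhood $U$ of the fiber coordinate, and the definitions~\eqref{eq:form_cone1}--\eqref{eq:form_cone2} yield
\begin{equation*}
C_q(\tilde S) = (d\rho_M|_q)^{-1}(C_p(S)),\qquad
C_q(\tilde S,\tilde S) = (d\rho_M|_q)^{-1}(C_p(S,S)).
\end{equation*}

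Next I would study the symplectic picture of $d\rho_M|_q$. Let $K = \ker(d\rho_M|_q)$, a $2$-dimensional subspace of $T_q(T^*(M\times\R))$. In local coordinates $(x,t;\xi,\tau)$ with $\omega = \sum_i d\xi_i\wedge dx_i + d\tau\wedge dt$, the subspace $K$ contains $\partial_t$ and a vector whose $\partial_\tau$-component equals $1$; therefore $\omega|_K$ is nondegenerate since $\omega(\partial_t,\partial_\tau) = -1$. Hence $K$ is a symplectic subspace and $T_q(T^*(M\times\R)) = K \oplus K^{\perp\omega}$, with $K^{\perp\omega}$ symplectic of dimension $2n$. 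The restriction $d\rho_M|_{K^{\perp\omega}} \colon K^{\perp\omega} \isoto T_p(T^*M)$ is a linear isomorphism, and a direct check (or symplectic reduction along the characteristics $\partial_t$ of $\{\tau = \tau_0\}$) shows it is conformally symplectic: $\omega|_{K^{\perp\omega}} = \tau \cdot (d\rho_M|_{K^{\perp\omega}})^* \omega_{T^*M}$, where $\tau \neq 0$.

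Now I would do the key linear-algebra step. For any subset $A \subset T_p(T^*M)$, set $\tilde A = (d\rho_M|_q)^{-1}(A)$; since $K \subset \tilde A$, the orthogonal $\tilde A^{\perp\omega}$ is contained in $K^{\perp\omega}$. Writing an arbitrary $v \in K^{\perp\omega}$, the identity $\omega(v, u) = \tau\, \omega_{T^*M}(d\rho_M(v), d\rho_M(u))$ for $u \in \tilde A \cap K^{\perp\omega}$ (combined with $v \perp_\omega K$ automatically) gives
\begin{equation*}
\tilde A^{\perp\omega} = (d\rho_M|_{K^{\perp\omega}})^{-1}(A^{\perp\omega_p}).
\end{equation*}
Since $d\rho_M|_q$ is surjective, for any $B \subset T_p(T^*M)$ with $\tilde B := (d\rho_M|_q)^{-1}(B)$ we have $\tilde A^{\perp\omega} \subset \tilde B$ if and only if $d\rho_M|_q(\tilde A^{\perp\omega}) \subset B$, i.e.\ $A^{\perp\omega_p} \subset B$.

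Finally I would apply this with $A = C_p(S,S)$ and $B = C_p(S)$. Combined with the tangent cone identities from the first step, this gives
\begin{equation*}
(C_q(\tilde S, \tilde S))^{\perp\omega} \subset C_q(\tilde S)
\iff
(C_p(S, S))^{\perp\omega_p} \subset C_p(S),
\end{equation*}
which is the claimed equivalence. The only real content is the observation that the vertical bundle of $\rho_M$ is symplectic (so $\rho_M$ behaves like a genuine symplectic quotient fiber-by-fiber); everything after that is formal. I do not foresee a serious obstacle.
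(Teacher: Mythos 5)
Your proof is correct and rests on the same key observation as the paper's: the kernel of $d\rho_M$ is a symplectic subspace, so $d\rho_M$ restricted to its symplectic orthogonal is a (conformally) symplectic isomorphism onto $T_pT^*M$, which makes the tangent-cone perp computation immediate. The paper reaches the same conclusion by first normalizing $\tau_0=1$ and $\xi_0=0$ via symplectic changes of coordinates commuting with $\rho_M$ (so that $d\rho_M$ becomes a literal projection and the cones split as products), whereas you avoid the normalization by tracking the conformal factor $\tau_0$ explicitly---a cosmetic rather than substantive difference.
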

\begin{proof}
  We use coordinates $(x,t;\xi,\tau)$ on $T^*(M\times\R)$ and corresponding
  coordinates $(X,T;\Xi,\Sigma)$ on $T_qT^*(M\times\R)$.  We set
  $S' = \opb{\rho_M}(S)$ and we write $q=(x_0,t_0;\xi_0,\tau_0)$.  We have
  $d\rho_{M,q}(X,T;\Xi,\Sigma) = (X; \frac{1}{\tau_0} \Xi -
  \frac{\xi_0}{\tau_0^2} \Sigma)$.  Since $S'$ is conic, we may assume
  $\tau_0=1$.  Using the symplectic transformations
  $(x;\xi) \mapsto (x;\xi-\xi_0)$ on $T^*M$ and
  $(x,t;\xi,\tau)\mapsto (x,t+\langle \xi_0,x\rangle; \xi-\tau \xi_0,\tau)$ on
  $T^*(M\times \R)$, which commute with $\rho_M$, we may also assume $\xi_0=0$.
  Then we have $d\rho_q (X,T;\Xi,\Sigma) = (X;\Xi)$ and we deduce
  $C_q(S') = C_p(S) \times T_{(t_0;1)}T^*\R$ and
  $C_q(S',S') = C_p(S,S) \times T_{(t_0;1)}T^*\R$.  Now the result follows
  easily.
\end{proof}

\section{Approximation of symplectic maps}

Let $(E,\omega)$ be a symplectic vector space which we identify with $\R^{2n}$.
We recall a standard application of the Alexander trick which says that a
symplectic map $\varphi \cl B_R^E\to E$ defined on some ball of $E$ coincides
with a Hamiltonian isotopy of $E$ on some smaller ball $B_r^E$.

We endow $E$ with the Euclidean norm of $\R^{2n}$.  For an open subset $U
\subset E$ and a map $\psi\cl U \to E$ we set
\begin{align}
\| \psi \|_U &= \sup \{ \| \psi(x) \|;\; x\in U \}, \\
\label{eq:norm1}
\| \psi \|^1_U &= \sup \{ \| \psi(x) \|,\, \| d\psi_x(v)\|;\;
x\in U,\, \|v\| =1 \}, \text{ if $\psi$ is $C^1$.}
\end{align}

\begin{lemma}\label{lem:appr_symplC1}
Let $R>r$ and $\varepsilon$ be positive numbers.  Let $\varphi \cl B_R^E\to E$
be a symplectic map of class $C^1$. Then there exists $R'>r$ and a symplectic
map $\psi \cl B_{R'}^E\to E$ which is of class $\Cinf$ such that
$\| \varphi - \psi \|_{B_r^E} \leq \varepsilon$.
\end{lemma}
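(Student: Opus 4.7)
The plan is to combine a mollification of $\varphi$ with Moser's trick to correct the resulting failure of symplecticity. First I would fix auxiliary radii $r < R' < R_1 < R$ and a small parameter $\delta > 0$ to be chosen. Convolving $\varphi$ componentwise with a smooth mollifier $\rho_\delta$ supported in $B_\delta^E$ gives a $C^\infty$ map $\tilde\varphi \colon B_{R_1}^E \to E$, provided $\delta < R - R_1$. Since $\varphi$ is of class $C^1$, both $\tilde\varphi - \varphi$ and $d\tilde\varphi - d\varphi$ converge to $0$ uniformly on $B_{R_1}^E$ as $\delta \to 0$; in the notation of~\eqref{eq:norm1}, $\|\varphi - \tilde\varphi\|^1_{B_{R_1}^E} \to 0$.

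Next I would correct $\tilde\varphi$ into a symplectic map. Set $\tilde\omega := \tilde\varphi^*\omega$. Since $\varphi^*\omega = \omega$, the $C^1$ closeness yields that $\tilde\omega$ is $C^0$-close to $\omega$, so for $\delta$ small the family $\omega_t := (1-t)\omega + t\tilde\omega$, $t \in [0,1]$, consists of symplectic forms on $B_{R_1}^E$. Because $B_{R_1}^E$ is star-shaped, the Poincar\'e homotopy operator produces a smooth primitive $\alpha$ with $d\alpha = \tilde\omega - \omega$ and $\|\alpha\|_{B_{R_1}^E} \lesssim R_1 \|\tilde\omega - \omega\|_{B_{R_1}^E}$. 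Solving the Moser equation $\iota_{X_t}\omega_t = -\alpha$ defines a smooth time-dependent vector field $X_t$ whose sup norm on $B_{R_1}^E$ tends to $0$ with $\delta$. For $\delta$ small enough the flow $\eta_t$ of $X_t$ is thus defined on $B_{R'}^E$ for all $t \in [0,1]$, takes values in $B_{R_1}^E$, and $\|\eta_1 - \id\|_{B_{R'}^E}$ is arbitrarily small.

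Finally I would set $\psi := \tilde\varphi \circ \eta_1 \colon B_{R'}^E \to E$. This is smooth, and the Moser computation $\frac{d}{dt}(\eta_t^*\omega_t) = \eta_t^*(\mathcal{L}_{X_t}\omega_t + \dot\omega_t) = \eta_t^*(d\iota_{X_t}\omega_t + d\alpha) = 0$ gives $\psi^*\omega = \eta_1^*\tilde\omega = \omega$, so $\psi$ is symplectic. For the $C^0$ estimate on $B_r^E \subset B_{R'}^E$, the triangle inequality yields
\[
\|\varphi - \psi\|_{B_r^E} \leq \|\varphi - \tilde\varphi\|_{B_{R_1}^E} + \|d\tilde\varphi\|_{B_{R_1}^E}\cdot \|\eta_1 - \id\|_{B_{R'}^E},
\]
and both summands can be made smaller than $\varepsilon/2$ by taking $\delta$ small (using the $C^1$ bound on $\tilde\varphi$, uniform in $\delta$, inherited from $\varphi$).

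The main obstacle is the quantitative character of the Moser step: one must check that when $\tilde\omega$ is $C^0$-close to $\omega$, the interpolation $\omega_t$ is genuinely non-degenerate, and that the resulting vector field $X_t$, hence its flow, remains small enough to stay inside $B_{R_1}^E$ throughout the time interval $[0,1]$, so that $\eta_1$ is defined on a ball containing $B_r^E$. This is standard but requires bookkeeping of the dependence of the Poincar\'e primitive and the inversion of $\omega_t$ on $\delta$ and on the fixed radii.
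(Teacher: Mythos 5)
Your proposal is correct and follows essentially the same route as the paper: smooth $\varphi$ in the $C^1$ norm (the paper simply says ``choose a smooth approximation,'' you spell out mollification), then apply Moser's trick to $\omega_t = (1-t)\omega + t\,\tilde\varphi^*\omega$ with a primitive bounded via the Poincar\'e operator, and finally set $\psi = \tilde\varphi \circ \eta_1$. The paper's shrinking of radii from $R$ to $r_1=(R+r)/2$ to $r_2=(r_1+r)/2$ is exactly the bookkeeping you flag as the ``main obstacle,'' and your estimates match theirs.
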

\begin{proof}
  We set $r_1=(R+r)/2$ and we choose a (non symplectic) map $\varphi' \cl
  B_R^E\to E$ of class $\Cinf$ such that $\| \varphi - \varphi' \|^1_{B_{r_1}^E}
  \leq \varepsilon$.  We set $\omega' = \varphi'^*(\omega)$.  We have $\omega -
  \omega' = (\varphi - \varphi')^*\omega$.  Hence, if we consider $\omega$ and
  $\omega'$ as maps from $E$ to $\wedge^2E$ and we endow $\wedge^2E$ with the
  Euclidean structure induced by $E$, we have $\| \omega - \omega'
  \|_{B_{r_1}^E} \leq C\varepsilon$, where the constant $C$ only depends on $n$.

  We set $r_2 = (r_1+r)/2$.  By Moser's lemma we can find a flow
  $\Phi \cl B_{r_1}^E \times [0,1] \to E$ such that
  $\Phi_t(B_{r_2}^E) \subset B_{r_1}^E$ for all $t\in [0,1]$ and
  $\omega|_{B_{r_2}^E} = \Phi_1^*(\omega')|_{B_{r_2}^E}$.  The flow $\Phi$ is
  the flow of a vector field $X_t$ which satisfies
  $\iota_{X_t}(\omega_t) = -\alpha$ over $B_{r_1}^E$, where
  $\omega_t = t \omega' - (1-t)\omega$ and $d\alpha = \omega' - \omega$.  We can
  assume that $\alpha$ satisfies the bound
  $\| \alpha \|_{B_{r_1}^E} \leq C'\| \omega' - \omega \|_{B_{r_1}^E}$ for some
  $C'>0$ only depending on $r_1$.  Hence $X_t$ satisfies
  $\| X_t \|_{B_{r_1}^E} \leq C''\varepsilon$, for some constant $C''>0$ and all
  $t\in [0,1]$.

We may assume from the beginning that $C''\varepsilon < r_1-r_2$.
Hence $\Phi_1(B_{r_2}^E) \subset B_{r_1}^E$ and we have
$\| \Phi_1 - \id \|_{B_{r_2}^E} \leq C''\varepsilon$.
The map $\psi = \varphi' \circ \Phi_1 \cl B_{r_2}^E \to E$ is a symplectic map
such that $\| \varphi - \psi \|_{B_r^E} \leq (1+C'')\varepsilon$, which
gives the lemma (up to replacing $\varepsilon$ by $\varepsilon/(1+C'')$).
\end{proof}

\begin{proposition}\label{prop:appr_Hamisot}
  Let $R>r>0$ be given.  Let $\varphi \cl B_R^E\to E$ be a symplectic map of
  class $\Cinf$.  Then there exists a Hamiltonian isotopy $\Phi \cl E\times \R
  \to E$ of class $\Cinf$ and with compact support such that $\Phi_1|_{B_r^E} =
  \varphi|_{B_r^E}$.
\end{proposition}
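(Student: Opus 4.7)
The plan is to combine connectedness of $\mathrm{Sp}(E,\omega)$, the Alexander scaling homotopy, and a Hamiltonian cut-off, in the spirit of the classical proof.

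First I would reduce to the case $\varphi(0)=0$ and $d\varphi_0=\mathrm{id}$. Any translation $x\mapsto x+a$ on $\ol{B_r^E}$ is the time--$1$ map of a linear Hamiltonian of the form $H(x)=\omega(a,x)$; multiplying $H$ by a bump function equal to $1$ on a large enough ball produces a compactly supported Hamiltonian isotopy whose time--$1$ map is that translation on $\ol{B_r^E}$. Composing $\varphi$ with such a translation reduces us to $\varphi(0)=0$. Next, $L:=d\varphi_0$ lies in $\mathrm{Sp}(E,\omega)$, and since this group is connected, I can choose a smooth path $L_t$ from $\mathrm{id}$ to $L$. The infinitesimal generator $\dot L_t \circ L_t^{-1}$ is a linear symplectic vector field, hence the Hamiltonian vector field of a smooth family of quadratic functions $Q_t$; cutting these $Q_t$ off by a bump equal to $1$ on a large ball yields a compactly supported Hamiltonian isotopy whose time--$1$ map coincides with $L$ on any prescribed compact set. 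After composing again, I may assume $\varphi(0)=0$ and $d\varphi_0=\mathrm{id}$.

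Next I would apply the Alexander trick: define
\[
\psi_s(x) := s^{-1}\varphi(sx) \quad (s\in (0,1]), \qquad \psi_0 := \mathrm{id}.
\]
A direct computation gives $d(\psi_s)_x = d\varphi_{sx}$, so each $\psi_s$ is symplectic. The assumption $d\varphi_0=\mathrm{id}$ and a Taylor expansion show that $\psi_s$ depends smoothly on $s\in[0,1]$, and $\psi_s$ is defined on $B_R^E$ for all $s\in[0,1]$. This produces a smooth symplectic isotopy from $\mathrm{id}$ to $\varphi$ on $B_R^E$. Its infinitesimal generator $Y_s := (\partial_s\psi_s)\circ\psi_s^{-1}$ satisfies $\mathcal{L}_{Y_s}\omega=0$, so $\iota_{Y_s}\omega$ is a closed $1$-form on the ball $B_R^E$, hence exact: $\iota_{Y_s}\omega = dH_s$ for a smooth family $(H_s)_{s\in[0,1]}$. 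Thus $\psi_s$ is a Hamiltonian isotopy on $B_R^E$.

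The hard part will be the cut-off, since $H_s$ is only defined on $B_R^E$ and cutting it off naively would alter the flow. I would fix $r<r_1<R$ and consider the compact set $T := \{\psi_s(x) : x\in\ol{B_r^E},\ s\in[0,1]\}$; by continuity and the fact that $\psi_0=\mathrm{id}$ and $\psi_1=\varphi$, one can arrange (shrinking $r_1-r$ if necessary, or replacing $r$ by a slightly bigger value still $<R$) that $T\subset B_{r_1}^E$. Choose $\chi\in C_c^\infty(B_R^E)$ with $\chi\equiv 1$ on a neighborhood $U$ of $T$; then $\widetilde H_s := \chi\cdot H_s$, extended by zero, is a smooth compactly supported time-dependent Hamiltonian on $E$. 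For any $x\in\ol{B_r^E}$ the trajectory $s\mapsto \psi_s(x)$ is contained in $U\subset\{\chi\equiv 1\}$, on which the Hamiltonian vector fields of $\widetilde H_s$ and $H_s$ coincide, so the flow $\widetilde\Phi_s$ of $\widetilde H_s$ satisfies $\widetilde\Phi_s(x)=\psi_s(x)$ for all $s\in[0,1]$, whence $\widetilde\Phi_1|_{B_r^E}=\varphi|_{B_r^E}$. Finally, concatenating $\widetilde\Phi$ with the compactly supported Hamiltonian isotopies produced in the reduction step and reparametrizing time yields the desired $\Phi$. The only delicate point is this trajectory-neighborhood choice of $\chi$; once it is arranged, all flows match automatically.
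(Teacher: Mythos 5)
Your step (i) (reduction to $\varphi(0)=0$, $d\varphi_0=\id$ via translations and connectedness of $\mathrm{Sp}(E,\omega)$, cut off) is a valid variant of the paper's reduction. But the paper's step (i) does more: via a generating-function argument it further reduces to the case $\varphi=\id$ on a small ball $B^E_\varepsilon$, and this extra reduction is precisely what makes its cut-off work. Your step (iii) has a genuine gap.

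Concretely, you claim that the trajectory tube $T=\{\psi_s(x):x\in\ol{B_r^E},\,s\in[0,1]\}$ can be arranged to lie in $B_{r_1}^E$ with $r_1<R$. This is false: $T$ contains $\psi_1(\ol{B_r^E})=\varphi(\ol{B_r^E})$, and there is no constraint forcing this image into $B_R^E$ (the codomain of $\varphi$ is $E$). For example on $E=\R^2$, $\varphi(q,p)=(q,p+cq^3)$ satisfies $\varphi(0)=0$, $d\varphi_0=\id$, yet $\varphi(r/2,0)=(r/2,cr^3/8)$ escapes $B_R^E$ as soon as $c$ is large. A second, related confusion: you write that ``$H_s$ is only defined on $B_R^E$'', but $Y_s=(\partial_s\psi_s)\circ\psi_s^{-1}$ and hence $H_s$ live on the image $\psi_s(B_R^E)$, which moves with $s$ and need not contain $\mathrm{supp}(\chi)\subset B_R^E$; thus $\chi\cdot H_s$ is not even well-defined for a time-independent $\chi$. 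The paper avoids both issues by cutting off with a $\Cinf$ function $g:E\times\R\to\R$ with \emph{arbitrary} compact support (not inside $B_R^E$), and by the preliminary reduction $\varphi|_{B_\varepsilon^E}=\id$, which forces $h$ to vanish on a funnel near $t=0$ and extend smoothly by $0$ to $t\le 0$, so that the closure $\ol{Z}$ of the trajectory tube is a compact subset of the domain $V'$ of $h$ and the cut-off is unconstrained. A repair of your argument is possible if you replace $\chi$ by a smooth time-dependent family $\chi_s$ with $\mathrm{supp}(\chi_s)\subset\psi_s(B_R^E)$ and $\chi_s\equiv 1$ on a neighborhood of $\psi_s(\ol{B_r^E})$; but as written the proposal breaks at the cut-off.
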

\begin{proof}
  (i) Up to composing with a translation or a symplectic linear map, we may
  assume that $\varphi(0) = 0$ and $d\varphi_0 = \id_E$.

  We first show that there exists a Hamiltonian isotopy
  $\Phi \cl E\times \R \to E$ with compact support such that
  $\Phi_1^{-1} \circ \varphi = \id_E$ near $0$.  We choose a symplectic
  isomorphism $E^2 \simeq T^*\Delta$ where $\Delta$ is the diagonal. Through
  this isomorphism the graph of $\varphi$ is a Lagrangian subset, say $\Lambda$,
  of $T^*\Delta$.  Since $\varphi(0) = 0$ and $d\varphi_0 = \id_E$, the set
  $\Lambda$ is tangent to the zero section at $0$ and there exists a $1$-form
  $\alpha$ on $\Delta$ such that $\Lambda$ coincides with the graph of $\alpha$
  near $0$. Since $\Lambda$ is Lagrangian, $\alpha$ is closed and we can find a
  function $f \cl \Delta \to \R$ with compact support such that $\Lambda$
  coincides with the graph of $df$ in some neighborhood $U$ of $0$.  Up to
  restricting $U$ we can assume that $f$ is small enough in $C^2$-norm so that
  the graph $\Lambda_t$ of $t\, df$, viewed as a subset of $E^2$, is the graph
  of a diffeomorphism $\Phi_t \cl E \to E$ for all $t\in [-1,2]$.  Then $\Phi$
  is a Hamiltonian isotopy with compact support and
  $\Phi_1^{-1} \circ \varphi = \id_E$ near $0$.

  \sui (ii) By~(i) we can assume that $\varphi|_{B^E_\varepsilon} = \id_E$ for some
  small ball $B^E_\varepsilon$.  We define $U \subset E \times \mo]0,+\infty[$ and
  $\psi \cl U \to E$ by
$$
U = \{(x,t); \; \| x \| < R/t \} ,
\qquad\qquad
\psi(x,t) = t^{-1}\varphi(tx) .
$$
Then $\psi(\cdot,t)$ is a symplectic map for all $t>0$.  We let $V =
\{(\psi(x,t),t)$; $(x,t) \in U\}$ be the image of $U$ by $\psi \times \id_\R$.
Then $V$ is contractible and we can find $h\cl V \to \R$ such that $\psi$ is
the Hamiltonian flow of $h$.

We define $U_0\subset U$ by $U_0 = \{(x,t)$; $t>0$, $\| x \| < \varepsilon/t
\}$.  Since $\varphi|_{B^E_\varepsilon} = \id_E$, we have $\psi(x,t) = x$ for
all $(x,t) \in U_0$. Hence $U_0 \subset V$. Moreover $h$ is constant on $U_0$.
We can assume $h|_{U_0} =0$ and extend $h$ by $0$ to a $\Cinf$ function defined
on $V' = \mo{]}-\infty,0] \cup V$.

We set $Z = \{(\psi(x,t),t)$; $t\in{} ]0,1]$ and $\|x\| \leq r\}$.  For $t \leq
\varepsilon / r$ and $\|x\| \leq r$ we have $\psi(x,t) = x$.  Hence
$$
Z = (\ol{B_r^E} \times \mo]0, \varepsilon / r]) 
\cup (\psi \times \id_\R) (\ol{B_r^E} \times [\varepsilon / r,1])
$$
and it follows that $\ol{Z}$ is compact.  We choose a $\Cinf$ function
$g\cl E\times \R \to \R$ with compact support such that $g=h$ on $Z$.  Then the
Hamiltonian isotopy $\Phi$ defined by $g$ has compact support contained in $C$
and satisfies $\Phi_1 = \varphi$ on $B_r^E$.  This proves the proposition.
\end{proof}

\section{Degree of a continuous map}

We recall the definition of the degree of a continuous map.  Let $M,N$ be two
oriented manifolds of the same dimension, say $d$.  We assume that $N$ is
connected. We have a morphism $H^d_c(M;\Z_M) \to \Z$ and an isomorphism
$H^d_c(N;\Z_N) \isoto \Z$.
Let $f\cl M\to N$ be a proper continuous map.  Applying $H^d_c(N;\cdot)$ to the
morphism $\Z_N \to \roim{f} \opb{f}\Z_N \simeq \reim{f} \Z_M$ we find
$$
\Z \isofrom H^d_c(N;\Z_N) \to H^d_c(M;\Z_M) \to \Z .
$$ 
The degree of $f$, denoted $\deg f$, is the image of $1$ by this morphism.

\begin{lemma}\label{lem:degree}
Let $M,N$ be two oriented manifolds of dimension $d$.  We assume
that $N$ is connected.

\noindent
{\rm(i)} Let $f\cl M\to N$ be a proper continuous map
and let $V \subset N$ be a connected open subset.
Then $\deg f = \deg(f|_{\opb{f}(V)} \cl \opb{f}(V) \to V)$.

\noindent
{\rm(ii)} Let $I\subset \R$ be an interval.
Let $U \subset M\times I$, $V \subset N\times I$ be open subsets and
let $f\cl U \to V$ be a continuous map which commutes with the
projections $U\to I$ and $V \to I$. We set
$U_t = U\cap (M\times \{t\})$, $V_t = V\cap (N\times \{t\})$ and
$f_t = f|_{U_t} \cl U_t \to V_t$, for all $t\in I$. We assume that $f$
is proper and that $V$ and all $V_t$, $t\in I$, are non empty and
connected.
Then $\deg f = \deg f_t$, for all $t\in I$.
\end{lemma}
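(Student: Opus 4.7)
The plan for part~(i) is to apply the base change formula to directly compare the two degree morphisms. Setting $U=\opb{f}(V)$ and $g=f|_U$, and letting $i\cl V\hookrightarrow N$ and $i'\cl U\hookrightarrow M$ be the open inclusions, the resulting square is cartesian; by Proposition~\ref{prop:formulaire}(i) the base change isomorphism $\opb{i}\reim{f}\Z_M\simeq\reim{g}\Z_U$ identifies the restriction to $V$ of the canonical morphism $\Z_N\to\reim{f}\Z_M$ with $\Z_V\to\reim{g}\Z_U$. After applying $H^d_c(V;-)$ and comparing with the global maps through the extension-by-zero morphisms $H^d_c(V;\Z_V)\to H^d_c(N;\Z_N)$ and $H^d_c(U;\Z_U)\to H^d_c(M;\Z_M)$, I get a commutative square in which the left vertical arrow is an isomorphism $\Z\isoto\Z$ compatible with the traces (since $V\subset N$ is a connected oriented open subset of a connected oriented $d$-manifold), yielding $\deg f=\deg g$.

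For part~(ii), since $t\mapsto\deg f_t$ is integer valued and $I$ is connected, it is enough to prove local constancy. Fix $t_0\in I$, pick $y_0\in V_{t_0}$, and use openness of $V$ in $N\times I$ to choose a connected open ball $B\subset N$ around $y_0$ and an open interval $J\ni t_0$ with $B\times J\subset V$. Applying part~(i) to each $f_t$ and the connected open subset $B\subset V_t$ gives $\deg f_t=\deg(f_t|_{\opb{f_t}(B)})$ for all $t\in J$, so I may replace $V$ by $B\times J$ and $U$ by $\opb{f}(B\times J)$.

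In this reduced setting, let $q\cl B\times J\to J$ be the projection; then $q\circ f\cl U\to J$ coincides with the restriction to $U$ of the smooth projection $M\times I\to I$, so it is a smooth submersion. Applying $\reim{q}$ to the canonical morphism $\alpha\cl\Z_V\to\reim{f}\Z_U$ and then taking top cohomology, I use that $B$ is connected oriented to get $R^dq_!\Z_{B\times J}\simeq\Z_J$, and Verdier duality for the smooth submersion $q\circ f$ (which gives $\epb{(q\circ f)}\Z_J\simeq\Z_U[d]$) together with the $(\reim{(q\circ f)},\epb{(q\circ f)})$-adjunction provides a fiberwise trace morphism $R^d(q\circ f)_!\Z_U\to\Z_J$. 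The composition
\[
\Z_J\isoto R^dq_!\Z_{B\times J}\to R^d(q\circ f)_!\Z_U\to\Z_J
\]
is an endomorphism of $\Z_J$ whose stalk at $t\in J$, by proper base change for $\reim{(q\circ f)}$, is multiplication by $\deg f_t$. Since $J$ is connected, $\Hom(\Z_J,\Z_J)\simeq\Z$, so this endomorphism is a single integer and $\deg f_t$ is constant on $J$. The main technical point will be producing the global fiberwise trace $R^d(q\circ f)_!\Z_U\to\Z_J$ from the pointwise traces: this relies on the orientation of $U$ inherited from $M\times I$ and on Verdier duality for the smooth submersion $q\circ f$.
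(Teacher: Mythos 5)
Your proof of part~(i) is essentially the same as the paper's: both compare $\deg f$ with $\deg(f|_{\opb{f}(V)})$ via the extension-by-zero morphisms $H^d_c(V;\Z_V)\to H^d_c(N;\Z_N)$ and $H^d_c(\opb{f}(V);\Z)\to H^d_c(M;\Z_M)$, using that these are isomorphisms of $\Z\to\Z$ for connected open subsets of connected oriented manifolds, and base change in the middle.

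For part~(ii) your route is genuinely different and slightly more elaborate. The paper's proof is a single commutative diagram
$$
\begin{tikzcd}[row sep=6mm]
\Z \dar[equal] & H^d_c(V_t;\Z_{V_t}) \lar[swap]{\sim} \rar \dar
  & H^d_c(U_t;\Z_{U_t}) \rar  \dar  & \Z \dar[equal]   \\
\Z  & H^{d+1}_c(V;\Z_V) \lar[swap]{\sim} \rar  & H^{d+1}_c(U;\Z_U) \rar
 & \Z,
\end{tikzcd}
$$
where the middle vertical arrows are the Gysin (shriek) maps associated with the closed codimension-one slices $V_t\hookrightarrow V$ and $U_t\hookrightarrow U$; both sides of each arrow are $\Z$ since $V$, $U$, $V_t$, $U_t$ are connected oriented manifolds of the appropriate dimensions, so the verticals are isomorphisms, and commutativity gives $\deg f_t = \deg f$ globally in one shot. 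You instead establish local constancy of $t\mapsto\deg f_t$: reduce to a product $B\times J\subset V$ by part~(i), project to $J$, and build the fiberwise trace $R^d(q\circ f)_!\Z_U\to\Z_J$ from Verdier duality for the smooth oriented submersion $q\circ f\cl U\to J$, then use that $\Hom(\Z_J,\Z_J)\simeq\Z$ and proper base change to identify the resulting scalar with $\deg f_t$. Both arguments are correct; yours trades the paper's implicit appeal to the top-degree Gysin isomorphism for a more explicit Verdier-duality trace and a local-to-global step, which makes the role of orientations and the proper base change formula more transparent, at the cost of a longer argument. One small remark: your reduction step requires $B\subset V_t$ (forced by $B\times J\subset V$) and properness of $g=f|_{\opb{f}(V)}$ in part~(i), which indeed follows from properness of $f$ and openness of $V$ — you may want to say this explicitly.
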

\begin{proof}
(i) and (ii) follow respectively from the commutative diagrams
$$
\begin{tikzcd}[row sep=6mm]
\Z \dar[equal] & H^d_c(N;\Z_V) \lar[swap]{\sim} \rar \dar 
  & H^d_c(M;\Z_{\opb{f}(V)}) \rar  \dar  & \Z \dar[equal]   \\
\Z & H^d_c(N;\Z_N) \lar[swap]{\sim} \rar & H^d_c(M;\Z_M) \rar  & \Z \virgdiag
\end{tikzcd}
$$
$$
\begin{tikzcd}[row sep=6mm]
\Z \dar[equal] & H^d_c(V_t;\Z_{V_t}) \lar[swap]{\sim} \rar \dar 
  & H^d_c(U_t;\Z_{U_t}) \rar  \dar  & \Z \dar[equal]   \\
\Z  & H^{d+1}_c(V;\Z_V) \lar[swap]{\sim} \rar  & H^{d+1}_c(U;\Z_U) \rar  
 & \Z \pointdiag
\end{tikzcd}
$$
\end{proof}

\begin{proposition}\label{prop:degree}
Let $B_R$ be the open ball of radius $R$ in $\R^d$. Let $U, V \subset \R^d$ be
open subsets and let $f \cl U \to B_R$, $g \cl V \to B_R$ be proper
continuous maps. We assume that there exists $r<R$ such that
$\opb{f}(\ol{B_r}) \subset U\cap V$, and that $d(f(x),g(x)) < r/2$, for all
$x\in U\cap V$.  Then $\deg f = \deg g$.
\end{proposition}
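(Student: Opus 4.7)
The plan is to interpolate linearly between $f$ and $g$ on their common domain of definition and invoke the homotopy invariance of Lemma~\ref{lem:degree}(ii). First I fix a real number $s$ with $0 < s < r/2$ and consider the continuous map
\begin{equation*}
  H \cl (U \cap V) \times [0,1] \to B_R \times [0,1],
  \qquad H(x,t) = \bigl((1-t)f(x) + tg(x),\, t\bigr),
\end{equation*}
which is well-defined because $B_R$ is convex. Let $\mathcal{U} = H^{-1}(B_s \times [0,1])$, an open subset of $(U \cap V) \times [0,1]$, and note that $H \cl \mathcal{U} \to B_s \times [0,1]$ commutes with the projections to $[0,1]$.

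The crucial estimate is that for $(x,t) \in \mathcal{U}$ the triangle inequality gives
\begin{equation*}
  |f(x)| \le \bigl|(1-t)f(x) + tg(x)\bigr| + t\,|f(x)-g(x)| < s + r/2 < r,
\end{equation*}
and similarly $|g(x)| < r$. Thus $x \in \opb{f}(\ol{B_r})$, which by hypothesis is a compact subset of $U \cap V$. Consequently $H^{-1}(K) \subset \opb{f}(\ol{B_r}) \times [0,1]$ is compact for every compact $K \subset B_s \times [0,1]$, so $H$ is proper.

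Since each fibre $B_s \times \{t\}$ is non-empty and connected, Lemma~\ref{lem:degree}(ii) applies and gives that $\deg H_t$ is constant on $[0,1]$. At $t=0$, $H_0 = f$ on $\mathcal{U}_0 = \opb{f}(B_s) \cap V$, and the inclusion $\opb{f}(B_s) \subset \opb{f}(\ol{B_r}) \subset V$ forces $\mathcal{U}_0 = \opb{f}(B_s)$, so by Lemma~\ref{lem:degree}(i) applied to the open connected subset $B_s \subset B_R$ we obtain $\deg H_0 = \deg f$. Symmetrically, the same estimate with the roles of $f$ and $g$ reversed shows $\opb{g}(B_s) \cap U \subset \opb{f}(\ol{B_r})$, which makes the subset $\mathcal{U}_1 = \opb{g}(B_s) \cap U$ both open and closed in $\opb{g}(B_s)$; Lemma~\ref{lem:degree}(i) then identifies the degree of $g$ on this clopen piece with $\deg g$, i.e.\ $\deg H_1 = \deg g$. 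Combining with the constancy of $\deg H_t$ yields $\deg f = \deg g$.

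The main obstacle is the endpoint identification at $t = 1$: the hypothesis is stated asymmetrically in $f$ and $g$, so one must extract from it the symmetric control $\opb{g}(B_s) \cap U \subset \opb{f}(\ol{B_r})$ needed to split $\opb{g}(B_s)$ as $(\opb{g}(B_s) \cap U) \sqcup (\opb{g}(B_s) \setminus U)$ into clopen pieces and to identify the degree on the first summand with $\deg g$. The key observation is that compactness of $\opb{f}(\ol{B_r})$ combined with $|f-g| < r/2$ on $U \cap V$ forces the ``interesting'' part of $\opb{g}(B_s)$ to be trapped inside the compact set $\opb{f}(\ol{B_r})$, and this is exactly what allows the straight-line homotopy to transport the full degree from $f$ to $g$.
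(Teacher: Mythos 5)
Your proof follows the same strategy as the paper's: interpolate linearly between $f$ and $g$ and invoke Lemma~\ref{lem:degree}. The identification at $t=0$ is sound, because $\opb{f}(B_s)\subset\opb{f}(\ol{B_r})\subset U\cap V$ forces $\mathcal{U}_0$ to be all of $\opb{f}(B_s)$, so Lemma~\ref{lem:degree}(i) does give $\deg H_0=\deg f$. The gap is at $t=1$. You correctly observe that $\mathcal{U}_1=\opb{g}(B_s)\cap U$ is clopen in $\opb{g}(B_s)$ and is trapped in the compact set $\opb{f}(\ol{B_r})$, but you then assert that Lemma~\ref{lem:degree}(i) ``identifies the degree of $g$ on this clopen piece with $\deg g$''. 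That lemma only yields $\deg g=\deg(g|_{\opb{g}(B_s)})$, and this number is the sum $\deg(g|_{\mathcal{U}_1})+\deg(g|_{\opb{g}(B_s)\setminus U})$ over the two clopen pieces. The second summand is simply invisible to the homotopy, which lives over $U\cap V$, and nothing in the hypotheses forces it to vanish.

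The omission is not cosmetic: as stated, the proposition is false. For $d=1$, $R=10$, $r=1$, take $U=\mo]-1,1\mc[$, $V=\mo]-1,1\mc[\,\sqcup\,\mo]5,7\mc[$, $f(x)=10x$ on $U$, and $g=f$ on $\mo]-1,1\mc[$ while $g(x)=10(x-6)$ on $\mo]5,7\mc[$. All hypotheses hold (indeed $f=g$ on $U\cap V$), yet $\deg f=1$ and $\deg g=2$. What is needed is a symmetric containment such as $\opb{g}(\ol{B_r})\subset U\cap V$, which forces $\mathcal{U}_1=\opb{g}(B_s)$ and closes the argument immediately. To be fair, the paper's own proof passes through $W_0=\opb{g}(B_{r/2})\cap(U\cap V)$ and applies Lemma~\ref{lem:degree}(i) with the same silent identification $W_0=\opb{g}(B_{r/2})$, so you have reproduced its reasoning faithfully; but the obstacle you yourself singled out at the end is a genuine one, and the clopen-splitting observation by itself does not overcome it.
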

\begin{proof}
(i) We define $h\cl (U\cap V) \times [0,1] \to \R^{d+1}$ by
$h(x,t) = (tf(x) + (1-t)g(x),t)$.  Let us prove that
$\opb{h}( \ol{B_{r/2}}\times [0,1])$ is compact.
Since $\opb{f}(\ol{B_r})$ is compact and contained in $U\cap V$,
it enough to prove that
$\opb{h}( \ol{B_{r/2}}\times [0,1]) \subset \opb{f}(\ol{B_r}) \times [0,1]$.
Let $(x,t) \in (U\cap V) \times [0,1]$ be such that $\| h(x,t) \| \leq r/2$.
Since $h(x,t)$ belongs to the line segment $[f(x),g(x)]$ which is of length
$< r/2$, we deduce $f(x) \in B_r$, as required.

\medskip\noindent
(ii) We define $W =\opb{h}(B_{r/2} \times [0,1])$,
$W_t = W\cap (\R^d\times \{t\})$ for $t\in [0,1]$ and
$h'_t = h|_{W_t} \cl W_t \to B_{r/2}$.
By~(i) $h|_W \cl W \to B_{r/2} \times [0,1]$ is proper.
Hence Lemma~\ref{lem:degree}~(ii) implies that $\deg h'_0 = \deg h'_1$.
We conclude with Lemma~\ref{lem:degree}~(i) which implies
$\deg h'_0 = \deg g$ and $\deg h'_1 = \deg f$.
\end{proof}

\section{The Gromov-Eliashberg theorem}

Let $(E,\omega)$ be a symplectic vector space which we identify with $\R^{2n}$.
We endow $E$ with the Euclidean norm of $\R^{2n}$.  For $R>0$ we let $B_R^E$ be
the open ball of radius $R$ and center $0$.  For a map $\psi\cl B_R^E \to E$ we
set $\| \psi \|_{B_R^E} = \sup\{ \| \psi(x) \|;$ $x\in B_R^E \}$.

For a map $f \cl E \to E$ we denote by $i_f \colon E \to E\times E$ the
embedding $x \mapsto (x, i_f(x))$ and by $\Gamma_f = i_f(E)$ the graph of $f$.

\begin{lemma}
\label{lem:goodpos}
Let $V = \R^n$ and $E = T^*V$.  Let $f \cl E \to E$ be a map of class $C^1$ and
$0<R$ be given.  Then there exist a Hamiltonian isotopy
$\Theta \cl T^*V^2 \times \R \to T^*V^2$ with compact support, $\varepsilon>0$ and
three balls centered at $0$: $B_V \subset V^2$ and
$B^*_0 \subset B^*_1 \subset (V^*)^2$ such that for any other map
$g \cl E \to E$ with $||f - g||_{B_R^E} < \varepsilon$ we have
\begin{itemize}
\item [(a)] $\Theta_1 \circ i_f(0) = (0;0)$,
\item [(b)] $\Theta_1(\Gamma_g) \cap (B_V\times (B_1^*\setminus B_0^*)) = \emptyset$,
\item [(c)] $\Gamma'_g := \Theta_1(\Gamma_g) \cap (B_V\times B_0^*)$ is
  contained in $\Theta_1(i_g(B_R^E))$,
\item [(d)] the restriction of $\pi_{V^2}$ to $\Gamma'_g$ gives a proper map
  $\Gamma'_g \to B_V$ of degree $1$.
\end{itemize}
\end{lemma}
\begin{proof}
  (i) We set $p = (0,f(0)) \in E^2$, $q = (0;0) \in T^*V^2$ and
  $F = T_p\Gamma_f$.  We can find a symplectic map
  $\psi \cl T_pE^2 \to T_qE^2$ such that $d(\pi_{V^2})_q \cl T_qE^2 \to T_0V^2$
  induces an isomorphism $\psi(F) \isoto T_0V^2$.  We choose a Hamiltonian
  isotopy $\Theta$ such that $\Theta_1(p) = q$ and $d\Theta_1 = \psi$.

  \sui (ii) We can find balls $B_V$, $B^*_0$, $B^*_1$ such that (b-d) hold for
  $g=f$. Indeed we first choose a neighborhood $W \subset B_R^E$ of $0$ in $E$
  such that $\pi_{V^2} \circ \Theta_1 \circ i_f$ induces a diffeomorphism from $W$
  to $W' = \pi_{V^2} (\Theta_1 (i_f(W)))$. Then $\pi_{V^2}$ is a diffeomorphism
  from $\Gamma' = \Theta_1 (i_f(W))$ to $W'$ and it is easy to find $B_V$, $B_0^*$
  such that (c) and (d) hold. Up to shrinking $B_V$, $B_0^*$, we can also find
  $B_1^*$ such that (b-d) hold.

  \sui(iii) For $g$ close enough to $f$ the property~(d) holds by
  Proposition~\ref{prop:degree} (apply the proposition with $f'$, $g'$, $U'$,
  $V'$ where $f' := \pi_{V^2} \circ \Theta_1 \circ i_f$,
  $g' := \pi_{V^2} \circ \Theta_1 \circ i_g$, $U' = f'^{-1}(B_V)$,
  $V' = g'^{-1}(B_V)$).

  To check~(c), we ask the additional condition
  $\Theta_1^{-1}(B_V \times B_1^*) \subset B_R^E \times E$, which is satisfied up
  to shrinking $B_V$, $B_1^*$.

  For~(b) we choose balls $B'_V$, $B'_1$ slightly smaller than $B_V$, $B_1^*$
  and $B'_0$ slightly bigger than $B_0^*$.  Let us assume that there exists
  $x\in E$ such that $\Theta_1(x,g(x)) \in B'_V\times (B_1'\setminus B_0')$.  Then
  $(x,g(x)) \in B_R^E \times E$ and $||f(x)-g(x)|| < \varepsilon$.  For
  $\varepsilon$ small enough this implies
  $\Theta_1(x,f(x)) \in B_V\times (B_1^* \setminus B_0^*)$ which is empty.  Hence,
  up to replacing $B_V$, $B^*_0$, $B^*_1$ by $B'_V$, $B'_0$, $B'_1$, we also
  have~(b) for $g$.
\end{proof}

Now we can give a proof of the Gromov-Eliashberg rigidity theorem
(see~\cite{E87, Gr86}).

\begin{theorem}
\label{thm:GE}
Let $R>0$. Let $\varphi_k\cl B_R^E\to E$, $k\in \N$, and
$\varphi_\infty\cl B_R^E \to E$ be $C^1$ maps.  We assume
\begin{itemize}
\item [(i)] $\varphi_k$ is a symplectic map, that is,
  $\varphi_k^*(\omega) = \omega$, for all $k\in \N$,
\item [(ii)] $\| \varphi_k - \varphi_\infty \|_{B_R^E} \to 0 $ when
  $k\to \infty$,
\item [(iii)] $d \varphi_{\infty,x} \cl T_xE \to T_{\varphi_\infty(x)}E$ is an
  isomorphism, for all $x\in B_R^E$.
\end{itemize}
Then $\varphi_\infty|_{B_R^E}$ is a symplectic map.
\end{theorem}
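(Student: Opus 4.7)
The plan is to localize around an arbitrary $x_0\in B_R^E$ (we may assume $x_0=0$ and $\varphi_\infty(0)=0$) and to show that $\Gamma_{\varphi_\infty}$ is coisotropic at $(0,0)$. Since hypothesis (iii) makes this graph a smooth submanifold of $E\times E$ of dimension $2n=\tfrac12\dim T^*V^2$, coisotropic will be equivalent to Lagrangian, which is precisely the statement that $\varphi_\infty$ is symplectic near $0$.

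First I would apply Lemma~\ref{lem:appr_symplC1} to replace each $\varphi_k$ by a $C^\infty$ symplectic map on a slightly smaller ball, then use Proposition~\ref{prop:appr_Hamisot} to realize the new $\varphi_k$ on an even smaller ball as the time-one map of a compactly supported Hamiltonian isotopy $\Phi^k$ of $E^2\simeq T^*V^2$ (using the symplectic identification under which the graph of a symplectomorphism becomes a section). Lifting via Proposition~\ref{prop:homog_isot} and Corollary~\ref{cor:quantconic} provides simple sheaves $K_k\in\Derlb(\cor_{V^2\times\R})$ whose reduced microsupport $\rho_{V^2}(\dot\SSi(K_k))$ is $\Gamma_{\varphi_k}$. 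Next apply Lemma~\ref{lem:goodpos} to $\varphi_\infty$ to obtain an auxiliary Hamiltonian isotopy $\Psi$ of $T^*V^2$ and nested balls $B_V\subset V^2$, $B_0^*\subset B_1^*\subset (V^*)^2$ such that, for $k$ large, $\Psi_1(\Gamma_{\varphi_k})$ decomposes into a piece $\Gamma'_k\subset B_V\times B_0^*$ which is proper of degree one over $B_V$ and a piece avoiding $B_V\times B_1^*$. Composing $K_k$ with the sheaf $K_\Psi$ of Theorem~\ref{thm:GKS} transports the microsupport so that it now lifts $\Psi_1(\Gamma_{\varphi_k})$.

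The separation between $\Gamma'_k$ and its complement in the cone over $B_V\times B_1^*$ meets the hypothesis of Proposition~\ref{prop:cut-off_split_local2}, which furnishes a local splitting $K_k|_W\simeq K'_k\oplus K''_k\oplus L_k$ (with $L_k$ locally constant) such that $\dot\SSi(K'_k)$ lifts only $\Gamma'_k$. I then form the ``limit'' via the distinguished triangle
\[
\bigoplus_{k\in\N} K'_k\;\to\;\prod_{k\in\N} K'_k\;\to\; K_\infty\;\to[+1].
\]
Standard microsupport estimates for sums and products and the $C^0$-convergence $\varphi_k\to\varphi_\infty$ give $\dot\SSi(K_\infty)\subset\rho_{V^2}^{-1}(\Psi_1(\Gamma_{\varphi_\infty}))$ on $W$. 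The main obstacle is showing $K_\infty\not\simeq 0$: restrict to a transverse line $D=\{y_0\}\times\R$ chosen so that each $K'_k|_D$ is constructible, and decompose via Corollary~\ref{cor:sheaves_dim1} into shifted constant sheaves on intervals. The degree-one property from Lemma~\ref{lem:goodpos}~(d), combined with the simplicity of $K_k$ and the fact that the splitting $K_k\simeq K'_k\oplus K''_k\oplus L_k$ forbids an interval from bridging $\pi(\Gamma'_k)$ with $\pi(\Gamma_k\setminus\Gamma'_k)$ inside the neighborhood $B_V\times B_1^*$, forces the existence for each large $k$ of an interval $I_k$ in the decomposition of $K'_k|_D$ of length bounded below uniformly in $k$. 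The family of corresponding nonzero classes then gives an element of $\prod_k H^*(D;K'_k)$ not lying in the image of $\bigoplus_k H^*(D;K'_k)$, which proves $K_\infty|_D\not\simeq 0$.

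Once $K_\infty\not\simeq 0$, Theorem~\ref{thm:invol} says $\SSi(K_\infty)$ is coisotropic at each of its points. By Lemma~\ref{lem:inclu_cois} the larger set $\rho_{V^2}^{-1}(\Psi_1(\Gamma_{\varphi_\infty}))$ containing $\dot\SSi(K_\infty)$ is then coisotropic at each point of $\dot\SSi(K_\infty)$, and Lemma~\ref{lem:invol_invol} transfers this coisotropy down to $\Psi_1(\Gamma_{\varphi_\infty})\subset T^*V^2$ at the image point. Since this image is a smooth submanifold of half dimension, coisotropic is equivalent to Lagrangian on an open neighborhood; pulling back by the symplectic map $\Psi_1$, we conclude that $\Gamma_{\varphi_\infty}$ is Lagrangian near $(0,0)$, i.e.\ $\varphi_\infty$ is symplectic at $0$. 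Running this argument at every $x_0\in B_R^E$ finishes the proof.
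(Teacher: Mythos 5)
Your proof follows the paper's strategy essentially step for step: the same reduction via Lemma~\ref{lem:appr_symplC1}, Proposition~\ref{prop:appr_Hamisot}, and Corollary~\ref{cor:quantconic}; the same use of Lemma~\ref{lem:goodpos} and composition with $K_\Psi$; the same cut-off splitting via Proposition~\ref{prop:cut-off_split_local2}; the same ``limit'' distinguished triangle $\bigoplus K'_k\to\prod K'_k\to K_\infty\to[+1]$; the same restriction to a vertical line and decomposition by Corollary~\ref{cor:sheaves_dim1}; and the same conclusion via Theorem~\ref{thm:invol}, Lemma~\ref{lem:inclu_cois}, and Lemma~\ref{lem:invol_invol}.

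However, the crucial nondegeneracy step is where your argument has a real gap. You conclude only that $K_\infty|_D\not\simeq 0$. This is not enough: a nonzero locally constant $K_\infty$ near $(x_0,0)$ would still have $\dot\SSi(K_\infty)\cap T^*_{(x_0,0)}=\emptyset$, in which case the involutivity theorem says nothing about $\Gamma_{\varphi_\infty}$. What the argument needs is that $K_\infty$ is not locally constant near $(x_0,0)$, i.e.\ that $\dot\SSi(K_\infty)$ meets the fiber over $(x_0,0)$. The way the paper gets this is sharper than ``an interval of length bounded below'': one shows that, after normalizing each $G_k$ by a vertical translation so that the distinguished interval has its closed end at $0$ (and a degree shift to $0$), the sheaf $K'_k|_{D\cap W}$ has $\cor_{[0,r[}$ as a direct summand for all $k$, with the \emph{same} interval. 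Then $l_{[0,r[}$ with $l=\prod_k\cor/\bigoplus_k\cor$ is a direct summand of $K_\infty|_{D\cap W}$; since $\cor_{[0,r[}$ has nontrivial microsupport at the origin of $D\cap W$, so does $K_\infty$, which is exactly what feeds into the involutivity argument. Without fixing the endpoint (by the translation normalization you omit), the direct summands $\cor_{[a_k,b_k[}$ with wandering $a_k$ do not line up in the product, and your element of $\prod_k H^*(D;K'_k)$ need not detect a nontrivial microsupport at $(x_0,0)$. A second, smaller imprecision: Proposition~\ref{prop:cut-off_split_local2} gives a distinguished triangle $K'_k\oplus K''_k\to K_k|_W\to L_k\to[+1]$, not a direct-sum decomposition $K_k|_W\simeq K'_k\oplus K''_k\oplus L_k$; this matters because Lemma~\ref{lem:deux_decompositions1} is phrased for the triangle, and it is that lemma which forces the endpoint $b_k$ outside $W$.
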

\begin{proof}
  (i) We will prove $d\varphi_\infty|_x$ is a symplectic linear map for any
  given $x\in B_R^E$.  Up to composition with a translation we can as well
  assume $x=0$.  By Lemma~\ref{lem:appr_symplC1} and
  Proposition~\ref{prop:appr_Hamisot} we can also assume, up to shrinking $R$,
  that $\varphi_k = \Phi_1^k|_{B_R^E}$ is the restriction of (the time $1$ of) a
  globally defined Hamiltonian isotopy $\Phi^k \cl E \times \R \to E$ with
  compact support, for each $k\in \N$.

  Let us choose an isomorphism $E \simeq T^*V$ where $V= \R^n$.  We let
  $\Gamma_k \subset T^*V^2$ be the graph of $\Phi_1^k$, twisted by
  $(x,x';\xi,\xi') \mapsto (x,x';\xi,-\xi')$.  We let $\Gamma_\infty$ be the
  graph of $\varphi_\infty$ with the same twist. It is enough to prove that
  $\Gamma_\infty$ is coisotropic at $(0; \varphi_\infty(0))$.

  We assume $n\geq 2$ (the case $n=1$ is about volume preserving maps and is
  easy).  Then $\Phi^k$ can be defined by a compactly supported Hamiltonian
  function.

  \sui(ii) By Corollary~\ref{cor:quantconic} there exists
  $F_k \in \Derb(\cor_{V^2 \times \R})$ such that $\dot\SSi(F_k)$ is a conic
  Lagrangian submanifold of $\dT^*(V^2 \times \R)$ which is the graph of a
  homogeneous lift of $\Phi^k_1$.  Applying the cut-off functor $P_\gamma$
  of~\eqref{eq:defPgam}, with $\gamma = \{(0,t) \in V^2 \times\R$; $t\leq 0\}$,
  we can as well assume $\dot\SSi(F_k) \subset \{\tau>0\}$.  We recall the map
  $\rho \cl T^*V^2 \times \dT^*\R \to T^*V^2$, $(x,t;\xi,\tau) = (x;\xi/\tau)$,
  defined in~\eqref{eq:def_rho}. Then $\rho(\dot\SSi(F_k)) = \Gamma_k$.

 \sui(iii) We apply Lemma~\ref{lem:goodpos} with the function $f\colon E \to E$
 given by $f(x;\xi) = \varphi_\infty(x;\xi)^a$.  It yields a Hamiltonian isotopy
 $\Theta$ with compact support and three balls centered at $0$: $B_V \subset V^2$
 and $B^*_0 \subset B^*_1 \subset (V^*)^2$ such that the conditions~(a-d) of the
 lemma hold for $g(x;\xi) = \varphi_k(x;\xi)^a$ if $k$ is big enough.

 By Proposition~\ref{prop:homog_isot} we can lift $\Theta$ to a homogeneous
 Hamiltonian isotopy $\widetilde\Theta$ of $\dT^*(V^2 \times \R)$ such that the
 diagram~\eqref{eq:diag-PhiPsi} commutes.  Then Theorem~\ref{thm:GKS} gives
 $K \in \Derb(\cor_{(V^2 \times \R)^2})$ such that $\dot\SSi(K)$ is the graph of
 $\widetilde\Theta_1$.  We set $G_k = K \circ F_k$ and $\Lambda_k = \dot\SSi(G_k)$
 for each $k\in \N$.  Then $G_k \in \Derb(\cor_{V^2 \times \R})$ and
 $\Lambda_k = \widetilde\Theta_1(\dot\SSi(F_k))$. We still have
  $$
  \Lambda_k \subset \{\tau > 0\},
  \qquad \rho(\Lambda_k) = \Theta_1(\Gamma_k).
  $$

  \sui(iv) We can find a point $x_0 \in B_V \subset V^2$, as closed to $0$ as we
  want, such that, for any $k$, the Lagrangian submanifold
  $\Lambda_k \subset T^*(V^2 \times \R)$ is in generic position with respect to
  the line $D = \{x_0\} \times \R$ in the following sense: there exists a
  neighborhood $W_k$ of $D$ and a hypersurface $S_k$ of $W_k$ meeting $D$
  transversely such that
  $\Lambda_k\cap \dT^*W_k = \dT^*_{S_k}W_k \cap \{\tau>0\}$.  (This implies
  that $x_0$ is not on the diagonal.)  Since $\Phi^k$ and $\Theta$ have compact
  supports and $x_0$ is not on the diagonal, $S_k$ has finitely many connected
  components.

  Then $G_k|_D$ is a constructible sheaf and we can decompose it as a finite sum
  of constant sheaves on intervals
  $G_k|_D \simeq \bigoplus_{\alpha\in A_k} \cor_{I_\alpha^k}[d_\alpha^k]$ by
  Corollary~\ref{cor:sheaves_dim1}.  Since $\SSi(G_k) \subset \{\tau\geq 0\}$,
  the $I_\alpha^k$ are of the form $[a,b[$ or $\mo]-\infty,b[$ with
  $b\in \R\cup \{+\infty\}$.  The finite ends of the intervals $I_\alpha^k$ are in
  bijection with
  $$
  E_k = \Theta_1(\Gamma_k)\cap T^*_{x_0}(V^2)
  = (\Lambda_k \cap (T^*_{x_0}(V^2) \times T^*\R)) / \rspos .
$$
  
\sui(v) We set $\Gamma_k^0 = \Theta_1(\Gamma_k) \cap (B_V \times B^*_0)$ and
$E_k^0 = E_k \cap \Gamma_k^0$.  By Lemma~\ref{lem:goodpos} the map
$\Gamma_k^0 \to B_V$ is of degree $1$ and it follows that $E_k^0$ is of odd
cardinality. Hence there exists one interval $I_\alpha^k$ with one end, say $a_k$, in
$E_k^0$ and the other, say $b_k$, in $(E_k\setminus E_k^0) \cup\{\pm\infty\}$.
We translate $G_k$ vertically so that $a_k = 0$ and we shift its degree so that
$d_\alpha^k = 0$ (we still have $\rho(\Lambda_k) = \Theta_1(\Gamma_k)$).

Now $G_k|_D$ has one direct summand which is $\cor_{I_\alpha^k}$ with
$I_\alpha^k = [0,b_k[$ or $]b_k,0[$, $b_k$ finite or infinite.  One of these
possibilities occurs infinitely many times and, up to taking a subsequence, we
assume $I_\alpha^k = [0,b_k[$ with $b_k \in \R$, for all $k$, the other cases being
similar.

\sui(vi) By Lemma~\ref{lem:goodpos} again
$\rho(\dot\SSi(G_k)) \cap (B_V\times (B_1^*\setminus B_0^*)) = \emptyset$.
Hence, by Proposition~\ref{prop:cut-off_split_local2}, there exists an open ball
$W$ with center $(x_0,0)$ and radius $r$ such that, for any $k$, we have a
distinguished triangle on $W$
$$
G'_k \oplus G''_k \to  G_k|_W \to L_k \to[+1] ,
$$
where $L_k$ is a constant sheaf,
$\dot\SSi(G'_k) = \Lambda'_k$ with
$$
\Lambda'_k = \Lambda_k \cap \opb{\rho}(B_V\times B_0^*) \cap T^*W 
$$
and $\dot\SSi(G''_k) = (\Lambda_k \cap T^*W) \setminus \Lambda'_k$.  In
particular $\dot\SSi(G''_k)$ does not meet $T_{(x_0,0)}^*W$ and $G''_k$ is
constant near $(x_0,0)$.  In the same way, if $(x_0, b_k)$ belongs to $W$, then
$G'_k$ is constant near $(x_0,b_k)$.

On the other hand, if $(x_0, b_k)$ belongs to $W$,
Lemma~\ref{lem:deux_decompositions1} below implies that the direct summand
$\cor_{[0,b_k[}|_{D\cap W}$ of $G_k|_{D\cap W}$ is also a direct summand of $H$
with $H = G'_k|_{D\cap W}$ or $H = G''_k|_{D\cap W}$.  Then $H$ would be non
constant at $(x_0,0)$ and $(x_0,b_k)$, which gives a contradiction.

It follows that $(x_0,b_k) \not\in W$ and $G'_k|_{D\cap W}$ has a direct summand
which is $\cor_{[0,r[}$ (recall $r$ is the radius of $W$).

\sui (vii) We define $G \in \Der(\cor_W)$ by the distinguished triangle
\begin{equation}
  \label{eq:GErigid1}
\bigoplus_{k\in \N} G'_k \to \prod_{k\in \N} G'_k \to G \to[+1] .
  \end{equation}
For any $N \in \N$ we also have the triangle
$\bigoplus_{k \geq N} G'_k \to \prod_{k\geq N} G'_k \to G \to[+1]$.  We have
$\dot\SSi(\bigoplus_{k \geq N} G'_k) \subset \ol{\bigcup_{k \geq N} \Lambda'_k}$
and the same bound holds for $\dot\SSi(\prod_{k \geq N} G'_k)$. Hence the same
bound also holds for $\dot\SSi(G)$ and, when $N\to \infty$, we obtain
$$
\dot\SSi(G) \subset \opb{\rho}(\Theta_1(\Gamma_\infty) \cap (W \times B_0^*)) .
$$
We let $i \cl D \cap W \to W$ be the inclusion.  We remark that $i$ is
non-characteristic for $\opb{\rho}(W \times B_0^*)$. Hence $i$ is non-characteristic
for $F = G'_k$ or $F = \prod_{k \in N} G'_k$ and Theorem~\ref{thm:iminv} gives
$\opb{i}F \simeq \epb{i}F[1]$.  Since $\opb{i}$ commutes with $\bigoplus$ and
$\epb{i}$ with $\prod$, we deduce the following distinguished triangle by applying
$\opb{i}$ to~\eqref{eq:GErigid1}
$$
\bigoplus_{k\in \N} (G'_k|_D) \to \prod_{k\in \N} (G'_k|_D) \to G|_D \to[+1] .
$$
We set $l = \prod_{k\in \N} \cor / \bigoplus_{k\in \N} \cor$. Since
$\cor_{[0,r[}$ is a direct summand of $G'_k|_{D \cap W}$, the sheaf $l_{[0,r[}$
is a direct summand of $G|_{D \cap W}$.  In particular $G$ is not constant
around $(x_0,0)$ and
$\dot\SSi(G) \cap T^*_{(x_0,0)}(V^2 \times \R) \not= \emptyset$.

We choose $p \in \dot\SSi(G) \cap T^*_{(x_0,0)}(V^2 \times \R)$.  By the
involutivity Theorem and Lemma~\ref{lem:invol_invol} we obtain that
$\Theta_1(\Gamma_\infty)$ is coisotropic at $\rho(p)$.  Since
$\rho(p) \in T^*_{x_0}V^2$ and $\Theta_1(\Gamma_\infty) \cap T^*_{x_0}V^2$
consists of a single point, we have $\rho(p) = (x_0; \xi(x_0))$.  The point
$x_0$ can be chosen arbitrarily close to $0$. Hence $\Theta_1(\Gamma_\infty)$ is
coisotropic at $(0; \xi(0))$ and it follows that $\Gamma_\infty$ is coisotropic
at $(0; \varphi_\infty(0))$, as required.
\end{proof}

\begin{lemma}\label{lem:deux_decompositions1}
  Let $\cor$ be a field.  Let $I$ be an interval in $\R$ and let $a,b \in I$
  with $a<b$.  Let $F_1, F_2, F,L \in \Derb(\cor_I)$. We assume that
  $\dot\SSi(L) = \emptyset$, that we have a distinguished triangle
\begin{equation*}
 F_1 \oplus F_2 \to[u] F \to[v] L \to[+1] 
\end{equation*}
and that $\cor_{[a,b[}$ is a direct summand of $F$.  Then $\cor_{[a,b[}$ is a
direct summand of $F_1$ or $F_2$.
\end{lemma}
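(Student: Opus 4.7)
The idea is to lift the inclusion $i \colon \cor_{[a,b[} \hookrightarrow F$ through $u$ to a morphism $\tilde i \colon \cor_{[a,b[} \to F_1 \oplus F_2$, write $\tilde i = (\tilde i_1, \tilde i_2)$, and then exploit that $\End(\cor_{[a,b[}) \simeq \cor$ is a field to conclude that one of the two components $\tilde i_j$ must itself admit a retraction.

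The key technical step is the vanishing
\[ \RHom(\cor_{[a,b[}, L) = 0. \]
Since $\dot\SSi(L) = \emptyset$, each cohomology sheaf $H^q L$ is locally constant on $I$, hence constant because $I$ is simply connected: $H^q L \simeq V_q \otimes \cor_I$ for some $\cor$-vector spaces $V_q$. Since $\Ext^k(\cor_I, \cor_I) = H^k(I;\cor) = 0$ for $k \geq 1$ and the $V_q$ are flat, Lemma~\ref{lem:compl_scinde} splits $L$ as $\bigoplus_q V_q \otimes \cor_I[-q]$, reducing the claim to $\RHom(\cor_{[a,b[}, \cor_I) = 0$. Applying $\RHom(-, \cor_I)$ to the short exact sequence
\[ 0 \to \cor_{[b, \infty[ \cap I} \to \cor_{[a, \infty[ \cap I} \to \cor_{[a,b[} \to 0, \]
this in turn reduces to $\RHom(\cor_{[c,\infty[ \cap I}, \cor_I) = \rsect_{[c,\infty[ \cap I}(I; \cor_I) = 0$ for $c = a, b$. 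The last vanishing follows from the excision triangle $\rsect_Z(\cor_I) \to \cor_I \to \roim{j}(\cor_I|_{I\setminus Z}) \to[+1]$ with $Z = [c,\infty[ \cap I$ and $j \colon I \setminus Z \hookrightarrow I$: applying $\rsect(I; -)$ yields a triangle $\rsect_Z(I;\cor_I) \to \cor \isoto \cor \to[+1]$, the restriction between the two copies of $\cor$ being an isomorphism since both $I$ and $I \setminus Z$ are nonempty and connected.

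Now let $i \colon \cor_{[a,b[} \hookrightarrow F$ and $p \colon F \to \cor_{[a,b[}$ realize the direct summand, so $p \circ i = \id$. Applying $\Hom(\cor_{[a,b[}, -)$ to the given distinguished triangle, the vanishing above (for both $L$ and $L[-1]$) yields an isomorphism $u_* \colon \Hom(\cor_{[a,b[}, F_1 \oplus F_2) \isoto \Hom(\cor_{[a,b[}, F)$. Thus $i$ lifts uniquely to $\tilde i = (\tilde i_1, \tilde i_2) \colon \cor_{[a,b[} \to F_1 \oplus F_2$ with $u \circ \tilde i = i$. Writing $u_j := u|_{F_j}$, the equality $p \circ u \circ \tilde i = p \circ i = \id_{\cor_{[a,b[}}$ becomes
\[ \alpha_1 + \alpha_2 = \id \quad \text{in } \End(\cor_{[a,b[}), \qquad \alpha_j := p \circ u_j \circ \tilde i_j. \]
By Lemma~\ref{lem:morph_deux_int0} we have $\End(\cor_{[a,b[}) \simeq \cor$, which is a field, so at least one $\alpha_j$, say $\alpha_1$, is a nonzero scalar and hence invertible. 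Then $\alpha_1^{-1} \circ p \circ u_1 \colon F_1 \to \cor_{[a,b[}$ is a retraction of $\tilde i_1$, exhibiting $\cor_{[a,b[}$ as a direct summand of $F_1$ (using that $\Derb(\Mod(\cor_I))$ is Karoubian, so a split monomorphism induces a direct summand decomposition).

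The only non-formal step is the vanishing $\RHom(\cor_{[a,b[}, L) = 0$; once this is in hand, the conclusion is a straightforward consequence of the fact that $\cor_{[a,b[}$ is an indecomposable object whose endomorphism ring is the ground field, which forces one of the two projections of the canonical lift $\tilde i$ to be a scalar multiple of the identity and hence to split.
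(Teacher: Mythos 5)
Your proof follows the same approach as the paper's: lift $i$ through $u$ using the vanishing of $\Hom(\cor_{[a,b[}, L)$ (a consequence of $L$ being a constant complex), then write $\id_{\cor_{[a,b[}} = \alpha_1 + \alpha_2$ in $\End(\cor_{[a,b[}) \simeq \cor$ and conclude that one of the $\alpha_j$ is a nonzero scalar, hence invertible. You give a fuller proof of the $\RHom$ vanishing than the paper does and also establish uniqueness of the lift (which is not needed), but the argument is structurally identical.
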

\begin{proof}
  By hypotheses there exist $i \cl \cor_{[a,b[} \to F$ and
  $p \cl F \to \cor_{[a,b[}$ such that $p \circ i = \id_{\cor_{[a,b[}}$.  Since
  $L$ has constant cohomology sheaves, we have $\Hom(\cor_{[a,b[},L) \simeq 0$.
  Hence $v \circ i = 0$ and there exists
  $i' = (\begin{smallmatrix} i_1 \\ i_2 \end{smallmatrix}) \colon \cor_{[a,b[}
  \to F_1 \oplus F_2$ such that $i = u \circ i'$.  Let
  $p_1 \colon F_1 \to \cor_{[a,b[}$, $p_2 \colon F_2 \to \cor_{[a,b[}$ be the
  components of $p\circ u$. Then
  $\id_{\cor_{[a,b[}} = p_1 \circ i_1 + p_2 \circ i_2$ and we deduce
  $p_1 \circ i_1 \not= 0$ or $p_2 \circ i_2 \not=0$.  Since
  $\Hom(\cor_{[a,b[}, \cor_{[a,b[}) = \cor$, we can multiply our morphisms by a
  scalar to have $p_1 \circ i_1$ or $p_2 \circ i_2$ equals $\id$, proving the
  lemma.
\end{proof}

\part{The three cusps conjecture}

In~\cite{A96} Arnol'd states a theorem of M\"obius ``a closed smooth curve
sufficiently close to the projective line (in the projective plane) has at least
three points of inflection'' and conjectures that ``the three points of
flattening of an immersed curve are preserved so long as under the deformation
there does not arise a tangency of similarly oriented branches''.  This is a
statement about oriented curves in $\RP^2$.  Under the projective duality it is
turned into a statement about Legendrian curves in the projectivized cotangent
bundle of $\RP^2$: if $\{\Lambda_s\}_{s\in [0,1]}$ is a generic path in the
space of Legendrian knots in $PT^*(\RP^2) = (T^*\RP^2 \setminus \RP^2)/\Rm$ such
that $\Lambda_0$ is a fiber of the projection $\pi \cl PT^*(\RP^2) \to \RP^2$,
then the front $\pi(\Lambda_s)$ has at least three cusps.  This statement is
given by Chekanov and Pushkar in~\cite{CP05}, where the authors prove a local
version, replacing $\RP^2$ by the plane $\R^2$, and also another similar
conjecture ``the four cusps conjecture''.  Here we prove the conjecture when the
base is the sphere $\sph$ (which implies the case of $\RP^2$ since we can lift a
deformation in $PT^*(\RP^2)$ to a deformation in $PT^*\sph = \dT^*\sph/\Rm$).

  Let us give the precise statement.  Let
$\{\ol\Lambda_s\}_{s\in [0,1]}$ be a path of Legendrians in $PT^*\sph$ starting
at $\ol\Lambda_0 = \dT^*_{x_0}\sph/\Rm$ for some point $x_0 \in \sph$.  We let
$\Sigma_s$ be the projection of $\ol\Lambda_s$ to $\sph$.
\begin{theorem}
\label{thm:threecusps-intro}
Let $s\in [0,1]$ be such that $\Sigma_s$ is a curve with only cusps and double
points as singularities. Then $\Sigma_s$ has at least three cusps.
\end{theorem}

Here is a sketch of the proof.  A first remark is that, for a connected curve
$\Sigma$ in $\sph$ with only cusps and double points as singularities and smooth
part $\Sigma_{reg}$, the closure of $\dT^*_{\Sigma_{reg}}\sph$ in $\dT^*\sph$ is
connected if and only if the number of cusps is odd.  Hence we only have to show
that $\Sigma_s$ does not have only one cusp.  For any path of Legendrians
$\{\ol\Lambda_s\}_{s\in [0,1]}$ there exists a contact isotopy
$\{\ol\Phi_s\}_{s\in [0,1]}$ of the ambient contact manifold such that
$\ol\Lambda_s = \ol\Phi_s(\ol\Lambda_0)$ (see for example Theorem~2.6.2
of~\cite{G08}).  We will apply Theorem~\ref{thm:GKS} to this contact isotopy.
To fit with the framework of this theorem we lift the isotopy
$\ol{\Phi} \cl PT^*\sph \times [0,1] \to PT^*\sph$ to a homogeneous Hamiltonian
isotopy $\Phi \cl \dT^*\sph \times [0,1] \to \dT^*\sph$ which satisfies
$\Phi_0 = \id$ and $\Phi_s(x;\lambda\,\xi) = \lambda \cdot \Phi_s(x;\xi)$ for
all $\lambda \in \Rm$ (not only for $\lambda \in \rspos$) and all
$(x;\xi) \in \dT^*\sph$.  By Theorem~\ref{thm:GKS}, for each $s\in [0,1]$ we
obtain an   auto-equivalence, say $R_{\Phi,s} = K_{\Phi,s} \circ -$, of
the category $\Der(\cor_\sph)$, with the property that
$\dot\SSi(R_{\Phi,s} (F)) = \Phi_s(\dot\SSi(F))$ for all
$F \in \Der(\cor_\sph)$.  We set $F_0 = \cor_{\{x_0\}}$ and
$F_s = R_{\Phi,s}(F_0)$. Hence $\dot\SSi(F_0) = \dT^*_{x_0}\sph$ and it follows
that $\dot\SSi(F_s) = \Phi_s(\dT^*_{x_0}\sph)$.

The fact that $\Phi$ is homogeneous for the action of $\Rm$, and not only $\rspos$,
implies that $R_{\Phi,s}$ commutes with the duality functor $\DD_\sph$.  The sheaf
$F_0$ is self-dual and simple. Hence so is $F_s$.  Now we prove the following result
(see Theorem~\ref{thm:ext1FF} and the beginning of the proof of
Theorem~\ref{thm:threecusps}): if   $F \in \Der(\cor_\sph)$ satisfies
\begin{itemize}
\item [(a)] $F$ is self-dual,
\item [(b)] $F$ is simple,
\item [(c)] $\rsect(\sph; F) \simeq \cor$,
\item [(d)] $\Lambda = \dot\SSi(F)/\rspos$ is a smooth curve whose projection to
  $\sph$ is a generic curve $\Sigma$ with only one cusp,
\end{itemize}
then $\Hom(F,F[1]) \not=0$.  Since $\Hom(F_0,F_0[1])=0$ there cannot exist an
auto-equivalence $R$ of $\Der(\cor_\sph)$ such that $R(F_0) = F$.

\smallskip

We describe the hypotheses of Theorem~\ref{thm:ext1FF} and see at the same time how
(a-d) imply them.  We choose a Morse function $q \cl \sph \to \R$ with only two
critical points and sufficiently generic with respect to $\Sigma$.  We decompose
$\roim{q}(F)$ using Corollary~\ref{cor:sheaves_dim1}.  The hypothesis~(c) implies
that all intervals, but one, appearing in the corollary are half-closed.  Then~(a)
implies that the non half-closed interval is reduced to a point.  We thus have
$\roim{q}(F) \simeq \cor_{\{t_0\}} \oplus \bigoplus_{a\in A} \cor^{n_a}_{I_a}[d_a]$
where the $I_a$ are half-closed intervals, $d_a\in \Z$ and $t_0$ is some point in
$\R$.  We can assume $t_0 = 0$ and, choosing $q$ sufficiently generic with respect to
$\Sigma$, we can assume that $0 \not\in \ol{I_a}$, for all $a$.  Then the
decomposition of $\roim{q}(F)$, restricted to a neighborhood of $0$, gives the
hypothesis~\eqref{eq:decomp_FJ2} of Theorem~\ref{thm:ext1FF} (up to rescalling), that
is, $\roim{q}(F)|_{\mo]-1,1[} \simeq \cor_{\{0\}} \oplus B_{\mo]-1,1[}$ for some
$B \in \Derb(\cor)$.

We write $C_t = q^{-1}(t)$.  The decomposition of $\roim{q}(F)$ and
Proposition~\ref{prop:oim} imply that $\Sigma$ is tangent to $C_0$.  For $q$ generic,
we can choose a diffeomorphism $q^{-1}(\mo]-1,1[) \simeq \cer \times \mo]-1,1[$ such
that $\Sigma \cap q^{-1}(\mo]-1,1[)$ is the union of one branch, say $\GammaE_0$,
tangent to $C_0$ and contained in $q^{-1}([0,1[)$, and other branches of the form
$\{\theta\} \times \mo]-1,1[$ (see~\eqref{eq:notJELambda0},
\eqref{eq:hypLambda_gentang} and Fig.~\ref{fig:hypLambda}).  We are now in the
settings of Theorem~\ref{thm:ext1FF}, which says that, if $\Sigma$ only has one cusp
(hypothesis~(d) above), then $\Hom(F,F[1]) \not= 0$.

The proof of Theorem~\ref{thm:ext1FF} distinguishes two cases and uses two criteria
to ensure the non-vanishing of $\Hom(F,F[1])$.  We decompose $F|_{C_{1/2}}$ using
Corollary~\ref{cor:sheaves_dim1} as
$F|_{C_{1/2}} \simeq L \oplus \bigoplus_{a\in A'} \oim{e}(\cor_{I'_a})[d'_a]$, where
$L$ is locally constant, the $I'_a$ are intervals of $\R$, $d'_a \in \Z$ and
$e \cl \R \to C_{1/2} \simeq \R/2\pi\Z$ is the quotient map.  The branch $\GammaE_0$
meets $C_{1/2}$ in two points, say $\theta_1$, $\theta_2$, and some intervals $I'_a$
have one end in $e^{-1}(\{\theta_1, \theta_2\})$ (a priori there are four such
intervals but some could coincide).  Now the two cases depend on these intervals.

If one of the intervals with one end in $e^{-1}(\{\theta_1, \theta_2\})$ is
closed or open, then the non vanishing of $\Hom(F,F[1])$ is given by
Propositions~\ref{prop:interv_ferme} and~\ref{prop:FC0-monodunip}.  This result
is in fact local around $C_0$ and we do not use the fact that $\Sigma$ only has
one cusp.

In the other case all intervals with one end in $e^{-1}(\{\theta_1, \theta_2\})$ are
half-closed.  Here we apply a more global criterion to ensure $\Hom(F,F[1]) \not=0$.
We define notions of $F$-linked and $F$-conjugate points of $\Lambda$ in
Section~\ref{sec:micro_linked}. Our criterion is roughly that, if there exist a pair
$(p_0,p_1)$ of $F$-linked points and a pair $(q,q')$ of $F$-conjugate points such
that $(p_0,p_1)$ and $(q,q')$ are intertwined on the circle $\Lambda/\rspos$, then
$\Hom(F,F[1]) \not=0$ (see Proposition~\ref{prop:mongammasurj}).  Examples of
conjugate points are the points of $\Lambda$ corresponding to the ends of one
interval $I_a$ in the decomposition of $F|_{C_{1/2}}$ recalled above.  In
Proposition~\ref{prop:otherFlinkedpoint} we see that in our situation we can situate
some $F$-linked points.  The hypothesis that $\Sigma$ has only one cusp is used as
follows.    The sheaf $F$ has a {\em shift} (or
{\em Maslov potential} -- see Example~\ref{ex:shift}) at each point of
$\Lambda$, which is a half integer and changes by $1$ when we cross a cusp.  If
$\Sigma$ has only one cusp, then $\Lambda/\rspos$ is decomposed in two arcs, say
$\Lambda_+$ and $\Lambda_-$, according to the value of the shift.  In the above
decomposition of $F|_{C_{1/2}}$, the points above the ends of an interval $I'_a$
which is half-closed are not in the same component $\Lambda_\pm$.  This gives several
pairs of conjugate points $(q_+,q_-)$ with $q_\pm \in \Lambda_\pm$.  Using the linked
points of Proposition~\ref{prop:otherFlinkedpoint} it is then possible to find two
intertwined pairs of linked/conjugate points and apply
Proposition~\ref{prop:mongammasurj}.

\smallskip

  This part is organized as follows.
In Section~\ref{sec:exemples_trois_cusps} we describe the sheaf associated with
the standard curve with three cusps and give an example of a curve with one cusp
whose conormal bundle is the microsupport of a sheaf.  In
Section~\ref{sec:generic_tgt} we give the first criterion (local around $C_0$)
for the non vanishing of $\Hom(F,F[1])$.  In Section~\ref{sec:micro_linked} we
introduce the notions of linked and conjugate points and prove that the
existence of two intertwined pairs of linked/conjugate points also implies the
non vanishing of $\Hom(F,F[1])$.  Examples of conjugate points are given in the
next section.  In Section~\ref{sec:trois_points_lies} we prove the existence of
three linked points (under the same hypotheses as in
Section~\ref{sec:generic_tgt}).  In the next two sections we apply these
criteria to prove the three cusps conjecture.  In the last section we give a
sketch of proof of the four cusp conjecture with the same method.

\smallskip

In this part we assume that $\cor$ is an infinite field (we use it in
Proposition~\ref{prop:otherFlinkedpoint} and we use Gabriel's theorem).

\section{Examples}
\label{sec:exemples_trois_cusps}

A first idea to prove Theorem~\ref{thm:threecusps-intro} could be the more
ambitious statement: if $\Lambda \subset \dT^*\sph$ is a smooth conic Lagrangian
submanifold and $\Sigma = \pi_{\sph}(\Lambda)$ is a curve with only one cusp and
otherwise only double points as singularities, then there is no
$F \in \Der(\cor_{\sph})$ such that $\dot\SSi(F) = \Lambda$.  Indeed
Theorem~\ref{thm:GKS} would imply that there is no homogeneous isotopy $\Phi$ of
$\dT^*\sph$ such that $\Lambda = \Phi_1(\dT^*_{x_0}\sph)$.  But this statement
is false; we give a counterexample in this section.  The examples given here are
in fact in $\R^2$.  We put coordinates $(x,y)$ on $\R^2$ and $(x,y;\xi,\eta)$ on
$T^*\R^2$.

\subsubsection*{Sheaf associated with a cusp}

Let $\Sigma_{cusp} = \{(x,y)$; $x^2 = y^3 \}$ be the ordinary cusp in $\R^2$.
Let $\Lambda_{cusp}$ be the closure of
$\dT^*_{\Sigma_{cusp} \setminus \{0\}}(\R^2 \setminus \{0\})$ in $\dT^*\R^2$.
Then $\Lambda_{cusp}$ is a smooth conic Lagrangian submanifold of $\dT^*\R^2$
consisting of two connected components, say $\Lambda_1 = \{(t^3,t^2; -2u,3tu)$;
$t\in\R$, $u\in \rspos\}$ and $\Lambda_2 = \Lambda_1^a$.  We set
$W_1 = \{(x,y)$; $-y^{3/2} < x \leq y^{3/2}$, $y>0 \}$ and $W_2 = \{(x,y)$;
$-y^{3/2} \leq x < y^{3/2}$, $y>0 \}$ (see Fig.~\ref{fig:pictcusp1}).  By
Example~5.3.4 of~\cite{KS90} we know that $\dot\SSi(\cor_{W_i}) = \Lambda_i$,
for $i=1,2$ (outside $T^*_0\R^2$ this follows from
Example~\ref{ex:microsupport}-(iii) and a direct computation gives the
microsupport over $0$).

\begin{figure}[ht]
 \begin{tikzpicture}[decoration={border, segment length=2mm, angle=90}]

\fill [fill=gray!20] plot [domain=-1:1] (\x^3, \x*\x);
\draw [postaction={decorate,draw}] plot [domain=0:1] (\x^3, \x*\x);
\draw [decorate] plot [domain=0:-1] (\x^3, \x*\x);
\draw [dotted] plot [domain=0:-1] (\x^3, \x*\x);
\node at (0,.7) {$W_1$};

\fill [fill=gray!20] plot [domain=-1:1] (3+\x^3, \x*\x);
\draw [postaction={decorate,draw}]  plot [domain=-1:0] (3+\x^3, \x*\x);
\draw [decorate] plot [domain=1:0] (3+\x^3, \x*\x);
\draw [dotted] plot [domain=1:0] (3+\x^3, \x*\x);
\node at (3,.7) {$W_2$};
\node at (7,.7) {$F = \cor_{W_1}[d_1] \oplus \cor_{W_2}[d_2]$};
\end{tikzpicture}

\bigskip
\begin{tikzpicture}

  \draw  plot [domain=-1:1] (\x^3, \x*\x);
  \draw (-1.5,.4) -- (1.1,.4) ;
  \node at (-1.5,.7) {$L$};
  \node at (-.5,.2) {$z$};
  \node at (.5,.18) {$z'$};

  \pgftransformxshift{3.5cm}
  \draw (-1.5,.4) -- (1.1,.4) ;
  \draw (-.3,.3) -- (-.3,.5);
  \draw (.3,.3) -- (.3,.5);

  \node at (-1.5,.7) {$L$};
  \node at (-.5,.2) {$z$};
  \node at (.5,.18) {$z'$};  
  
  \fill (-.3,.7)  circle[radius=.04cm] ;
  \draw (-.3,.7) -- (.3,.7) arc(180:90:.04) arc(90:270:.04) ;
  \fill (.3,.1)  circle[radius=.04cm] ;
  \draw (.3,.1) -- (-.3,.1) arc(0:90:.04) arc(90:-90:.04) ;
  
\node at (3.5,.4) {$F|_L \simeq \cor_{[z,z'[}[d_1]\oplus \cor_{]z,z']}[d_2]$};  
\end{tikzpicture}
\caption{}   \label{fig:pictcusp1}
\end{figure}

\begin{lemma}\label{lem:sheafcusp}
  Let $F \in \Der(\cor_{\R^2})$ be such that $\dot\SSi(F) = \Lambda_{cusp}$ and $F$ is
  simple along $\Lambda_{cusp}$.  Then there exist $E\in \Der(\cor)$ and
  $d_1, d_2\in \Z$ such that
  $F \simeq \cor_{W_1}[d_1] \oplus \cor_{W_2}[d_2] \oplus E_{\R^2}$.
\end{lemma}

With the notations of Lemma~\ref{lem:sheafcusp}, if $\supp(F) \subset \ol{W_1}$
and $L$ is a horizontal line cutting $\Sigma_{cusp}$ in two points $z, z'$, then
$F|_L \simeq \cor_{[z,z'[}[d_1] \oplus \cor_{]z,z']}[d_2]$.

\begin{proof}
  This follows from the description of sheaves on the plane in~\cite{STZ17} (see
  for example (3.3) in the proof of Thm.~3.12 or Prop.~5.8) but we give a sketch
  of proof for the reader convenience.

  \sui(i) We first consider the case where $\dot\SSi(F) = \Lambda_1$ and
  $\supp(F)$ is contained in $\ol{W_1}$. The set $U = W_2 \cup \{x<0\}$ is open
  with a boundary of class $C^1$. We set $Z = \R^2 \setminus U$.  The
  microsupport condition gives $(\rsect_Z(F))_p \simeq 0$ for all
  $p\in \partial U$, and then for all $p\in Z$ because
  $\supp(F) \subset \ol{U}$. Hence $\rsect_Z(F) \simeq 0$ and we obtain, by
  excision, $F \simeq \rsect_U(F) \simeq \roim{j}(F|_U)$, where $j$ is the
  inclusion of $U$ in $\R^2$.  Using Example~\ref{ex:SS=conormal_hypersurface}
  and the fact that $F$ is simple, we see that $F|_U \simeq \cor_{U,W_1}[d_1]$
  for some $d_1 \in \Z$. We deduce $F\simeq \cor_{\R^2,W_1}[d_1]$.

  \sui(ii)   Now we assume $\dot\SSi(F) = \Lambda_1$
  but we assume nothing on $\supp(F)$.  We set $q = (0,-1)$ and let $B_r$ be the open
  disc with center $q$ and radius $r$.  By Corollary~\ref{cor:Morse} we have
  isomorphisms $\rsect(B_r; F) \isoto \rsect(B_s; F)$ for all $r\geq s >0$.  We
  deduce $\rsect(\R^2;F) \isoto F_q$. We set $E'= \rsect(\R^2;F)$. We have a natural
  morphism $u\colon E'_{\R^2} \to F$ and $u_q$ is an isomorphism. Hence the cone of
  $u$, say $F'$, satisfies $\dot\SSi(F') = \Lambda_1$ and $F'_q \simeq 0$.  By~(i) we
  have $F'\simeq \cor_{W_1}[d_1]$ for some $d_1 \in \Z$. We can check that
  $\Hom(F', E'_{\R^2}[1]) \simeq 0$ and it follows that
  $F \simeq F' \oplus E'_{\R^2}$.

  \sui(iii) Now we assume $\dot\SSi(F) = \Lambda_{cusp}$.  By
  Proposition~\ref{prop:cut-off_split_local2} there exists a neighborhood $V$ of
  $0$ and a distinguished triangle in $V$,
  $F_1 \oplus F_2 \to F|_V \to E''_{\R^2}\to[+1]$, where
  $\dot\SSi(F_i) = \Lambda_i$ and $E''_{\R^2}$ is a constant sheaf.  We can find
  an isotopy from $\R^2$ to a small neighborhood of $0$ which preserves
  $\Lambda_{cusp}$. Hence Proposition~\ref{prop:iminvproj} implies that the
  distinguished triangle can be extended to $\R^2$. By~(ii) we know $F_1$ and
  $F_2$ and, using $\Hom(\cor_{\R^2}, F_i[1]) \simeq 0$, we deduce the result.
\end{proof}

\subsubsection*{Sheaf associated with a deltoid}

We deform $\dT^*_0\R^2$ by the Hamiltonian flow, say $\Phi$, of the function
$h(x,y;\xi,\eta) = \eta^3/(\xi^2+\eta^2)$ (which is one of the simplest example
of function homogeneous of degree $1$).  We set
$\Lambda_{delt} = \Phi_1(\dT^*_0\R^2)$ and
$\Sigma_{delt} = \pi_{\R^2}(\Lambda_{delt})$.  Then $\Sigma_{delt}$ is a curve
which bounds a star shaped domain, say $D$, and which has three cusps, all
pointing to the outward direction ($\Sigma_{delt}$ is a kind of deltoid):
$$
\begin{tikzpicture}

\draw plot [domain=0:3.14] (  {-2*cos(\x r)*cos(\x r)*cos(\x r)*sin(\x r)},
  {cos(\x r)*cos(\x r)*cos(\x r)*cos(\x r) +3*sin(\x r)*sin(\x r)*cos(\x r)*cos(\x r)} );
\end{tikzpicture}
$$
Let $K_\Phi$ be the sheaf associated with $\Phi$ by Theorem~\ref{thm:GKS}.  The
composition with $K_{\Phi,1}$ gives an equivalence between simple sheaves with
microsupport $\dT^*_0\R^2$ and simple sheaves with microsupport
$\Lambda_{delt}$.  We set $F = K_{\Phi,1} \circ \cor_{\{0\}}$. Then
$\dot\SSi(F) = \Phi_1(\dot\SSi(\cor_{\{0\}})) = \Lambda_{delt}$ and $F$ is
simple along $\Lambda_{delt}$.  Since $\cor_{\{0\}}$ has compact support, the
same holds for $F$. Since $\dot\SSi(F) = \Lambda_{delt}$, $F$ is locally
constant outside $\Sigma_{delt}$.  Hence $F$ must be zero outside $\ol D$.  By
Lemma~\ref{lem:sheafcusp} we have $\supp(F) = \ol D$ and, if $L$ is a line cutting
$\Sigma_{delt}$ transversely in two points $z, z'$, then
$F|_L \simeq \cor_{[z,z'[}[d_1] \oplus \cor_{]z,z']}[d_2]$ for some integers
$d_1$, $d_2$.

\begin{lemma}\label{lem:sheafdeltoid}
  Let $F = K_{\Phi,1} \circ \cor_{\{0\}} \in \Der(\cor_{\R^2})$ be the simple
  sheaf along $\Lambda_{delt}$ corresponding to $\cor_{\{0\}}$.  Then $F$ is
  concentrated in degree $-1$ and its restriction to a line cutting transversely
  $\Sigma_{delt}$ at $z,z'$ is $(\cor_{[z,z'[} \oplus \cor_{]z,z']})[1]$.
\end{lemma}
We summarize these results in the following picture (where the deltoid is slightly
deformed)
\begin{figure}[ht]
\begin{tikzpicture}    
\draw (0,0) .. controls (0,1) and (-1,1) .. (-1,2) ; 
\draw (0,0) .. controls (0,1) and (1,1) .. (1,2) ;  
\draw (-.6,2) arc[radius=.6cm, start angle=180, end angle=360]; 
\draw (-1,2) .. controls (-1,2.3) and (-.8,2.4) .. (-.8,2.7); 
\draw (-.6,2) .. controls (-.6,2.3) and (-.8,2.4) .. (-.8,2.7); 
\draw (1,2) .. controls (1,2.3) and (.8,2.4) .. (.8,2.7); 
\draw (.6,2) .. controls (.6,2.3) and (.8,2.4) .. (.8,2.7); 
\draw [dotted] (-1.5,.7) rectangle (1.5,1.9);
\node at (-1.3,.9) {$B$};
\draw (-1.5,2.1) -- (-.3,2.1);
\draw (1.5,2.1) -- (.3,2.1);
\draw (-.6,.5) -- (.6,.5);
\node at (-.5,.2) {$L_1$};
\node at (-1.3,2.4) {$L_2$};
\node at (1.5,2.3) {$L_3$};
\node at (5,1.3) 
{$(F[-1])|_{L_i} \simeq \cor_{\tikz{\path (-.1,0); \fill (0,0)  circle[radius=.04cm] ;
    \draw (0,0) -- (.5,0) arc(180:90:.04) arc(90:270:.04) ;}}
\oplus
\cor_{\tikz{\path (-.1,0); \fill (0.5,0)  circle[radius=.04cm] ;
    \draw (.5,0) -- (0,0) arc(0:90:.04) arc(90:-90:.04) ;}}$};
\end{tikzpicture}
\caption{}  \label{fig:deltoid2} 
\end{figure}

\begin{proof}
  We have already seen that the restriction of $F$ to a transverse line is of
  the form $\cor_{[z,z'[}[d_1] \oplus \cor_{]z,z']}[d_2]$ for some integers $d_1$,
  $d_2$.

  Let us first prove that $d_1 = d_2$.  Let $E_1$, $E_2$, $E_3$ be the edges of the
  domain $D$.   We set
  $U_i = \R^2 \setminus (E_{i-1} \cup E_{i+1} )$ (with $E_4= E_1$).  Then $F|_{U_i}$
  is of the form
  $F^{d_1,d_2}_i := \cor_{U_i \cap D}[d_1] \oplus \cor_{U_i \cap \ol D}[d_2]$ or
  $F^{d_2,d_1}_i$. Moreover, if $F|_{U_i} \simeq F^{d_1,d_2}_i$, then
  $F|_{U_{i+1}} \simeq F^{d_2,d_1}_{i+1}$.  Turning once around $D$ we obtain
  $F^{d_1,d_2}_1 \simeq F^{d_2,d_1}_1$, which gives $d_1 = d_2$.
  
    Now we prove that $d_1=1$.  Let $q \colon \R^2 \to \R$ be the projection
    $q(x,y) = y$.  By Corollary~\ref{cor:isot_equivcat} we know that
    $\rsect(\R^2;F) \simeq \rsect(\R^2;\cor_{\{0\}}) \simeq \cor$.  We compute
    $\rsect(\R^2;F)$ using $\rsect(\R^2;F) \simeq \rsect(\R; \roim{q}(F))$ and
    we will deduce $d_1$.

    The line $q^{-1}(y)$ is transverse to $\Sigma_{delt}$ except for one value
    $y=y_0$.  For $y\not= y_0$, $F|_{q^{-1}(y)}$ is a sheaf on $\R$ which is the
    sum of two or four sheaves of the type $\cor_I[d_1]$, where $I$ is a half
    closed interval.  Hence
    $(\roim{q}(F))_y \simeq \rsect(q^{-1}(y); F|_{q^{-1}(y)}) \simeq 0$ for
    $y\not=y_0$.  It follows that
    $\rsect(\R; \roim{q}(F)) \simeq (\roim{q}(F))_{y_0}$.  Let $z,z'$ be the
    transverse intersections of $\Sigma_{delt}$ and $q^{-1}(y_0)$ and let $z''$
    be their tangent intersection.  Then $F|_{q^{-1}(y_0)}$ is of the type
  \begin{gather*}
    (\cor_{[z,-[} \oplus \cor_{]z,-[})[d_1]  \quad \text{near $z$} , \qquad
    (\cor_{]-,z'[} \oplus \cor_{]-,z']})[d_1]  \quad \text{near $z'$} , \\
    (\cor_{]-,z''[} \oplus \cor_{]z'',-[} \oplus \cor_\R)[d_1]
                                           \quad \text{near $z''$.} 
  \end{gather*}
  By Corollary~\ref{cor:sheaves_dim1} we have
  $F|_{q^{-1}(y_0)} \simeq (\cor_{I_1} \oplus \cor_{I_2} \oplus \cor_{I_3})[d_1]$,
  where the intervals $I_1,I_2,I_3$ have two closed ends and four open ends. This
  leaves two possibilities: (1) two of these intervals are half-closed and one is
  open, or (2) two are open and one is closed.  In the first case we find
  $(\roim{q}(F))_{y_0} \simeq \cor[d_1-1]$ and in the second case
  $(\roim{q}(F))_{y_0} \simeq \cor^2[d_1-1] \oplus \cor[d_1]$.  Since the result is
  $\cor$, this excludes the second case and we obtain $d_1=1$.
\end{proof}

\subsubsection*{Sheaf associated with a curve with one cusp}

We define a new curve $\Sigma$ from $\Sigma_{delt}$ as follows.  We cut
$\Sigma_{delt}$ by the rectangle $B$ of Fig.~\ref{fig:deltoid2}; the bottom edge
$\partial^-B$ of $B$ cuts $\Sigma_{delt}$ in one pair of points near the bottom
cusp and the top edge $\partial^+B$ of $B$ cuts $\Sigma_{delt}$ in two pairs of
points near the top cusps.  We put two small copies of $B$, say $B_2$ and $B_3$,
above another copy $B_1$, so that the corresponding copies of
$\Sigma_{delt} \cap B$ glue together, as in Fig.~\ref{fig:onecusp}.  We attach a
cusp to the pair of points of $\Sigma_{delt} \cap \partial^- B_1$ and we attach
four arcs of circles, say $C_1, \dots, C_4$, to the four pairs of points of
$\Sigma_{delt} \cap (\partial^+ B_2 \cup \partial^+ B_3)$ so that the resulting
curve, say $\Sigma$, is connected.  The closure of the conormal of the regular
part of $\Sigma$ is a smooth conic Lagrangian submanifold of $\dT^*\R^2$, say
$\Lambda$.

\begin{figure}[ht]
\begin{tikzpicture}[scale=1.2]
\draw (0,0) .. controls (0,1) and (-2.6,1) .. (-2.6,3) ;
\draw (0,0) .. controls (0,1) and (2.6,1) .. (2.6,3) ;

\draw (-.6,2) -- (-.6,3);
\draw (.6,2) -- (.6,3);

\draw (-.6,2) arc[radius=.6cm, start angle=180, end angle=360];
\draw (-2.2,3) arc[radius=.6cm, start angle=180, end angle=360];
\draw (1,3) arc[radius=.6cm, start angle=180, end angle=360];

\draw (-2.6,3) arc[radius=1.8cm, start angle=180, end angle=0];
\node [fill=white] at (-2,4.5) {$C_1$};
\draw (-2.2,3) arc[radius=1.4cm, start angle=180, end angle=0];
\node [fill=white] at (-1.9,3.6) {$C_2$};
\node at (-1.2,4.55) {$D$};

\draw (2.6,3) arc[radius=1.8cm, start angle=0, end angle=180];
\node [fill=white] at (2,4.5) {$C_3$};
\draw (2.2,3) arc[radius=1.4cm, start angle=0, end angle=180];
\node [fill=white] at (1.9,3.6) {$C_4$};
\node at (1.2,4.55) {$D'$};

\draw [dotted] (-3.2,.5) rectangle (3.2,2);
\draw [dotted] (-3.2,2) rectangle (-.2,3);
\draw [dotted] (3.2,2) rectangle (.2,3);

\node at (-2.9,.8) {$B_1$}; \node at (-4.5,-.2) {$\partial^- B_1$};
\draw [->] (-3.8,-.2) -- (-2,0.4);
\node at (-2.9,2.3) {$B_2$}; \node at (-4.5,3.7) {$\partial^+ B_2$};
\draw [->] (-3.8,3.7) -- (-2.9,3.1);
\node at (2.9,2.3) {$B_3$}; \node at (4.3,3.7) {$\partial^+ B_3$};
\draw [->] (3.7,3.7) -- (3,3.1);
\end{tikzpicture}
\caption{}  \label{fig:onecusp} 
\end{figure}

Let $F$ be the sheaf described in Lemma~\ref{lem:sheafdeltoid}.  Let $F_i$ be a copy
of $F|_B$ in the rectangle $B_i$.  By the description of $F$, the sheaves $F_1$,
$F_2$, $F_3$ glue into a sheaf, $G$, on $B_1 \cup B_2 \cup B_3$. We can then glue $G$
with a sheaf associated with the cusp to obtain a sheaf $G'$.   We let $D \simeq C_1 \times \mo]0,1[$ be the domain bounded by $C_1$,
$C_2$ and we set $D_1 = D \cup C_1$, $D_2 = D \cup C_2$.  We define similarly the
strips $D'$, $D'_3$, $D'_4$ associated with the two other arcs $C_3, C_4$.  Now we
glue $G'$ with the sheaf
$(\cor_{D_1} \oplus \cor_{D_2} \oplus \cor_{D'_3} \oplus \cor_{D'_4})[1]$ and we
obtain a sheaf $F'$ such that $\dot\SSi(F') = \Lambda$.

\section{Simple sheaf at a generic tangent point}
\label{sec:generic_tgt}

In this section we consider a sheaf $F$ on a surface $M$ whose microsupport
$\dot\SSi(F) = \Lambda \subset \dT^*M$ is a smooth Lagrangian submanifold.  We
give a criterion for the non vanishing of $\Hom(F,F[1])$ (see
Propositions~\ref{prop:interv_ferme} and~\ref{prop:FC0-monodunip}).  The
geometric situation and the hypotheses on $F$ are described in
Section~\ref{sec:threecusps_nothyp}.  Our criterion is in fact local around an
embedded circle of $M$. We begin with a general statement to go from a local non
vanishing to a global one (Lemma~\ref{lem:cohom_supp}).

\subsection{Local cohomology}

We make general remarks about the (local) extensions of sheaves.    Let $M$ be a manifold and $Z$ a closed
subset of $M$.  Let $F,G \in \Der(\cor_M)$ be given.  We will often use the
isomorphisms (see Proposition~\ref{prop:formulaire}-(a-c))  
\begin{equation}
  \label{eq:derivedHom}
 \Hom(F,G[i]) \simeq   H^i\RHom(F,G) \simeq H^i(M; \rhom(F,G)) ,
\end{equation}
where $\RHom$ denotes the derived $\Hom$ functor, with values in $\Der(\cor)$,
and $\Hom$ denotes the $\Hom$ functor in $\Der(\cor_M)$, with values in
$\Mod(\cor)$.  We also have the following isomorphisms, deduced
from~\eqref{eq:cohom_support} and the adjunction $(\otimes, \rhom)$
\begin{equation}
  \label{eq:adj_cohom_supp}
\rsect_Z\rhom(F,G) \simeq \rhom(F,\rsect_Z(G)) \simeq \rhom(F_Z,G) .
\end{equation}
Since $\rsect_M(F) \simeq F \simeq F_M$, the morphism $\cor_M \to \cor_Z$
induces the natural morphisms~\eqref{eq:rem_cohom_supp1} and
then~\eqref{eq:rem_cohom_supp2} by composition
\begin{gather}
  \label{eq:rem_cohom_supp1}
i_Z(F) \colon \rsect_Z(F) \to F, \qquad j_Z(F) \colon F \to F_Z , \\
  \label{eq:rem_cohom_supp2}
  H^i\RHom(F_Z,\rsect_Z(G)) \to   H^i\RHom(F,G) .
\end{gather}

The next result gives conditions about some morphisms of sheaves supported on
$Z$ to ensure the non triviality of $H^i\RHom(F,G)$.

\begin{lemma}\label{lem:cohom_supp}
  Let $i\in\Z$ and $u \colon F_Z \to \rsect_Z(G)[i]$ be given.  We assume that
  $j_Z(G) \circ i_Z(G) \circ (u[-i]) \not=0$. Then the image of $u$ in
  $H^i\RHom(F,G)$ by~\eqref{eq:rem_cohom_supp2} is non zero.
\end{lemma}
\begin{proof}
  Composing with $j_Z(F)$, $i_Z(G)$ and $j_Z(G)$ gives the commutative diagram
$$
\begin{tikzcd}[column sep=3cm]
  \RHom(F_Z,\rsect_Z(G)) \ar[r, "i_Z(G) \circ \; - \; \circ j_Z(F)"]
  \ar[d, "j_Z(G) \circ i_Z(G) \circ \; -"]
  & \RHom(F,G) \ar[d,  "j_Z(G) \circ \; -"] \\
\RHom(F_Z,G_Z) \ar[r, "\sim", "- \; \circ j_Z(F)"'] & \RHom(F,G_Z) ,
\end{tikzcd}
$$ 
where the top arrow is~\eqref{eq:rem_cohom_supp2} and the left arrow is
composing with $j_Z(G) \circ i_Z(G)$.  Since $G_Z$ is supported in $Z$, we have
$\rsect_Z(G_Z) \simeq G_Z$ and the isomorphisms~\eqref{eq:adj_cohom_supp} show
that the bottom arrow is an isomorphism. Taking $H^i(-)$ gives the result.
\end{proof}

\subsection{Generic tangent point - notations and hypotheses}
\label{sec:threecusps_nothyp}

   We consider a surface $M$, a smooth closed conic Lagrangian
submanifold $\Lambda \subset \dT^*M$ and $F\in \Derb(\cor_M)$ such that
$\dot\SSi(F) = \Lambda$.  We introduce some hypotheses on $\Lambda \cap T^*\Uu$ and
$F|_\Uu$, where $\Uu$ is an open subset of $M$ diffeomorphic to a cylinder.

Since these hypotheses are not  
completely obvious, we give a quick justification why we introduce them (see also the
introduction and Lemma~\ref{lem:autodual}).  Let us assume for a while (as we will do
when we apply the results of this section) that $\Sigma= \dot\pi_{M}(\Lambda)$ is an
immersed curve, with only cusps and transverse double points as singularities.  We
choose a Morse function $q \colon M \to \R$ generic enough so that the fibers
$q^{-1}(t)$ are tangent to $\Sigma$ only for finitely many values $t_1,\dots,t_N$ and
that $q^{-1}(t_i)$ only contains one tangent point and no cusp nor double points.  By
Corollary~\ref{cor:sheaves_dim1} there exists a decomposition of $\roim{q}(F)$ as a
finite sum $\roim{q}(F) \simeq \bigoplus_{a\in A} \cor^{n_a}_{I_a}[d_a]$, where the
$I_a$'s are intervals.  By Proposition~\ref{prop:oim} the ends of these intervals can
only be the points $t_i$ and the critical values of $q$.  In our application the
sheaf $F\in \Derb(\cor_M)$ will satisfy $\DD_M(F) \simeq F$ (for this, it is
necessary that $\Lambda^a = \Lambda$, by Theorem~\ref{thm:SSrhom}) and
$\rsect(M;F) \simeq \cor$.  By Lemma~\ref{lem:autodual} this implies that all
intervals $I_a$ are half-closed except one which is reduced to a point.  We can check
that this point cannot be a critical value, hence it is $t_{i_0}$ for some $i_0$.
Thus there is one special tangent fiber, with respect to $F$, and in this section we
try to understand $F$ around this fiber.  Up to adding $-t_{i_0}$ to $q$ we assume
$t_{i_0} = 0$.  Then $q^{-1}(0)$ is a union of circles, one of which is tangent to
$\Sigma$.  Restricting to a neighborhood $\Uu$ of this circle, we obtain the
situation described in~\eqref{eq:notJELambda0} and~\eqref{eq:hypLambda_gentang}
below, with $\GammaE = \Sigma \cap \Uu$, and $F|_\Uu$
satisfies~\eqref{eq:autodualite} and~\eqref{eq:decomp_FJ2} (we restrict the
isomorphism $\roim{q}(F) \simeq \bigoplus_{a\in A} \cor^{n_a}_{I_a}[d_a]$ to a
neighborhood $\Jj$ of $0$ which does not contain the ends of the $I_a$'s, for
$I_a \not= \{0\}$, and rescale to have $\Jj = \mo]-1,1\mc[$). In
Fig.~\ref{fig:hypLambda} we give a typical curve $\dot\pi_M(\Lambda)$ in $M$ and more
precisely the situation in the open subset $\Uu$.

We set $\Jj = \mo]-1,1\mc[$ and we choose a diffeomorphism
$\Uu \simeq \cer \times \Jj$. We let
$$
q \colon \Uu \to \Jj
$$
be the projection.  We take the coordinates $\theta \in \mo]-\pi,\pi]$ on
$\cer$, $t$ on $\Jj$ and $(\theta;\xi)$ on $T^*\cer$, $(t;\tau)$ on $T^*\Jj$.
We set
\begin{equation}\label{eq:notJELambda0}
\left\{
\begin{alignedat}{2}
  C_t &= \opb{q}(t), \; t\in \Jj,  & \qquad  \Uu_0& =  \mo]-1,1\mc[ \times \Jj , \\
\GammaE_0 &= \{(\theta,t) \in \Uu_0; \; t =  \theta^2 \} , 
& \Omega &= \{(\theta,t) \in \Uu_0; \; t > \theta^2 \}, \\
 \Lambda_0 &= T^*_{\GammaE_0}\Uu_0 , 
&  p^\pm &= (0,0;0,\pm 1) \in \Lambda_0 .
\end{alignedat}
\right.
\end{equation}
We assume that $\Lambda$ satisfies
\begin{equation}\label{eq:hypLambda_gentang}
\left\{     \hspace*{-1cm}
  \begin{minipage}[c]{11cm}
    \begin{itemize}
    \item [-] $\Lambda^a = \Lambda$ (that is, $\Lambda$ is stable by the antipodal
      map $(x;\xi) \mapsto (x;-\xi)$),
    \item [-] $\Lambda \cap T^*\Uu = \dT^*_\GammaE \Uu$, where
      $\GammaE = (\{\theta_1,\dots,\theta_N\} \times \Jj) \cup \GammaE_0$ and
      $\theta_1,\dots,\theta_N \in \cer \setminus \mo]-1,1\mc[$.
    \end{itemize}
\end{minipage}
\right.
\end{equation}

\begin{figure}[ht] 
\begin{tikzpicture}[scale=.5]
  \draw   (-1,1) parabola bend (0,0) (1,1)
  -- plot [smooth] coordinates {(1, 1) (1.2, 2) (0, 3) (-.5, 2.5)
    (0, 2.2) (2.3, 2.5) (3, 3) };

  \draw  plot [smooth] coordinates {(3, 3) (2.7, 2.5) (2.9, 2)
    (3,1) (3,-1) (2.5, -1.8) (1,-3) (-1, -2.3) (-1.5, -1.5) (-1.5,1) (-1,1.5)
    (1, 1.5) (2,1) (2,-1) (0.5, -2) (-2, -2.3) (-2.5,-1) (-2.5,1)
  (-2,2.5) (-1.5,2) (-1,1) };

  \draw (0.5,0) circle [radius=4.3];
\foreach \x/\y\z in {-3.8/4.8/0}
\draw (\x,\z) -- (\y,\z);
\foreach \x/\y\z in {-3.7/4.7/-1, -3.7/4.7/1}
\draw [dashed] (\x,\z) -- (\y,\z);

  \draw [->] (6,0) -- (8,0) node[midway, above] {$q$};
  
  \draw (10,-1) -- (10,1) ;
  \node at (10,2) {$\Jj$} ;
  \node at (-7,0) {$\Uu \simeq \cer \times \Jj$} ;
  \begin{scope}[decoration=brace]
\draw [decorate] (-4.3,-1) -- (-4.3,1);
\end{scope}
\foreach \x in {-1,0,1} 
{  \node at (11,\x) {$\x$} ;
  \draw (9.9,\x) -- (10.1,\x) ; };
  
  \node at (-3.5,3.5) {$M$} ;   
\end{tikzpicture}

\bigskip

\begin{tikzpicture}[scale=1.3]
  \draw[dashed] (-3.5,0) -- (-3, 0);
\draw (-3,0) -- (2,0) ;
\draw[dashed] (2,0) -- (2.5, 0);

  \draw (-1,1) parabola bend (0,0) (1,1);
  \fill [gray!20] (-1,1) parabola bend (0,0) (1,1);
  \node [fill=white] at (-.6,.36) {$\scriptstyle \GammaE_0$};
  \node  at (.1,.7) {$\Omega$};
  \node at (-1.2,-.2) {$-1$} ;
  \draw [dashed] (-1,-1) -- (-1,1) ;
  \node at (.9,-.2) {$1$} ;
  \draw [dashed] (1,-1) -- (1,1) ;
  \node at (-2.5,-.2) {$\theta_i$} ;
  \draw (-2.3,-1) -- (-2.3,1) ;
  \node at (1.5,-.2) {$\theta_j$} ;
  \draw (1.7,-1) -- (1.7,1) ;
  \node at (-4.3,.8) {$\Uu$};
  \node at (-3.8,0) {$C_0$};
  \node at  (0,-1.5) {$\Uu_0 \simeq \mo]-1,1[ \times \Jj$};
  \begin{scope}[decoration=brace]
\draw [decorate] (1,-1.2) -- (-1,-1.2);
  \end{scope}
\end{tikzpicture}
\caption{} \label{fig:hypLambda}
\end{figure}

In fact, if $\Lambda \subset \dT^*M$ is any smooth closed conic Lagrangian
submanifold such that $\Lambda^a = \Lambda$ and $\pi_M(\Lambda)$ is a smooth
curve in a neighborhood of $C_0$ which has one tangent point with $C_0$ with a
tangency of order $1$, then we can find an isotopy $\{\varphi_t\}_{t\in [0,1]}$
of $M$ such that $d\varphi_1(\Lambda)$ satisfies~\eqref{eq:hypLambda_gentang},
up to maybe changing $q$ to $-q$.

We will often make the following hypotheses on the restriction of $F$ to $\Uu$
\begin{align}
\label{eq:autodualite}
& \DD_\Uu(F|_\Uu) \simeq F|_\Uu , \\
\label{eq:decomp_FJ2}
& \roim{q}(F|_\Uu) \simeq \cor_{\{0\}} \oplus B_{\Jj} , \qquad
\text{for some $B \in \Derb(\cor)$},
\end{align}
where   $B_{\Jj}$ denotes the constant sheaf on $\Jj$ with stalk $B$
(that is, $B_{\Jj} = \opb{a_{\Jj}}(B)$ where $a_{\Jj}\colon \Jj \to \pt$ is the
map to a point).

\subsection{Local study around $C_0$}
We prove that $H^1\RHom(F,F)$ is non zero for a sheaf $F$
satisfying~\eqref{eq:autodualite}, \eqref{eq:decomp_FJ2} and some additional
hypothesis (see Proposition~\ref{prop:FC0-monodunip}). For this we use
Lemma~\ref{lem:cohom_supp} with $G=F$ and $Z= C_0$.  To construct the morphism $u$ of
the lemma we use the decomposition of $F_{C_0}$ and $\rsect_{C_0}(F)$ (which are
sheaves on the circle) given by Corollary~\ref{cor:sheaves_dim1} and, more
specifically,   the fact that a non
zero locally constant sheaf with unipotent monodromy appears in the decomposition
of $F_{C_0}$. This fact is proved in Proposition~\ref{prop:interv_ferme}.  We begin
with a lemma which describes the restriction of $F$ to the open set $\Uu_0$ defined
in~\eqref{eq:notJELambda0}.  For this lemma and for
Proposition~\ref{prop:interv_ferme} we consider a sheaf
satisfying~\eqref{eq:decomp_FJ2} but maybe not~\eqref{eq:autodualite} (the argument
of the proof constructs an intermediate sheaf which inherits~\eqref{eq:decomp_FJ2}
but a priori not~\eqref{eq:autodualite}).

In this section we use the notations~\eqref{eq:notJELambda0}
and~\eqref{eq:hypLambda_gentang},  in particular $\Uu$, $\Uu_0$, $\GammaE$,
$\Omega$, $\Lambda$, $\Lambda_0$.  
\begin{lemma}
  \label{lem:F_degree-1}
  Let $\Lambda \subset \dT^*M$ be a smooth closed conic Lagrangian submanifold
  satisfying~\eqref{eq:hypLambda_gentang}.  Let
  $F \in \Derb_{[\Lambda\cap T^*\Uu]}(\cor_\Uu)$ be given. Then $F$ is decomposed
  as $F \simeq \bigoplus_{i\in \Z} (H^iF)[-i]$ and  
  $\dot\SSi(H^iF) \subset \Lambda\cap T^*\Uu$, for each $i\in\Z$.  Moreover, if
  $F$ satisfies~\eqref{eq:decomp_FJ2}, then
  \begin{itemize}
  \item [-] $(H^{-1}F)[1]$ satisfies~\eqref{eq:decomp_FJ2} (for some other
    $B\in \Der(\cor)$),
  \item [-] $\roim{q}(H^iF)$ is a constant sheaf on $\Jj$, for all $i\not=-1$,
  \item [-]
    $H^{-1}F|_{\Uu_0} \simeq \cor_{\Uu_0}^p \oplus \cor_{\Uu_0 \setminus \Omega}^q
    \oplus \cor_{\Uu_0 \setminus \ol\Omega}^r$ with $p,q,r \in\N$, $q,r \geq 1$.
  \end{itemize}
\end{lemma}
\begin{proof} 
  (i) Since $\Lambda \cap T^*\Uu \subset T^*_\GammaE \Uu$ where $\GammaE$ is a smooth
  curve, $F$ is   weakly constructible for the stratification
  $\Uu = \GammaE \cup (\Uu\setminus \GammaE)$ by Proposition~\ref{prop:mustratif}
  and Remark~\ref{rem:tensor_prod_construct}.  It follows that the same holds
  for all $H^iF$.  By Corollary~\ref{cor:tensor_prod_construct}, for two such
  weakly constructible sheaves $G$, $G'$, the complex $H = \rhom(G,G')$ is also
  weakly constructible for the same stratification.  If $G$, $G'$ are
  concentrated in degree $0$, then $H_{\Uu\setminus \GammaE}$ is concentrated in
  degree $0$ and $H_\GammaE$ in degrees $0$ and $1$.  Moreover $H_\GammaE$ has
  stalks $0$ outside $\GammaE$ and $H_\GammaE|_\GammaE$ is locally constant;
  similarly $H_{\Uu\setminus \GammaE}$ has stalks $0$ outside $\Uu\setminus \GammaE$
  and $H_{\Uu\setminus \GammaE}|_{\Uu\setminus \GammaE}$ is locally constant.  Since
  $\GammaE$ consists only of segments, we deduce $H^2(\Uu; H_\GammaE) \simeq 0$.
  We can check that the same vanishing holds for
  $H^2(\Uu; H_{\Uu\setminus \GammaE})$ and we deduce by excision that
  $\Ext^2(G,G') \simeq 0$.  Lemma~\ref{lem:compl_scinde} gives
  $F \simeq \bigoplus_i (H^iF)[-i]$ and this implies the bound for
  $\dot\SSi(H^i(F))$.

  \sui (ii) Now we assume that $F$ satisfies~\eqref{eq:decomp_FJ2}.  By~(i) we
  have $\roim{q}F \simeq \bigoplus_{i\in \Z}\roim{q}(H^iF)[-i]$.  Each
  $\roim{q}(H^iF)$ is weakly constructible for the stratification
  $\Jj = \{0\} \sqcup (\Jj \setminus \{0\})$ and the same argument as in~(i)
  shows that $\roim{q}(H^iF)$ is decomposed by the cohomological degree. Hence
  there exists $i_0$ such that $\roim{q}(H^{i_0}F)[-i_0]$ has $\cor_{\{0\}}$ as
  a direct summand and $\roim{q}(H^iF)$ are constant sheaves on $\Jj$ for
  $i\not= i_0$.  Let us check that $i_0 = -1$.   We see
  $H^{i_0}F$ as an object of $\Derb(\cor_\Uu)$ concentrated in degree $0$.  Since
  the fibers of $q$ have dimension $1$, the direct image $\roim{q}(H^{i_0}F)$ is
  concentrated in degrees $0$ and $1$ and so is its summand $\cor_{\{0\}}[i_0]$.
  Hence $i_0$ can only be $0$ or $-1$.  If $i_0 = 0$, then taking $H^0$ of
  $\roim{q}(H^{0}F)$ we find that $\cor_{\{0\}}$ is a direct summand of
  $H^0\roim{q}(H^{0}F)$ (in the category $\Mod(\cor_{\Jj})$ of non derived
  sheaves).  We recall that $H^0\roim{q}(H^{0}F) \simeq \oim{q}(H^{0}F)$ (the
  non derived direct image).  It follows that there exists
  $s \in \sect(\Jj; \oim{q}(H^{0}F))$ with $\supp(s) = \{0\}$.  Since
  $\sect(\Jj; \oim{q}(H^{0}F)) \simeq \sect(\Uu; H^{0}F)$ this section $s$ gives a
  section $s'$ of $H^0F$ with $\supp(s') \subset q^{-1}(0)$.  But we have seen
  that $H^0F$ is constructible for the stratification
  $\Uu = \GammaE \cup (\Uu\setminus \GammaE)$.  The support of $s'$ should be a
  union of strata and there is no stratum contained in $q^{-1}(0)$. This
  excludes the case $i_0=0$ and we have $i_0=-1$.

  \sui(iii) We recall that $\Uu_0$ is an open subset of $\Uu$ which contains the
  curve $\GammaE_0$ and no other component of $\GammaE$
  (see~\eqref{eq:notJELambda0}).  We can find a submersion $f \cl \Uu_0\to \R$
  whose fibers are intervals and such that $\Omega = \opb{f}(]0,+\infty[)$.
  Since $\dot\SSi(H^{-1}(F)) \subset \Lambda$, Proposition~\ref{prop:iminvproj}
  implies that $H^{-1}(F)|_{\Uu_0} \simeq \opb{f}G$ for some sheaf $G$ on
  $\R$. Then $G$ must be concentrated in degree $0$ and we have
  $\dot\SSi(G) \subset T^*_0\R$.  By Corollary~\ref{cor:sheaves_dim1} $G$ is
  decomposed as a sum of the sheaves $\cor_\R$, $\cor_{]0,+\infty[}$,
  $\cor_{[0,+\infty[}$, $\cor_{\{0\}}$, $\cor_{]-\infty,0[}$ and
  $\cor_{]-\infty,0]}$ with some multiplicities.  Hence $H^{-1}(F)|_{\Uu_0}$ is
  decomposed as a sum of the sheaves $\cor_{\Uu_0}$, $\cor_\Omega$,
  $\cor_{\ol{\Omega}}$, $\cor_{\GammaE_0}$, $\cor_{\Uu_0 \setminus \ol{\Omega}}$ and
  $\cor_{\Uu_0 \setminus \Omega}$.  The sheaves $\cor_\Omega$,
  $\cor_{\ol{\Omega}}$, $\cor_{\GammaE_0}$ have a support which is closed in $\Uu$
  (not only in $\Uu_0$). It follows that, if one of them, say $F'$, is a summand
  of $H^{-1}(F)|_{\Uu_0}$, then it is also a summand of $H^{-1}(F)$ and
  $\roim{q}(F')$ is a summand of $\roim{q}(H^{-1}(F))$.  Since $H^{-1}(F)[1]$
  satisfies~\eqref{eq:decomp_FJ2}, the summands of $\roim{q}(H^{-1}(F))$ can
  only have $\{0\}$ or $\Jj$ as possible support.  On the other hand
  $\roim{q}\cor_\Omega$, $\roim{q}\cor_{\ol{\Omega}}$, $\roim{q}\cor_{\GammaE_0}$
  all have support $[0,1[$.  Hence $\cor_\Omega$, $\cor_{\ol{\Omega}}$,
  $\cor_{\GammaE_0}$ cannot appear in the decomposition of $H^{-1}(F)|_{\Uu_0}$.

  The only possible summands of $H^{-1}(F)|_{\Uu_0}$ are then $\cor_{\Uu_0}$,
  $\cor_{\Uu_0 \setminus \ol{\Omega}}$ and $\cor_{\Uu_0 \setminus \Omega}$. If the
  last two do not appear both, then $\dot\SSi(H^{-1}(F))$ does not meet $\Lambda_0$
  or only contains one of the two components of $\Lambda_0$. By
  Proposition~\ref{prop:oim} we obtain that $\dot\SSi(\roim{q}(H^{-1}(F))$ does not
  meet $\dT_0\R$ or only contains one half of $\dT_0\R$, which contradicts the fact
  that $\cor_{\{0\}}[-1]$ is a summand of $\roim{q}(H^{-1}(F))$. This concludes the
  proof.
\end{proof}

Let $F\in \Derb(\cor_\Uu)$ be given such that $\dot\SSi(F) = \Lambda \cap \dT^*\Uu$
and $F$ is simple.  We assume that $F$ is constructible (for the stratification
$\Uu = \GammaE \sqcup (\Uu\setminus \GammaE)$ -- see
Section~\ref{sec:constructibility}). Then $F|_{C_{1/2}}$ is constructible and we
can decompose $F|_{C_{1/2}}$ according to Corollary~\ref{cor:sheaves_dim1}:
there exist   $L\in \Derb(\cor_{C_{1/2}})$ and
$\{(I_a, d_a)\}_{a\in A}$, where $L$ has locally constant cohomology sheaves of
finite rank and $A$ is a finite family of bounded intervals and integers, such
that
\begin{equation}
\label{eq:decomp_F_C}
F|_{C_{1/2}} \simeq L \oplus \bigoplus_{a\in A} \oim{e}(\cor_{I_a})[d_a] ,
\end{equation}
where $e \cl \R \to C_{1/2} \simeq \R/2\pi\Z$ is the quotient map.   
  Since $F$ is simple, so is $F|_{C_{1/2}}$ by
Corollary~7.5.13 of~\cite{KS90} (see also Lemma~\ref{lem:bonne_im_inv} below). In
particular, the intervals $I_a$ appear with multiplicity $1$ and, for any two
distinct intervals $I_a$, $I_b$, we have
$\dot\SSi(\oim{e}(\cor_{I_a})) \cap \dot\SSi(\oim{e}(\cor_{I_b})) = \emptyset$.  This
implies:
\begin{equation}
  \label{eq:multipl_un}
  \begin{minipage}{10cm}
    if $x$ is an end of $I_a$, $y$ an end of $I_b$ and there exists
    $\pi>\varepsilon>0$ such that
    $e(I_a \cap \mo ]x-\varepsilon,x+\varepsilon[) = e(I_b \cap \mo
    ]y-\varepsilon,y+\varepsilon[)$, then $I_a=I_b$.    
  \end{minipage}
\end{equation}

  In the next proposition we consider a sheaf
$F \in \Dersf_{[\Lambda]}(\cor_\Uu)$ satisfying~\eqref{eq:decomp_FJ2}. By
Lemma~\ref{lem:F_degree-1} we know that $F$ is decomposed according to the
cohomological degree and that $H^{-1}F|_{\Uu_0}$ has $\cor_{\Uu_0 \setminus \Omega}$
and $\cor_{\Uu_0 \setminus \ol\Omega}$ as direct summands.  We remark that
$C_{1/2} \cap (\Uu_0 \setminus \Omega)$ is the union of the two intervals
$\mo]-1,-1/\sqrt2]$ and $[1/\sqrt2,1[$ of length $l = 1-1/\sqrt2$.
By~\eqref{eq:multipl_un}, in the decomposition~\eqref{eq:decomp_F_C} of
$F|_{C_{1/2}}$, there are two intervals (uniquely defined but maybe equal) in
the family $\{I_a\}_{a\in A}$, say $I_b$, with right end $x$ and $I_c$, with
left end $y$, such that
\begin{equation}
  \label{eq:def_Ib_Ic}
  \begin{split}
  e(I_b \cap \mo]x-l, +\infty[) &= \mo]-1,-1/\sqrt2]  , \\
  e(I_c \cap \mo]-\infty, y+l[) &= [1/\sqrt2,1[ .    
  \end{split}
\end{equation}

We recall that a locally constant sheaf $G$ on the circle is described up to
isomorphism by its stalk $G_\theta$ at a given point and its monodromy
$m\colon G_\theta \to G_\theta$.  We then have
$H^0(C_0;G) \simeq \{v\in G_\theta$; $m(v)=v\}$. Hence a locally constant sheaf
has a section if and only if its monodromy has a unipotent factor.

\begin{proposition}
\label{prop:interv_ferme}
Let $\Lambda \subset \dT^*M$ be a smooth closed conic Lagrangian submanifold
satisfying~\eqref{eq:hypLambda_gentang}.  We use the
notations~\eqref{eq:notJELambda0}.  Let $F \in \Derb_{[\Lambda]}(\cor_\Uu)$ be a
simple sheaf satisfying~\eqref{eq:decomp_FJ2}.  We assume that $F$ is
constructible (see Section~\ref{sec:constructibility}).    We consider the intervals $I_b$, $I_c$ appearing in
the decomposition~\eqref{eq:decomp_F_C} of $F|_{C_{1/2}}$ which are defined
in~\eqref{eq:def_Ib_Ic}.  We assume that $I_b$ or $I_c$ is closed.  Then there
exists a decomposition $F|_{C_0} \simeq F' \oplus L[1]$ such that
$L \in \Mod(\cor_{C_0})$ is a locally constant sheaf with unipotent monodromy
and $L\not=0$.
\end{proposition}
\begin{proof} 
  (i) We assume that $I_b$ is closed, the other case being similar.  We argue by
  contradiction and assume that there exists $F$ satisfying the hypotheses of the
  proposition but not the conclusion.  We choose $F$ (among those $F$ contradicting
  the proposition) so that the integer $\lfloor l(I_b)/2\pi \rfloor$ is minimal,
  where $l(I_b)$ is the length of $I_b$.  We write $I_b = [\alpha, \beta]$.  We set
  $C = C_{1/2}$ for short.  By Lemma~\ref{lem:F_degree-1} we know that $F$ is
  decomposed according to the cohomological degree, that $(H^{-1}F)[1]$
  satisfies~\eqref{eq:decomp_FJ2} and that $H^{-1}F|_{\Uu_0}$ has
  $\cor_{\Uu_0 \setminus \Omega}$ and $\cor_{\Uu_0 \setminus \ol\Omega}$ as direct
  summands.     Since $F$ is
  simple, only this degree $-1$ term involves intervals with one end in
  $\opb{e}(\GammaE_0\cap C)$. We can then assume from the beginning that $F$ is
  concentrated in degree $-1$ and we write $F = F_0[1]$, where   $F_0 \in \Mod(\cor_\Uu)$ is seen as usual as an object of
  $\Derb(\cor_\Uu)$ that is concentrated in degree $0$.  The hypotheses say that we
  can write   $F_0|_C \simeq \oim{e}(\cor_{[\alpha, \beta]}) \oplus
  F_1$. We define $s = (\mathbf{1},0) \in H^0(C; F_0|_C)$ according to this
  decomposition, where $\mathbf{1}$ is the natural section of
  $\oim{e}(\cor_{[\alpha, \beta]})$.

  By the base change formula we have  
  $H^0(C; F_0|_C) \simeq (\oim{q}(F_0))_{1/2}$ (the stalk of the {\em non
    derived} direct image $\oim{q}(F_0)$ at the point $1/2 \in \Jj$).  The
  hypothesis~\eqref{eq:decomp_FJ2} implies that
  $\oim{q}(F_0) \simeq H^0(\roim{q}F_0)$ is a constant sheaf on $\Jj$, hence the
  restriction morphism $H^0(\Jj;\oim{q}(F_0)) \to (\oim{q}(F_0))_{1/2}$ is an
  isomorphism.  Since $H^0(\Jj;\oim{q}(F_0)) = H^0(\Uu; F_0)$, there exists
  $s' \in H^0(\Uu; F_0)$ such that $s'|_C = s$.  We interpret the sections $s$ and
  $s'$ as morphisms
$$
u \cl \cor_C \to F_0|_C, \qquad u' \cl \cor_\Uu \to F_0 .
$$

\sui (ii) If $\lfloor (\beta-\alpha)/2\pi \rfloor = 0$, then
$I' = e([\alpha, \beta])$ is an arc of $C$ (smaller than $C$).  Then
$S = \supp( s')$ is a closed subset of $\Uu$ which satisfies $S \cap C = I'$.  By
Lemma~\ref{lem:F_degree-1} and the fact that $F$ is simple we have
$F_0|_{\Uu_0} \simeq \cor_{\Uu_0}^p \oplus \cor_{\Uu_0 \setminus \Omega} \oplus
\cor_{\Uu_0 \setminus \ol\Omega}$.   Hence $S\cap \Uu_0$ can only be
$\emptyset$, $\Uu_0$ or $\Uu_0 \setminus \Omega$.  The first two cases are excluded
because one end of the arc $I'$ is in $\partial \Omega$ and $I' \not= C$. Hence
$S\cap \Uu_0 = \Uu_0 \setminus \Omega$. It follows that
$I' = C \setminus (C\cap \Omega)$.      Outside $\Uu_0$ the sheaf $F_0$ is constant on the vertical segments
$\{\theta\} \times \Jj$, by Theorem~\ref{thm:iminv} and the
hypotheses~\eqref{eq:hypLambda_gentang} on $\Lambda$, and
$S\setminus (S\cap \Uu_0)$ must be a union of such segments.  Hence we have
$S = \Uu \setminus \Omega$.

The morphism $u'$ factorizes through $\cor_\Uu \to \cor_S$ and gives
$v \cl \cor_S \to F_0$.  By construction the restriction of $v$ to $\Uu_0$,
$v|_{\Uu_0} \colon \cor_{\Uu_0 \setminus \Omega} \to F_0|_{\Uu_0}$, is the morphism
induced by the above decomposition of $ F_0|_{\Uu_0}$.  Let us define
$G \in \Derb(\cor_\Uu)$ by the distinguished triangle
\begin{equation}
\label{eq:interv_ferme1}
\cor_S \to[v] F_0 \to G \to[+1].
\end{equation}
Then $G|_{\Uu_0} \simeq \cor_{\Uu_0}^p \oplus \cor_{\Uu_0 \setminus \ol\Omega}$.  The
image of the triangle~\eqref{eq:interv_ferme1} by $\roim{q}$ gives
\begin{equation}
\label{eq:interv_ferme2}
\roim{q}(\cor_S) \;\to[w]\; \cor_{\{0\}}[-1] \oplus B_{\Jj}[-1] 
 \;\to\;  \roim{q}(G)  \;\to[+1] ,
\end{equation}
with $B \in \Derb(\cor)$ as in~\eqref{eq:decomp_FJ2} and $w = \roim{q}(v)$. Let us
write $w = \bpmat w_0 \\ w_1 \epmat$. The morphism $w_0$ has target
$\cor_{\{0\}}[-1]$ and thus is determined by $H^1(w|_{\{0\}})$.  By the base change
formula $H^1(w|_{\{0\}}) = H^1(C_0; v|_{C_0})$.  We have
$\cor_S|_{C_0} \simeq \cor_{C_0}$ and we have assumed that $F_0|_{C_0}$ does not have
a direct summand which is locally constant with unipotent monodromy.  This implies
that $H^1(C_0; v|_{C_0})$ vanishes.  Indeed, decomposing
$F_0|_{C_0} \simeq L_0 \oplus \bigoplus_{a\in A'} \oim{e}(\cor^{n_a}_{J_a})$ as
in~\eqref{eq:cons_sh_cer1}, we write $v|_{C_0} = v_0 + \sum_{a\in A'} v_a$.  Our
hypothesis says that $L_0$ has no global section, hence $v_0 = 0$.  If $v_a\not=0$,
then the corresponding interval $J_a$ is closed and
$H^1(C_0; \oim{e}(\cor^{n_a}_{J_a})) \simeq 0$, hence $H^1(C_0; v_a)=0$.  Finally
$H^1(C_0; v|_{C_0}) =0$, as claimed.

We thus have $w_0 = 0$ and we deduce from~\eqref{eq:interv_ferme2} that
$\cor_{\{0\}}[-1]$ is a direct summand of $\roim{q}(G)$.  On the other hand, the
triangular inequality for the microsupport together
with~\eqref{eq:interv_ferme1} and
$G|_{\Uu_0} \simeq \cor_{\Uu_0}^p \oplus \cor_{\Uu_0 \setminus \ol\Omega}$ give the
bound $\dot\SSi(G) \subset (\Lambda \cap T^*\Uu) \setminus \Lambda_0^-$, where
$\Lambda_0^-$ is the connected component of $\Lambda_0$ which contains $p^-$.
This implies $\dot\SSi( \roim{q}(G)) \subset \{(0;\tau)$; $\tau>0\}$ and
contradicts the fact that $\cor_{\{0\}}[-1]$ is a direct summand of
$\roim{q}(G)$.

\sui (iii) Now we assume $\lfloor (\beta - \alpha)/2\pi \rfloor > 0$.  We define
$G \in \Derb(\cor_\Uu)$ by the distinguished triangle
$\cor_\Uu \to[u'] F_0 \to G \to[+1]$. Applying $\roim{q}$ gives the distinguished
triangle
\begin{equation}
\label{eq:interv_ferme3}
\roim{q}(\cor_\Uu) \to \cor_{\{0\}}[-1] \oplus B_{\Jj}[-1] 
\;\to\;  \roim{q}(G)  \;\to[+1] .
\end{equation}
The same argument as in~(ii) works and we find that $\cor_{\{0\}}[-1]$ is a
direct summand of $\roim{q}(G)$.  Since
$\roim{q}(\cor_\Uu) \simeq \cor_{\Jj} \oplus \cor_{\Jj}[-1]$ the other summands of
$\roim{q}(G)$ are constant sheaves on $\Jj$. Hence $G[1]$
satisfies~\eqref{eq:decomp_FJ2}.  Since $\dot\SSi(\cor_\Uu) = \emptyset$, we also
have $\dot\SSi(G) = \dot\SSi(F_0)$ and $G$ is simple.  By~(iv) below we have
$G|_C \simeq \oim{e}(\cor_{[\alpha,\beta-2\pi]}) \oplus F_1$.  
Hence $G[1]$
satisfies the same hypotheses as $F$ with $(\beta - \alpha)/2\pi$ replaced by
$(\beta - \alpha)/2\pi -1$.  Hence $G|_{C_0}$ has a direct summand, say $L_u$,
which is a locally constant sheaf with unipotent monodromy (recall that $F$ was
chosen to contradict the result with $\beta-\alpha$ minimal).  As in~(ii) we
write
$F_0|_{C_0} \simeq L_0 \oplus \bigoplus_{a\in A'} \oim{e}(\cor^{n_a}_{J_a})$ and
we write $u'|_{C_0} = \sum_{a\in A''} u'_a$, where $A'' \subset A'$ is the set of
closed intervals.  Setting
$F'_0 = L_0 \oplus \bigoplus_{a\in A'\setminus A''} \oim{e}(\cor^{n_a}_{J_a})$,
$F''_0 = \bigoplus_{a\in A''} \oim{e}(\cor^{n_a}_{J_a})$ we have
$F_0|_{C_0} \simeq F'_0 \oplus F''_0$ and $u'|_{C_0} = (0,u'')$.  Hence
$G|_{C_0} \simeq F'_0 \oplus \coker(u'')$. The summand $L_u$ of $G|_{C_0}$
cannot appear in $F'_0$ (by the assumption on $F_0|_{C_0}$) hence it appears in
$\coker(u'')$.  Hence $L_u$ is a quotient of $F''_0$. On the other hand, if
$J_a$ is closed, then $\Hom(\oim{e}(\cor^{n_a}_{J_a}), L_u) = 0$ and we have a
contradiction. This proves the proposition.

\sui(iv)   It remains to explain the decomposition of $G|_C$.  We have
$\beta - \alpha \geq 2\pi$ and we set $I = [\alpha, \beta]$,
$J = [\alpha, \beta-2\pi]$, $J' = [\alpha+2\pi, \beta]$.  We have the canonical
isomorphisms
$\Hom(\cor_C, \oim{e}(\cor_I)) \simeq \Hom(\cor_\R, \cor_I) \simeq
\sect(\R;\cor_I) \simeq \cor$ where the first one is given by the adjunction
$(\opb{e}, \oim{e})$.  Let $i\colon \cor_C \to \oim{e}(\cor_I)$ be the morphism
corresponding to $1\in \cor$.  We have a natural isomorphism
$\varphi \colon \oim{e}(\cor_J) \simeq \oim{e}(\cor_{J'})$ and two restriction
morphisms $r \colon \cor_I \to \cor_J$, $r' \colon \cor_I \to \cor_{J'}$.  For
$\theta \in C$ the vector space $(\oim{e}(\cor_I))_\theta$ has a basis
$\{p_1,\dots,p_n\}$ identified with $I \cap e^{-1}(\theta)$ ($n$ depends on
$\theta$). We order the basis by the order induced by $\R$, that is,
$I \cap e^{-1}(\theta) = \{p_1< \dots <p_n\}$.  In the same way, a basis of
$(\oim{e}(\cor_J))_\theta$ is $\{p_1,\dots,p_{n-1}\}$.  The morphisms
$\oim{e}(r)$ and $\varphi^{-1} \circ \oim{e}(r')$ induce in the stalks the
morphisms $(x_1,\dots,x_n) \mapsto (x_1,\dots,x_{n-1})$ and
$(x_1,\dots,x_n) \mapsto (x_2,\dots,x_n)$.  We also have
$(\cor_C)_\theta \simeq \cor$ and the morphism $i$ induces the diagonal morphism
in the stalks, $x \mapsto (x,\dots,x)$.  It follows that the sequence
  $$
  0 \to \cor_C \to[i] \oim{e}(\cor_I)
  \to[\oim{e}(r) - \varphi^{-1} \circ \oim{e}(r')] \oim{e}(\cor_J) \to 0
  $$
  is exact in the stalks, hence exact.
\end{proof}

\begin{example} 
  Here is an example of a sheaf satisfying the hypotheses of
  Proposition~\ref{prop:interv_ferme}.   We
  consider the closed subset $S = \Uu \setminus \Omega$ of $\Uu$ introduced in
  part~(ii) of the proof and define its interior $S' = \Uu \setminus \ol\Omega$.
  Then $\Hom(\cor_{S'}, \cor_S[1]) \simeq H^1(S'; \cor_S) \simeq \cor$ and we define
  $F \in \Derb(\cor_\Uu)$ by the distinguished triangle
  $\cor_{S'} \to[u] \cor_S[1] \to F \to[+1]$, where $u$ is the image of $1$ by this
  isomorphism.  The image of the triangle by $\roim{q}$ gives the distinguished
  triangle
  $$
\cor_\Jj[-1] \oplus \cor_{]-1,0[} \to[v]
\cor_\Jj[1] \oplus \cor_{]-1,0]} \to \roim{q}F \to[+1] ,
  $$
  where the morphism $v$ is non zero because its stalk at a point
  $t \in \mo]-1,0[$ is the image of the morphism $\cor_\cer \to \cor_\cer[1]$ by
  the functor of global sections and this is non zero.  Writing $v$ as a
  $2\times 2$ matrix of morphisms, the only possibly non zero entry is
  $v_{22} \colon \cor_{]-1,0[} \to \cor_{]-1,0]}$ and $v_{22}$ must be a
  multiple of the canonical morphism $\cor_{]-1,0[} \to \cor_{]-1,0]}$ whose
  cone is $\cor_{\{0\}}$.  Since the other entries of $v$ are zero, we find
  $\roim{q}F \simeq \cor_{\{0\}} \oplus \cor_\Jj \oplus \cor_\Jj[1]$, which is
  the hypothesis~\eqref{eq:decomp_FJ2}.
\end{example}

\begin{proposition} 
\label{prop:FC0-monodunip}
Let $\Lambda \subset \dT^*M$ be a Lagrangian submanifold
satisfying~\eqref{eq:hypLambda_gentang} and let $F \in \Derb_{[\Lambda]}(\cor_\Uu)$ be a
simple sheaf.  We assume that $F$ is constructible (see
Section~\ref{sec:constructibility}) and satisfies~\eqref{eq:autodualite}
and~\eqref{eq:decomp_FJ2}.   
We also assume that
$H^{-1}F|_{C_0}$ has a direct summand which is a (non zero) locally constant sheaf
with unipotent monodromy.  Then $H^1\RHom(F,F)$ is non zero.
\end{proposition}
\begin{proof}
  \newcommand{\ww}{w}
  \newcommand{\vv}{v}
  \newcommand{\uu}{u}
  \newcommand{\unip}{\mathrm{uni}}
  \newcommand{\nunip}{\mathrm{nu}}

  (i) We will use Lemma~\ref{lem:cohom_supp} and look for
  $\uu \colon F_{C_0}[-1] \to \rsect_{C_0}(F)$ such that $i \circ \uu \not=0$, where
  $i = j_{C_0}(F) \circ i_{C_0}(F)$ is the composition of the natural morphisms
  $i_{C_0}(F) \colon \rsect_{C_0}(F) \to F$, $j_{C_0}(F) \colon F \to F_{C_0}$.
  In fact, to ensure that $i \circ \uu \not=0$ we first find
  $\ww \colon \cor_{C_0} \to \rsect_{C_0}(F)$ satisfying $i\circ \ww \not=0$ and
  prove the existence of a factorization $\ww = \uu \circ \vv$ as in the diagram
$$
\begin{tikzcd}
  \cor_{C_0} \ar[rr, "\ww"] \ar[dr, dashed, "\vv"]
  & &  \rsect_{C_0}(F)  \ar[r, "i"]  &  F_{C_0} . \\
& F_{C_0}[-1] \ar[ur, dashed, "\uu"]
\end{tikzcd}
$$
It is then clear that $i \circ \uu \not=0$.  We first define $\ww$ such that
$i\circ \ww \not= 0$.  Then in parts~(ii-iv) of the proof we actually show that
any $\ww \colon \cor_{C_0} \to \rsect_{C_0}(F)$ can be written
$\ww = \uu \circ \vv$ as above.

The decomposition~\eqref{eq:decomp_FJ2} gives a morphism
$\cor_{\{0\}} \to \roim{q}(F)$, hence
$\ww' \colon \cor_{\{0\}} \to \rsect_{\{0\}}\roim{q}(F)$.  Using  
$\rsect_{\{0\}}\roim{q}(F) \simeq \roim{q}\rsect_{C_0}(F)$ and the adjunction
$(\opb{q},\roim{q})$, we see that $\ww'$ induces a morphism
$\ww \colon \cor_{C_0} \to \rsect_{C_0}(F)$.   Then
$i\circ \ww \colon \opb{q}(\cor_{\{0\}}) \simeq \cor_{C_0} \to F_{C_0}$ corresponds
to $\roim{q}(i) \circ \ww' \colon \cor_{\{0\}} \to \roim{q}(F_{C_0})$ via the
adjunction.  Through the isomorphisms
$\rsect_{\{0\}}\roim{q}(F) \simeq \roim{q}\rsect_{C_0}(F)$ and
$(\roim{q}(F))_{\{0\}} \simeq \roim{q}(F_{C_0})$ the direct image
$\roim{q}(i) \colon \rsect_{\{0\}}\roim{q}(F) \to (\roim{q}(F))_{\{0\}}$ coincides
with the natural morphism $j_{\{0\}}(\roim{q}(F)) \circ i_{\{0\}}(\roim{q}(F))$.  It
follows that $\roim{q}(i) \circ \ww'$ induces the identity morphism on the summand
$\cor_{\{0\}}$ of $\roim{q}F$. Hence $\roim{q}(i) \circ \ww'$ is non zero and its
image by the adjunction, $i\circ \ww$, is also non zero.

\sui(ii) The existence of $\vv, \uu$ only depend on the restriction of $F$ to a
neighborhood of $C_0$.  So from now on we restrict over $\Uu$, but still write $F$
instead of $F|_\Uu$.  We recall that $\DD F = \DD_\Uu F = \rhom(F,\omega_\Uu)$ and,
since $\Uu$ is oriented of dimension $2$, $\DD_\Uu F \simeq (\DD'_\Uu F)[2]$, where
$\DD'_\Uu F = \rhom(F,\cor_\Uu)$.

Using Lemma~\ref{lem:F_degree-1} we have $F \simeq \bigoplus_{i\in \Z} H^iF[-i]$
and $H^{-1}F$ satisfies~\eqref{eq:decomp_FJ2}.  We deduce
$\DD F \simeq \bigoplus_{i\in \Z}\DD(H^iF[-i])$ where $\DD(H^iF[-i])$ is
concentrated in degree $2-i$. Hence $H^{-1}F$ satisfies~\eqref{eq:autodualite}.
Since $\RHom(H^{-1}F[-1], H^{-1}F[-1])$ is a direct summand of $\RHom(F,F)$, we
may assume from the beginning that $F$ is concentrated in degree $-1$.

  \sui(iii) We recall that a locally constant sheaf $G$ on the circle is
  described up to isomorphism by its stalk $G_\theta$ at a given point and its
  monodromy $m\colon G_\theta \to G_\theta$.  The stalk of $\DD'G$ is
  $\Hom(G_\theta,\cor)$ and its monodromy is ${}^tm^{-1}$.  Two locally constant
  sheaves with isomorphic stalks and conjugate monodromies are isomorphic.
    In particular, if $G$ has unipotent monodromy, then
  $\DD'G \simeq G$.  Using Corollary~\ref{cor:sheaves_dim1} we decompose
  $F|_{C_0}$ as
  \begin{equation}
    \label{eq:FC0-monodunip1}
F|_{C_0} \simeq L_\unip[1] \oplus L_\nunip[1] \oplus
\bigoplus_{a\in A'} \oim{e}(\cor_{I_a})[1] ,
  \end{equation}
  where $L_\unip$, $L_\nunip$ are locally constant sheaves, $L_\unip$ has
  unipotent monodromy, no summand of $L_\nunip$ has unipotent monodromy, and $A'$
  is a finite family of bounded intervals or $\R$.  Since $\DD_\Uu F \simeq F$ we
  have
  $\rsect_{C_0}(F)|_{C_0} \simeq \rsect_{C_0}(\DD_\Uu F)|_{C_0} \simeq
  \DD_{C_0}(F|_{C_0})$.  Since $\DD'L_\unip \simeq L_\unip$, we obtain
  \begin{equation}
    \label{eq:FC0-monodunip2}
\rsect_{C_0}(F)|_{C_0} \simeq L_\unip \oplus \DD'L_\nunip \oplus
\bigoplus_{a\in A'} \oim{e}(\cor_{I_a^*}) ,    
  \end{equation}
where $I_a^*$ is the interval such that $\DD'\cor_{I_a} \simeq \cor_{I_a^*}$.

\sui(iv) We see $\ww$ as a section of $\rsect_{C_0}(F)$ using
$\Hom(\cor_{C_0}, \rsect_{C_0}F) \simeq H^0(C_0; \rsect_{C_0}(F))$.  For a
locally constant sheaf $G$ as in~(iii) with monodromy
$m\colon G_\theta \to G_\theta$, we have $H^0(C_0;G) \simeq \{v\in G_\theta$;
$m(v)=v\}$. Hence a locally constant sheaf has a section if and only if its
monodromy has a unipotent factor.  Using the
decomposition~\eqref{eq:FC0-monodunip2} we can then write
$\ww = (\ww_\unip, 0,\sum_a\ww_a)$ where $\ww_\unip$ is a section of $L_\unip$
and $\ww_a$ a section of $\oim{e}(\cor_{I_a^*})$.

We choose a non zero section $\vv_\unip^1$ of $L_\unip$ (recall that
$L_\unip \not= 0$ by assumption) and define $\vv_\unip$ by
$$
\vv_\unip =  \ww_\unip \quad \text{ if $\ww_\unip \not= 0$,}
\qquad\qquad
\vv_\unip =  \vv_\unip^1   \quad \text{ if $\ww_\unip=0$.}
$$
Using the decomposition~\eqref{eq:FC0-monodunip1} we set
$\vv = (\vv_\unip, 0, 0) \colon \cor_{C_0} \to F_{C_0}[-1]$ (viewing $\vv_\unip$ as a
morphism $\cor_{C_0} \to L_\unip$).

  Now, for each $a\in A'$,
we look for $\uu_a \colon L_\unip \to \oim{e}(\cor_{I_a^*})$ such that
$\ww_a = \uu_a \circ \vv_\unip$.  The morphism $\vv_\unip$ is injective (at each
stalk, it is a non zero morphism
$(\cor_{C_0})_\theta = \cor \to (L_\unip)_\theta$, hence injective).  Let $L'$
be its cokernel.  For each $a\in A'$, the exact sequence
$0 \to \cor_{C_0} \to L_\unip \to L' \to 0$ yields the following part of a long
exact sequence
$$
\Hom(L_\unip, \oim{e}(\cor_{I_a^*})) \to[\varphi] \Hom(\cor_{C_0}, \oim{e}(\cor_{I_a^*}))
\to  \Ext^1(L',  \oim{e}(\cor_{I_a^*})) ,
$$
where $\varphi$ is the morphism $f \mapsto f \circ \vv_\unip$.
By the adjunction $(\opb{e}, \oim{e})$ we have
$$
\Ext^1(L', \oim{e}(\cor_{I_a^*})) \simeq \Hom(L', \oim{e}(\cor_{I_a^*})[1])
\simeq \Hom(\opb{e}(L'), \cor_{I_a^*}[1]).
$$
Now $L'$ is locally constant, hence $\opb{e}(L')$ is constant, say
$\opb{e}(L') \simeq \cor_\R^N$, and we have
$\Hom(\opb{e}(L'), \cor_{I_a^*}[1]) \simeq (H^1(\R; \cor_{I_a^*}))^N$.  We
remark that a sheaf $\cor_{I_a^*}$ has a non zero section if and only if $I_a^*$
is closed, in which case we have $H^1(\R; \cor_{I_a^*}) \simeq 0$ and the
morphism $\varphi$ is surjective.  Hence for any $a\in A'$ there exists
$\uu_a \colon L_\unip \to \oim{e}(\cor_{I_a^*})$ such that
$\ww_a = \uu_a \circ \vv_\unip$.

Using the decompositions~\eqref{eq:FC0-monodunip1}, \eqref{eq:FC0-monodunip2} we
define $\uu \colon F_{C_0}[-1] \to \rsect_{C_0}(F)$ by
$$
\uu = \bpmat \id_{L_\unip} & 0 & 0 \\ 0 & 0 &0 \\ \sum \uu_a & 0 &0 \epmat
\text{ if $\ww_\unip \not= 0$},
\qquad
\uu = \bpmat 0 & 0 & 0 \\ 0 & 0 &0 \\ \sum \uu_a & 0 &0 \epmat
\text{ if $\ww_\unip = 0$.}
$$
The equality $\uu \circ \vv = \ww$ follows; indeed it reads respectively in  the
cases $\ww_\unip \not= 0$ and $\ww_\unip = 0$:
$$
\bpmat \id_{L_\unip} & 0 & 0 \\ 0 & 0 &0 \\ \sum \uu_a & 0 &0 \epmat
\bpmat  \ww_\unip  \\ 0 \\ 0 \epmat 
= \bpmat  \ww_\unip \\ 0 \\ \sum_a\ww_a \epmat ,
\qquad
\bpmat 0 & 0 & 0 \\ 0 & 0 &0 \\ \sum \uu_a & 0 &0 \epmat
\bpmat \vv_\unip^1 \\ 0 \\ 0 \epmat
= \bpmat  0 \\ 0 \\ \sum_a\ww_a \epmat .
$$
\end{proof}

\section{Microlocal linked points}
\label{sec:micro_linked}

Let $M$ be a manifold (in Proposition~\ref{prop:mongammasurj} $M$ will be a
surface) and let $\Lambda$ be a smooth conic Lagrangian submanifold of $\dT^*M$.
We consider $F\in \Derb(\cor_M)$ such that $\dot\SSi(F) = \Lambda$ and $F$ is
simple along $\Lambda$.  In this section we give a criterion which implies that
$H^1\RHom(F,F)$ is non zero (see Proposition~\ref{prop:mongammasurj}).
For $U,V$ two open subsets of $M$ such that $M=U\cup V$ and for $G\in \Derb(\cor_M)$
we denote by
\begin{equation}
\label{eq:defHUVG}
H^1(\{U,V\}; G) = H^0(U\cap V; G) / (H^0(U; G) \times H^0(V; G) )
\end{equation}
the first \v Cech group of $G$ associated with the covering $\{U,V\}$ of $M$ (we
do not assume that $U$ and $V$ are connected).  By the Mayer-Vietoris long exact
sequence we have an injective map
\begin{equation}
\label{eq:May-VietUVG}
H^1(\{U,V\}; G) \hookrightarrow H^1(M;G) .
\end{equation}
In particular it is enough for our purpose to find a covering $\{U,V\}$ with
$H^1(\{U,V\}; \rhom(F,F)) \not=0$.  To better understand this latter group, we
use the natural morphism from $\rhom(F,F)$ to $\RR\dot\pi_{M*} \muhom(F,F)$.
Since $F$ is simple we have a canonical isomorphism
$$
\cor_\Lambda \isoto \muhom(F,F)|_{\dT^*M}
$$
sending $1$ to $\id_F$.  We deduce morphisms
$\rhom(F,F) \to \RR\dot\pi_{M*}(\cor_\Lambda)$ and
$$
H^1(\{U,V\};\rhom(F,F)) \to H^1(\{U',V'\}; \cor_\Lambda) ,
$$
where $U' = T^*U\cap \Lambda$, $V' = T^*V \cap \Lambda$.  However we lose too
  much information in this way and we consider a map to another \v Cech
group (see~\eqref{eq:defmongamma}).  The construction of this map relies on the
notion of {\em linked points} in $\Lambda$ introduced in
Definition~\ref{def:linkedpair} below.

We first introduce a general notation.  For $G,G' \in \Derb(\cor_M)$ we recall
the canonical isomorphism~\eqref{eq:proj_muhom_oim}
\begin{equation}
\label{eq:Sato-bis}
\rhom(G,G') \simeq \roim{\pi_M} \muhom(G,G') .
\end{equation}
For an open subset $W$ of $M$ and $p\in T^*W$, we deduce the morphisms
\begin{alignat}{2}
\label{eq:def_umu_general}
\Hom(G|_W,G'|_W) &\to H^0(T^*W; \muhom(G,G')) , &\qquad u &\mapsto u^{\mu+} , \\
\label{eq:def_umup_general}
\Hom(G|_W,G'|_W) &\to  H^0 \muhom(G,G')_p , &\qquad u &\mapsto u^{\mu+}_p . 
\end{alignat}
For $G=G'=F$ with $F$ simple and for $p\in \dot\SSi(F)$ we obtain
\begin{alignat}{2}
\label{eq:def_umu}
\Hom(F|_W,F|_W) &\to H^0(\dT^*W; \cor_\Lambda) , &\qquad u &\mapsto u^\mu , \\
\label{eq:def_umup}
\Hom(F|_W,F|_W) &\to  \cor , &\qquad u &\mapsto u^\mu_p 
\end{alignat}
and we have $u^{\mu+}_p = u^\mu_p \cdot (\id_F)^{\mu+}_p$, where $\cdot$ is the
scalar multiplication (indeed the identification $H^0 \muhom(F,F)_p \simeq \cor$
sends $(\id_F)^{\mu+}_p$ to $1$).

\begin{definition}
\label{def:linkedpair}
Let $W\subset M$ be an open subset and let $p,q \in \Lambda \cap T^*W$ be given
points. We say that $p$ and $q$ are $F$-linked over $W$ if $u^\mu_p = u^\mu_q$
for all $u \in \Hom(F|_W, F|_W)$.
\end{definition}

  Because of the isomorphism $\cor_\Lambda \isoto \muhom(F,F)|_{\dT^*M}$ the
scalar $u^\mu_p$ only depends on the component of $\Lambda \cap T^*W$ containing $p$
and we could also speak of $F$-linked connected components of $\Lambda \cap T^*W$.
In particular, if $\Lambda$ is connected and $W=M$ all points of $\Lambda$ are
$F$-linked over $M$. The notion of $F$-linked points will be used when
$\Lambda \cap T^*W$ is a priori non connected.

\begin{remark}
\label{rem:umu_autredef} 
(a) Let $p=(x;\xi) \in \Lambda$.  Let $\varphi \cl M\to \R$ be a function of class
$C^\infty$ such that $\Lambda$ and $\Lambda_\varphi$ intersect transversely
at $p$.  We set $Z = \{ \varphi \geq \varphi(x) \}$.  For
$G,G' \in \Derb_{[\Lambda]}(\cor_M)$, we have by~\eqref{eq:stalk_muhom}
\begin{equation*}
H^0 \muhom(G,G')_p \simeq
\Hom( (\rsect_Z(G))_x , (\rsect_Z(G'))_x ).
\end{equation*}
If $G=G'=F$ with $F$ simple at $p$, then $(\rsect_Z(F))_x$ is $\cor$ in some degree. For
$u \in \Hom(F,F)$ the morphism $(\rsect_Z(u))_x$ is the multiplication by the
scalar $u^\mu_p$.  

\noindent (b) We keep the notations in~(a). We assume that $u^\mu_p \not=0$.
Defining $H \in \Derb_{[\Lambda]}(\cor_M)$ by the distinguished triangle
$F \to[u] F \to H \to[+1]$, we thus have $(\rsect_Z(H))_x \simeq 0$. On the
other hand $\dot\SSi(H) \subset \Lambda$ by the triangular inequality for the
microsupport and the vanishing of $(\rsect_Z(H))_x$ implies that $\dot\SSi(H)$
does not meet the connected component of $\Lambda$ containing $p$.

\noindent (c) We keep the notations in~(a). We assume that $u$ factorizes
through $H \in \Derb(\cor_M)$ and $p\not\in \SSi(H)$. Then $(\rsect_Z(u))_x$
factorizes through $(\rsect_Z(H))_x \simeq 0$ and we obtain $u^\mu_p = 0$.
\end{remark}

By Theorem~\ref{thm:germemuhom} the functors $u \mapsto u^{\mu+}_p$ or
$u \mapsto u^\mu_p$ are well-defined in the quotient category
$\Derb(\cor_M;p)$. We can also express this result as follows.
\begin{lemma}
\label{lem:umup_DMp}
Let $G,G' \in\Derb(\cor_M)$ be such that
$\dot\SSi(G), \dot\SSi(G') \subset \Lambda$ and let $p\in \Lambda$ be given. We
assume that there exists a distinguished triangle $G \to[g] G' \to H \to[+1]$
with $p\not\in \SSi(H)$.  Then the composition with $g$ induces
isomorphisms
$$
\muhom(G,G)_p \isoto[a] \muhom(G,G')_p \isofrom[b] \muhom(G',G')_p
$$
and we have $a((\id_G)^{\mu+}_p) = g^{\mu+}_p = b((\id_{G'})^{\mu+}_p)$.  In
particular, if $G$ and $G'$ are simple and $u\cl G \to G$ and $v\cl G' \to G'$
satisfy $v \circ g = g \circ u$ then $u^\mu_p = v^\mu_p$.
\end{lemma}
\begin{proof}
  We apply the functor $\muhom(G,\cdot)$ to the given distinguished triangle
  and we take the stalks at $p$.  By the bound~\eqref{eq:suppmuhom} we have $\mu
  hom(G,H)_p \simeq 0$ and we deduce the isomorphism $a$. The isomorphism $b$ is
  obtained in the same way.  Then the last assertion follows from the relations
  $u^{\mu+}_p = u^\mu_p \cdot (\id_G)^{\mu+}_p$ and $v^{\mu+}_p = v^\mu_p \cdot
  (\id_{G'})^{\mu+}_p$.
\end{proof}

We recall that $U,V$ are two open subsets of $M$ such that $M = U \cup V$.  Let
$\gamma \cl [0,1] \to \Lambda$ be a path such that  
\begin{equation}
\label{eq:hyppathgamma}
\begin{minipage}[c]{10cm}
  $p_0 = \gamma(0)$ and $p_1 = \gamma(1)$ belong to
  $(T^*U \setminus T^*V) \cap \Lambda$ and are $F$-linked over $U$; moreover
  $\im(\gamma)$ is not entirely contained in $T^*U \cap \Lambda$.
  \end{minipage} 
\end{equation}
We define a circle $C$ by identifying $0$ and $1$ in $[0,1]$ and  
let $r\colon [0,1] \to C$ be the quotient map.  The natural orientation of
$[0,1]$ induces an orientation on $C$.  We set
$U' = r(\opb{\gamma}(T^*U \cap \Lambda))$,
$V' = r(\opb{\gamma}(T^*V \cap \Lambda))$.    We have
$U'\cup V' = C$ and, by~\eqref{eq:hyppathgamma}, $U'$ and $V'$ are neither empty
nor equal to $C$.  Hence they are unions of non trivial arcs of $C$ and we have
a canonical isomorphism
$H^1(\{U',V'\}; \cor_C) \simeq H^1(C;\cor_C) \simeq \cor$.

For $u \in H^0(U;\rhom(F,F))$ the inverse image of $u^\mu$ by $\gamma$
gives a well-defined section of $H^0(U';\cor_C)$ because $p_0$ and
$p_1$ are $F$-linked over $U$. An element of $H^0(V;\rhom(F,F))$ also
induces a section of $H^0(V';\cor_C)$ because $p_0, p_1 \not\in T^*V
\cap \Lambda$.  We deduce a well-defined map
\begin{equation}
\label{eq:defmongamma}
\mon_\gamma \cl H^1(\{U,V\};\rhom(F,F)) \to H^1(\{U',V'\}; \cor_C) \simeq \cor.
\end{equation}

Now we describe a situation where the map $\mon_\gamma$ is surjective.  We first
introduce a notion of conjugate pair of points.

\begin{definition}
\label{def:conjugate_points}
Let $M$ be a manifold, $\cor$ a field and $F\in \Derb(\cor_M)$.  Let
$q_0 = (x_0;\xi_0)$ and $q_1 = (x_1;\xi_1)$ be given points of $\dot\SSi(F)$,
generating two distinct half-lines $\rspos\cdot q_0 \not= \rspos\cdot
q_1$.   Let $I$ be either an open interval of $\R$ or the
circle $\cer$ and let $i \cl I \to M$ be an embedding.  We say that $q_0$ and
$q_1$ are $F$-conjugate with respect to $i$ if
\begin{itemize}
\item [(i)] $x_0, x_1 \in \im(i)$; we write $t_0 = i^{-1}(x_0)$,
  $t_1 = i^{-1}(x_1)$,
\item [(ii)] for $k=0,1$, there exist a neighborhood $U_k \subset M$ of $x_k$
  and a hypersurface $N_k \subset U_k$ such that
  $\SSi(F) \cap \dT^*U_k \subset \dT^*_{N_k}U_k$ and $i$ is transverse to $N_k$
  at $t_k$,
\item [(iii)] $F$ is simple along $\SSi(F)$ at $q_k$ for $k=0,1$,
\item [(iv)] $\opb{i}F$ has a direct summand $F'$ such that
  $(t_k;i_d(\xi_k)) \in \SSi(F')$ for $k=0,1$ and $F'$ is isomorphic to
$$
\begin{cases}
\text{$\cor_J[d]$ for some interval $J$ of $I$} 
& \text{if $I$ is an interval,} \\
\text{$\oim{e}(\cor_J)[d]$ for some interval $J$ of $\R$} 
& \text{if $I=\cer$,}
\end{cases}
$$
where $d\in \Z$ and $e \cl \R \to \cer$ is the covering map.
\end{itemize}
\end{definition}

We will see in Proposition~\ref{prop:mulink-restrdim1}   that the notion of
conjugate and linked points are related.

\begin{proposition}
\label{prop:mongammasurj}
Let $M$ be a surface and let $\Lambda$ be a smooth conic Lagrangian submanifold of
$\dT^*M$ such that $\Lambda/\rspos$ is a circle.  Let
$F\in \Derb_{[\Lambda]}(\cor_M)$ such that $F$ is simple along $\Lambda$.  We assume
that there exists an embedding of the circle $i \cl \cer \to M$ and $p_0$, $p_1$,
$q_0$, $q_1 \in \Lambda$ such that
\begin{itemize}
\item [(a)]  $\im(i)$ meets $\pi_M(\Lambda)$ at smooth points and transversely,
\item [(b)] $M\setminus \im(i)$ has two connected components $U$, $V$,
\item [(c)] $q_0$ and $q_1$ are $F$-conjugate with respect to $i$,
\item [(d)] $p_0$ and $p_1$ are $F$-linked over $U$,
\item [(e)] the pairs $\{[p_0], [p_1]\}$ and $\{[q_0], [q_1]\}$ in $\Lambda/\rspos$
  are intertwined, i.e. each   of the two arcs from $[p_0]$ to $[p_1]$ contains exactly
  one point of $\{[q_0], [q_1]\}$ (where $[p]$ denotes the image of $p\in \Lambda$ in
  $\Lambda/\rspos$).
\end{itemize}
Let $\gamma$ be a path from $p_0$ to $p_1$ in $\Lambda$ lifting an arc in
$\Lambda/\rspos$.  Then we can increase $U$, $V$ so that the map $\mon_\gamma$
of~\eqref{eq:defmongamma} is defined and surjective.  In particular
$H^1\RHom(F,F) \not= 0$.
\end{proposition}
The hypotheses are illustrated in Fig.~\ref{fig:dem_un_cusp}
and~\ref{fig:dem_un_cusp2} for the proof of Theorem~\ref{thm:ext1FF}: the embedding
$i$ is the inclusion of the circle $C_{1/2}$ and the points $p^+_l$, $q_l$ are
$F$-conjugate with respect to $i$; the points $p^+$, $p_0$ are $F$-linked over the
lower open set bounded by $C_{1/2}$; the dotted path in Fig.~\ref{fig:dem_un_cusp} is
$\im(\pi_M \circ \gamma)$.

\begin{proof}
  (i) By the hypothesis~(a) we can assume that $\im(i)$ has a neighborhood
  $\shn$ of the type $\shn = \cer \times K$, where $K = \mo]-1,1\mc[$ and
  $i$ corresponds to the embedding of $\cer \times \{0\}$, such that
  $\Lambda \cap T^*\shn = \Lambda_0 \times T^*_KK$ for some subset
  $\Lambda_0 \subset \dT^*\cer$. Since $p_0$, $p_1 \in T^*(M\setminus\cer)$ we
  can also assume that $p_0$, $p_1 \not\in \ol{T^*\shn}$.  By
  Theorem~\ref{thm:iminv} we can write $F|_\shn \simeq \opb{p}F_0$ where
  $p \colon \shn \to \cer$ is the projection and $F_0 \in \Der(\cor_\cer)$. We
  must have $F_0 = \opb{i}F$.  We write   $q_0 = (a,0; \alpha,0)$,
  $q_1 = (b,0; \beta,0)$ with $a$, $b\in \cer$ and $\alpha$, $\beta \not= 0$.
  By the definition of conjugate points, $F_0$ has a direct summand isomorphic
  to $\oim{e}(\cor_J)[d]$, for some interval $J$ of $\R$ whose endpoints are
  mapped to $a$ and $b$ by $e \cl \R \to \cer$   and such that
  $\dot\SSi(\oim{e}(\cor_J)) = \rspos \cdot (a;\alpha) \sqcup \rspos \cdot
  (b;\beta)$.  Hence we can write
\begin{equation}
\label{eq:decFOmega}
F|_\shn \simeq F' \oplus F'',
\end{equation}
where $F' = \opb{p}(\oim{e}(\cor_J)[d])$ and
$\dot\SSi(F') = \dot\SSi(\oim{e}(\cor_J)) \times T^*_KK$. In other words
$\dot\SSi(F')$ consists of the two components of $\Lambda \cap T^*\shn$
containing $q_0$ and $q_1$.    We remark that the components of
$\Lambda \cap T^*\shn$ are of the form
$\Lambda_i = \rspos \cdot (a_i;\alpha_i) \times T^*_KK$, $i=1,\dots,n$, with
$(a_i;\alpha_i) \in T^*\cer$.  Since $\gamma \colon [0,1] \to \Lambda$ lifts an
arc of $\Lambda/\rspos$ whose two ends, $p_0$ and $p_1$, do not belong to
$\ol{T^*\shn}$, it follows that $I_i = \gamma^{-1}(\Lambda_i)$ is either empty
or an interval of $[0,1]$. Moreover $\ol{I_i}$ does not contain $0$, $1$.  The
hypothesis~(e) says that $\im(\gamma)$ meets exactly one of the two components
of $\dot\SSi(F')$.  Hence $\opb{\gamma}(\SSi(F'))$ consists of a single
interval.  We choose the indices so that $\opb{\gamma}(\SSi(F')) = I_1$,
$\im(\gamma)$ meets $\Lambda_1,\dots,\Lambda_{n'}$ and does not meet
$\Lambda_{n'+1},\dots,\Lambda_n$, for some $n'\leq n$.

\sui(ii) We set $U^+ = U \cup \shn$ and $V^+ = V \cup \shn$.  Hence
$U^+\cap V^+ = \shn$.   Let $x\in \cor$ be given.  Using the
decomposition~\eqref{eq:decFOmega} we define a morphism
$\alpha(x) \cl F|_\shn \to F|_\shn$ by
\begin{equation}
\label{eq:defalphaFUV}
\alpha(x) = \begin{pmatrix}
  x \cdot \id_{F'}  \\ & 0_{F''} 
\end{pmatrix}.
\end{equation}
As described after~\eqref{eq:hyppathgamma} we set $C = [0,1]/(0\sim 1)$ and let
$r\colon [0,1] \to C$ be the quotient map. We set
$U' = r(\opb{\gamma}(T^*U^+ \cap \Lambda))$ and
$V' = r(\opb{\gamma}(T^*V^+ \cap \Lambda))$. Then $U'$ and $V'$ are unions of
possibly several arcs of $C$, say $U' = \bigsqcup_{i = 1}^l U'_i$,
$V' = \bigsqcup_{i = 1}^m V'_i$.    We have
$U' \cap V' = r(\opb{\gamma}(\Lambda \cap T^*\shn)) = \bigsqcup_{i = 1}^{n'}
r(I_i)$ and in fact $m = l = n'/2$.  By definition $H^1(\{U',V'\}; \cor_C)$ is
the cokernel of the morphism
$$
d \colon \bigoplus_{i = 1}^l H^0(U'_i; \cor_C) \oplus
\bigoplus_{i = 1}^m H^0(V'_i; \cor_C) 
\to \bigoplus_{i = 1}^{n'} H^0(r(I_i); \cor_C),
$$
where each $H^0(-; \cor_C)$ is isomorphic to $\cor$ and $d$ is induced by the
obvious restriction maps (we remark that
$H^0(U'_i; \cor_C) \to H^0(r(I_j); \cor_C)$ is the identity map if
$r(I_j) \subset U'_i$, the zero map else).  We know that
$H^1(\{U',V'\}; \cor_C) \simeq H^1(C, \cor_C) \simeq \cor$ and the quotient map
induces $H^0(r(I_i); \cor_C) \isoto \cor$, for each $i = 1,\dots,n'$.

By the definition of $\alpha(x)$, the section $\alpha(x)^\mu$ of
$\muhom(F,F) \simeq \cor_\Lambda$ is the scalar $x$ over the two components of
$\Lambda \cap T^*\shn$ given by $\dot\SSi(F')$ and $0$ over all other
components of $\Lambda \cap T^*\shn$.  Taking the pull-back by $\gamma$ and
the image by $r$, and remembering that $\opb{\gamma}(\SSi(F')) = I_1$ is a
single interval, we obtain that $\mon_\gamma([\alpha(x)])$ is represented by
$(x_i)_{i=1}^{n'} \in \bigoplus_{i = 1}^{n'} H^0(r(I_i); \cor_C)$ with $x_1 = x$
and $x_i = 0$ for $i\not=1$.  It follows that $\mon_\gamma([\alpha(x)])$ is non
zero if $x\not=0$, hence $\mon_\gamma$ is surjective.

In particular $H^1(\{U,V\};\rhom(F,F)) \not=0$ and, by~\eqref{eq:May-VietUVG},
we have $H^1\RHom(F,F) \simeq H^1(M; \rhom(F,F)) \not= 0$.
\end{proof}

\section{Examples of microlocal linked points}
\label{sec:exmiclinkpt}

We begin with the following easy example over $\R$. In higher dimension we give
Propositions~\ref{prop:mulink-restrdim1} and~\ref{prop:mulink-projdim1} below
which reduce to this case by inverse or direct image.

\begin{proposition}
\label{prop:mulink-dim1}
Let $t_0 \leq t_1 \in \R$ and $p_0 = (t_0;\tau_0)$, $p_1 = (t_1;\tau_1) \in \dT^*\R$
be given.  Let $F \in \Derb(\cor_\R)$ be a constructible sheaf with
$p_0 , p_1 \in \SSi(F)$ such that $F$ is simple at $p_0, p_1$.  We assume that there
exists a decomposition $F \simeq G \oplus \cor_I[d]$ where $G \in \Derb(\cor_\R)$,
$d\in \Z$ and $I$ is an interval with ends $t_0,t_1$ such that
$p_0,p_1 \in \SSi(\cor_I)$.  Then $p_0$ and $p_1$ are $F$-linked over any open  
interval containing $\ol{I}$.
\end{proposition}
\begin{proof}
  Let $i \cl \cor_I[d] \to F$ and $q \cl F \to \cor_I[d]$ be the morphism
  associated with the decomposition of $F$.  Then $i$ and $q$ give a morphism
  $\Hom(F,F) \to \Hom(\cor_I,\cor_I) \simeq \cor$.  Since $F$ is simple at
  $p_k$, for $k=0,1$, and $p_k \in \SSi(\cor_I)$, we have $p_k \not\in \SSi(G)$.
  Let $u\colon F \to F$ be given and let ${}_Iu \colon \cor_I[d] \to \cor_I[d]$
  be the morphism induced by $u$.  Then $u^\mu_{p_k} = ({}_Iu)^\mu_{p_k}$ by
  Lemma~\ref{lem:umup_DMp}.  Since $\Hom(\cor_I,\cor_I) \simeq \cor$ we have
  $({}_Iu)^\mu_{p_0} = ({}_Iu)^\mu_{p_1}$ for any $u$ defined in a neighborhood
  of $\ol{I}$.  The result follows.
\end{proof}

Let $f \cl M \to N$ be a morphism of manifolds. We recall the notations $f_d \cl
M \times_N T^*N \to T^*M$ and $f_\pi \cl M \times_N T^*N \to T^*N$ for the
natural maps induced on the cotangent bundles.

We study easy cases of inverse or direct images of simple sheaves.  We let
$F \in \Derb(\cor_M)$ be such that $\Lambda = \dot\SSi(F)$ is a smooth
Lagrangian submanifold and $F$ is simple along $\Lambda$.  More general, but
local, statements are given in~\cite{KS90} (see Corollaries~7.5.12 and~7.5.13).

\begin{lemma}
\label{lem:bonne_im_inv}
Let $i \cl L \to M$ be an embedding and let $x_0$ be a point of $L$.  We assume
that there exist a neighborhood $U \subset M$ of $i(x_0)$ and a submanifold $N
\subset U$ such that $\Lambda \cap \dT^*U \subset \dT^*_NU$ and $N$ is
transverse to $L$ at $x_0$.  Then, up to shrinking $U$ around $x_0$, the set
$\Lambda' = \dot\SSi((\opb{i}F)|_{U \cap L})$ is contained in $\dT^*_{N \cap
  L}L$ and $(\opb{i}F)|_{U \cap L}$ is simple along $\Lambda'$.

Moreover, if $u \cl F \to F$ is defined in a neighborhood of $x_0$ and $v \cl
\opb{i}F \to \opb{i}F$ is the induced morphism, then for any $p = (i(x_0);\xi_0)
\in \Lambda$ and $q = (x_0; i_d(\xi_0))$ we have $v^\mu_q = u^\mu_p$.
\end{lemma}
We note that the inclusion $\Lambda \cap T^*U \subset \dT^*_NU$ implies that
$\Lambda$ is a union of components of $\dT^*_NU$ (hence this is an equality if
$N$ is connected of codimension $\geq 2$).
\begin{proof}
  Up to shrinking $U$ we can find a submersion $f \cl U \to L\cap U$ such that
  $N = \opb{f} (L \cap N)$ and $f\circ i = \id_{L\cap U}$.  By
  Proposition~\ref{prop:iminvproj} we can write $F|_U \simeq \opb{f}G$ for some
  $G \in \Derb(\cor_{L\cap U})$. We must have $G = (\opb{i}F)|_{U \cap L}$.
  Then $\muhom(F,F) \simeq \oim{f_d} \opb{f_\pi}(\muhom(G,G))$ over $U$ and
  the lemma follows.
\end{proof}

\begin{lemma}
\label{lem:bonne_im_dir}
Let $q \cl M \to \R$ be a function of class $C^2$ and let $t_0$ be a regular
value of $q$.  We assume that there exist an open interval $J$ around $t_0$, a
connected hypersurface $L$ of $U \eqdot \opb{q}(J)$ and a connected component
$\Lambda_0$ of $\dT^*_LU$ such that
\begin{itemize}
\item [(a)] $q|_U \colon U \to J$ is proper on $\supp F$,
\item [(b)] $q|_L$ is Morse with a single critical point $x_0$ and $q(x_0) =
  t_0$,
\item [(c)] $\Lambda_0 \subset \Lambda \cap T^*U$ and $T^*_{x_0}M \cap \Lambda
  = T^*_{x_0}M \cap \Lambda_0$,
\item [(d)] $((\Lambda \cap T^*U) \setminus \Lambda_0) \cap q_d(M \times_\R
  \dT^*\R) = \emptyset$.
\end{itemize}
Let $p = (x_0;\xi_0)$ be the point of $\Lambda_0$ ($\xi_0$ is unique up to a
positive scalar) above $x_0$. We have $p \in \im(q_d)$ and we set $p' =
(t_0;\tau_0) = q_\pi(\opb{q_d}(p))$.  Then $p' \in \SSi(\roim{q}F)$ and
$\roim{q}(F)$ is simple at $p'$.  Moreover, for any morphism $u \cl F|_U \to
F|_U$, denoting by $v = \roim{q}(u) \cl \roim{q}F|_J \to \roim{q}F|_J$ the
induced morphism, we have $v^\mu_{p'} = u^\mu_p$.
\end{lemma}
\begin{proof}
  (i) We set $N = \opb{q}(t_0)$.    Then $N$ is a smooth hypersurface of $M$ and, up to shrinking
  $J$ and restricting to a neighborhood of $\supp(F) \cap N$, we can assume that
  $U = N \times J$.  The hypersurfaces $N$ and $L$ of $U$ are tangent at the
  point $x_0$.   We choose a distance function
  $d_N$ on $N$ and define $f\colon U \to \R$, $(x,t) \mapsto d_N(x_0,x)$.  For
  $r>0$ we set $V_r = f^{-1}([0,r[)$ and $Z_r = U \setminus V_r$.  We have a
  distinguished triangle $F_{V_r} \to F \to F_{Z_r} \to[+1]$.  By~(c) we can
  find a ball $B$ around $x_0$ in $U$ such that
  $T^*B \cap \Lambda = T^*B \cap \Lambda_0$.  By Theorem~\ref{thm:SSrhom} we
  obtain $\dot\SSi(F_{Z_r}) \cap T^*B \subset \Lambda(r)$ where
  $$
  \Lambda(r) = ((\Lambda_0 \cap \pi_U^{-1}(Z_r)) \cup T^{*,out}_{\partial V_r}U
  \cup (\Lambda_0 +  T^{*,out}_{\partial V_r}U)) \cap T^*B,
  $$
  and $T^{*,out}_{\partial V_r}U$ is the outer conormal bundle of $\partial V_r$
  in $U$.  We claim that $\Lambda(r) \cap T^*_NU = \emptyset$ for a dense set of
  $r$. Indeed it is enough to see that
  $(T^*_LU + T^*_{f^{-1}(r)}U) \cap T^*_NU \cap T^*B = \emptyset$.  Since $x_0$
  is the only critical point of $q|_L$, $L \cap N$ is smooth outside $x_0$ and
  $(T^*_LU + T^*_{f^{-1}(r)}U) \cap T^*_NU$ is empty if and only if
  $T^*_{L\cap N}U \cap T^*_{f^{-1}(r)}U$ is empty; this means that $r$ is a
  regular value of $f|_{L\cap N}$ and happens for a dense set of $r$.

  \sui(ii)   We choose $r$ and
  $\varepsilon_0 >0$ such that $\Lambda(r) \cap T^*_NU = \emptyset$ and
  $\ol{V_r} \cap q^{-1}([-\varepsilon_0, \varepsilon_0]) \subset B$.  Then, for
  $0<\varepsilon<\varepsilon_0$ small enough, we have
  $\Lambda(r) \cap q_d((N \times [-\varepsilon, \varepsilon]) \times_\R \dT^*\R) =
  \emptyset$. It follows from~(d) that
  $\dot\SSi(F_{Z_r}) \cap q_d((N \times [-\varepsilon, \varepsilon]) \times_\R
  \dT^*\R) = \emptyset$.  Hence $\roim{q}(F_{Z_r})|_{[-\varepsilon, \varepsilon]}$ is
  a constant sheaf and we can assume from the beginning that $F = F_{V_r}$.

  \sui(iii) We set $W = V_r \cap (N \times [-\varepsilon, \varepsilon])$.
  Taking $r$ and $\varepsilon$ smaller if necessary we are in the situation of
  Example~\ref{ex:SS=conormal_hypersurface} and there exist
  $E, E' \in \Derb(\cor)$ and a distinguished triangle
  $E'_W \to[u] E_{W'} \to F|_W \to[+1]$ in $\Derb(\cor_W)$, where $W'$ is the
  closure of one component of $W\setminus L$.  Since $F$ is simple, we must have
  $E = \cor[d]$ for some integer $d$.  Since
  $(\roim{q}(E'_{W}))|_{[-\varepsilon, \varepsilon]}$ is constant, we deduce
  $(\rsect_Y(\roim{q}(F_{V_r})))_{t_0} \simeq
  (\rsect_Y(\roim{q}(\cor_{W'}[d])))_{t_0}$, where $Y = \mo]-\infty,0]$ or
  $Y = [0,+\infty[$ according to the sign of $\tau_0$.  Now the problem is
  reduced to the computation of $\roim{q}(\cor_{W'})$ in a neighborhood of
  $t_0$, which is a classical computation in Morse theory (in our case this is
  done in the proof of Proposition~7.4.2 of~\cite{KS90}).
\end{proof}

Proposition~\ref{prop:mulink-dim1} and Lemmas~\ref{lem:bonne_im_inv}
and~\ref{lem:bonne_im_dir} imply the following results.
\begin{proposition}
\label{prop:mulink-restrdim1}
Let $M$ be a manifold, $\cor$ a field and $F\in \Derb(\cor_M)$.  Let
$p_0, p_1 \in \dot\SSi(F)$ be given.  We assume that there exists an embedding
$i \cl I \to M$ of the circle or an open interval of $\R$ such that $p_0$ and
$p_1$ are $F$-conjugate with respect to $i$.  Then $p_0$ and $p_1$ are
$F$-linked over any open subset containing $i(I)$.
\end{proposition}

\begin{proposition}
\label{prop:mulink-projdim1}
Let $q \cl M \to \R$ be a map of class $C^2$ and let $t_0 \leq t_1$ be regular
values of $q$.  For $k=0,1$ we assume that there exist an open interval $J_k$
around $t_k$, a connected hypersurface $L_k$ of $U_k \eqdot \opb{q}(J_k)$ and a
connected component $\Lambda_k$ of $\dT^*_{L_k}U_k$ such that the
hypotheses~(a)-(d) of Lemma~\ref{lem:bonne_im_dir} are satisfied.  We assume
moreover that $\roim{q}F$ has a decomposition $\roim{q}F \simeq G \oplus
\cor_J[d]$ where $G \in \Derb(\cor_\R)$, $d\in \Z$ and $J$ is an interval with
ends $t_0, t_1$.  Let $p_k = (x_k;\xi_k) \in \Lambda$ be such that $q(x_k) =
t_k$ and $q_\pi(\opb{q_d}(p_k)) \in \dot\SSi(\cor_J)$. Then $p_0$ and $p_1$ are
$F$-linked over any open subset containing $\opb{q}([t_0,t_1])$.
\end{proposition}

  Now we check that in a generic situation a given point has a
unique conjugate.

\begin{proposition}
  \label{prop:exist_conj_pt}
  Let $M$ be a manifold, let $\Lambda \subset \dT^*M$ be a smooth conic
  Lagrangian submanifold and let $i \colon \cer \to M$ be an embedding of the
  circle.  We assume that, in some neighborhood $U$ of $i(\cer)$, there exists a
  hypersurface $N$ of $U$ which is transverse to $i$ and such that
  $\Lambda \subset T^*_NU$. Let $F \in \Derb(\cor_M)$ be such that
  $\dot\SSi(F) = \Lambda$, $F$ is simple along $\Lambda$ and $\opb{i}F$ is
  constructible.  Then, for any $q_0 \in \Lambda \cap (i(\cer) \times_M T^*M)$
  there exists a unique $q_1 \in \Lambda \cap (i(\cer) \times_M T^*M)$ (unique
  up to the action of $\rspos$) such that $q_0$ and $q_1$ are $F$-conjugate with
  respect to $i$.
\end{proposition}
\begin{proof}
  We set $\Lambda' = i_d\opb{i_\pi}(\Lambda)$. The hypothesis on $\Lambda$ implies
  that $\Lambda'$ is in bijection with $\Lambda \cap (i(\cer) \times_M T^*M)$.  Up to
  shrinking $U$ we can find a diffeomorphism $f \colon U \isoto \cer \times \R^d$,
  $d=\dim M -1$, and a finite subset $Z$ of $\cer$ such that $f(N) = Z \times \R^d$
  and $f\circ i$ is the inclusion of $\cer \times \{0\}$.
  By
  Proposition~\ref{prop:iminvproj} we can write
  $\oim{f}(F|_U) \simeq \opb{p}\opb{i}(F)$ where $p \colon \cer \times \R^d \to \cer$
  is the projection.  It follows that $\SSi(\opb{i}(F)) = \Lambda'$. Moreover
  $\opb{i}(F)$ is simple along $\Lambda'$ by Lemma~\ref{lem:bonne_im_inv}.

  Using Corollary~\ref{cor:sheaves_dim1}, we write
  $\opb{i}(F) \simeq L \oplus \bigoplus_{a\in A} \oim{e}(\cor^{n_a}_{I_a})[d_a]$,
  where $L$ is locally constant and $A$ is a finite family of bounded intervals of
  $\R$.  Since the $I_a$'s are bounded we can identify the set of ends of these
  intervals with $B = A \times \{left, right\}$.  Taking the microsupport gives a map
  $\mu$ from $B$ to the set of half-lines of $\dT^*\cer$.   More precisely, for an interval $I_a$ with $\ol{I_a} = [x,y]$ we have
  $\mu(I_a, left) = \{(e(x); \xi)$; $\varepsilon \xi>0\}$ with $\varepsilon = 1$ if
  $I_a$ is closed near $x$, $\varepsilon = -1$ if $I_a$ is open near $x$, and
 $\mu(I_a, right) = \{(e(y); \xi)$; $\varepsilon \xi>0\}$ with $\varepsilon = -1$ if
  $I_a$ is closed near $y$, $\varepsilon = 1$ if $I_a$ is open near $y$.

  Since $\opb{i}(F)$ is simple, the integers $n_a$ are all equal to $1$ and the
  map $\mu$ is injective, inducing a bijection between $B$ and
  $\Lambda'/\rspos$.  Now, two points
  $q_0, q_1 \in \Lambda \cap (i(\cer) \times_M T^*M)$ are $F$-conjugate with
  respect to $i$ if and only if their images in $\Lambda'$ correspond, via
  $\mu$, to the right and left ends of the same interval.  The result follows.
\end{proof}

\section{Generic tangent point - global study}
\label{sec:trois_points_lies}

In this section we find some $F$-linked points in order to be able to apply
Proposition~\ref{prop:mongammasurj} in the proof of Theorem~\ref{thm:ext1FF}.
We first describe the geometric situation.  We assume now that $M = \sph$ is the
sphere.  We keep the notations and assumptions of~\S\ref{sec:threecusps_nothyp}:
$\Uu\subset M$ is an open subset diffeomorphic to $\cer \times \Jj$, with
$\Jj = \mo]-1,1\mc[$, and $\Lambda \subset \dT^*M$ is a smooth closed conic
Lagrangian submanifold satisfying~\eqref{eq:hypLambda_gentang}.  We will also
use the notations $\GammaE$, $\Omega$, $\Lambda_0$ and $p^\pm \in\Lambda_0$
in~\eqref{eq:notJELambda0} and~\eqref{eq:hypLambda_gentang}.

Since $M$ is the sphere, the boundary $\cer \times \{-1\}$ of $\Uu$ bounds a disc
$D$.  For $t \in [-1,1]$ we set
\begin{equation}
  \label{eq:defMt}
M_t = D \cup (\cer \times [-1,t[).   
\end{equation}

\begin{proposition}
\label{prop:otherFlinkedpoint}
Let $F \in \Derb_{[\Lambda]}(\cor_M)$ be a simple sheaf.  We assume that $F|_\Uu$ is
constructible (see Section~\ref{sec:constructibility}) and
satisfies~\eqref{eq:decomp_FJ2}.  Then there exists $p \in \Lambda \cap T^*M_0$
such that $p$, $p^-$ and $p^+$ are $F$-linked over $M_t$, for any
$t \in \mo]0,1]$.
\end{proposition}
\begin{proof}
  (i)   By~\eqref{eq:notJELambda0} and~\eqref{eq:hypLambda_gentang} the group
  $\Hom(F|_{M_t}, F|_{M_t})$ is independent of $t \in \mo]0,1]$ and we can
  assume $t=1$.  By Proposition~\ref{prop:mulink-projdim1} (applied with
  $t_0 = t_1 =0$ and $J = \{0\}$) we know that $p^-$ and $p^+$ are $F$-linked
  over $M_1$.  Hence it is enough to find $p \in \Lambda \cap T^*M_0$ which is
  $F$-linked with $p^+$ over $M_1$.  We argue by contradiction and assume that
  there exists no such $p$.  Since $\Lambda/\rspos$ is compact and
  $\Lambda \cap T^*\Uu$ has the form given in~\eqref{eq:hypLambda_gentang}, we see
  that $\Lambda \cap T^*M_1$ is a finite union of connected components, say
  $$
\Lambda \cap T^*M_1 = \bigsqcup_{i=0}^N \Lambda_i,
  $$
  where $\Lambda_0$ is already defined in~\eqref{eq:notJELambda0}.  The
  components of $\Lambda \cap T^*M_0$ are then $\Lambda_i \cap T^*M_0$,
  $i=1,\dots,N$.

  \smallskip
  
    Before we go on we discuss the idea of the proof
  since it is rather long.  The statement implies in particular that $N\geq 1$.  We
  could indeed see now that $N\geq 1$ as follows: if $N=0$, then $F|_{M_1}$ is a
  simple sheaf on the disc $M_1$ with $\dot\SSi(F|_{M_1}) = \Lambda_0$.  Using
  Example~\ref{ex:SS=conormal_hypersurface} we could describe $F$ and see that it
  cannot satisfy the decomposition hypothesis~\eqref{eq:decomp_FJ2}. The idea is to
  {\em try to} reduce to this situation and obtain a contradiction. We define
  $u \cl F|_{M_1} \to F|_{M_1}$ satisfying~\eqref{eq:otherFlink1} below.  The cone of
  $u$, say $C(u)$, is such that $\dot\SSi(C(u)) = \Lambda_0$ but a priori $C(u)$ does
  not satisfy~\eqref{eq:decomp_FJ2}.  We modify $u$ in~(ii) and~(iii) below to obtain
  another morphism, say $u^1$, ($u^1 = u+v$ in~\eqref{eq:otherFlink6}) such that
  $u^1$ is an isomorphism on $M_0$, and not only a ``microlocal isomorphism'' along
  $\Lambda \cap T^*M_0$ as is the case for $u$. (For this we modify $u$ by a morphism
  $v$ which factorizes through a constant sheaf.)  Now the cone of $u^1$ still does
  not satisfy~\eqref{eq:decomp_FJ2} but it is zero on $M_0$ and we easily obtain a
  contradiction in~(iv), using~\eqref{eq:decomp_FJ2} for $F$ (in fact the case $N=0$
  was the motivation for introducing $u$, $u^1$ and their cones, but the proof is
  finally not a reduction to $N=0$).

  \smallskip

  Now we resume the course of the proof and we build $u$.  We have
  $p^+ \in \Lambda_0$ and we choose $p_i \in \Lambda_i \cap T^*M_0$ for each
  $i=1,\dots,N$. By Definition~\ref{def:linkedpair} there exists
  $u_i \cl F|_{M_1} \to F|_{M_1}$ such that $(u_i)^\mu_{p_i} \not= (u_i)^\mu_{p^+}$,
  for each $i =1,\dots,N$.  Adding a multiple of $\id_F$ and   rescaling we
  can even assume $(u_i)^\mu_{p_i} = 1$ and $(u_i)^\mu_{p^+} = 0$.

  Since $\cor$ is infinite, we can find $\ul a = (a_i)_{i=1,\dots,N} \in \cor^N$
  outside the union of hyperplanes $\bigcup_{i=1,\dots,N} P_i$, where
  $P_i = \{ \ul a$; $\sum_{j=1,\dots,N} a_j (u_j)^\mu_{p_i} = 0\}$ (we remark
  that $P_i \not= \cor^N$ because the coefficient of $a_i$ is $1$).  Then
  $u = \sum_{j=1,\dots,N} a_j \, u_j \cl F|_{M_1} \to F|_{M_1}$ satisfies
\begin{equation}
\label{eq:otherFlink1}
\text{$u^\mu_{p^+} = 0$ and
$u^\mu_{p_i} \not= 0$ for all $i =1,\dots,N$.} 
\end{equation}

\sui(ii) Since $F|_\Uu$ is constructible the algebra $\Hom(F|_\Uu,F|_\Uu)$ is
finite dimensional and there exists a non zero polynomial $P \in \cor[X]$ such
that $P(u|_\Uu)=0$.  Let us write $P(X) = Q(X) \cdot X^k$, where $Q(0) \not=0$.
Then $R(X) = Q(X) + X$ is prime with $P(X)$ and it follows that $R(u|_\Uu)$ is
an isomorphism (we can find $A,B\in \cor[X]$ with $AR+BP=1$ and we obtain
$A(u)R(u) = \id_F$).  It follows that $R(u)$ is an isomorphism (on
$M_1$). Indeed, let $F'$ be the cone of $R(u)$. By the triangular inequality for
the microsupport we have $\dot\SSi(F') \subset \Lambda \cap T^*M_1$.    Since $R(u|_\Uu)$ is an isomorphism we also have
$\dot\SSi(F') \cap T^*\Uu = \emptyset$.  But a microsupport cannot be a proper
subset of a smooth connected Lagrangian submanifold by
Corollary~\ref{cor:micsuppLagrlisse}.  Since all components of
$\Lambda \cap T^*M_1$ meet $T^*\Uu$ we deduce $\dot\SSi(F') = \emptyset$. Hence
$F'$ is locally constant on $M_1$. Since it vanishes on $\Uu$, it vanishes
everywhere on $M_1$. This means that $R(u)$ is an isomorphism on $M_1$.

\sui(iii) We claim that there exists  
$v \colon F|_{M_1} \to F|_{M_1}$ such that
\begin{equation}
\label{eq:otherFlink2}
\text{$v|_{M_0} = Q(u)|_{M_0}$ and
$v^\mu_p = 0$ for any $p\in \Lambda \cap T^*M_1$.}
\end{equation}
To construct $v$ we first define $G \in \Derb(\cor_{M_{1}})$ and $u'$ by the
distinguished triangle
\begin{equation*}
G \to[u'] F|_{M_1} \to[u^k] F|_{M_1} \to G[1] . 
\end{equation*}
By the triangular inequality for the microsupport we have
$\dot\SSi(G) \subset \Lambda \cap T^*M_1$.  By~\eqref{eq:otherFlink1} and
Remark~\ref{rem:umu_autredef}-(b) $\dot\SSi(G)$ avoids the components
$\Lambda_i$ for $i=1,\dots,N$. Hence $\dot\SSi(G) \subset \Lambda_0$.  In
particular $\dot\SSi(G|_{M_0})$ is empty and $G|_{M_0}$ is locally constant,
hence constant since $M_0$ is a disc. Let us write
$G|_{M_0} = K_{M_0} = \opb{a_{M_0}}(K)$ for some $K \in \Derb(\cor)$ where
$a_{M_0}\colon M_0 \to \pt$ is the projection,

Since $u^k \circ Q(u) = 0$ there exists, $w' \colon F \to G$ such that
$Q(u) = u' \circ w'$.  Now both morphisms
$u'|_{M_0} \colon K_{M_0} \to F|_{M_0}$ and
$w'|_{M_0} \colon F|_{M_0} \to K_{M_0}$ extend to $M_1$ as
$u'' \colon K_{M_1} \to F|_{M_1}$ and $w'' \colon F|_{M_1} \to K_{M_1}$.  To see
this we use $K_{M_0} = \opb{a_{M_0}}(K)$, $K_{M_0} \simeq \epb{a_{M_0}}(K)[-2]$ and
the adjunctions  
$(\opb{a_{M_0}}, \rsect(M_0;-)$ and $(\rsect_c(M_0;-), \epb{a_{M_0}})$, which yield
\begin{align}
  \label{eq:otherFlink3}
\Hom(K_{M_0}, F|_{M_0}) &\simeq  \Hom(K, \rsect(M_0; F)), \\
\Hom(F|_{M_0}, K_{M_0}) &\simeq  \Hom(\rsect_c(M_0; F), K[-2]) .
\end{align}
We have similar isomorphisms with $M_0$ replaced by $M_1$.  We extend the
projection $q \colon \Uu \to \Jj$ as $q\colon M_1 \to \R$ by setting
$q(\ol D) = -1$.  Using the hypothesis~\eqref{eq:decomp_FJ2} and
$\rsect(M_1; F) \simeq \rsect(\R; \roim{q}F)$, we have
$\rsect(M_1; F) \simeq \rsect(M_0; F) \oplus \cor$.  Hence the image of $u'$
by~\eqref{eq:otherFlink3} can be extended to $M_1$ and thus $u'$ also can
be extended to $M_1$. In the same way
$\rsect_c(M_1; F) \simeq \rsect_c(M_0; F) \oplus \cor$ and $w'$ can be extended
to $M_1$.

We choose such extensions $u''$, $w''$ and we set $v = u'' \circ w''$. Then
$v|_{M_0} = (u' \circ w')|_{M_0} = Q(u)|_{M_0}$.  Since $v$ factorizes through
$K_{M_1}$ and $\dot\SSi(K_{M_1})$ is empty, Remark~\ref{rem:umu_autredef}-(c) gives
$v^\mu_p = 0$ for any $p\in \Lambda\cap T^*M_1$.

  \sui(iv) We define $G' \in \Derb(\cor_{M_{1}})$ by the distinguished triangle
\begin{equation}
\label{eq:otherFlink6}
G' \to F|_{M_1} \to[u+v] F|_{M_1} \to G'[1] . 
\end{equation}
Then $\dot\SSi(G') \cap T^*M_1 \subset \Lambda \cap T^*M_1$.  Since
$(u+v)|_{M_0} = (u+ Q(u))|_{M_0} = (R(u))|_{M_0}$ is an isomorphism by~(ii), we
have $G'|_{M_0} \simeq 0$.  In particular $\dot\SSi(G') \cap T^*M_0 = \emptyset$
and we obtain that $\dot\SSi(G') \cap T^*M_1 \subset \Lambda_0$.  Then $G'$ is
locally constant on $M_1 \setminus \ol{\Omega}$, hence zero since it is zero on
$M_0$.   Using
Proposition~\ref{prop:iminvproj} as in the part~(iii) of the proof of
Lemma~\ref{lem:F_degree-1} we deduce that $G'$ is a direct sum of sheaves of the
type $\cor_\Omega$, $\cor_{\ol{\Omega}}$ and $\cor_{\GammaE_0}$ with some shifts
in degree.  It follows easily that either $G' \simeq 0$ or
$\supp(\roim{q}(G')) = [0,1[$.

On the other hand, applying $\roim{q}$ to the triangle~\eqref{eq:otherFlink6},
we obtain the distinguished triangle
$\roim{q}(G')|_{\Jj} \to (\cor_{\{0\}} \oplus B_{\Jj}) \to[w] (\cor_{\{0\}}
\oplus B_{\Jj}) \to[+1]$.  Whatever the morphism $w$ this implies
$\supp(\roim{q}(G')) \cap \Jj = \Jj$, $\{0\}$ or $\emptyset$. This prevents
$\supp(\roim{q}(G')) = [0,1[$ and proves that $G' \simeq 0$.  Hence $u+v$ is an
isomorphism. But we also have $(u+v)^\mu_p = u^\mu_p + v^\mu_p = 0$ for any
$p\in \Lambda_0$. This gives a contradiction and concludes the proof.
\end{proof}

\section{Front with one cusp}

In this section we keep the hypotheses of Section~\ref{sec:trois_points_lies} and
add genericity hypotheses on $\Sigma = \dot\pi_{M}(\Lambda)$.  So $M = \sph$ is
the sphere, $\Lambda \subset \dT^*M$ is a smooth closed conic Lagrangian
submanifold and there exists a cylinder $\Uu = \cer \times \Jj$ in $M$ such that
$\Lambda$ satisfies~\eqref{eq:hypLambda_gentang}.  We assume moreover that
$\Sigma$ has exactly one simple cusp $c$ and that
$\Sigma \cap (M\setminus \{c\})$ is an immersed curve, with transverse double
points.  Since $\Lambda$ is stable by the antipodal map, $\Lambda \cap T^*_cM$
consists of two opposite half lines and
$\Lambda \setminus (\Lambda \cap T^*_cM)$ has two connected components.  The
antipodal map has no fixed point and it follows that it exchanges the two
connected components of $\Lambda \setminus (\Lambda \cap T^*_cM)$.

We consider $F \in \Dersf_{[\Lambda]}(\cor_M)$ satisfying~\eqref{eq:autodualite}
and~\eqref{eq:decomp_FJ2}.  Since $F$ is simple along $\Lambda$, it has a shift
$s(p) \in \demi\Z$ at any $p\in \Lambda$ (see~\eqref{eq:def_shift_simple}).  As
recalled in Example~\ref{ex:shift}, the shift is locally constant outside the
cusps and changes by $1$ when $p$ crosses a cusp.  Hence $s(p)$ takes two
distinct values over $\Lambda \setminus (\Lambda \cap T^*_cM)$, one for each
connected component.  We recall that we have distinguished two points  
$p^\pm$ of $\Lambda$ (see notations~\eqref{eq:notJELambda0}) and we have
$p^{+a} = p^-$.  Hence $F$ has different shifts at the points $p^+$ and $p^-$.
We denote by $\Lambda^\pm$ the connected component of
$\Lambda \setminus(\Lambda \cap T^*_cM)$ containing $p^\pm$.

By Example~\ref{ex:shift} the sheaf $\cor_{\mo]-\infty,0[}$ on $\R$ has shift
$-\frac12$ at $(0;1)$ and the sheaf $\cor_{\mo]-\infty,0]}$ has shift $\frac12$
at $(0;-1)$.  In particular the constant sheaf on a closed or open interval has
the same shifts at both ends. The constant sheaf on a half-closed interval has
different shifts at both ends.

\begin{figure}[ht]
  \centering
\begin{tikzpicture}[scale=1]
  \draw   (-1,1) parabola bend (0,0) (1,1)
  -- plot [smooth] coordinates {(1, 1) (1.2, 2) (0, 3) (-.5, 2.5)
    (0, 2.2) (2.3, 2.5) (3, 3) };

  \draw  plot [smooth] coordinates {(3, 3) (2.7, 2.5) (2.9, 2)
    (3,1) (3,-1) (2.5, -1.8) (1,-3) (-1, -2.3) (-1.5, -1.5) (-1.5,1) (-1,1.5)
    (1, 1.5) (2,1) (2,-1) (0.5, -2) (-2, -2.3) (-2.5,-1) (-2.5,1)
    (-2,2.5) (-1.5,2) (-1,1) };

\node at (3.2,3.2) {$c$};  
  \node at (-5.7,-.2) {$C_0$};
  \draw (-5,0) -- (4,0);
  \node at (-5.7,.9) {$C_{\frac 12}$};
  \draw (-5,0.7) -- (4,0.7);

\draw [->] (0,0) -- +(0,.3); \node  at (.3,.4) {$\textstyle p^+$};
\draw [->] (0,0) -- +(0,-.3); \node  at (.3,-.4) {$\textstyle p^-$};
\draw [->] (-0.85,0.7) -- +(0.3,.2); \node  at (-.5,1.15) {$\textstyle p^+_l$};
\draw [->] (0.85,0.7) -- +(-0.3,.2); \node  at (.5,1.15) {$\textstyle p^+_r$};
\draw [->] (0.5, -2) -- +(0.1,-.3); \node  at (.9, -2.3) {$\textstyle p_0$};
\draw [thick, ->] (-2.5, .7) -- +(0.4,0); \node  at (-2.1, .4) {$\textstyle q_l$};

  \draw[loosely dotted, ultra thick]  plot [smooth] coordinates { (0.5, -2) (-2, -2.3) (-2.5,-1) (-2.5,1)
    (-2,2.5) (-1.5,2) (-1,1) (-.5,.25) (0,0) };
\end{tikzpicture}
  \caption{} \label{fig:dem_un_cusp}
\end{figure}

\begin{theorem}
  \label{thm:ext1FF}
  Let $M$ be the sphere and let   $\Lambda \subset \dT^*M$ be a
  closed conic Lagrangian submanifold such that $\Lambda$
  satisfies~\eqref{eq:hypLambda_gentang}. We assume that
  $\Sigma = \dot\pi_{M}(\Lambda)$ is a curve with only cusps and ordinary double
  points as singularities. We assume that $\Sigma$ has exactly one cusp.  Let
  $F \in \Derb_{[\Lambda]}(\cor_M)$ be a simple sheaf.  We assume that $F|_\Uu$ is
  constructible and satisfies~\eqref{eq:autodualite} and~\eqref{eq:decomp_FJ2}.
  Then $H^1\RHom(F,F) \not= 0$.
\end{theorem}
\begin{proof}
  The notations introduced in~(i) are illustrated in Fig.~\ref{fig:dem_un_cusp},
  where the curve is $\Sigma$, the dotted path is $\im(\pi_M \circ \gamma)$ and
  $\varepsilon = +$.

  \sui(i)   We let $i$ be the inclusion of
  $\cer = C_{1/2}$ in $M$.  We use the notation $M_t$ of~\eqref{eq:defMt}.  By
  Proposition~\ref{prop:otherFlinkedpoint} there exists
  $p_0 = (z_0;\xi_0) \in \Lambda \cap T^*M_0$ such that $p^+$, $p^-$ and $p_0$
  are $F$-linked over $M_{1/2}$.  The point $p_0$ is in $\Lambda^+$ or
  $\Lambda^-$.  We define $\varepsilon = +$ or $\varepsilon = -$ so that
  $p_0 \in \Lambda^\varepsilon$.

  We denote by $z_l = (-1/\sqrt2,1/2)$, $z_r = (1/\sqrt2,1/2)$ the intersections
  of $\GammaE_0$ and $C_{1/2}$.  We also denote by $p_l^\pm$, $p_r^\pm$ the
  points of $\Lambda_0$ (well-defined up to a positive multiple) above $z_l$,
  $z_r$ in the same connected component as $p^\pm$.

  Let $\gamma \cl [0,1] \to \Lambda$ be an embedding such that
  $\gamma(0) = p^\varepsilon$, $\gamma(1) =p_0$,
  $\gamma([0,1]) \subset \Lambda^\varepsilon$ and $\pi_M \circ \gamma$ is an
  immersion (hence $\pi_M \circ \gamma$ describes the portion of the curve
  $\Sigma$ between $(0,0)$ and $z_0$ which does not contain the cusp).  Then the
  image of $\gamma$ meets either the half-line $\Rp\cdot p_l^\varepsilon$ or the
  half-line $\Rp\cdot p_r^\varepsilon$.  We assume it meets
  $\Rp\cdot p_l^\varepsilon$, the other case being similar.  (In
  Fig.~\ref{fig:dem_un_cusp2} the circle represents $\Lambda/\rspos$ and the
  positions of the points correspond to Fig.~\ref{fig:dem_un_cusp}; the points
  $c', c''$ are the preimages of the cusp $c$; we have $\varepsilon=+$, the
  image of the path $\gamma$ is the arc $(p^+p_0)$, which contains $p^+_l$.)

  By   Proposition~\ref{prop:exist_conj_pt} there exists a unique
  $q_l \in \Lambda$ (up to a positive scalar) which is $F$-conjugate to
  $p_l^\varepsilon$ with respect to the embedding $i$ of $C_{1/2}$ in the sense
  of Definition~\ref{def:conjugate_points}.  This means that there exists an
  interval $I_a$ in the decomposition~\eqref{eq:decomp_F_C} of $F|_{C_{1/2}}$
  such that  
  $\dot\SSi(\oim{e}(\cor_{I_a})) = i_d\opb{i_\pi}(\Rp\cdot p_l^\varepsilon
  \sqcup \Rp\cdot q_l)$.  Let $x$ be the right end of $I_a$. Then $e(x) =
  z_l$. If $I_a$ is closed near $x$, then $I_a$ is the same as $I_b$ introduced
  in~\eqref{eq:def_Ib_Ic}.

\sui(ii) If the interval $I_a$ is closed, then the non vanishing of
$H^1\RHom(F,F)$ follows from Propositions~\ref{prop:interv_ferme}
and~\ref{prop:FC0-monodunip}.

\sui(iii) We assume that $I_a$ is open. Then    
$\DD'_{C_{1/2}}(\oim{e}(\cor_{I_a})) \simeq \oim{e}(\cor_{\ol{I_a}})$. Since
$\DD_M(F) \simeq F$, the interval $\ol{I_a}$, maybe translated by a multiple of
$2\pi$, also appears in the decomposition of $F|_{C_{1/2}}$.  Hence we can still
apply Propositions~\ref{prop:interv_ferme} and~\ref{prop:FC0-monodunip}.

\sui(iv) If the interval $I_a$ is half-closed, then $F$ has different shifts at
the points $p_l^\varepsilon$ and $q_l$. Hence $q_l$ belongs to
$\Lambda^{-\varepsilon}$ and the path $\gamma$ does not meet $\Rp\cdot q_l$.
This means that the pairs $\{[p_l^\varepsilon], [q_l]\}$ and
$\{[p^\varepsilon], [p_0]\}$ are intertwined in $\Lambda/\rspos$ (see
Fig.~\ref{fig:dem_un_cusp2}).  The result follows from
  Proposition~\ref{prop:mongammasurj} (applied with the embedding $i$ of
  $C_{1/2}$ in $M$ and the path $\gamma$).
\end{proof}

\begin{figure}[ht]
  \centering
\begin{tikzpicture}
  \draw (0,0) circle [radius=2];

  \foreach \x/\y in {90/c', -90/c'', 0/p^-, 20/p^-_l, -20/p^-_r,
    180/p^+, 160/p^+_r, 200/p^+_l, 240/p_0, 40/q_l}
{\draw (\x:1.9cm) -- (\x:2.1cm) ;
  \node at (\x:2.4cm) {$\y$}; };

\draw[dotted] (180:2cm) .. controls (210:1cm) .. (240:2cm) ;
\draw[dotted] (200:2cm) -- (40:2cm) ;

\end{tikzpicture}
  \caption{}   \label{fig:dem_un_cusp2}
\end{figure}

\section{Proof of the three cusps conjecture}
\label{sec:proof3cusps}

We first make a general remark about the sheaf $K_\Phi$ associated with a
homogeneous Hamiltonian isotopy.   Lemma~\ref{lem:qhisym}
is a general statement and $M$ is any manifold (we come back to $M = \sph$ for
the three cusps conjecture), $\It$ is an open interval containing $0$ and
$\Phi \cl \dT^*M \times \It \to \dT^*M$ is a Hamiltonian isotopy such that
$\Phi_0 = \id$.  We assume that
$\Phi_s(x;\lambda\,\xi) = \lambda \cdot \Phi_s(x;\xi)$ for all $\lambda \in \Rm$
(not only $\lambda \in \rspos$) and all $(x;\xi) \in \dT^*M$, where we set as
usual $\Phi_s(\cdot) = \Phi(\cdot,s)$.  Theorem~\ref{thm:GKS} associates with
$\Phi$ a sheaf $K_\Phi \in\Derlb(\cor_{M^2\times \It})$ and the next lemma says
that it is self-dual in the sense
$K_\Phi \simeq \rhom(K_\Phi, \omega_M \etens \cor_{M\times \It})$.  Let us first
  recall some fact about the dualizing sheaf.
For a manifold $M$ we set $\omega_M = \epb{a_M}(\cor)$, where
$a_M\colon M \to \pt$ is the projection.  We have $\omega_M \simeq or_M[d_M]$,
where $or_M$ is the orientation sheaf and $d_M$ the dimension of $M$. In our
relative case
$\omega_M \etens \cor_{M\times \It} \simeq \epb{p}(\cor_{M\times \It})$, where
$p \colon M^2 \times \It \to M\times \It$ is $(x,y,s)\mapsto (y,s)$.

\begin{lemma}
  \label{lem:qhisym}
  Let $K_\Phi \in\Derlb(\cor_{M^2\times \It})$ be the sheaf associated with $\Phi$
  by Theorem~\ref{thm:GKS}.  Then
  $K_\Phi \simeq \rhom(K_\Phi, \omega_M \etens \cor_{M\times \It})$.  Moreover,
  for any $F \in \Der(\cor_M)$ and any $s\in \It$, we have
  $\DD_M(K_{\Phi,s} \circ F) \simeq K_{\Phi,s} \circ \DD_M(F)$.
\end{lemma}
\begin{proof}
  (i) We use the uniqueness part of Theorem~\ref{thm:GKS}.  We set
  $K' = \rhom(K_\Phi, \omega_M \etens \cor_{M\times \It})$.  Since $\omega_M$ is
  locally constant, Theorem~\ref{thm:SSrhom} gives
  $\dot\SSi(K') = (\dot\SSi(K_\Phi))^a = \Lambda_\Phi^a$, where $\Lambda_\Phi$
  is the graph of $\Phi$, as defined in~\eqref{eq:def_twistedgraph_global}.  The
  hypothesis on $\Phi$ implies $\Lambda_\Phi^a = \Lambda_\Phi$, hence
  $\dot\SSi(K') = \Lambda_\Phi$.

  In particular the inclusion $i \colon M^2\times \{0\} \to M^2 \times \It$ is
  non-characteristic for $\SSi(K')$ and Theorem~\ref{thm:iminv} gives
  $\epb{i}(K') \simeq \opb{i}(K') \otimes \omega_i$, where
  $\omega_i = \epb{i}(\cor_{M^2 \times \It}) \simeq \cor_{M^2}[-1]$.
  By adjunction we have   
  \begin{align*}
  \epb{i}(K')
  &\simeq \rhom(\opb{i}K_\Phi, \epb{i}(\omega_M \etens \cor_{M\times \It})) \\
& \simeq \rhom(\cor_{\Delta_M}, \omega_M \etens \cor_M[-1] ) .
  \end{align*}
  Now $\rhom(\cor_{\Delta_M}, -) \simeq \roim{\delta}\epb{\delta}(-)$, where
  $\delta$ is the inclusion of $\Delta_M$.  We also have
  $\omega_M \etens \cor_M \simeq \epb{p_2}(\cor_M)$, where
  $p_2 \colon M^2 \to M$ is the second projection.  Since
  $p_2 \circ \delta = \id_M$ we obtain
  $\rhom(\cor_{\Delta_M}, \omega_M \etens \cor_M) \simeq \cor_{\Delta_M}$.
  Hence $\opb{i}(K') \simeq \cor_{\Delta_M}$.  Summing up we have
  $\dot\SSi(K') = \Lambda_\Phi$ and
  $K'|_{M^2 \times \{0\}} \simeq \cor_{\Delta_M}$.  The uniqueness property of
  $K_\Phi$ gives $K' \simeq K_\Phi$.

  \sui(ii) The proof of the second statement is the same as~(i). We define two
  sheaves $F'$, $F''$ on $M\times \It$ by $F' = K_\Phi \circ \DD_M(F)$ and
  $F'' = \rhom(K_\Phi \circ F, \omega_M \etens \cor_\It)$.  Then
  $F'|_{M \times \{0\}} \simeq F''|_{M \times \{0\}}$ and $F'$, $F''$ have the
    same microsupport, which is $\Lambda_\Phi \circ^a \SSi(F)$ in the
  notation~\eqref{eq:imageA_par_isotopie}. Then
  Corollary~\ref{cor:isot_equivcat} gives $F' \simeq F''$.  In particular
  $F'|_{M \times \{s\}} \simeq F''|_{M \times \{s\}}$, which is the required
  isomorphism.
\end{proof}

\begin{lemma}
\label{lem:autodual}
Let $G \in \Derb(\cor_\R)$ be a constructible sheaf.  We assume that $G$ has
compact support, that there exists an isomorphism $G \simeq \DD_\R(G)$ and that
$\rsect(\R;G) \simeq \cor$.  Then there exist $t_0 \in \R$ and a decomposition
$G \simeq \cor_{\{t_0\}} \oplus \bigoplus_{a\in A} \cor^{n_a}_{I_a}[d_a]$ where the
$I_a$ are half-closed intervals.
\end{lemma}
\begin{proof}
  By Corollary~\ref{cor:sheaves_dim1} there exists a   decomposition of $G$
  as a finite sum $G \simeq \bigoplus_{a\in A} \cor^{n_a}_{I_a}[d_a]$. Then
  $\rsect(\R;G) \simeq \bigoplus_{a\in A} \rsect(\R;\cor_{I_a})^{n_a}[d_a]$.  Since
  $\rsect(\R;G) \simeq \cor$ all the intervals $I_a$ but one, say $I_b$, have
  cohomology zero, which means that they are half-closed.  For $\alpha<\beta$ and
  $I = \mo]\alpha, \beta\mc[$, $J = [\alpha,\beta]$, we set $I^* = J$, $J^*=I$.  We
  have $\DD_\R(\cor_I[d]) \simeq \cor_{I^*}[1-d]$.  If the remaining interval $I_b$
  is open or is closed with non empty interior, then $I_b^*$ also appears in the
  family $I_a$, $a\in A$, because $\DD_\R(G) \simeq G$.  Then $\cor_{I^*_b}$ also
  contributes to $\rsect(\R;G)$ and it would imply that
  $\bigoplus_{k\in \Z} H^k(\R; G)$ has dimension at least $2$.  Hence $I_b$ is
  reduced to one point, say $t_0$.  Using
  $\DD_\R(\cor_{\{t_0\}}[d]) \simeq \cor_{\{t_0\}}[-d]$ we deduce the lemma.
\end{proof}

Now we can prove the three cusps conjecture.  We let $PT^*\sph = \dT^*\sph/\Rm$
be the projectivized cotangent bundle of $\sph$.  Let $\It$ be an open interval
  containing $0$ and let $\ol{\Phi} \cl PT^*\sph \times \It \to PT^*\sph$
be a contact isotopy of $PT^*\sph$.  It lifts to a homogeneous Hamiltonian
isotopy $\Phi \cl \dT^*\sph \times \It \to \dT^*\sph$ which satisfies
$\Phi_0 = \id$ and $\Phi_s(x;\lambda\,\xi) = \lambda \cdot \Phi_s(x;\xi)$ for
  all $\lambda \in \Rm$ (not only for $\lambda \in \rspos$) and all
  $(x;\xi) \in \dT^*\sph$.  Let $x_0\in \sph$ be a given point.  We set
$\Lambda_0 = \dT^*_{x_0}\sph$, $\Lambda_s = \Phi_s(\Lambda_0)$ and
$\Sigma_s = \pi_\sph(\Lambda_s)$.

\begin{theorem}
\label{thm:threecusps}
Let $s\in \It$ be such that $\Sigma_s$ is a curve with only cusps and double points as
singularities. Then $\Sigma_s$ has at least three cusps.
\end{theorem}
\begin{proof}
  (i) We remark that the number of cusps of $\Sigma_s$ is odd because   $\Lambda_s$ is connected. Hence we have to prove that $\Sigma_s$ does
  not have one cusp.

  \sui(ii) Let $K = K_\Phi \in \Derlb(\cor_{(\sph)^2 \times \It})$ be the sheaf
  associated with $\Phi$ by Theorem~\ref{thm:GKS}. For $s\in \It$ we set
  $K_s = K|_{(\sph)^2 \times \{s\}} \in \Derb(\cor_{(\sph)^2})$.   We also set 
  $F_0 = \cor_{\{x_0\}}$ and $F_s = K_s \circ F_0$.  Then
  $\dot\SSi(F_s) = \Lambda_s$.  We have $\DD_{\sph}(F_0) \simeq F_0$ and by
  Lemma~\ref{lem:qhisym} we deduce that $\DD_{\sph}(F_s) \simeq F_s$.  By
  Corollary~\ref{cor:isot_equivcat} we also have
  $\rsect(\sph; F_s) \simeq \rsect(\sph; F_0) \simeq \cor$ for all $s\in \It$.

  \sui(iii) Now we consider a given $s\in \It$ so that $\Sigma_s$ satisfies the
    hypotheses of the theorem.  We choose a Morse function
  $q \cl \sph \to \R$ with only one minimum $x_-$ and one maximum $x_+$ such
  that $x_-, x_+ \not\in \Sigma_s$.  We can choose $q$ such that any fiber
  $q^{-1}(t)$, $t\in\R$, contains at most one of the following special points:
    tangent point between $q^{-1}(t)$ and
  $\Sigma_s$, double point or cusp (in particular a cusp is not tangent to
  $q^{-1}(t)$).

  We set $G = \roim{q}(F_s)$.  We have isomorphisms
  $\DD_\R(G) \simeq \roim{q}(\DD_{\sph} F_s) \simeq G$ and
  $\rsect(\R;G) \simeq \rsect(\sph; F_s) \simeq \cor$.  By
  Lemma~\ref{lem:autodual} there exist $t_0 \in \R$ and a decomposition
  $G \simeq \cor_{\{t_0\}} \oplus \bigoplus_{a\in A} \cor^{n_a}_{I_a}[d_a]$
where the $I_a$ are half-closed intervals. We set $t_\pm = q(x_\pm)$.  Since
  $F_s$ is constant near $x_-$, we have
  $G \simeq L \otimes (\cor_{[t_-,+\infty[} \oplus
  \cor_{]t_-,+\infty[}[-1])$ near $t_-$, where $L = (F_s)_{x_-}$.  The same
  holds near $t_+$.  Hence $t_0 \not= t_-$ and $t_0 \not= t_+$.

  Since $\cor_{\{t_0\}}$ is a direct summand of $G$, we have
  $T^*_{t_0}\R \subset \SSi(G)$.  By Proposition~\ref{prop:oim} this implies
  $\Lambda_s \cap T^*_{\opb{q}(t_0)}\sph \not= \emptyset$.  By the assumption on
  $q$ it follows that $\opb{q}(t_0)$ is tangent to $\Sigma_s$ at a single point
  and that $\opb{q}(t_0)$ contains no cusp or double point.  By
  Lemma~\ref{lem:bonne_im_dir} the intervals $I_a$ cannot have an end at $t_0$.
  
  Up to a change of coordinates we can assume that $t_0 =0$ and that
  $\Lambda \cap T^*\Uu$, with $\Uu = q^{-1}(\mo]-1,1[)$,
  satisfies~\eqref{eq:hypLambda_gentang} and $F_s$
  satisfies~\eqref{eq:autodualite}
  and~\eqref{eq:decomp_FJ2}.   In particular $F_s|_\Uu$
  is weakly constructible (see Proposition~\ref{prop:mustratif} and
  Remark~\ref{rem:tensor_prod_construct}).  Moreover
  $F = K \circ F_0 \in \Derlb(\cor_{\sph \times \It})$ has finite dimensional
  stalks at any point of
  $(\sph \times \It) \setminus \dot\pi_{\sph \times \It}(\dot\SSi(F))$ by
  Lemma~\ref{lem:stalk_simple_sh}. Hence $F_s|_\Uu$ is constructible.  By
  Theorem~\ref{thm:ext1FF} we deduce $\Hom(F_s, F_s[1]) \not\simeq 0$.  By
  Corollary~\ref{cor:isot_equivcat} we have
  $$
  \Hom(F_s, F_s[1]) \simeq \Hom(F_0, F_0[1]) \simeq 0
  $$
  and this gives a contradiction.
\end{proof}

\section{The four cusps conjecture}
\newcommand{\ttt}{s}
\newcommand{\sss}{\sigma}

In this short section we sketch a proof of Arnol'd's four cusps conjecture along
the same lines as the proof of Theorem~\ref{thm:threecusps}.  It is proved by
Chekanov and Pushkar in~\cite{CP05} and says the following.  Let $M = \R^2$ and
$f\colon M \to \R$, $x\mapsto ||x||$.  Let $\cer = f^{-1}(1)$ be the unit circle
and let $\Lambda^\pm = \{(x; \lambda df)$; $x\in \cer$, $\pm \lambda >0\}$ be
the outer and inner conormal bundles of $\cer$.   
Let $\Phi\cl \dT^*M \times \It \to \dT^*M$ be a homogeneous Hamiltonian isotopy
with $0,1 \in \It$.  We set $\Lambda_\ttt = \Phi_\ttt(\Lambda^-)$ for
$\ttt\in \It$ and we assume that $\Lambda_1 = \Lambda^+$.       We make the same genericity hypotheses as
in~\cite{CP05}:
\begin{itemize}
\item [-] for $\ttt\in \It$ outside a finite set $\{\ttt_1,\dots, \ttt_N\}$,
  $\Sigma_\ttt = \dot\pi_{M}(\Lambda_\ttt)$ is a curve with only cusps and
  ordinary double points as singularities,
\item [-] for $\ttt \in \{\ttt_1,\dots, \ttt_N\}$, $\Sigma_\ttt$ may have a
  birth/death of swallowtail, a triple point, a self-tangency or a double point
  where one of the points is a cusp.
\end{itemize}
Then there exists $\ttt\in \It$ such that $\Sigma_\ttt$ has at least four cusps.

Here is a sketch of proof.

\sui(i) We let $F_0 = \cor_{\{f\leq 1\}}$ be the constant sheaf on the closed unit
ball.  Hence $\dot\SSi(F_0) = \Lambda_0$.  We let
$\Lambda' \subset \dT^*(M\times \It)$ be the image of $\Lambda_0$ by the whole
isotopy (see~\eqref{eq:imageA_par_isotopie}).  It is a conic Lagrangian submanifold
which is non-characteristic for all inclusions
$i_\ttt \colon M \times \{\ttt\} \to M\times \It$ (that is,
$\Lambda' \cap (T^*_MM \times T^*_\ttt\It) = \emptyset$) and satisfies
$i_\ttt^\sharp(\Lambda') = \Lambda_\ttt$.  By Corollary~\ref{cor:isot_equivcat} there
exists a unique $F \in \Der(\cor_{M\times \It})$ such that $\dot\SSi(F) = \Lambda'$
and $\opb{i_0}F \simeq F_0$.  We set $F_\ttt = \opb{i_\ttt}F$.  We know that $F$ is
simple along $\Lambda'$, $F_\ttt$ has compact support,
$\dot\SSi(F_\ttt) = \Lambda_\ttt$ and $\rsect(M; F_\ttt)$ is independent of $\ttt$.

\sui(ii) We compute $F_1$. We have $\dot\SSi(F_1) = \Lambda^+$, hence $F_1$ is
locally constant on $M \setminus \cer$.  Since $F_1$ has compact support, it
must be supported on the unit ball.  The microsupport condition then implies
that it is of the form $F_1 \simeq E_{\{f<1\}}$ for some $E\in \Der(\cor)$.
Since $F_1$ is simple, we have $E = \cor[d]$ for some $d\in \Z$.  Finally the
condition $\rsect(M;F_1) \simeq \rsect(M;F_0) \simeq \cor$ gives $d=2$ and we
have $F_1 \simeq \cor_{\{f<1\}}[2]$.

\sui(iii)   We choose a projection $q\colon M \to \R$
which is generic with respect to the family of curves $\Sigma_\ttt$ (in particular,
for all but finitely many $\ttt$, $\Sigma_\ttt$ has no more than one tangent point to
a fiber of $q$ and, for the remaining values of $\ttt$, $\Sigma_\ttt$ has no more
than two tangent points to a fiber of $q$).  We set
$G = \roim{(q\times \id_\It)}(F) \in \Der(\cor_{\R \times \It})$,
$G_\ttt= G|_{\R \times \{\ttt\}} \simeq \roim{q}(F_\ttt) \in \Der(\cor_{\R})$.  We
have $\rsect(\R; G_\ttt) \simeq \rsect(M; F_\ttt) \simeq \cor$ for all $\ttt\in \It$.
We can decompose $G_\ttt$ as a sum of constant sheaves on intervals, with degree
shifts, using Corollary~\ref{cor:sheaves_dim1}.  Since
$\rsect(\R; G_\ttt) \simeq \cor$ all these intervals are half-closed but one, say
$I(\ttt)$.  Then either $I(\ttt)$ is closed and $G_\ttt$ has $\cor_{I(\ttt)}$ as a
direct summand (this is the case for $\ttt$ close to $0$), or $I(\ttt)$ is open and
$G_\ttt$ has $\cor_{I(\ttt)}[1]$ as a direct summand (this is the case for $\ttt$
close to $1$).

\sui(iv) We prove that there exists $\ttt_0 \in \It$ such that $I(\ttt_0)$ is
reduced to a point. For this we argue as in Part~\ref{part:graphsel}. We set
$G' = H^0G$.  We have a natural morphism
$H^0(\R \times \It; G) \to H^0(\R \times \It; \tau_{\geq 0}G) \simeq H^0(\R
\times \It; G')$. Hence the element $\sss \in H^0(\R \times \It; G) \simeq \cor$
corresponding to $1$ induces a section $\sss'$ of $G'$.  We consider its
support, say $Z = \supp(\sss')$.  Then $Z$ is a closed set and coincides with the
set of $(z,\ttt) \in \R \times \It$ such that $\sss'_{(z,\ttt)} \not= 0$. We set
$G'_\ttt = G'|_{\R \times \{\ttt\}} \simeq H^0G_\ttt$ and we let
$\sss'(\ttt) \in H^0(\R; G'_\ttt)$ be the section of $G'_\ttt$ induced by
$\sss'$.  We have $\supp(\sss'(\ttt)) = Z \cap (\R \times \{\ttt\})$.  By
Corollary~\ref{cor:sheaves_dim1} $G'_\ttt$ is the direct sum of the summands of
$G_\ttt$ which are in degree $0$.  A sheaf $\cor_J$, $J$ some interval, has a
global section if and only if $J$ is closed; in this case the support of the
section is $J$.  Hence we deduce from~(iii) that, if $\sss'(\ttt) \not= 0$, then
$I(\ttt)$ is closed and   $\supp(\sss'(\ttt)) = I(\ttt)$.  We define
$\It' \subset \It$ by $\It' = \{\ttt$; $\sss'(\ttt) \not= 0\}$.  Then $\It'$ is
closed and $Z = \bigsqcup_{\ttt\in \It'} (I(\ttt) \times \{\ttt\})$.  We define
$\ttt_0$ as the upper bound of $\It' \cap [0,1]$. We remark that $\ttt_0 < 1$
because $I(\ttt)$ is open for $\ttt$ close to $1$.

  On the other hand $\dot\SSi(G)$ is bounded
by $\Xi = (q\times \id_\It)_\pi (q\times \id_\It)_d^{-1}(\Lambda')$.  We set
$C = \dot\pi_{\R\times \It}(\Xi)$ (see Fig.~\ref{fig:four_cuspsCetZ}). If $q$ is
generic with respect to $\Lambda'$, then   $C$ is a (singular)
curve of $\R\times \It$, with a finite set, say $C_{sing}$, of singular points (cusps
or multiple points).  We set $C_{reg} = C \setminus C_{sing}$, which is a finite union
of disjoint smooth arcs.  If $C'$ is a connected component of $C_{reg}$, then $\Xi$
must contain one of the two connected components of $T^*_{C'}(\R\times \It)$.  Using
the fact that $\Lambda'$ is non-characteristic for all inclusions $i_\ttt$, we can
see that $\Xi \cap (T^*_\R\R \times T^*\It) = \emptyset$.   It follows that $C'$ is not tangent to any line $\R \times \{\ttt\}$,
$\ttt\in\It$.

Now $G$, hence $H^0G$, is weakly constructible with respect to the stratification
induced by $C$ (see Section~\ref{sec:constructibility}).  The strata are the
components of $(\R\times \It) \setminus C$, the components of $C_{reg}$ and the
points of $C_{sing}$.  The set $Z$, which is the support of a section of $H^0G$, is a
union of strata.  Hence, if $I(\ttt_0) \times \{\ttt_0\}$ meets an open stratum, then
$Z \cap (\R \times \{\ttt_0+\varepsilon\})$ is not empty for small $\varepsilon>0$,
which contradicts the definition of $\ttt_0$.  Hence $I(\ttt_0) \times \{\ttt_0\}$ is
contained in $C$.  If $I(\ttt_0)$ is not reduced to a point, then
$C_{reg} \cap (I(\ttt_0) \times \{\ttt_0\})$ is non trivial and $C_{reg}$ is tangent
to $\R \times \{\ttt_0\}$, which is impossible as already remarked.  Hence
$I(\ttt_0)$ is a point.

\begin{figure}[ht]
  \centering
\begin{tikzpicture}[yscale = 3]
\fill [fill=gray!20] plot [smooth, dotted] coordinates {(1, 0) (.4,.3) (0.1, .7) (-0.1,.84)}
--  plot [smooth, dotted] coordinates {(-0.1,.84) (-0.1, .5)} 
-- plot [smooth, dotted] coordinates {(-0.1, .5)(-.3,.3) (-1, 0)} ;

\draw plot [smooth] coordinates {(1, 0) (.4,.3) (0, .8) (-1,1) };

\draw plot [smooth] coordinates {(-1, 0) (-.3,.3) (.2, .8)};
\draw plot [smooth] coordinates {(.2, .8) (.25, .7) (.5, .4) (.25, .2) (.2,.15) };
\draw plot [smooth] coordinates {(.2, .15) (.2,.1) (-.1, .5) (-.1, .7) (0,.9) (1,1)};

\draw plot [smooth] coordinates {((-1,.25) (-1,.3) (-1.3, .5) (-1,.7) (-1,.75)};
\draw plot [smooth] coordinates {((-1,.25) (-.9,.3) (-.7, .5) (-.9,.7) (-1,.75)};

\draw [->] (-2.5,-.3) -- (3,-.3); \node at (3.2,-.2) {$\R$};
\draw [->] (-3,-.2) -- (-3, 1.2); \node at (-3.4, 1.2) {$\It$};
\draw [dotted] (-3,0) -- (2,0);  \node at (-3.4, 0) {$0$};
\draw [dotted] (-3,1) -- (2,1); \node at (-3.4, 1) {$1$};

\draw [dotted]  let \p1 = (-0.1,.84) in (-3,\y1) -- (\p1);
\node at (-3.4, .84) {$\ttt_0$};
\end{tikzpicture}
\caption{The curve $C$ and the support $Z$ of $\sss'$} \label{fig:four_cuspsCetZ}
\end{figure}

\sui(v)     The situation at time $\ttt_0$ is similar to the setting
of~\S\ref{sec:threecusps_nothyp}. In particular, assuming $I(\ttt_0) = \{0\}$, we
have $(\roim{q}(F_{\ttt_0}))|_{\Jj} \simeq \cor_{\{0\}} \oplus B_{\Jj}$, for some
interval $\Jj$ around $0$ and some $B \in \Derb(\cor)$ (like~\eqref{eq:decomp_FJ2}).
We can see also that $\Lambda_{\ttt_0}$ meets $T^*_{q^{-1}(0)}M$ along two half
lines, generated by, say, $p^\pm$.

These two points belong to the conormal bundles of two branches of the intersection
of $\Sigma_{\ttt_0}$ with some neighborhood of $q^{-1}(0)$, say $\Gamma^\pm$, tangent
to $q^{-1}(0)$ (unlike the case of the three cusps conjecture $\Lambda_{\ttt_0}$ is
not equal to $\Lambda_{\ttt_0}^a$ and the map $\Lambda_{\ttt_0}^a/\rspos \to M$ is
generically one to one).    The fact
that $M$ is $\R^2$ and not $\sph$ makes the situation easier.  In particular an
argument as in the step~(ii) in the proof of Proposition~\ref{prop:interv_ferme}
gives the following: let $p^+_0$, $p^+_1$ be the two points in
$\Lambda_{\ttt_0} \cap T^*_{\Gamma^+}M \cap \pi^{-1}q^{-1}(\pm \varepsilon)$, for
$\varepsilon>0$ small enough (we write $\pm \varepsilon$ because we do not know which
side of $q^{-1}(0)$ the branch $\Gamma^+$ is situated).  Let $q^+_0$, $q^+_1$ be the
conjugate points of $p^+_0$, $p^+_1$ with respect to $q^{-1}(\pm \varepsilon)$.  Then
$F_{\ttt_0}$ has different shifts at $p^+_i$ and $q^+_i$.  (See
Fig.~\ref{fig:four_cusps0} where we give two examples of possible $\Sigma_{\ttt_0}$
and some notations -- we do not claim that there actually exist sheaves corresponding
to these pictures.)

\begin{figure}[ht]
  \centering 
  \begin{tikzpicture}[scale=.9]
    \draw   (-1,1) parabola bend (0,0) (1,1)
  -- plot [smooth] coordinates {(1, 1) (1.3, 1.2) (1.5, -1.2)
    (2.3, 2) (3, 3) };
  \draw  plot [smooth] coordinates {(3, 3) (2.7, 2.5) (2.9, 2)
    (3,1) (3,-1) (2.5, -2) (1,-3) (-1.5, -1.5) (-1.5,1) (-1,1.5)
    (1, 1.5) (2,1) (2,-1) (0.5, -2) (-3,-2.4) (-4,-1.6) (-4,-1)}
  -- (-4,-1) parabola bend (-3,0) (-2,-1)
  plot [smooth] coordinates {(-2,-1) (-2, -2) (-4,-2) (-4.7,-2.5)};
  \draw  plot [smooth] coordinates {(-4.7,-2.5) (-4.3,-1.5) (-4,1) (-2,2.5)
    (-1,1) };
  \draw[loosely dotted, ultra thick] (-1,1) parabola bend (0,0) (1,1);
   \draw[loosely dotted, ultra thick] (-4,-1) parabola bend (-3,0) (-2,-1);
  
  \node at (3.3,3) {$c_2$};
  \node at (-5,-2.5)(3.2,3.2) {$c_1$};  
  \node at (-7,0) {$q^{-1}(0)$};
  \draw (-6,0) -- (4,0);
  \node at (-7,.7) {$q^{-1}(\varepsilon)$};
  \draw (-6,0.7) -- (4,0.7);
    \node at (-7,-.7) {$q^{-1}(-\varepsilon)$};
  \draw (-6,-0.7) -- (4,-0.7);

\draw [->] (0,0) -- +(0,.3); \node  at (.3,.4) {$\textstyle p^+$};
\draw [->] (-0.85,0.7) -- +(0.3,.2); \node  at (-.5,1.15) {$\textstyle p^+_0$};
\draw [->] (-1.6,0.7) -- +(0.3,-0.1); \node  at (-1.2,.4) {$\textstyle q^+_0$};
\draw [->] (0.85,0.7) -- +(-0.3,.2); \node  at (.4,1.15) {$\textstyle p^+_1$};
\draw [->] (-3,0) -- +(0,-.3); \node  at (-2.7,-.4) {$\textstyle p^-$};
\draw [->] (-3.85,-0.7) -- +(0.3,-.2); \node  at (-3.4,-1.1) {$\textstyle p^-_1$};
\draw [->] (-2.15,-0.7) -- +(-0.3,-.2); \node  at (-2.4,-1.1) {$\textstyle p^-_0$};
\end{tikzpicture}

\begin{tikzpicture}[scale=.9]
  \draw   (-1,1) parabola bend (0,0) (1,1)
  -- plot [smooth] coordinates {(1, 1) (2, 2.2) (3.3, 2.2)
    (3.3, -1.5) (-4.5,-1.5) (-5, 1.5) (0,2) (3, 3) };
  \draw  plot [smooth] coordinates {(3, 3) (2.7, 2.5) (2.9, 2)
    (3,1) (3,-1) (2.5, -2) (1,-3) (-1.5, -1.5) (-1.5,1) (-1,1.5)
    (1, 1.5) (2,1) (2,-1) (0.5, -2) (-4, -2) (-5,1) (-4.4,1.4) (-4,1)}
  -- (-4,1) parabola bend (-3,0) (-2,1)
  plot [smooth] coordinates {(-2,1) (-1.6, 2) (-1.5, 2.5)};
  \draw  plot [smooth] coordinates {(-1.5, 2.5) (-1.4,2) (-1,1) };
  \draw[loosely dotted, ultra thick] (-1,1) parabola bend (0,0) (1,1);
   \draw[loosely dotted, ultra thick] (-4,1) parabola bend (-3,0) (-2,1);
  
  \node at (3.3,3) {$c_1$};
  \node at (-1.8,2.5) {$c_2$};  
  \node at (-7,0) {$q^{-1}(0)$};
  \draw (-6,0) -- (4,0);
  \node at (-7,.7) {$q^{-1}(\varepsilon)$};
  \draw (-6,0.7) -- (4,0.7);
    \node at (-7,-.7) {$q^{-1}(-\varepsilon)$};
  \draw (-6,-0.7) -- (4,-0.7);

\draw [->] (-0,0) -- +(0,.3); \node  at (.3,.4) {$\textstyle p^+$};
\draw [->] (-0.85,0.7) -- +(0.3,.2); \node  at (-.5,1.15) {$\textstyle p^+_1$};
\draw [->] (0.85,0.7) -- +(-0.3,.2); \node  at (.4,1.15) {$\textstyle p^+_0$};
\draw [->] (3,0.7) -- +(-0.3,.1); \node  at (2.6,1.15) {$\textstyle q^+_0$};
\draw [->] (-3,0) -- +(0,-.3); \node  at (-2.7,-.4) {$\textstyle p^-$};
\draw [->] (-3.85,0.7) -- +(-0.3,-.2); \node  at (-4,.3) {$\textstyle p^-_0$};
\draw [->] (-5.4,0.7) -- +(-0.3,.1); \node  at (-5.6,1.2) {$\textstyle q^-_0$};
\draw [->] (-2.15,0.7) -- +(0.3,-.2); \node  at (-1.9,.3) {$\textstyle p^-_1$};
\end{tikzpicture}
  \caption{two examples of $\Sigma_{\ttt_0}$ -- the branches $\Gamma^\pm$ are
  the dotted parts} \label{fig:four_cusps0}
\end{figure}

We have similar pairs $\{p^-_i, q^-_i\}$. Using these four pairs of conjugate points
we can now prove directly an analog of Theorem~\ref{thm:ext1FF} (there is no need for
a third linked point as in Proposition~\ref{prop:otherFlinkedpoint}) as follows.  Let
us assume that $\Sigma_{\ttt_0}$ has only two cusps, say $c_1$, $c_2$.  Then
$\Lambda_{\ttt_0} \setminus (\Lambda_{\ttt_0} \cap (T^*_{c_1}M \cup T^*_{c_2}M))$
consists of two connected components, corresponding to two different shifts, say
$\Lambda^0$, $\Lambda^1$.  We assume that $p^+ \in \Lambda^0$.  By
Proposition~\ref{prop:mulink-projdim1} (with $t_0 = t_1 =0$ and $J = \{0\}$) we know
that $p^-$ and $p^+$ are $F$-linked over any open subset containing $q^{-1}(0)$.  We
distinguish three cases, (a), (b-i), (b-ii) (see Fig.~\ref{fig:fourcusp} -- the first
picture in Fig.~\ref{fig:four_cusps0} is compatible with~(b-i) and the second one
with~(b-ii)):

\noindent (a) If $p^- \in \Lambda^0$, the arc of $\Lambda^0$ from $p^+$ to $p^-$
contains $p_0^+$ or $p_1^+$.  We choose the notations so that it contains $p_0^+$
(see Fig.~\ref{fig:fourcusp}~(a)).  Then $q_0^+ \in \Lambda^1$ and we see that the
pairs $\{[p^+], [p^-]\}$ and $\{[p_0^+], [q_0^+]\}$ in $\Lambda_{\ttt_0}/\rspos$ are
intertwined. By Proposition~\ref{prop:mongammasurj} we deduce
$H^1\RHom(F_{\ttt_0},F_{\ttt_0}) \not= 0$, contradicting $H^1\RHom(F_0,F_0) = 0$.

\sui (b)   If $p^- \in \Lambda^1$, we choose the
notations so that $p_0^+$, $p_0^-$ and $c_1$ belong to the same arc of
$\Lambda_{\ttt_0}/\rspos$ joining $p^+$ and $p^-$.  We remark that the open arc
$]p_0^-,p_1^-[$ is contained in $q^{-1}(\mo]-\varepsilon, \varepsilon[)$.  Since
$q(q_0^+) = \pm \varepsilon$, $q_0^+$ belongs to the arc $]c_1, p_0^-]$ or to the arc
$]c_2,p_1^-]$.  It could happen that $q_0^+ = p_0^-$, but this implies that
$\Gamma^+$ and $\Gamma^-$ are on the same side of $q^{-1}(0)$ and that
$\pi_M(p_1^+) < \pi_M(p_0^+) < \pi_M(p_0^-) < \pi_M(p_1^-)$ for the order on the line
$q^{-1}(\varepsilon)$; hence we cannot have both $q_0^+ = p_0^-$ and $q_1^+ = p_1^-$.
Up to switching $0$ and $1$, we can assume $q_0^+ \not= p_0^-$.  In other words
$p_0^+$ and $p_0^-$ are not conjugate, and we also have $q_0^- \not=
p_0^+$. Similarly as $q_0^+$, $q_0^-$ belongs to the arc $]c_1, p_0^+[$ or to the arc
$]c_2,p_1^+]$.

\noindent(b-i) If $q_0^+$ belongs to $]c_2,p_1^-]$ (see
Fig.~\ref{fig:fourcusp}~(b-i)), then the pairs $\{[p^+], [p^-]\}$ and
$\{[p_0^+], [q_0^+]\}$ are intertwined.  If $q_0^-$ belongs to $]c_2,p_1^+]$, then
the pairs $\{[p^+], [p^-]\}$ and $\{[p_0^-], [q_0^-]\}$ are intertwined.  In both
cases we conclude as in case~(a).

\noindent(b-ii) If none of these two cases occurs, then $q_0^+$ belongs to
$]c_1, p_0^-[$, $q_0^-$ belongs to $]c_1, p_0^+[$ and the pairs
$\{[p_0^+], [q_0^+]\}$ and $\{[p_0^-], [q_0^-]\}$ are intertwined (see
Fig.~\ref{fig:fourcusp}~(b-ii)).  Since $F$-conjugate points are $F$-linked (see
Proposition~\ref{prop:mulink-restrdim1}), we can again apply
Proposition~\ref{prop:mongammasurj}, after moving the pairs so that they are not on
the same line (choose $p_0^-$ on a line $q^{-1}(\varepsilon')$ for $\varepsilon'$
slightly different from $\varepsilon$).

\begin{figure}[ht]
  \centering
  \begin{tikzpicture}
  \draw (0,0) circle [radius=1.1];
  
  \foreach \x/\y in {90/c_1, -90/c_2, 220/p^-, 200/p^-_0, 240/p^-_1,
    140/p^+, 120/p^+_1, 160/p^+_0, 30/q^+_0 }
{\draw (\x:1cm) -- (\x:1.2cm) ;
  \node at (\x:1.5cm) {$\y$}; };

\draw[dotted] (160:1.1cm) -- (30:1.1cm) ;
\draw[dotted] (140:1.1cm) .. controls (180:.5cm) .. (220:1.1cm) ;

\node at (0,-2.2) {(a)};
\end{tikzpicture}
\quad
\begin{tikzpicture}
  \draw (0,0) circle [radius=1.1];

  \foreach \x/\y in {90/c_1, -90/c_2, 0/p^-, 20/p^-_0, -20/p^-_1,
    180/p^+, 160/p^+_0, 200/p^+_1, -45/q^+_0 }
{\draw (\x:1cm) -- (\x:1.2cm) ;
  \node at (\x:1.5cm) {$\y$}; };

\draw[dotted] (180:1.1cm) -- (0:1.1cm) ;
\draw[dotted] (160:1.1cm) -- (-45:1.1cm) ;

\node at (0,-2.2) {(b-i)};
\end{tikzpicture}
\quad
\begin{tikzpicture}
  \draw (0,0) circle [radius=1.1];

  \foreach \x/\y in {90/c_1, -90/c_2, 0/p^-, 20/p^-_0, -20/p^-_1,
    180/p^+, 160/p^+_0, 200/p^+_1, 45/q^+_0, 135/q^-_0 }
{\draw (\x:1cm) -- (\x:1.2cm) ;
 \node at (\x:1.5cm) {$\y$}; };

\draw[dotted] (135:1.1cm) -- (20:1.1cm) ;
\draw[dotted] (160:1.1cm) -- (45:1.1cm) ;

\node at (0,-2.2) {(b-ii)};
\end{tikzpicture}
  \caption{} \label{fig:fourcusp}
\end{figure}

\part{Triangulated orbit categories for sheaves}
\label{part:orbcat}

In the study of Lagrangian exact submanifolds of cotangent bundles in
Part~\ref{chap:ex_Lag} we will use triangulated orbit categories.  For a   triangulated category $\catt$, the triangulated envelope of the
orbit category of $\catt$ is a triangulated category $\catt'$ with a functor
$\iota \colon \catt \to \catt'$ such that $\iota(F) \simeq \iota(F)[1]$ for any
$F\in \catt$ and
$\Hom_{\catt'}(\iota(F), \iota(G)) \simeq \bigoplus_{i\in Z} \Hom_{\catt}(F,G[i])$.
Such categories are constructed by Keller in~\cite{Ke05}. We specify his construction
in the case of categories of sheaves and check that we can define a microsupport in
this framework.

\section{Definition of triangulated orbit categories}
\label{sec:def_orb_cat}

We will use a very special case of the triangulated hull of an orbit category as
described by Keller in~\cite{Ke05}.  More precisely
Definition~\ref{def:categorie-orb} below is inspired by \S7 of~\cite{Ke05} that we
apply to the simple case where we quotient $\Derb(\cor_M)$ by the autoequivalence
$F \mapsto F[1]$ (in~\cite{Ke05} much more general equivalences are
considered). However we apply this construction for sheaves instead of modules over
an algebra.  We use the name triangulated orbit category for the category
$\Orb(\cor_M)$ introduced in Definition~\ref{def:categorie-orb} by analogy with the
categories introduced by Keller.  However we only show that we have a functor
$\iota_M \cl \Derb(\cor_M) \to \Orb(\cor_M)$ such that
$\iota_M(F) \simeq \iota_M(F)[1]$ and which satisfies
Corollary~\ref{cor:morph-QRF-QRG} below (the proof of this result is also inspired
by~\cite{Ke05}).  In this section we assume $\cor = \Z/2\Z$.

\subsection*{Quick reminder on localization}

  We recall quickly
some facts about quotients of triangulated categories.  We follow the exposition
of~\cite[\S1.6]{KS90} or~\cite[\S10.2]{KS06}.
Let $\catd$ be a triangulated category. A multiplicative system
$\shs$ in  $\catd$ is a family of morphisms such that
\begin{itemize}
\item [(S1)] any isomorphism is in $\shs$,
\item [(S2)] $\shs$ is stable by composition of morphisms,
\item [(S3)] for given $f\colon X \to Y$ and $s\colon X \to X'$, with $s\in\shs$,
  there exist $t\colon Y \to Y'$ and $g\colon X' \to Y'$, with $t\in \shs$, such that
  $g \circ s = t \circ f$, and the same holds with all arrows reversed (visualized by
\begin{tikzcd}[row sep=.5cm]
X \ar[r, "f"] \ar[d, "s"] & Y \ar[d, "t"] \\
X' \ar[r, "g"] & Y' 
\end{tikzcd}
and
\begin{tikzcd}[row sep=.5cm]
X \ar[r, leftarrow, "f"] \ar[d, leftarrow, "s"] & Y \ar[d, leftarrow, "t"] \\
X' \ar[r, leftarrow, "g"] & Y' 
\end{tikzcd}) 
\item [(S4)] For $f,g \colon X \rightrightarrows Y$, there exists $s \colon W \to X$
  in $\shs$ such that $f \circ s = g \circ s$ if and only if there exists
  $t \colon Y \to Z$ in $\shs$ such that $t \circ f = t\circ g$.
\end{itemize}
The localization of $\catd$ by $\shs$ is then the category $\catd_\shs$ with the same
objects as $\catd$ and with morphisms $\Hom_{\catd_\shs}(X,Y) = \{(X',s,f)$;
$s \colon X' \to X$, $f\colon X' \to Y$ with $s\in \shs$, $f$ any morphism$\}/\sim$,
where the equivalence relation $\sim$ is generated by: $(X',s,f) \sim (X'',t,g)$ if there exists a
commutative diagram
\begin{tikzcd}[row sep=.2cm, baseline=(Y.base)]
& X' \ar[dl, "s"'] \ar[dr, "f"] \ar[dd] \\
X && |[alias=Y]| Y .\\
& X'' \ar[ul, "t"] \ar[ur, "g"'] 
\end{tikzcd}
The composition is defined using the property (S3).  The
localization comes with a functor $Q_\shs \colon \catd \to \catd_\shs$ such that, for
any $s\in \shs$, $Q_\shs(s)$ is an isomorphism. Moreover the pair
$(\catd_\shs, Q_\shs)$ is universal with respect to this property.  (Up to now we did
not use the triangulated structure.)  We now define a distinguished triangle in
$\catd_\shs$ as a triangle $A \to B \to C \to A[1]$ which is isomorphic in
$\catd_\shs$ to the image by $Q_\shs$ of a distinguished triangle of $\catd$.  This
turns $\catd_\shs$ into a triangulated category.

In the framework of triangulated categories, a convenient way to obtain a
multiplicative system is to start with a full triangulated subcategory $\shn$ of
$\catd$ which is saturated in the following sense: for any isomorphism $X \simeq Y$
in $\catd$, if $X\in \shn$, then $Y\in \shn$.  We then define $\shs_\shn$ as the
family of morphisms $s$ in $\catd$ such that the cone of $s$ belongs to $\shn$.  We
can check that $\shs_\shn$ is a multiplicative system and consider the localization
$\catd_{\shs_\shn}$. Then, for any $N \in \shn$, $Q_{\shs_\shn}(N) \simeq 0$;
moreover $(\catd_{\shs_\shn}, Q_{\shs_\shn})$ is universal with respect to this
property.  We call $\catd_{\shs_\shn}$ the quotient of $\catd$ by $\shn$ and denote
it by $\catd/\shn$.

\smallskip

The localization is used to define the derived category.  If $\catc$ is an abelian
category, we let $\CC(\catc)$ be the category of complexes of objects of $\catc$ and
$\CC^{\mathrm b}(\catc)$ its full subcategory of bounded complexes.  We define
$\CK(\catc)$ as the category with the same objects as $\CC(\catc)$ and morphisms
$\Hom_{\CK(\catc)}(X,Y) = \Hom_{\CC(\catc)}(X,Y)/\sim$ where $\sim$ is the homotopy
equivalence of morphisms. We define $\CK^{\mathrm b}(\catc)$ as the full subcategory
of $\CK(\catc)$ of bounded complexes.  These categories $\CK(\catc)$,
$\CK^{\mathrm b}(\catc)$ are triangulated.  We recall that $u\colon X \to Y$ is a
quasi-isomorphism if $H^n(u) \colon H^n(X) \to H^n(Y)$ is an isomorphism for all
$n\in \Z$.  Then $\Der(\catc) = (\CK(\catc))_{qis}$ and
$\Derb(\catc) = (\CK^{\mathrm b}(\catc))_{qis}$, where $qis$ denotes the family of
quasi-isomorphisms.  We have a natural functor $\Derb(\catc) \to \Der(\catc)$ which
identifies $\Derb(\catc)$ as a full subcategory of $\Der(\catc)$.  It is classical
that $\Derb(\catc)$ is also equivalent to the full subcategory of $\Der(\catc)$:
\begin{equation}
  \label{eq:defderb}
  \{X \in \Der(\catc); \; H^n(X) \simeq 0 \text{ for $n\ll0$ and $n\gg0$}\} .
\end{equation}
We use similar constructions to define the derived categories of complexes bounded
from below, $\Der^+(\catc)$, or above, $\Der^-(\catc)$.

If $\catc'$ is another abelian category, any additive functor
$F \colon \catc \to \catc'$ induces a functor,
$F_1 \colon \CK(\catc) \to \CK(\catc')$, compatible with the triangulated structures.

  If $F$ is exact, then $F_1$ sends
quasi-isomorphisms to quasi-iso\-mor\-phisms and $Q \circ F_1$ sends
quasi-isomorphisms to isomorphisms, where  
$Q \colon \CK(\catc') \to \Der(\catc')$ is the localization functor. By the universal
property, $Q \circ F_1$ factorizes through a functor
\begin{equation}
  \label{eq:funcder0}
F \colon  \Der(\catc) \to \Der(\catc')
\end{equation}
(compatible with the triangulated structures).  In the same way $F$ induces functors,
all denoted $F$, $\Der^*(\catc) \to \Der^*(\catc')$, where $*$ stands for
$\mathrm{b}$, $+$ or $-$.

In general, if $F$ is only additive, $Q \circ F_1$ does not factorizes through
$\Der(\catc)$ but, under some assumptions (for example, $\catc$ has enough
injectives), we can define a ``best possible approximation'' to a factorization,
which is called the right derived functor of $F$,
\begin{equation}
  \label{eq:funcder1}
\RR F \colon \Der^+(\catc) \to \Der^+(\catc').
\end{equation}
If $F$ is left exact, we have $F \simeq H^0\circ \RR F \circ \iota$, where $\iota$ is
the embedding of $\catc$ as the full subcategory of $\Der^+(\catc)$ of complexes
concentrated in degree $0$ (if $F$ is not left exact, $F$ and $\RR F$ are not related
in a useful way).  If $F$ is exact, then $F \simeq \RR F$.

If $\catc$ has enough projectives, we can also derive functors on the left. However
we are interested in categories of sheaves, which do not have enough projectives.
For a specific functor $F$, we can still derive $F$ on the left, if $\catc$ has
enough $F$-projective objects.  We are interested in the case of the tensor product
of sheaves; the $\otimes$-projective sheaves are the flat sheaves and there are enough
flat sheaves.  So we can define $\ltens$ on the bounded from above derived categories
of sheaves:
\begin{equation}
  \label{eq:funcder2}
\ltens_{\cor_M} \colon \Der^-(\cor_M) \times \Der^-(\cor_M)  \to \Der^-(\cor_M) ,
\end{equation}
where $M$ is any manifold and $\cor$ any ring.

\subsection*{Definition of the orbit category}

We recall that $\cor = \Z/2\Z$ and we set $\Cor = \cor[X]/\langle X^2 \rangle$.  We
let $\varepsilon$ be the image of $X$ in $\Cor$. Hence $\Cor = \cor[\varepsilon]$
with $\varepsilon^2=0$.  Let $M$ be a manifold.  We denote as usual by $\Mod(\cor_M)$
(resp. $\Mod(\Cor_M)$) the category of sheaves of $\cor$-modules
(resp. $\Cor$-modules) on $M$. We set $\Derb(\cor_M) = \Derb(\Mod(\cor_M))$ and
$\Derb(\Cor_M) = \Derb(\Mod(\Cor_M))$, in the sense of~\eqref{eq:defderb}.

The natural ring morphisms $\cor \to \Cor$ and $\Cor\to \cor$ induce two pairs of
adjoint functors $(e_M,r_M)$ and $(E_M,R_M)$, where $e_M,E_M$ are scalar extensions
and $r_M,R_M$ restrictions of scalars (or rather, their derived versions, see the
discussion around~\eqref{eq:funcder0}-\eqref{eq:funcder2} for a quick reminder):
\begin{alignat*}{3}
&\Derb(\cor_M) \overset{e_M}{\underset{r_M}{\rightleftarrows}} \Derb(\Cor_M),
&\qquad e_M(F) &= \Cor_M \tens_{\cor_M} F,
&\quad r_M(G) &= G, \\
&\Derb(\Cor_M) \xfrom[R_M]{} \Derb(\cor_M),
&& & R_M(F) &= F , \\
&\Der^-(\Cor_M) \overset{E_M}{\underset{R_M}{\rightleftarrows}} \Der^-(\cor_M),
& E_M(G) &= \cor_M \ltens_{\Cor_M} G,
& R_M(F) &= F . 
\end{alignat*}

We will sometimes use the fact that $r_M$ is conservative, that is, for a
morphism $u\colon F \to G$ in $\Derb(\Cor_M)$, if $r_M(u)$ is an isomorphism, so
is $u$.  Indeed $r_M$ is the derived functor of an exact functor and the
assertion follows from the case of the module categories $\Mod(\Cor_M)$,
$\Mod(\cor_M)$, where it is clear.

The ring morphism $\Cor\to \cor$ gives a basis of $\Hom_{\cor}(\Cor,\cor)$ (which is a free
$\Cor$-module). It induces $\Cor_M \isoto \rhom_{\cor_M}(\Cor_M,\cor_M)$ and then a canonical
isomorphism, for all $F\in \Derb(\cor_M)$,
\begin{equation}\label{eq:eMtorhom}
\Cor_M \tens_{\cor_M} F \simeq \rhom_{\cor_M}(\Cor_M,F)
\end{equation}
and we deduce the isomorphisms:
\begin{align}
\notag
\Hom_{\Derb(\Cor_M)}(G,e_M(F))
& \simeq \Hom_{\Derb(\Cor_M)}(G, \rhom_{\cor_M}(\Cor_M,F))  \\
\label{eq:adjrMeM}
& \simeq \Hom_{\Derb(\cor_M)}( G\tens_{\Cor_M} \Cor_M ,F)  \\
\notag
& \simeq \Hom_{\Derb(\cor_M)}(r_M(G),F)  .
\end{align}

\begin{definition}\label{def:categorie-orb}
We let $\perf(\Cor_M)$ be the full triangulated subcategory of $\Derb(\Cor_M)$
generated by the image of $e_M$, that is, by the objects of the form
$\Cor_M \tens_{\cor_M} F$ with $F\in \Derb(\cor_M)$.

We denote by $\Orb(\cor_M)$ the quotient category $\Derb(\Cor_M)/\perf(\Cor_M)$.  We
let $Q_M\cl \Derb(\Cor_M) \to \Orb(\cor_M)$ be the quotient functor and we set
$\iota_M = Q_M \circ R_M \cl \Derb(\cor_M) \to \Orb(\cor_M)$.
\end{definition}

An object of $\perf(\Cor_M)$ is obtained by taking iterated cones of objects in
$e_M(\Derb(\cor_M))$.  There are objects in $\perf(\Cor_M)$ which are not of the
form $e_M(F)$. For example, when $M$ is a point, the objects of $\Derb(\cor)$
are split (sum of their cohomology) and so are the objects of
$e_{\pt}(\Derb(\cor))$, but the object $L^{p,q} \in \perf(\Cor)$ defined
in~\eqref{eq:def-Lpq} below is not split.

\smallskip

Let $\perf'(\Cor_M)$ be the subcategory of $\Derb(\Cor_M)$ formed by the $P$ such
that $Q_M(P) \simeq 0$. Then $\perf(\Cor_M) \subset \perf'(\Cor_M)$ and we also have
$\Orb(\cor_M) \isoto \Derb(\Cor_M)/\perf'(\Cor_M)$. We do not know whether
$\perf'(\Cor_M) = \perf(\Cor_M)$. A general result (see~\cite{KS06} Ex.~10.11) says
that $P\in \perf'(\Cor_M)$ if and only if $P\oplus P[1] \in \perf(\Cor_M)$.

\begin{notation}
If the context is clear, we will not write the functor $Q_M$ or $R_M$, that is,
for $F\in \Derb(\cor_M)$ we often write $F$ instead of $R_M(F)$, and, for $G\in
\Derb(\Cor_M)$, we often write $G$ instead of $Q_M(G)$.  In particular for a
locally closed subset $Z\subset M$, we consider $\cor_Z = R_M(\cor_Z) \in
\Derb(\Cor_M)$ and $\cor_Z = Q_M(\cor_Z) \in \Orb(\cor_M)$.
\end{notation}

The exact sequence of $\Cor$-modules  $0\to \cor \to \Cor \to \cor \to 0$
induces a morphism
\begin{equation}\label{eq:def_shift_morph0}
s_M \cl \cor_M \to \cor_M[1] \qquad \qquad \text{in $\Derb(\Cor_M)$}
\end{equation}
and a distinguished triangle, for any $F\in \Derb(\cor_M)$,
\begin{equation}\label{eq:td_shift_morph0}
R_M(F) \to e_M(F)  \to  R_M(F) \to[s_M \tens \id_F] R_M(F)[1] .
\end{equation}
We thus obtain an isomorphism $s_M \tens \id_F \cl R_M(F) \isoto R_M(F)[1]$ in
$\Orb(\cor_M)$, for any $F\in \Derb(\cor_M)$.  This would work for any field
$\cor$. In characteristic $2$ we can generalize this isomorphism to any $F\in
\Derb(\Cor_M)$ (see Remark~\ref{rem:def_shift_morph2}).

\subsection*{Internal tensor product and homomorphism}

For two $\Cor$-modules $E_1,E_2$ we define $E_1 \epstens_\cor E_2 \in
\Mod(\Cor)$ as follows. The underlying $\cor$-vector space is $E_1 \otimes_\cor
E_2$ and $\varepsilon$ acts by
$$
\varepsilon \cdot(x\otimes y)
 = (\varepsilon x) \otimes y + x \otimes (\varepsilon y) .
$$
Since the characteristic is $2$, we can check that $\varepsilon^2$ acts by $0$ and
this defines an object of $\Mod(\Cor)$ that we denote $E_1 \epstens_\cor E_2$. We
obtain in this way a bifunctor
$\epstens_\cor \cl \Mod(\Cor) \times \Mod(\Cor) \to \Mod(\Cor)$.  For
$F_1,F_2 \in \Mod(\Cor_M)$, we define $F_1 \epstens_{\cor_M} F_2 \in \Mod(\Cor_M)$ as
the sheaf associated with the presheaf $U \mapsto F_1(U) \epstens_\cor F_2(U)$.  We
remark that $r_M(F_1 \epstens_{\cor_M} F_2) \simeq r_M(F_1) \tens_{\cor_M} r_M(F_2)$,
where $r_M$ is seen here as a functor $\Mod(\Cor_M) \to \Mod(\cor_M)$, and it follows
easily that $\epstens_{\cor_M}$ is an exact functor.   Hence it induces a
functor on the derived category (see the discussion around~\eqref{eq:funcder0}),
denoted in the same way:
\begin{equation}\label{eq:def-epstens}
\epstens_{\cor_M} \cl \Derb(\Cor_M) \times \Derb(\Cor_M) \to \Derb(\Cor_M) .  
\end{equation}
For any $F,G \in \Derb(\Cor_M)$ we have canonical isomorphisms
\begin{gather}
\label{eq:unit-epstens}
\cor_M \epstens_{\cor_M} F \simeq F \epstens_{\cor_M} \cor_M \simeq F
\quad \text{ in $\Derb(\Cor_M)$,}  \\
\label{eq:tens-epstens}
  r_M(F  \epstens_{\cor_M} G) \simeq r_M(F) \tens_{\cor_M} r_M(G) 
\quad \text{ in $\Derb(\cor_M)$.}
\end{gather}
Using~\eqref{eq:unit-epstens} and the exact sequence $0\to \cor \to \Cor \to
\cor \to 0$, we obtain as
in~(\ref{eq:def_shift_morph0}-\ref{eq:td_shift_morph0}) a morphism $s_M(F) \cl F
\to F[1]$, for any $F\in \Derb(\Cor_M)$, and a distinguished triangle
\begin{equation}\label{eq:td_shift_morph}
F \to  \Cor_M \epstens_{\cor_M} F  \to  F \to[s_M(F)] F[1] .
\end{equation}
Using the adjunction $(e_M,r_M)$ and~\eqref{eq:tens-epstens} we have the
isomorphism, for any $F\in \Derb(\cor_M)$ and $G \in \Derb(\Cor_M)$,  
\begin{equation}\label{eq:eM-epstens}
  \begin{split}
\Hom&_{\Derb(\Cor_M)} ( e_M(F \tens_{\cor_M} r_M(G)) , e_M(F)\epstens_{\cor_M} G)  \\
& \simeq
\Hom_{\Derb(\cor_M)}( F \tens_{\cor_M} r_M(G) , (r_Me_M(F))\tens_{\cor_M} r_M(G) ) .
  \end{split}
\end{equation}
By adjunction we have a morphism $a_F \cl F \to r_Me_M(F)$.  The inverse image
of $a_F \otimes \id_{r_M(G)}$ by~\eqref{eq:eM-epstens} gives a canonical
morphism, for any $F\in \Derb(\cor_M)$ and $G \in \Derb(\Cor_M)$,
\begin{equation}\label{eq:eM-epstens2}
e_M(F \tens_{\cor_M} r_M(G)) \to  e_M(F)\epstens_{\cor_M} G .
\end{equation}

\begin{lemma}\label{lem:epstens-perf}
  Let $F\in \Derb(\cor_M)$ and $G \in \Derb(\Cor_M)$.  Then the
  morphism~\eqref{eq:eM-epstens2} is an isomorphism.  In the same way
  $G \epstens_{\cor_M} e_M(F) \simeq e_M(r_M(G) \tens_{\cor_M} F)$. In
  particular, for $F,G \in \Derb(\Cor_M)$ such that $F$ or $G$ belongs to
  $\perf(\Cor_M)$, we have $F \epstens_{\cor_M} G \in \perf(\Cor_M)$ and
  $\epstens_{\cor_M}$ induces a functor
$$
\epstens_{\cor_M} \cl \Orb(\cor_M) \times \Orb(\cor_M) \to \Orb(\cor_M) .
$$
\end{lemma}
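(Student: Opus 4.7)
The strategy is to prove the first isomorphism by showing it becomes an isomorphism after applying the conservative functor $r_M$; the second is entirely symmetric, and the remaining assertions then follow by a routine triangulated-subcategory argument. Since the underived restriction of scalars $\Mod(\Cor_M) \to \Mod(\cor_M)$ is exact and faithful, it is conservative; its derived functor $r_M$ is therefore also conservative, so it is enough to check that $r_M$ of~\eqref{eq:eM-epstens2} is an isomorphism.

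The decomposition $\Cor_M \simeq \cor_M \cdot 1 \oplus \cor_M \cdot \varepsilon$ as a $\cor_M$-module yields a canonical isomorphism $r_M e_M(H) \simeq H \oplus H$ for every $H \in \Derb(\cor_M)$. Combined with~\eqref{eq:tens-epstens} this identifies both sides of~\eqref{eq:eM-epstens2}, after $r_M$, with $(F \tens_{\cor_M} r_M G)^{\oplus 2}$. By its construction,~\eqref{eq:eM-epstens2} corresponds under~\eqref{eq:eM-epstens} to $a_F \tens \id_{r_M G}$, where the unit $a_F \cl F \to r_M e_M F \simeq F \oplus F$ is the inclusion into the first summand; this pins down the first column of $r_M$ of~\eqref{eq:eM-epstens2} as $\bpmat 1 \\ 0 \epmat$. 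Moreover, since~\eqref{eq:eM-epstens2} is $\Cor_M$-linear, its image under $r_M$ must intertwine the $\varepsilon$-actions on source and target. On the source this action is $\bpmat 0 & 0 \\ 1 & 0 \epmat$, while on the target the Leibniz rule defining $\epstens$ gives $\bpmat \varepsilon_G & 0 \\ 1 & \varepsilon_G \epmat$, where $\varepsilon_G \cl r_M G \to r_M G$ is the $\cor_M$-linear endomorphism induced by multiplication by $\varepsilon$. Combining the intertwining constraint with the known first column forces $r_M$ of~\eqref{eq:eM-epstens2} to equal $\bpmat 1 & \varepsilon_G \\ 0 & 1 \epmat$; the self-consistency conditions that arise reduce to $\varepsilon_G^2 = 0$ and $2\varepsilon_G = 0$, which hold because $\varepsilon^2 = 0$ and $\cor$ has characteristic $2$. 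This matrix is upper triangular with identities on the diagonal, hence an isomorphism.

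The symmetric isomorphism $G \epstens e_M(F) \simeq e_M(r_M G \tens_{\cor_M} F)$ is proved by the identical argument. For the final assertion, $\perf(\Cor_M)$ is by definition the smallest triangulated subcategory of $\Derb(\Cor_M)$ containing the essential image of $e_M$. For fixed $G \in \Derb(\Cor_M)$ the full subcategory $\{F \in \Derb(\Cor_M) \mid F \epstens G \in \perf(\Cor_M)\}$ is triangulated (since $- \epstens G$ is triangulated) and contains $e_M(\Derb(\cor_M))$ by the first isomorphism; it therefore contains $\perf(\Cor_M)$. A symmetric argument covers the case $G \in \perf(\Cor_M)$. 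Thus $\epstens$ preserves $\perf(\Cor_M)$ whenever either argument is perfect, and descends to the claimed bifunctor $\Orb(\cor_M) \times \Orb(\cor_M) \to \Orb(\cor_M)$. The main technical step is the matrix computation of the middle paragraph: the off-diagonal entry $\varepsilon_G$, tracking the Leibniz twist, is precisely what reconciles the two $\Cor$-module structures and forces the canonical morphism (not merely some abstract isomorphism) to be the desired equivalence.
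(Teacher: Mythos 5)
Your proof is correct and takes a genuinely different route from the paper's. The paper's argument first reduces, by induction on length using the truncation triangles $\tau_{\leq i}F \to F \to \tau_{>i}F \to[+1]$, to the case where $F$ and $G$ are concentrated in degree~$0$; it then computes $u_{F,G}$ on sections and exhibits the explicit inverse $u_{F,G}^{-1}\bigl((f_0+\varepsilon f_1)\otimes g\bigr) = \bigl(f_0\otimes g + f_1 \otimes (\varepsilon g)\bigr) + \varepsilon(f_1\otimes g)$. You instead stay in the derived category throughout: you exploit the conservativity of $r_M$ (a fact the paper records just before Definition~\ref{def:categorie-orb} but does not invoke for this lemma), identify $r_M$ of both sides with $(F \tens_{\cor_M} r_M G)^{\oplus 2}$ via the splitting $\Cor = \cor\cdot 1 \oplus \cor\cdot\varepsilon$, and then pin down $r_M(u_{F,G})$ entirely from two constraints --- the triangle identity $r_M(u_{F,G}) \circ a_{F\tens r_M G} = a_F \tens \id$, which gives the first column $\bpmat 1 \\ 0 \epmat$, and the requirement that $r_M(u_{F,G})$ intertwine the $\varepsilon$-endomorphisms on source and target. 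The intertwining equation forces the matrix $\bpmat 1 & \varepsilon_G \\ 0 & 1 \epmat$ and simultaneously surfaces the two hypotheses doing the work, $\varepsilon_G^2=0$ and $2\varepsilon_G=0$ (the consistency conditions on the $(1,2)$ and $(2,2)$ entries), which is a conceptual gain over the paper's unmotivated inverse formula. The trade-off is that the paper's computation is more self-contained, while yours relies on recognizing that direct-sum decompositions and the functorial $\varepsilon$-endomorphism make sense at the level of $\Derb(\cor_M)$; both are sound. Your handling of the final assertion --- closure of $\perf(\Cor_M)$ under $\epstens$ via the triangulated preimage argument --- is a mild repackaging of the paper's induction on the length of a devissage of $F$ by cones from $e_M(\Derb(\cor_M))$, and equally correct.
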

\begin{proof}
(i) Let us denote by $u_{F,G}$ the morphism~\eqref{eq:eM-epstens2}. Using the
distinguished triangle $\tau_{\leq i}F \to F \to \tau_{>i}F \to[+1]$ and the
similar one for $G$ we can argue by induction on the length of $F$ and $G$ to
prove that $u_{F,G}$ is an isomorphism.  Then we are reduced to the case where
$F$ and $G$ are concentrated in degree $0$.
Writing $\Cor = \cor \oplus \varepsilon \cor$ we have $e_M(F \tens_{\cor_M}
r_M(G)) = (F\otimes G) \oplus \varepsilon (F\otimes G)$ and $e_M(F)
\epstens_{\cor_M} G = (F \oplus \varepsilon F) \otimes G$.  For sections $f$ and
$g$ of $F$ and $G$ we have
\begin{align*}
u_{F,G} ( f\otimes g) &= f\otimes g, \\
u_{F,G} ( \varepsilon(f\otimes g)) &=  \varepsilon \cdot (f\otimes g)
= (\varepsilon f)\otimes g + f \otimes (\varepsilon g )
\end{align*}
and we can check directly that $u_{F,G}$ is an isomorphism with inverse
$$
u_{F,G}^{-1} ((f_0+\varepsilon f_1)\otimes g) 
= (f_0\otimes g + f_1 \otimes (\varepsilon g)) + \varepsilon(f_1\otimes g) .
$$
(ii) For any $F \in \perf(\Cor_M)$, there exist a sequence
$F_1, F_2,\ldots,F_n=F \in \Derb(\Cor_M)$ and distinguished triangles
$F_i \to F'_i \to F''_i \to[+1]$ in $\Derb(\Cor_M)$, $i=1,\dots,n$, such that
$F'_i$, $F''_i$ belong to $e_M(\Derb(\cor_M)) \cup \{F_1, F_2,\dots, F_{i-1}\}$.
Then~(i) and an induction on $n$ give $F \epstens_{\cor_M} G \in \perf(\Cor_M)$.
\end{proof}

\begin{remark}\label{rem:def_shift_morph2}
An easy case of Lemma~\ref{lem:epstens-perf} is $F=\cor_M$
in~\eqref{eq:eM-epstens2}. We obtain $e_Mr_M(G) \isoto \Cor_M\epstens_{\cor_M}
G$, for any $G\in \Derb(\Cor_M)$.  Hence the distinguished
triangle~\eqref{eq:td_shift_morph} becomes
\begin{equation}\label{eq:td_shift_morph-bis}
F \to  e_Mr_M(F)  \to  F \to[s_M(F)] F[1]
\quad\text{ for any $F\in\Derb(\Cor_M)$}.
\end{equation}
Applying $Q_M$ to this triangle gives an isomorphism $s_M(F) \cl F \isoto F[1]$
in $\Orb(\cor_M)$.
\end{remark}

We can define an adjoint $\homeps$ to $\epstens$ by a similar construction.
For $F_1,F_2 \in \Mod(\Cor_M)$, we define $\homeps(F_1,F_2) \in \Mod(\Cor_M)$ as
the sheaf of $\cor$-vector spaces $\hom_\cor(F_1,F_2)$ with the action of
$\varepsilon$ given by
$$
(\varepsilon \cdot \varphi)(x)
 = \varepsilon \varphi(x) + \varphi(\varepsilon x), 
$$
where $\varphi$ is a section of $\hom_\cor(F_1,F_2)$ over an open set $U$ and
$x$ a section of $F_1$ over a subset $V$ of $U$.  Then we see that $\homeps$ is
right adjoint to $\epstens$, hence left exact.  We check also that its derived
functor $\rhomeps$ is right adjoint to $\epstens$ in $\Derb(\Cor_M)$ and that,
for any $F,G \in \Derb(\Cor_M)$,
\begin{equation}\label{eq:hom-homeps}
r_M(\rhomeps(F,G) ) \simeq \rhom(r_M(F), r_M(G)).
\end{equation}
We have the similar result as Lemma~\ref{lem:epstens-perf}.
\begin{lemma}\label{lem:homeps-perf}
Let $F,G \in \Derb(\Cor_M)$. We assume that $F$ or $G$ belongs to
$\perf(\Cor_M)$.  Then $\rhomeps(F,G) \in \perf(\Cor_M)$. The induced functor
$$
\rhomeps \cl \Orb(\cor_M)^{op} \times \Orb(\cor_M) \to \Orb(\cor_M) .
$$
is right adjoint to $\epstens_{\cor_M}$.
\end{lemma}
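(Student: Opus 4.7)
The plan is to follow the strategy used in the proof of Lemma~\ref{lem:epstens-perf}: first compute $\rhomeps$ when one argument is in the essential image of $e_M$, then extend to all of $\perf(\Cor_M)$ by triangular induction, and finally descend the derived adjunction $(\epstens,\rhomeps)$ to the orbit category.

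For the base case I claim that, for $F' \in \Derb(\cor_M)$ and $G \in \Derb(\Cor_M)$, there are natural isomorphisms in $\Derb(\Cor_M)$
\begin{align*}
\rhomeps(e_M(F'), G) &\simeq e_M(\rhom_{\cor_M}(F', r_M(G))), \\
\rhomeps(G, e_M(F')) &\simeq e_M(\rhom_{\cor_M}(r_M(G), F')).
\end{align*}
Both would be read off from Yoneda. For the first, for arbitrary $H \in \Derb(\Cor_M)$ I would compute
\begin{align*}
\Hom(H, \rhomeps(e_M(F'), G))
& \simeq \Hom(H \epstens e_M(F'), G)
  \simeq \Hom(e_M(r_M(H) \tens_{\cor_M} F'), G) \\
& \simeq \Hom(r_M(H) \tens_{\cor_M} F', r_M(G))
  \simeq \Hom(r_M(H), \rhom_{\cor_M}(F', r_M(G))) \\
& \simeq \Hom(H, e_M(\rhom_{\cor_M}(F', r_M(G)))),
\end{align*}
invoking successively: the $(\epstens,\rhomeps)$ adjunction; the isomorphism $H \epstens e_M(F') \simeq e_M(r_M(H) \tens_{\cor_M} F')$ furnished by~\eqref{eq:eM-epstens2} and Lemma~\ref{lem:epstens-perf}; the tensor-restriction adjunction $e_M \dashv r_M$; ordinary tensor-hom adjunction in $\Derb(\cor_M)$; and the further adjunction $r_M \dashv e_M$ recorded in~\eqref{eq:adjrMeM}, available because $\Cor_M$ is self-dual as a $\cor_M$-module. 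The second isomorphism is obtained by a symmetric chain starting from $\Hom(H \epstens G, e_M(F'))$ and using~\eqref{eq:tens-epstens} to identify $r_M(H \epstens G)$ with $r_M(H) \tens_{\cor_M} r_M(G)$.

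In both cases the result lies in the essential image of $e_M$, hence in $\perf(\Cor_M)$. For the induction step, any $F \in \perf(\Cor_M)$ is built from objects of the form $e_M(F')$ by finitely many distinguished triangles; the triangulated contravariant functor $\rhomeps(-, G)$ sends such a triangle to a distinguished triangle, and since $\perf(\Cor_M)$ is a triangulated subcategory I would conclude by induction on the length that $\rhomeps(F, G) \in \perf(\Cor_M)$. The argument for $G \in \perf(\Cor_M)$ is identical using the second formula.

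It remains to check that descent to $\Orb(\cor_M)$ carries the adjunction. Both $\epstens$ and $\rhomeps$ preserve $\perf(\Cor_M)$ in each variable (by Lemma~\ref{lem:epstens-perf} and the first part just proved), so they factor through $\Orb(\cor_M)$. Using the Verdier quotient description of $\Hom$,
$$\Hom_{\Orb(\cor_M)}(H, \rhomeps(F, G))
\simeq \varinjlim_{G \to G'} \Hom_{\Derb(\Cor_M)}(H, \rhomeps(F, G')),$$
where the colimit runs over morphisms whose cone is in $\perf(\Cor_M)$; the derived $(\epstens,\rhomeps)$ adjunction identifies this termwise with $\varinjlim \Hom(H \epstens F, G')$, and since $\rhomeps(F, -)$ preserves $\perf$ this cofinal system also computes $\Hom_{\Orb(\cor_M)}(H \epstens F, G)$. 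The main obstacle is administrative rather than conceptual: making the Yoneda identifications in the base case natural enough in all three variables to glue across the triangles of Step~2 and survive the passage to the colimit in Step~3. No new geometric input is required beyond the Frobenius self-duality of $\Cor_M$ over $\cor_M$, which already underlay Lemma~\ref{lem:epstens-perf}.
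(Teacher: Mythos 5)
Your proposal follows the same blueprint as the proof of Lemma~\ref{lem:epstens-perf}, which is exactly what the paper indicates without writing it out: establish the isomorphisms when one argument lies in $e_M(\Derb(\cor_M))$, extend over $\perf(\Cor_M)$ by triangular induction, and descend the adjunction to $\Orb(\cor_M)$. Your Yoneda derivation of the two base-case isomorphisms is a clean alternative to the direct hands-on computation (length induction plus a degree-zero check) that the paper uses for $\epstens$; all the steps in your chain of identifications are legitimate, and naturality in all variables does follow from the naturality of each intermediate adjunction, so Steps~1 and~2 are fine.

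The one place to tighten is Step~3. The formula
$\Hom_{\Orb(\cor_M)}(H, \rhomeps(F, G)) \simeq \varinjlim_{G \to G'} \Hom_{\Derb(\Cor_M)}(H, \rhomeps(F, G'))$,
with the colimit over $G \to G'$ whose cone is perfect, is \emph{not} the standard Verdier-quotient formula for the left-hand side: that formula indexes over \emph{all} $\perf$-isomorphisms $\rhomeps(F,G) \to A$, and the subsystem of those arising as $\rhomeps(F,s)$ is not obviously cofinal. As you observe, what your colimit manipulation actually establishes is $\varinjlim_{G\to G'}\Hom(H,\rhomeps(F,G')) \simeq \Hom_{\Orb}(H\epstens F, G)$, which is correct; what is missing is the identification of that colimit with $\Hom_{\Orb}(H, \rhomeps(F, G))$. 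The gap is easily closed: since $-\epstens F$ and $\rhomeps(F,-)$ both send $\perf(\Cor_M)$ into itself (the first by Lemma~\ref{lem:epstens-perf}, the second by what you prove in Steps~1--2), the unit, counit, and triangle identities of the adjunction on $\Derb(\Cor_M)$ descend directly to $\Orb(\cor_M)$, and that gives the adjunction there without any cofinality check. I would replace the colimit argument by this observation.
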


\subsection*{Morphisms in the triangulated orbit category}
We prove the formula~\eqref{eq:mor-orbA} which describes the morphisms in
$\Orb(\cor_M)$.

\begin{lemma}\label{lem:mor-perf-borne}
Let $F,P \in \Derb(\Cor_M)$. We assume that $P \in \perf(\Cor_M)$.
Then $\RHom(P,F)$ and $\RHom(F,P)$ belong to $\Derb(\Cor)$.
\end{lemma}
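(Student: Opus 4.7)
The plan is to exploit the fact that $\perf(\Cor_M)$ is, by definition, the triangulated subcategory of $\Derb(\Cor_M)$ generated by the image of $e_M$, and reduce the statement to the case $P = e_M(F')$ for some $F' \in \Derb(\cor_M)$, where the two adjunctions $(e_M,r_M)$ and $(r_M, e_M)$ (the latter coming from the isomorphism~\eqref{eq:eMtorhom}, which identifies $e_M(F')$ with $\rhom_{\cor_M}(\Cor_M,F')$) allow us to rewrite the $\Cor_M$-$\RHom$ as a $\cor_M$-$\RHom$ on $M$, where $\cor$ is a field.

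Since $\RHom(-,F)$ and $\RHom(F,-)$ are triangulated functors to $\Der(\Cor)$, and the subcategory of $\Derb(\Cor)$ formed by the $X$ with $\RHom(X,F)$ and $\RHom(F,X)$ both in $\Derb(\Cor)$ is triangulated, it suffices to verify the statement when $P = e_M(F')$ with $F' \in \Derb(\cor_M)$. In this case, the adjunction $(e_M, r_M)$ at the sheaf level gives an isomorphism
\begin{equation*}
\rhom_{\Cor_M}(e_M(F'),F) \simeq \rhom_{\cor_M}(F', r_M(F))
\end{equation*}
of sheaves of $\Cor$-modules (the $\Cor$-structure on the right coming from the $\Cor$-structure on $F$). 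Similarly, using~\eqref{eq:eMtorhom} to rewrite $e_M(F')$ as $\rhom_{\cor_M}(\Cor_M, F')$ and the tensor-hom adjunction, we obtain
\begin{equation*}
\rhom_{\Cor_M}(F, e_M(F')) \simeq \rhom_{\cor_M}(r_M(F), F') .
\end{equation*}
Taking $\rsect(M;-)$ on both sides yields natural isomorphisms of $\RHom$-complexes in $\Der(\Cor)$.

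It then suffices to observe that for any $A,B \in \Derb(\cor_M)$ one has $\RHom_{\cor_M}(A,B) \in \Derb(\cor)$: indeed, because $\cor$ is a field, $\rhom_{\cor_M}(A,B) \in \Derb(\cor_M)$, and since $M$ is a finite-dimensional manifold, Grothendieck's vanishing $H^i(M;-)=0$ for $i>\dim M$ implies $\rsect(M;-)$ sends $\Derb(\cor_M)$ to $\Derb(\cor)$. Boundedness as a $\Cor$-module is the same as boundedness of the underlying $\cor$-complex, so we conclude $\RHom_{\Cor_M}(e_M(F'), F), \RHom_{\Cor_M}(F, e_M(F')) \in \Derb(\Cor)$, and the triangulated induction on the length of a presentation of $P$ in $\perf(\Cor_M)$ finishes the proof. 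The only subtlety worth checking is that the adjunction isomorphisms are $\Cor$-linear (not merely $\cor$-linear), but this is automatic since both sides carry their $\Cor$-structure from the same $\Cor$-module factor ($F$ or $G$) and the adjunction unit/counit are morphisms of $\Cor$-modules.
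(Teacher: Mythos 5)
Your proof is correct and follows the same route as the paper: reduce to the generating objects $P = e_M(F')$ by noting that the set of $P$ with the required property is triangulated, then use the two adjunctions $(e_M,r_M)$ and $(r_M,e_M)$ (the latter via~\eqref{eq:eMtorhom}, which is exactly~\eqref{eq:adjrMeM}) to replace $\RHom_{\Cor_M}$ by $\RHom_{\cor_M}$ of bounded complexes, which is bounded. You spell out more details than the paper does — the Grothendieck vanishing giving boundedness of $\rsect(M;-)$ on $\Derb(\cor_M)$, and the $\Cor$-linearity of the adjunction isomorphisms — but the argument is the same.
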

\begin{proof}
  Since $\perf(\Cor_M)$ is generated by $e_M(\Derb(\cor_M))$, the same argument as
  in~(ii) of the proof of Lemma~\ref{lem:epstens-perf} implies that we can assume
  $P=e_M(Q)$, for some $Q\in \Derb(\cor_M)$. Then $\RHom_{\Cor}(P,F)$ is isomorphic
  to $\RHom_\cor(Q,r_M(F))$ and is bounded. Using~\eqref{eq:adjrMeM} the same proof
  gives that $\RHom(F,P)$ is bounded.
\end{proof}

We define the following objects $L^{p,q}$ of $\Derb(\Cor)$, for any
two integers $p\leq q$, by
\begin{equation}\label{eq:def-Lpq}
  L^{p,q} \; = \; 0 \to \Cor \to[\varepsilon] \Cor \to[\varepsilon] \dots
\to[\varepsilon] \Cor \to 0,
\end{equation}
where the first $\Cor$ is in degree $p$ and the last one in degree $q$.  Then
$L^{p,q} \in \perf(\Cor)$ and there is a distinguished triangle in $\Derb(\Cor)$,
\begin{equation}\label{eq:dt-Lpq}
\cor[-p] \to L^{p,q} \to \cor[-q] \to[s^{p,q}] \cor[-p+1],
\end{equation}
with $s^{p,q} = s_{\pt}^{q-p+1}[-q]$, where $s_{\pt}$
is~\eqref{eq:def_shift_morph0}.  For $F \in \Derb(\Cor_M)$ we define
$s^{p,q}_M(F) \eqdot s^{p,q}\tens \id_F \cl F[-q] \to F[-p+1]$.  We deduce the
triangle, for any $F \in \Derb(\Cor_M)$ and any $n\geq 1$,
$$
L^{1,n}_M \epstens_{\cor_M} F \to F[-n] \to[{s_M^{1,n}(F)}] F \to[+1] .
$$
\begin{lemma}\label{lem:morph-orb}
We consider a distinguished triangle $P \to F' \to F \to[+1]$ in $\Derb(\Cor_M)$
and we assume that $P\in \perf(\Cor_M)$. Then there exist $n\in \N$ and a
morphism of triangles
\begin{equation*}
\begin{tikzcd}
L^{1,n}_M \epstens_{\cor_M} F \ar[r]\ar[d] & F[-n] \ar[rr, "s_M^{1,n}(F)"] \ar[d]
&&  F \ar[r, "+1"] \ar[d, equal] & {} \\
P \ar[r] & F'  \ar[rr]  && F \ar[r, "+1"]  &  \pointdiag 
\end{tikzcd}
\end{equation*}
\end{lemma}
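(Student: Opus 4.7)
\emph{The plan.} My plan is to translate the existence of the morphism of triangles into a vanishing statement for the iterated Bockstein-type morphism $s^{1,n}_M$, and then establish this vanishing by devissage along the generators of $\perf(\Cor_M)$.

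\emph{Reformulation.} A morphism of triangles with identity on the rightmost vertical exists if and only if the connecting morphism $\delta \cl F \to P[1]$ of the lower triangle factors through $\tau_n \cl F \to (L^{1,n}_M \epstens_{\cor_M} F)[1]$, the connecting morphism of the upper triangle. Applying $\Hom(-, P[1])$ to the upper triangle turns this factorization condition into the vanishing $\delta \circ s^{1,n}_M(F) = 0$ in $\Hom_{\Derb(\Cor_M)}(F[-n], P[1])$. Using the bifunctoriality of $\epstens_{\cor_M}$ recalled in Lemma~\ref{lem:epstens-perf}, the family $s^{1,n}_M(G) = s^{1,n} \epstens \id_G$ is a natural transformation $[-n] \Rightarrow \id$ on $\Derb(\Cor_M)$, hence $\delta \circ s^{1,n}_M(F) = s^{1,n}_M(P)[1] \circ \delta[-n]$. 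It therefore suffices to show that for every $P \in \perf(\Cor_M)$ there is some $n \geq 1$ with $s^{1,n}_M(P) = 0$.

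\emph{Devissage.} Since $\perf(\Cor_M)$ is the smallest triangulated subcategory of $\Derb(\Cor_M)$ containing the image of $e_M$, I proceed by induction on the number of cones needed to build $P$. In the base case $P = e_M(Q)$, Lemma~\ref{lem:epstens-perf} identifies $s^{1,n}_M(e_M Q)$ with $e_M(r_M(s^{1,n}) \otimes_{\cor_M} \id_Q)$, and since the short exact sequence $0 \to \cor \to \Cor \to \cor \to 0$ splits upon restriction of scalars, $r_M(s_\pt) = 0$ in $\Der(\cor_M)$, whence $r_M(s^{1,n}) = 0$ and $s^{1,n}_M(e_M Q) = 0$ for every $n \geq 1$. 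For the inductive step, given a triangle $P_1 \to P \to P_2 \to[+1]$ with $s^{1,n_i}_M(P_i) = 0$ for $i = 1, 2$, I claim that $n := n_1 + n_2$ works: naturality of $s^{1,n_1}$ applied to $P \to P_2$ forces $s^{1,n_1}_M(P)$ to factor as $(P_1 \to P) \circ \alpha$ for some $\alpha \cl P[-n_1] \to P_1$, and then the identity $s^{1,n_1+n_2} = s^{1,n_2} \circ s^{1,n_1}[-n_2]$ in $\Der(\Cor)$ (a direct consequence of $s^{1,n} = s_\pt^n[-n]$) combined with a second application of naturality yields $s^{1,n_1+n_2}_M(P) = (P_1 \to P) \circ s^{1,n_2}_M(P_1) \circ \alpha[-n_2] = 0$. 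The main obstacle to writing this up cleanly will be the careful bookkeeping of shifts in the naturality of $s^{1,n}$ and in the compositional identity for iterated Bocksteins; once these are in place, the devissage closes.
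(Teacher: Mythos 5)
Your proof is correct but follows a genuinely different route from the paper. The paper reduces the problem, as you do, to the vanishing of the composition $w \circ s_n \in \Hom(F[-n],P[1])$ (with $w$ the connecting morphism), but then invokes Lemma~\ref{lem:mor-perf-borne}: since $P\in\perf(\Cor_M)$, $\RHom(F,P)$ is a \emph{bounded} complex, so the entire group $\Hom(F[-n],P[1]) = H^{n+1}\RHom(F,P)$ vanishes for $n$ large. This is a cohomological-dimension argument that makes no attempt to understand the morphism $s^{1,n}_M(P)$ itself. Your argument instead uses naturality of $s^{1,n}_M$ to reduce to the stronger assertion $s^{1,n}_M(P)=0$, and then proves that by devissage along the generators $e_M(Q)$ of $\perf(\Cor_M)$. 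The base case correctly exploits that restriction of scalars $r_M$ kills $s_\pt$ (the sequence $0\to\cor\to\Cor\to\cor\to0$ splits over $\cor$) together with the compatibility $\Cor_M\epstens_{\cor_M}e_M(Q)\simeq e_M(r_M(\Cor_M)\tens Q)$ from Lemma~\ref{lem:epstens-perf}; the inductive step correctly combines the factorization forced by $g\circ s^{1,n_1}_M(P)=0$ with the composition identity $s^{1,n_1+n_2}=s^{1,n_2}\circ s^{1,n_1}[-n_2]$. What your approach buys is an explicit bound on $n$ in terms of the number of cones used to build $P$ and a stronger conclusion (the actual map $s^{1,n}_M(P)$ vanishes, not merely the ambient $\Hom$ group), at the cost of verifying naturality of the $\epstens$-isomorphism and of more shift bookkeeping; the paper's route is shorter because it outsources the vanishing to the finiteness of the flabby dimension of $M$, already packaged in Lemma~\ref{lem:mor-perf-borne}. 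Both yield the lemma.
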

\begin{proof}
We set for short $s_n = {s_M^{1,n}(F)}$. We consider the diagram
$$
\begin{tikzcd}
& F[-n] \ar[r, "s_n"]  \ar[dr, "s_n"]  \ar[d, dashed, "a"] &  F \ar[d, equal] \\
P \ar[r] & F'  \ar[r]  & F \ar[r, "w"]  &  P[1] \pointdiag 
\end{tikzcd}
$$
We have $w \circ s_n \in \Hom(F[-n],P[1])$.  Since $P\in \perf(\Cor_M)$ this
group vanishes for $n$ big enough, by Lemma~\ref{lem:mor-perf-borne}. In this
case we have $w \circ s_n =0$ and there exists a morphism $a$ as in the diagram
making the square commute.  Then we can extend the square to a commutative
diagram as in the lemma.
\end{proof}

For $F \in \Derb(\Cor_M)$ we have $s^{n,n}_M(F) \cl F[-n] \to F[-n+1]$.  Then
$\{F[-n], s^{n,n}_M(F)\}_{n\in \N}$ gives a projective system.  For $G\in
\Derb(\Cor_M)$ we define
\begin{equation}\label{eq:mor-Der-Orb}
\varinjlim_{n\in \N} \Hom_{\Derb(\Cor_M)}(F[-n],G) \to \Hom_{\Orb(\cor_M)}(F,G),
\end{equation}
by sending $\varphi_n \cl F[-n]\to G$ to $\varphi_n \circ (s_M^{1,n}(F))^{-1}$.
This is well defined since $s_M^{1,n}(F)$ becomes invertible in $\Orb(\cor_M)$
and $s_M^{1,n}(F) = s_M^{1,n-1}(F) \circ s_M^{n,n}(F)$.

\begin{proposition}\label{prop:morph-orb}
Let $F,G \in \Derb(\Cor_M)$. Then the inductive limit in the left hand side
of~\eqref{eq:mor-Der-Orb} stabilizes and the morphism~\eqref{eq:mor-Der-Orb} is
an isomorphism. More precisely, if $H^i(F) = H^i(G) =0$ for all $i$ outside an
interval $[a,b]$, then
\begin{equation}\label{eq:mor-orbA}
\Hom_{\Derb(\Cor_M)}(F[-n],G) \isoto \Hom_{\Orb(\cor_M)}(F,G),
\end{equation}
for all $n > b-a + \dim M +1$.
\end{proposition}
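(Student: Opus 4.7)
The plan is to represent morphisms in the Verdier quotient $\Orb(\cor_M) = \Derb(\Cor_M)/\perf(\Cor_M)$ by roofs and use Lemma~\ref{lem:morph-orb} to put every such roof into the standard form with left leg $s^{1,n}_M(F)\cl F[-n]\to F$. Since $s^{1,n}_M(F)$ becomes invertible in $\Orb(\cor_M)$, composing with its inverse defines the map~\eqref{eq:mor-Der-Orb}. For surjectivity, a morphism $u\in\Hom_{\Orb(\cor_M)}(F,G)$ is represented by a diagram $F\xleftarrow{s}F'\to G$ with cone$(s)=P\in\perf(\Cor_M)$; applying Lemma~\ref{lem:morph-orb} to the triangle $P\to F'\to F\to[+1]$ yields, for some $n$, a morphism of triangles whose middle component $F[-n]\to F'$ composed with $F'\to G$ gives an element $\varphi_n\in\Hom_{\Derb(\Cor_M)}(F[-n],G)$ mapping to $u$.

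For injectivity (and hence well-definedness of the plan above on the limit), suppose $\varphi_n\in\Hom_{\Derb(\Cor_M)}(F[-n],G)$ becomes zero in $\Orb(\cor_M)$. By the calculus of fractions in Verdier quotients, this means there is $t\cl X\to F[-n]$ with cone in $\perf(\Cor_M)$ such that $\varphi_n\circ t=0$. Applying Lemma~\ref{lem:morph-orb} again, we can factor $t$ through $s^{1,m}_M(F)[-n]\cl F[-n-m]\to F[-n]$ for some $m$, so $\varphi_n\circ s^{1,m}_M(F)[-n]=0$ in $\Derb(\Cor_M)$, i.e.\ $\varphi_n$ dies at stage $n+m$ of the inductive system. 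Hence~\eqref{eq:mor-Der-Orb} is an isomorphism in the limit.

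The main work is the quantitative stabilization. The transition map $\Hom_{\Derb(\Cor_M)}(F[-n],G)\to\Hom_{\Derb(\Cor_M)}(F[-n-1],G)$ is precomposition with $s^{n+1,n+1}_M(F)\cl F[-n-1]\to F[-n]$; by tensoring the triangle~\eqref{eq:dt-Lpq} with $F$, the cone of this morphism in $\Derb(\Cor_M)$ is $e_Mr_M(F)[-n]$ (using $L^{n+1,n+1}=\Cor[-n-1]$ and~\eqref{eq:td_shift_morph-bis}). The associated long exact sequence shows the transition is an isomorphism as soon as
\[
\Hom_{\Derb(\Cor_M)}(e_Mr_M(F)[-n],G)=\Hom_{\Derb(\Cor_M)}(e_Mr_M(F)[-n-1],G)=0.
\]
By the adjunction~\eqref{eq:adjrMeM}, both groups are $\Ext^{n}$ and $\Ext^{n+1}$ groups in $\Derb(\cor_M)$ between $r_M(F)$ and $r_M(G)$.

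The hard part is the vanishing of these $\Ext$ groups in the stated range. Both $r_M(F)$ and $r_M(G)$ have cohomology in $[a,b]$, so over the field $\cor$ the sheaf $\rhom(r_M(F),r_M(G))$ has cohomology concentrated in degrees $[-(b-a),b-a]$; since $M$ is a manifold, $\rsect(M;-)$ has cohomological dimension $\dim M$ (on sheaves of $\cor$-vector spaces), so $\Ext^k(r_M(F),r_M(G))=0$ for $k>b-a+\dim M$. For $n>b-a+\dim M+1$ both $n$ and $n-1$ exceed this bound, so the transition map is an isomorphism from that point on. Combined with the surjectivity argument, this gives the stronger statement~\eqref{eq:mor-orbA}.
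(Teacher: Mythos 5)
Your overall plan matches the paper's: derive the transition-map triangle from~\eqref{eq:td_shift_morph-bis}, use the adjunction~\eqref{eq:adjrMeM} to reduce the relevant $\Hom$ groups to $\Ext$ groups over $\cor_M$, and then invoke Lemma~\ref{lem:morph-orb} together with the description of morphisms in a Verdier quotient to identify the stabilized limit with $\Hom_{\Orb(\cor_M)}(F,G)$. Your injectivity/surjectivity discussion for part~(ii) is a welcome expansion of the paper's terse ``This gives the result.''

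However, the quantitative vanishing argument has a genuine gap. You claim that for $r_M(F), r_M(G)$ with cohomology in $[a,b]$, the complex $\rhom(r_M(F),r_M(G))$ has cohomology concentrated in $[-(b-a),\,b-a]$ ``over the field $\cor$.'' This is false: the internal derived $\hom$ of sheaves of $\cor$-vector spaces does \emph{not} preserve cohomological amplitude, because $\hom(\cdot,\cdot)$ is not an exact bifunctor on $\Mod(\cor_M)$ even when $\cor$ is a field. Concretely, on $M=\R$ with $F'=\cor_{\{0\}}$ and $G'=\cor_M$ (both in degree $0$), one has $\rhom(\cor_{\{0\}},\cor_M)\simeq\rsect_{\{0\}}(\cor_M)\simeq\cor_{\{0\}}[-1]$, so $\mathcal{E}xt^1\ne 0$. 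The stalks of $\rhom(F',G')$ are not $\RHom_\cor(F'_x,G'_x)$. The correct amplitude of $\rhom(F',G')$ is $[-(b-a),\,(b-a)+d]$, where $d$ is the injective (equivalently, since $\cor$ is a field, flabby) dimension of $\Mod(\cor_M)$; and the correct way to get the vanishing is exactly what the paper does, namely compute $\RHom(r_M(F),r_M(G))$ using a bounded injective resolution of $r_M(G)$ of length $\le d$, giving $\Ext^n=0$ for $n>(b-a)+d$ without ever isolating an amplitude bound for $\rhom$ alone. The paper takes $d=\dim M+1$; your claimed $\rsect$ bound of $\dim M$ is separately defensible but does not rescue the argument, since the flaw is upstream in the asserted amplitude of $\rhom$.
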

\begin{proof}
(i) We prove that the limit stabilizes. We chose $a\leq b$ such that $H^i(F) =
H^i(G) =0$ for all $i$ outside $[a,b]$.  By~\eqref{eq:td_shift_morph-bis} we
have the distinguished triangle
\begin{equation}\label{eq:morph-orb1}
\begin{split}
\Hom_{\Derb(\Cor_M)}( e_M r_M(F)[-n],G)
&\to \Hom_{\Derb(\Cor_M)}(F[-n],G) \\
\to[s'_n] {}& \Hom_{\Derb(\Cor_M)}(F[-n-1],G) \to[+1] ,
\end{split}
\end{equation}
for all $n\in \Z$. By adjunction we have
$$
\Hom_{\Derb(\Cor_M)}( e_M r_M(F)[-n],G) \simeq
\Hom_{\Derb(\cor_M)}(r_M(F)[-n],r_M(G) )
$$
and this is zero for $n> b-a + \dim M +1$ (recall that the flabby dimension
of a manifold $M$ is $\dim M +1$ and that injective over a field is the same as
flabby). It follows that the morphism $s'_n$ in~\eqref{eq:morph-orb1} is an
isomorphism for $n> b-a + \dim M +1$.

\medskip\noindent
(ii) We prove that~\eqref{eq:mor-Der-Orb} is an isomorphism.  We recall that
$$
\Hom_{\Orb(\cor_M)}(F,G) \simeq \varinjlim_{i\cl F'\to F}
\Hom_{\Derb(\Cor_M)}(F',G),
$$
where the limit runs over the morphisms $i\cl F'\to F$ whose cone belongs to
$\perf(\Cor_M)$ (and a morphism $u\cl F' \to G$ is send to $u \circ i^{-1}$ in
$\Orb(\cor_M)$).  By Lemma~\ref{lem:morph-orb} we can restrict to the family of
morphisms $s_M^{1,n}(F) \cl F[-n] \to F$ for $n\in \N$. This gives the result.
\end{proof}

\begin{corollary}\label{cor:morph-QRF-QRG}
  Let $F,G \in \Derb(\cor_M)$. Let $\iota_M = Q_M \circ R_M$.  We have  
$$
\Hom_{\Orb(\cor_M)}(\iota_M(F), \iota_M(G))
\simeq \bigoplus_{n\in\Z} \Hom_{\Derb(\cor_M)}(F[-n],G) .
$$
\end{corollary}
\begin{proof}
By Proposition~\ref{prop:morph-orb} the left hand side of the formula is
isomorphic to
$$
\Hom_{\Derb(\Cor_M)}(R_M(F)[-n_0], R_M(G))
\simeq \Hom_{\Der^-(\cor_M)}(E_M R_M(F)[-n_0], G),
$$
for any big enough $n_0\in\N$.  Using the resolution of $\cor$ as a
$\Cor$-module given by $\cdots\to \Cor \to[\varepsilon] \Cor \to[\varepsilon]
\Cor \to \cor \to 0$, we see that $E_MR_M(F) \simeq \bigoplus_{i\in \N} F[i]$.
The result follows easily.
\end{proof}

This last result says in particular that $\iota_M$ is faithful.  We define
$\iota^0_M \colon \Mod(\cor_M) \to \Orb(\cor_M)$ as the composition of $\iota_M$
and the embedding $\Mod(\cor_M) \to \Derb(\cor_M)$ which sends a sheaf to a
complex concentrated in degree $0$.  For $F \in \Orb(\cor_M)$ we let $h^0_M(F)$
be the sheaf associated with the presheaf
$U \mapsto \Hom_{\Orb(\cor_U)}(\cor_U,F|_U)$.  This defines a functor
$h^0_M \colon \Orb(\cor_M) \to \Mod(\cor_M)$.

\begin{corollary}\label{cor:iota_equivcat}
  For all $F \in \Mod(\cor_M)$ we have $h^0_M(\iota^0_M(F)) \simeq F$.  When $M$
  is a point, the functors $\iota^0$ and $h^0$ are mutually inverse equivalences
  of categories between $ \Mod(\cor)$ and $\Orb(\cor)$.
\end{corollary}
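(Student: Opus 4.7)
The plan is to derive both claims from Corollary~\ref{cor:morph-QRF-QRG}. For the first, fix $F \in \Mod(\cor_M)$. Since restriction to an open $U \subset M$ commutes with $Q$ and $R$, one has $\iota^0_M(F)|_U \simeq \iota^0_U(F|_U)$, and with the convention $\cor_U = \iota^0_U(\cor_U)$ Corollary~\ref{cor:morph-QRF-QRG} yields
\begin{equation*}
\Hom_{\Orb(\cor_U)}(\cor_U,\iota^0_M(F)|_U) \simeq \bigoplus_{n\in\Z} \Hom_{\Derb(\cor_U)}(\cor_U[-n],F|_U) \simeq \bigoplus_{n\in\Z} H^n(U;F|_U).
\end{equation*}
By Proposition~\ref{prop:formulaire}(c), the sheafification of $U\mapsto H^n(U;F|_U)$ is the cohomology sheaf $H^nF$, which equals $F$ if $n=0$ and vanishes otherwise since $F$ lives in degree $0$. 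Hence $h^0_M(\iota^0_M(F))\simeq F$.

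For $M=\pt$, the same calculation applied to arbitrary $V,W \in \Mod(\cor)$ gives $\Hom_{\Orb(\cor)}(\iota^0(V),\iota^0(W)) \simeq \bigoplus_n \Ext^n_\cor(V,W) \simeq \Hom_\cor(V,W)$, because higher $\Ext$ over a field vanishes; combined with the first paragraph, $\iota^0$ is fully faithful and $h^0 \circ \iota^0 \simeq \id$. It remains to prove essential surjectivity: every $G \in \Orb(\cor)$ should be isomorphic to $\iota^0(V)$ for some $V$. I would proceed by induction on the number of nonzero cohomology groups of a bounded representative $F \in \Derb(\Cor)$. In the base case $F \simeq L[-n]$ with a single $\Cor$-module $L$, the structure theorem for $\Cor = \cor[\varepsilon]/\varepsilon^2$-modules writes $L \simeq \cor^a \oplus \Cor^b$; the free summand dies in $\Orb(\cor)$, and the isomorphism $s_\pt : \cor \isoto \cor[1]$ from Remark~\ref{rem:def_shift_morph2} identifies the image with $\iota^0(\cor^a)$.

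For the inductive step, truncation produces a triangle $\iota^0(V_1) \to G \to \iota^0(V_2) \to[\partial] \iota^0(V_1)[1]$ in $\Orb(\cor)$; by full faithfulness and $s_\pt$, the boundary $\partial$ comes from a morphism $\phi : V_2 \to V_1$ in $\Mod(\cor)$. I would lift back to $\Derb(\Cor)$ via the $\Cor$-module $W = V_1 \oplus V_2$ (as $\cor$-vector space) with $\varepsilon$ acting by $\phi$: the short exact sequence $0 \to V_1 \to W \to V_2 \to 0$ in $\Mod(\Cor)$ realizes the extension class of $W$ as $\phi$ under the isomorphism $\Ext^1_\Cor(V_2,V_1) \simeq \Hom_\cor(V_2,V_1)$, so the associated triangle $\iota^0(V_1) \to Q_\pt(W) \to \iota^0(V_2) \to[+1]$ in $\Orb(\cor)$ has the same boundary as $G$, and uniqueness of the cone forces $Q_\pt(W) \simeq G$. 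Decomposing $W$ as a $\Cor$-module yields $W \simeq \cor^{\dim\ker\phi + \dim\coker\phi} \oplus \Cor^{\dim\,\mathrm{im}\,\phi}$, so $G \simeq \iota^0(\ker\phi \oplus \coker\phi)$, closing the induction. The main technical point is this matching of boundaries: it rests on the compatibility between $s_\pt$ viewed as a morphism $\cor \to \cor[1]$ and the canonical generator of $\Ext^1_\Cor(\cor,\cor) \simeq \cor$ attached to the short exact sequence $0 \to \cor \to \Cor \to \cor \to 0$.
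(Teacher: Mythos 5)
Your proof is correct. The first claim and the full-faithfulness step follow the paper's route via Corollary~\ref{cor:morph-QRF-QRG}. For essential surjectivity you take a genuinely different route, though both arguments reduce to one key step: given a triangle $\iota^0(V_1) \to X \to \iota^0(V_2) \to[+1]$ in $\Orb(\cor)$ with $V_1, V_2 \in \Mod(\cor)$, show $X$ is in the essential image. You manufacture an explicit $\Cor$-module $W$ with $\varepsilon$ acting by $\phi$, match boundary maps (correctly flagged as the crux, and indeed correct: it is the compatibility between $s_\pt$ and the Yoneda class of $0 \to \cor \to \Cor \to \cor \to 0$), and then decompose $W$ by the structure theory of $\cor[\varepsilon]/\varepsilon^2$-modules. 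The paper sidesteps the construction of $W$ entirely: it reads the triangle as the cone of a morphism, identifies that morphism with some $\phi \in \Hom_{\Mod(\cor)}(V_2, V_1)$ by full faithfulness, and observes that in $\Derb(\cor)$ the cone of a morphism of modules already splits as $\coker\phi \oplus \ker\phi[1]$ since $\cor$ is a field (hence hereditary); the shift isomorphism then collapses this to $\iota^0(\ker\phi \oplus \coker\phi)$. The paper's argument is shorter, needs no boundary-compatibility check, and never invokes the Baer/Pr\"ufer-type structure theorem for (possibly infinitely generated) $\Cor$-modules that your argument uses twice, once in the base case and once to decompose $W$; its reduction to iterated cones also avoids the structure theorem, using only the extension $0 \to \varepsilon G \to G \to G/\varepsilon G \to 0$. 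Both are valid; the trade-off is the explicit $\Cor$-module realization in yours versus the splitting-of-cones observation in the paper's.
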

\begin{proof}
  (i) We have
  $\Hom_{\Orb(\cor_U)}(\cor_U,\iota_M(F)|_U) \simeq \bigoplus_{n\in\Z} H^n(U;
  F)$ by Corollary~\ref{cor:morph-QRF-QRG}.  The sheaf associated with
  $U \mapsto H^n(U; F)$ is $H^nF$ and we obtain $h^0_M(\iota_M(F)) \simeq F$
  when $F$ is in degree $0$.

  \sui(ii) When $M$ is a point, Corollary~\ref{cor:morph-QRF-QRG} says that
  $\iota^0$ is fully faithful. Let us prove that it is essentially surjective.
  Any $G \in \Mod(\Cor)$ can be written as an extension
  $0 \to R(F) \to G \to R(F') \to 0$ with $F,F' \in \Mod(\cor)$.  It follows
  that any object in $\Orb(\cor)$ is obtained from objects in
  $\iota^0(\Mod(\cor))$ by taking iterated cones.  Hence to prove that $\iota^0$
  is essentially surjective, it is enough to see that if we have a distinguished
  triangle $\iota^0(F) \to[u] \iota^0(F') \to G \to[+1]$ in $\Orb(\cor)$, with
  $F,F' \in \Mod(\cor)$, then $G \simeq \iota^0(F'')$ for some
  $F'' \in \Mod(\cor)$.  Since $\iota^0$ is fully faithful, we have
  $u \in \Hom_{\Mod(\cor)}(F,F')$.  Then
  $G \simeq \coker(u) \oplus \ker(u)[1] \simeq \coker(u) \oplus \ker(u)$ and the
  result follows.
\end{proof}

\subsection*{Direct sums}

We recall that we denote by $Q_M$ the quotient functor $\Derb(\Cor_M) \to
\Orb(\cor_M)$.
\begin{lemma}\label{lem:direct-sum}
Let $I$ be a small set and $\{F_i\}_{i\in I}$ a family in $\Derb(\Cor_M)$.  We
assume that there exist two integers $a\leq b$ such that $H^k(F_i)=0$ for all
$k$ outside $[a,b]$ and all $i\in I$.
Then $\bigoplus_{i\in I} Q_M(F_i)$ exists in $\Orb(\cor_M)$ and
$\bigoplus_{i\in I} Q_M(F_i) \simeq Q_M(\bigoplus_{i\in I} F_i)$.
\end{lemma}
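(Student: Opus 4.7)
The plan is to verify the universal property of the coproduct for $Q_M(\bigoplus_{i\in I} F_i)$ by reducing computations in $\Orb(\cor_M)$ to computations in $\Derb(\Cor_M)$ via Proposition~\ref{prop:morph-orb}.

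First I would check that the direct sum $F := \bigoplus_{i\in I} F_i$ is well defined in $\Derb(\Cor_M)$. Arbitrary small direct sums exist in $\Der(\Cor_M)$ since $\Mod(\Cor_M)$ is a Grothendieck abelian category; moreover, filtered colimits (in particular direct sums) are exact, so $H^k(F) \simeq \bigoplus_i H^k(F_i)$. The uniform hypothesis $H^k(F_i) = 0$ for $k \notin [a,b]$ therefore yields $H^k(F) = 0$ for $k \notin [a,b]$, hence $F \in \Derb(\Cor_M)$. The natural inclusions $\iota_i \colon F_i \to F$ in $\Derb(\Cor_M)$ give morphisms $Q_M(\iota_i) \colon Q_M(F_i) \to Q_M(F)$ in $\Orb(\cor_M)$, and these are the candidate coproduct structure maps.

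Next I would verify the universal property: for every $G \in \Orb(\cor_M)$, the map
\[
\Hom_{\Orb(\cor_M)}(Q_M(F), G) \longrightarrow \prod_{i\in I} \Hom_{\Orb(\cor_M)}(Q_M(F_i), G)
\]
induced by precomposition with the $Q_M(\iota_i)$ is bijective. Pick a representative of $G$ in $\Derb(\Cor_M)$, with cohomology concentrated in some interval $[a', b']$, and set $[\alpha, \beta] = [\min(a,a'), \max(b,b')]$. Choose any integer $n > \beta - \alpha + \dim M + 1$. By Proposition~\ref{prop:morph-orb}, applied simultaneously to $F$ and to each $F_i$ (all have cohomology in $[\alpha,\beta]$), the isomorphism~\eqref{eq:mor-orbA} gives
\[
\Hom_{\Orb(\cor_M)}(Q_M(F), G) \simeq \Hom_{\Derb(\Cor_M)}(F[-n], G),
\]
and likewise for each $F_i$. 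Since $F[-n] \simeq \bigoplus_i F_i[-n]$ in $\Derb(\Cor_M)$ and $\Hom$ out of a direct sum is the product of Homs, the right-hand side equals $\prod_i \Hom_{\Derb(\Cor_M)}(F_i[-n], G)$, which in turn matches $\prod_i \Hom_{\Orb(\cor_M)}(Q_M(F_i), G)$.

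The main technical point, and essentially the only thing to verify carefully, is that the identification obtained this way is really induced by precomposition with the $Q_M(\iota_i)$. This follows from naturality in the first variable: the isomorphism~\eqref{eq:mor-orbA} is defined by sending $\varphi \colon F[-n] \to G$ to $\varphi \circ (s_M^{1,n}(F))^{-1}$, and both the distinguished triangle defining $s_M^{1,n}$ and the functor $L^{1,n}_M \epstens_{\cor_M} -$ are additive, hence commute with the direct sum decomposition $F = \bigoplus_i F_i$. Thus $s_M^{1,n}(F)$ is the direct sum of the $s_M^{1,n}(F_i)$, compatibly with the inclusions $\iota_i$, and the bijection above intertwines precomposition by $F_i[-n] \to F[-n]$ on one side with precomposition by $Q_M(\iota_i)$ on the other. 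This establishes $\bigoplus_{i\in I} Q_M(F_i) \simeq Q_M(\bigoplus_{i\in I} F_i)$.
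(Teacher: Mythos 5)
Your proof follows essentially the same approach as the paper: both reduce the universal property of the coproduct in $\Orb(\cor_M)$ to a $\Hom$-computation in $\Derb(\Cor_M)$ via Proposition~\ref{prop:morph-orb}, choosing $n$ large enough for the interval containing all the cohomologies. The extra paragraph you add on compatibility of the identification with the structure maps $Q_M(\iota_i)$ is a correct and welcome elaboration of what the paper compresses into "the lemma follows from the universal property of the sum."
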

\begin{proof}
By the hypothesis on the degrees the sum $\bigoplus_{i\in I} F_i$ exists in
$\Derb(\Cor_M)$ and we have $H^k(\bigoplus_{i\in I} F_i)=0$ for $k$ outside
$[a,b]$. Let $G \in \Derb(\Cor_M)$ and let $a'\leq a$, $b'\geq b$ be such that
$H^k(G)=0$ for $k$ outside $[a',b']$. We set $n=b'-a' + \dim M +2$.  If $F =
F_i$ for some $i\in I$, or $F = \bigoplus_{i\in I} F_i$, we have
$$
\Hom_{\Derb(\Cor_M)}(F[-n],G) \isoto \Hom_{\Orb(\cor_M)}(Q_M(F),Q_M(G))
$$
by Proposition~\ref{prop:morph-orb}.  Now the lemma follows from the universal
property of the sum.
\end{proof}

\subsection*{Direct and inverse images}

Let $f\cl M\to N$ be a morphism of manifolds.  We have functors $\roim{f}$,
$\reim{f}$, $\opb{f}$ and $\epb{f}$, between $\Derb(\Cor_M)$ and
$\Derb(\Cor_N)$. Indeed, the functors
$\roim{f}, \reim{f} \cl \Der(\Cor_M) \to \Der(\Cor_N)$ commute with the functors
$r_N \cl \Der(\Cor_N) \to \Der(\cor_N)$ and $r_M$.  We remark that, for
$G \in \Der(\Cor_N)$, if $r_N(G) \in \Derb(\cor_N)$, then $G \in \Derb(\Cor_N)$.
Hence $\roim{f}$ and $\reim{f}$ induce functors
$\Derb(\Cor_M) \to \Derb(\Cor_N)$ between the bounded categories.

The case of $\opb{f}$ is clear since it is an exact functor.  To prove the
existence of $\epb{f}$, right adjoint to $\reim{f}$, it is enough, by
factorizing through the graph embedding, to consider the cases where $f$ is an
embedding or $f$ is a submersion. The usual formulas work in our case.  If $f$
is an embedding, then $\epb{f}(\cdot) = \opb{f} \rhomeps(\cor_M,\cdot)$ is
adjoint to $\reim{f}$. If $f$ is a submersion, then $\epb{f}(\cdot) =
\opb{f}(\cdot) \epstens_{\cor_M} \omega_{M|N}$.  We also remark that $\opb{f}$ and
$\epb{f}$ commute with $r_N$ and $r_M$.

\begin{lemma}\label{lem:im-dir-inv_Orb}
Let $f\cl M\to N$ be a morphism of manifolds.  Then the functors $\roim{f}$,
$\reim{f}$, $\opb{f}$ and $\epb{f}$, between $\Derb(\Cor_M)$ and
$\Derb(\Cor_N)$, preserves the categories $\perf(\Cor_M)$ and $\perf(\Cor_N)$.
They induce pairs of adjoint functors (that we denote in the same way) between
$\Orb(\cor_M)$ and $\Orb(\cor_N)$.
\end{lemma}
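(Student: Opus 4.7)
The plan is as follows. Since $\perf(\Cor_M)$ is by definition the triangulated subcategory generated by the objects $e_M(F)$ with $F\in\Derb(\cor_M)$ and each of the four geometric functors is triangulated, it suffices to verify that each one sends generators into $\perf$ on the target side. Once this is established, each functor descends by the universal property of the Verdier quotient to a triangulated functor between $\Orb(\cor_M)$ and $\Orb(\cor_N)$, and the two adjunctions in $\Derb(\Cor_{\scbul})$ can be lifted to $\Orb$ using the explicit description of morphisms provided by Proposition~\ref{prop:morph-orb}.

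First I would verify four commutation identities between $e$ and the geometric functors. Since $\Cor_M \simeq \opb{f}\Cor_N$ as sheaves of rings and $\opb{f}$ is exact and commutes with $\otimes$, one has $\opb{f}\circ e_N \simeq e_M\circ\opb{f}$. The projection formula applied to $\opb{f}\Cor_N = \Cor_M$ yields $\reim{f}\circ e_M \simeq e_N\circ\reim{f}$. Using the canonical isomorphism $e_M(F)\simeq\rhom_{\cor_M}(\Cor_M,F)$ from~\eqref{eq:eMtorhom} together with the standard adjunction isomorphism $\roim{f}\rhom(\opb{f}H,G)\simeq\rhom(H,\roim{f}G)$, applied with $H=\Cor_N$, gives $\roim{f}\circ e_M \simeq e_N\circ\roim{f}$. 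For $\epb{f}$, factoring through the graph reduces the problem to the case of a closed embedding, where one uses the formula $\epb{f}(\scbul)\simeq\opb{f}\rhomeps(\cor_M,\scbul)$ combined with Lemma~\ref{lem:homeps-perf} applied to $\rhomeps(\cor_M,e_N(F))$, and to the case of a submersion, where the formula $\epb{f}(\scbul)\simeq\opb{f}(\scbul)\epstens\omega_{M|N}$ combined with Lemma~\ref{lem:epstens-perf} shows that $\epb{f}\circ e_N$ lands in $\perf(\Cor_M)$. In each case the $\cor$-linear direct image of a bounded complex is bounded (the paper's setup already implicitly handles this for $\roim{f}$ on manifolds).

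With these four identities, each functor is triangulated and carries a set of generators of $\perf$ into $\perf$, so by a standard devissage it preserves the thick subcategories $\perf(\Cor_M)$ and $\perf(\Cor_N)$. The induced functors between the Verdier quotients $\Orb(\cor_M)$ and $\Orb(\cor_N)$ then exist by the universal property. For the adjunction $(\opb{f},\roim{f})$ in $\Orb$, given $F\in\Derb(\Cor_N)$ and $G\in\Derb(\Cor_M)$, Proposition~\ref{prop:morph-orb} provides, for $n$ sufficiently large,
\[
\Hom_{\Orb(\cor_M)}(\opb{f}F,G) \simeq \Hom_{\Derb(\Cor_M)}(\opb{f}F[-n],G) \simeq \Hom_{\Derb(\Cor_N)}(F[-n],\roim{f}G) \simeq \Hom_{\Orb(\cor_N)}(F,\roim{f}G),
\]
the middle isomorphism being the adjunction in $\Derb(\Cor_{\scbul})$; naturality in $F$ and $G$ is inherited from the naturality of that adjunction. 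The pair $(\reim{f},\epb{f})$ is handled identically.

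The main obstacle is essentially bookkeeping: one has to check that the classical $\cor$-linear identities (projection formula, $\rhom$--$\roim{f}$ adjunction, base-case formulas for $\epb{f}$) genuinely transport to $\Cor$-linear isomorphisms, which is routine because $\Cor_M = \opb{f}\Cor_N$ and $\Cor$ arises as a constant sheaf of rings; and one must confirm that boundedness is preserved at each step, which is the reason the lemma is stated for $\Derb$ rather than $\Der$.
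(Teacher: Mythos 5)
Your proof is correct, but it takes a more ``hands on'' route than the paper's. The paper handles $\roim{f}$ by a conservativity argument: it writes down the natural morphism $u\cl \Cor_N\tens_{\cor_N}\roim{f}F \to \roim{f}(\Cor_M\tens_{\cor_M}F)$ for $F\in\Derb(\cor_M)$, observes that after applying the restriction-of-scalars functor $r_N$ both sides become $\roim{f}F \oplus \roim{f}F$ (because $r_N(\Cor_N)\simeq\cor_N^2$), so $r_N(u)$ is visibly an isomorphism, and then invokes conservativity of $r_N$; the remaining functors are declared ``similar'' and the passage from generators of $\perf$ to the whole of $\perf$ is the same d\'evissage you describe. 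You instead derive each commutation $\opb{f}e_N\simeq e_M\opb{f}$, $\reim{f}e_M\simeq e_N\reim{f}$, $\roim{f}e_M\simeq e_N\roim{f}$, $\epb{f}e_N\simeq e_M\epb{f}$ directly from the projection formula, the $\rhom$--$\roim{f}$ adjunction (via the chosen isomorphism $e_M(F)\simeq\rhom_{\cor_M}(\Cor_M,F)$ of~\eqref{eq:eMtorhom}), and the embedding/submersion formulas for $\epb{f}$ together with Lemmas~\ref{lem:epstens-perf} and~\ref{lem:homeps-perf}. Both approaches are sound and lead to the same d\'evissage for $\perf$; the paper's conservativity trick is slightly more uniform (one does not need a different formula per functor, only the existence of a natural comparison morphism and the freeness of $\Cor$ over $\cor$), while yours produces the explicit commutation isomorphisms, which can be convenient later. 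Your treatment of the adjunction on $\Orb$ via Proposition~\ref{prop:morph-orb} is the intended one, and the point you flag --- that $n$ must be taken large enough to accommodate the cohomological amplitude of $\roim{f}G$ and not just of $G$ --- is precisely the small subtlety worth spelling out.
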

\begin{proof}
  We only consider the case of $\roim{f}$, the other cases being similar.  For
  $F\in \Derb(\cor_M)$ we have a natural morphism
  $u\cl \Cor_N \tens_{\cor_N}\roim{f} F \to \roim{f}(\Cor_M \tens_{\cor_M}
  F)$. Since $r_N(\Cor_N) \simeq \cor_N^2$ we see easily that $r_N(u)$ is an
  isomorphism. Since $r_N$ is conservative, $u$ is an isomorphism and we obtain
  $\roim{f}(\Cor_M \tens_{\cor_M} F) \in \perf(\Cor_N)$.  It follows as in~(ii)
  of the proof of Lemma~\ref{lem:epstens-perf} that
  $\roim{f}(\perf(\Cor_M)) \subset \perf(\Cor_N)$, as required.
\end{proof}

For $F \in \Orb(\cor_M)$ and $j\cl U \to M$ the inclusion of a locally closed
subset, we use the standard notations $F|_U = \opb{j}F$, $F_U =
\reim{j}\opb{j}F$, $\rsect_U(F) = \roim{j}\opb{j}F$, $\rsect(U;F) =
\roim{a_U}(F|_U) \in \Orb(\cor)$, where $a_U$ is $U \to \pt$, and $\rsect_c(U;F)
= \reim{a_U}(F|_U) \in \Orb(\cor)$. We have the same formulas as in
$\Derb(\cor_M)$:
$$
F_U  \simeq F \epstens_{\cor_M} \cor_U, \qquad \rsect_U(F) \simeq \rhomeps(\cor_U,F) .
$$
We also define
$\RHom^\varepsilon(F,G) = \rsect(M; \rhomeps(F,G)) \in \Orb(\cor)$.  The
adjunctions $(\opb{a_M}, \roim{a_M})$ and $(\epstens_{\cor_M}, \rhomeps)$ give
\begin{equation}
  \label{eq:Hom_RHomepsilon}
  \begin{aligned}
  \Hom_{\Orb(\cor_M)}(F,G)
  &\simeq \Hom_{\Orb(\cor)}(\cor, \RHom^\varepsilon(F,G)) \\
&\simeq \Hom_{\Orb(\cor_M)}(\cor_M, \rhomeps(F,G)) .
\end{aligned}
\end{equation}

Let $N$ be a submanifold of $M$.  We recall that Sato's microlocalization is a
functor $\mu_N\cl \Derb(\cor_M) \to \Derb(\cor_{T^*_NM})$.  It is defined by
composing direct and inverse images functors and Lemma~\ref{lem:im-dir-inv_Orb}
implies that it induces functors, denoted in the same way:
\begin{align}
\mu_N&\cl \Derb(\Cor_M) \to \Derb(\Cor_{T^*_NM}) , \\
\mu_N&\cl \Orb(\cor_M) \to \Orb(\cor_{T^*_NM}) .  
\end{align}

\begin{definition}
Let $q_1,q_2\cl M\times M \to M$ be the projections.  We identify
$T^*_{\Delta_M}(M\times M)$ with $T^*M$ through the first projection.  For
$F,G\in \Orb(\cor_M)$ we define as in~\eqref{eq:def_muhom}
\begin{equation*}
\muhom^\varepsilon (F,G) = \mu_{\Delta_M}(\rhomeps(\opb{q_2}F,\epb{q_1}G))
\quad \in \quad \Orb(\cor_{T^*M}). 
\end{equation*}
\end{definition}

The following result follows from the analogous one in $\Derb(\Cor_M)$.
\begin{lemma}\label{lem:dir-sum_im-inv-dir}
Let $\{F_i\}_{i\in I}$ a small family in $\Derb(\Cor_M)$ satisfying the
hypotheses of Lemma~\ref{lem:direct-sum}.  Let $f\cl M' \to M$ and $g\cl M\to
M''$ be morphisms of manifolds and let $G\in \Orb(\cor_M)$.  Then we have
canonical isomorphisms
\newcommand{\somme}{{\textstyle\bigoplus_{i\in I}}\;}
\begin{align*}
\opb{f} (\somme Q_M(F_i)) &\simeq \somme \opb{f}Q_M(F_i), \\
\reim{g}(\somme Q_M(F_i)) &\simeq \somme \reim{g}\, Q_M(F_i), \\
(\somme Q_M(F_i)) \epstens_{\cor_M} G 
&\simeq \somme (Q_M(F_i)) \epstens_{\cor_M} G).
\end{align*}
\end{lemma}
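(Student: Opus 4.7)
The plan is to reduce each of the three isomorphisms to the analogous statement in $\Derb(\Cor_M)$ (or on the target manifold), then push through the quotient functor $Q$ using Lemma~\ref{lem:direct-sum}. First I would apply Lemma~\ref{lem:direct-sum} on both sides: the left-hand side is isomorphic to $Q_M(\bigoplus_i F_i)$ provided the family $\{F_i\}$ is uniformly bounded in cohomological degree, and the right-hand side will be identified with $Q_{M'}$ (resp.\ $Q_{M''}$, resp.\ $Q_M$) of the sum after a second application of the same lemma. So the task reduces to (a) checking that the relevant functor commutes with arbitrary small direct sums in $\Derb(\Cor_{\bullet})$, and (b) checking that the image family is still uniformly bounded so that Lemma~\ref{lem:direct-sum} applies on the output side.

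For the inverse image $\opb{f}$, both points are immediate: $\opb{f}$ is exact, commutes with $r$, and therefore both preserves the uniform cohomological bound and commutes with small direct sums in $\Derb(\Cor_{M'})$; the required isomorphism then follows by applying $Q_{M'}$ to the canonical isomorphism in $\Derb(\Cor_{M'})$. For the tensor functor $- \epstens_{\cor_M} G$, I would first lift $G \in \Orb(\cor_M)$ to some $\widetilde G \in \Derb(\Cor_M)$ with $Q_M(\widetilde G) = G$, and use Lemma~\ref{lem:epstens-perf} (which says the tensor product on $\Orb$ is well-defined) to observe that $Q_M(F_i) \epstens G \simeq Q_M(F_i \epstens_{\cor_M} \widetilde G)$. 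Boundedness of the family $\{F_i \epstens_{\cor_M} \widetilde G\}$ follows from~\eqref{eq:tens-epstens}, since after applying $r_M$ we get $r_M(F_i) \tens_{\cor_M} r_M(\widetilde G)$ which is uniformly bounded when $r_M(\widetilde G)$ is bounded and the $r_M(F_i)$ are uniformly bounded; commutation of $\epstens$ with small direct sums in $\Derb(\Cor_M)$ is again inherited from the usual tensor product via the exact conservative functor $r_M$.

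The proper direct image $\reim{g}$ is the only step that requires a little care. Uniform boundedness of $\{\reim{g} F_i\}$ follows from the fact that on a manifold the functor $\reim{g}$ has finite cohomological dimension (bounded by the fiber dimension of $g$), so a uniform bound $[a,b]$ for the $F_i$ yields a uniform bound $[a, b + \dim M]$ for the $\reim{g} F_i$. For commutation with arbitrary small direct sums, I would reduce to the underived statement: the functor $\eim{g}\colon \Mod(\Cor_M) \to \Mod(\Cor_{M''})$ commutes with arbitrary direct sums (proper direct image of a sum of sheaves is the sum of proper direct images), and since we have a uniform bound we can compute the derived functor by a fixed-length $\reim{g}$-acyclic resolution (e.g.\ a flabby resolution of length $\dim M + 1$) chosen functorially so that it also commutes with direct sums of uniformly bounded families. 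Combining, we get the isomorphism $\reim{g}(\bigoplus_i F_i) \simeq \bigoplus_i \reim{g} F_i$ in $\Derb(\Cor_{M''})$, and applying $Q_{M''}$ together with Lemma~\ref{lem:direct-sum} on both sides concludes. The main obstacle here is simply the bookkeeping to verify the boundedness hypothesis for Lemma~\ref{lem:direct-sum} at each stage; there is no substantive triangulated-orbit-category difficulty beyond what is already encoded in Lemmas~\ref{lem:direct-sum}, \ref{lem:epstens-perf}, and \ref{lem:im-dir-inv_Orb}.
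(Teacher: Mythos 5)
Your proposal is correct and follows the same strategy as the paper, which simply states that the result ``follows from the analog one in $\Derb(\Cor_M)$.'' You have supplied the details that sentence leaves implicit: applying Lemma~\ref{lem:direct-sum} on both sides, verifying that each functor preserves the uniform cohomological bound (for $\reim{g}$ this is finite cohomological dimension on manifolds; for the others exactness), and verifying commutation with small direct sums in $\Derb(\Cor_{\bullet})$.
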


\begin{lemma}\label{lem:modif-repr}
Let $U$ be an open subset of $M$. Let $F\in \Derb(\Cor_M)$ and $F' \in
\Derb(\Cor_U)$. We assume that there exists an isomorphism $F|_U\simeq F'$ in
$\Orb(\cor_U)$.  Then there exists $F_1 \in \Derb(\Cor_M)$ such that $F_1|_U
\simeq F'$ in $\Derb(\Cor_U)$ and $F_1\simeq F$ in $\Orb(\cor_M)$.
\end{lemma}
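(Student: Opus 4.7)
The plan is to lift the given isomorphism $F|_U\simeq F'$ to an honest morphism in $\Derb(\Cor_U)$ via Proposition~\ref{prop:morph-orb}, push it forward by the open inclusion $j\cl U\hookrightarrow M$, and then assemble $F_1$ as a Mayer--Vietoris style patching of $F$ and $\reim{j}F'$ along $F_U$.

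First, by Proposition~\ref{prop:morph-orb} applied on $U$, for $n$ large enough (depending on the cohomological amplitudes of $F|_U$ and $F'$ and on $\dim U$) the natural map
\begin{equation*}
\Hom_{\Derb(\Cor_U)}(F|_U[-n], F') \to \Hom_{\Orb(\cor_U)}(F|_U, F')
\end{equation*}
is a bijection, so I would pick a lift $\alpha\cl F|_U[-n]\to F'$ of the given isomorphism. Then $Q_U(\alpha)$ is itself an isomorphism in $\Orb(\cor_U)$. By Lemma~\ref{lem:im-dir-inv_Orb}, $\reim{j}$ descends to the orbit categories, and hence $\reim{j}\alpha\cl F_U[-n] \to \reim{j}F'$ is an isomorphism in $\Orb(\cor_M)$.

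Writing $a\cl F_U\to F$ for the canonical morphism, I would define $F_1\in\Derb(\Cor_M)$ by the distinguished triangle
\begin{equation*}
F_U[-n] \to[{(a[-n],\,-\reim{j}\alpha)}] F[-n]\oplus \reim{j}F' \to F_1 \to[+1] .
\end{equation*}
Restricting to $U$ turns the left map into $(\id,-\alpha)\cl F|_U[-n]\to F|_U[-n]\oplus F'$, which is a split monomorphism (the first projection is a retraction), so its cofiber is $F'$; hence $F_1|_U\simeq F'$ in $\Derb(\Cor_U)$. Applying $Q_M$ to the triangle and using the isomorphism $Q_M(\reim{j}\alpha)$ to replace $\reim{j}F'$ by $F_U[-n]$, the left map becomes $(a[-n],-\id)\cl F_U[-n]\to F[-n]\oplus F_U[-n]$, again split, with cofiber $F[-n]$. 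Since $F[-n]\simeq F$ in $\Orb(\cor_M)$ via the shift isomorphism of Remark~\ref{rem:def_shift_morph2}, this yields $F_1\simeq F$ in $\Orb(\cor_M)$.

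No serious obstacle is anticipated. The key inputs are Proposition~\ref{prop:morph-orb} (to lift the orbit-category isomorphism to an actual morphism in $\Derb$) and Lemma~\ref{lem:im-dir-inv_Orb} (so that $\reim{j}\alpha$ remains invertible after descending to $\Orb(\cor_M)$); the two cofiber identifications reduce to the elementary observation that any morphism into a direct sum one of whose components is the identity is a split monomorphism whose cofiber is the other summand.
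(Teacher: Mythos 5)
Your proof is correct and uses the same key ingredients as the paper's: lift the orbit-category isomorphism to an actual morphism $\alpha$ via Proposition~\ref{prop:morph-orb}, push forward by $\reim{j}$ (which descends to the orbit categories by Lemma~\ref{lem:im-dir-inv_Orb}), and glue $F$ with $\reim{j}F'$ along $\alpha$. The only difference is presentational: you build $F_1$ directly as the cone of the ``difference'' map $(a[-n],-\reim{j}\alpha)$ into $F[-n]\oplus\reim{j}F'$, whereas the paper constructs the same object via the excision triangle $F_Z[-n-1]\to F_U[-n]\to F[-n]$ and an application of the octahedral axiom; the two constructions produce the same $F_1$ up to isomorphism, so the argument is essentially the same.
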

\begin{proof}
We let $j\cl U \to M$ be the inclusion and we set $Z=M\setminus U$.
Let $u \cl F|_U \to F'$ be an isomorphism in $\Orb(\cor_U)$.  By
Proposition~\ref{prop:morph-orb} there exist $n\in \Z$ and a morphism $F[-n] \to
F'$ in $\Derb(\Cor_M)$ which represents $u$.  Defining $P$ by the distinguished
triangle $F|_U[-n] \to F' \to P \to[+1]$ we have $Q_U(P) \simeq 0$.  We apply
$\eim{j}$ to this triangle and get~\eqref{eq:modif-repr2} below; we also
consider the excision triangle~\eqref{eq:modif-repr1} and the
triangle~\eqref{eq:modif-repr3} built on the composition $F_Z[-n-1] \to F_U[-n]
\to \eim{j} F'$:
\begin{gather}
\label{eq:modif-repr1}
F_Z[-n-1] \to[a] F_U[-n] \to F[-n] \to[+1] , \\
\label{eq:modif-repr2}
F_U[-n] \to[b]  \eim{j} F' \to \eim{j} P \to[+1] , \\
\label{eq:modif-repr3}
F_Z[-n-1] \to[b\circ a] \eim{j} F' \to F_1  \to[+1] .
\end{gather}
Then the octahedron axiom gives the triangle $F[-n] \to F_1 \to \eim{j} P
\to[+1]$. We have $Q_M(\eim{j} P) \simeq \eim{j} Q_M(P) \simeq 0$, hence
$F_1\simeq F[-n] \simeq F$ in $\Orb(\cor_M)$.  Applying $\opb{j}$ to the
triangle~\eqref{eq:modif-repr3} gives $F' \simeq F_1|_U$, as required.
\end{proof}

\begin{definition}\label{def:supporb}
For $F \in \Orb(\cor_M)$ we define $\supporb(F) \subset M$ as the complement of
the union of the open subsets $U\subset M$ such that $F|_U \simeq 0$.
\end{definition}
For an open subset $U\subset M$ we have $F|_U \simeq 0$ in $\Orb(\cor_U)$ if and only
if $F_U \simeq 0$ in $\Orb(\cor_M)$.  By the Mayer-Vietoris triangle we deduce that,
for a finite covering $U=\bigcup_{i=1}^n U_i$, we have $F|_U \simeq 0$ if and only if
$F|_{U_i} \simeq 0$ for all $i$.    For an increasing countable union $U=\bigcup_{i=1}^\infty U_i$ we have an
exact sequence
$$
0 \to \bigoplus_{i=1}^\infty \cor_{U_i} \to[\id -t] \bigoplus_{i=1}^\infty
\cor_{U_i} \to[s] \cor_U \to 0
$$
in $\Mod(\cor_M)$, where $t$ is the sum of the morphisms
$t_i \colon \cor_{U_i} \to \cor_{U_{i+1}}$ induced by the inclusions
$U_i \subset U_{i+1}$ and $s$ the sum of the morphisms
$s_i \colon \cor_{U_i} \to \cor_{U}$ (the exactness is easily checked in the stalks).
Turning this sequence into a distinguished triangle in $\Der(\cor_M)$, then in
$\Orb(\cor_M)$, and applying $F \epstens_{\cor_M}-$ we obtain a similar triangle
\begin{equation}\label{eq:lim_crois_ouverts}
\bigoplus_{i=1}^\infty F_{U_i} \to \bigoplus_{i=1}^\infty F_{U_i} \to F_U
\to[+1]
\end{equation}
and we deduce as in the finite case that $F|_U \simeq 0$ if
$F|_{U_i} \simeq 0$ for all $i$.  We obtain finally, for any
$F \in \Orb(\cor_M)$,
\begin{equation}\label{eq:supporb-OK}
F|_{M\setminus\supporb(F)} \simeq 0 \quad \text{and} \quad
F \isoto F_{\supporb(F)} .
\end{equation}

\section{Microsupport in the triangulated orbit categories}
\label{sec:micsup_orb_cat}

We define the microsupport of objects of $\Orb(\cor_M)$ and check that it
satisfies the same properties as the usual microsupport.

We recall that the microsupport is invariant by restriction of scalars, that is,
for $F \in \Der(\Cor_M)$, we have $\SSi(r_M(F)) = \SSi(F)$.  We deduce that
Theorem~\ref{thm:SSrhom}, about the microsupports of $F\ltens G$ and
$\rhom(F,G)$, for $F,G \in \Der(\Cor_M)$, is still true if we replace $\ltens$
and $\rhom$ by $\epstens_{\cor_M}$ and $\rhomeps$, because of~\eqref{eq:tens-epstens}
and~\eqref{eq:hom-homeps}.

\subsection{Definition and first properties}
We define the microsupport $\SSo(F)$ of an object of $F\in\Orb(\cor_M)$ from the
microsupports of its representatives in $\Derb(\Cor_M)$.  We prove in
Proposition~\ref{prop:repr-bonmicsup} that, for a given $x_0\in M$ and $F\in
\Orb(\cor_M)$, we can find a representative $F'\in \Derb(\Cor_M)$ with
$T^*_{x_0}M \cap \SSi(F')$ contained in an arbitrary neighborhood of $T^*_{x_0}M
\cap \SSo(F)$ .

\begin{definition}\label{def:SSorb}
Let $F \in \Orb(\cor_M)$. We define $\SSo(F) \subset T^*M$ by $\SSo(F) =
\bigcap_{F'} \SSi(F')$ where $F'$ runs over the objects of $\Derb(\Cor_M)$ such
that $F'\simeq F$ in $\Orb(\cor_M)$. We set $\dSSo(F) = \SSo(F) \cap \dT^*M$.
\end{definition}

We remark that $\SSo(F)$ is a closed conic subset of $T^*M$.  We deduce from
Lemma~\ref{lem:modif-repr} that $\SSo(F)$ is a local notion, that is, for
$U\subset M$ open, we have
\begin{equation}\label{eq:SSorb_local}
\SSo(F|_U) = \SSo(F) \cap T^*U.
\end{equation}
In other words $p=(x;\xi) \not\in \SSo(F)$ if and only if there exist a
neighborhood $U$ of $x$ and $F'\in \Derb(\Cor_U)$ such that $F|_U \simeq F'$ in
$\Orb(\cor_U)$ and $p\not\in \SSi(F')$.  We also have
$\supporb(F) = T^*_MM \cap \SSo(F)$.  Indeed we have
$T^*_MM \cap \SSo(F) \subset \supporb(F)$ by~\eqref{eq:SSorb_local}. Conversely,
if $(x;0) \not\in \SSo(F)$, then $F$ has a representative $F' \in \Derb(\Cor_M)$
such that $(x;0) \not\in \SSi(F')$.  Hence $F'$, and thus $F$, vanishes in some
neighborhood of $x$.

\begin{lemma}\label{lem:repr-bonmicsup}   Let $F \in \Orb(\cor_M)$
  and $F', F'' \in \Derb(\Cor_M)$ such that $Q_M(F') \simeq Q_M(F'') \simeq F$.  Let
  $x_0 \in M$ and let $A \subset \dT^*_{x_0}M$ be an open contractible cone with a
  smooth boundary such that $(\ol{A} \setminus \{x_0\}) \cap \SSi(F'') = \emptyset$.
  Let $C \subset \dT^*_{x_0}M$ be a conic neighborhood of $\SSi(F') \cap \partial A$.
  Then there exists $G \in \Derb(\Cor_M)$ such that $Q_M(G) \simeq F$ and
  $\SSi(G) \cap T^*_{x_0}M \subset (\SSi(F') \setminus A) \cup C$.
\end{lemma}
\begin{proof}
  Let $U$ be a chart around $x_0$ such that $T^*U \simeq U \times T^*_{x_0}M$ and $(U
  \times A) \cap \SSi(F'') = \emptyset$.  We apply
  Proposition~\ref{prop:microcutofflevrai1} with $B = \SSi(F') \cap T^*_{x_0}M$ and $B' =
  C$.    We obtain a neighborhood $W$ of $x_0$ and a functor $R \cl
  \Der(\Cor_U) \to \Der(\Cor_W)$ together with a morphism of functors $(-)|_W \to R$.
  This functor $R$ is a composition of usual sheaf operations and, by the results of the
  previous section, it induces a functor that we denote by the same letter $R \colon
  \Orb(\Cor_U) \to \Orb(\Cor_W)$.  By~(ii) of Proposition~\ref{prop:microcutofflevrai1} we
  have $F''|_W \isoto R(F''|_U)$. Hence $F|_W \isoto R(F|_U)$ and it follows that
  $Q_W(R(F'|_U)) \simeq R(Q_U(F'|_U)) \simeq F|_W$.  By~(i) of
  Proposition~\ref{prop:microcutofflevrai1} the sheaf $R(F'|_U)$, defined on $W$,
  satisfies the conclusion of the lemma.  By Lemma~\ref{lem:modif-repr} we can extend
  $R(F'|_U)$ into $G$, defined on $M$.
\end{proof}

\begin{proposition}\label{prop:repr-bonmicsup}
  Let $F\in \Orb(\cor_M)$.  Let $x_0\in M$ be given and let
  $B \subset \dT^*_{x_0}M$ be a closed conic subset such that
  $\SSo(F) \cap B = \emptyset$.  Then there exists $F' \in \Derb(\Cor_M)$ such
  that $Q_M(F') \simeq F$ and $\SSi(F') \cap B = \emptyset$.
\end{proposition}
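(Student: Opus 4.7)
My plan is to iterate Lemma \ref{lem:repr-bonmicsup} using a finite covering of $B$. First, for each $p \in B$, Definition \ref{def:SSorb} together with $\SSo(F) \cap B = \emptyset$ gives a representative $F_p \in \Derb(\Cor_M)$ with $p \notin \SSi(F_p)$. Since $\SSi(F_p) \cap T^*_{x_0}M$ is closed and conic, $p$ admits a small open contractible conic neighborhood $A_p \subset \dT^*_{x_0}M$ with smooth boundary, e.g.\ the cone over a small round open ball in the sphere $\dT^*_{x_0}M / \rpos$, such that $A_p \cap \SSi(F_p) = \emptyset$. The projectivization $B / \rpos$ is a closed, hence compact subset of this sphere, so from the open cover $\{A_p/\rpos\}_{p\in B}$ I extract a finite subcover corresponding to cones $A_1, \dots, A_n$ with representatives $F_1, \dots, F_n \in \Derb(\Cor_M)$, each $F_i$ satisfying $A_i \cap \SSi(F_i) = \emptyset$ and $Q_M(F_i)\simeq F$.

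Next I construct representatives $G_k$ of $F$ inductively, for $k=1,\dots,n$, such that
\[
\SSi(G_k) \cap T^*_{x_0}M \cap (A_1 \cup \cdots \cup A_k) = \emptyset.
\]
Set $G_1 = F_1$. Given $G_k$, let $W_{k+1} = A_1 \cup \cdots \cup A_{k+1}$, which is open in $\dT^*_{x_0}M$. Since $A_{k+1}$ is open, $\partial A_{k+1} \cap A_{k+1} = \emptyset$, and by the inductive hypothesis $\SSi(G_k)$ is disjoint from $A_1 \cup \cdots \cup A_k$; hence $\SSi(G_k) \cap \partial A_{k+1}$ is a closed conic subset of $\dT^*_{x_0}M$ disjoint from the open conic set $W_{k+1}$. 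I can therefore choose an open conic neighborhood $C$ of $\SSi(G_k) \cap \partial A_{k+1}$ contained in the complement of $W_{k+1}$. Applying Lemma \ref{lem:repr-bonmicsup} with $F' = G_k$, $F'' = F_{k+1}$, $A = A_{k+1}$ and this $C$ yields $G_{k+1} \in \Derb(\Cor_M)$ with $Q_M(G_{k+1}) \simeq F$ and
\[
\SSi(G_{k+1}) \cap T^*_{x_0}M \subset (\SSi(G_k) \setminus A_{k+1}) \cup C,
\]
which by construction is disjoint from $W_{k+1}$.

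After $n$ steps, $F' := G_n$ represents $F$ in $\Orb(\cor_M)$ and satisfies $\SSi(F') \cap T^*_{x_0}M \cap (A_1 \cup \cdots \cup A_n) = \emptyset$. Since $B \subset A_1 \cup \cdots \cup A_n \subset \dT^*_{x_0}M$, this yields $\SSi(F') \cap B = \emptyset$, as required. The main obstacle is ensuring that the ``leftover'' boundary neighborhood $C$ produced at each cut-off step does not intrude back into cones already cleared; this is handled precisely by taking the $A_j$ open (so their boundaries lie in their complements) and by choosing $C$ inside the complement of $W_{k+1}$ at each stage, which is possible because $\SSi(G_k) \cap \partial A_{k+1}$ is closed and disjoint from this open set. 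The smooth boundary hypothesis needed by Lemma \ref{lem:repr-bonmicsup} is built into the choice of the $A_j$ as cones over round balls.
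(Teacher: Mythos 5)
Your inductive step has a genuine gap at the point where you claim to choose an open conic neighborhood $C$ of $\SSi(G_k) \cap \partial A_{k+1}$ contained in the complement of $W_{k+1} = A_1 \cup \cdots \cup A_{k+1}$. Disjointness of the closed set $S := \SSi(G_k) \cap \partial A_{k+1}$ from the open set $W_{k+1}$ does \emph{not} give an open neighborhood of $S$ inside $W_{k+1}^c$; for that you would need $S \cap \ol{W_{k+1}} = \emptyset$. But in fact $S \subset \partial A_{k+1} \subset \ol{A_{k+1}} \subset \ol{W_{k+1}}$, and combined with $S \cap W_{k+1} = \emptyset$ this forces $S \subset \partial W_{k+1}$. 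So whenever $S \neq \emptyset$, \emph{every} open neighborhood of $S$ meets $W_{k+1}$, and your choice of $C$ is impossible. The induction therefore cannot proceed unless you have already arranged $\SSi(G_k) \cap \partial A_{k+1} = \emptyset$ at each stage — which is precisely the condition the proof must engineer, not assume.

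The paper handles this by a separate induction on dimension: it first clears a compact $k$-dimensional obstruction $S$ in the projectivization (the base case $k=0$ works because points are isolated, so the garbage neighborhood $C$ can be shrunk around $\partial A_i$ to miss the finitely many already-cleared points), and in the inductive step it pre-clears the union of boundary spheres $S' = \bigcup_i S \cap (\partial A_i/\rspos)$ — a set of dimension $\leq k$ — before attacking the $(k+1)$-dimensional $S$ itself, so that it can legitimately take $C = \emptyset$ when applying Lemma~\ref{lem:repr-bonmicsup}. If you want to salvage your more direct approach, the natural fix is to shrink the cones: choose slightly smaller open cones $A_i'$ with $\ol{A_i'} \subset A_i$ such that the $A_i'$ still cover $B$, and run the induction with the weaker invariant $\SSi(G_k) \cap (\ol{A_1'} \cup \cdots \cup \ol{A_k'}) = \emptyset$. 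Then at step $k+1$ the set $\SSi(G_k)\cap \partial A_{k+1}$ is disjoint from the \emph{closed} set $\ol{A_1'} \cup \cdots \cup \ol{A_{k+1}'}$, so a genuine open $C$ avoiding all the $\ol{A_j'}$ does exist, and the induction closes. Either the paper's dimension induction or this cone-shrinking is needed; as written your argument breaks.
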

\begin{proof}
  (i) We first prove the following claim to reduce the problem to
  Lemma~\ref{lem:repr-bonmicsup}:
  \begin{quote}
    Let $A_1,\dots,A_n$ be open cones and $S$ a closed cone in $T^*_{x_0}M$. Let
    $C$ be a conic neighborhood of $S \cap \partial (\bigcup_{i=1}^n A_i)$. Then
    there exist conic subsets $C_1,\dots,C_n$ of $T^*_{x_0}M$ such that, defining
    inductively $S_0 =S$ and $S_i = (S_{i-1} \setminus A_i) \cup C_i$, we have: $C_i$
    is a neighborhood of $S_{i-1} \cap \partial A_i$ and
    $S_n \subset (S \setminus \bigcup_{i=1}^n A_i) \cup C$.
  \end{quote}
  We prove the claim by induction on $n$.  For $n=1$ we take $C_1=C$ and the claim is
  clear.  Let us assume we have proved it for $n-1$.  Since
  $(S \cap \partial A_1 \cap \partial (\bigcup_{i=2}^{n} A_i)) \subset C$ and
  $(S \cap \partial A_1) \subset C \cup \bigcup_{i=2}^{n} A_i$, we can choose a
  neighborhood $C_1$ of $S \cap \partial A_1$ such that
  $(C_1 \cap \partial(\bigcup_{i=2}^{n} A_i)) \subset C$ and
  $C_1 \subset C \cup \bigcup_{i=2}^{n} A_i$.  We set
  $S_1 = (S \setminus A_1) \cup C_1$.  Then $C$ is a neighborhood of
  $S_1 \cap \partial (\bigcup_{i=2}^{n} A_i)$ and we can apply the induction
  hypothesis with $A_2,\dots,A_n$, $S=S_1$ and $C$. The claim follows (using
  $(S_1 \setminus \bigcup_{i=2}^n A_i) \cup C \subset (S \setminus \bigcup_{i=1}^n
  A_i) \cup C)$).

  \sui(ii) For any $p\in B$ we can find $F^p \in \Derb(\Cor_M)$ representing $F$ such
  that $p\not\in \SSi(F^p)$. We can find an open convex cone $A^p$ around $p$ such
  that $(\ol{A^p}\setminus \{x_0\}) \cap \SSi(F^p) = \emptyset$. We can choose
  finitely many such cones, say $A_1,\dots,A_n$, such that
  $B \subset \bigcup_{i=1}^n A_i$ and we let $F_1,\dots, F_n \in \Derb(\Cor_M)$ be
  the corresponding sheaves.  We also choose an arbitrary representative $F_0$ of $F$
  and set $S = \SSi(F_0)$.  Finally we choose a neighborhood $C$ of
  $S \cap \partial (\bigcup_{i=1}^n A_i)$ such that $C \cap B = \emptyset$.

  We let $C_1,\dots,C_n$ be the subsets of $T^*_{x_0}M$ given by the claim in~(i).
  
  Now we define $G_i \in \Derb(\Cor_M)$, $i=0,\ldots,n$ inductively: we start with
  $G_0 = F_0$, and using Lemma~\ref{lem:repr-bonmicsup}, with $F' = G_{i-1}$,
  $F'' = F_i$, $A = A_i$, $C=C_i$, we set $G_i = G$ ($G$ given by the lemma). Then
  $G_i$ satisfies $\SSi(G_i) \subset S_i$, for each $i$. In particular the sheaf
  $G_n$ satisfies the conclusion of the proposition.
  \end{proof}

\begin{proposition}\label{prop:trian-ineq-SSo}
Let $F \to F' \to F'' \to[+1]$ be a distinguished triangle
in $\Orb(\cor_M)$. Then $\SSo(F'') \subset \SSo(F) \cup \SSo(F')$.
\end{proposition}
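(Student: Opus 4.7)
The plan is to work pointwise: fixing any $p \in T^*M$ with $p \notin \SSo(F) \cup \SSo(F')$, we will exhibit a representative of $F''$ in $\Derb(\Cor_M)$ whose microsupport avoids $p$.

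First, I would use Proposition~\ref{prop:repr-bonmicsup}, applied with $x_0 = \pi_M(p)$ and $B = \rpos \cdot p$, to select representatives $F_1,\, F'_1 \in \Derb(\Cor_M)$ of $F,\, F'$ together with isomorphisms $\alpha \colon Q_M(F_1) \isoto F$ and $\beta \colon Q_M(F'_1) \isoto F'$ such that $p \notin \SSi(F_1)$ and $p \notin \SSi(F'_1)$.

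Next, the morphism $v \eqdot \beta^{-1} \circ (F \to F') \circ \alpha \in \Hom_{\Orb(\cor_M)}(Q_M(F_1), Q_M(F'_1))$ needs to be realized at the level of $\Derb(\Cor_M)$. By Proposition~\ref{prop:morph-orb}, for $n$ large enough there exists $u \colon F_1[-n] \to F'_1$ in $\Derb(\Cor_M)$ such that $Q_M(u) = v \circ s_M^{1,n}(F_1)$ in $\Orb(\cor_M)$, where $s_M^{1,n}(F_1) \colon F_1[-n] \isoto F_1$ in the orbit category. Let $G \in \Derb(\Cor_M)$ be the cone of $u$, fitting in a distinguished triangle $F_1[-n] \to[u] F'_1 \to G \to[+1]$. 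Since $Q_M$ is a triangulated quotient functor, applying it yields a distinguished triangle in $\Orb(\cor_M)$ whose first map, modulo the isomorphisms $s_M^{1,n}(F_1)$, $\alpha$, $\beta$, agrees with the map $F \to F'$ of the given triangle; by the (up-to-isomorphism) unicity of cones in a triangulated category, this forces $Q_M(G) \simeq F''$.

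Finally, the triangular inequality for the usual microsupport (the property after Definition~\ref{def:SSF}) applied to the distinguished triangle in $\Derb(\Cor_M)$ gives $\SSi(G) \subset \SSi(F_1[-n]) \cup \SSi(F'_1) = \SSi(F_1) \cup \SSi(F'_1)$, so $p \notin \SSi(G)$, which by Definition~\ref{def:SSorb} yields $p \notin \SSo(F'')$. The main subtlety to watch is the bookkeeping in the second step: we must ensure that the cone of $u$ really represents $F''$, which rests on both the fact that the quotient functor $Q_M$ preserves distinguished triangles and the fact that the surjectivity part of Proposition~\ref{prop:morph-orb} allows the chosen representatives $F_1$, $F'_1$ (selected to have small microsupport at $p$) to be used as the source and target of the lift.
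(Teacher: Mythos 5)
Your proof is correct and follows the paper's argument. One small point of economy: to get representatives $F_1$, $F'_1$ with $p \notin \SSi(F_1)$ and $p \notin \SSi(F'_1)$, you do not actually need to invoke Proposition~\ref{prop:repr-bonmicsup} (which is designed for the harder problem of avoiding a whole closed cone $B\subset \dT^*_{x_0}M$); by Definition~\ref{def:SSorb}, $p\notin\SSo(F)$ is \emph{precisely} the statement that some representative of $F$ has $p$ outside its microsupport, so this is available ``by definition,'' which is how the paper phrases it. Otherwise your lifting of the morphism via Proposition~\ref{prop:morph-orb}, your identification of the cone $G$ with a representative of $F''$ using the fact that $Q_M$ is exact and cones are determined up to isomorphism, and your final invocation of the triangular inequality for $\SSi$ all match the paper's proof; your version merely spells out the cone-identification step that the paper leaves implicit.
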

\begin{proof}
  Let $p=(x_0;\xi_0) \in T^*M$ be given. We assume that
  $p\not\in \SSo(F) \cup \SSo(F')$.  Let $u \cl F \to F'$ be the morphism of the
  triangle.  By definition we can find $F_0, F'_0 \in \Derb(\Cor_M)$ such that
  $Q_M(F_0) \simeq F$, $Q_M(F'_0) \simeq F'$ and $p\not\in \SSi(F_0)$,
  $p\not\in \SSi(F'_0)$.  By Proposition~\ref{prop:morph-orb} there exist
  $n\in \Z$ and a morphism $u_0 \colon F_0[n] \to F'_0$ such that
  $Q_M(u_0) = u$.  Then the cone of $u_0$, say $F''_0$, represents $F''$ and
  $p\not\in \SSi(F''_0)$ by the triangular inequality for the usual
  microsupport. The result follows.
\end{proof}

\subsection{Functorial behavior}
We prove that $\SSo(\cdot)$ satisfies the same properties as $\SSi(\cdot)$
with respect to the usual sheaf operations.

\begin{proposition}\label{prop:im-inv-SSorb}
Let $f\cl M \to N$ be a morphism of manifolds.  Let $G\in \Orb(\cor_N)$. We
assume that $f$ is non-characteristic for $\SSo(G)$. Then $\SSo(\opb{f}G) \cup
\SSo(\epb{f}G) \subset f_d\opb{f_\pi}\SSo(G)$.
\end{proposition}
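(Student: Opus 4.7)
The plan is to reduce the claim about $\SSo$ in $\Orb(\cor_M)$ to the classical bound of Theorem~\ref{thm:iminv}, by choosing a well-adapted representative of $G$ via Proposition~\ref{prop:repr-bonmicsup}. Since $\SSo$ is local by~\eqref{eq:SSorb_local} and $\opb{f}$, $\epb{f}$ commute with restriction to open subsets by Lemma~\ref{lem:im-dir-inv_Orb}, it suffices to prove that for each $p = (x_0;\xi_0) \notin f_d\opb{f_\pi}\SSo(G)$ we have $p \notin \SSo(\opb{f}G) \cup \SSo(\epb{f}G)$. Set $y_0 = f(x_0)$. If $\xi_0 = 0$, the hypothesis forces $(y_0;0) \notin \SSo(G)$, so $G \simeq 0$ in $\Orb(\cor_V)$ for some neighborhood $V$ of $y_0$; hence $\opb{f}G$ and $\epb{f}G$ vanish on the neighborhood $\opb{f}(V)$ of $x_0$.

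Assume now $\xi_0 \neq 0$. Working in a local chart near $y_0$ that identifies $T^*N$ with $V_N \times \R^n$, I would choose a closed conic neighborhood $\widetilde B \subset \R^n$ of the closed conic set
\[
\{\eta \in \R^n \setminus \{0\} : f_d(x_0,\eta) = 0 \text{ or } f_d(x_0,\eta) \in \rpos \cdot \xi_0 \},
\]
small enough that $\SSo(G) \cap (\{y_0\} \times \widetilde B) = \emptyset$. This disjointness is exactly the content of the two hypotheses together with conicity of $\SSo(G)$: non-characteristicity excludes $\SSo(G)$ from the kernel of $f_d(x_0,\cdot)$, while $p \notin f_d\opb{f_\pi}\SSo(G)$ excludes it from $f_d(x_0,\cdot)^{-1}(\rpos\xi_0)$. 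Then Proposition~\ref{prop:repr-bonmicsup} produces $G' \in \Derb(\Cor_N)$ with $Q_N(G') \simeq G$ and $\SSi(G') \cap (\{y_0\} \times \widetilde B) = \emptyset$.

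The key point of the construction is that $\widetilde B$ can be arranged to be a conic neighborhood of the minimal bad set large enough, using continuity of $x \mapsto df_x$, to contain the kernel of $f_d(x,\cdot)$ for every $x$ in a small neighborhood $U$ of $x_0$. Then, by closedness of $\SSi(G')$ and shrinking if necessary to a neighborhood $V \subset V_N$ of $y_0$ with $\opb{f}(V) \subset U$, we get $\SSi(G'|_V) \cap (V \times \widetilde B) = \emptyset$; consequently $f|_{\opb{f}(V)}$ is non-characteristic for $\SSi(G'|_V)$ and $p \notin f_d\opb{f_\pi}(\SSi(G'|_V))$. Theorem~\ref{thm:iminv}, which holds verbatim in $\Derb(\Cor_N)$ since $\SSi$ is invariant under restriction of scalars $r_N$, then gives $\SSi(\opb{f}(G'|_V)) \cup \SSi(\epb{f}(G'|_V)) \subset f_d\opb{f_\pi}(\SSi(G'|_V))$, a set not containing $p$. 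Since by Lemma~\ref{lem:im-dir-inv_Orb} the objects $\opb{f}(G'|_V)$ and $\epb{f}(G'|_V)$ represent $\opb{f}G$ and $\epb{f}G$ on $\opb{f}(V)$, we conclude $p \notin \SSo(\opb{f}G) \cup \SSo(\epb{f}G)$.

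The main obstacle is in the second paragraph: Proposition~\ref{prop:repr-bonmicsup} formally only controls $\SSi(G')$ at the single fiber $T^*_{y_0}N$, whereas the non-characteristic condition in Theorem~\ref{thm:iminv} must hold on a neighborhood of $y_0$. The resolution is either to inspect the proof of that proposition — its building block Proposition~\ref{prop:microcutofflevrai1} actually delivers a bound $\SSi(R(F)) \subset W \times ((B \setminus A) \cup B')$ over an entire coordinate neighborhood $W$, not just on one fiber — or, equivalently, to use closedness of $\SSi(G')$ and continuity of $x \mapsto df_x$ to propagate the fiberwise bound to a neighborhood of $y_0$.
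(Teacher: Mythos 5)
Your proof is correct, and it takes a genuinely different route from the paper's. The paper first factors $f = p\circ i$ through the graph embedding $i\cl M \to M\times N$ and the projection $p\cl M\times N \to N$, and then argues the two cases separately. For a submersion the argument is very short and does not need Proposition~\ref{prop:repr-bonmicsup} at all: since ${}^tf'_x$ is injective and the non-characteristic condition is automatic, one can literally write $\SSo(\opb{f}G)\cap T^*_xM \subset \bigcap_{G'}\SSi(\opb{f}G')\cap T^*_xM = {}^tf'_x(\SSo(G)\cap T^*_yN)$, where the last equality uses injectivity to pull the intersection inside. For an embedding the paper applies Proposition~\ref{prop:repr-bonmicsup} to the compact set $\opb{f_d}(l)\cap S$ (the preimage in the fiber of the half-line $l = \rpos\cdot(x_0;\xi_0)$), using non-characteristicity to know this avoids $\SSo(G)$ off the zero section. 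Your version instead treats general $f$ directly by enlarging the cut-off region to a conic set $\widetilde B$ containing both $\ker f_d(x_0,\cdot)$ and $f_d(x_0,\cdot)^{-1}(\rpos\xi_0)$: this unifies the two cases at the cost of having to verify the propagation step explicitly. Your resolution of that step is right — $\SSi(G')$ is a closed conic set, the bad set is closed in the cosphere fiber and hence compact, so disjointness at the single fiber $T^*_{y_0}N$ persists on a tube $W\times\widetilde B$, and continuity of $x\mapsto df_x$ keeps $\ker f_d(x,\cdot)$ inside $\widetilde B$ for $x$ near $x_0$. This same closedness/compactness argument is implicitly used in the paper's embedding case as well, so your noting it is not a sign of a gap in the paper but of a shared implicit step. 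Net comparison: the paper's decomposition isolates the trivial submersion case and keeps the cut-off argument simpler (a single half-line preimage), while your unified version avoids the factorization at the price of a somewhat larger $\widetilde B$ and a slightly more delicate continuity bookkeeping. Both are valid and rely on the same two key ingredients, Proposition~\ref{prop:repr-bonmicsup} and Theorem~\ref{thm:iminv}.
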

\begin{proof}
  (i) The cases of $\opb{f}$ and $\epb{f}$ are similar and we only consider
  $\opb{f}$.  We can write $f = p\circ i$ where $i\cl M \to M\times N$ is the
  graph embedding and $p\cl M\times N \to N$ is the projection. Since the result
  is compatible with the composition it is enough to consider the case of an
  embedding and a submersion separately.

  \sui (ii) We assume that $f$ is a submersion. Let $x \in M$ and set
  $y=f(x)$. Then
$$
\SSo(\opb{f}G)\cap T^*_xM
= \bigcap_{F'} \SSi(F')\cap T^*_xM 
\subset \bigcap_{G'} \SSi(\opb{f}G')\cap T^*_xM ,
$$
where $F'$ runs over the objects of $\Derb(\Cor_M)$ such that
$F' \simeq \opb{f}G$ in $\Orb(\cor_M)$ and $G'$ over the objects of
$\Derb(\Cor_N)$ such that $G' \simeq G$ in $\Orb(\cor_N)$.  Now the result
follows from Theorem~\ref{thm:iminv} and the fact that
${}^tf'_x \colon T^*_yN \to T^*_xM$ is injective.

\sui (iii) We assume that $f$ is an embedding.  Let $(x_0;\xi_0) \in T^*M$ be
such that $(x_0;\xi_0) \not\in f_d(\SSo(G)\cap T^*_{x_0}N)$.  Let $l$ be the
half line $\rpos \cdot(x_0;\xi_0)$. Since $f$ is non-characteristic for
$\SSo(G)$, we have $\opb{f_d}(l) \cap \SSo(G) \subset \{x_0\}$.  By
Proposition~\ref{prop:repr-bonmicsup} there exist a neighborhood $V$ of $x_0$
and $G'\in \Derb(\Cor_V)$ such that $G' \simeq G$ in $\Orb(\cor_V)$ and
$\opb{f_d}(l) \cap \SSi(G') \subset \{x_0\}$. Then $\opb{f}G' \simeq \opb{f}G$
in $\Orb(\cor_{M\cap V})$ and
$(x_0;\xi_0) \not\in f_d(\SSi(G')\cap T^*_{x_0}N)$.  This proves
$(x_0;\xi_0) \not\in \SSo(\opb{f}G)$, hence the inclusion of the proposition.
\end{proof}

\begin{proposition}\label{prop:im-dir-SSorb}
Let $f\cl M \to N$ be a morphism of manifolds.  Let $F\in \Orb(\cor_M)$.  We
assume that $f$ is proper on $\supporb(F)$. Then $\SSo(\reim{f}F)\subset
f_\pi\opb{f_d}\SSo(F)$.
\end{proposition}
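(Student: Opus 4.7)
The plan is to mirror the proof of Proposition~\ref{prop:im-inv-SSorb}: factor $f = p \circ \iota$ through a closed embedding $\iota \colon M \to M \times N$ (the graph of $f$) and the projection $p \colon M \times N \to N$, and treat each case separately. Since $\reim{\iota}$, $\reim{p}$, and hence $\reim{f}$ commute with $Q_M, Q_N$ (Lemma~\ref{lem:im-dir-inv_Orb}), any representative $F' \in \Derb(\Cor_M)$ of $F$ yields a representative $\reim{f}F' \in \Derb(\Cor_N)$ of $\reim{f}F$, so by Proposition~\ref{prop:oim} we have $\SSo(\reim{f}F) \subset \SSi(\reim{f}F') \subset f_\pi \opb{f_d}\SSi(F')$. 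It therefore suffices to show that for every $(y_0;\eta_0) \notin f_\pi \opb{f_d}\SSo(F)$, some representative $F'$ satisfies $(y_0;\eta_0) \notin f_\pi \opb{f_d}\SSi(F')$.

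The embedding case is handled pointwise. If $y_0 \notin \iota(\supporb(F))$ there is nothing to prove; otherwise $y_0 = \iota(x_0)$ for a unique $x_0$, the condition reduces to $\iota_d(x_0, \eta_0) \notin \SSo(F)$, and Proposition~\ref{prop:repr-bonmicsup} applied at $x_0$ with a small closed conic neighborhood of this single point yields the desired $F'$. For the projection $p$, the condition $(y_0;\eta_0) \notin p_\pi \opb{p_d}\SSo(F)$ means $((x,y_0);(0,\eta_0)) \notin \SSo(F)$ for every $x \in M$. Since $\SSo$ is local on $N$, I would first restrict to a relatively compact open neighborhood $V$ of $y_0$; properness of $p$ on $\supporb(F)$ then makes $K := \supporb(F) \cap p^{-1}(\overline V)$ compact, and the target compact set $T := \{((x,y_0);(0,\eta_0)) : x \in \pi_M(K)\} \subset T^*(M \times V)$ is disjoint from $\SSo(F)$. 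I need one representative $F'$ of $F$ on $M \times V$ whose classical microsupport avoids an open conic neighborhood of $T$; then $\SSi(\reim{p}F') \subset p_\pi \opb{p_d}\SSi(F')$ misses $(y_0;\eta_0)$ and we conclude.

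The main obstacle is that Proposition~\ref{prop:repr-bonmicsup} addresses only one base point at a time, whereas here the obstruction is spread over the whole compact set $\pi_M(K)$. I plan to establish a compact strengthening: given $F \in \Orb(\cor_M)$, a compact subset $Z \subset M$, and a closed conic $B \subset T^*M$ with $\opb{\pi_M}(Z) \cap B \cap \SSo(F) = \emptyset$, there exists a representative $F' \in \Derb(\Cor_M)$ with $\opb{\pi_M}(Z) \cap B \cap \SSi(F') = \emptyset$. The proof covers $Z$ by finitely many small open charts $U_1, \dots, U_n$ and iterates Lemma~\ref{lem:repr-bonmicsup}: at step $j$, starting from $F'_{j-1}$ which already satisfies the required bound over $\opb{\pi_M}(U_1 \cup \dots \cup U_{j-1})$, use a pointwise good representative produced by Proposition~\ref{prop:repr-bonmicsup} at some $x_j \in U_j \cap Z$ as the auxiliary $F''$ and apply the cut-off functor of Proposition~\ref{prop:microcutofflevrai1} supported in a small neighborhood $W_j \subset U_j$ of $x_j$ to produce $F'_j$. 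The combinatorial heart of the argument is to choose each $W_j$ small enough to be disjoint from $U_1 \cup \dots \cup U_{j-1}$ (so the modification leaves the earlier improvement intact) and to arrange the neighborhood $C$ of $\SSi(F'_{j-1}) \cap \partial A_j$ appearing in Lemma~\ref{lem:repr-bonmicsup} disjoint from $B$; applying this lemma with $Z = \pi_M(K)$ and $B$ a small closed conic neighborhood of $T$ produces the representative required for the projection case.
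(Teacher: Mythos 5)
Your reduction to the embedding and projection cases matches the paper, and the embedding case is fine (it is essentially part (ii) of the proof of Proposition~\ref{prop:im-inv-SSorb}: pick the half-line $\rspos\cdot\iota_d(x_0,\eta_0)$ as $B$ in Proposition~\ref{prop:repr-bonmicsup}; note that if $\iota_d(x_0,\eta_0)=0$ the hypothesis already forces $x_0\notin\supporb(F)$, so this degenerate case does not arise).

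The projection case, however, has a genuine gap. The ``compact strengthening'' you want to establish by iterating Lemma~\ref{lem:repr-bonmicsup} does not follow from the induction scheme as written. At step $j$ the cut-off functor produces a complex only on the small chart $W_j$, and the only way you have of going back to a global object is Lemma~\ref{lem:modif-repr}, whose output is controlled \emph{in $\Derb(\Cor_M)$} only over $W_j$: outside $W_j$ it agrees with the previous representative merely up to a shift and a cone on an object of $\perf$. There is therefore no reason for $\SSi(F'_j)$ to still avoid the bad set over $U_1\cup\dots\cup U_{j-1}$, so the inductive invariant is not preserved. The fix you propose --- taking $W_j$ disjoint from $U_1\cup\dots\cup U_{j-1}$ --- does not save the argument and is in fact inconsistent: if the $U_i$ cover the compact fibre $\pi_M(K)$, then a sequence of $W_j$ each chosen disjoint from all previous $U_i$ cannot cover it. Nor can you simply ``glue $R_j(F'_{j-1})$ on $W_j$ to $F'_{j-1}$ elsewhere'': the identification $R_j(G)\simeq G|_{W_j}$ in $\Derb$ given by Proposition~\ref{prop:microcutofflevrai1}~(ii) requires $\dot\SSi(G)$ to avoid the cone $A_j$ over the \emph{entire} chart $U_j$, which your $F'_{j-1}$ is only known to do over the previously cleared region.

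The paper sidesteps all of this by never iterating. For the projection $p\colon M'\times N\to N$ it builds, from Proposition~\ref{prop:microcutofflevrai1}, a \emph{single} convolution functor $R^+(H)=K\circ H$ on $\Derb(\Cor_{M'\times U})$ acting only in the $N$-direction, with $M'$ as a parameter. Three properties do the work: (a) $R^+$ descends to $\Orb$; (b) near any $\{x\}\times W$ where \emph{some} representative of $F$ has microsupport avoiding $T^*_xM'\times A$, the natural morphism $F|_W\to R^+(F)$ is an isomorphism in $\Orb$; and (c) for \emph{every} $G\in\Derb(\Cor_{M'\times U})$, $\SSi(R^+(G))$ misses $T^*_{M'}M'\times\{q\}$. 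Since pointwise good representatives exist for each $x$ in a neighbourhood of the compact fibre, (b) applied point by point shows $R^+(F)\simeq F|_W$ in the orbit category (the cone of $F|_W\to R^+(F)$ has empty support). Then applying (c) to any chosen representative $F'$ gives a representative $R^+(F')$ of $F|_W$ whose microsupport avoids the point, and Proposition~\ref{prop:oim} finishes. This avoids the need for a compact version of Proposition~\ref{prop:repr-bonmicsup} altogether: the relevant ``compactness'' is absorbed into the fact that one globally defined kernel $K$, applied once, simultaneously repairs the microsupport over every fibre while remaining invisible in the orbit category. If you want to complete your write-up, this is the mechanism you need to replace the iteration with.
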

\begin{proof}
  (i) As in the proof of Proposition~\ref{prop:im-inv-SSorb} we can reduce the
  problem to the cases where $f$ is an embedding or a projection. The case of an
  embedding is similar to the part~(ii) of the proof of
  Proposition~\ref{prop:im-inv-SSorb}.  

  \sui (ii) We assume now that $M=M'\times N$ and $f$ is the projection.  Let
  $q=(y;\eta) \in T^*N$ be such that $q\not\in f_\pi\opb{f_d}\SSo(F)$.  Let us
  prove that $q\not\in \SSo(\reim{f}F)$.  Since $f$ is proper on $\supporb(F)$,
  we can assume, up to restricting to a neighborhood of $y$, that
  $\supporb(F) \subset C \times N$, for some compact set $C\subset M'$.  We can
  then find an open convex cone $A$ in $T^*_yN$ containing $\eta$ and a
  neighborhood $\Omega$ of $C$, such that, for any $x\in \Omega$ there exists
  $G \in \Derb(\cor_M)$ representing $F$ with
  $\SSi(G) \cap (T^*_{x}M' \times A) = \emptyset$.

  As in the proof of Lemma~\ref{lem:repr-bonmicsup} we apply
  Proposition~\ref{prop:microcutofflevrai1} to obtain two neighborhoods
  $W \subset U$ of $y$ in $N$ and a functor
  $R^+ \colon \Derb(\Cor_{M' \times U}) \to \Derb(\Cor_{M' \times W})$, of the
  form $R^+(H) = K \circ H$, where $K\in \Derb(\cor_{W\times U})$ is given by
  the proposition, which satisfies
  \begin{itemize}
  \item [(a)] $R^+$ induces a functor on $\Orb(\cor_{M' \times U})$,
  \item [(b)] for $G \in \Derb(\Cor_{M' \times U})$ and $x\in M'$ such that
    $\SSi(G) \cap (T^*_{x}M' \times A) = \emptyset$ we have $R^+(G) \simeq G|_W$
    around $\{x\} \times W$,
  \item [(c)]for any $G \in \Derb(\Cor_{M' \times U})$ we have
    $\SSi(R^+(G)) \cap (T^*_{M'}M' \times \{p\}) = \emptyset$.
  \end{itemize}
  Then~(a) and~(b) imply $R^+(F) \simeq F|_W$.  Now we choose a representative
  $F'\in \Derb(\cor_M)$ of $F$.  Then $R^+(F')$ is another representative of $F$
  and we deduce the result from the condition~(c) and
  Proposition~\ref{prop:oim}.
\end{proof}

\begin{proposition}\label{prop:SSo-tens-hom}
Let $F,G\in\Orb(\cor_M)$. 
\begin{itemize}
\item [(i)] We assume that $\SSo(F)\cap\SSo(G)^a\subset T^*_MM$. Then \\
  $\SSo(F\epstens_{\cor_M} G)\subset \SSo(F)+\SSo(G)$.
\item [(ii)] We assume that $\SSo(F)\cap\SSo(G)\subset T^*_MM$. Then \\
  $\SSo(\rhomeps(F,G))\subset \SSo(F)^a+\SSo(G)$. 
\end{itemize}
\end{proposition}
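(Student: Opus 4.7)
Both statements reduce to the known microsupport estimates in $\Derb(\Cor_M)$: Theorem~\ref{thm:SSrhom} transfers verbatim from $\cor$ to $\Cor$ coefficients since the microsupport is invariant under restriction of scalars, and $r_M(F'\epstens G') \simeq r_M(F')\tens r_M(G')$, $r_M(\rhomeps(F',G')) \simeq \rhom(r_M(F'),r_M(G'))$ by~\eqref{eq:tens-epstens} and~\eqref{eq:hom-homeps}. The bridge between $\Orb(\cor_M)$ and $\Derb(\Cor_M)$ is Proposition~\ref{prop:repr-bonmicsup}, which lets us pick a representative $F' \in \Derb(\Cor_M)$ of $F \in \Orb(\cor_M)$ whose ordinary microsupport at a fixed point $x_0$ is contained in any chosen closed conic neighborhood of $\SSo(F) \cap T^*_{x_0}M$.

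For part (i), fix $p_0 = (x_0;\xi_0) \notin \SSo(F) + \SSo(G)$ and set $A_F = \SSo(F)\cap T^*_{x_0}M$, $A_G = \SSo(G)\cap T^*_{x_0}M$. The hypothesis gives $A_F \cap (-A_G) \subset \{0\}$, from which a standard cone argument shows $A_F + A_G$ is closed, and a compactness argument on the cosphere $S^*_{x_0}M$ produces closed conic neighborhoods $B_F \supset A_F$, $B_G \supset A_G$ such that $B_F \cap (-B_G) \subset \{0\}$ and $\xi_0 \notin B_F + B_G$. Applying Proposition~\ref{prop:repr-bonmicsup} to $F$ with the closed conic set $\dT^*_{x_0}M \setminus \mathrm{Int}(B_F)$ (which, by construction, meets $\SSo(F)$ only at $0$, so is disjoint from $\SSo(F) \cap \dT^*_{x_0}M$), and similarly to $G$, yields representatives $F',G' \in \Derb(\Cor_M)$ with $\SSi(F')\cap \dT^*_{x_0}M \subset B_F$ and $\SSi(G')\cap \dT^*_{x_0}M \subset B_G$.

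Now $H = \SSi(F')^a \cap \SSi(G') \cap \dT^*M$ is closed and conic, and $H \cap T^*_{x_0}M \subset \{0\}$ by the choice of $B_F,B_G$; since the projection $PT^*M \to M$ is proper, $\dot\pi_M(H)$ is closed in $M$ and avoids $x_0$, so $H \cap T^*U = \emptyset$ for some neighborhood $U$ of $x_0$. On $U$ we may therefore apply Theorem~\ref{thm:SSrhom} combined with Example~\ref{ex:hplus_noncar} to $r_M(F')$ and $r_M(G')$, obtaining $\SSi((F' \epstens G')|_U) \subset \SSi(F') + \SSi(G')$ on $U$. Shrinking $U$ once more (by the same closedness-and-properness argument applied to $\SSi(F')+\SSi(G') \setminus \{\text{zero section}\}$, using that this sum is closed under our noncharacteristic condition), we arrange that its intersection with $T^*_{x_0}M$ is contained in $B_F + B_G$, which misses $\xi_0$. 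Hence $p_0 \notin \SSi(F' \epstens G')$, and since $Q_M(F' \epstens G') \simeq F \epstens G$ this gives $p_0 \notin \SSo(F \epstens G)$, proving (i). Part (ii) is identical with $\rhomeps$ in place of $\epstens$, using the $\rhom$-estimate of Theorem~\ref{thm:SSrhom}, \eqref{eq:hom-homeps}, and the hypothesis $\SSo(F) \cap \SSo(G) \subset T^*_MM$ to build analogous neighborhoods $B_F, B_G$ with $B_F \cap B_G \subset \{0\}$. The only real work is the elementary cone-geometry lemma producing the simultaneous neighborhoods $B_F$, $B_G$; everything else is formal once Proposition~\ref{prop:repr-bonmicsup} is invoked.
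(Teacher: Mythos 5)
Your proposal is correct and follows essentially the same approach as the paper's proof: pick conic neighborhoods $A, B$ of $\SSo(F)\cap T^*_{x_0}M$ and $\SSo(G)\cap T^*_{x_0}M$ satisfying the noncharacteristic condition, invoke Proposition~\ref{prop:repr-bonmicsup} to obtain representatives in $\Derb(\Cor_M)$ whose microsupport at $x_0$ sits inside those neighborhoods, use closedness of microsupports to propagate the condition to a neighborhood $U$ of $x_0$, and then apply Theorem~\ref{thm:SSrhom} (transported to $\Cor$-coefficients via $r_M$). The only cosmetic difference is that you fix a target point $p_0\notin\SSo(F)+\SSo(G)$ and build $B_F, B_G$ missing it, whereas the paper phrases the conclusion by letting $A$ and $B$ shrink arbitrarily close to the microsupports; the two formulations are equivalent.
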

\begin{proof}
  Let us prove~(i). Let $x_0 \in M$ and let $A , B \subset T^*_{x_0}M$ be conic
  neighborhoods of $\SSo(F)\cap T^*_{x_0}M$, $\SSo(G)\cap T^*_{x_0}M$ such that
  $A\cap B^a \subset \{x_0\}$.  By Proposition~\ref{prop:repr-bonmicsup} we can
  find representatives $F', G' \in \Derb(\Cor_M)$ of $F,G$ such that
  $\SSi(F')\cap T^*_{x_0}M \subset A$, $\SSi(G')\cap T^*_{x_0}M \subset B$.
  Since microsupports are closed we have $\SSi(F')\cap\SSi(G')^a\subset T^*_UU$
  for some neighborhood $U$ of $x_0$.  Then Theorem~\ref{thm:SSrhom} gives
  $\SSi(F'\epstens_{\cor_M} G') \cap T^*_{x_0}M \subset A+B$.  Since $A$ and $B$
  are arbitrarily close to our microsupports we deduce~(i). The proof of~(ii) is
  the same.
\end{proof}

\subsection{Microsupport in the zero section}
In Proposition~\ref{prop:SSorb-sectnulle} we give a special case of
Proposition~\ref{prop:iminvproj} for $\SSo$.

\begin{lemma}\label{lem:recouvrt-cubes}
Let $C=[a,b]^d$ be a compact cube in $\R^d$ and let $\{U_i\}_{i\in I}$ be a
family of open subsets of $\R^d$ such that $C\subset \bigcup_{i\in I}
U_i$. Then there exists a finite family of open subsets $\{V_n\}$,
$n=1,\ldots,N$, such that
\begin{itemize}
\item [(i)] for each $n=1,\ldots,N$ there exists $i\in I$ such that $V_n
  \subset U_i$,
\item [(ii)] $C\subset \bigcup_{n=1}^N V_n$,
\item [(iii)] $(\bigcup_{k=1}^{n} V_k) \cap V_{n+1}$ is contractible, for each
  $n=1,\ldots,N-1$.
\end{itemize}
\end{lemma}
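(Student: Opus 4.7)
The plan is to use a fine grid of slightly enlarged small cubes, ordered lexicographically, and to exploit the special combinatorial structure of the lexicographic order to show that each newly added cube intersects the union of the previous ones in a star-shaped set.

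First, I would use a standard Lebesgue-number argument: since $C$ is compact and covered by the open family $\{U_i\}$, there exists $\delta > 0$ such that every open ball of radius $\delta$ centered at a point of $C$ is contained in some $U_i$. Choose an integer $K$ large enough that $h \eqdot (b-a)/K$ satisfies $h\sqrt{d}(1+2\epsilon) < \delta$ for some fixed $\epsilon \in (0,1/2)$, and for each multi-index $\alpha \in \{0,\ldots,K-1\}^d$ define the slightly enlarged cube
\[
V_\alpha \eqdot \prod_{i=1}^{d}\bigl(a+(\alpha_i-\epsilon)h,\; a+(\alpha_i+1+\epsilon)h\bigr).
\]
Each $V_\alpha$ has diameter $<\delta$ and meets $C$, so $V_\alpha \subset U_{i(\alpha)}$ for some $i(\alpha)$, giving~(i); the usual (unenlarged) closed cubes tile $C$, giving~(ii). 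Finally I would enumerate the $V_\alpha$'s, as $V_1,\ldots,V_N$, in lexicographic order on the multi-indices.

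The core step is to verify~(iii). For a fixed $\alpha$, a direct check on intervals shows that $V_\beta\cap V_\alpha\neq\emptyset$ iff $|\beta_i-\alpha_i|\leq 1$ for all $i$, and that the $i$-th coordinate of $V_\alpha\cap V_\beta$ is one of three intervals: the whole $V_\alpha$ range if $\beta_i=\alpha_i$, the ``left thin strip'' $I_i^- \eqdot (a+(\alpha_i-\epsilon)h,\, a+(\alpha_i+\epsilon)h)$ if $\beta_i=\alpha_i-1$, or the analogous right thin strip if $\beta_i=\alpha_i+1$. Writing lexicographic predecessors $\beta<\alpha$ with nonempty overlap in the form $\beta_{<j}=\alpha_{<j}$, $\beta_j=\alpha_j-1$, $\beta_{>j}\in\{\alpha_{>j}-1,\alpha_{>j},\alpha_{>j}+1\}$ (with $j$ such that $\alpha_j\geq 1$), one obtains the explicit description
\[
W_\alpha \eqdot \Bigl(\bigcup_{\beta<\alpha} V_\beta\Bigr)\cap V_\alpha
= \bigl\{x\in V_\alpha : \exists\, j\in S_\alpha,\ x_j\in I_j^-\bigr\},
\]
where $S_\alpha = \{j : \alpha_j\geq 1\}$. (The inclusion ``$\subset$'' is immediate; for ``$\supset$'', given $x$ with $x_j\in I_j^-$ and $j\in S_\alpha$ the smallest such index, I choose $\beta_{<j}=\alpha_{<j}$, $\beta_j=\alpha_j-1$, and for $i>j$ pick $\beta_i\in\{\alpha_i-1,\alpha_i,\alpha_i+1\}$ so that $x_i\in V_{\beta_i}$, which is always possible since $x\in V_\alpha$.)

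Whenever $S_\alpha\neq\emptyset$, the point $c_\alpha$ with coordinates $c_{\alpha,i} = a+(\alpha_i-\epsilon/2)h$ lies in every strip $I_i^-$ and therefore in $W_\alpha$; moreover for any $x\in W_\alpha$, witnessed by some $j\in S_\alpha$ with $x_j\in I_j^-$, the segment $(1-t)c_\alpha + tx$ stays in the convex set $V_\alpha$ coordinatewise and has its $j$-th coordinate in the convex interval $I_j^-$, hence remains in $W_\alpha$. This shows $W_\alpha$ is star-shaped at $c_\alpha$ and therefore contractible. The only case $S_\alpha=\emptyset$ is $\alpha=(0,\ldots,0)$, which corresponds to the very first cube $V_1$, for which no condition is required. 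I expect the main obstacle to be organizing this combinatorial bookkeeping carefully, especially handling the boundary cases ($\alpha_i\in\{0,K-1\}$) so that one does not mistakenly include or exclude neighbors; the star-shapedness argument itself is short once the precise description of $W_\alpha$ is established.
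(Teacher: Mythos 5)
Your proof is correct and follows the same approach as the paper's: a fine grid of overlapping cubes covering $C$, enumerated in lexicographic order of their multi-indices, so that each newly added cube meets the union of its lexicographic predecessors in a star-shaped (hence contractible) set. The paper's proof is essentially one sentence (it just asserts that the lexicographically ordered family of small cubes centered at lattice points works), so your explicit description of $W_\alpha$ and the verification of star-shapedness at $c_\alpha$ supplies the details the paper leaves to the reader.
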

\begin{proof}
  For $x\in \R^d$ and $\varepsilon>0$ we set  
  $C_x^\varepsilon = x+\mo]-\varepsilon,\varepsilon[^d$.  We can choose
  $\varepsilon>0$ such that, for any $x\in C$, there exists $i\in I$ satisfying
  $C_x^\varepsilon \subset U_i$ ($\varepsilon$ is a {\em Lebesgue number} of the
  covering).  We let $x_n$, $n=1,\ldots,N$, be the points of the lattice
  $C\cap (\varepsilon \Z)^d$ ordered by the lexicographic order of their coordinates.
  Then the family $V_n = C_{x_n}^\varepsilon$, $n=1,\ldots,N$, satisfies the required
  properties.
\end{proof}

\begin{lemma}\label{lem:const_ouv_contr}
  Let $M$ be a manifold and $U$, $V \subset M$ two open subsets. Let
  $F\in \Orb(\cor_M)$.  We assume that $U\cap V$ is contractible
  and that there exist $A,A' \in \Mod(\cor)$ such that $F|_U \simeq A_U$ and
  $F|_V \simeq A'_V$.  Then $A\simeq A'$ and $F|_{U\cup V} \simeq A_{U\cup V}$.
\end{lemma}
\begin{proof}
  We set $W = U \cap V$ and $X = U \cup V$.  Taking the stalks at some $x\in W$
  gives $A \simeq A'$. The Mayer-Vietoris triangle yields in our case
  $A_W \to[u] A_U \oplus A_V \to F_X \to[+1]$, where $u$ is of the form $(1,v)$
  for some isomorphism $v \in \Hom_{\Orb(\cor_M)}(A_W,A_W)$.  By
  Corollary~\ref{cor:morph-QRF-QRG} we have
  \begin{align*}
    \Hom_{\Orb(\cor_M)}(A_V, A_V)
    &\simeq \bigoplus_{i\in\Z} \Hom_{\Derb(\cor_M)}(A_V, A_V[i])  \\
    &\simeq \Hom(A,A) \otimes \bigoplus_{i\in\N} H^i(V;\cor) .
  \end{align*}
  In the same way $\Hom_{\Orb(\cor_M)}(A_W, A_W) \simeq \Hom(A,A)$ since $W$ is
  contractible.  Hence $v$ can be extended as an isomorphism to $V$ and we can
  define the commutative square $(C)$ below:
\begin{equation*}
\newcommand{\fleche}{\left(\begin{smallmatrix}
    1 & 0 \\ 0 & v^{-1} \end{smallmatrix} \right)}
\begin{tikzcd}[column sep=1.5cm]
A_W \ar[r, "{(1,v)}"] \ar[d, equal] \ar[dr, phantom, "(C)" description]
& A_U \oplus A_V \ar[r] \ar[d, "\fleche", "\wr"']
& F_X \ar[r, "+1"]  \ar[d, dashed] &  {} \\
A_W \ar[r, "{(1,1)}"']
& A_U \oplus A_V \ar[r]
& A_X \ar[r, "+1"] & \pointdiag
\end{tikzcd}
\end{equation*}
We extend this square to an isomorphism of triangles and obtain
$F_X \simeq A_X$, as required.
\end{proof}

\begin{proposition}\label{prop:SSorb-sectnulle}
  Let $E=\R^d$ and $F\in \Orb(\cor_E)$.  We assume that
  $\SSo(F) \subset T^*_EE$. Then there exists $A\in \Mod(\cor)$ such that
  $F\simeq A_E$.
\end{proposition}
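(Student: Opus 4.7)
The plan is to prove the statement in three steps: local triviality, patching on compact cubes, and globalization via exhaustion.

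For local triviality, fix $x_0 \in E$. Since $\SSo(F) \cap \dT^*_{x_0}E = \emptyset$ and $\dT^*_{x_0}E$ is closed conic, Proposition~\ref{prop:repr-bonmicsup} yields a representative $F' \in \Derb(\Cor_E)$ of $F$ with $\SSi(F') \cap \dT^*_{x_0}E = \emptyset$. As $\SSi(F')$ is closed, for a sufficiently small open ball $U$ centered at $x_0$ we have $\SSi(F'|_U) \subset T^*_UU$. Applying Proposition~\ref{prop:iminvproj} with $N = \pt$ and $I = U$ contractible, we obtain $F'|_U \simeq G_U$ for $G = \rsect(U; F'|_U) \in \Derb(\Cor)$. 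Passing to $\Orb(\cor_U)$ and invoking the equivalence $\Orb(\cor) \simeq \Mod(\cor)$ of Corollary~\ref{cor:iota_equivcat}, there exists $A(x_0) \in \Mod(\cor)$ with $F|_U \simeq A(x_0)_U$ in $\Orb(\cor_U)$.

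For the patching step, the value $A(x_0)$ is locally constant in $x_0$ by Lemma~\ref{lem:const_ouv_contr} applied to two overlapping balls, hence globally independent of $x_0$ by connectedness of $E$; denote this common value by $A$. For any compact cube $C \subset E$, cover $C$ by open balls on each of which $F$ is isomorphic to the appropriate restriction of $A_E$, and use Lemma~\ref{lem:recouvrt-cubes} to extract a finite subcover $V_1, \ldots, V_N$ such that $W_n := V_1 \cup \cdots \cup V_n$ satisfies that $W_n \cap V_{n+1}$ is contractible. Applying Lemma~\ref{lem:const_ouv_contr} inductively yields $F|_{W_N} \simeq A_{W_N}$, with $W_N$ a neighborhood of $C$.

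Finally, exhaust $E$ by an increasing union $E = \bigcup_n B_n$ of open balls with $\ol{B_n} \subset B_{n+1}$; by the previous step we obtain isomorphisms $\phi_n : A_{B_n} \isoto F|_{B_n}$ for every $n$. On the contractible ball $B_n$, Corollary~\ref{cor:morph-QRF-QRG} combined with $\rhom_{\cor_{B_n}}(A_{B_n}, A_{B_n}) \simeq \Hom_\cor(A,A)_{B_n}$ (which is valid because $\cor$ is a field, so $\Ext^i_\cor(A,A) = 0$ for $i > 0$) shows that $\Hom_{\Orb(\cor_{B_n})}(A_{B_n}, A_{B_n}) \simeq \End_\cor(A)$, and the restriction map from $B_{n+1}$ to $B_n$ is the identity on this group. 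Hence we may inductively modify the $\phi_n$ so that $\phi_{n+1}|_{B_n} = \phi_n$. Applying the distinguished triangle \eqref{eq:lim_crois_ouverts} to both $F$ and $A_E$ with respect to the covering $\{B_n\}$, the compatible system $\bigoplus_n \phi_n$ gives an isomorphism between the first two terms of the two triangles and therefore induces an isomorphism $A_E \isoto F$ in $\Orb(\cor_E)$. The main obstacle will be this last step: verifying that automorphisms of constant sheaves on contractible opens in the orbit category reduce to $\Aut_\cor(A)$ (with no higher cohomological correction) is what makes the inductive adjustment of the $\phi_n$ possible and allows the Milnor-type triangle \eqref{eq:lim_crois_ouverts} to globalize the local trivializations.
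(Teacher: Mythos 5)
Your proof follows essentially the same route as the paper: local constancy via Proposition~\ref{prop:repr-bonmicsup}, \ref{prop:iminvproj} and Corollary~\ref{cor:iota_equivcat}, patching over compacts via Lemmas~\ref{lem:recouvrt-cubes} and~\ref{lem:const_ouv_contr}, then globalization through the Milnor-type triangle~\eqref{eq:lim_crois_ouverts}. One welcome refinement: where the paper merely asserts that the isomorphisms $\phi_n$ can be chosen compatibly, you justify this by showing $\End_{\Orb(\cor_{B_n})}(A_{B_n}) \simeq \End_\cor(A)$ with restriction acting as the identity, which is the correct reason and worth making explicit.
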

\begin{proof}
  (i) By Proposition~\ref{prop:repr-bonmicsup}, for any $x\in E$ there exists a
  representative of $F$, say $F^x$, in $\Derb(\Cor_E)$ such that
  $\SSi(F^x) \cap T^*_xE \subset \{x\}$.  Since microsupports are closed there
  exists an open neighborhood of $x$, say $U_x$, such that
  $\SSi(F^x) \cap \dT^*U_x = \emptyset$, that is, $F^x|_{U_x}$ is constant. In
  other words, there exists $B^x\in \Derb(\Cor)$ such that
  $F^x|_{U_x} \simeq B^x_{U_x}$.  By Corollary~\ref{cor:iota_equivcat} there
  exists $A_x\in \Mod(\cor)$ such that $B^x \simeq A^x$ in $\Orb(\cor)$ and we
  have $F|_{U_x} \simeq A^x_{U_x}$ in $\Orb(\cor_{U_x})$.

  \sui(ii) We set $I_n = \mo]-n,n[^d$ for $n\in \N\setminus\{0\}$.  The family
  $\{U_x\}_{x\in \ol{I_n}}$ covers $\ol{I_n}$. Hence, by Lemmas~\ref{lem:recouvrt-cubes}
  and~\ref{lem:const_ouv_contr} there exists $A\in \Mod(\cor)$ such that $F_{I_n} \simeq
  A_{I_n}$ for all $n \in \N\setminus\{0\}$.    We can assume that these isomorphisms are
  compatible with the morphisms $i_n\cl (\cdot)_{I_n} \to (\cdot)_{I_{n+1}}$.  We obtain
  the commutative square (D) below between triangles deduced as
  in~\eqref{eq:lim_crois_ouverts}
\begin{equation*}
\begin{tikzcd}
  \oplus_n F_{I_n} \ar[r, "u"] \ar[d, "\wr"] \ar[dr,  phantom, "(D)" description]
  & \oplus_n F_{I_n} \ar[r]\ar[d, "\wr"]
 & F \ar[r, "+1"]\ar[d, dashed] & {}\\
 \oplus_n A_{I_n} \ar[r, "u"] & \oplus_n A_{I_n} \ar[r] & A_E \ar[r, "+1"] &,
\end{tikzcd}
\end{equation*}
where the $n^{th}$-component of $u$ is $\id - i_n$. We extend this
square to an isomorphism of triangles and we see that $F \simeq A_E$.
\end{proof}

We deduce a version of the Corollary~\ref{cor:Morse} for the orbit category.  We
state a particular case on the real line (the only case we will use), but the
statement of Corollary~\ref{cor:Morse} follows by applying this case to
$\roim{\phi}F$ and using the triangle
$\cor_{]-\infty,a[} \to \cor_{]-\infty,b[} \to \cor_{[a,b[} \to[+1]$.
\begin{corollary}\label{cor:Morse-orb}
  Let $F\in \Orb(\cor_\R)$ and let $a<b$ be given.  We assume that
  $\SSo(F) \cap \pi_\R^{-1}([a,b[)$ is contained in
  $T^*_{\tau\leq 0}\R = \{(t;\tau) \in T^*_\R\R$; $\tau\leq0\}$. Then
  $\RHom^\varepsilon(\cor_{[a,b[},F) \simeq 0$.
\end{corollary}
\begin{proof}
  We set
  $G = \roim{s}\rhomeps(\opb{q_2} \cor_{]0,\infty[}, \epb{q_1}
  \rhomeps(\cor_{[a,b[},F))$, where $q_1,q_2,s\colon \R^2 \to \R$ are the two
  projections and the sum.  Using the bounds in Propositions~\ref{prop:im-inv-SSorb},
  \ref{prop:SSo-tens-hom} and~\ref{prop:im-dir-SSorb}) we obtain
  \begin{align*}
    \SSo(\rhomeps(\cor_{[a,b[},F))
    &\subset \{(t;\tau);\;  a\leq t< b,\, \tau\leq0 \} \cup T^*_b\R, \\
  \SSo(G) &\subset \{(t;0);\; t\not=b\} \cup \{(b;\tau);\; \tau\geq0 \}.
  \end{align*}
  We deduce that $G|_{]-\infty,b[}$ is constant by
  Proposition~\ref{prop:SSorb-sectnulle}.  Let $i_t\colon \R \to \R^2$,
  $t' \mapsto (t',t-t')$ be the inclusion of $s^{-1}(t)$. Then $q_1 \circ i_t = \id$
  and $q_2 \circ i_t$ is the reflexion along $\frac t2$. Hence
  Proposition~\ref{prop:formulaire}-(h-j) gives
  \begin{align*}
    \rsect_{\{t\}}G
    &\simeq \RHom^\varepsilon(\cor_{]-\infty,t[},  \rhomeps(\cor_{[a,b[},F)) \\
    &\simeq \RHom^\varepsilon( \cor_{]-\infty,t[}\epstens \cor_{[a,b[},F) \\
    &\simeq \RHom^\varepsilon( \cor_{[a,c[}, F) ,
  \end{align*}
  where $c = \min\{ b, t\}$.  In particular $\rsect_{\{t\}}G \simeq 0$ for $t<a$ and
  hence $G|_{]-\infty,b[}$ vanishes.  We also obtain
  $\rsect_{\{b\}}G \simeq \RHom^\varepsilon(\cor_{[a,b[},F)$.

  We can represent $G$ by $\tilde G \in \Der(\Cor_\R)$ with
  $(b;-1) \not \in \SSi(\tilde G)$.  The definition of the microsupport implies
  $(\rsect_{]-\infty,b[}\tilde G)_b[-1] \isoto (\rsect_{\{b\}}\tilde G)_b$.  The same
  isomorphism holds in $\Orb(\cor_\R)$ and we deduce $\rsect_{\{b\}}G \simeq 0$
  because $G|_{]-\infty,b[}$ vanishes.
\end{proof}

\part{The Kashiwara-Schapira stack}
\label{chap:KSstack}

Let $M$ be a manifold and $\Lambda$ a locally closed conic Lagrangian submanifold of
$\dT^*M$.  In this part we define the Kashiwara-Schapira stack $\kss(\cor_\Lambda)$
and its orbit category version.  It is obtained by quotienting the category of
sheaves on $M$ by subcategories defined by microsupport conditions.  These categories
are introduced by Kashiwara-Schapira in~\cite{KS90}.  For a conic subset $S$ of
$T^*M$ we denote by $\Derb(\cor_M;S)$ the quotient of $\Derb(\cor_M)$ by the
subcategory of sheaves $F$ with $\SSi(F) \cap S = \emptyset$.   Then $\kss(\cor_\Lambda)$ is the stack on $\Lambda$
associated with the prestack whose value over $\Lambda_0 \subset \Lambda$ is the
subcategory of $\Derb(\cor_M;\Lambda_0)$ generated by the $F$ such that there exists
a neighborhood $\Omega$ of $\Lambda_0$ in $T^*M$ with
$\SSi(F) \cap \Omega \subset \Lambda_0$.  We remark that we lose the triangulated
structure in the stackification process.

An important result of~\cite{KS90} says that the $\hom$ sheaf in
$\kss(\cor_\Lambda)$ is given by $H^0\muhom$ (see
Corollary~\ref{cor:defbiskss}).  It is then possible to describe an object of
$\kss(\cor_\Lambda)$ by local data (see Remark~\ref{rem:description_KSstack}).
We check in Lemma~\ref{lem:simple_local_base} that, locally on $M$, there exist
simple sheaves with microsupport $\Lambda$: if $B$ is a small enough ball in $M$
and $\Lambda$ is in generic position, there exist simple sheaves on $B$ with
microsupport $\Lambda \cap T^*B$.  We define two classes
$\mu^{sh}_1(\Lambda) \in H^1(\Lambda; \Z)$ and
$\mu^{sh}_2(\Lambda) \in H^2(\Lambda; \cor^\times)$ which are obstructions for
the existence of a global object of $\kss(\cor_\Lambda)$ (that is, an object of
$\kss(\cor_\Lambda)(\Lambda)$).  In the last two sections we make the link
between these classes and the usual Maslov class $\mu_1(\Lambda)$ and another
obstruction class $\mu^{gf}_2(\Lambda)$ (an obstruction class to trivialize the
Gauss map of $\Lambda$).  In fact it would not be difficult to deduce
$\mu^{sh}_1(\Lambda) = \mu_1(\Lambda)$ directly from
Proposition~\ref{prop:inv_microgerm} and the description of the Maslov class by
\v Cech cohomology (for example in~\cite{GSt77}).  However the class
$\mu^{sh}_2(\Lambda)$ requires more work; we only prove the useful implication
that the vanishing of $\mu^{sh}_2(\Lambda)$ implies the vanishing of
$\mu^{gf}_2(\Lambda)$.

When we work with sheaves of $\cor$-modules we only have the obstruction classes
$\mu^{sh}_1(\Lambda)$ and $\mu^{sh}_2(\Lambda)$ for the existence of a global section
of $\kss(\cor_\Lambda)$.  If we work with sheaves of spectra, there are infinitely
many of them.  Of course this requires to work with dg-categories or infinity
categories (see~\cite{To07, To08} or~\cite{Lu09}). In this framework the stack
$\kss(\cor_\Lambda)$ has the structure of a stable category, like a category of
sheaves (whereas the triangulated structures are not suited for stackification).
This is explained in~\cite{Jin19} where the higher classes $\mu^{sh}_i(\Lambda)$ are
described (they do not coincide with the obstruction classes $\mu^{gf}_i(\Lambda)$ --
see also~\cite{JT17}).    In~\cite{AK16} it is
proved that these classes vanish when $\Lambda$ is a Lagrangian embedding in $T^*M$
in the homotopy class of the base.

\begin{notation}\label{not:micro_categories}  
  For a conic subset $S$ of $T^*M$ we recall the categories $\Der_{S}(\cor_M)$,
  $\Der_{[S]}(\cor_M)$ and $\Der_{(S)}(\cor_M)$ of
  Notation~\ref{not:cat_micsupp_fixe}.  We also denote by $\Der(\cor_M;S)$ the
  quotient of $\Der(\cor_M)$ by $\Der_{T^*M\setminus S}(\cor_M)$ (see the reminder on
  localization in section~\ref{sec:def_orb_cat}).

  We use similar notations for the (locally) bounded derived categories:
  $\Der^*_{S}(\cor_M)$, $\Der^*_{[S]}(\cor_M)$, $\Der^*_{(S)}(\cor_M)$ and
  $\Der^*(\cor_M;S)$, where $*=\mathrm{b}$ or $*=\mathrm{lb}$.  We also define
  $\OrbL{S}(\cor_M)$, $\OrbL{[S]}(\cor_M)$, $\OrbL{(S)}(\cor_M)$ and $\Orb(\cor_M;S)$
  in the same way, replacing $\Der$ by $\Orb$ and $\SSi$ by $\SSo$.
\end{notation}

As   recalled in section~\ref{sec:def_orb_cat} the
objects of $\Derb(\cor_M;S)$ are those of $\Derb(\cor_M)$.  A morphism $u\cl F\to G$
in $\Derb(\cor_M;S)$ is represented by a triple $(F',s,u')$ where
$F'\in \Derb(\cor_M)$ and $s$, $u'$ are morphisms
\begin{equation}
\label{eq:repr_morphism_qoutient}
F\from[s] F' \to[u'] G
\end{equation}
such that the $L$ defined (up to isomorphism) by the distinguished triangle
$F'\to[s] F \to L \to[+1]$ satisfies $S \cap \SSi(L) = \emptyset$.  Two such triples
$(F'_i,s_i,u'_i)$, $i=1,2$, represent the same morphism if there exists a third
triple $(F',s,u')$ and two morphisms $v_i \cl F' \to F'_i$ such that
$s = s_i \circ v_i$, $i=1,2$, and $u'_1 \circ v_1 = u'_2 \circ v_2$.

The notion of stack used here is that of ``sheaf of categories''.  We refer for
example to~\cite[\S 19]{KS06}.  A prestack $\catc$ on a topological space $X$
consists of the data of a category $\catc(U)$, for each open subset $U$ of $X$,
restriction functors $r_{V,U} \cl \catc(U) \to \catc(V)$, for $V\subset U$, and
isomorphisms of functors $r_{W,V} \circ r_{V,U} \simeq r_{W,U}$, for
$W\subset V\subset U$, satisfying compatibility conditions.

A stack is a prestack satisfying some gluing conditions.  In particular, if
$\catc$ is a stack and $A,B \in \catc(U)$, then the presheaf
$V \mapsto \Hom_{\catc(V)}(A|_V,B|_V)$ is a sheaf on $U$.  Moreover, if
$U = \bigcup_{i\in I} U_i$ and $A_i \in \catc(U_i)$ are given objects with
compatible isomorphisms between their restrictions on the intersections
$U_i \cap U_j$, then these objects glue into an object of $\catc(U)$.

For any given prestack we can construct its associated stack, similar to the
associated sheaf of a presheaf.

\section{Definition of the Kashiwara-Schapira stack}
\label{sec:KSstack}

We use the categories associated with a subset of $T^*M$ introduced in
Notation~\ref{not:micro_categories}.
\begin{definition}
\label{def:KSstack}
Let $\Lambda \subset T^*M$ be a locally closed conic subset.
We define a prestack $\kss^0_\Lambda$ on $\Lambda$ as follows.
Over an open subset $\Lambda_0$ of $\Lambda$ the objects of
$\kss^0_\Lambda(\Lambda_0)$ are those of $\Derb_{(\Lambda_0)}(\cor_M)$.
For $F,G \in \kss^0_\Lambda(\Lambda_0)$ we set
$$
\Hom_{\kss^0_\Lambda(\Lambda_0)}(F,G) \eqdot
\Hom_{\Derb(\cor_M;\Lambda_0)}(F,G) .
$$
We define the Kashiwara-Schapira stack of $\Lambda$ as the stack associated with
$\kss^0_\Lambda$. We denote it by $\kss(\cor_\Lambda)$.  For
$\Lambda_0 \subset \Lambda$ we usually write abusively $\kss(\cor_{\Lambda_0})$
instead of $\kss(\cor_\Lambda)(\Lambda_0)$.

We denote by
$\kssfunc_\Lambda \cl \Derb_{(\Lambda)}(\cor_M) \to \kss(\cor_\Lambda)$ the
obvious functor. However, for $F \in \Derb_{(\Lambda)}(\cor_M)$, we often write
$F$ instead of $\kssfunc_\Lambda(F)$ if there is no risk of ambiguity.
\end{definition}

Several results in the next sections give links between $\kss(\cor_\Lambda)$
and stacks of the following type.

\begin{definition}
\label{def:Dloc}
Let $X$ be a topological space. We let $\Dloc^0(\cor_X)$ be the subprestack of
$U \mapsto \Derb(\cor_U)$, $U$ open in $X$, formed by the $F\in \Derb(\cor_U)$
with locally constant cohomologically sheaves.  We let $\Dloc(\cor_X)$ be the
stack associated with $\Dloc^0(\cor_X)$.
We denote by $\loc(\cor_X)$ the substack of $\Mod(\cor_X)$ formed by the
locally constant sheaves.
\end{definition}
\begin{remark}
  \label{rem:Dloc}
We remark that $\Dloc(\cor_X)$ is only a stack of additive categories (the
triangulated structure is of course lost in the ``stackification'').  However
the cohomological functors $H^i \cl \Derb(\cor_U) \to \Mod(\cor_U)$ induce
functors of stacks $H^i \cl \Dloc(\cor_X) \to \loc(\cor_X)$ and the natural
embedding $\Mod(\cor_U) \hookrightarrow \Derb(\cor_U)$ induces $i \cl
\loc(\cor_X) \to \Dloc(\cor_X)$.  We have $H^0 \circ i \simeq
\id_{\loc(\cor_X)}$. Hence $i$ is faithful and $\loc(\cor_X)$ is a subcategory
of $\Dloc(\cor_X)$.
\end{remark}

\subsection*{Link with microlocalization}

We recall now some results of~\cite{KS90} which explain the link between the
localized categories $\Derb(\cor_M;S)$, for $S \subset T^*M$, and the
microlocalization, making these categories easier to describe.

First we recall some properties of the functor $\muhom$.  For $U \subset M$ and
$F,G \in \Der(\cor_U)$, the isomorphism
$\rhom(F,G) \simeq \roim{\pi_M}\muhom(F,G)$ of~\eqref{eq:proj_muhom_oim}
implies $\Hom(F,G) \isoto H^0(T^*U; \muhom(F,G))$; for $u\colon F\to G$ we
denote by
\begin{equation}
\label{eq:def_umu-bis}
u^\mu \in H^0(T^*U; \muhom(F,G))  
\end{equation}
the image of $u$ by this isomorphism (this notation has been introduced
in~\eqref{eq:def_umu}).  For $F,G,H \in \Derb(\cor_M)$ we have a composition
morphism (see~\cite[Cor.~4.4.10]{KS90})
\begin{equation}\label{eq:comp_muhom}
  \muhom(F,G) \ltens \muhom(G,H) \to \muhom(F,H) .
\end{equation}
(With the notations of Definition~\ref{def:muhom}, it is induced by a natural
morphism
$\mu_{\Delta_M}(A) \ltens \mu_{\Delta_M}(B) \to \mu_{\Delta_M}(A \ltens B)$ and
the composition morphism for $\rhom(\opb{q_2}F,\epb{q_1}G)$.)

\begin{notation}
\label{not:mucomposition}
For $F,G,H \in \Derb(\cor_M)$, $S \subset T^*M$ and sections of
$\muhom(-,-)$, say $a \in H^i(S; \muhom(F,G)|_S)$,
$b \in H^j(S; \muhom(G,H)|_S)$, we denote by
$$
b\mucirc a \in H^{i+j}(S; \muhom(F,H)|_S)
$$
the image of $a\tens b$ by the morphism induced by~\eqref{eq:comp_muhom} on
sections.
\end{notation}
By construction $\mucirc$ is compatible with the usual composition morphism for
$\rhom$ through the isomorphism~\eqref{eq:proj_muhom_oim}: for $u\colon F\to G$,
$v\colon G \to H$ we have $(v\circ u)^\mu = v^\mu \mucirc u^\mu$.

In the same way $\mucirc$ is also compatible with the functoriality of $\muhom$
as follows. A morphism $v\colon G \to H$ induces 
\begin{equation}
\label{eq:fonctorialitemuhom}
\muhom(F,v) \colon \muhom(F,G) \to \muhom(F,H)  
\end{equation}
and we have, for a section $a$ of $\muhom(F,G)$,
\begin{equation}
\label{eq:mucirc_fonct}
\muhom(F,v)(a) =  v^\mu \mucirc a .
\end{equation}

\smallskip

We come back to $\Derb(\cor_M;S)$ for $S\subset T^*M$.  Let $u\cl F\to G$ be a
morphism in $\Derb(\cor_M;S)$, represented by a triple $(F',s,u')$ as
in~\eqref{eq:repr_morphism_qoutient}.  Let $L$ be defined (up to isomorphism) by the
distinguished triangle $F'\to[s] F \to L \to[+1]$. Then $L$ satisfies
$S \cap \SSi(L) = \emptyset$.  By~\eqref{eq:suppmuhom} we have
$\muhom(L,G)|_S \simeq 0$ and $\muhom(s,G)$ as in~\eqref{eq:fonctorialitemuhom} gives
an isomorphism
$$
\muhom(s,G) \colon \muhom(F,G)|_S \isoto \muhom(F',G) |_S .
$$
Hence the triple $(F',s,u')$ yields a section $(\muhom(s,G))^{-1}((u')^\mu)$ of
$H^0(S;\muhom(F,G)|_S)$.  We can check that this section only depends on $u$
and not its representative $(F',s,u')$.  Thus we obtain a well-defined morphism
\begin{equation}\label{eq:HomDkMOmega_vers_muhom}
\Hom_{\Derb(\cor_M;S)}(F,G) \to H^0(S;\muhom(F,G)|_S) .
\end{equation}
\begin{theorem}[Thm.~6.1.2 of~\cite{KS90}]
\label{thm:HomDkMp=muhomp}
If $S = \rspos \cdot p$ for some $p\in T^*M$,
then~\eqref{eq:HomDkMOmega_vers_muhom} is an isomorphism.
\end{theorem}
When $p\in M \simeq T^*_MM$ we have $S = \{p\}$ and the statement has the
following meaning: we remark that $\Derb(\cor_M;\{p\})$ is the localization of
$\Derb(\cor_M)$ by the subcategory of sheaves which are zero in some neighborhood of
$p$.  We can then see that $\Hom_{\Derb(\cor_M;\{p\})}(F,G)$ identifies with
$H^0(\rhom(F,G))_p$.  We also have $\muhom(F,G)|_{T^*_MM} \simeq \rhom(F,G)$
(use~\eqref{eq:proj_muhom_oim} and the fact that $\muhom$ is conic), hence
$(\muhom(F,G))_p \simeq (\rhom(F,G))_p$.

Let $F,G \in \Derb(\cor_M)$ be given.  It follows from
Theorem~\eqref{thm:HomDkMp=muhomp} that the sheaf associated with the presheaf
$\Omega \mapsto \Hom_{\Derb(\cor_M;\rspos\cdot \Omega)}(F,G)$, where $\Omega$ runs
over the open subsets of $T^*M$, is $H^0\muhom(F,G)$ (here $\rspos\cdot \Omega$ is
the image of $\rspos \times \Omega \to T^*M$, $(a,p) \mapsto a\cdot p$). We obtain an
alternative definition of $\kss(\cor_\Lambda)$:
\begin{corollary}\label{cor:defbiskss}
Let $\Lambda \subset T^*M$ be as in Definition~\ref{def:KSstack}.
We define a prestack $\kss^1_\Lambda$ on $\Lambda$ as follows.
Over an open subset $\Lambda_0$ of $\Lambda$ the objects of
$\kss^1_\Lambda(\Lambda_0)$ are those of $\Derb_{(\Lambda_0)}(\cor_M)$.
For $F,G \in \kss^1_\Lambda(\Lambda_0)$ we set
$\Hom_{\kss^1_\Lambda(\Lambda_0)}(F,G)
\eqdot H^0(\Lambda_0;\muhom(F,G)|_{\Lambda_0})$.
The composition is induced by~\eqref{eq:comp_muhom}.
Then, the natural functor of prestacks $\kss^0_\Lambda \to \kss^1_\Lambda$
induces an isomorphism on the associated stacks.
\end{corollary}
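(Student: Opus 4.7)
The plan is to show that the natural functor of prestacks $\kss^0_\Lambda \to \kss^1_\Lambda$, which is the identity on objects and the map~\eqref{eq:HomDkMOmega_vers_muhom} on morphisms, is a local isomorphism in the sense that it induces isomorphisms on the sheaves of morphisms. Since both prestacks have the same objects (namely those of $\Derb_{(\Lambda_0)}(\cor_M)$ over each open $\Lambda_0 \subset \Lambda$), and stackification does not change the class of locally defined objects, it will be enough to compare the Hom sheaves. First I would verify that~\eqref{eq:HomDkMOmega_vers_muhom} is functorial in $\Omega$ (restriction compatibility) and respects composition. Functoriality in $\Omega$ is immediate from the construction, since both sides are defined via restriction of the section of $\muhom(F,G)$ produced from a triple $(F',s,u')$. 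Compatibility with composition follows from the fact that the mark $u \mapsto u^\mu$ satisfies $(v\circ u)^\mu = v^\mu \mucirc u^\mu$, as recorded after~\eqref{eq:comp_muhom}.

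Next I would identify the Hom sheaves. For $F, G \in \Derb_{(\Lambda_0)}(\cor_M)$, the Hom sheaf of the associated stack of $\kss^1_\Lambda$ over $\Lambda_0$ is the sheaf associated to the presheaf $V \mapsto H^0(V; H^0\muhom(F,G))$, which is exactly $H^0\muhom(F,G)|_{\Lambda_0}$. On the other hand, the Hom sheaf of the associated stack of $\kss^0_\Lambda$ over $\Lambda_0$ is the sheaf associated to
\[
V \mapsto \Hom_{\Derb(\cor_M;V)}(F,G).
\]
The key step is the computation of the stalk of this presheaf at a point $p \in \Lambda_0$: by definition of the localized categories, this stalk is $\Hom_{\Derb(\cor_M;p)}(F,G)$, and Theorem~\ref{thm:HomDkMp=muhomp} identifies it with $H^0\muhom(F,G)_p$. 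Moreover, the map~\eqref{eq:HomDkMOmega_vers_muhom} realizes exactly this identification at the level of stalks.

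Putting the two computations together, the functor of prestacks $\kss^0_\Lambda \to \kss^1_\Lambda$ induces an isomorphism of Hom sheaves after sheafification. Since the objects of the two prestacks are identical on every open set, and the gluing of objects in the associated stacks is formally the same process applied to the same objects with compatibly identified morphism sheaves, the induced functor of stacks is an equivalence.

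The main obstacle in this plan is not any of the individual identifications but the bookkeeping around stackification: one has to check that compatibility of the map~\eqref{eq:HomDkMOmega_vers_muhom} with restriction and composition is strict enough (at the level of the presheaves of categories, not just on stalks) to produce a genuine morphism of prestacks, so that the universal property of stackification applies. Once this is verified, the stalkwise isomorphism from Theorem~\ref{thm:HomDkMp=muhomp} does all the work.
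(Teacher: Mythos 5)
Your proposal is correct and follows the same route the paper (implicitly) takes: the corollary is deduced from Theorem~\ref{thm:HomDkMp=muhomp}, which identifies the stalks of the two Hom presheaves, so they sheafify to the same thing ($H^0\muhom(F,G)$), and since the two prestacks have identical objects, the associated stacks agree. Your elaboration of functoriality in $\Omega$ and compatibility with $\mucirc$ is the right bookkeeping; the only small slip is that the Hom presheaf of $\kss^1_\Lambda$ should be $V \mapsto H^0(V;\muhom(F,G))$ (hypercohomology of the complex), not $V \mapsto H^0(V;H^0\muhom(F,G))$, though by Proposition~\ref{prop:formulaire}~(c) this sheafifies to $H^0\muhom(F,G)$ as you conclude.
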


\begin{remark}\label{rem:description_KSstack}
By Corollary~\ref{cor:defbiskss} an object of $\kss(\cor_\Lambda)$ is determined
by the data of an open covering $\{\Lambda_i\}_{i\in I}$ of $\Lambda$, objects
$F_i \in \Derb_{(\Lambda_i)}(\cor_M)$, for any $i\in I$, and sections $u_{ji}\in
H^0(\Lambda_{ij};\muhom(F_i,F_j)|_{\Lambda_{ij}})$, for any $i,j\in I$, such that
\begin{itemize}
\item [(i)] $u_{ii}$ is induced by $\id_{F_i}$, for any $i\in I$,
\item [(ii)] $u_{kj} \mucirc u_{ji} = u_{ki}$, for any $i,j,k \in I$.
\end{itemize}
For a complex of sheaves $A$, the sheaf associated with the presheaf
$U \mapsto H^0(U;A)$ is $H^0A$.  Hence, for $F,G \in \Derb_{(\Lambda)}(\cor_M)$,
we find that the homomorphism sheaf
$\hom_{\kss(\cor_\Lambda)}(\kssfunc_\Lambda(F), \kssfunc_\Lambda(G))$ is
$H^0(\muhom(F,G))|_\Lambda$.  In particular, for an open subset
$\Lambda_0 \subset \Lambda$,
\begin{equation}\label{eq:hom_sheaf_kss}
\Hom(\kssfunc_\Lambda(F)|_{\Lambda_0}, \kssfunc_\Lambda(G)|_{\Lambda_0})
\simeq H^0(\Lambda_0; H^0(\muhom(F,G))) .
\end{equation}
\end{remark}

\begin{remark}\label{rem:description_KSstack2}
  We will only consider $\kss(\cor_\Lambda)$ when $\Lambda$ is conic Lagrangian
  submanifold of $\dT^*M$.  In this case, for $F,G\in \Derb_{(\Lambda)}(\cor_M)$
  we know by Corollary~\ref{cor:muhom_loccst} that $\muhom(F,G)$ has locally
  constant cohomology sheaves on $\Lambda$.  Moreover, for a given
  $p = (x;\xi) \in \Lambda$ we have
\begin{equation*}
\muhom(F,G)_p \simeq
\RHom( (\rsect_{\{ \varphi_0 \geq 0\}}(F))_x ,
(\rsect_{\{ \varphi_0 \geq 0\}}(G))_x ),
\end{equation*}
where $\varphi_0$ is such that $\Lambda$ and $\Lambda_{\varphi_0}$ intersect
transversely at $p$ (see~\eqref{eq:stalk_muhom}).  Hence, if $F$ and $G$ are
pure along $\Lambda$, $\muhom(F,G)$ is concentrated in one degree.  If this
degree is $0$, the right hand side of~\eqref{eq:hom_sheaf_kss} coincides with
$H^0(\Lambda_0; \muhom(F,G))$.  If $F$ and $G$ are simple, then $\muhom(F,G)$ is
moreover of rank one.
\end{remark}

\begin{remark}
\label{rem:iso_sur_Omega}
For a conic subset $\Omega$ of $T^*M$, we recall that a morphism $a\cl F \to G$
in $\Der(\cor_M)$ is an {\em isomorphism on $\Omega$} if
$\SSi(C(a)) \cap \Omega = \emptyset$, where $C(a)$ is given by the distinguished
triangle $F\to[a] G \to C(a) \to[+1]$.  In particular $a$ induces an isomorphism
in $\Derb(\cor_M; \Omega)$.  We assume that $\Lambda$ is a conic Lagrangian
submanifold and $F,G\in \Derb_{(\Lambda)}(\cor_M)$. Let $\Lambda_0$ be an open
subset of $\Lambda$ and let $a\cl F \to G$ be an isomorphism on $\Lambda_0$.
Then the morphism
$\kssfunc_\Lambda(a)|_{\Lambda_0} \colon \kssfunc_\Lambda(F)|_{\Lambda_0} \to
\kssfunc_\Lambda(G)|_{\Lambda_0}$ is an isomorphism. It follows from
Remark~\ref{rem:description_KSstack2} that there exists
$b \in H^0(\Lambda_0; H^0(\muhom(G,F)))$ such that
$$
a^\mu \mucirc b = \id_G^\mu , \qquad b \mucirc a^\mu =  \id_F^\mu.
$$
\end{remark}

  The functor
$\kss^{0,op}_\Lambda \times \kss^0_\Lambda \to \Derb(\cor_\Lambda)$,
$(F,G) \mapsto \muhom(F,G)$ induces a functor of stacks
\begin{equation}
  \label{eq:def_muhombar}
  \ol{\muhom} \cl \kss(\cor_\Lambda)^{op} \times \kss(\cor_\Lambda)
  \to \Dloc(\cor_\Lambda) .
\end{equation}

\section{Simple sheaves}\label{sec:simpsheaves}

In this section we assume that $\Lambda$ is a locally closed conic Lagrangian
submanifold of $\dT^*M$.  We have seen the notion of pure and simple sheaves
along $\Lambda$ in Section~\ref{sec:simplesheaves}. We give here some additional
properties.

It is easy to describe the simple sheaves along a Lagrangian submanifold at a
generic point. They are given in the following example.

\begin{example}
\label{ex:simple_sheaf}
We consider the hypersurface $S= \R^{n-1}\times \{0\}$ in $M=\R^n$. We let
$\Lambda = \{(\ul x, 0; 0, \xi_n); \xi_n>0\}$ be the ``positive'' half part of
$T^*_SM$. We set $Z= \R^{n-1}\times \rpos$.  The sheaf $\cor_Z$ is simple along
$\Lambda$. More generally, by Example~\ref{ex:SS=conormal_hypersurface}, the
simple sheaves $F$ along $\Lambda$ fit in a distinguished triangle
$$
E'_M \to \cor_Z[i] \to F \to[+1] ,
$$
for some integer $i$ and some $E' \in \Der(\cor)$.  Let
$F\in \Derb_{[\Lambda]}(\cor_M)$. Then, there exists
$L\in \Derb(\cor)$ such that the image of $F$ in the quotient category
$\Derb(\cor_M; \dT^*M)$ is isomorphic to $L_Z = L_M \otimes \cor_Z$.  The pure
sheaves correspond to the case where $L$ is concentrated in one degree and free.
The simple sheaves correspond to the case where $L\simeq \cor[i]$ for some
degree $i\in \Z$.
\end{example}

For any $p\in \Lambda$ we can find a homogeneous Hamiltonian isotopy that sends
a neighborhood of $p$ in $\Lambda$ to the conormal bundle of a smooth
hypersurface.  Then Theorem~\ref{thm:GKS} reduces the general case to
Example~\ref{ex:simple_sheaf} (since this is a local statement in $T^*M$ we can
also use Theorem~7.2.1 of~\cite{KS90}). We deduce:

\begin{lemma}\label{lem:simple_local}
Let $p=(x;\xi)$ be a given point of $\Lambda$. Then there exists a neighborhood
$\Lambda_0$ of $p$ in $\Lambda$ such that
\begin{itemize}
\item [(i)] there exists $F\in \Derb_{(\Lambda_0)}(\cor_M)$ which is
simple along $\Lambda_0$,
\item [(ii)] for any $G\in \Derb_{(\Lambda_0)}(\cor_M)$ there exist a
  neighborhood $\Omega$ of $\Lambda_0$ in $T^*M$ and an isomorphism
  $F\ltens L_M \isoto G$ in $\Derb(\cor_M;\Omega)$, where $L\in \Derb(\cor)$ is
  given by $L = \muhom(F,G)_p$.
\end{itemize}
\end{lemma}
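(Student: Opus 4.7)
The plan is to reduce the statement to the model situation of Example~\ref{ex:simple_sheaf} by means of a contact transformation realized as a homogeneous Hamiltonian isotopy, and then to transport the simple sheaf $\cor_Z$ and the classification of objects in the quotient category back to $\Lambda$.

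First, since $\Lambda$ is a smooth conic Lagrangian submanifold of $\dT^*M$, the classical fact about contact geometry of the sphere bundle says that locally around $p$ we can find a homogeneous Hamiltonian isotopy $\Phi\colon \dT^*M\times I\to\dT^*M$ whose time-one map sends a conic neighborhood $\Lambda_0$ of $p$ in $\Lambda$ onto a conic neighborhood in the half conormal $\Lambda'=\{(\ul x,0;0,\xi_n);\ \xi_n>0\}$ of a smooth hypersurface $S=\R^{n-1}\times\{0\}$ in the model manifold $M'=\R^n$ (after choosing local coordinates on $M$ around $x$). This is the standard observation that all smooth conic Lagrangians in $\dT^*M$ are locally contactomorphic to a conormal bundle. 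By Theorem~\ref{thm:GKS} and Corollary~\ref{cor:isot_equivcat} the sheaf $K_{\Phi,1}$ induces an equivalence $R_\Phi=K_{\Phi,1}\circ-$ between neighborhoods of $\Lambda_0$ and $\Phi_1(\Lambda_0)$ in the respective microlocalized categories, matching microsupports and preserving simplicity (since the composition of simple sheaves is simple, as recalled in Remark~\ref{rem:thm_isot}).

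Thus it suffices to prove the statement in the model situation $\Lambda_0\subset\Lambda'$. In that situation, (i) is immediate from Example~\ref{ex:simple_sheaf}: take $F=\cor_Z$ with $Z=\R^{n-1}\times\rpos$; then $\dot\SSi(F)=\Lambda'$ and $F$ is simple. For (ii), again by Example~\ref{ex:simple_sheaf}, any $G\in\Derb_{(\Lambda_0)}(\cor_{M'})$ becomes isomorphic in the quotient category $\Derb(\cor_{M'};\dT^*M')$ to $L_Z\simeq F\ltens L_{M'}$ for some $L\in\Derb(\cor)$. To identify $L$ with $\muhom(F,G)_p$, note that $F$ is simple so $\muhom(F,F)|_{\Lambda'}\simeq \cor_{\Lambda'}$ by Corollary~\ref{cor:muhom_loccst} and Remark~\ref{rem:description_KSstack2}, and therefore the functoriality of $\muhom$ together with the stalk formula~\eqref{eq:stalk_muhom} gives $\muhom(F,F\ltens L_{M'})_p\simeq L$; pulling back by the isomorphism $F\ltens L_{M'}\simeq G$ in $\Derb(\cor_{M'};p)$ yields $L\simeq\muhom(F,G)_p$. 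Shrinking $\Lambda_0$ further if necessary, the isomorphism $F\ltens L_M\isoto G$ in $\Derb(\cor_M;p)$ upgrades to an isomorphism in $\Derb(\cor_M;\Omega)$ for some conic open neighborhood $\Omega$ of $\Lambda_0$, because the cone of a representative of this morphism has microsupport avoiding $p$, hence avoiding a whole neighborhood by closedness of the microsupport.

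The only genuinely non-routine point is the existence of the Hamiltonian isotopy reducing $\Lambda$ near $p$ to a half conormal bundle; this is a Darboux-type statement in contact geometry and is used (implicitly or explicitly) throughout~\cite{KS90}, in particular in the proof of Theorem~7.2.1 there. Once this normal form is in place, the rest is purely a matter of translating Example~\ref{ex:simple_sheaf} via the equivalence $R_\Phi$ and identifying the free $\cor$-module $L$ through the stalk of $\muhom$.
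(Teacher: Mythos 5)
Your proposal follows essentially the same route as the paper: the paper also reduces to Example~\ref{ex:simple_sheaf} by finding a homogeneous Hamiltonian isotopy sending a neighborhood of $p$ in $\Lambda$ to the half conormal of a smooth hypersurface, invoking Theorem~\ref{thm:GKS} to transport sheaves and simplicity, and then deducing both (i) and (ii) from the model case. The paper additionally remarks that one could instead cite Theorem~7.2.1 of~\cite{KS90} for the local quantization of the contact transformation, but the argument you give via Theorem~\ref{thm:GKS} and Corollary~\ref{cor:isot_equivcat} is the one the paper actually sketches, and your identification of $L$ via~\eqref{eq:stalk_muhom} correctly fills in the step the paper leaves implicit.
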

  The subset $\Lambda_0$ of the lemma is
contractible by construction if it is obtained from the $\Lambda$ of
Example~\ref{ex:simple_sheaf} by a homogeneous Hamiltonian isotopy.  Conversely if
the condition~(ii) of the lemma is satisfied, we must have $\pi_1(\Lambda_0) = 0$ and
$H^i(\Lambda_0;\cor) \simeq 0$ for all $i>0$.

\begin{definition}\label{def:KSstacksimple}
Let $\Lambda \subset \dT^*M$ be a locally closed conic Lagrangian submanifold.
We let $\kss^p(\cor_\Lambda)$ (resp. $\kss^s(\cor_\Lambda)$) be the substack of
$\kss(\cor_\Lambda)$ formed by the pure (resp. simple) sheaves along $\Lambda$.
\end{definition}

Lemma~\ref{lem:simple_local} implies the following result.

\begin{proposition}\label{prop:KSstack=Dloc}
  Let $\Lambda \subset \dT^*M$ be a locally closed conic Lagrangian submanifold.
  We assume that there exists a simple sheaf $F\in \kss^s(\cor_\Lambda)$.  Then
  the functor $\ol{\muhom}$ defined in~\eqref{eq:def_muhombar} induces an
  equivalence of stacks
\begin{equation*}
\ol{\muhom}(F,\cdot) \cl \kss(\cor_\Lambda) \isoto   \Dloc(\cor_\Lambda) ,
\qquad G \mapsto \ol{\muhom}(F,G).
\end{equation*}
\end{proposition}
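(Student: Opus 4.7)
The plan is to exploit the stack structure on both sides: since $\kss(\cor_\Lambda)$ and $\Dloc(\cor_\Lambda)$ are stacks on $\Lambda$, it suffices to prove that the morphism $\ol{\muhom}(F, \cdot)$ is locally an equivalence of categories on $\Lambda$. First one records two preliminary facts. The functor lands in $\Dloc(\cor_\Lambda)$: for any $G \in \Derb_{(\Lambda)}(\cor_M)$, Corollary~\ref{cor:muhom_loccst} gives that $\muhom(F, G)$ is supported on $\Lambda$ with locally constant cohomology there. Moreover, since $F$ is simple, the stalk computation recalled in Remark~\ref{rem:description_KSstack2} gives $H^*\muhom(F,F)_p \simeq \cor$ concentrated in degree $0$ at every $p \in \Lambda$, so the canonical section $\id_F^\mu$ of~\eqref{eq:def_idmuF} promotes to an isomorphism $\cor_\Lambda \isoto \muhom(F,F)|_\Lambda$.

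Next, fix $p \in \Lambda$ and apply Lemma~\ref{lem:simple_local} to obtain a neighborhood $\Lambda_0 \ni p$ such that every $G \in \Derb_{(\Lambda_0)}(\cor_M)$ is isomorphic, in $\Derb(\cor_M;\Omega)$ for some open $\Omega \supset \Lambda_0$, to $F \ltens L_M$ with $L = \muhom(F, G)_p \in \Derb(\cor)$. Shrinking $\Lambda_0$ to a contractible open, essential surjectivity is immediate: any $A \in \Dloc(\cor_{\Lambda_0})$ has the form $A \simeq L_{\Lambda_0}$ for some $L \in \Derb(\cor)$, and then $G := F \ltens L_M$ satisfies
$$
\muhom(F, G) \simeq L \otimes \muhom(F, F) \simeq L_\Lambda \simeq A
$$
on $\Lambda_0$, using the simplicity trivialization of $\muhom(F, F)$.

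For full faithfulness, by the stack structure it suffices to show that the morphism of sheaves on $\Lambda$
$$
H^0\muhom(G, G') \to H^0\hom(\muhom(F, G), \muhom(F, G')),
$$
induced by the composition $\mucirc$ of Notation~\ref{not:mucomposition}, is an isomorphism. This is a stalkwise assertion at each $p \in \Lambda$, and Lemma~\ref{lem:simple_local} allows one to reduce to the case $G \simeq F \ltens L_M$ and $G' \simeq F \ltens L'_M$ in the relevant microlocal category. Standard manipulations of $\muhom$ under tensor with a constant sheaf, combined with the trivialization $\cor_\Lambda \isoto \muhom(F,F)$, then identify $\muhom(G, G')$ with $\rhom(L, L')_\Lambda$ and $\muhom(F, G)$, $\muhom(F, G')$ with $L_\Lambda$, $L'_\Lambda$ respectively, and the composition map $\mucirc$ becomes the tautological $\rhom(L, L')_\Lambda \to \hom(L_\Lambda, L'_\Lambda)$, which is clearly an isomorphism.

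The main technical obstacle is formalizing the compatibility of $\muhom$ with tensoring by a constant sheaf $L_M$ and identifying the composition $\mucirc$ in these local coordinates; this is where the simplicity of $F$ plays the key role, via the canonical section $\id_F^\mu$, which trivializes all the relevant $\muhom$'s and makes the local identifications canonical. The passage from a pointwise (equivalently, local) equivalence of the associated prestacks to an equivalence of stacks is then formal.
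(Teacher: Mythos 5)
Your argument is correct and follows the paper's intended line: the paper simply remarks that Lemma~\ref{lem:simple_local} implies the proposition, and your proof is the natural expansion of that remark---reduce to a local statement on $\Lambda$ via the stack structure, invoke Lemma~\ref{lem:simple_local}(ii) to write any local object as $F \ltens L_M$ in the microlocalized category, and then both essential surjectivity and full faithfulness reduce to the compatibility of $\muhom$ with constant coefficients together with the trivialization $\cor_\Lambda \isoto \muhom(F,F)|_\Lambda$ furnished by simplicity of $F$. One small point worth being explicit about: Lemma~\ref{lem:simple_local}(ii) is stated with the locally constructed $F_0$ in the first slot, so you should note that your global $F$ is locally isomorphic to $F_0$ up to a shift (simple objects over a contractible $\Lambda_0$ are unique up to shift and a unique isomorphism), which lets you substitute $F$ for $F_0$; with that said, the proof is sound and is essentially the one the paper has in mind.
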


By Lemma~\ref{lem:simple_local} we can find a simple sheaf with microsupport
$\Lambda$ locally around a given point $p\in \Lambda$. When $\Lambda$ is in a
good position we can improve this result as follows.

\begin{lemma}\label{lem:simple_local_base}
    Let $M$ be a manifold and let
  $\Lambda \subset \dT^*M$ be a locally closed conic Lagrangian submanifold such that
  the projection $\Lambda/\R_{>0} \to M$ has finite fibers. Let
  $p=(x;\xi) \in \Lambda$.  Then there exist a neighborhood $U$ of $x$ and
  $F\in \Derb(\cor_U)$ such that $\dot\SSi(F) = \Lambda\cap T^*U$ and $F$ is simple
  along $\Lambda\cap T^*U$.
\end{lemma}
\begin{proof}
(i) By hypotheses $\Lambda\cap T^*_xM$ consists of finitely many half-lines, say
$\rspos\cdot p_i$, with $p_i = (x;\xi_i)$, $i=1,\ldots,n$. Up to a restriction
to a neighborhood of $x$ we can assume that the $p_i$ belong to distinct
connected components of $\Lambda$, say $\Lambda_i$, $i=1,\ldots,n$. If $F_i$ is
simple along $\Lambda_i$, then the direct sum $\oplus_i F_i$ is simple along
$\Lambda$.  Hence we can assume that $\Lambda\cap T^*_xM = \rspos\cdot p$ for
some $p=(x;\xi)$.

\sui(ii) By Lemma~\ref{lem:simple_local} there exists a neighborhood $\Omega$ of $p$
in $T^*M$ and $F_0\in \Derb(\cor_M)$ such that  
$\SSi(F_0) \cap \Omega = \Lambda$ and $F_0$ is simple along $\Lambda$ at $p$.  For a
neighborhood $V$ of $x$ we choose a trivialization $T^*V \simeq V \times T^*_{x}M$.
Up to shrinking $V$ we can find two disjoint closed conic contractible subsets
$A, A' \subset \dT^*_{x}M$ such that $\dot\SSi(F_0) \subset V\times (A \sqcup A')$
and $\dot\SSi(F_0) \cap (V\times A) = \Lambda \cap \dT^*V$.  By
Proposition~\ref{prop:cut-off_split_local2} there exists a distinguished triangle
$F \oplus F_1 \to F_0|_U \to L \to[+1]$ in $\Der(\cor_U)$ on some smaller
neighborhood $U$ of $x$ such that $\dot\SSi(F) = \dot\SSi(F_0) \cap (U\times A)$,
$\dot\SSi(F_1) = \dot\SSi(F_0) \cap (U\times A')$ and $L$ is constant.  Then
$\dot\SSi(F) = \Lambda \cap T^*U$ and $F$ is simple.
\end{proof}

\section{Obstruction classes}
\label{sec:obstr_classes}

In this section we see that there are two obstructions to the existence of a
global simple object in $\kss(\cor_\Lambda)$ (by this we mean an object of
$\kss^s(\cor_\Lambda)(\Lambda)$).  They are classes
$\mu^{sh}_1(\Lambda) \in H^1(\Lambda; \Z)$ and
$\mu^{sh}_2(\Lambda) \in H^2(\Lambda; \cor^\times)$, where $\cor^\times$ is the
group of units in $\cor$ (when $\cor = \Z/2\Z$ this last class is automatically
zero).

On the other hand, it is proved in~\cite{Gi88} that $\Lambda$ has a local
generating function if and only if the stable Gauss map
$g\colon \Lambda \to U/O$ is homotopic to the constant map
$v\colon \Lambda \to U/O$ which sends a point to the vertical fiber of
$T^*M$. Here $U/O = \varinjlim_n U(n)/O(n)$ is the (stable) Lagrangian
Grassmannian (some choices are needed to define the Gauss map, but it is
well-defined up to homotopy).  The obstruction classes to find such a homotopy
are classes $\mu^{gf}_i(L) \in H^i(\Lambda; \pi_i(U/O))$ for $i=1,\dots, \dim M$
($gf$ stands for generating function).  The first class is the Maslov class
$\mu^{gf}_1(\Lambda) = \mu_1(\Lambda)$.  We will see in~\S\ref{sec:monodromy}
that, for $i=1,2$, the vanishing of $\mu^{sh}_i(\Lambda)$ implies the vanishing
of $\mu^{gf}_i(\Lambda)$.

To define the classes $\mu^{sh}_i(\Lambda)$ we recall how we can describe an
object of $\kss^s(\cor_\Lambda)(\Lambda)$.  By
Remark~\ref{rem:description_KSstack} a global simple object of
$\kss(\cor_\Lambda)$ is determined by the data of an open covering
$\{\Lambda_i\}_{i\in I}$ of $\Lambda$, objects
$F_i \in \Derb_{(\Lambda_i)}(\cor_M)$, for all $i\in I$, which are simple along
$\Lambda_i$, and sections
$u_{ji}\in H^0(\Lambda_{ij};\muhom(F_i,F_j)|_{\Lambda_{ij}})$, for any
$i,j\in I$, such that
\begin{itemize}
\item [(i)] $u_{ii}$ is induced by $\id_{F_i}$, for any $i\in I$,
\item [(ii)] $u_{kj} \mucirc u_{ji} = u_{ki}$, for any $i,j,k \in I$.
\end{itemize}
We try to find such a set of data.  First we choose a covering
$\{\Lambda_i\}_{i\in I}$ of $\Lambda$ by small open subsets. We assume that the
$\Lambda_i$'s and all intersections $\Lambda_{ij}$, $\Lambda_{ijk}$ are
contractible.  We have seen in the previous section that, if the $\Lambda_i$'s
are small enough, we can choose $F_i \in \Derb_{(\Lambda_i)}(\cor_M)$, for all
$i\in I$, which are simple along $\Lambda_i$.  Moreover, for each $i\in I$, if
$F'_i\in \Derb_{(\Lambda_i)}(\cor_M)$ is another simple sheaf along $\Lambda_i$
we have $\kssfunc_{\Lambda_i}(F'_i) \simeq \kssfunc_{\Lambda_i}(F_i)[d]$ for some
shift $d$.

The sheaf $\muhom(F_i,F_j)|_{\Lambda_{ij}}$ is constant on $\Lambda_{ij}$, free
of rank one.  Hence there exist isomorphisms
$\varphi_{ji} \colon \muhom(F_i,F_j)|_{\Lambda_{ij}} \simeq
\cor_{\Lambda_{ij}}[-d_{ij}]$, for some integers $d_{ij}$. They induce
$\varphi_{ji} \in H^0(\Lambda_{ij}; \muhom(F_i,F_j)[d_{ij}])$.  In view
of~\eqref{eq:hom_sheaf_kss} the $\varphi_{ji}$'s give isomorphisms in
$\kss^s(\cor_\Lambda)(\Lambda_{ij})$:
$$
v_{ji} \colon \kssfunc_{\Lambda_i}(F_i)|_{\Lambda_{ij}} \isoto
\kssfunc_{\Lambda_j}(F_j)|_{\Lambda_{ij}}[d_{ij}] .
$$
We deduce that the \v Cech cochain $\{d_{ij}\}_{i,j \in I}$ is a cocyle and
defines
\begin{equation}
  \label{eq:Maslov_premiere}
  \mu^{sh}_1(\Lambda) = [\{d_{ij}\}]\in H^1(\Lambda; \Z) .
\end{equation}
By the remark that $ \kssfunc_{\Lambda_i}(F_i)$ is well defined up to shift,
this class only depends on $\Lambda$.  If there exists a global simple object
$\shf$ in $\kss(\cor_\Lambda)$, for some coefficient ring $\cor$, then we can
choose $F_i$'s which represent $\shf|_{\Lambda_i}$ and this implies $d_{ij} = 0$
for all $i,j$ and thus $\mu^{sh}_1(\Lambda) = 0$.

\smallskip

Let us assume that $\mu^{sh}_1(\Lambda) = 0$.  Then we can write $d_{ij} = d_j - d_i$
for some family of integers $d_i$, $i\in I$.  We set $F'_i = F_i[d_i]$ and
obtain isomorphisms
$$
w_{ji} \colon \kssfunc_{\Lambda_i}(F'_i)|_{\Lambda_{ij}} \isoto
\kssfunc_{\Lambda_j}(F'_j)|_{\Lambda_{ij}} .
$$
For $i,j,k \in I$ we define an automorphism $c_{ijk}$ of
$\kssfunc_{\Lambda_i}(F'_i)|_{\Lambda_{ijk}}$ by
$c_{ijk} = w_{ik} \circ w_{kj} \circ w_{ij}$.  Since
$$
\hom(\kssfunc_{\Lambda_i}(F'_i), \kssfunc_{\Lambda_i}(F'_i))
\simeq H^0\muhom(F'_i, F'_i) \simeq \cor_{\Lambda_i} ,
$$
we can canonically identify the automorphisms group of
$\kssfunc_{\Lambda_i}(F'_i)|_{\Lambda_{ijk}}$ with
$\cor^\times \subset H^0(\Lambda_{ijk}; \cor_{\Lambda_{ijk}}) \simeq \cor$.
Hence the $c_{ijk}$'s give a \v Cech cochain with coefficient in $\cor^\times$.
It is easy to see that it is a cocycle and defines
\begin{equation}
  \label{eq:Maslov_deuxieme}
  \mu^{sh}_2(\Lambda) = [\{c_{ijk}\}]\in H^2(\Lambda; \cor^\times) .
\end{equation}
The isomorphism $H^0\muhom(F'_i, F'_i) \simeq \cor_{\Lambda_i}$ also implies
that $w_{ji}$ is well defined up to multiplication by a unit.  It follows that
$\mu^{sh}_2(\Lambda)$ only depends on $\Lambda$.  If $\mu^{sh}_2(\Lambda) = 0$, then we
can write $\{c_{ijk}\}$ as the boundary of a $2$-cochain, say $\{b_{ij}\}$, with
$b_{ji} \in \cor^\times$.  Defining
$$
w'_{ji} = b_{ji}^{-1} w_{ji} \colon \kssfunc_{\Lambda_i}(F'_i)|_{\Lambda_{ij}}
\isoto \kssfunc_{\Lambda_j}(F'_j)|_{\Lambda_{ij}}
$$
we have $w'_{ik} \circ w'_{kj} = w'_{ij}$ and the $\kssfunc_{\Lambda_i}(F'_i)$
glue into a global simple object of $\kss(\cor_\Lambda)$.

\section{The Kashiwara-Schapira stack for orbit categories}
\label{sec:KSstack_orbit}

In this section we set $\cor = \Z/2\Z$.  We have defined the usual sheaf operations
for the triangulated orbit categories $\Orb(\cor_M)$ and we can define a
Kashiwara-Schapira stack in this situation.  We give quickly the analogs of the
results obtained in the previous sections.  For a conic subset $S$ of $T^*M$ we
recall the categories $\OrbL{S}(\cor_M)$, $\OrbL{[S]}(\cor_M)$, $\OrbL{(S)}(\cor_M)$
and $\Orb(\cor_M;S)$ (see Notation~\ref{not:micro_categories}).

Let $\Lambda \subset \dT^*M$ be a locally closed conic
Lagrangian submanifold. We define a stack $\kss_{/[1]}(\cor_\Lambda)$ on $\Lambda$ as
in Definition~\ref{def:KSstack}, again replacing $\Derb$ by $\Orb$.  It comes with a
functor
$\kssfunc_{/[1],\Lambda} \cl \OrbL{(\Lambda)}(\cor_M) \to \kss_{/[1]}(\cor_\Lambda)$.

We say that $F\in \Orb(\cor_M)$ is simple along $\Lambda$ if
$\SSo(F)\cap \dT^*M \subset \Lambda$ and, for any $p\in \Lambda$, there exists
$F'\in \Derb(\cor_M)$ such that $\iota_M(F') \simeq F$, $\SSi(F') = \Lambda$ in a
neighborhood of $p$ and $F'$ is simple along $\Lambda$ at $p$.  As in
section~\ref{sec:KSstack} we can define the substack $\kss_{/[1]}^{s}(\cor_\Lambda)$
of $\kss_{/[1]}(\cor_\Lambda)$ associated with the simple sheaves.  For
$\Omega \subset T^*M$, we have a morphism similar
to~\eqref{eq:HomDkMOmega_vers_muhom}
$$
\Hom_{\Orb(\cor_M;\Omega)}(F,G) 
\to \Hom_{\Orb(\cor_\Omega)}(\cor_\Omega,\muhom^\varepsilon(F,G)|_\Omega) 
$$
and, as in Theorem~\ref{thm:HomDkMp=muhomp}, it is an isomorphism if
$\Omega = \{p\}$ for some $p\in T^*M$.

We remark that Proposition~\ref{prop:SSorb-sectnulle} implies in particular
that, if $B$ is homeomorphic to a ball and $L\in \Orb(\cor_B)$ is locally of the
form $\cor_U$, then there exists an isomorphism $u\colon L \simeq \cor_B$.
Moreover $\Hom_{\Orb(\cor_B)}(\cor_B, \cor_B) \simeq \cor$ (and
$\cor = \Z/2\Z$), hence $u$ is unique.  For simple sheaves this gives:

\begin{lemma}\label{lem:unique-fais-simple-orb}
Let $\Lambda \subset \dT^*M$ be a locally closed conic Lagrangian
submanifold. We assume that $\Lambda$ is contractible.  Let $F,F' \in
\Orb(\cor_M)$ be two simple sheaves along $\Lambda$ and let $\Omega$ be a
neighborhood of $\Lambda$ such that $\SSo(F) \cap \Omega =\SSo(F') \cap \Omega =
\Lambda$.  Then we have a unique iso\-mor\-phism $\muhom^\varepsilon(F,F')|_\Omega
\simeq \cor_\Lambda$ in $\Orb(\cor_\Omega)$.
\end{lemma}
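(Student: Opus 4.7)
The plan is to first establish that $\muhom^\varepsilon(F,F')|_\Omega$ is locally isomorphic to $\cor_\Lambda$ in $\Orb(\cor_\Omega)$, then to glue these local isomorphisms into a global one using contractibility of $\Lambda$, and finally to deduce uniqueness from the fact that $\cor=\Z/2\Z$ has only one unit. I first note that by hypothesis $\Lambda$ is closed in $\Omega$, and by (the orbit-category analog of)~\eqref{eq:suppmuhom} the sheaf $\muhom^\varepsilon(F,F')|_\Omega$ has support contained in $\Lambda$.

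For the local step, I would pick $p\in\Lambda$ and, by the definition of simplicity in the orbit category, choose representatives $F_p,F'_p\in\Derb(\cor_M)$ with $\iota_M(F_p)\simeq F$, $\iota_M(F'_p)\simeq F'$, which are simple along $\Lambda$ in the usual Kashiwara--Schapira sense on some neighborhood of $p$. Since $\varepsilon$ acts as zero on $R_M(F_p)$ and $R_M(F'_p)$ and $R_M$ commutes with $\opb{q_2}$ and $\epb{q_1}$, one verifies that $\rhomeps(R_M(F_p),R_M(F'_p))\simeq R_M(\rhom(F_p,F'_p))$, and hence $\muhom^\varepsilon(F,F')$ is represented near $p$ by $\iota_{T^*M}(\muhom(F_p,F'_p))$. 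By Remark~\ref{rem:description_KSstack2}, $\muhom(F_p,F'_p)$ is locally on $\Lambda$ near $p$ isomorphic to $\cor_\Lambda[d]$ for some $d\in\Z$; applying $\iota$ and using that shifts are trivialized in $\Orb$ (see Remark~\ref{rem:def_shift_morph2}) yields a local isomorphism $\muhom^\varepsilon(F,F')|_V\isoto \cor_{\Lambda\cap V}$ in $\Orb(\cor_V)$ for some neighborhood $V$ of $p$ in $\Omega$.

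For the gluing step, I would cover $\Omega$ by opens $V_i$ carrying such local isomorphisms $\varphi_i\colon \muhom^\varepsilon(F,F')|_{V_i}\isoto \cor_{\Lambda\cap V_i}$ and arranged so that each $V_i$, $V_i\cap V_j$, and its intersection with $\Lambda$ is contractible. On overlaps, $\varphi_j\circ\varphi_i^{-1}$ is an automorphism of $\cor_{\Lambda\cap V_i\cap V_j}$ in $\Orb$; by Corollary~\ref{cor:morph-QRF-QRG} and contractibility, the endomorphism ring identifies with $\cor=\Z/2\Z$, whose units form $\{1\}$, so the transition is automatically the identity and the $\varphi_i$ glue directly to a global isomorphism $\muhom^\varepsilon(F,F')|_\Omega\simeq\cor_\Lambda$. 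If preferred, one can instead mimic the iterative Mayer--Vietoris procedure of Proposition~\ref{prop:SSorb-sectnulle} via Lemma~\ref{lem:const_ouv_contr}, using contractibility of $\Lambda$ to reduce to the case of a single chart.

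Uniqueness follows because any two such isomorphisms differ by an element of $\Aut_{\Orb(\cor_\Omega)}(\cor_\Lambda)$. Since $\Lambda$ is contractible and closed in $\Omega$, Corollary~\ref{cor:morph-QRF-QRG} combined with the closed-embedding identity $\opb{i}\oim{i}\simeq\id$ gives $\Hom_{\Orb(\cor_\Omega)}(\cor_\Lambda,\cor_\Lambda)\simeq\bigoplus_{n\in\Z}H^n(\Lambda;\cor)\simeq\cor$, whose only unit is $1$, so the automorphism group is trivial. The main obstacle I expect is carefully checking that the local Kashiwara--Schapira structure of $\muhom$ for simple sheaves in $\Derb(\cor_M)$ transports correctly through $R_M$ and $Q_M$ to produce a local isomorphism with $\cor_\Lambda$ rather than a shift thereof in the orbit category.
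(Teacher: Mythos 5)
Your proposal is correct and follows essentially the same route as the paper, which proves this lemma only by a short remark preceding the statement: the support of $\muhom^\varepsilon(F,F')|_\Omega$ lies in $\Lambda$, on $\Lambda$ it is locally of the form $\cor_U$ by simplicity (you spell this out by passing through representatives in $\Derb(\cor_M)$ and the compatibility of $R_M$ with $\rhomeps$, $\opb{q_2}$, $\epb{q_1}$, $\mu_{\Delta_M}$), then Proposition~\ref{prop:SSorb-sectnulle} together with contractibility yields the global isomorphism, and uniqueness follows from $\End_{\Orb(\cor_\Omega)}(\cor_\Lambda)\simeq\cor=\Z/2\Z$ having trivial unit group. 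The only extra care needed is that your \v Cech-style gluing must actually run the Mayer--Vietoris argument of Proposition~\ref{prop:SSorb-sectnulle}/Lemma~\ref{lem:const_ouv_contr}, as you note yourself, since vanishing of transition ambiguities alone does not produce a global object in a non-stacky triangulated category.
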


By Lemma~\ref{lem:unique-fais-simple-orb} there exists a unique simple sheaf in
$\kss_{/[1]}(\cor_{\Lambda_0})$ for any contractible open subset $\Lambda_0
\subset \Lambda$, up to a unique isomorphism.  In other words
$\kss_{/[1]}^{s}(\cor_{\Lambda})$ has locally a unique object with the identity as
unique isomorphism. Hence gluing is trivial.  Since
$\kss_{/[1]}^{s}(\cor_{\Lambda})$ is a stack it follows that it has a unique
global object.

We recall that $\loc(\cor_X)$ is the substack of $\Mod(\cor_X)$ formed by the
locally constant sheaves.

\begin{definition}\label{def:Orbloc}
Let $X$ be a manifold. We let $\Oloc^0(\cor_X)$ be the subprestack of
$U \mapsto \Orb(\cor_U)$, $U$ open in $X$, formed by the $F\in \Orb(\cor_U)$
such that $\SSo(F) \subset T^*_UU$.  We let $\Oloc(\cor_X)$ be the
stack associated with $\Oloc^0(\cor_X)$.
\end{definition}
By Proposition~\ref{prop:SSorb-sectnulle} the condition $\SSo(F) \subset T^*_UU$
is equivalent to: $F$ is locally isomorphic to $A_U$ for some
$A \in \Mod(\cor)$.  We recall the functors
$\iota^0_X \colon \Mod(\cor_X) \to \Orb(\cor_X)$ and
$h^0_X \colon \Orb(\cor_X) \to \Mod(\cor_X)$ defined before
Corollary~\ref{cor:iota_equivcat}.  They induce functors of stacks
$i_X\cl \loc(\cor_X) \to \Oloc(\cor_X)$ and
$h_X \cl \Oloc(\cor_X) \to \loc(\cor_X)$.

\begin{lemma}\label{lem:equiv_loc-Oloc}
  The functors $i_X$ and $h_X$ are mutually inverse equivalences of stacks.
\end{lemma}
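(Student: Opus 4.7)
The plan is to reduce the statement to a purely local computation on contractible open subsets, exploiting that both $\loc(\cor_X)$ and $\Oloc(\cor_X)$ are stacks. Since $i_X$ and $h_X$ are morphisms of stacks, it will suffice to check that $i_U$ and $h_U$ are mutually inverse equivalences for $U$ ranging in a basis of open subsets diffeomorphic to $\R^d$; on such $U$ the hypothesis of Proposition~\ref{prop:SSorb-sectnulle} is satisfied.

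Fix such a $U$. I would first dispose of the easy side: any locally constant sheaf on $U$ is constant, and morphisms between constant sheaves on the connected $U$ are determined by their fibers, so $A \mapsto A_U$ induces an equivalence $\Mod(\cor) \isoto \loc(\cor_U)$. On the orbit side, Proposition~\ref{prop:SSorb-sectnulle} gives that every $F \in \Oloc(\cor_U)$ is isomorphic to $\iota^0_U(A_U) = i_U(A_U)$ for some $A \in \Mod(\cor)$, so the same rule gives an essentially surjective functor $\Mod(\cor) \to \Oloc(\cor_U)$. To see it is fully faithful I would invoke Corollary~\ref{cor:morph-QRF-QRG} to obtain
\begin{equation*}
\Hom_{\Orb(\cor_U)}(\iota^0_U(A_U), \iota^0_U(B_U))
\;\simeq\; \bigoplus_{n\in\Z} \Hom_{\Derb(\cor_U)}(A_U[-n], B_U)
\;\simeq\; \bigoplus_{n\in\Z} H^n(U;\hom(A,B)_U),
\end{equation*}
and then use that $U$ is contractible and that $\cor$ is a field (so $\hom(A,B)_U$ is a constant sheaf in degree $0$) to conclude that the only nonzero term is $\Hom_\cor(A,B)$ in degree $0$.

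Combining these two identifications, $i_U$ corresponds, under $A \leftrightarrow A_U \leftrightarrow \iota^0_U(A_U)$, to the identity functor of $\Mod(\cor)$; hence $i_U$ is an equivalence of categories. Corollary~\ref{cor:iota_equivcat} already provides a natural isomorphism $h_U \circ i_U \simeq \id_{\loc(\cor_U)}$, so $h_U$ is automatically a quasi-inverse to $i_U$ and is itself an equivalence.

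The main point of the argument is really just the morphism computation on $\Oloc(\cor_U)$; the rest is a formal descent using the stack property to pass from a cover of $X$ by such $U$ back to $X$ itself. There is no serious obstacle, the only subtlety being the restriction in Proposition~\ref{prop:SSorb-sectnulle} to open subsets of the form $\R^d$, which is handled by choosing the covering accordingly.
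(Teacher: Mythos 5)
Your proof is correct and follows essentially the same route as the paper: reduce to a contractible chart $U \simeq \R^d$, use Proposition~\ref{prop:SSorb-sectnulle} for essential surjectivity of $i_U$, use Corollary~\ref{cor:morph-QRF-QRG} together with the vanishing of $\Ext^n(A_U,B_U)$ for $n\not=0$ on contractible $U$ to get full faithfulness, and then use Corollary~\ref{cor:iota_equivcat} to identify $h_U$ as a quasi-inverse. The passage through $\Mod(\cor)$ and the explicit identification of both sides are extra scaffolding but do not change the argument.
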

\begin{proof}
  We have seen in Corollary~\ref{cor:iota_equivcat} that
  $h_X \circ i_X \simeq \id_{\loc(\cor_X)}$.  Hence it is enough to see that
  $i_X$ is locally an equivalence, that is, essentially surjective and fully
  faithful. Let $U \subset X$ be an open subset homeomorphic to a ball.  By
  Proposition~\ref{prop:SSorb-sectnulle} the functor $i_U$ is essentially
  surjective and, for $F,G \in \loc(\cor_U)$, Corollary~\ref{cor:morph-QRF-QRG}
  gives
\begin{align*}
\Hom_{\Orb(\cor_U)}(i_U(F) ,i_U(G)) 
& \simeq \bigoplus_{n\in\Z} \Hom_{\Derb(\cor_U)}(F[-n],G) \\
& \simeq \Hom_{\loc(\cor_U)}(F,G) ,
\end{align*}
which proves that $i_U$ is fully faithful.
\end{proof}

As we remarked after Lemma~\ref{lem:unique-fais-simple-orb}
$\kss_{/[1]}^{s}(\cor_\Lambda)$ has a unique global object.  As
in~\eqref{eq:def_muhombar} the functor $\muhom^\varepsilon$ induces a functors
of stacks
\begin{equation*}
  \ol{\muhom^\varepsilon} \cl
  (\kss_{/[1]}(\cor_\Lambda))^{op} \times \kss_{/[1]}(\cor_\Lambda)
  \to \Oloc(\cor_\Lambda) \simeq  \loc(\cor_\Lambda) .
\end{equation*}
We have an analog of Proposition~\ref{prop:KSstack=Dloc} in the orbit category
case.

\begin{proposition}\label{prop:KSstackorb}
  The stack $\kss_{/[1]}^{s}(\cor_\Lambda)$ has a unique object, say $\shf_0$,
  defined over $\Lambda$.  Moreover the functor
  $\ol{\muhom^\varepsilon}(\shf_0,-)$ induces an equivalence of stacks
  $\kss_{/[1]}(\cor_\Lambda) \isoto \loc(\cor_\Lambda)$.
\end{proposition}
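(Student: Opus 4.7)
The plan is to first construct $\shf_0$ by gluing local simple sheaves, then establish the equivalence locally and invoke the stack property.

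For the existence of $\shf_0$, I would cover $\Lambda$ by contractible open subsets $\{\Lambda_i\}_{i\in I}$ with contractible finite intersections, each small enough that Lemma~\ref{lem:simple_local} yields a simple $F_i \in \Derb_{(\Lambda_i)}(\cor_M)$ in the classical sense. Setting $\shf_i := \kssfunc^\orb_{\Lambda_i}(\iota_M(F_i))$ gives a simple object of $\kss^{\orb,s}(\cor_{\Lambda_i})$. Lemma~\ref{lem:unique-fais-simple-orb} provides a \emph{unique} isomorphism $\varphi_{ji}\colon \shf_i|_{\Lambda_{ij}} \isoto \shf_j|_{\Lambda_{ij}}$ on each overlap, where uniqueness reflects the triviality of $\cor^\times=\{1\}$ since $\cor = \Z/2\Z$. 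The cocycle identity $\varphi_{ki} = \varphi_{kj}\circ\varphi_{ji}$ on $\Lambda_{ijk}$ then follows for free, since both sides are isomorphisms between the same two simple objects and there is only one such. The stack property glues the $(\shf_i,\varphi_{ji})$ into a global $\shf_0$; uniqueness follows by the same argument applied to any other global simple object.

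To show $\ol{\muhom^\varepsilon}(\shf_0,-)$ gives an equivalence $\kss^\orb(\cor_\Lambda) \isoto \loc(\cor_\Lambda)$, I would first check that its values land in $\Oloc(\cor_\Lambda)$: for a local representative $\tilde G \in \Derb(\Cor_M)$ of $G$, the identity~\eqref{eq:hom-homeps} reduces the orbit computation to the classical one, and Corollary~\ref{cor:muhom_loccst} together with Proposition~\ref{prop:SSorb-sectnulle} forces $\muhom^\varepsilon(\shf_0, G)$ to be locally constant along $\Lambda$; Lemma~\ref{lem:equiv_loc-Oloc} then identifies $\Oloc(\cor_\Lambda) \simeq \loc(\cor_\Lambda)$. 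Since being an equivalence of stacks is a local property, I would restrict to a contractible $\Lambda_0$ on which $\shf_0$ is represented by a classical simple $F_0 \in \Derb_{(\Lambda_0)}(\cor_M)$, and prove the orbit-category analog of Lemma~\ref{lem:simple_local}(ii): every $G \in \kss^{\orb}(\cor_{\Lambda_0})$ is isomorphic, in the stack, to $\iota_M(F_0) \epstens \iota_M(L_M)$ for a locally constant $L$ whose stalk at $p \in \Lambda_0$ is $\muhom^\varepsilon(\shf_0, G)_p$. From this, essential surjectivity is immediate, and fully faithfulness follows via the orbit version of Theorem~\ref{thm:HomDkMp=muhomp} identifying $\Hom_{\Orb(\cor_M;p)}$ with stalks of $H^0\muhom^\varepsilon$.

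The main obstacle is establishing that orbit-category analog. Because $\iota_M$ is not essentially surjective in general, an arbitrary $G$ must be handled through some representative $\tilde G \in \Derb(\Cor_M)$ not necessarily of the form $R_M(F)$. The strategy will be to apply the classical Lemma~\ref{lem:simple_local}(ii) to $r_M(\tilde G) \in \Derb(\cor_M)$ --- whose microsupport equals $\SSi(\tilde G)$ and lies near $\Lambda_0$ --- to produce an isomorphism $r_M(\tilde G) \simeq F_0 \otimes L_M$ in $\Derb(\cor_M;\Omega)$, then lift this through the adjunction $(e_M, r_M)$ and the identity $e_M(F_0 \otimes L_M) \simeq e_M(F_0) \epstens L_M$ of Lemma~\ref{lem:epstens-perf} to get the required isomorphism after applying $Q_M$. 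The shift identification $\iota_M(F[1]) \simeq \iota_M(F)$ in $\Orb(\cor_M)$ is what makes the construction work without Maslov-type obstructions: it corresponds precisely to the vanishing of $\mu^{sh}_1$ and $\mu^{sh}_2$ over $\cor = \Z/2\Z$, where $\cor^\times$ is trivial.
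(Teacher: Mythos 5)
Your construction and uniqueness of $\shf_0$ (Lemma~\ref{lem:unique-fais-simple-orb} plus the stack axiom) matches the paper exactly, and the plan for the equivalence is the right one: land in $\Oloc(\cor_\Lambda)$ via microsupport bounds and Proposition~\ref{prop:SSorb-sectnulle}, identify $\Oloc \simeq \loc$ by Lemma~\ref{lem:equiv_loc-Oloc}, and reduce to the orbit analog of Lemma~\ref{lem:simple_local}(ii). The gap is in the proposed way of producing that analog.

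Applying $e_M$ to the isomorphism $r_M(\tilde G)\simeq F_0\otimes L_M$ in $\Derb(\cor_M;\Omega)$ gives an isomorphism between $e_M r_M(\tilde G)$ and $e_M(F_0)\epstens L_M$. But both sides lie in $\perf(\Cor_M)$ (since $\perf(\Cor_M)$ is by definition generated by the image of $e_M$), so after applying $Q_M$ this isomorphism degenerates to $0\simeq 0$ in $\Orb(\cor_M)$ and carries no information. Passing instead through the adjunction $(e_M,r_M)$ or through~\eqref{eq:adjrMeM} only produces a morphism $e_M(F_0\otimes L_M)\to \tilde G$ or $\tilde G\to e_M(F_0\otimes L_M)$, neither of which can be an isomorphism when $\tilde G\notin\perf(\Cor_M)$. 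The underlying difficulty is that the $\cor_M$-linear isomorphism from Lemma~\ref{lem:simple_local}(ii) forgets the $\Cor_M$-module structure of $\tilde G$, and that structure is exactly the data carried by the orbit category; it cannot be recovered by base change.

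The fix is to redo the local classification inside $\Orb(\cor_M)$ from the start, rather than pulling it back from $\Derb(\cor_M)$. Concretely: use the orbit analog of Theorem~\ref{thm:GKS} (convolution with $\iota(K_\Phi)$ gives an auto-equivalence of $\Orb(\cor_N)$) to reduce to $\Lambda_0$ conormal to a hypersurface, then prove the orbit version of Example~\ref{ex:microsupport}(v): for $F\in\Orb(\cor_\R)$ with $\dSSo(F)=\{(0;\xi);\xi>0\}$, the sheaf $F|_{U_\pm}$ is constant by Proposition~\ref{prop:SSorb-sectnulle}, one sets $E:=(\rsect_Z F)_0\in\Orb(\cor)\simeq\Mod(\cor)$ by Corollary~\ref{cor:iota_equivcat}, produces $v\colon E_Z\to F$ by adjunction, and concludes that the cone of $v$ is constant. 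This yields the orbit version of Lemma~\ref{lem:simple_local}(ii) directly. Alternatively, you can decouple the two halves of the equivalence: essential surjectivity on a contractible $\Lambda_0$ only requires hitting constant local systems $A_{\Lambda_0}$, for which $G=\iota_M(F_0\otimes A_M)$ suffices with no lifting at all, and fully faithfulness can be checked on stalks using the orbit analog of~\eqref{eq:stalk_muhom} and the fact that $(\rsect_{Z_0}\shf_0)_x\simeq\cor$ for a simple $\shf_0$.
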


\section{Microlocal germs}
\label{sec:defmicrogerms}

In this section and the next one we   see the link between the classes introduced
in~\S\ref{sec:obstr_classes}, $\mu^{sh}_1(\Lambda) \in H^1(\Lambda; \Z)$,
$\mu^{sh}_2(\Lambda) \in H^2(\Lambda; \cor^\times)$ and
$\mu^{gf}_1(\Lambda) \in H^1(\Lambda; \Z)$,
$\mu^{gf}_2(\Lambda) \in H^2(\Lambda; \Z/2\Z)$.  We only prove the useful
implication that the vanishing of $\mu^{sh}_i(\Lambda)$ (for a ring $\cor$ with
$2\not= 0$) implies the vanishing of $\mu^{gf}_i(\Lambda)$ but a little more work
would show that they coincide.

In the definition of the microsupport of a sheaf $F$ we consider whether some
local cohomology group vanishes, namely
$(\rsect_{\{x;\, \phi(x)\geq \phi(x_0)\}} (F))_{x_0}$ for some function $\phi$.
A natural question is then how this group depends on $\phi$ and not only on
$\xi_0 = d\phi(x_0)$.  In general it really depends on $\phi$, but it is proved
in Proposition~7.5.3 of~\cite{KS90} that it is independent of $\phi$ (up to a
shift $d_\phi$) if we assume that $\Lambda = \SSi(F)$ is a Lagrangian
submanifold near $(x_0;\xi_0)$ and that $\Gamma_{d\phi}$ is transverse to
$\Lambda$ at $(x_0;\xi_0)$ (see Proposition~\ref{prop:inv_microgerm}).  Moreover
the shift $d_\phi$ is related with the Maslov index of the three Lagrangian
subspaces of $T_{(x_0;\xi_0)}T^*M$ given by $\lambda =T_{(x_0;\xi_0)}\Lambda$,
$\lambda_\phi = T_{(x_0;\xi_0)}\Gamma_{d\phi}$ and
$\lambda_0 = T_{(x_0;\xi_0)}(\pi^{-1}(x_0))$. In particular
$(\rsect_{\{x;\, \phi(x)\geq \phi(x_0)\}} (F))_{x_0}$ only depends on
$\lambda_\phi$ which is a Lagrangian subspace of $T_{(x_0;\xi_0)}T^*M$
transverse both to $\lambda$ and $\lambda_0$.  In this section we precise a
little bit this result and prove that there exists a locally constant sheaf on
some open subset of the Lagrangian Grassmannian of $\Lambda \times_{T^*M} TT^*M$
whose stalks at $\lambda_\phi$ is
$(\rsect_{\{x;\, \phi(x)\geq \phi(x_0)\}} (F))_{x_0}$.  We will also see in
\S\ref{sec:monodromy} that it has a non trivial monodromy.

\bigskip

We first introduce some notations.  In this section $M$ is a manifold of
dimension $n$ and $\Lambda$ is a locally closed conic Lagrangian submanifold of
$\dT^*M$.  We recall the notations~\eqref{eq:def_lambdas_p}: for a given point
$p=(x;\xi) \in \Lambda$ we have the following Lagrangian subspaces of
$T_p(T^*M)$
\begin{equation*}
\lambda_0(p) = T_p(T_x^*M), \qquad 
\lambda_\Lambda(p) = T_p\Lambda 
\end{equation*}
and, for   a function $\varphi \colon M \to \R$,
we set $\Lambda_\varphi = \Gamma_{d\varphi} = \{(x; d\varphi(x)); \; x\in M\}$ and
$$
\lambda_\varphi(p) = T_p\Lambda_\varphi .
$$
We let
\begin{equation}\label{eq:def_shl_M}
  \sigma_{T^*M}\cl \lag_M \to T^*M, \qquad
\sigma_{T^*M}^0 \cl \lag_M^0 \to T^*M  
\end{equation}
be respectively the fiber bundle of Lagrangian Grassmannian of $T^*M$ and the
subbundle whose fiber over $p\in T^*M$ is the set of Lagrangian subspaces of
$T_pT^*M$ which are transverse to $\lambda_0(p)$.  Then $\lag_M^0$ is an open
subset of $\lag_M$. For a given $p\in T^*M$ we set $V=T_{\pi_M(p)}M$ and we
identify $T_pT^*M$ with $V\times V^*$.  We use coordinates $(\nu;\eta)$ on
$T_pT^*M$.  Then we can see that any $l\in (\lag_M^0)_p$ is of the form
\begin{equation}\label{eq:LM0=matsym}
l = \{ (\nu;\eta)\in T_pT^*M;\; \eta = A\cdot \nu \},
\end{equation}
where $A \cl V \to V^*$ is a symmetric matrix.  This identifies the fiber
$(\lag_M^0)_p$ with the space of $n\times n$-symmetric matrices.

For a function $\varphi$ defined on a product $X\times Y$ and for a given
$x\in X$ we use the general notation $\varphi_x = \varphi|_{\{x\}\times Y}$.
\begin{lemma}\label{lem:exist-fcttest}
There exists a function $\psi\cl \lag_M^0\times M \to \R$ of class
$C^\infty$ such that, for any $l\in \lag_M^0$ with $\sigma_{T^*M}(l) = (x;\xi)$,
$$
\psi_l(x) = 0, \quad
d\psi_l(x) = \xi, \quad
\lambda_{\psi_l}( \sigma_{T^*M}(l)) = l .
$$
\end{lemma}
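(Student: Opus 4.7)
The plan is to reduce the statement to a pointwise $2$-jet construction using the description~\eqref{eq:LM0=matsym} of Lagrangians in $\lag_M^0$ by symmetric matrices, and then globalise using a partition of unity on $\lag_M^0$. Notice that the three required properties at the point $x$ only depend on the $2$-jet of $\psi_l$ at $x$; more importantly, each of them is linear in $\psi_l$, so a convex combination of candidates satisfying them at $x$ still satisfies them at $x$. This is what allows a partition of unity argument to produce a single global $\psi$.

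First I would treat the local case. Fix $l_0 \in \lag_M^0$ with $\sigma_{T^*M}(l_0) = (x_0;\xi_0)$ and pick a chart $U \subset M$ around $x_0$ with coordinates $(y_1,\dots,y_n)$, together with the induced coordinates $(y;\eta)$ on $T^*U$. By~\eqref{eq:LM0=matsym}, there is an open neighborhood $V$ of $l_0$ in $\lag_M^0$ with $\sigma_{T^*M}(V) \subset T^*U$ and a smooth map $V \to T^*U \times \Sym_n(\R)$, $l \mapsto (x(l),\xi(l),A(l))$, identifying each $l$ with the Lagrangian $\{(\nu;\eta) : \eta = A(l)\cdot\nu\}$ at the base point $(x(l),\xi(l))$. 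Choose a smooth bump $\chi \in C^\infty_c(U)$ with $\chi \equiv 1$ on some smaller open $U' \subset U$ containing the projection $\pi_M(\sigma_{T^*M}(V'))$ of a smaller neighborhood $V' \subset V$ of $l_0$, and define, for $l \in V'$ and $y \in M$,
\[
\psi^{(V')}(l,y) \;=\; \chi(y)\,\Bigl[\,\langle \xi(l),\, y - x(l)\rangle \;+\; \tfrac{1}{2}\,\langle A(l)(y-x(l)),\, y-x(l)\rangle\,\Bigr]
\]
in the chart (and $0$ outside it). This is smooth on $V' \times M$. Since $\chi \equiv 1$ near $x(l)$ for $l \in V'$, the $2$-jet of $\psi^{(V')}_l$ at $x(l)$ is exactly $\xi(l)\cdot(y-x(l)) + \tfrac{1}{2} A(l)(y-x(l))^{\otimes 2}$, which gives $\psi^{(V')}_l(x(l)) = 0$, $d\psi^{(V')}_l(x(l)) = \xi(l)$, and, by the matrix description of the fiber, $\lambda_{\psi^{(V')}_l}(\sigma_{T^*M}(l)) = l$.

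For the global step, cover $\lag_M^0$ by a locally finite family $\{V'_\alpha\}$ of open sets as above, producing smooth functions $\psi^{(\alpha)} : V'_\alpha \times M \to \R$ each satisfying the three properties at every $l \in V'_\alpha$. Choose a smooth partition of unity $\{\tau_\alpha\}$ on $\lag_M^0$ subordinate to $\{V'_\alpha\}$ and set
\[
\psi(l,y) \;=\; \sum_\alpha \tau_\alpha(l)\,\psi^{(\alpha)}(l,y),
\]
which is smooth on $\lag_M^0 \times M$ by local finiteness. For fixed $l$, only indices $\alpha$ with $\tau_\alpha(l)\neq 0$ contribute, and each summand satisfies the three $2$-jet conditions at $x = \pi_M(\sigma_{T^*M}(l))$; since $\sum_\alpha \tau_\alpha(l) = 1$ and the conditions are linear in $\psi_l$, the weighted average $\psi_l$ satisfies them as well. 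The only mild obstacle is ensuring the local cutoffs $\chi$ are equal to $1$ on a neighborhood of the relevant base points so that neither the value nor the differential at $x$ is corrupted, which is what the choice $V' \subset V$ and $U' \subset U$ above arranges.
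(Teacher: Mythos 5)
Your proof is correct. The key observation you make — that the three required properties only constrain the $2$-jet of $\psi_l$ at $x=\pi_M(\sigma_{T^*M}(l))$, and are in fact affine in $\psi_l$, hence are preserved under convex combinations — is sound, and the chart-plus-cutoff local model combined with a partition of unity on $\lag_M^0$ closes the argument cleanly.

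The paper takes a genuinely different route. After the same vector-space model $\psi_0(l,y)=\langle y-x;\xi\rangle+\tfrac12 q_l(y-x)$ as your local candidate, it does not patch: instead it chooses an embedding $i\cl M\hookrightarrow \R^N$ and uses symplectic reduction along $M\times_{\R^N}T^*\R^N\subset T^*\R^N$ to produce a surjective bundle map $r\cl \lag^0_{\R^N}|_{M\times_{\R^N}T^*\R^N}\to\lag^0_M$ with affine fibers, hence admitting a smooth global section $j$; then $\psi(l,x)=\psi_0(j(l),i(x))$ works. Both proofs ultimately hinge on the same structural fact — the solution space at a given $l$ is an affine subspace (of $2$-jets, or of Lagrangian lifts in $\lag^0_{\R^N}$) — but you exploit it through the local-to-global mechanism of a partition of unity, while the paper exploits it through the existence of a global section of a bundle with contractible fibers. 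Your version avoids invoking an embedding into Euclidean space and symplectic reduction, so it is more elementary and self-contained; the paper's version yields a more "canonical" construction, with $\psi$ pulled back from a single explicit function on $\lag^0_{\R^N}\times\R^N$ rather than depending on the choice of charts and bump functions.
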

\begin{proof}
(i) We first assume that $M$ is the vector space $V=\R^n$. 
We identify $T^*M$ and $M\times V^*$. For $p=(x;\xi) \in M\times V^*$ the fiber
$(\lag_M^0)_p$ is identified with the space of quadratic forms on $V$
through~\eqref{eq:LM0=matsym}.  For $l\in (\lag_M^0)_p$ we let $q_l$ be the
corresponding quadratic form.  Now we define $\psi_0$ by  
$$
\psi_0(l,y) = \langle y-x, \xi \rangle  + \pdemi\, q_l(y-x),
\quad \text{where $(x;\xi) = \sigma_{T^*M}^0(l)$.}
$$
We can check that $\psi_0$ satisfies the conclusion of the lemma.

\sui (ii) In general we choose an embedding $i\cl M\hookrightarrow \R^N$.  For a
given $p' =(x;\xi') \in M\times_{\R^N} T^*\R^N$ the subspace
$T_{p'}(M\times_{\R^N} T^*\R^N)$ of $T_{p'}T^*\R^N$ is coisotropic.  The
symplectic reduction of $T_{p'}T^*\R^N$ by $T_{p'}(M\times_{\R^N} T^*\R^N)$ is
canonically identified with $T_pT^*M$, where $p = i_d(p')$.  The symplectic
reduction sends Lagrangian subspaces to Lagrangian subspaces and we deduce a
map, say $r_{p'} \cl \lag_{\R^N,p'} \to \lag_{M,p}$.  The restriction of
$r_{p'}$ to the set of Lagrangian subspaces which are transverse to
$T_{p'}(M\times_{\R^N} T^*\R^N)$ is an actual morphism of manifolds.  In
particular it induces a morphism
$r^0_{p'} \cl \lag^0_{\R^N,p'} \to \lag^0_{M,p}$.  We can see that $r^0_{p'}$ is
onto and is a submersion.  When $p'$ runs over $M\times_{\R^N} T^*\R^N$ we
obtain a surjective morphism of bundles, say $r$:
$$
\begin{tikzcd}
\lag_{\R^N}^0|_{M\times_{\R^N} T^*\R^N} \ar[r, "r"] \ar[d] & \lag_M^0 \ar[d]\\
M\times_{\R^N} T^*\R^N \ar[r, "i_d"] & T^*M. 
\end{tikzcd}
$$
We can see that $r$ is a fiber bundle, with fiber an affine space.  Hence we can
find a section, say $j\cl \lag_M^0 \to \lag_{\R^N}^0$.  For
$(l,x) \in \lag_M^0\times M$ we set $\psi(l,x) = \psi_0(j(l), i(x))$, where
$\psi_0$ is defined in~(i). Then $\psi$ satisfies the conclusion of the lemma.
\end{proof}

We come back to the Lagrangian submanifold $\Lambda$ of $\dT^*M$.  We let
\begin{equation}\label{eq:def_U_Lambda}
U_\Lambda \subset \lag_M^0|_\Lambda
\end{equation}
be the subset of $\lag_M^0|_\Lambda$ consisting of Lagrangian subspaces of
$T_pT^*M$ which are transverse to $\lambda_\Lambda(p)$. We define
$\sigma_\Lambda = \sigma_{T^*M}|_{U_\Lambda}$,
$\tau_M = \pi_M|_\Lambda \circ \sigma_\Lambda$ and
$i_\Lambda \colon U_\Lambda \hookrightarrow U_\Lambda\times M$,
$l \mapsto (l,\tau_M(l))$:
\begin{equation}\label{eq:def_sigmaLambdaetc}
\begin{tikzcd}[column sep=2cm]
U_\Lambda \ar[r, "\sigma_\Lambda"] \ar[d, "i_\Lambda"] \ar[dr, "\tau_M"]
&  \Lambda \ar[d, "\pi_M|_\Lambda"]  \\
U_\Lambda\times M \ar[r, "q_2"] &M.
\end{tikzcd}
\end{equation}
We note that $U_\Lambda$ is not a fiber bundle over $\Lambda$ but only an open
subset of $\lag_M^0|_\Lambda$. However, for a given $p\in \Lambda$, we will use
the notation $U_{\Lambda,p} = \opb{\sigma_\Lambda}(p)$.

\begin{definition}\label{def:microgerm}
  Let $\psi\cl \lag_M^0\times M \to
  \R$ be a function satisfying the conclusions of Lemma~\ref{lem:exist-fcttest}
  and let $\varphi \cl U_\Lambda\times M \to
  \R$ be its restriction to $U_\Lambda\times M$.  For
  $F\in \Derb_{(\Lambda)}(\cor_M)$ we define
  $m_\varphi(F) \in \Derb(\cor_{U_\Lambda})$ by
  \begin{equation*}
m_\varphi(F) = \opb{i_\Lambda}(\rsect_{\opb{\varphi}([0,+\infty[)}(\opb{q_2}F)),
\end{equation*}
where $q_2\cl U_\Lambda \times M \to M$ is the projection.
\end{definition}

\begin{proposition}\label{prop:SSmicrogerm}
  Let $F\in \Derb_{(\Lambda)}(\cor_M)$.  Let
  $\varphi \cl U_\Lambda\times M \to \R$ be as in
  Definition~\ref{def:microgerm}.  Then the object
  $m_\varphi(F) \in \Derb(\cor_{U_\Lambda})$ has locally constant cohomology
  sheaves and its stalks are
  $$
  (m_\varphi(F))_l \simeq (\rsect_{\opb{\varphi_l}([0,+\infty[)}(F))_x,
  $$
  for any $l\in U_\Lambda$ and $x= \tau_M(l)$.
\end{proposition}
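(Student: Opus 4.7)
The plan is to establish the two assertions separately. For the stalk formula at $l_0 \in U_\Lambda$, I would choose a small product neighborhood $W \times V$ of $i_\Lambda(l_0) = (l_0, x_0)$ in $U_\Lambda \times M$, where $x_0 = \tau_M(l_0)$. On this neighborhood $\opb{q_2}F$ takes the product form $\opb{p_2}(F|_V)$, and the slice $\{\varphi \geq 0\} \cap (\{l_0\} \times V)$ equals $\{l_0\} \times \{\varphi_{l_0} \geq 0\}$. Taking the germ at $(l_0, x_0)$ of $\opb{i_\Lambda}\rsect_{\{\varphi\geq 0\}}\opb{q_2}F$ via the base change for the composition $U_\Lambda \to[i_\Lambda] U_\Lambda \times M \to[q_2] M$ then identifies the stalk with $(\rsect_{\{\varphi_{l_0} \geq 0\}}F)_{x_0}$.

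For the local constancy of the cohomology sheaves, I would show $\dot\SSi(m_\varphi(F)) = \emptyset$; by Example~\ref{ex:microsupport}~(i), this forces each $H^i m_\varphi(F)$ to be locally constant. Setting $Z = \{\varphi \geq 0\}$ and $H = \rsect_Z(\opb{q_2}F)$, Theorem~\ref{thm:SSrhom} gives $\SSi(H) \subset \SSi(\cor_Z)^a \hplus \opb{q_2}\SSi(F)$, and Theorem~\ref{thm:iminv} yields $\SSi(m_\varphi(F)) \subset i_\Lambda^\sharp(\SSi(H))$. To carry out this microlocal pullback calculation near $i_\Lambda(l_0)$, I would introduce the coordinate change $(l, y) = (l, x - \tau_M(l))$, so that $i_\Lambda$ becomes the coordinate embedding $\{y = 0\}$. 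Writing $\tilde\varphi(l, y) = \varphi(l, y + \tau_M(l))$ and differentiating the identity $\varphi(l, \tau_M(l)) \equiv 0$, one gets $d\tilde\varphi(l_0, 0) = (0, \xi_0)$, so the $l$-component of covectors in $\SSi(\cor_{\tilde Z})^a$ at $i_\Lambda(l_0)$ vanishes. Similarly, in new coordinates $\opb{q_2}F$ equals $\opb{\mathrm{add}}F$ for the submersion $\mathrm{add}(l, y) = y + \tau_M(l)$, so $\SSi(\opb{q_2}F)$ at $(l_0, 0)$ consists of covectors of the form $((d\tau_M(l_0))^*\eta, \eta)$ with $\eta \in \Lambda_{x_0}$. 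Now because $\Lambda$ is a conic Lagrangian submanifold, the Liouville $1$-form $\alpha_M$ vanishes on $\Lambda$, giving $\xi(d\pi_M(v)) = 0$ for every $p = (x, \xi) \in \Lambda$ and $v \in \lambda_\Lambda(p)$; since $\im(d\tau_M(l_0)) = d\pi_M(\lambda_\Lambda(p_0))$, this yields $(d\tau_M(l_0))^*\xi_0 = 0$. Hence the $l$-components of covectors in the $\hplus$-bound vanish at $i_\Lambda(l_0)$, and the sequential characterization of Example~\ref{ex:fsharp_embed} implies that $i_\Lambda^\sharp$ sends such covectors to the zero section of $T^*U_\Lambda$.

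The main obstacle will be to handle the contributions coming from branches of $\Lambda$ other than the one through $p_0$ meeting $T^*_{x_0}M$ (for which the Liouville cancellation uses a different $\lambda_\Lambda$), together with the non-sharpness of the $\hplus$-bound when covectors in $\SSi(\cor_{\tilde Z})^a$ blow up along the boundary direction. These should be dealt with by a preliminary localization step replacing $F$ with a cut-off version whose microsupport lies in a conic neighborhood of the branch of $\Lambda$ through $p_0$, obtained via the techniques of Part~\ref{chap:cutoff} (e.g.\ Proposition~\ref{prop:cut-off_split_local2}), combined with careful bookkeeping of the rate condition $|y_n|\,|\beta_n| \to 0$ from Example~\ref{ex:fsharp_embed}.
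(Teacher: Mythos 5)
Your argument for the stalk formula does not work as stated, and this is the central gap. The functor $\rsect_Z = \rhom(\cor_Z, -)$ does \emph{not} commute with restriction to a slice by any base-change theorem: local cohomology is not a direct image along a proper map, and in general $\opb{i}\rhom(\cor_Z, G)\not\simeq \rhom(\cor_{Z\cap S},\opb{i}G)$. (Take $Z=\{0\}\subset\R^2$, $G=\cor_{\R^2}$, $S=\R\times\{0\}$: the two stalks at $0$ are $\cor[-2]$ and $\cor[-1]$.) The commutation you want is a genuine microlocal rigidity statement, and the paper establishes it by a completely different device: Lemma~\ref{lem:isotopie_graphes} produces a \emph{parametrized homogeneous Hamiltonian isotopy} $\Psi$ of $\dT^*W$ over a neighbourhood $U$ of $l_0$ that fixes $\Lambda$ pointwise and carries $\Lambda'_{\varphi_{l_0}}$ to $\Lambda'_{\varphi_l}$; since both $\cor_{\opb{\varphi}([0,+\infty[)}$ and $\opb{q_2}F$ then lie in $\Der_{\Lambda^+}(\cor_{U\times W})$, Proposition~\ref{prop:equiv_local_isotopie}~(ii) gives exactly the isomorphism $\rhom(\cdot,\cdot)|_{\{l_0\}\times W}\isoto\rhom(\cdot|_{\{l_0\}\times W},\cdot|_{\{l_0\}\times W})$ you are trying to get by a nonexistent base change. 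Without this GKS-type input (or some equivalent argument) the stalk computation cannot be completed.

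Your local-constancy argument is a genuinely different approach from the paper's, and the Liouville-form observation — $(D\tau_M(l_0))^*\xi_0 = 0$ because $\alpha_M|_\Lambda = 0$ — is a correct and attractive pointwise computation. But the approach carries more risk than you acknowledge. First, the $\hplus$-bound is \emph{not} pointwise: a covector at $i_\Lambda(l_0)$ can arise as a limit from nearby points where $\partial_l\tilde\varphi\neq 0$ and where $(D\tau_M(l))^*\eta\neq 0$, precisely because the Liouville cancellation only holds on the nose when $\eta\in\Lambda_{\tau_M(l)}$. Second, $F\in\Derb_{(\Lambda)}(\cor_M)$ controls $\SSi(F)$ only in a conic neighbourhood $\Omega$ of $\Lambda$, so $\SSi(F)\cap T^*_{x_0}M$ generally contains directions far from $\Lambda_{x_0}$; a cut-off via Proposition~\ref{prop:cut-off_split_local2} is not designed to remove those (it splits a disjoint microsupport, it does not shrink a neighbourhood). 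The paper avoids all this bookkeeping by a structurally different route: it identifies $\rsect_Z(\opb{q_2}F)|_{I_\Lambda}$ with $\roim{\dot\pi}\bigl(\muhom(\cor_Z,\opb{q_2}F)|_{\dT^*V}\bigr)$ via Sato's triangle~\eqref{eq:SatoDTmuhom1} (the $\DD'\ltens$ term dies since $I_\Lambda\subset\partial Z$), then proves that $\muhom(\cor_Z,\opb{q_2}F)$ is supported on $J_\Lambda$ and locally constant there by the clean-intersection Lemma~\ref{lem:lagr-clean-inter}, and concludes by Proposition~\ref{prop:iminvproj} since $J_\Lambda\to I_\Lambda$ has contractible fibers. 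That argument exchanges your $\hplus$-limit analysis for the sharper estimate~\eqref{eq:SSmuhom} on $\SSi(\muhom)$, which is exactly what controls the tangential limits you are worried about.
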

\begin{proof}
  (i) We first prove that $m_\varphi(F)$ is locally constant.  For this we give
  another expression of $m_\varphi(F)$.  We define
  $G \in \Derb(\cor_{T^*(U_\Lambda \times M)})$ by
  $G = \muhom(\cor_{\opb{\varphi}([0,+\infty[)},\opb{q_2}F)$.  We use the notations
  in~\eqref{eq:def_sigmaLambdaetc} and we define
  $I_\Lambda = \im(i_\Lambda) \subset U_\Lambda\times M$ and
  $J_\Lambda \subset \dT^*(U_\Lambda\times M)$,
  $J_\Lambda = \{ (l,x; 0, \lambda \xi)$; $(x;\xi) = \sigma_\Lambda(l), \lambda>0\}$.
  We remark that $J_\Lambda$ is a fiber bundle over $I_\Lambda$ with fiber $\rspos$.
  We prove in~(ii) and~(iii) below that there exists a neighborhood $V$ of
  $I_\Lambda$ in $U_\Lambda \times M$ such that
  \begin{itemize}
  \item [(a)] $\supp(G) \cap \dT^*V \subset J_\Lambda$,
  \item [(b)] $\SSi(G|_{\dT^*V}) \subset T^*_{J_\Lambda}T^*(U_\Lambda\times M)$,
  \item [(c)]
    $(\rsect_{\opb{\varphi}([0,+\infty[)}(\opb{q_2}F))_{I_\Lambda} \simeq
    \roim{\dot\pi_{V}{}}( G|_ {\dT^*V})$.
  \end{itemize}
  By   Proposition~\ref{prop:iminvproj} the
  properties~(a-b) imply that $G|_{\dT^*V}$ has support in $J_\Lambda$ and is locally
  constant along $J_\Lambda$. Since $J_\Lambda$ is a fiber bundle over $I_\Lambda$,
  we deduce by~(c) that
  $(\rsect_{\opb{\varphi}([0,+\infty[)}(\opb{q_2}F))_{I_\Lambda}$ is locally constant
  on $I_\Lambda$, hence $m_\varphi(F)$ is locally constant on $U_\Lambda$.

  \sui(ii)   We prove (i-a) and (i-b).  By
  Proposition~\ref{prop:SSmuhom} and Lemma~\ref{lem:lagr-clean-inter} below we have:
  for $\Lambda_1,\Lambda_2$ two conic Lagrangian submanifolds of a cotangent bundle
  $\dT^*X$ with a clean intersection $\Xi = \Lambda_1 \cap \Lambda_2$ and for
  $F_i \in \Derb_{[\Lambda_i]}(\cor_X)$, $i=1,2$, we have
  $\supp(\muhom(F_1,F_2)|_{\dT^*X}) \subset \Xi$ and
  $\SSi(\muhom(F_1,F_2)|_{\dT^*X}) \subset T^*_\Xi T^*X$.

    By Example~\ref{ex:microsupport}~(iii) we have
  $\dot\SSi(\cor_{\opb{\varphi}([0,+\infty[)}) = \Lambda'_\varphi$, where
 $$
  \Lambda'_\varphi = \{(l,x; \lambda \cdot d\varphi(l,x)); \; (l,x)\in U_\Lambda
  \times M, \; \lambda >0,\; \varphi(l,x) = 0\} .
  $$
  We also have $\dot\SSi(\opb{q_2}F) = T^*_{U_\Lambda}U_\Lambda \times \Lambda$.  It
  is then enough to find a neighborhood $V$ of $I_\Lambda$ in $U_\Lambda \times M$
  such that $T^*V \cap \Lambda'_\varphi$ and
  $T^*V \cap (T^*_{U_\Lambda}U_\Lambda \times \Lambda)$ have a clean intersection,
  which is $J_\Lambda$.

  \smallskip

  Let us first prove that $\Lambda_\varphi (= \Gamma_{d\varphi})$ is transverse to
  $T^*U_\Lambda \times \Lambda$.  For $l_0 \in U_\Lambda$, with
  $\sigma_\Lambda(l_0) = (x_0;\xi_0)$, we know that $\Lambda_{\varphi_{l_0}}$ is
  transverse to $\Lambda$ at the point $(x_0;\xi_0)$ and
  $\xi_0 = d\varphi_{l_0}(x_0)$. Hence we can find a neighborhood $V_{l_0}$ of $x_0$
  in $M$ such that
  $\Lambda_{\varphi_{l_0}} \cap \Lambda \cap T^*V_{l_0} = \{(x_0;\xi_0)\}$.  Since
  $\Lambda_{\varphi_{l_0}}$ is the projection of
  $(T^*_{l_0}U_\Lambda \times T^*M) \cap \Lambda_\varphi$ to $T^*M$, it follows that,
  in $T^*_{l_0}U_\Lambda \times T^*V_{l_0}$, the submanifolds
  $(T^*_{l_0}U_\Lambda \times T^*V_{l_0}) \cap \Lambda_\varphi$ and
  $T^*_{l_0}U_\Lambda \times (T^*V_{l_0} \cap \Lambda)$ are transverse at the point
  $(l_0,x_0; \frac{\partial \varphi}{ \partial l}, \frac{\partial\varphi}{\partial
    x})$ (and this is the only intersection point).  We can make $V_{l_0}$ move
  nicely enough with $l_0$ so that $V = \bigsqcup_{l \in U_\Lambda} \{l\} \times V_l$
  is a neighborhood of $I_\Lambda$ in $U_\Lambda \times M$.  Then
  $T^*V \cap \Lambda_\varphi$ is transverse to
  $T^*V \cap (T^*U_\Lambda \times \Lambda)$, with intersection
$$
J^1_\Lambda = \{ (l,x; \frac{\partial \varphi}{ \partial l}, 
\frac{\partial\varphi}{\partial x}); \; l\in U_\lambda, \, x = \tau_M(l)\} .
$$
Let us prove that $J_\Lambda = \rspos \cdot J^1_\Lambda$. For $l_0\in U_\lambda$ and
$x_0 = \tau_M(l_0)$ we have
$(x_0;\frac{\partial\varphi}{\partial x}(l_0,x_0)) = \sigma_\Lambda(l_0)$ by the
definition of $\varphi$. It remains to see that
$\frac{\partial \varphi}{ \partial l} (l_0,x_0) = 0$.  We recall that
$\varphi(l,\tau_M(l)) = 0$ for all $l \in U_\Lambda$.  Differentiating this
relation we obtain
$\frac{\partial \varphi}{ \partial l}(l_0,x_0) + \xi_0 \circ d\tau_M(l_0) = 0$,
for all $l_0\in U_\lambda$ and $(x_0; \xi_0) = \sigma_\Lambda(l_0)$ (here we view
$\xi_0 = \frac{\partial\varphi}{\partial x}(l_0,x_0)$ as a map from $T_{x_0}M$ to
$\R$).  Since the map $\tau_M \colon U_\Lambda \to M$ factorizes through
$\sigma_\Lambda \colon U_\Lambda \to \Lambda \subset T^*M$, we have
$$
\xi_0 \circ d\tau_M(l_0) = \xi_0 \circ d\pi_M(x_0;\xi_0) \circ
d\sigma_\Lambda(l_0) = \alpha_M \circ d\sigma_\Lambda(l_0),
$$
where $\alpha_M$ is the Liouville $1$-form on $T^*M$.  Now $\Lambda$ is conic
Lagrangian, hence the pull-back of $\alpha_M$ to $\Lambda$ vanishes and we obtain
$\xi_0 \circ d\tau_M(l_0) = 0$, hence
$\frac{\partial \varphi}{ \partial l} (l_0,x_0) = 0$, as required.

It follows from this discussion that $A = T^*V \cap\rspos \cdot \Lambda_\varphi$ is
transverse to $B = T^*V \cap (T^*U_\Lambda \times \Lambda)$ with intersection
$J_\Lambda$.  Now $J_\Lambda$ is contained in $A_1 = \{\varphi=0\}$ and in
$B_1 = T^*_{U_\Lambda}U_\Lambda \times \Lambda$.  We deduce that $A \cap A_1$ and
$B \cap B_1$ have a clean intersection which is still $J_\Lambda$.  Since
$A\cap A_1 = T^*V \cap \Lambda'_\varphi $ and
$B\cap B_1 = T^*V \cap (T^*_{U_\Lambda}U_\Lambda \times \Lambda)$, this concludes the
proof of~(i-a) and (i-b).

  \sui(iii) Now we prove the claim (c) of (i).  Sato's
  triangle~\eqref{eq:SatoDTmuhom1} gives
  \begin{align*}
(\DD'(\cor_{\opb{\varphi}([0,+\infty[)}) \tens \opb{q_2}F)_{I_\Lambda}
    \to (&\rsect_{\opb{\varphi}([0,+\infty[)}(\opb{q_2}F))_{I_\Lambda} \\
    &\qquad\to \roim{\dot\pi_{U_\Lambda \times M}{}}( G )_{I_\Lambda} \to[+1].
  \end{align*}
By definition $d\varphi$ does not vanish in a neighborhood of $I_\Lambda$.
Hence $\opb{\varphi}(0)$ is a smooth hypersurface near $I_\Lambda$ and
$\DD'(\cor_{\opb{\varphi}([0,+\infty[)}) \simeq
\cor_{\opb{\varphi}(]0,+\infty[)}$.  Since $I_\Lambda \subset \opb{\varphi}(0)$,
the first term of the above triangle is zero. By~\mbox{(i-a)} the support of
$\roim{\dot\pi_{V}{}}( G|_ {\dT^*V})$ is already contained in $I_\Lambda$. So we
can forget the subscript $I_\Lambda$ in the third term and we obtain~(i-c).

\sui(iv) We prove the last assertion of the proposition. Let $l_0 \in U_\Lambda$ be
given and $(x_0; \xi_0) = \sigma(l_0)$.  Since $\Lambda'_{\varphi_l}$ is transverse
to $\Lambda$ at $(x;\xi) = \sigma(l)$, by Lemma~\ref{lem:isotopie_graphes} below we
can find neighborhoods $U$ of $l_0$ and $W$ of $x_0$ and a homogeneous Hamiltonian
isotopy of $\dT^*W$ parameterized by $U$, say
$\Psi \colon U \times \dT^*W \to \dT^*W$, such that
$\Psi_l(\Lambda) \cap T^*W = \Lambda \cap T^*W$ and
$\Psi_l(\Lambda'_{\varphi_{l_0}}) \cap T^*W = \Lambda'_{\varphi_l} \cap T^*W$, for
all $l\in U$.  We set $\Lambda^+ = \Lambda'_\varphi \cup (T^*_UU \times \Lambda)$ and
$\Lambda^+_l = \Lambda'_{\varphi_l} \cup \Lambda$. Then
$\cor_{\opb{\varphi}([0,+\infty[)}$ and $\opb{q_2}F$ belong to
$\Der_{[\Lambda^+]}(\cor_{U\times W})$ and $\Psi_l(\Lambda^+_{l_0}) = \Lambda^+_l$.
By Proposition~\ref{prop:equiv_local_isotopie}~(ii) we deduce
\begin{align*}
  \rhom(&\cor_{\opb{\varphi}([0,+\infty[)}, \opb{q_2}F)|_{\{l_0\}\times W} \\
  &\isoto \rhom(\cor_{\opb{\varphi}([0,+\infty[)}|_{\{l_0\}\times W},
    \opb{q_2}F|_{\{l_0\}\times W}) \\
  & \simeq \rhom(\cor_{\opb{\varphi_{l_0}}([0,+\infty[)}, F)|_W .
\end{align*}
Taking the germs at $x_0 \in W$ we obtain the required isomorphism.
\end{proof}

Let $X$ be a manifold and $Y,Z$ two submanifolds of $X$.  We recall that $Y$ and
$Z$ have a clean intersection if $W=Y\cap Z$ is a submanifold of $X$ and $TW =
TY \cap TZ$. This means that we can find local coordinates $(\ul x,\ul y, \ul
z,\ul w)$ such that $Y = \{ \ul x = \ul z =0\}$ and $Z = \{ \ul x = \ul y
=0\}$. Using these coordinates the following lemma is easy.

\begin{lemma}\label{lem:clean-cone}
Let $X$ be a manifold and $Y,Z$ two submanifolds of $X$ which have a clean
intersection. We set $W=Y\cap Z$. Then $C(Y,Z) = W\times_X TY + W\times_X TZ$.
\end{lemma}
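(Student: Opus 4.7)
The strategy is to reduce to a local computation in the adapted coordinates provided by the cleanness hypothesis. First I would observe that both sides are supported on $W$: if $x_0 \notin Y\cap Z$, say $x_0\notin Y$, then a neighborhood of $x_0$ is disjoint from $Y$, the union in~\eqref{eq:form_cone2} is empty, and $C(Y,Z)\cap T_{x_0}X=\emptyset$; similarly $W\times_X TY+W\times_X TZ$ is empty off $W$. So it suffices to check that the two fibers agree at each $w_0\in W$, and this is a purely local statement.

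Next, invoking the normal form from the remark just preceding the lemma, I would choose local coordinates $(\ul x,\ul y,\ul z,\ul w)$ on a neighborhood of $w_0$ in which $Y=\{\ul x=\ul z=0\}$ and $Z=\{\ul x=\ul y=0\}$, so that $W=\{\ul x=\ul y=\ul z=0\}$ is parametrized by $\ul w$. In these coordinates $T_{w_0}Y=\{(0,v_y,0,v_w)\}$ and $T_{w_0}Z=\{(0,0,v_z,v_w)\}$, hence
\[
T_{w_0}Y+T_{w_0}Z=\{(0,v_y,v_z,v_w)\}=\{v\in T_{w_0}X\,;\;v_x=0\}.
\]

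Identifying the chart with an open subset of a vector space, formula~\eqref{eq:form_cone2} describes $C(Y,Z)\cap T_{w_0}X$ as the closure of the union of the half-lines $q([z_n,y_n))$ for $y_n\in Y$, $z_n\in Z$ tending to $w_0$; equivalently, it is the set of vectors of the form $\lim_n c_n(y_n-z_n)$ with $c_n\geq 0$. Since every point of $Y$ and every point of $Z$ has vanishing $\ul x$-component, the same is true of every such limit, which gives the inclusion $C(Y,Z)_{w_0}\subset T_{w_0}Y+T_{w_0}Z$. For the reverse inclusion, given $v=(0,v_y,v_z,v_w)$, I would set $y_n=(0,v_y/n,0,v_w/n)\in Y$ and $z_n=(0,0,-v_z/n,0)\in Z$; then $y_n,z_n\to w_0$ and $n(y_n-z_n)=v$, so $v\in C(Y,Z)_{w_0}$.

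I expect no real difficulty: once the cleanness is converted into the adapted coordinates, both inclusions are essentially immediate, so the only ``content'' of the proof is the local normal form itself.
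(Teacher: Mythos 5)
Your proof is correct and follows the same route the paper intends: put $Y$ and $Z$ in the adapted normal form $Y=\{\ul x=\ul z=0\}$, $Z=\{\ul x=\ul y=0\}$ afforded by cleanness, then verify both inclusions by the description of the tangent cone via limits of scaled differences. The paper merely states that "using these coordinates the lemma is easy," and your write-up supplies exactly the short computation that phrase is pointing to.
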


\begin{lemma}\label{lem:lagr-clean-inter}
  Let $X$ be a manifold and $\Lambda_1,\Lambda_2$ be two Lagrangian submanifolds
  of $\dT^*X$. Let $F_1\in \Derb_{(\Lambda_1)}(\cor_X)$ and
  $F_2\in \Derb_{(\Lambda_2)}(\cor_X)$.  We assume that $\Lambda_1$ and
  $\Lambda_2$ have a clean intersection and we set
  $\Xi= \Lambda_1 \cap \Lambda_2$.  Then there exists a neighborhood $U$ of
  $\Xi$ in $T^*X$ such that $\SSi(\muhom(F_1,F_2) |_U) \subset T^*_\Xi T^*X$,
  that is, $\muhom(F_1,F_2)|_U$ is supported on $\Xi$ and has locally constant
  cohomology sheaves on $\Xi$.
\end{lemma}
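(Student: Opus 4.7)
The plan is to bound $\SSi(\muhom(F_1,F_2))$ using Proposition~\ref{prop:SSmuhom}, then simplify the cone $C(\SSi(F_2),\SSi(F_1))$ via the clean intersection hypothesis and Lemma~\ref{lem:clean-cone}, and finally use the Lagrangian property of $\Lambda_1,\Lambda_2$ to pass through the Hamiltonian isomorphism $H$ and land in $T^*_\Xi T^*X$. The final assertion about local constancy along $\Xi$ is then a direct application of Example~\ref{ex:loccst_submfd}.

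First, I would choose a neighborhood $U$ of $\Xi$ in $T^*X$ small enough that $\SSi(F_i)\cap U \subset \Lambda_i$ for $i=1,2$; such a $U$ exists because $F_i\in\Derb_{(\Lambda_i)}(\cor_X)$. The support estimate~\eqref{eq:suppmuhom} already gives $\supp\muhom(F_1,F_2)\cap U\subset \Lambda_1\cap\Lambda_2\cap U=\Xi\cap U$, which is the support part of the conclusion. It remains only to control the microsupport of $\muhom(F_1,F_2)|_U$.

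Next, Proposition~\ref{prop:SSmuhom} yields
$$\SSi(\muhom(F_1,F_2))\subset \bigl(H^{-1}(C(\SSi(F_2),\SSi(F_1)))\bigr)^a.$$
Because the cone $C(\cdot,\cdot)$ is defined by a local procedure over $T^*X$ via~\eqref{eq:form_cone2}, shrinking the neighborhoods that appear in that formula so as to stay inside $U$ shows $C(\SSi(F_2),\SSi(F_1))|_U\subset C(\Lambda_2,\Lambda_1)$. The clean intersection hypothesis together with Lemma~\ref{lem:clean-cone} then gives
$$C(\Lambda_2,\Lambda_1)=\Xi\times_{T^*X}(T\Lambda_2+T\Lambda_1)\subset T(T^*X)|_\Xi.$$

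Finally, for $p\in\Xi$ and $v\in T_p\Lambda_i$ the covector $H^{-1}(v)\in T_p^*T^*X$ is, up to sign convention, $\omega_p(v,\cdot)$. Since $\Lambda_i$ is Lagrangian and, by clean intersection, $T_p\Xi=T_p\Lambda_1\cap T_p\Lambda_2\subset T_p\Lambda_i$, this covector annihilates $T_p\Xi$, so $H^{-1}(T\Lambda_i)|_\Xi\subset T^*_\Xi T^*X$ for $i=1,2$. Since $T^*_\Xi T^*X$ is stable under the antipodal map, we conclude $\SSi(\muhom(F_1,F_2)|_U)\subset T^*_\Xi T^*X$. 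Example~\ref{ex:loccst_submfd} applied to the inclusion $\Xi\hookrightarrow U$ then delivers the statement that $\muhom(F_1,F_2)|_U$ is supported on $\Xi$ with locally constant cohomology sheaves. The only delicate point is the cone comparison in step two, where one must be careful to invoke the locality of~\eqref{eq:form_cone2} rather than treating $C(\SSi(F_2),\SSi(F_1))$ as a global object; everything else is a direct combination of the tools already assembled in the paper.
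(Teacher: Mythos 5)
Your proposal is correct and follows essentially the same route as the paper: bound $\SSi(\muhom(F_1,F_2))$ via Proposition~\ref{prop:SSmuhom}, localize the cone to $C(\Lambda_2,\Lambda_1)$, apply Lemma~\ref{lem:clean-cone}, and use the Lagrangian property of $\Lambda_i$ to push $H^{-1}$ of the tangent spaces into $T^*_\Xi T^*X$. The paper reaches the last step by quoting the identity $H^{-1}(T\Lambda_i)=T^*_{\Lambda_i}T^*X$ directly, whereas you unpack it pointwise using $\omega_p(v,\cdot)$; this is the same computation, just spelled out.
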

\begin{proof}
We have $\SSi(\muhom(F_1,F_2) ) \subset (H^{-1}(C(\SSi(F_2) , \SSi(F_1) )))^a$
by the bound~\eqref{eq:SSmuhom}.
Let $U_i$ be a neighborhood of $\Lambda_i$ such that
$\SSi(F_i)\cap U_i  \subset \Lambda_i$, $i=1,2$.
Then $U = U_1 \cap U_2$ is a neighborhood of $\Xi$ and we have
$H^{-1}(C(\SSi(F_2) , \SSi(F_1) )) \cap T^*U
 \subset H^{-1}(C(\Lambda_2,\Lambda_1))$.

Since $\Lambda_i$ is Lagrangian we have
$H^{-1}(T\Lambda_i) = T^*_{\Lambda_i}T^*X$, for $i=1,2$. In particular
$H^{-1}( \Xi\times_{T^*X} T\Lambda_i) \subset T^*_\Xi T^*X$ and the result
follows from Lemma~\ref{lem:clean-cone}.
\end{proof}

\begin{lemma}\label{lem:isotopie_graphes}
  Let $B$ be a neighborhood of $0$ in $\R^N$.  Let
  $\varphi \colon B \times \R^n \to \R$ be a family of functions.
  \\
  (i) We assume that $\Gamma_{d\varphi_0}$ is transverse to the zero-section
  $T^*_{\R^n}\R^n$ of $T^*\R^n$ and
  $\Gamma_{d\varphi_0} \cap T^*_{\R^n}\R^n = \{0\}$.  Then there exist neighborhoods
  $B'$ of $0$ in $\R^N$ and $V$ of $0$ in $\R^n$ and a family of Hamiltonian
  isotopies of $T^*\R^n$ parameterized by $B'$, say
  $\Psi \colon B' \times T^*\R^n \to T^*\R^n$, such that
  $\Psi_b(T^*_{\R^n}\R^n) = T^*_{\R^n}\R^n$ and
  $\Psi_b(\Gamma_{d\varphi_0}) \cap T^*V = \Gamma_{d\varphi_b} \cap T^*V$, for all
  $b\in B'$.
  \\
  (ii) Let $\Lambda \subset \dT^*\R^n$ be a closed conic Lagrangian submanifold.  We
  assume that $\Gamma_{d\varphi_0}$ is transverse to $\Lambda$ with
  $\Gamma_{d\varphi_0} \cap \Lambda = \{(0; \xi_0)\}$,
  $\Gamma_{d\varphi_b} \cap \Lambda = \{(x_b; \xi_b)\}$ and $\varphi_b(x_b) = 0$ for
  all $b\in B$.  Then there exist neighborhoods $B'$ of $0$ in $\R^N$ and $V$ of $0$
  in $\R^n$ and a family of homogeneous Hamiltonian isotopies of $\dT^*\R^n$
  parameterized by $B'$, say $\Psi \colon B' \times \dT^*\R^n \to \dT^*\R^n$, such
  that  
  $\Psi_b(\Lambda) \cap \dT^*V = \Lambda \cap \dT^*V$ and
  $\Psi_b(\Lambda'_0) \cap \dT^*V = \Lambda'_b \cap \dT^*V$, for all $b\in B'$, where
  $\Lambda'_b = \{(x; \lambda \cdot d\varphi_b(x)); \; \lambda >0,\; \varphi_b(x) =
  0\}$.
\end{lemma}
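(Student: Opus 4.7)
The plan is to prove both parts by a Moser-type argument, constructing the desired isotopy as the flow of a carefully chosen time-dependent Hamiltonian $h^s_b$ (with $s \in [0,1]$ the time parameter and $b$ the base parameter), built to satisfy two simultaneous constraints: one that realizes the motion of the auxiliary Lagrangian, and one that preserves the fixed Lagrangian.

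For part (i), I first apply the implicit function theorem to the transversality hypothesis to produce a smooth family of points $x_{sb}$ with $d\varphi_{sb}(x_{sb}) = 0$, for $s \in [0,1]$ and $b$ in a small ball $B'$, giving the unique intersection of $L^s := \Gamma_{d\varphi_{sb}}$ with $0_{\R^n}$ in a neighborhood $V$ of $0$. A standard computation shows that the infinitesimal evolution of $s \mapsto L^s$ is described by the closed 1-form $d(\partial_s \varphi_{sb})$ on $L^s$ (using the identification $L^s \simeq \R^n$ via the projection), which admits the primitive $\partial_s \varphi_{sb}$. Therefore a Hamiltonian $h^s_b$ on $T^*V$ whose flow sends $L^0$ to $L^s$ is characterized by $h^s_b|_{L^s} = \partial_s \varphi_{sb}$ up to an additive constant, and its flow preserves $0_{\R^n}$ setwise iff $h^s_b|_{0_{\R^n}}$ is constant. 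The two prescriptions are compatible at the unique intersection point $(x_{sb},0)$: the constant on $0_{\R^n}$ is forced to be $\partial_s \varphi_{sb}(x_{sb})$, and the differential $dh^s_b$ at the intersection is independently prescribed on $TL^s$ and $T 0_{\R^n}$, which span the tangent space by transversality. Such an $h^s_b$ therefore exists on $L^s \cup 0_{\R^n}$, extends smoothly to $T^*V$ (smoothly in all of $s$, $b$, and the fiber variable), and integrating $X_{h^s_b}$ over $s \in [0,1]$ yields the required isotopy $\Psi_b$.

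For part (ii) the argument is parallel, with two adjustments. First, I require $h^s_b$ to be homogeneous of degree one in $\xi$ so that the resulting isotopy is homogeneous. Second, I replace $0_{\R^n}$ by $\Lambda$ and $L^s$ by $\Lambda'_{sb}$. By the implicit function theorem again the intersection point $(x_{sb};\xi_{sb}) = \Gamma_{d\varphi_{sb}} \cap \Lambda$ varies smoothly; the standing hypothesis $\varphi_b(x_b)=0$ (applied at $b' = sb$) yields $\varphi_{sb}(x_{sb}) = 0$, so $(x_{sb};\xi_{sb})$ lies on $\Lambda'_{sb}$. Preservation of the conic Lagrangian $\Lambda$ by the flow of a degree-one homogeneous Hamiltonian amounts to $h^s_b|_\Lambda \equiv 0$ (any degree-one homogeneous function locally constant on a conic Lagrangian vanishes identically on it). Movement of $\Lambda'_{sb}$ along its family amounts to prescribing $h^s_b|_{\Lambda'_{sb}}$ as a suitable primitive of the evolution 1-form.

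The main obstacle is to verify the compatibility of these two constraints on $h^s_b$ at the intersection point $(x_{sb};\xi_{sb})$, and this is precisely where the hypothesis $\varphi_b(x_b)=0$ is used: the natural primitive of the evolution 1-form on $\Lambda'_{sb}$, built from $\partial_s \varphi_{sb}$ restricted to the slice $\{\varphi_{sb}=0\}$ and extended homogeneously in $\lambda$, takes the value $0$ at $(x_{sb};\xi_{sb})$ exactly because $\varphi_{sb}(x_{sb}) = 0$, matching the value forced by $h^s_b|_\Lambda \equiv 0$. The compatibility of differentials follows from the transversality of $\Gamma_{d\varphi_{sb}}$ and $\Lambda$ (which implies the weaker transversality of $\Lambda'_{sb}$ and $\Lambda$ in the directions needed). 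Once this compatibility is in hand, one extends $h^s_b$ from $\Lambda \cup \Lambda'_{sb}$ to a neighborhood by a homogeneous-of-degree-one function (using, e.g., coordinates adapted to the conic structure and a homogeneous partition of unity), and integrates the flow on $\dT^*V$ to obtain $\Psi$.
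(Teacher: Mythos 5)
Your plan takes a genuinely different route from the paper on both parts.

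For part (i) the paper proceeds by a direct, essentially explicit construction rather than a Moser argument: since $\Gamma_{d\varphi_0}$ is transverse to the zero section, $d\varphi_b$ is locally invertible and its inverse $\theta_b = (d\varphi_b)^{-1}$ is a closed $1$-form on $(\R^n)^*$, so $\theta_b = dh_b$ for a function $h_b$ of $\xi$ alone.  The Hamiltonian $h_b(\xi)$ then has vector field $\sum_i (\theta_b)_i(\xi)\,\partial_{x_i}$: since there is no $\partial_{\xi_i}$ component, its flow preserves $\{\xi = \mathrm{const}\}$ and in particular the zero section, while the time-$1$ map carries the vertical fiber $\{0\}\times(\R^n)^*$ onto $\Gamma_{d\varphi_b}$.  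One then just sets $\Psi_b = \phi^1_{h_b}\circ(\phi^1_{h_0})^{-1}$.  No time-dependent Hamiltonian, no compatibility check at the intersection, no extension problem.  Your Moser construction works, but it is heavier; the paper's approach buys you an explicit Hamiltonian and avoids the delicate step of extending a function prescribed on $L^s\cup 0_{\R^n}$ smoothly in $(s,b)$.

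For part (ii) your sketch has a genuine gap.  The two conic Lagrangians $\Lambda$ and $\Lambda'_{sb}$ share the half-ray through $(x_{sb};\xi_{sb})$ and are therefore \emph{not} transverse (their tangent spaces at any point of that ray both contain the radial direction), so the part-(i) mechanism — prescribe $dh$ independently on two transverse tangent spaces — does not carry over.  On the overlap direction the two constraints genuinely interact: homogeneity of degree $1$ forces the radial derivative of $h^s_b$ to equal its value, so $h^s_b|_\Lambda\equiv 0$ prescribes a \emph{zero} radial derivative along the ray, while the primitive on $\Lambda'_{sb}$ prescribes a radial derivative equal to its own value there.  Your argument simply asserts that the "natural primitive" vanishes at $(x_{sb};\xi_{sb})$ because $\varphi_{sb}(x_{sb})=0$; that is a real claim that needs a computation, since $\varphi_{sb}(x_{sb})=0$ by itself only says the intersection point lies on $\Lambda'_{sb}$, not that a given primitive of the evolution form vanishes there.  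You also defer the homogeneous extension to "a homogeneous partition of unity" without exhibiting one compatible with the constraints along both cones.  The paper avoids all of this: it first applies a homogeneous isotopy $\Phi$ putting $\Lambda$ into conic normal form (half the conormal of a hypersurface), in which $\Lambda'_b$ becomes the conormal of a graph $\Gamma_{\varphi'_b}$; one is then reduced to part (i) on $\R^{n-1}$, lifts the resulting planar isotopy $\Psi'$ to a homogeneous isotopy $\Psi''$ of $\dT^*\R^n$, and conjugates back by $\Phi$.  That reduction removes the compatibility and extension issues entirely.
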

\begin{proof}
  (i) The transversality hypothesis implies that $d\varphi_b$ viewed as a
  function from $\R^n$ to $(\R^n)^*$ is invertible near $0$, for $b$ small
  enough.  We set $\theta_b = (d\varphi_b)^{-1}$ and view $\theta_b$ as a
  $1$-form on $(\R^n)^*$ defined in some neighborhood of $0$.  Since the graph
  of $\theta_b$ is Lagrangian, it is a closed $1$-form and we can write
  $\theta_b = dh_b$ near $0$.  We consider $h_b(\xi)$ as a Hamiltonian function
  on $T^*\R^n$.  By construction its Hamiltonian vector field is
  $X_{h_b}(x;\xi) = \sum_i (\theta_b)_i(\xi) \partial_{x_i}$ and the time $1$ of
  its flow satisfies $\phi_{h_b}^1(\{0\} \times (\R^n)^*) = \Gamma_{d\varphi_b}$
  near $0$. Moreover
  $\phi_{h_b}^t(\R^n \times \{0\}) \subset \R^n \times \{0\}$.  Now we set
  $\Psi_b = \phi_{h_b}^1 \circ (\phi_{h_0}^1)^{-1}$.

  \sui(ii) We can find a homogeneous Hamiltonian isotopy $\Phi$ arbitrarily close to
  $\id$ and a neighborhood $W$ of $\Phi(0;\xi_0)$ such that $T^*W \cap \Phi(\Lambda)$
  is half of the conormal bundle of a smooth hypersurface $X$ and
  $T^*W \cap \Phi(\Gamma_{d\phi_b})$ is still the graph of a function for $b$ close
  enough to $0$.  We take coordinates on $W$ such that $X = \R^{n-1} \times \{0\}$,
  $\Phi(0;\xi_0) = (\ul 0, 0; \ul 0,1)$ and we write
  $\Phi(x_b;\xi_b) = (y_b,0; \ul 0, \eta_b)$.  Then $\Phi(\Lambda'_b)$ is the
  conormal bundle of a hypersurface which is the graph of a function
  $\varphi'_b \colon \R^{n-1} \to \R$.  Moreover $\Gamma_{d\varphi'_b}$ is transverse
  to $T^*_{\R^{n-1}}\R^{n-1}$.  Part~(i) of the proof gives a family $\Psi'$ of
  Hamiltonian isotopies of $T^*\R^{n-1}$ such that
  $\Psi'_b(T^*_{\R^{n-1}}\R^{n-1}) = T^*_{\R^{n-1}}\R^{n-1}$ and
  $\Psi'_b(\Gamma_{d\varphi'_0}) \cap T^*V = \Gamma_{d\varphi'_b} \cap T^*V$.  We
  lift $\Psi'$ into a family $\Psi''$ of homogeneous Hamiltonian isotopies of
  $\dT^*\R^n$ and we set $\Psi_b = \Phi^{-1} \circ \Psi''_b \circ \Phi$.
\end{proof}

\begin{remark}\label{rem:def_microgerm}  
  The assignment $F \mapsto m_\varphi(F)$ of Definition~\ref{def:microgerm} is a
  functor $m_\varphi \colon \Derb_{(\Lambda)}(\cor_M) \to \Derb(\cor_{U_\Lambda})$.
  By Proposition~\ref{prop:SSmicrogerm} it factorizes through
  $\Dloc^0(\cor_{U_\Lambda})$ (see Definition~\ref{def:Dloc}).  Let us check that
  $m_\varphi$ induces a functor of stacks 
  $m_\varphi \colon \kss(\cor_\Lambda) \to \Dloc(\cor_{U_\Lambda})$.

  We recall that $\kss(\cor_\Lambda)$ is associated with some prestack
  $\kss^0_\Lambda$; the objects of $\kss^0_\Lambda(\Lambda_0)$ are those of
  $\Derb_{(\Lambda_0)}(\cor_M)$ and the Hom set between $F$ and $G$ is
  $\Hom_{\Derb(\cor_M;\Lambda_0)}(F,G)$ (see Definition~\ref{def:KSstack}).  Now we
  remark that the definition of $m_\varphi(F)$ applies to any $F \in \Derb(\cor_M)$
  and gives in fact $m_\varphi \colon \Derb(\cor_M) \to \Derb(\cor_{U_\Lambda})$.  By
  the definition of the microsupport we see that $m_\varphi(F) \simeq 0$ if
  $\SSi(F) \cap \Lambda = \emptyset$, that is, $m_\varphi(F) \simeq 0$ if
  $F \in \Derb_{T^*M \setminus \Lambda}(\cor_M)$.  By the definition of
  $\Derb(\cor_M;\Lambda)$ this means that $m_\varphi$ factorizes through
  $\Derb(\cor_M;\Lambda)$ (see the reminder on localization in
  section~\ref{sec:def_orb_cat}).  Hence $m_\varphi$ induces a functor from
  $\kss^0_\Lambda(\Lambda)$ to $\Derb(\cor_{U_\Lambda})$, still with image in
  $\Dloc^0(\cor_{U_\Lambda})$. We can replace $\Lambda$ by any of its open subset
  $\Lambda_0$ and obtain in this way a functor of prestacks from $\kss^0_\Lambda$ to
  $\Dloc^0(\cor_{U_\Lambda})$.  Passing to the associated stacks it induces a functor
  of stacks $m_\varphi \colon \kss(\cor_\Lambda) \to \Dloc(\cor_{U_\Lambda})$.
\end{remark}

\section{Monodromy morphism}
\label{sec:monodromy}

We keep the notations of Definition~\ref{def:microgerm} and
Remark~\ref{rem:def_microgerm}.  In particular we have a choice of function
$\varphi \cl U_\Lambda\times M \to \R$ which gives a functor
$m_\varphi \colon \Derb_{(\Lambda)}(\cor_M) \to \Derb(\cor_{U_\Lambda})$.  For
$F\in \Derb_{(\Lambda)}(\cor_M)$ we know that $m_\varphi(F)$ is a locally constant
object on $U_\Lambda$.  Here we describe the monodromy of its restriction to a fiber
$U_{\Lambda,p}$ of $\sigma_\Lambda \cl U_\Lambda \to \Lambda$.

\medskip

We first recall well-known results on locally constant sheaves and introduce
some notations.  Let $X$ be a manifold and $L\in \Derb(\cor_X)$ such that
$\SSi(L) \subset T^*_XX$, that is, $L$ has locally constant cohomology sheaves.
Then any path $\gamma \cl [0,1] \to X$ induces an isomorphism
\begin{equation}\label{eq:def_action_chemin}
M_\gamma(L) \cl L_{\gamma(0)} \isofrom \rsect([0,1]; \opb{\gamma}L)
\isoto L_{\gamma(1)}.
\end{equation}
Moreover, $M_\gamma(L)$ only depends on the homotopy class of $\gamma$ with
fixed ends and $M_\gamma(L) \circ M_{\gamma'}(L) = M_{\gamma \circ \gamma'}(L)$
if $\gamma$, $\gamma'$ are composable.  In particular, if we fix a base point
$x_0\in X$, we obtain the monodromy morphism
\begin{equation}\label{eq:def_monodr}
  \begin{split}
M(L) \cl \pi_1(X;x_0) &\to \Iso(L_{x_0})  \\
\gamma & \mapsto M_\gamma(L),
  \end{split}
\end{equation}
where $\Iso(L_{x_0})$ is the group of isomorphisms of $L_{x_0}$ in
$\Derb(\cor)$.

\smallskip

Now we go back to the situation of section~\ref{sec:defmicrogerms}.  For a given
$p\in \Lambda$ we set $U_{\Lambda,p} = \sigma_\Lambda^{-1}(p)$.  This is the open
subset of Lagrangian Grassmannian manifold $\lag(T_pT^*M)$ formed by the Lagrangian
subspaces of $T_p\Lambda$ which are transverse to $\lambda_0(p)$ and
$\lambda_\Lambda(p)$.  Let us describe its connected components and their fundamental
groups.

Let $(V,\omega)$ be a symplectic vector space of dimension $2n$.  Let $l_1, l_2$
be two Lagrangian subspaces.  We can assume that $V=\R^{2n}$ with
$\omega = \sum e_i\wedge f_i$ in the canonical base
$(e_1,\ldots,e_n, f_1,\ldots,f_n)$ and $l_1 = \langle e_1,\ldots,e_n \rangle$,
$l_2 = \langle e_1,\ldots,e_k$, $f_{k+1},\ldots, f_n \rangle$.  Let
$U(l_1) \subset \lag(V)$ be the open subset of Lagrangian subspaces which are
transverse to $l_1$.  Then $U(l_1)$ is diffeomorphic to $\Sym_n$, the space of
symmetric matrices of size $n\times n$, through
$M \mapsto l_M := \{(M \ul y, \ul y)$; $\ul y = (y_1,\ldots,y_n)\}$.  Writing
$M = \begin{pmatrix} A & B \\ {}^tB & C \end{pmatrix}$, with $A$ of size
$k\times k$, we see that $l_M$ is also transverse to $l_2$ if and only if $C$ is
invertible.  Hence $U(l_1) \cap U(l_2)$ is diffeomorphic to
$\R^d \times \Sym^0_{n-k}$ where $d$ is some integer and $\Sym^0_{n-k}$ is the
subset of invertible matrices in $\Sym_{n-k}$.

Let us recall the topology of $\Sym_n^{p,q}$ the subset of $\Sym_n$ of matrices
with $p$ positive eigenvalues and $q$ negative eigenvalues, $p+q=n$.  The action
of $\GL^+_n$ on $\Sym_n^{p,q}$, $A\cdot M = {}^tAMA$, gives
$\Sym_n^{p,q} \simeq \GL^+_n/\SO(p,q)$.  In particular $\Sym_n^{p,q}$ is
connected.  Now a maximal compact subgroup of $\SO(p,q)$ is
$K = S(\GO(p)\times \GO(q))$ and $\SO(p,q)$ is diffeomorphic to $K \times \R^d$
for some $d$. We deduce an exact sequence of fundamental groups:
$$
\pi_1(K) \to \pi_1(\GL^+_n) \to \pi_1(\Sym_n^{p,q})
\to \pi_0(K) \to \pi_0(\GL^+_n)= \{1\} . 
$$
We recall that $\pi_1(\SO(n)) = \pi_1(\GL^+_n)$ is $\Z$ for $n=2$ and $\Z/2\Z$
for $n\geq 3$.  We will only need the case where $n$ is big. In particular we
can assume $n\geq 3$ and $p$ or $q\geq 2$.  Since $K$ contains
$\SO(p) \times \SO(q)$ it follows that the map $\pi_1(K) \to \pi_1(\GL^+_n)$ is
surjective. Hence $\pi_1(\Sym_n^{p,q}) \isoto \pi_0(K)$.  If $p$ and $q$ are
both $\geq 1$, then $\GO(p) \times \GO(q)$ has four components and
$\pi_0(S(\GO(p)\times \GO(q))) = \Z/2\Z$.  If $p$ or $q$ vanishes, then
$\SO(p,q) = \SO(n)$ and $\Sym_n^{p,q}$ is contractible.  In conclusion, for
$n\geq 3$ we have $\pi_1(\Sym_n^{p,q}) \simeq \Z/2\Z$ if $p\not=0$ and $q\not=0$
and $\pi_1(\Sym_n^{p,q}) \simeq 0$ if $p=0$ or $q=0$.

\smallskip

Let us write down the above results for
$U_{\Lambda,p} = \sigma_\Lambda^{-1}(p)$.  We set $n = \dim M$ and
$k = \dim(\lambda_0(p) \cap \lambda_\Lambda(p))$.  We assume $n-k \geq 3$.  Then
$U_{\Lambda,p}$ is topologically equivalent to
$\Sym^0_{n-k} = \bigsqcup_{p=0}^{n-k} \Sym_{n-k}^{p,n-k-p}$, which has $n-k+1$
components, two of them being contractible and the other ones having
$\pi_1 = \Z/2\Z$.

The inertia index $\tau_{T_pT^*M}$ introduced in~\eqref{eq:def_inertia_ind}
gives a function on 
$$
\tau_p \colon U_{\Lambda,p} \to \Z, \qquad
l \mapsto \tau_{T_pT^*M}(\lambda_0(p), \lambda_\Lambda(p), l) 
$$
which is constant on each component of $U_{\Lambda,p}$.  In the coordinates chosen
above for $l_1$, $l_2$ and with $l = l_M$,
$M = \begin{pmatrix} A & B \\ {}^tB & C \end{pmatrix}$   we can
see that $\tau_p(l_M) = \sgn(C)$, using Lemma~A.3.3 of~\cite{KS90} (here $\sgn(C)$ is
$p_+-p_-$, where $p_+$ and $p_-$ are the numbers of positive and negative eigenvalues
of $C$).  Since $C$ is an invertible symmetric matrix of size $(n-k)$, we obtain that
the values of $\tau_p$ are $\{-n+k, -n+k+2,\dots, n-k\}$. Hence $\tau_p$
distinguishes the components of $U_{\Lambda,p}$ and we can index them as follows:
\begin{equation}\label{eq:def_ULambdai}
  \text{$U_{\Lambda,p}^i$ is the connected component of $U_{\Lambda,p}$ where
    $\tau_p = i$.}
\end{equation}
We have to be careful that the components of $U_\Lambda$ cannot be indexed in
this way: the function $\tau_p$ is locally constant on $U_{\Lambda,p}$ but the
function
$l \mapsto \tau_{T_{\sigma(l)}T^*M}(\lambda_0(\sigma(l)),
\lambda_\Lambda(\sigma(l)), l)$ is not locally constant on $U_\Lambda$.  For
example, when $\dim(\lambda_0(p) \cap \lambda_\Lambda(p))$ changes by $1$ (which
happens when $p$ moves from a generic point to a cusp), the parity of the
possible values of $\tau_p$ also changes.  However we have the following result.

\begin{lemma}\label{lem:diff_Maslov}
  The function $\delta \colon U_\Lambda \times_\Lambda U_\Lambda \to \Z$,
  $(l,l') \mapsto \tau_p(l) - \tau_p(l')$, where $p = \sigma(l) = \sigma(l')$,
  is locally constant on $U_\Lambda \times_\Lambda U_\Lambda$.  
\end{lemma}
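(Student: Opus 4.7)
The plan is to work locally on $U_\Lambda \times_\Lambda U_\Lambda$ and extract local constancy of $\delta$ from the local constancy of a microgerm of a simple sheaf. Given a point $(l_0, l_0') \in U_\Lambda \times_\Lambda U_\Lambda$ with base $p_0 = \sigma_\Lambda(l_0) = \sigma_\Lambda(l_0')$, I would apply Lemma~\ref{lem:simple_local} to find a neighborhood $\Lambda_0 \subset \Lambda$ of $p_0$ and a sheaf $F \in \Derb_{(\Lambda_0)}(\cor_M)$ that is simple along $\Lambda_0$; let $d_F \colon \Lambda_0 \to \frac{1}{2}\Z$ denote its shift function in the sense of~\eqref{eq:def_shift_simple}. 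Since every point of $U_\Lambda \times_\Lambda U_\Lambda$ admits a neighborhood of this form, it suffices to prove local constancy of $\delta$ on $U_{\Lambda_0} \times_{\Lambda_0} U_{\Lambda_0}$, where $U_{\Lambda_0} = \sigma_\Lambda^{-1}(\Lambda_0)$.

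Next, I would choose $\varphi$ as in Definition~\ref{def:microgerm} and consider the microgerm $m_\varphi(F)$. By Proposition~\ref{prop:SSmicrogerm}, its restriction to $U_{\Lambda_0}$ has locally constant cohomology sheaves, with stalk at $l \in U_{\Lambda_0}$ isomorphic to $(\rsect_{\{\varphi_l \geq 0\}}(F))_{\tau_M(l)}$. Since $F$ is simple, formula~\eqref{eq:def_shift_simple} identifies this stalk with $\cor[d_F(p) - \frac{1}{2}d_M - \frac{1}{2}\tau_p(l)]$, where $p = \sigma_\Lambda(l)$ and the transversality hypothesis of Proposition~\ref{prop:inv_microgerm} is automatic because $l \in U_\Lambda$. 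The cohomology at $l$ is thus concentrated in a single degree
\[
  e(l) \;:=\; -d_F(p) + \tfrac{1}{2}d_M + \tfrac{1}{2}\,\tau_p(l),
\]
and because $m_\varphi(F)$ has locally constant cohomology sheaves, the integer-valued function $e$ is locally constant on $U_{\Lambda_0}$.

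Finally, I would pull $e$ back along the two projections $\pi_1, \pi_2 \colon U_{\Lambda_0} \times_{\Lambda_0} U_{\Lambda_0} \to U_{\Lambda_0}$ to obtain two locally constant functions, whose difference is therefore locally constant. On the fiber product one has $\sigma_\Lambda(l) = \sigma_\Lambda(l') = p$, so the terms $d_F(p)$ and $\tfrac{1}{2}d_M$ cancel and
\[
  e(l) - e(l') \;=\; \tfrac{1}{2}\bigl(\tau_p(l) - \tau_p(l')\bigr) \;=\; \tfrac{1}{2}\,\delta(l,l').
\]
Hence $\delta = 2(\pi_1^* e - \pi_2^* e)$ is locally constant near $(l_0, l_0')$, which concludes the proof.

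The main obstacle I foresee is the bookkeeping for the shift function $d_F$: it is only a well-defined function $\Lambda_0 \to \frac{1}{2}\Z$, not a locally continuous one (it jumps across cusps, cf.\ Example~\ref{ex:shift}), and correspondingly $l \mapsto \tau_{\sigma_\Lambda(l)}(l)$ is not locally constant on $U_\Lambda$ either. The substance of the argument is precisely that their linear combination $e(l)$ is forced to be locally constant by Proposition~\ref{prop:SSmicrogerm}, and the fiber-product construction then eliminates the irregular term $d_F(p)$, producing the local constancy of $\delta$.
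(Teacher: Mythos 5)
Your proof is correct, but it takes a genuinely different route from the paper's. The paper proves Lemma~\ref{lem:diff_Maslov} by pure linear symplectic algebra: it invokes the cocycle relation for the inertia index $\tau(l_1,l_2,l_3)-\tau(l_2,l_3,l_4)+\tau(l_3,l_4,l_1)-\tau(l_4,l_1,l_2)=0$ to rewrite $\tau_p(l)-\tau_p(l')$ as $\tau(l,l',\lambda_0(p))-\tau(l,l',\lambda_\Lambda(p))$, and then observes that each term of the latter expression is constant as long as the transversality configuration does not change, which holds on $U_\Lambda$. Your approach instead imports the sheaf machinery: you produce a simple sheaf $F$ locally via Lemma~\ref{lem:simple_local}, invoke Proposition~\ref{prop:SSmicrogerm} to see that the microgerm $m_\varphi(F)$ has locally constant cohomology on $U_\Lambda$, read off that the degree function $e(l)=-d_F(p)+\tfrac12 d_M+\tfrac12\tau_p(l)$ is therefore locally constant, and then subtract the two pullbacks along $U_\Lambda\times_\Lambda U_\Lambda\rightrightarrows U_\Lambda$ to cancel $d_F(p)$ and obtain $\delta=2(\pi_1^*e-\pi_2^*e)$. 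Your bookkeeping is right (including the sign convention, since $\cor[n]$ lives in degree $-n$), there is no circularity (Proposition~\ref{prop:SSmicrogerm} precedes this lemma and does not use it), and the transversality hypotheses for~\eqref{eq:def_shift_simple} hold by definition of $U_\Lambda$. The trade-off: the paper's argument is self-contained and applies without reference to sheaves at all --- it is really a statement about the Lagrangian Grassmannian; yours is heavier, relying on the existence of simple sheaves and on the (non-trivial) Proposition~\ref{prop:SSmicrogerm}, but it has the virtue of exhibiting the lemma as a consequence of the very phenomenon it is meant to organize (microgerms of simple sheaves are locally constant on $U_\Lambda$), which makes explicit the compensation between the jumps of $d_F$ and of $\tau_p(\cdot)$ across cusps that you rightly flag as the heart of the matter.
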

\begin{proof}
  We recall that $\tau$ satisfies a cocycle relation (see for
  example~\cite{KS90}, Thm~A.3.2)
  $\tau(l_1,l_2,l_3) - \tau(l_2,l_3,l_4) + \tau(l_3,l_4,l_1) - \tau(l_4,l_1,l_2)
  = 0$ and that $\tau(l_1,l_2,l_3)$ is constant when $(l_1,l_2,l_3)$ moves but
  the dimensions of $l_i\cap l_j$, $i,j=1,2,3$, and $l_1 \cap l_2 \cap l_3$
  do not change.

  The function $\delta$ is locally constant on
  $U_{\Lambda,p} \times U_{\Lambda,p}$ for a given $p$ because so are
  $\tau_p(l)$ and $\tau_p(l')$.  When $p$ moves we choose a local trivialization
  of $T^*M$ and we consider $l$, $l'$, $\lambda_0(p)$, $\lambda_\Lambda(p)$ as
  subspaces of a fixed symplectic space.  Then
  $\tau_p(l) - \tau_p(l') = \tau(l,l',\lambda_0(p)) -
  \tau(l,l',\lambda_\Lambda(p))$ is constant for $l,l'$ fixed and
  $\lambda_0(p)$, $\lambda_\Lambda(p)$ remaining transverse to $l,l'$.
\end{proof}

\begin{proposition}\label{prop:monodr=sign}
  Let $F\in \Derb_{(\Lambda)}(\cor_M)$.   For
  $p\in \Lambda$ let $U_{\Lambda,p}^i$ and $U_{\Lambda,p}^j$ be two components of
  $U_{\Lambda,p}$ (see~\eqref{eq:def_ULambdai}). Then
  \begin{itemize}
  \item [(a)] for $l \in U_{\Lambda,p}^i$ and $l' \in U_{\Lambda,p}^j$ we have
    $m_\varphi(F)_l \simeq m_\varphi(F)_{l'}[(i-j)/2]$,
  \item [(b)] if $\pi_1(U_{\Lambda,p}^i) = \Z/2\Z$, the monodromy of
    $m_\varphi(F)|_{U_{\Lambda,p}^i}$ along the non trivial loop is the
    multiplication by $-1$.
  \end{itemize}
\end{proposition}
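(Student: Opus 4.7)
Part (a) will be a direct application of Proposition~\ref{prop:SSmicrogerm} combined with Proposition~\ref{prop:inv_microgerm}. Given $l \in U_{\Lambda,p}^i$ and $l' \in U_{\Lambda,p}^j$ with common image $p = (x;\xi)$ under $\sigma_\Lambda$, Proposition~\ref{prop:SSmicrogerm} identifies the stalks with the local cohomology groups $(\rsect_{\{\varphi_l \geq 0\}}F)_x$ and $(\rsect_{\{\varphi_{l'} \geq 0\}}F)_x$, where $\varphi$ is the test function of Lemma~\ref{lem:exist-fcttest}. By construction both $\varphi_l$ and $\varphi_{l'}$ vanish at $x$ with differential $\xi$, and their graphs meet $\Lambda$ transversally at $p$ with tangent spaces $l$ and $l'$ respectively. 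Proposition~\ref{prop:inv_microgerm} then furnishes an isomorphism between the two stalks shifted by $[(\tau_{\varphi_l} - \tau_{\varphi_{l'}})/2] = [(i-j)/2]$, since $\tau_{\varphi_l} = \tau_p(l) = i$ and $\tau_{\varphi_{l'}} = j$ by definition of the components $U_{\Lambda,p}^i$.

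For part (b), my plan proceeds in two reduction steps followed by an explicit calculation. First, since $m_\Lambda$ factors through $\kss(\cor_\Lambda)$ (Remark~\ref{rem:def_microgerm}) and a simple sheaf $F_0$ exists on a neighborhood of $p$ in $\Lambda$ (Lemma~\ref{lem:simple_local}), Proposition~\ref{prop:KSstack=Dloc} yields an isomorphism $F \simeq F_0 \otimes L$ in $\kss$ near $p$ for some locally constant $L$ on $\Lambda$. The pullback $\sigma_\Lambda^* L$ is constant along every fiber $U_{\Lambda,p}$, so the monodromy of $m_\Lambda(F)|_{U_{\Lambda,p}^i}$ equals that of $m_\Lambda(F_0)|_{U_{\Lambda,p}^i}$, and we may assume $F$ is simple. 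Second, using Theorem~\ref{thm:GKS} to apply a homogeneous Hamiltonian isotopy as in Lemma~\ref{lem:simple_local}, together with Example~\ref{ex:simple_sheaf}, we may assume (up to a global shift) that $F = \cor_{\ol Z}$ for $Z = \{x_n \geq 0\}$ in $M = \R^n$ and that $p = (0;0,\dots,0,1)$; the Lagrangians $l \in U_{\Lambda,p}$ are then parameterized by symmetric $n\times n$ matrices $Q$ whose upper-left $(n-1)\times(n-1)$ block $A$ is invertible, with the component $U_{\Lambda,p}^i$ corresponding to $\sgn(A) = i$.

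The computation then runs as follows. Taking $\varphi_l(x) = x_n + \tfrac12 Q(x,x)$, an application of the excision triangle and Morse theory identifies $(\rsect_{\{\varphi_l \geq 0\}}\cor_{\ol Z})_0$, up to the global degree shift, with the top cohomology of $\{A(x',x') > 0\}$ intersected with a small sphere, which deformation-retracts onto $S^{p-1}$ for $(p,q) = \mathrm{sign}(A)$. The variation of this one-dimensional stalk along a loop $\gamma$ in $U_{\Lambda,p}^i$ is controlled by the induced loop in $\Sym_{n-1}^{p,q} = \GL_{n-1}^+/\SO(p,q)$. The long exact sequence
\[
\pi_1(\SO(p,q)) \to \pi_1(\GL_{n-1}^+) \to \pi_1(\Sym_{n-1}^{p,q}) \to \pi_0(\SO(p,q)) \to 0
\]
shows that when $\pi_1(U_{\Lambda,p}^i) = \Z/2\Z$ the generator is detected in $\pi_0(\SO(p,q)) = \Z/2\Z$, that is, representable by a path in $\GL_{n-1}^+$ from the identity to an orientation-reversing block-diagonal element of $\SO(p,q)$. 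This element acts on the positive subspace by an orientation-reversing reflection, hence on $H^{p-1}(S^{p-1}) = \cor$ as multiplication by $-1$, yielding the claimed monodromy.

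The hard part of this plan is the explicit Morse-theoretic identification of the stalk in step~3, and a subtle issue in the Hamiltonian reduction of step~2: namely, a homogeneous Hamiltonian isotopy does not in general carry $\lambda_0(p)$ to $\lambda_0(\phi(p))$, so the fiber bundle $U_\Lambda$ is not strictly Hamiltonian-invariant. However, the linear-symplectic data $(T_pT^*M, \lambda_0(p), \lambda_\Lambda(p))$ determine the combinatorial structure of $U_{\Lambda,p}$ and its fundamental groups, so once we fix coordinates at $p$ the calculation goes through in any chosen local model realizing the same $(p,q)$.
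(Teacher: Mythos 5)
Part (a) of your proposal is fine and matches the paper's own treatment, which simply defers to Proposition~\ref{prop:inv_microgerm} (and then recovers it again as a by-product of the computation for (b)).

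For part (b), however, your step~2 has a genuine gap, which you notice but then wave past. You want to reduce to the model $F = \cor_{\ol Z}$, $\Lambda = T^*_{\{x_n = 0\}}\R^n$ by applying a homogeneous Hamiltonian isotopy $\Phi$ as in Theorem~\ref{thm:GKS}. The problem is not merely that $U_\Lambda$ is ``not strictly Hamiltonian-invariant.'' The micro-germ functor $m_\Lambda$ is built from the bundle $U_\Lambda \subset \lag^0_M|_\Lambda$, which is cut out by the transversality condition to $\lambda_0(p) = T_p(T^*_{\pi(p)}M)$; the vertical foliation is \emph{not} preserved by $\Phi$, so $d\Phi_p$ does not carry $U_{\Lambda,p}$ to $U_{\Phi(\Lambda),\Phi(p)}$, and in particular a loop in $U_{\Lambda,p}^i$ has no canonical image loop after the isotopy. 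Your claim that ``the linear-symplectic data determine the combinatorial structure of $U_{\Lambda,p}$ and its fundamental groups'' is true but beside the point: the monodromy you are computing is a property of the \emph{local system $m_\Lambda(F)$ on $U_{\Lambda,p}^i$}, not of the abstract space $U_{\Lambda,p}^i$. You need an isomorphism of local systems identifying $m_\Lambda(F)|_{U_{\Lambda,p}^i}$ with $m_{\Lambda'}(F')|_{U_{\Lambda',p'}^{i'}}$ (for a suitable component index $i'$), and the Hamiltonian isotopy does not give one. This is exactly the step the paper takes care to avoid: it deforms the \emph{point} $p$ rather than the Lagrangian $\Lambda$. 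By Proposition~\ref{prop:SSmicrogerm}, $m_\Lambda(F)$ is locally constant on the open set $U_\Lambda \subset \lag^0_M|_\Lambda$, so any loop in $U_{\Lambda,p}^i$ can be deformed, inside $U_\Lambda$, into a loop in a nearby fiber $U_{\Lambda,q}^i$ with $q$ generic; at such a $q$, $\Lambda$ is \emph{already} a conormal bundle locally and no isotopy is needed, after which Lemma~\ref{lem:simple_local} replaces $F$ by $L_N$ (constant $L$) in the microlocal quotient. No change of $\lambda_0$ ever occurs. If you want to keep a ``move $\Lambda$'' viewpoint, you would have to track the effect of $\Phi$ on the triple $(\lambda_0, \lambda_\Lambda, \lambda_\varphi)$ and show it induces an identification of the two local systems — which is substantially more work than the deformation-to-a-nearby-fiber argument.

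Two smaller remarks. First, step~1 works, but the paper gets $F \simeq L_N$ with $L \in \Derb(\cor)$ a \emph{constant} object directly from Lemma~\ref{lem:simple_local}, which is sufficient and slightly more elementary than invoking the full stack equivalence $\kss(\cor_\Lambda) \simeq \Dloc(\cor_\Lambda)$; but your route is also correct. Second, your step~3 computation (retract to $S^{p-1}$, push the loop to $\pi_0(\SO(p,q))$, observe an orientation-reversing action on $H^{p-1}(S^{p-1})$) is in the right spirit and presumably works, but the paper's presentation is cleaner: it parameterizes explicit quadratic cones $C_\theta$ and observes that a consistent relative orientation of the subspaces $V_\theta$ along the loop is impossible because the vector $e_\theta$ only makes half a turn as $\theta$ runs over $[0,2\pi]$. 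You should also be careful, when working with codimension $k > 1$ submanifolds $N$ rather than only hypersurfaces, that the matrices involved are $(n-k)\times(n-k)$ rather than $(n-1)\times(n-1)$, though this is a cosmetic adjustment once the reduction step is corrected.
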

By the discussion before~\eqref{eq:def_ULambdai} we have
$\pi_1(U_{\Lambda,p}^i) = \Z/2\Z$ if $|i|$ is not maximal, that is,
$|i|\not= \dim M - \dim(\lambda_0(p) \cap \lambda_\Lambda(p))$.

\begin{proof}
  The point~(a) is already proved in~\cite{KS90} and stated here as
  Proposition~\ref{prop:inv_microgerm}. It will be recovered in the course of
  the proof of~(b).

  \sui(i) Since $m_\varphi(F)$ is locally constant on $U_\Lambda$ and $j-i$ is
  well-defined in a neighborhood of $U_{\Lambda,p}^i \times U_{\Lambda,p}^j$ by
  Lemma~\ref{lem:diff_Maslov}, we can assume for the proof of~(a) that $p$ is a
  generic point of $\Lambda$.  This also works for the proof of~(b) since any
  loop in $U_{\Lambda,p}^i$ can be deformed into a loop in a nearby fiber
  $U_{\Lambda,q}^{i'}$.

  Hence we assume that $\Lambda = T^*_NM$ in a neighborhood of $p$, for some
  submanifold $N \subset M$.   By Example~\ref{ex:shift} we know that $\cor_N$ is simple.  By
  Lemma~\ref{lem:simple_local} there exists a neighborhood $\Omega$ of $p$ in $T^*M$
  such that $F$ is isomorphic to $\cor_N \otimes L_M \simeq \Z_N \tens_\Z L_M$ in
  $\Derb(\cor_M;\Omega)$, for some $L\in \Derb(\cor)$.  Hence
  $m_\varphi(F) \simeq m_\varphi(\Z_N)\tens_\Z L_{U_\Lambda}$ and we can assume that
  $\cor=\Z$ and $F=\Z_N$.

  \sui(ii) We take coordinates $(x_1,\ldots,x_n)$ so that
  $N = \{x_1 =\cdots= x_k = 0\}$ and $p= (0;1,0)$. We identify $(\lag_M^0)_p$
  with a space of matrices as in~\eqref{eq:LM0=matsym}.  Then $U_{\Lambda,p}$ is
  the space of symmetric matrices $A$ such that $\det(A_k) \not= 0$, where $A_k$
  is the matrix obtained from $A$ by deleting the $k$ first lines and columns.
  The component $U_{\Lambda,p}^i$ is defined by $\sgn(A_k) = i$. We can choose a
  base point $B \in U_{\Lambda,p}^i$ represented by a diagonal matrix $B$ with
  entries $0$, $1$ or $-1$.  More precisely the diagonal consists of $k$ $0$'s,
  $\alpha$ $1$'s and $\beta$ $-1$'s.  We have $\alpha - \beta = i$, hence
  $2\alpha = i + n-k$.  Since $\pi_1(U_{\Lambda,q}^i) \not=0$ we also have
  $\alpha$, $\beta \geq 1$.

  We choose $a$, $b$ such that $B_{aa} = 1$ and $B_{bb}=-1$.  For
  $\theta \in [0,2\pi]$, we define the matrix $B(\theta)$ which is equal to $B$
  except
$$
\begin{pmatrix}
B_{aa}(\theta) & B_{ab}(\theta) \\
B_{ba}(\theta) & B_{bb}(\theta)
\end{pmatrix}
=
\begin{pmatrix}
\cos(\theta) & \sin(\theta) \\ 
\sin(\theta) & -\cos(\theta)
\end{pmatrix} .
$$
Then $\gamma\cl \theta \mapsto B(\theta)$ defines a non trivial loop in
$U_{\Lambda,p}^i$ and we want to prove that the monodromy of $m_\varphi(\Z_N)$
around $\gamma$ is the multiplication by $-1$.  Since $m_\varphi(\Z_N)$ has
stalk $\Z$ up to some shift, we only have to check that this monodromy is not
trivial.

\sui(iii) We define $\varphi\cl [0,2\pi] \times M \to \R$ by
$\varphi(\theta, \ul x) = x_1 + \ul x \, B(\theta) \, {}^t\ul x$.  Then
$C_\theta = \{\varphi_\theta\geq 0\} \cap N$ is a quadratic cone and the pair
$(N,C_\theta)$ is homotopically equivalent to the pair $(N,V_\theta)$, where
$V_\theta$ is the subspace
$$
V_\theta = \langle e_\theta, e_p; p=k+1,\ldots, k+\alpha, \, p\not= a \rangle,
$$
of dimension $\alpha$ with
$e_\theta = (0, \underset{\scriptscriptstyle a}\cos(\frac{\theta}{2}),0,
\underset{\scriptscriptstyle b}\sin(\frac{\theta}{2}) , 0)$ and
$e_p = (0,\underset{\scriptscriptstyle p}{1},0)$.  The stalk of
$m_\varphi(\Z_N)$ at $B(\theta) \in U_{\Lambda,p}^i$ is
\begin{equation*}
m_\varphi(\Z_N)_{B(\theta)} \simeq  \rsect_{\{\varphi_\theta\geq 0\}}(\Z_N)
\simeq (H^{n-k-\alpha}_{V_\theta}(\Z_N))_0[k+\alpha-n]
\end{equation*}
and a non zero germ $s_\theta \in m_\varphi(\Z_N)_{B(\theta)}$ gives a choice of
relative orientation of $V_\theta$.   In particular a non zero section of $m_\varphi(\Z_N)$ defined on some
neighborhood of $\gamma$ would induce relative orientations of all $V_\theta$
together for $\theta \in [0,2\pi]$, varying continuously with $\theta$ and coinciding
for $\theta=0$ and $\theta=2\pi$, which is impossible.  Hence the monodromy of
$m_\varphi(\Z_N)$ is not $1$, which proves~(b).

The part~(a) follows from the fact that $m_\varphi(\Z_N)_{B(\theta)}$ is
concentrated in cohomological degree $n-k-\alpha = (n-k)/2 - i/2$.
\end{proof}

Let $M$ be a manifold and $L \subset T^*M$ a closed Lagrangian submanifold.  We
set $\shl = \lag_M|_L$.  We have already defined the Gauss map
$g \colon L \to \shl$ as $g(p) = \lambda_\Lambda(p)$.  We have also considered
the tangent to the vertical fiber and we see it now as a map
$v \colon L \to \shl$, $p \mapsto \lambda_0(p)$.  We can define the same maps
after stabilisation.  For an integer $N$ we let $V_N$ be the symplectic vector
space $V_N = \C^N$ and we let $l_0^N = \R^N$, $l_1^N = i\R^N$ be two Lagrangian
subspaces. We define $\shl_N \to L$ the fiber bundle whose fiber at $p\in L$ is
$\lag(T_pT^*M \oplus V_N)$.  We extend $g$ and $v$ into two sections
$g_N, v_N \colon L \to \shl_N$ defined by
$g_N(p) = \lambda_\Lambda(p) \oplus l_1^N$ and
$v_N(p) = \lambda_0(p) \oplus l_0^N$.  Taking the limit for $N\to \infty$, we
set $\shl_\infty = \varinjlim_N \shl_N$ and obtain the sections $g_\infty$ and
$v_\infty$ of $\shl_\infty$.  As recalled in~\S\ref{sec:obstr_classes} it is
proved in~\cite{Gi88} that $\Lambda$ has a local generating function if and only
if the sections $g_\infty$ and $v_\infty$ are homotopic.

In this situation it is classical to consider the obstructions classes to find a
homotopy of sections between $g_\infty$ and $v_\infty$. The $i^{th}$ class
belongs to $H^i(L; \pi_i(U/O))$; we denote it by $\mu_i^{gf}(L)$ (more
precisely, these classes are defined inductively and we need the vanishing of
the first $i-1$ classes to define the $i^{th}$ class).  Let us assume that $L$
has a triangulation and denote by $S_k(L)$ the $k$-skeleton of $L$.  Then
$\mu^{gf}_i=0$ for $i=1,\ldots,k$, if and only if there exists a homotopy
between $g_\infty|_{S_k}$ and $v_\infty|_{S_k}$.

\smallskip

The question of finding an isotopy between $g$ and $v$ is related to finding a
section of the map $U_L \to L$, where   $U_L$ is the open subset of
$\lag_M|_L$ introduced in~\eqref{eq:def_U_Lambda}.  Indeed, for $p\in L$ and
$l \in U_{L,p}$ the open subset $U(l)$ of $\lag_{M,p}$ consisting of Lagrangian
subspaces transverse to $l$ is an affine chart of $\lag_{M,p}$ isomorphic to a space
of symmetric matrices. It has a natural structure of affine space.  Since $g(p)$ and
$v(p)$ belong to $U(l)$, we obtain an isotopy
$h_l \colon t\mapsto t g(p) + (1-t) v(p)$, $t\in [0,1]$.  Hence any section of the
map $\sigma \colon U_L \to L$ over a subset $S$ of $L$ gives an isotopy between
$g|_S$ and $v|_S$.

For an integer $N$ we set $\Xi_N = \dT^*_{\R^N}\R^{N+1}$ and
$\xi_N = (\ul0,0; \ul0,1) \in \Xi_N$.  For $F \in \Der(\cor_M)$ we consider
$F\etens \cor_{\R^N} \in \Der(\cor_{M\times \R^{N+1}})$.  Since
$F\etens \cor_{\R^N} \simeq \oim{i} \opb{p} F$, where
$p \colon M \times \R^N \to M$ is the projection and
$i \colon M \times \R^N \to M\times \R^{N+1}$ the inclusion, we have
$\SSi(F \etens \cor_{\R^N}) = \SSi(F) \times \Xi_N$.  If
$\dot\SSi(F) = \Lambda$, then $\dot\SSi(F\etens \cor_{\R^N})$ contains
$\Lambda \times \Xi_N$ as an open set and $\Lambda \times \Xi_N$ is a
neighborhood of $\Lambda \simeq \Lambda \times \{\xi_N\}$ which retracts to
$\Lambda$.  We can identify $\lag(M \times \R^N)|_{\Lambda \times \{\xi_N\}}$
with the Lagrangian Grassmannian $\shl_N$ of the stabilisation of $T^*M$
introduced above.

\smallskip

Summing up the discussion we obtain the following result.  We assume that
$\Lambda$ is triangulated.  If the map
$\sigma \colon U_{\Lambda \times \Xi_N} \to \Lambda \times \Xi_N$ has a section
over the $k$-skeleton of $\Lambda \times \{\xi_N\}$, then
$\mu_i^{gf}(\Lambda) = 0$ for $i\leq k$.

\begin{corollary}\label{cor:mush_et_mu}
  Let $\Lambda \subset \dT^*M$ be a closed conic Lagrangian submanifold.  We assume
  that there exists $\shf \in \kss(\cor_\Lambda)(\Lambda)$ which is simple along
  $\Lambda$.  Then $\mu_1^{gf}(\Lambda) = 0$.  Moreover, if  
  $\cor = \Z$, then $\mu_2^{gf}(\Lambda) = 0$.
\end{corollary}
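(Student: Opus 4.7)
The strategy is to extract from $\shf$ a section of the fibration $\sigma\cl U_{\Lambda_N} \to \Lambda_N$ over the $k$-skeleton of $\Lambda$ and then to invoke the criterion stated just before the corollary, which asserts that such a section forces $\mu_i^{gf}(\Lambda) = 0$ for $i \leq k$. The key object is the microlocal germ sheaf $m_{\Lambda}(\shf)$ from Definition~\ref{def:microgerm}, whose properties are controlled by Propositions~\ref{prop:SSmicrogerm} and~\ref{prop:monodr=sign}.

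First I would stabilize: set $\shf_N = \shf \etens \cor_{\R^N}$ for $N$ large. This is a global simple section of $\kss(\cor_{\Lambda_N})$ with $\Lambda_N = \Lambda \times \Xi_N$, and the classes $\mu_i^{gf}$ are invariant under this operation by their definition via the stable maps $g_\infty, v_\infty$. By Proposition~\ref{prop:SSmicrogerm} the microlocal germ $L_N \eqdot m_{\Lambda_N}(\shf_N)$ is a locally constant sheaf on $U_{\Lambda_N}$, and since $\shf_N$ is simple its stalks have the form $\cor[d(l)]$ for a locally constant integer-valued function $d\cl U_{\Lambda_N}\to \Z$. By Proposition~\ref{prop:monodr=sign}(a), on each fiber $U_{\Lambda_N,p}$ the restriction of $d$ to the component $U^i_{\Lambda_N,p}$ differs from its restriction to $U^j_{\Lambda_N,p}$ by $(i-j)/2$, so $d$ attains its maximum on the unique component with maximal inertia index $\tau_p$; by the classification preceding Proposition~\ref{prop:monodr=sign}, this component is contractible. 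The open subset $U^{\max}_{\Lambda_N} \subset U_{\Lambda_N}$ picking out these maximal components is then a subfibration of $\sigma$ with contractible fibers, and therefore admits a section over the $1$-skeleton of $\Lambda$, giving $\mu_1^{gf}(\Lambda)=0$.

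For $\mu_2^{gf}(\Lambda) = 0$ under the hypothesis $2 \neq 0$ in $\cor$, the same subfibration with contractible fibers extends the $1$-skeleton section to the $2$-skeleton. The hypothesis $2 \neq 0$ is needed precisely to ensure that $U^{\max}_{\Lambda_N}$ is a globally coherent subbundle: by Proposition~\ref{prop:monodr=sign}(b), the monodromy of $L_N$ around a non-trivial loop in a component with $\pi_1 = \Z/2\Z$ is multiplication by $-1$, a sign which is non-trivial only when $2 \neq 0$; this is what forces the degree function $d$ to make a consistent selection across non-contractible components without ambiguity, and rules out the potential $\Z/2\Z$-twist obstructing global existence of $U^{\max}_{\Lambda_N}$. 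The main obstacle I expect is the technical verification that $U^{\max}_{\Lambda_N}$ really is an open subbundle with contractible fibers on all of $\Lambda$, including at points where $k(p) = \dim(\lambda_0(p) \cap \lambda_\Lambda(p))$ jumps (cusps and tangencies of the projection $\Lambda \to M$). Handling this requires Lemma~\ref{lem:diff_Maslov}, which provides the local constancy of differences of inertia indices on $U_\Lambda \times_\Lambda U_\Lambda$, together with enough stabilization to make all fiber dimensions sufficiently large that the component structure described in Section~\ref{sec:monodromy} behaves uniformly across these singular loci.
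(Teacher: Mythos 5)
Your plan starts from the right ingredients (the microlocal germ $m_{\Lambda_N}(\shf_N)$, stabilization, Propositions~\ref{prop:SSmicrogerm} and~\ref{prop:monodr=sign}), but the central claim that the set $U^{\max}_{\Lambda_N}$ of fiberwise max-degree elements is an open subbundle is false, and the $\mu_2$ step would prove too much.

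For $\mu_1$: write $D(p)=\max\{d(l) : l\in U_{\Lambda_N,p}\}$. Then $U^{\max}_{\Lambda_N}=\{l : d(l)=D(\sigma(l))\}$. The function $d$ is locally constant on $U_{\Lambda_N}$, but $D$ is only \emph{lower} semicontinuous on $\Lambda$: as $p$ moves from a locus where $k(p)=\dim(\lambda_0(p)\cap\lambda_\Lambda(p))$ is larger (e.g.\ a cusp) to a generic nearby $p'$, the fiber $U_{\Lambda_N,p'}$ acquires extra components with larger $d$-value, so $D(p')>D(p)$. A point $l$ over $p$ with $d(l)=D(p)$ then has nearby $l'$ over $p'$ with $d(l')=d(l)<D(p')$, so $l'\notin U^{\max}_{\Lambda_N}$ and $l$ is not interior. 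Hence $U^{\max}_{\Lambda_N}$ is not open, and obstruction theory does not apply. This is not a technicality that Lemma~\ref{lem:diff_Maslov} can fix. The paper circumvents this entirely: it does not take the fiberwise max, but rather picks a \emph{single connected component} $U_0^N$ of $U_{\Lambda\times\Xi_N}|_{\Lambda\times\{\xi_N\}}$, built by choosing $N>d'-d''$ and observing that the components $U_{c(a)}^N \supset U_a\times V_{d'-d_a}^N$ all have the same degree $d'$, hence coincide by the rigidity statement (ii-b). This $U_0^N$ is surjective with connected fibers, which is exactly what the criterion preceding the corollary needs --- and crucially its fibers are not contractible in general (generically they are $\Sym_m^{p,q}$ with $p,q\geq 1$, with $\pi_1=\Z/2\Z$).

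For $\mu_2$: your argument is wrong in a more basic way. If $\sigma$ really had a subbundle with contractible fibers over $\Lambda$, a section over the entire skeleton (hence a homotopy from $g_\infty$ to $v_\infty$) would exist, proving $\mu^{gf}_i(\Lambda)=0$ for \emph{all} $i$. The paper explicitly does not claim this (and the introduction says the higher classes require sheaves of spectra). The role of $2\neq 0$ is not to ``make $U^{\max}$ globally coherent''; it is that $G=m_{\Lambda\times\Xi_N}(\shf\etens\Z_{\R^N})$ is locally constant with germ $\Z$ on each non-simply-connected fiber and has monodromy $-1$ around the generator of $\pi_1(\sigma^{-1}(p))=\Z/2\Z$ (Proposition~\ref{prop:monodr=sign}(b)). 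One picks a generator $u_p$ of $G_{i(p)}$ at each vertex and uses the two homotopy classes of lifts over each edge, together with $a_j=-a_{j'}$, to force a \emph{canonical} choice of edge lift; then for a triangle $T$ the loop $i(\partial T)$ must have trivial $G$-monodromy, hence is nullhomotopic in $\sigma^{-1}(T)$, so $c(T)=0$ and the section extends. This monodromy bookkeeping is the actual content of the $\mu_2$ step, and it has no counterpart in your proposal.
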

\begin{proof}
(i)  We choose a triangulation of $\Lambda$.  By the discussion before the
  corollary it is enough to prove that, for $N$ big enough, the map
  $\sigma \colon U_{\Lambda \times \Xi_N} \to \Lambda \times \Xi_N$ has a
  section over the $k$-skeleton of $\Lambda \times \{\xi_N\}$, for $k=1,2$.

  We consider the connected components of
  $U_{\Lambda \times \Xi_N}|_{\Lambda \times \{\xi_N\}}$, that we denote by $U_a^N$,
  $a\in A_N$.  We let $U_a$, $a\in A$, be the connected components of $U_\Lambda$.
  To prove the vanishing of $\mu_1$ it is enough to see that there exists $N$ and
  $U_a^N$ such that $\sigma|_{U_a^N} \colon U_a^N \to \Lambda$ is   surjective with connected fibers (then
  it is possible to find a section on the $1$-skeleton, since $U_a^N$ is an open
  subset of a fiber bundle over $\Lambda$).

  \sui(ii) By Remark~\ref{rem:def_microgerm} we can define
  $m_\varphi(\shf) \in \Dloc(\cor_\Lambda)$.  Since $\shf$ is simple,
  $m_\varphi(\shf)$ is (locally) concentrated in one degree with germs $\cor$ in this
  degree.  Hence, for any $a\in A$ there exists $d_a = d_a(\shf) \in \Z$ such that
  $m_\varphi(\shf)|_{U_a}[d_a]$ is locally constant with germs $\cor$.

  Let $a,a' \in A$ and $p\in \Lambda$.  We recall that the connected components
  of $U_{\Lambda,p}$ are distinguished by the inertia index and that we denote
  by $U_{\Lambda,p}^i$ the component with index $i$.  We assume that
  $U_{\Lambda,p}^i$ is a component of $U_a \cap U_{\Lambda,p}$ and
  $U_{\Lambda,p}^{i'}$ is a component of $U_{a'} \cap U_{\Lambda,p}$.  By
  Proposition~\ref{prop:monodr=sign}-(a) this implies $i-i' = 2(d_a-d_{a'})$.
  We thus obtain
  \begin{itemize}
  \item [(ii-a)] $U_a$ cannot intersect $U_{\Lambda,p}$ in more than one
    connected component,
  \item [(ii-b)] if $d_a(\shf) = d_{a'}(\shf)$ and
    $\sigma(U_a) \cap \sigma(U_{a'}) \not= \emptyset$, then $U_a = U_{a'}$.
  \end{itemize}
  Applying (ii-a) to $\shf\etens \cor_{\R^N}$ (writing abusively $\cor_{\R^N}$ for
  $\kssfunc_{\Xi_N}(\cor_{\R^N})$) we obtain that $\sigma|_{U_a^N}$ has connected
  fibers, for any component $U_a^N$ introduced in~(i).

  \sui(iii) We denote by $V_b^N$, $b\in B_N$, the components of $U_{\Xi_N}$ and by
  $d_b(\cor_{\R^N})$ (as in~(ii)) the cohomological degree such that
  $m_{\varphi_N}(\cor_{\R^N})$ has germs $\cor$ in degree $d_b(\cor_{\R^N})$, for a
  function $\varphi_N \colon U_{\Xi_N} \times \R^{N+1} \to \R$ similar to $\varphi$.
  We can see directly on the formula of Proposition~\ref{prop:SSmicrogerm} that
  $d_b(\cor_{\R^N})$ takes the values $0$, $1$,\dots,$N$ (more precisely
  $H^*_{\{f\geq 0\}}\cor_{\R^N}$ is concentrated in degree $N-i$ when
  $f(\ul x) = x_{N+1} + q(x_1,\ldots,x_N)$ and $q$ is a non degenerate quadratic form
  with $i$ negative eigenvalues).  We can then identify $B_N$ with $\{0,\ldots,N\}$
  and we get $d_b(\cor_{\R^N}) = b$.

  We remark that the product of two Lagrangian subspaces gives a natural inclusion of
  $U_\Lambda \times U_{\Xi_N}$ in $U_{\Lambda \times \Xi_N}$.  For $a \in A$ and
  $b\in B_N$ the component $U_c^N$ of $U_{\Lambda \times \Xi_N}$ containing
  $U_a \times V_b^N$ satisfies
  $d_c(\shf\etens \cor_{\R^N}) = d_a(\shf) + d_b(\cor_{\R^N}) = d_a(\shf) + b$.

  We set $d' = \max\{d_a(\shf);\, a\in A\}$, $d'' = \min\{d_a(\shf);\, a\in A\}$ and
  we choose $N$ bigger than $d' - d''$.  For $a\in A$ we let $U_{c(a)}^N$,
  $c(a) \in A_N$, be the component which contains $U_a \times V_{d'-d_a(\shf)}^N$.
  Then $d_{c(a)}(\shf\etens \cor_{\R^N}) = d'$, for all $a\in A$.  Since
  $\sigma(U_{c(a)}^N)$ contains  
  $\sigma(U_a) \times \Xi_N$, the open subsets $\sigma(U_{c(a)}^N)$ cover
  $\Lambda \times \Xi_N$.  By~(ii-b) we deduce that these components $U_{c(a)}^N$ are
  in fact a single component.

  By the final remark of~(i) this proves that $\mu_1 = 0$.

  \sui(iv)   Now we assume $\cor = \Z$ and prove
  $\mu^{gf}_2 =0$. Let us set
  $U_0^N =U_{c(a)}^N \cap \sigma_{\Lambda \times \Xi_N}^{-1}(\Lambda \times
  \{\xi_N\})$ ($U_{c(a)}^N$ is the component of $U_{\Lambda \times \Xi_N}$ found
  in~(iii)).  Hence
  $\sigma := \sigma_{\Lambda \times \Xi_N}|_{U_0^N} \colon U_0^N \to \Lambda$ is
  surjective with connected fibers.  We recall that the components of $U_{\Lambda,p}$
  have fundamental group $\Z/2\Z$ except the two with extremal inertia index.  Hence,
  up to taking the product with $V_1^2$ (the component of $U_{\Xi_2}$ with index $1$
  -- see~(iii)), we can assume that the fibers of $\sigma|_{U_0^N}$ have fundamental
  group $\Z/2\Z$.

    Let us recall how $\mu^{gf}_2$ is
  defined.  We assume that our triangulation is fine enough so that
  $\pi_1(\sigma^{-1}(T)) = \pi_1(\sigma^{-1}(p)) = \Z/2\Z$ for any triangle $T$ and
  $p\in T$.  We choose a section of $\sigma$ on the $1$-skeleton, say
  $i \colon S_1(\Lambda) \to U_0^N$.  Then $\mu^{gf}_2$ is the obstruction to extend
  it to the $2$-skeleton and is defined as follows.  The boundary of each triangle
  $T$ gives a loop $i(\partial T)$ in $\sigma^{-1}(T)$, hence an element
  $c(T) \in \pi_1(\sigma^{-1}(T)) = \Z/2\Z$.  Then $i$ can be extended to the
  $2$-skeleton if and only if the chain $c \colon T \mapsto c(T)$ vanishes.  It is
  easy to see that $c$ is a cocycle and, by definition $\mu^{gf}_2 = [c]$.  If
  $[c] = 0$, we write $c = \partial b$ where $b$ is a $1$-chain and we can modify $i$
  by $b$ and obtain a new section $i' \colon S_1(\Lambda) \to U_0^N$ which can be
  extended to $S_2(\Lambda)$.

  Now we see how we can use our sheaf to define $i$ such that the chain $c$ vanishes.
  By Proposition~\ref{prop:monodr=sign}-(b) the sheaf
  $G = m_\varphi(\shf \etens \Z_{\R^N})|_{U_0^N }$ is locally constant with germs
  $\Z$ and its restriction to the fibers has monodromy $-1$.  We first define $i$ on
  $S_0(\Lambda)$ arbitrarily.  For each $p \in S_0(\Lambda)$ we also choose a
  generator $u_p$ of $G_{i(p)} \simeq \Z$. We have two such generators $u_p$ and
  $-u_p$.

  Let $E \subset S_1(\Lambda)$ be an edge with boundaries $p$, $p'$.  The
  fundamental group of $\sigma^{-1}(E)$ is $\pi_1(\sigma^{-1}(E)) =
  \Z/2\Z$. Hence we have two sections $j$, $j'$ of $\sigma$ over $E$ up to
  homotopy such that $j(p) = j'(p) = i(p)$ and $j(p') = j'(p') = i(p')$.  Then
  $\opb{j}(G)$ is a constant sheaf on $E$ and we have a canonical isomorphism
  $$
  a_j \colon G_{i(p)} \simeq (\opb{j}G)_p \isofrom \sect(E; \opb{j}G)
  \isoto (\opb{j}G)_{p'} \simeq G_{i(p')} 
  $$
  and a similar one $a_{j'}$.  Since $G$ has monodromy $-1$, we have
  $a_j = - a_{j'}$.  Hence we can choose one section $j$ or $j'$, that we call
  $i$, such that $a_i(u_p) = u_{p'}$.

  We do this for all edges and we obtain $i \colon S_1(\Lambda) \to U_0^N$. With
  this definition the monodromy of $G$ along $i(\partial T)$ is $1$, for any
  triangle $T$.  Using again $\pi_1(\sigma^{-1}(T)) = \Z/2\Z$ and the fact that
  $G$ has monodromy $-1$ along the non trivial loop, we deduce that
  $i(\partial T)$ is a trivial loop. Hence we can extend $i$ to
  $S_2(\Lambda)$. This proves $\mu^{gf}_2 = 0$.
\end{proof}

\part{Convolution and microlocalization}
\label{chap:conv_mic}

Let $N$ be a manifold and $\Lambda \subset \dT^*N$ a locally closed conic
Lagrangian submanifold.  As explained in Remark~\ref{rem:description_KSstack}
the objects of $\kss^s(\cor_\Lambda)$ are described by simple sheaves along a
covering $\{\Lambda_i\}_{i\in I}$ of $\Lambda$, say
$F_i \in \Derb_{(\Lambda_i)}(\cor_N)$, and gluing ``isomorphisms''
$u_{ji}\in H^0(\Lambda_{ij};\muhom(F_i,F_j)|_{\Lambda_{ij}})$.  For a given
object of $\kss^s(\cor_\Lambda)$ we want to find a representative in
$\Derlb_{(\Lambda)}(\cor_N)$, or better, $\Derlb_{[\Lambda]}(\cor_N)$.  For this
we would like to glue the $F_i$'s in the category $\Derlb(\cor_N)$ instead of
$\kss^s(\cor_\Lambda)$.  A first step for this is to find other representatives
of the $\kssfunc_{\Lambda_i}(F_i)$'s for which the $u_{ji}$ arise from morphisms
in $\Derlb(\cor_N)$.  In this part we introduce a functor, $\Psi$, which gives
an answer to this question (see Proposition~\ref{prop:muhom=hompsi} and
Corollary~\ref{cor:kssfuncPsi} below).  This functor is a variation on
Tamarkin's projector of~\S\ref{sec:Tamproj}.  To define $\Psi$ we need to choose
a direction on $N$ and we assume that $N$ is a product $N=M \times\R$.  For an
open subset $U$ of $N$ we define
$\Psi_U \colon \Der(\cor_U) \to \Der(\cor_{U\times \mo]0,+\infty[})$ with the
following properties: setting  
$\Psi_U^\varepsilon(F) = \Psi_U(F)|_{U \times \{\varepsilon\}}$ for
$\varepsilon>0$, we have, for $F,G \in \Der(\cor_U)$ with a microsupport
contained in $T^*M \times \{\tau\geq 0\}$ ($\tau$ the fiber coordinate of
$T^*\R$ -- see~\eqref{eq:def_tau_positif} below),
\begin{itemize}
\item [(i)]
  $\dot\SSi(\Psi_U^\varepsilon(F)) = \dot\SSi(F) \cup
  T_\varepsilon(\dot\SSi(F))$, where $T_\varepsilon$ is the translation by
  $\varepsilon$ along $\R$,
\item [(ii)] $\Psi_U^\varepsilon(F)$ is isomorphic to $F$ along $\dot\SSi(F)$
  in the sense that we have a triangle
  $\Psi_U^\varepsilon(F) \to F \to H \to[+1]$ with
  $\SSi(H) \cap \dot\SSi(F) = \emptyset$,
\item [(iii)] if $F \simeq G$ in $\Der(\cor_U; \dT^*U)$, then
  $\Psi_U(F) \simeq \Psi_U(G)$,
\item [(iv)]
  $H^0(\dT^*V; \muhom(F,G)) \simeq \varinjlim_{\varepsilon\rightarrow 0}
  \Hom(\Psi_U^\varepsilon(F)|_V, \Psi_U^\varepsilon(G)|_V)$, if $V$ is a
  relatively compact open subset of $U$.
\end{itemize}
The property~(iv) will be used in the next part to glue representatives of objects of
$\kss^s(\cor_\Lambda)$ as follows.  For $W\subset M\times\R$ we set
$\Lambda_W = \Lambda \cap T^*W$. We are given
$\shf \in \kss^s(\cor_\Lambda)(\Lambda_W)$, a covering $W =W_1 \cup W_2$ and
$F_i \in \Der(\cor_{W_i})$ representing $\shf|_{\Lambda_{W_i}}$.  We set
$U = W_1 \cap W_2$.  We then have isomorphisms
$\kssfunc_{\Lambda_U}(F_1|_U) \simeq \shf|_{\Lambda_U} \simeq
\kssfunc_{\Lambda_U}(F_2|_U)$, hence a section of $H^0\muhom(F_1,F_2)$ over $\dT^*U$.
Using the property~(iv) we deduce an isomorphism
$\Psi_U^\varepsilon(F_1) \simeq \Psi_U^\varepsilon(F_2)$ and we can glue
$\Psi_U^\varepsilon(F_1)$ and $\Psi_U^\varepsilon(F_2)$ into a representative of
$\shf$ over $W$.

With this procedure we can construct sheaves representing objects of
$\kss^s(\cor_\Lambda)(\Lambda_0)$ when $\Lambda_0 \subset \Lambda$ is of the type
$\Lambda \cap T^*U_0$, $U_0$ open in $M\times \R$.  Unfortunately we will need to
consider more general open subsets $\Lambda_0$ in~\S\ref{sec:quant_orbcat}.  Our
$\Lambda_0$ will not be a union of subsets $\Lambda \cap T^*W_i$, but only a union of
connected components of $\Lambda \cap T^*W_i$.  This means that
$\pi_{M\times \R}(\Lambda_0) \cap \pi_{M\times \R}(\partial\Lambda_0)$ is a priori
non empty. Let $p=(x;\xi) \in \partial\Lambda_0$, $\Xi$ a neighborhood of $p$ in
$\Lambda$ and $U = \pi_{M\times \R}(\Xi)$,
$U_0 = \pi_{M\times \R}(\Xi\cap \Lambda_0)$.   Near $x$ we will have to consider sheaves of the
form $F = \rsect_{U_0}(F')$ with $F'\in \Der(\cor_U)$, $\SSi(F') = \Xi$.  For sheaves
of this kind the above formula~(iv) may still hold but the microsupport of $F$ is
bigger than $\Xi \cap T^*U_0$.  If $G = \rsect_{U_1}(G')$ is another sheaf of the
same type, what we really need in~(iv) is not $H^0(\dT^*V; \muhom(F,G))$ (for some
$V\subset U$) but $H^0(\dT^*V \cap \Lambda_0; \muhom(F,G)|_{\Lambda_0})$.  In fact,
for generic open subsets $U_0, U_1$ (in the sense that the conormal bundles of their
boundaries are well positioned with respect to $\Lambda$), these two groups are
isomorphic.  We check this in~\S\ref{sec:dblsh}, where we introduce a category of
sheaves which are locally of the form $\Psi_U(\rsect_{U_0}(F'))$.

\bigskip

We introduce some notations.  We set for short $\rspos = \mo]0,+\infty[$ and
$\rpos = [0,+\infty[$. We usually endow the factor $\R$ in $M\times \R$ with the
coordinate $t$.  We will need an extra parameter, usually denoted $u$, running
over $\rpos$ or $\rspos$. The associated coordinates in the cotangent bundles
are $(t;\tau)$ for $T^*\R$ and $(u;\upsilon)$ for $T^*\rspos$.  We set
$T^*_{\tau\geq 0}\R = \{ (t,\tau) \in T^*\R;$ $\tau\geq 0\}$ and we define
$T^*_{\tau>0}\R$ similarly.  For a manifold $M$ and an open subset
$U\subset M\times\R$ we define
\begin{equation}
  \label{eq:def_tau_positif}
  \begin{alignedat}{2}
T^*_{\tau\geq 0}U &= (T^*M \times T^*_{\tau\geq 0}\R) \cap T^*U, 
& \qquad T^*_{\tau\leq 0}U & = (T^*_{\tau\geq 0}U)^a ,  \\
T^*_{\tau >0}U &= (T^*M \times T^*_{\tau >0}\R) \cap T^*U ,
& \qquad T^*_{\tau < 0}U & = (T^*_{\tau > 0}U)^a.
\end{alignedat}
\end{equation}
\begin{definition}\label{def:derbtp}
  Let $U$ be an open subset of $M\times\R$.    For $* = \emptyset$, $\mathrm{b}$ or
  $\mathrm{lb}$, we let $\Der^*_{\tau >0}(\cor_U)$ (resp.
  $\Der^*_{\tau \geq 0}(\cor_U)$) be the full subcategory of $\Der^*(\cor_U)$ of
  sheaves $F$ satisfying $\dot\SSi(F) \subset T^*_{\tau >0}U$ (resp.
  $\SSi(F) \subset T^*_{\tau \geq 0}U$).
\end{definition}

\section{The functor $\Psi$}\label{sec:func_psi}

The convolution product is a variant of the ``composition of kernels''
considered in~\cite{KS90} (denoted by $\circ$ --
see~\eqref{eq:def_compo_faisceaux}).  It is used in~\cite{T08} to study the
localization of $\Der(\cor_{M\times\R})$ by the objects with microsupport in
$T^*_{\tau\leq 0}(M\times\R)$, in a framework similar to the present
one. Namely, Tamarkin proves that the functor
$F\mapsto \cor_{[0,+\infty[} \star F$ is a projector from
$\Der(\cor_{M\times\R})$ to the left orthogonal of the subcategory
$\Der_{T^*_{\tau\leq 0}(M\times\R)}(\cor_{M\times\R})$ of objects with
microsupport in $T^*_{\tau\leq 0}(M\times\R)$ (see~\cite{GS11} for a survey).
We will use a variant of Tamarkin's definition.

We will use the product $\star$ in the following special situation.  We define
the subsets of $\R\times \rspos$:
\begin{equation}\label{eq:def_cones}
  \begin{split}
\gammaof & = \{(t,u); \; 0\leq t <u\}, \\
  \lambda_0 & = \{0\} \times \rspos, \quad
\lambda_1=  \{(t,u) \in \R\times\rspos; \; t=u\} .
  \end{split}
\end{equation}

\begin{definition}\label{def:conv_gamma}
  Let $M$ be a manifold and let $U \subset M\times\R$ be an open subset.   We define
  $U_\gammaof \subset M\times\R \times \rspos$ by  
\begin{equation*}
U_\gammaof 
= \{ (x,t,u) \in M\times \R \times \rspos;\; \{x\}\times [t-u,t] \subset U\}.
\end{equation*}
For  $F \in \Der(\cor_U)$ and $G \in \Der(\cor_{\R\times \rspos})$ with
$\supp(G) \subset \ol{\gamma}$, we define $G\star F \in
\Der(\cor_{U_\gammaof})$ by
\begin{equation}\label{eq:defGstarF}
G\star F = (\reim{s}(F \letens G))|_{U_\gammaof} ,
\end{equation}
where  
$s = s_U\cl U \times \R \times \rspos \to M\times \R \times \rspos$ is the sum
$s(x,t_1,t_2,u) = (x,t_1+t_2,u)$ (in general $U$ is understood and we write $s$ for
$s_U$).  We define the functor $\Psi_U \cl \Der(\cor_U) \to \Der(\cor_{U_\gammaof})$
by
\begin{equation}\label{eq:defPsiU}
\Psi_U(F) = \cor_{\gammaof} \star F
= (\reim{s}(F\etens \cor_{\{(t,u);\; 0\leq t < u \}}))|_{U_\gammaof}  .
\end{equation}
\end{definition}

\begin{remark}\label{rem:def-Psi}
  (i)   We see easily from the
  definition of $U_\gammaof$ that, for any submanifold $M'$ of $M$, we have
  $$
  U_\gammaof \cap (M' \times\R \times \rspos) = (U \cap (M' \times\R))_\gammaof.
  $$
  In particular
  $U_\gammaof \cap (\{x\} \times\R \times \rspos) = (U \cap (\{x\}
  \times\R))_\gammaof$, for any point $x\in M$, and we can write
  $U_\gammaof = \bigsqcup_{x\in M} \big( \{x\} \times (U \cap (\{x\}
  \times\R))_\gammaof \big)$.

  For a disjoint union $U = \bigsqcup_{i\in I} U_i$ we have
  $U_\gammaof = \bigsqcup_{i\in I} U_{i,\gammaof}$.  Hence we can reduce the
  description of $U_\gammaof$ to the case where $M$ is a point and $U= \mo]a,b[$ is
  an interval of $\R$. Then we have
\begin{equation}\label{eq:Ugamma-interval}
  (]a,b[)_\gammaof = \{ (t,u) \in \R\times\rspos;\; a+u<t<b \}.
\end{equation}

\sui (ii)   We have
$\opb{s}(U_\gammaof) \cap (M\times \R \times \ol{\gammaof}) \subset U \times
\ol{\gammaof}$.  Since $\supp(G) \subset \ol{\gamma}$, it follows that we also have
$G\star F = (\reim{s_{M\times\R}}(F' \letens G))|_{U_\gammaof}$ where
$F' \in \Der(\cor_{M\times\R})$ is any object such that $F'|_U = F$.

\sui (iii) For the same reason the restriction of $s$ to
$\opb{s}(U_\gammaof) \cap (M\times \R \times \ol{\gammaof})$ is a proper map.
Hence we can replace $\reim{s}$ by $\roim{s}$ in~\eqref{eq:defGstarF}.
\end{remark}

\begin{example}\label{ex:calculPsiF}  
  In Fig.~\ref{fig:imagePsi} we give the easiest examples of $\Psi_U(F)$ when
  $M$ is a point and $U=\R$.  The three pictures describe $\Psi_U(F_i)$
  respectively for $F_1=\cor_{[0,+\infty[}$, $F_2=(\cor_{]-\infty,0[})[1]$,
  $F_3=\cor_{[0,1[}$.  We have $\Psi_U(F_1) \simeq \Psi_U(F_2) \simeq \cor_W$,
  where $W = \{(t,u)$; $0\leq t <u\} \subset \R\times\rspos$, and $\Psi_U(F_3)$
  is concentrated in degrees $0,1$, with $H^0\Psi_U(F_3) = \cor_{W_0}$,
  $H^1\Psi_U(F_3) = \cor_{W_1}$, for some locally closed subsets $W_0$, $W_1$ of
  $\R\times \rspos$ (however
  $\Psi_U(F_3) \not\simeq \cor_{W_0} \oplus \cor_{W_1}[-1]$).  We have sketched
  $W$, $W_0$, $W_1$ in the pictures.

  We remark that we have a distinguished triangle
  $\cor_\R \to F_1 \to F_2 \to[+1]$ and Lemma~\ref{lem:suppPsiuL} below implies
  $\Psi_U(\cor_\R) \simeq 0$, hence $\Psi_U(F_1) \simeq \Psi_U(F_2)$ as we have
  said. Setting $\Lambda_{x_0} = \{(x_0;\tau)$, $\tau>0\} \subset \dT^*\R$, for
  $x_0\in\R$, we also have $\dot\SSi(F_1) = \dot\SSi(F_2) = \Lambda_0$ and
  $\kssfunc_{\Lambda_0}(F_1) \simeq \kssfunc_{\Lambda_0}(F_2)$.

  Similar things happen for $F_3$ and
  $F_4 := F_1 \oplus \cor_{[1,+\infty[}[-1]$.  We have
  $\dot\SSi(F_3) = \dot\SSi(F_4) = \Lambda_0 \sqcup \Lambda_1$ and
  $\kssfunc_{\Lambda_0 \sqcup \Lambda_1}(F_3) \simeq \kssfunc_{\Lambda_0 \sqcup
    \Lambda_1}(F_4)$.  However $\Psi_U(F_3) \not\simeq \Psi_U(F_4)$; indeed
  $\Psi_U$ is additive and the result for $F_1$ gives here
  $\Psi_U(F_4) \simeq \cor_W \oplus \cor_{W'}[-1]$, where $W' = \{(t,u)$;
  $1\leq t <1+u\}$.  What remains true is that, for $\varepsilon>0$ small enough
  (here $\varepsilon \leq 1$),
  $\Psi_U(F_3)|_{\R \times \mo]0,\varepsilon[} \simeq \Psi_U(F_4)|_{\R \times
    \mo]0,\varepsilon[}$.

  In Corollary~\ref{cor:kssfuncPsi} we will see: for a closed conic Lagrangian
  submanifold $\Lambda \subset T^*_{\tau >0}U$ and
  $F,G \in \Der_{[\Lambda]}(\cor_U)$, if
  $\kssfunc_\Lambda(F) \simeq \kssfunc_\Lambda(G)$, then
  $\Psi_U(F)|_V \simeq \Psi_U(G)|_V$, for some open subset $V$ of $U_\gammaof$ such
  that $(U\times \{0\})\sqcup V$ is open in $U \times \rpos$.  So $\Psi_U$ gives
  canonical representatives of objects of $\kss(\cor_\Lambda)$, but with microsupport
  ``doubled'' (see Remark~\ref{rem:SSPsiF} -- the problem is addressed
  in~\S\ref{sec:transl_micsup}).
\end{example}

\begin{figure}[ht]

\begin{tikzpicture}
\draw (-2,-0.5) -- (2,-0.5) ;
\draw (0,-0.5) node {$\bullet$} ;
\draw [very thick] (0,-0.5) -- (2,-0.5) ;
\node at (1,-0.8) {$\cor_{[0,\infty[}$};
\coordinate (A) at (0,0) ; \coordinate (B) at (0,2) ; 
\coordinate (E) at (2,2) ;
\fill [fill=gray!20] (A) -- (B) -- (E) -- cycle;
\draw (-2,0) -- (2,0) ;
\draw (A) -- (B) ;
\draw [dotted] (A) -- (E) ;
\draw (0.5,1.2) node{$\cor_W$} ;
\end{tikzpicture}
\hspace{1.5cm}
\begin{tikzpicture}
\draw (-2,-0.5) -- (2,-0.5) ;
\draw [very thick] (0.07,-.57) arc (-100:-260:.07) ;
\draw [very thick] (-2,-0.5) -- (0,-0.5) ;
\node at (-1,-0.8) {$(\cor_{\mo]-\infty,0[})[1]$};
\coordinate (A) at (0,0) ; \coordinate (B) at (0,2) ; 
\coordinate (E) at (2,2) ;
\fill [fill=gray!20] (A) -- (B) -- (E) -- cycle;
\draw (-2,0) -- (2,0) ;
\draw (A) -- (B) ;
\draw [dotted] (A) -- (E) ;
\draw (0.5,1.2) node{$\cor_W$} ;
\end{tikzpicture}

\vspace{8mm}

\begin{tikzpicture}
  \begin{scope}[xshift=5cm]
  \draw[->] (0,-.5) -- (1,-.5) ;
  \node at (1.1,-.7) {$t$};
  \draw[->] (0,0) -- (1,0) ;
  \draw[->] (0,0) -- (0,1) ;
  \node at (1.1,.2) {$t$};
    \node at (-.3,.8) {$u$};
\end{scope}
\draw (-1,-.5) -- (3,-.5) ;
\draw (0,-.5) node {$\bullet$} ;
\draw [very thick] (1.07,-.57) arc (-100:-260:.07) ;
\draw [very thick] (0,-.5) -- (1,-.5) ;
\node at (0.5,-0.8) {$\cor_{[0,1[}$};
\coordinate (A) at (0,0) ; \coordinate (B) at (0,2) ; 
\coordinate (C) at (1,0) ; \coordinate (D) at (1,1) ; 
\coordinate (E) at (1,2) ; \coordinate (F) at (3,2) ; 
\coordinate (G) at (2,2) ; 
\fill [fill=gray!20] (A) -- (B) -- (E) -- (D) -- cycle;
\fill [fill=gray!20] (C) -- (D) -- (G) -- (F) -- cycle;
\draw (-1,0) -- (3,0) ;
\draw (A) -- (B) ;
\draw [dotted] (A) -- (D) ;
\draw (C) -- (D) ;
\draw [dotted] (D) -- (E) ;
\draw [dotted] (C) -- (F) ;
\draw (D) -- (G);
\draw (0.5,1.2) node{$\cor_{W_0}$} ;
\draw (2,1.2) node{$\cor_{W_1}[-1]$} ;  
\end{tikzpicture}

  \caption{} \label{fig:imagePsi}
\end{figure}

We define the projections
\begin{equation}\label{eq:def_proj}
  \begin{alignedat}{2}
q \cl M\times \R \times \rspos &\to M\times \R ,
& \qquad  (x,t,u) &\mapsto (x,t),  \\
r \cl M\times \R \times \rspos &\to M\times \R ,
 & (x,t,u) &\mapsto (x,t-u) 
  \end{alignedat}
\end{equation}
and, for an open subset $U$ of $M\times\R$, we denote by
\begin{equation}\label{eq:def_projU}
q_U,r_U \cl U_\gammaof \to U 
\end{equation}
the restrictions of $q,r$ to $U_\gammaof$. Using the
notations~\eqref{eq:def_cones} we have
$\cor_{\lambda_0} \star F \simeq \opb{q_U}(F)$ and
$\cor_{\lambda_1} \star F \simeq \opb{r_U}(F)$, for any $F\in \Der(\cor_U)$.
The closed inclusions $\lambda_0 \subset \gammaof$,
$\lambda_1 \subset \ol{\gammaof}$ and
$\lambda_1 \subset \ol{\gammaof} \setminus \lambda_0$ give excision
distinguished triangles
\begin{gather*}
\cor_{\Int(\gammaof)} \to[b''] \cor_\gammaof \to[a] \cor_{\lambda_0} \to[+1],
\qquad
\cor_{\lambda_1} [-1] \to[b] \cor_\gammaof \to \cor_{\ol\gammaof} \to[+1] ,\\
\cor_{\lambda_1} [-1] \to[b'] \cor_{\Int(\gammaof)}
\to \cor_{\ol\gammaof\setminus  \lambda_0} \to[+1] 
\end{gather*}
and we have $b = b'' \circ b'$.  The convolution $- \star F$ turns the morphisms
$a$, $b$, $b'$, $b''$ into morphisms of functors:
\begin{equation}\label{eq:def_alphabeta}
\alpha(F) \cl \Psi_U(F) \to  \opb{q_U}(F),
\qquad
\beta(F) \cl \opb{r_U}(F)[-1] \to \Psi_U(F) , \\
\end{equation}
and $\beta'(F) \cl \opb{r_U}(F)[-1] \to \cor_{\Int(\gammaof)} \star F$, \
$\beta''(F) \cl \cor_{\Int(\gammaof)} \star F \to \Psi_U(F)$.  We have
$\beta(F) = \beta''(F) \circ \beta'(F)$ and two distinguished triangles
\begin{gather}
  \label{eq:dtrPsiq0}
\cor_{\Int(\gammaof)} \star F  \to[\;\;\beta''(F)\;\;] \Psi_U(F) \to[\;\;\alpha(F)\;\;] 
\opb{q_U}(F)  \to[+1] , \\
\label{eq:dtrPsiq1}
 \opb{r_U}(F)[-1] \to[\;\;\beta'(F)\;\;] \cor_{\Int(\gammaof)} \star F
\to \cor_{\ol\gammaof\setminus  \lambda_0} \star F  \to[+1] 
\end{gather}

\begin{lemma}\label{lem:dtgrPsiq}
  For $F\in \Dertpn(\cor_U)$ the morphism $\beta'(F)$ is an isomorphism
  and~\eqref{eq:dtrPsiq0} gives the distinguished triangle
\begin{equation}\label{eq:dtgamqr2}
\opb{r_U}(F)[-1] \to[\;\;\beta(F)\;\;] \Psi_U(F) \to[\;\;\alpha(F)\;\;] 
 \opb{q_U}(F)  \to[+1] .
\end{equation}
\end{lemma}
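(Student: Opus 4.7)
Claim 2 is formal given Claim 1: since $\beta = \beta'' \circ \beta'$ by construction, an isomorphism $\beta'$ identifies the triangle~\eqref{eq:dtrPsiq0} with~\eqref{eq:dtgamqr2}. Hence the task reduces to proving that $\beta'(F)$ is an isomorphism, which by the defining triangle~\eqref{eq:dtrPsiq1} amounts to the vanishing
$$\cor_{\ol\gammaof \setminus \lambda_0} \star F \simeq 0.$$

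I would verify this pointwise. Fix $p_0 = (x_0, t_0, u_0) \in U_\gammaof$; by the definition of $U_\gammaof$, $\{x_0\} \times [t_0-u_0, t_0] \subset U$. Proper base change applied to $\reim{s}$ identifies the germ at $p_0$ with $\rsect_c(s^{-1}(p_0); (F \etens \cor_{\ol\gammaof \setminus \lambda_0})|_{s^{-1}(p_0)})$. Parametrizing $s^{-1}(p_0) \simeq \R$ by $t_2$ (with $t_1 = t_0 - t_2$), the restricted sheaf identifies with
$$H \eqdot \opb{\iota}F \otimes \cor_{(0, u_0]} \in \Derb(\cor_{\R_{t_2}}),$$
where $\iota \colon \R \to U$, $t_2 \mapsto (x_0, t_0 - t_2)$. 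I claim $\SSi(H) \subset \{\tau_{t_2} \leq 0\}$: the differential $d\iota$ sends $\partial_{t_2}$ to $-\partial_t$, so Theorem~\ref{thm:iminv} combined with $\SSi(F) \subset \{\tau \geq 0\}$ yields $\SSi(\opb{\iota}F) \subset \{\tau_{t_2} \leq 0\}$; by Example~\ref{ex:microsupport}, both boundary contributions to $\SSi(\cor_{(0, u_0]})$ (outer conormal at the open endpoint $0$, and the antipodal of outer at the closed endpoint $u_0$) lie in $\{\tau_{t_2} \leq 0\}$; and the tensor-product bound of Theorem~\ref{thm:SSrhom} combines them.

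Since $H$ has compact support in $[0, u_0]$ and $\SSi(H) \subset \{\tau_{t_2} \leq 0\}$, I would apply Corollary~\ref{cor:Morse} with the Morse function $\phi(t_2) = t_2$: its differential has $\tau_{t_2} = 1 \notin \SSi(H)$, and $\phi$ is automatically proper on the compact $\supp(H)$. Taking $b$ large enough that $(-\infty, b) \supset \supp(H)$, the corollary yields $\rsect_c(\R; H) = \rsect(\R; H) \simeq \rsect((-\infty, 0); H|_{(-\infty, 0)})$; the latter vanishes because $\cor_{(0, u_0]}|_{(-\infty, 0)} = 0$. Hence the germ of $\cor_{\ol\gammaof \setminus \lambda_0} \star F$ at $p_0$ vanishes, as desired. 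The only delicate point is tracking the sign reversal induced by $\iota$ on the cotangent direction; once this is done correctly, so that $\phi(t_2) = t_2$ (rather than $-t_2$) is non-characteristic for $H$, the vanishing is a one-line application of Corollary~\ref{cor:Morse}.
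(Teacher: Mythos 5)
Your proof is correct and follows essentially the same route as the paper: reduce via~\eqref{eq:dtrPsiq1} to the vanishing of $\cor_{\ol\gammaof\setminus\lambda_0}\star F$, compute germs over each point of $U_\gammaof$ by proper base change, observe the resulting one-dimensional sheaf has compact support and microsupport on one side of the zero section, and conclude by Corollary~\ref{cor:Morse}. The paper packages the last step into its Lemma~\ref{lem:noyaustar} and parametrizes the fiber by $t_1$ (getting a sheaf with $\SSi\subset\{\tau\geq0\}$ and half-open interval $[x-b,x-a[$), whereas you parametrize by $t_2$ and track the resulting sign flip to $\{\tau_{t_2}\leq0\}$ — a cosmetic difference only.
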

\begin{proof}
  Since $\beta(F) = \beta''(F) \circ \beta'(F)$ the second part of the lemma follows
  from the claim that $\beta'(F)$ is an isomorphism.  In view of~\eqref{eq:dtrPsiq1}
  we only have to prove that
  $\cor_{\ol\gammaof\setminus \lambda_0} \star F \simeq 0$.  For
  $(x,t,u) \in U_\gammaof$ we have the Cartesian square  
\begin{equation*}
\begin{tikzcd}
\R \ar[r, "i_{(x,t,u)}"] \ar[d] & M\times\R^2\times \rspos \ar[d, "s"] \\
\{(x,t,u)\} \ar[r] & M\times\R\times \rspos ,
\end{tikzcd}
\end{equation*}
where $i_{(x,t,u)}(t') = (x, t', t-t',u)$.  Writing
$i_{(x,t,u)} = (i_x \times i_{(t,u)}) \circ \delta$, with $\delta(t') = (t',t')$,
$i_x(t') = (x,t')$ and $i_{(t,u)}(t') = (t-t',u)$, we deduce by the base change
formula that
\begin{align*}
(\cor_{\ol\gammaof\setminus \lambda_0} \star F)_{(x,t,u)}
  & \simeq \rsect_c(\R;  \opb{i_x} F
    \tens \opb{i_{(t,u)}} \, \cor_{\ol\gammaof\setminus \lambda_0}) \\
& \simeq \rsect_c(\R;  \opb{i_x} F\tens \cor_{[t-u,t[}) .
\end{align*}
Then $G = \opb{i_x} F\tens \cor_{[t-u,t[}$ has compact support and satisfies
$\SSi(G) \subset T^*_{\tau\geq 0}\R$ by Theorems~\ref{thm:iminv}
and~\ref{thm:SSrhom}.  Using Corollary~\ref{cor:Morse} we deduce that
$\rsect_c(\R; G) \simeq 0$.  It follows that
$\cor_{\ol\gammaof\setminus \lambda_0} \star F \simeq 0$.
\end{proof}

Let     $U$ be an open subset of
$M\times\R$ and $V$ an open subset of $U$.  Let $X$ be a submanifold of $M$ and
$U' = U \cap (X\times \R)$. We have
\begin{align}
\label{eq:restr-PsiU-ouvert}
\Psi_V(F|_V) &\simeq (\Psi_U(F))|_{V_\gammaof} ,  \\
\label{eq:restr-PsiU-sousvar}
\Psi_{U'}(F|_{U'}) &\simeq (\Psi_U(F))|_{U'_\gammaof} ,
\end{align}
where the first isomorphism follows from supports estimates as in
Remark~\ref{rem:def-Psi}~(ii) and the second one follows from the base change
formula.

\medskip

In the next lemma we use an analog of the convolution for sets.  For $A \subset
M\times \R$ and $B \subset \R\times \rspos$ we define $B\star A \subset M\times
\R\times \rspos$ by   
\begin{equation}\label{eq:convset}
B\star A = s_{M\times\R}(A\times B) ,
\end{equation}
where $s_{M\times\R}\cl M \times \R^2 \times \rspos \to M\times \R \times \rspos$ is
the sum as in Definition~\ref{def:conv_gamma}.

\begin{lemma}\label{lem:suppPsiuL}
Let $F\in \Der(\cor_U)$ and let $V\subset U$ be an open subset.  We assume
that
\begin{equation}\label{eq:hyp_F_vert_cst}
\text{$F|_{V\cap (\{x\}\times \R)}$ is locally constant, for any $x\in M$.}
\end{equation}
Then $\Psi_U(F)|_{V_\gammaof} \simeq 0$.  As a special case, if $\SSi(F|_V)
\subset T^*_VV$, then $\Psi_U(F)|_{V_\gammaof} \simeq 0$. In particular
$\supp(\Psi_U(F)) \subset (\gammaf \star \dot\pi_U(\dot\SSi(F))) \cap
U_\gammaof$.
\end{lemma}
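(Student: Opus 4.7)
The plan is to compute the stalks of $\Psi_U(F)$ at points of $V_\gammaof$ directly from the definition, reduce the problem to a one-dimensional cohomological computation, and then exploit the vanishing of $\rsect_c$ of a half-open interval with locally constant coefficients. The last assertion will follow by applying the first part to a carefully chosen $V$ and matching the complement set-theoretically.

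First I would fix $(x_0,t_0,u_0)\in V_\gammaof$ and compute $\Psi_U(F)_{(x_0,t_0,u_0)}$ via base change. By the support observation in Remark~\ref{rem:def-Psi} the map $s$ is proper on the relevant portion of $\supp(F\etens\cor_{\gammaof})$, so
\[
\Psi_U(F)_{(x_0,t_0,u_0)} \simeq \rsect_c\bigl(\{x_0\}\times\R ;\, F|_{\{x_0\}\times\R}\tens \cor_{]t_0-u_0,\,t_0]}\bigr),
\]
where the interval $]t_0-u_0,t_0]$ arises from the condition $(t_0-t_1,u_0)\in\gammaof$. The hypothesis $(x_0,t_0,u_0)\in V_\gammaof$ means $\{x_0\}\times[t_0-u_0,t_0]\subset V$, so the restriction of $F$ to the closed interval $\{x_0\}\times[t_0-u_0,t_0]$ is (by assumption) locally constant. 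Hence the stalk is isomorphic to $\rsect_c(]t_0-u_0,t_0]; L)$ for some locally constant $L\in\Derb(\cor_{[t_0-u_0,t_0]})$.

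The key vanishing is then that $\rsect_c(]a,b];L)\simeq 0$ for any locally constant $L$ on $[a,b]$. Since $[a,b]$ is contractible and we may work cohomology-sheaf-wise, it suffices to handle $L=\cor$: the excision triangle associated with $]a,b[\subset\, ]a,b]$ gives
\[
\rsect_c(]a,b[;\cor)\to\rsect_c(]a,b];\cor)\to\rsect_c(\{b\};\cor)\to[+1],
\]
i.e.\ $\cor[-1]\to\rsect_c(]a,b];\cor)\to\cor\to[+1]$, and one checks the connecting morphism is an isomorphism (e.g.\ by computing $\rsect_c$ of $[a,b]$), yielding $\rsect_c(]a,b];\cor)\simeq 0$. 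Therefore $\Psi_U(F)|_{V_\gammaof}\simeq 0$ pointwise, which proves the first assertion. The ``special case'' is immediate: if $\SSi(F|_V)\subset T^*_VV$ then by Example~\ref{ex:microsupport}~(i) every $H^iF|_V$ is locally constant, and restriction to the submanifold $\{x\}\times\R\cap V$ preserves this property, so the hypothesis of the first part holds.

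For the final inclusion, set $S=\dot\pi_U(\dot\SSi(F))$ and take $V=U\setminus S$. Then $\SSi(F|_V)\subset T^*_VV$, so by the special case $\Psi_U(F)|_{V_\gammaof}\simeq 0$, whence $\supp(\Psi_U(F))\subset U_\gammaof\setminus V_\gammaof$. A direct set-theoretic check, using that $(x,t,u)\in U_\gammaof\setminus V_\gammaof$ iff $\{x\}\times[t-u,t]\subset U$ and there exists $t'\in[t-u,t]$ with $(x,t')\in S$, identifies $U_\gammaof\setminus V_\gammaof$ with $(\gammaf\star S)\cap U_\gammaof$ via the decomposition $(x,t,u)=s(x,t',t-t',u)$ with $(t-t',u)\in\gammaf=\ol\gammaof$.

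The main obstacle is the stalk computation: one must be careful that the base change is legitimate, i.e.\ that the properness condition on $s$ needed to pass from $\reim{s}$ to proper pushforward on the fiber holds over the chosen point. This is where the hypothesis $(x_0,t_0,u_0)\in V_\gammaof$ (so that $\{x_0\}\times[t_0-u_0,t_0]$ sits inside $U$) is essential; otherwise the fiber of $s$ over $(x_0,t_0,u_0)$ could fall outside $U\times\R\times\rspos$ and the formula would not directly apply. Once this point is handled, the vanishing of $\rsect_c$ on a half-open interval is the routine but crucial input.
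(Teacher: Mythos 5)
Your proof is correct and follows essentially the same route as the paper's: both reduce to a one-dimensional computation over a single fiber $\{x\}\times\R$, where the key point is that $\rsect_c$ of a half-open interval with constant coefficients vanishes. The paper packages this via the restriction compatibility $\Psi_U(F)|_{(V_x)_\gammaof}\simeq\Psi_{V_x}(F|_{V_x})$ rather than a direct stalk formula, but the underlying calculation is the one you carried out.
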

\begin{proof}
  We set $V_x = V \cap (\{x\}\times \R)$. Then
  $V_\gammaof = \bigsqcup_{x\in M} ( \{x\} \times (V_x)_\gammaof)$ and we have to
  prove $\Psi_U(F)|_{\{x\} \times (V_x)_\gammaof} \simeq 0$, for all $x\in
  M$. By~\eqref{eq:restr-PsiU-sousvar} we have
  $\Psi_U(F)|_{\{x\} \times (V_x)_\gammaof} \simeq \Psi_{V_x}(F|_{V_x})$.  The set $V_x$ is a
  disjoint union of open intervals of $\R$ and $F|_{V_x}$ is constant on each of
  these intervals. A direct computation gives $\Psi_{V_x}(F|_{V_x}) \simeq 0$ and we
  obtain the result.
\end{proof}

\begin{lemma}\label{lem:annulation_qr}
Let $F \in \Der(\cor_U)$.
\begin{itemize}
\item [(i)] We have $\reim{q_U} \epb{q_U} (F) \isoto F$ and
  $\reim{r_U}(\Psi_U(F)) \simeq 0$.
\item [(ii)] If $F\in \Dertpn(\cor_U)$, then $\reim{q_U} \opb{r_U} (F)$
  satisfies~\eqref{eq:hyp_F_vert_cst} (with $V=U$). In particular $\Psi_U(
  \reim{q_U} \opb{r_U} (F)) \simeq 0$.
\item [(iii)] We assume that $U = M \times \R$, that $F\in \Dertpn(\cor_U)$
  and that $\supp(F) \subset M \times [a,+\infty[$ for some $a\in \R$.  Then
  $\reim{q_U} \opb{r_U} (F) \simeq 0$.
\end{itemize}
\end{lemma}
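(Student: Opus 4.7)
For part~(i)(a), I would apply the projection formula. The map $q_U$ is a submersion whose fiber over $(x,t)\in U$ is the open interval $(0,L^-(x,t))$, where $L^-(x,t)=\sup\{u>0:\{x\}\times[t-u,t]\subset U\}$. Hence $\omega_{q_U}$ is the relative orientation sheaf shifted by $1$, and $\reim{q_U}\omega_{q_U}$ has stalks $\rsect_c((0,L^-);\cor)[1]\simeq\cor$; the trace identification gives $\reim{q_U}\omega_{q_U}\simeq\cor_U$, so $\reim{q_U}\epb{q_U}F\simeq F\otimes\cor_U\simeq F$. For part~(i)(b), I would compute the stalk of $\reim{r_U}\Psi_U(F)$ at $(x,t)$ directly: writing $\Psi_U(F)\simeq\reim{\tilde s}(F_U\etens\cor_\gammaof)|_{U_\gammaof}$ and applying base change, the stalk becomes the compactly supported cohomology of $F_{(x,t_1)}$ over the region $\{(t_1,u): t<t_1\le t+u,\ u\in(0,L^+(x,t))\}$, with $L^+(x,t)=\sup\{u>0:\{x\}\times[t,t+u]\subset U\}$. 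Using Fubini, I fix $t_1>t$ and integrate in $u$ first; the inner integral is $\rsect_c([t_1-t,L^+(x,t));\cor)$, which vanishes because $[t_1-t,L^+(x,t))$ is a half-open interval (its one-point compactification is contractible). Hence the stalk is $0$ and $\reim{r_U}\Psi_U(F)\simeq 0$.

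For part~(ii), I would use base change along the inclusion $V_x:=\{x\}\times\R\cap U\hookrightarrow U$ to reduce to a one-dimensional problem: $(\reim{q_U}\opb{r_U}F)|_{V_x}\simeq\reim{q_{V_x}}\opb{r_{V_x}}(F|_{V_x})$. For a connected component $V_x=(a,b)$, the stalks are $\rsect_c((a,t);G)$ with $G:=F|_{V_x}$ satisfying $\SSi(G)\subset\{\tau\ge 0\}$. For $t_0<t$ in $(a,b)$, the distinguished triangle $G_{(a,t_0)}\to G_{(a,t)}\to G_{[t_0,t)}\to[+1]$ reduces local constancy of the stalks to the vanishing of $\rsect_c([t_0,t);G)$. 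Here the crucial input is that $\SSi(\cor_{[t_0,t)})\subset\{\tau\ge 0\}$ on $V_x$ (closed left end, open right end both yield nonnegative $\tau$), which I check directly from Definition~\ref{def:SSF} by computing $(\rsect_{\{\varphi\ge\varphi(t_k)\}}\cor_{[t_0,t)})_{t_k}$ at the two endpoints. Then $H:=G_{[t_0,t)}$ has $\SSi(H)\subset\{\tau\ge 0\}$ and compact support in $[t_0,t]$. Applying Corollary~\ref{cor:Morse} with $\phi=-t'$, the hypothesis $d\phi\notin\SSi(H)$ holds everywhere, and $\phi|_{\supp H}$ is proper; the resulting invariance of $\rsect_{\{t'\le s\}}(\R;H)$ in $s$, combined with the vanishing for $s\ll t_0$ and the identification with $\rsect_c(\R;H)$ for $s\gg t$, forces $\rsect_c([t_0,t);G)=0$. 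The second assertion then follows from Lemma~\ref{lem:suppPsiuL}.

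For part~(iii), the same strategy applies more directly. By base change, $(\reim{q_U}\opb{r_U}F)_{(x,t)}\simeq\rsect_c((-\infty,t);G)$ with $G=F|_{\{x\}\times\R}$, where $\SSi(G)\subset\{\tau\ge 0\}$ and $\supp(G)\subset[a,+\infty[$. I extend by zero to form $H:=G_{(-\infty,t)}$ on $\R$: the outer conormal at the open right endpoint $t$ is $\{\tau>0\}$, so $\SSi(H)\subset\{\tau\ge 0\}$, and $\supp(H)\subset[a,t]$ is compact. Corollary~\ref{cor:Morse} with $\phi=-t'$ applies as in part~(ii), yielding $\rsect_c(\R;H)=\rsect_c((-\infty,t);G)=0$ for every $(x,t)$, hence $\reim{q_U}\opb{r_U}F\simeq 0$. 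The main obstacle throughout is the microsupport computation for the half-closed cutoff sheaf $\cor_{[t_0,t)}$, which is the critical ingredient enabling the Morse argument in~(ii) and is not immediately covered by Example~\ref{ex:microsupport}(iii); verifying $\SSi(\cor_{[t_0,t)})\subset\{\tau\ge 0\}$ directly from the definition is the delicate point.
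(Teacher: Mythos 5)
Your parts (i) and (iii) are correct, but part (ii) has a genuine gap.

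Part~(i) is essentially the paper's argument made a bit more explicit; both versions of~(i)(b) reduce to the vanishing of compactly supported sections over a half-open interval, you via a Fubini at the level of stalks, the paper via base change and the projection formula. Your part~(iii) is a genuine variant: the paper deduces~(iii) from~(ii) (constancy along the fibres $\{x\}\times\R$, plus the vanishing of the restriction to $M\times\{a-1\}$), whereas you prove it directly by applying Corollary~\ref{cor:Morse} to $H=G_{(-\infty,t)}$, which has compact support in $[a,t]$ and satisfies $\SSi(H)\subset\{\tau\geq0\}$. This is correct, and has the advantage of being independent of~(ii).

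The gap is in part~(ii). After reducing to a component $V_x=(a,b)$ and setting $L:=\reim{q_{V_x}}\opb{r_{V_x}}(F|_{V_x})$, you show that the stalks $L_{t_0}=\rsect_c((a,t_0);G)$ and $L_t=\rsect_c((a,t);G)$ are isomorphic via the map coming from $G_{(a,t_0)}\to G_{(a,t)}\to G_{[t_0,t)}\to[+1]$ and the vanishing of $\rsect_c([t_0,t);G)$. But isomorphic stalks do not make a sheaf locally constant: the map $L_{t_0}\to L_t$ you produce is a priori unrelated to any restriction map of $L$ (two stalks of a sheaf carry no canonical morphism between them), so you cannot conclude that the cohomology sheaves $H^iL$ are local systems, which is what Lemma~\ref{lem:suppPsiuL} requires. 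What is needed is a bound $\SSi(L)\subset T^*_{V_x}V_x$, and this is exactly what the paper establishes: with $F'=\roim{j}F$ one bounds $\SSi(\opb{r}F'\tens\cor_{\R\times\rspos})$ by $\{(\tau,-\tau+\upsilon):\tau\geq0,\ \upsilon\leq0\}$ over the relevant region, and then the estimate for $\reim{q}$ (Proposition~\ref{prop:oim}) forces $\tau=\upsilon=0$, i.e.\ $\SSi(L)\subset\{\tau=0\}$, the zero section. Replacing the stalk computation by a microsupport estimate of this type is what would close the gap. Incidentally, the point you single out as delicate, namely $\SSi(\cor_{[t_0,t)})\subset\{\tau\geq0\}$, is routine (apply Example~\ref{ex:microsupport}(iii) at each endpoint, or use excision against $\cor_{[t_0,t]}$ and $\cor_{\{t\}}$); the real subtlety is in passing from stalks to the sheaf.
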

\begin{proof}
  (i) The first morphism is the adjunction for $(\reim{q_U}, \epb{q_U})$.  It is
  an isomorphism because the fibers of $q_U$ are intervals.  Let us prove the
  second isomorphism.  By the base change formula
  (see~\eqref{eq:restr-PsiU-sousvar}) we may as well assume that $M$ is a
  point. Then $U$ is an open subset of $\R$ and we can restrict to one component
  of $U$. Hence we assume $U$ is an interval.  We define
  $r'\cl \R^2 \times\rspos \to \R^2$, $(t_1,t_2,u) \mapsto (t_1,t_2-u)$ and
  $s' \cl \R^2\to \R$, $(t_1,t_2) \mapsto (t_1+t_2)$.  We have the commutative
  diagram
\begin{equation*}
\begin{tikzcd}
\R^2 \times\rspos \ar[r, "r'"] \ar[d, "s"']
& \R^2 \ar[d, "s'"] \\
\R \times\rspos \ar[r, "r"] & \R . 
\end{tikzcd}
\end{equation*}
We let $j\cl U \to \R$ be the inclusion and we set $F' = \eim{j}F$.     Then
$\Psi_U(F) \simeq (\cor_{\gammaof}\star F')|_{U_\gammaof}$.  Let
$q_2 \colon \R \times \R \times\rspos \to \R\times\rspos$ be the projection on the
last two factors. We have
\begin{align*}
\reim{r_U}(\Psi_U(F))
&\simeq \reim{r} ( \reim{s}(F' \etens \cor_{\gammaof}) \tens \cor_{U_\gammaof})  \\
&\simeq \reim{(r\circ s)} ( (F'\etens \cor_{\R\times\rspos}) \tens
 \cor_{\opb{q_2}\gammaof \cap \opb{s}U_\gammaof})  \\
&\simeq \reim{(s'\circ r')}( \opb{r'}(F'\etens \cor_\R) \tens 
 \cor_{\opb{q_2}\gammaof \cap \opb{s}U_\gammaof})  \\
&\simeq \reim{s'} ( (F'\etens \cor_\R) \tens
\reim{r'}( \cor_{\opb{q_2}\gammaof \cap \opb{s}U_\gammaof})).
\end{align*}
Hence it is enough to prove that
$\reim{r'}( \cor_{\opb{q_2}\gammaof \cap \opb{s}U_\gammaof}) \simeq 0$. We write
$U = \mo]a,b \mc[$.  Then we have $U_\gammaof = \{(t,u) \in \R\times\rspos$;
$a+u<t<b\}$.  For any $(t_1,t_2) \in \R^2$ the fiber
$\opb{r'}(t_1,t_2) \cap (\opb{q_2}\gammaof \cap \opb{s}U_\gammaof)$ is
identified with
\begin{align*}
\{u>0; \; (t_1,&t_2+u,u) \in \opb{q_2}\gammaof \cap \opb{s}U_\gammaof\} \\
&= \{u>0; \; 0\leq t_2+u<u \text{ and } (t_1+t_2+u,u) \in U_\gammaof\} \\
&= \{u>0; \;  -t_2\leq u \text{ and }  u < b-t_1-t_2 \},
\end{align*}
where we assume $t_2<0$ and $a<t_1+t_2$ (otherwise the fiber is empty).  Since
$-t_2>0$ we see that the fiber is either empty or a half closed interval. This
implies $\reim{r'}( \cor_{\opb{q_2}\gammaof \cap \opb{s}U_\gammaof}) \simeq 0$,
as required.

\sui (ii)     We choose
$x\in M$ and we set $U_x = U\cap (\{x\}\times \R)$.  By the base change formula we
have
$(\reim{q_U} \opb{r_U} (F))|_{U_x} \simeq \reim{q_{U_x}} \opb{r_{U_x}}
(F|_{U_x})$. Hence we can assume that $M$ is a point and that $U$ is an interval.  It
is enough to prove that $(\reim{q_U} \opb{r_U} (F))|_{]a,b[}$ is constant for any
$a,b \in U$.  We set $V = U \cap \mo]-\infty,a[$, $Z = U\setminus V$. In view of the
distinguished triangle $F_V \to F \to F_Z \to[+1]$ it is enough to see that
$G_1 = (\reim{q_U} \opb{r_U} (F_V))|_{]a,b[}$ and
$G_2 = (\reim{q_U} \opb{r_U} (F_Z))|_{]a,b[}$ are constant.

The maps $q_U$ and $r_U$ restricted to $W = q_U^{-1}(]a,b[) \cap r_U^{-1}(V)$
identify $W$ with $]a,b\mc[ \times V$ and we deduce, by the base change formula, that
$G_1$ is constant with stalks $\rsect_c(V; F|_V)$.

Let $j\cl U \to \R$ be the inclusion and let $q',r' \colon \R^2 \to \R$ be the maps
$(t,u) \mapsto t$, $(t,u) \mapsto t-u$.  We set $F' = \reim{j}(F_Z)$. Then we have
$G_2 \simeq (\reim{q'} (\opb{r'} (F') \tens \cor_{\R\times \rspos}))|_{]a,b[}$ and
$\SSi(F') \subset T^*_{\tau\geq 0}\R$.  Using Theorems~\ref{thm:SSrhom}
and~\ref{thm:iminv} we obtain, with coordinates $(t,u;\tau,\upsilon)$ on $T^*\R^2$:
\begin{align*}
 \SSi(\opb{r'}(F')|_{U\times \R}) &\subset \{(t,u;\tau,-\tau); \;\tau\geq 0\} , \\
 \SSi((\opb{r'} (F') \tens \cor_{\R\times \rspos})|_{U\times \R})
&\subset \{(t,u;\tau, -\tau + \upsilon); \;\tau\geq 0,\, \upsilon \leq 0\} .
\end{align*}
This last set intersects $T^*\R \times T^*_\R\R$ along $T^*_{\R^2}\R^2$.  Since $q'$
is proper on $r'^{-1}(Z) \cap (\R \times \rpos)$ it follows from
Proposition~\ref{prop:oim} that $\SSi(G_2)$ is contained in the zero section.  Hence
$G_2$ is constant.

\medskip\noindent
(iii) By~(ii) $\reim{q_U} \opb{r_U} (F)$ is constant on the fibers $\{x\}
\times \R$ for all $x\in M$.  By the hypothesis its restriction to $M\times
\{a-1\}$ vanishes.  Hence it is zero.
\end{proof}

\begin{lemma}\label{lem:SSPsiF}
  We let $j\colon U \times \rspos \to U\times\R$ be the inclusion.   We set
  $A = (U\times \{0\}) \times_{U\times\R} T^*(U\times\R)$.  We use the maps $q_U,r_U$
  of~\eqref{eq:def_projU} and the notations~\eqref{eq:def_derivee_morph}.  Let
  $F\in \Dertpn(\cor_U)$.  Then
\begin{gather*}
  \SSi(\Psi_U(F))   \,\subset\,
q_{U,d}\opb{q_{U,\pi}}(\SSi(F)) \cup r_{U,d}\opb{r_{U,\pi}}(\SSi(F)) , \\
\begin{split}
\SSi(\reim{j} \Psi_U(F))\cap A \,\subset\, \{(x,t,0&;\xi,\tau,\upsilon); \\
  & (x,t;\xi,\tau) \in \SSi(F) ,\, -\tau \leq \upsilon \leq 0\} .
\end{split}
\end{gather*}
In particular $\reim{j} \Psi_U(F) \isoto \roim{j} \Psi_U(F)$.
\end{lemma}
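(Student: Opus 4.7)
The plan is to establish in turn the two microsupport bounds and then the isomorphism $\reim{j}\Psi_U(F) \isoto \roim{j}\Psi_U(F)$.

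For the first inclusion, I would apply the distinguished triangle of Lemma~\ref{lem:dtgrPsiq},
$$\opb{r_U}(F)[-1] \to \Psi_U(F) \to \opb{q_U}(F) \to[+1],$$
which is available since $F \in \Derbtpn(\cor_U)$. Since $q$ and $r$ are submersions, Theorem~\ref{thm:iminv} gives equalities $\SSi(\opb{q_U}F) = q_d\opb{q_\pi}\SSi(F)$ and $\SSi(\opb{r_U}F) = r_d\opb{r_\pi}\SSi(F)$; combining these with the triangular inequality for the microsupport completes the first bound.

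For the second inclusion the plan is to obtain two independent upper bounds at the boundary $u=0$ and intersect them. The bound $\{\upsilon \leq 0\}$ comes from the fact that, near an interior point $(x_0,t_0,0)$ of $U\times\{0\}$, $U_\gammaof$ is locally an open subset of $U\times \R$ with smooth boundary along $\{u=0\}$; applying Theorem~\ref{thm:oim_open} gives $\SSi(\reim{j}\Psi_U F) \subset \SSi(\Psi_U F) \hplus \SSi(\cor_{U_\gammaof})$, whose outer conormal at $\{u=0\}$ (with $f=-u$) lies in $\{\upsilon \leq 0\}$; combined with the first bound and $\tau \geq 0$, this yields $\upsilon \leq 0$. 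The complementary bound $\{\upsilon \geq -\tau\}$ requires rewriting $\reim{j}\Psi_U(F)$ as $\reim{s'}(\tilde F \etens \cor_{\bar\gammaof})$ on $M\times\R\times\R$, where $\tilde F$ is the extension by zero of $F$ and $\cor_{\bar\gammaof}$ is $\cor_\gammaof$ viewed on $\R^2$ (valid near interior points by Remark~\ref{rem:def-Psi}(ii) and base change). The short exact sequence $0 \to \cor_{\bar\gammaof} \to \cor_{\ol{\bar\gammaof}} \to \cor_{\{t=u,\,u\geq 0\}} \to 0$, with $\ol{\bar\gammaof} = \{0 \leq t \leq u\}$, yields a distinguished triangle after convolving with $\tilde F$. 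The microsupport of $\cor_{\ol{\bar\gammaof}}$ at the vertex $(0,0)$ equals the polar cone $\{\upsilon \geq 0,\ \tau+\upsilon \geq 0\}$ by Example~\ref{ex:microsupport}(iv), so Proposition~\ref{prop:oim} bounds $\SSi(\tilde F \star \cor_{\ol{\bar\gammaof}})\cap A$ by $\{\upsilon \geq 0\}$ (using $\tau \geq 0$); direct stalk computation identifies $\tilde F \star \cor_{\{t=u,\,u\geq 0\}}$ with $(\opb{\tilde r}\tilde F)\otimes \cor_{\{u\geq 0\}}$ where $\tilde r(x,t,u)=(x,t-u)$, and Theorem~\ref{thm:SSrhom} with Example~\ref{ex:microsupport}(iii) yields $\SSi \cap A \subset \{\upsilon \geq -\tau\}$. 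The triangular inequality combines these into $\{\upsilon \geq -\tau\}$, and intersecting with the first bound gives the desired $-\tau \leq \upsilon \leq 0$.

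For the final assertion, the cone $C$ of $\reim{j}\Psi_U(F) \to \roim{j}\Psi_U(F)$ is supported on $\{u=0\}$, since both sides agree with $\Psi_U F$ on $\{u>0\}$ and vanish for $u<0$. The analogous $\roim{j}$-estimate from Theorem~\ref{thm:oim_open} (using $\SSi(\cor_{U_\gammaof})^a$) together with the direct $\{\upsilon \geq -\tau\}$ bound forces $\SSi(C)\cap A$ into a subset compatible with the direct stalk computation $C_{(x_0,t_0,0)} = \lim_\epsilon \rsect(B\times(0,\epsilon);\Psi_U F) = 0$, obtained by applying the distinguished triangle of Lemma~\ref{lem:dtgrPsiq} section-wise and using Künneth/base change for $q_U$ and $r_U$ together with the hypothesis $F\in\Derbtpn$ to see mutual cancellation.

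The main obstacle is the second step, specifically the lower bound $\upsilon \geq -\tau$: this bound does not follow from the standard open-embedding estimate, which only yields $\upsilon \leq 0$, and requires recognizing $\reim{j}\Psi_U(F)$ as a specific convolution on $M\times \R\times\R$ with the closed cone $\ol{\bar\gammaof}$ and exploiting the polar-cone description of $\SSi(\cor_{\ol{\bar\gammaof}})$ at its vertex.
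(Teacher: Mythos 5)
Your first inclusion is proved exactly as in the paper, via the triangle of Lemma~\ref{lem:dtgrPsiq} and the triangular inequality.

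For the second inclusion you take a genuinely different, and noticeably longer, route. The paper views $\gammaof$ as a subset of $\R^2$, computes $\SSi(\cor_\gammaof) \cap T^*_{(0,0)}\R^2$ directly --- near the vertex $\cor_\gammaof \simeq \cor_{\{t\geq 0\}} \otimes \cor_{\{t<u\}}$, so Theorem~\ref{thm:SSrhom} gives the closed convex cone $\{\tau\geq 0,\ -\tau\leq\upsilon\leq 0\}$ in one step --- checks properness of $s$ on the relevant set, and applies Proposition~\ref{prop:oim} once. You instead derive the upper bound $\upsilon\leq 0$ from Theorem~\ref{thm:oim_open} and the first inclusion, and the lower bound $\upsilon\geq -\tau$ from the triangle $\cor_\gammaof \to \cor_{\ol\gammaof} \to \cor_\delta$. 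Note that the triangular inequality applied to that decomposition, with the bare polar-cone bounds $\SSi(\cor_{\ol\gammaof})_{(0,0)}=\{\upsilon\geq 0,\ \tau+\upsilon\geq 0\}$ and $\SSi(\cor_\delta)_{(0,0)}=\{\tau+\upsilon\geq 0\}$, would only give the half-plane $\{\tau+\upsilon\geq 0\}$, which is strictly weaker than what is needed; what rescues your argument is the explicit identification of $\tilde F\star\cor_\delta$ with $\opb{\tilde r}\tilde F\otimes\cor_{\{u\geq 0\}}$ and a bound on its microsupport via Theorem~\ref{thm:SSrhom}. This works, but the extra layer of decomposition is not needed: the closed-cone formula for $\SSi(\cor_\gammaof)$ already contains both bounds simultaneously, which is what the paper uses.

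Your treatment of the final isomorphism has a real gap. The paper's argument here is short and is precisely the intended payoff of the second inclusion: for $z \in U\times\{0\}$, the bound shows $(z;(0,0,-1)) \notin \SSi(\reim{j}\Psi_U(F))$ (take $\xi=0$, $\tau=0$, $\upsilon=-1$, which violates $-\tau\leq\upsilon$), and the very definition of the microsupport then gives $(\rsect_Z(\reim{j}\Psi_U(F)))_z \simeq 0$ for $Z = U\times\mo]-\infty,0]$, so the excision triangle for $Z$ and $W = U\times\rspos$ gives $\reim{j}\Psi_U(F) \isoto \roim{j}\Psi_U(F)$. You correctly observe that the cone $C$ is concentrated on $\{u=0\}$ and identify $C_z$ with $\lim_{B,\epsilon}\rsect(B\times(0,\epsilon);\Psi_U F)$, but the assertion that this vanishes ``by applying the triangle section-wise and mutual cancellation'' is not a proof. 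The triangle $\opb{r_U}F[-1]\to\Psi_U F\to\opb{q_U}F$, after passing to the limit, yields both outer terms $\simeq F_z$; what you then need is that the connecting morphism $F_z\to F_z$ is an isomorphism, and you have given no argument for this. It is also not clear your two microsupport bounds (from $\reim{j}$ and $\roim{j}$) force $C\simeq 0$ by themselves. Using the $\reim{j}$ bound together with the definition of $\SSi$ as the paper does avoids all of this.
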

\begin{remark}\label{rem:SSPsiF}
  If we restrict to a ``slice''
  $U_\gammaof^\varepsilon = U_\gammaof \cap (U \times \{\varepsilon\})$, we obtain
  $\dot\SSi(\Psi_U(F)|_{U_\gammaof^\varepsilon}) \subset (\dot\SSi(F) \cup
  T_\varepsilon( \dot\SSi(F) ) )\cap T^*U_\gammaof^\varepsilon$, where
  $T_\varepsilon$ is the translation
  $T_\varepsilon(x,t;\xi,\tau) = (x,t+c;\xi,\tau)$.
\end{remark}
\begin{proof}
  (i) The first inclusion follows from the triangle~\eqref{eq:dtgamqr2} and the
  triangular inequality for the microsupport.  To prove the second inclusion we
  consider $\gammaof$ as a subset of $\R^2$ rather than $\R \times \rspos$.  We
  also consider the sum $s$, $(x,t_1,t_2,u) \mapsto (x,t_1+t_2,u)$, as a map
  from $U \times\R^3$ to $M\times \R^2$.  Then by the base change formula we
  have $\reim{j} \Psi_U(F) \simeq \reim{s}(F\etens \cor_{\gammaof})$.

  Setting $V = (U\times \mo]-\infty,0]) \cup U_\gammaof$ we see that $s$ is
  proper as a map from $\opb{s}(V) \cap (U \times \ol{\gammaof})$ to $V$.  Hence
  we can use Proposition~\ref{prop:oim} to bound
  $\SSi(\reim{s}(F\etens \cor_{\gammaof})|_V)$.  We see that we only need to
  know $\SSi(F\etens \cor_{\gammaof})$ above $U\times\R\times \{0\}$.  Now
  $\SSi(\cor_\gammaof) \cap T^*_{(t,0)}\R^2$ is empty for $t\not=0$ and is the
  cone $\{-\tau \leq \upsilon \leq 0\}$ for $t=0$.  Hence
  $\SSi(F\etens \cor_{\gammaof}) \cap\{u=0\} \subset \{(x,t_1,0,0;
  \xi,\tau_1,\tau_2,\upsilon)$; $(x,t_1;\xi,\tau_1) \in \SSi(F)$,
  $-\tau_2 \leq \upsilon \leq 0\}$.  We conclude with
  Proposition~\ref{prop:oim}.

  \sui(ii) We deduce $\reim{j} \Psi_U(F) \isoto \roim{j} \Psi_U(F)$.  Let us set
  $Z = U\times \mo]-\infty,0]$ and $W = U\times \mo]0,+\infty[$.  Then
  $\roim{j} \Psi_U(F) \simeq \rsect_W( \reim{j} \Psi_U(F))$ and, by the excision
  distinguished triangle, we are reduced to prove that
  $\rsect_Z( \reim{j} \Psi_U(F)) \simeq 0$.  Since the support of
  $\reim{j} \Psi_U(F)$ is contained in $\ol W$, it is enough to prove
  $(\rsect_Z( \reim{j} \Psi_U(F)))_z \simeq 0$ for any $z\in U\times \{0\}$.
  Going back to the definition of the microsupport, this follows from
  $(z,0;0,-1)\not\in \SSi(\reim{j} \Psi_U(F))$.
\end{proof}

\begin{proposition}\label{prop:decomposition_muhomPsi}
  Let $F, G \in \Dertpn(\cor_U)$.  We set $\Omega = T^*_{\tau>0}U_\gammaof$ for
  short.  Then we have a natural decomposition
\begin{align*}
  \muhom(\Psi_U(F), \Psi_U(G))|_\Omega
&\simeq \muhom(\opb{q_U}(F), \opb{q_U}(G))|_\Omega \\
  &\qquad  \oplus \muhom(\opb{r_U}(F), \opb{r_U}(G))|_\Omega 
\end{align*}
such that the corresponding projection from $\muhom(\Psi_U(F), \Psi_U(G))|_\Omega$ to
$\muhom(\opb{q_U}(F), \opb{q_U}(G))|_\Omega$ coincides with
  $\alpha_2^{-1} \circ \alpha_1$, where
$\alpha_1$, $\alpha_2$ are respectively induced by the morphisms $\alpha(G)$ and
$\alpha(F)$ of~\eqref{eq:def_alphabeta} as follows:
\begin{align*}
  \muhom(\Psi_U(F), \Psi_U(G))|_\Omega
  & \to[\alpha_1] \muhom(\Psi_U(F), \opb{q_U}(G))|_\Omega \\
  & \isofrom[\alpha_2] \muhom(\opb{q_U}(F), \opb{q_U}(G))|_\Omega  .
\end{align*}
\end{proposition}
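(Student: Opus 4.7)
\medskip
\noindent\textbf{Proof plan.} The plan is to use the two distinguished triangles from Lemma~\ref{lem:dtgrPsiq} (one for $F$ and one for $G$) to break the four-term expression $\muhom(\Psi_U(F),\Psi_U(G))$ into the $\muhom$'s of the four possible combinations of $\opb{q_U}$ and $\opb{r_U}$, and then kill the cross terms by a microsupport estimate. The first step is to compute these microsupports: since $q_U$ and $r_U$ are submersions, Theorem~\ref{thm:iminv} gives
\[
\SSi(\opb{q_U}F)\subset\{(x,t,u;\xi,\tau,0):(x,t;\xi,\tau)\in\SSi(F)\},
\]
\[
\SSi(\opb{r_U}F)\subset\{(x,t,u;\xi,\tau,-\tau):(x,t-u;\xi,\tau)\in\SSi(F)\},
\]
and the analogous bounds for $G$. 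On $\Omega=T^{*}_{\tau>0}U_{\gammaof}$ the first subset lies in $\{\upsilon=0\}$ and the second in $\{\upsilon=-\tau<0\}$, so these two subsets of $T^{*}U_{\gammaof}$ are disjoint on~$\Omega$. By the support bound~\eqref{eq:suppmuhom} this yields the vanishings
\[
\muhom(\opb{q_U}F,\opb{r_U}G)|_{\Omega}\simeq 0,\qquad
\muhom(\opb{r_U}F,\opb{q_U}G)|_{\Omega}\simeq 0,
\]
and, more importantly, the supports of $\muhom(\opb{q_U}F,\opb{q_U}G)$ and of $\muhom(\opb{r_U}F,\opb{r_U}G)$ are disjoint on~$\Omega$.

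Next I would apply $\muhom(-,\opb{q_U}G)$ to the triangle $\opb{r_U}F[-1]\to\Psi_U(F)\to\opb{q_U}F\to[+1]$; the third term vanishes on $\Omega$ by the first vanishing above, so $\alpha(F)$ induces an isomorphism
\[
\alpha_{2}\colon\muhom(\opb{q_U}F,\opb{q_U}G)|_{\Omega}\isoto\muhom(\Psi_U F,\opb{q_U}G)|_{\Omega}.
\]
Applying $\muhom(-,\opb{r_U}G[-1])$ to the same triangle and using the second vanishing, I would get an isomorphism $\muhom(\Psi_U F,\opb{r_U}G[-1])|_{\Omega}\simeq\muhom(\opb{r_U}F,\opb{r_U}G)|_{\Omega}$ induced by~$\beta(F)$. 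Applying $\muhom(\Psi_U F,-)$ to the triangle $\opb{r_U}G[-1]\to\Psi_U(G)\to\opb{q_U}G\to[+1]$ and plugging in these two identifications, one obtains on $\Omega$ a distinguished triangle
\[
\muhom(\opb{r_U}F,\opb{r_U}G)|_{\Omega}\to\muhom(\Psi_U F,\Psi_U G)|_{\Omega}\to[\alpha_{1}]\muhom(\opb{q_U}F,\opb{q_U}G)|_{\Omega}\to[+1],
\]
in which the second arrow coincides, after the identification $\alpha_{2}$, with the map $\alpha_{2}^{-1}\circ\alpha_{1}$ of the statement.

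Finally, I would split this triangle using the disjoint-support remark of the first paragraph: the connecting morphism from the right-hand term to the shift of the left-hand term is a section of a $\muhom$ sheaf whose support is contained in the intersection of the supports of the two end terms, which is empty on~$\Omega$. Hence the connecting morphism vanishes and the triangle splits canonically into a direct sum. This produces the claimed decomposition, with the projection onto the $qq$-summand being $\alpha_{2}^{-1}\circ\alpha_{1}$. I expect no serious obstacle: the only point one has to be a bit careful about is tracking which concrete morphisms of sheaves give rise to the isomorphisms $\alpha_{1}$ and $\alpha_{2}$, so that the identification of the projection in the statement matches the one obtained by splitting the triangle; this is a routine check using the functoriality of $\muhom$ and the composition~\eqref{eq:mucirc_fonct}.
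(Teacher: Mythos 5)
Your proof is correct and takes essentially the same approach as the paper's: both use the distinguished triangle~\eqref{eq:dtgamqr2} for $F$ and for $G$ together with the support bound~\eqref{eq:suppmuhom} and the estimates $\SSi(\opb{q_U}(-))\subset\{\upsilon=0\}$, $\SSi(\opb{r_U}(-))\subset\{\upsilon=-\tau\}$ to kill the cross terms on $\Omega$ and then split by disjointness of supports. The only difference is cosmetic — you apply the $F$-triangle first and then split the $G$-triangle, whereas the paper first splits $\muhom(\Psi_U(F),\Psi_U(G))$ via the $G$-triangle and then identifies the two summands using the $F$-triangle.
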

\begin{proof}
  (i) We set for short $h(A) = \muhom(\Psi_U(F), A)|_\Omega$ for a sheaf $A$ on
  $U_\gammaof$. The triangle~\eqref{eq:dtgamqr2} induces a distinguished
  triangle
  $$
  h(\opb{r_U}(G))[-1] \to h(\Psi_U(G)) \to[\alpha_1] h(\opb{q_U}(G))
  \to[\gamma_1] h(\opb{r_U}(G))
  $$
  on $\Omega$.  We recall the bound
  $\supp \muhom(F_1,F_2) \subset \SSi(F_1) \cap \SSi(F_2)$. By
  Theorem~\ref{thm:iminv} the microsupports of $\opb{q_U}(G)$ and $\opb{r_U}(F)$
  are respectively contained in $\{ (x,t,u;\xi,\tau,0) \}$ and
  $\{ (x,t,u;\xi,\tau,-\tau) \}$. Hence their intersection is contained in
  $\{\tau=0\}$. Since we work on $\Omega = T^*_{\tau>0}U_\gammaof$ it follows
  that the supports of $h(\opb{q_U}(G))$ and $h(\opb{r_U}(G))$ are disjoint,
  hence that $\gamma_1 = 0$. This proves that
  $h(\Psi_U(G)) \simeq h(\opb{q_U}(G)) \oplus h(\opb{r_U}(G))$.
  
  \sui(ii) Now we check that
  $\alpha_2 \colon \muhom(\opb{q_U}(F), \opb{q_U}(G))|_\Omega \to
  h(\opb{q_U}(G))$ is an isomorphism.  Using the triangle~\eqref{eq:dtgamqr2}
  again, we see that the cone of $\alpha_2$ is
  $\muhom(\opb{r_U}(F), \opb{q_U}(G))|_\Omega$ whose support is contained in
  $\SSi(\opb{r_U}(F)) \cap \SSi(\opb{q_U}(G))$.  The same bound as in~(i) shows
  that this set is contained in $\{\tau=0\}$ and does not meet $\Omega$.  Hence
  the cone of $\alpha_2$ vanishes and $\alpha_2$ is an isomorphism.

  We obtain $\muhom(\opb{r_U}(F), \opb{r_U}(G))|_\Omega \isoto h(\opb{r_U}(G))$
  by swapping $q_U$ and $r_U$ in the previous argument.  This concludes the
  proof of the proposition.
\end{proof}

\section{Adjunction properties}\label{sec:adj_prop}

Let $U\subset M\times \R$ be an open subset.   We let
$\Derra(\cor_{U_\gammaof})$ be the full subcategory of $\Dertpn(\cor_{U_\gammaof})$
consisting of $\reim{r_U}$-acyclic objects, that is, the objects $G$ such that
$\reim{r_U}G \simeq 0$.  This is a triangulated category.  By
Lemma~\ref{lem:annulation_qr}~(i) the functor $\Psi_U$ takes values in
$\Derra(\cor_{U_\gammaof})$.  Using Theorem~\ref{thm:oim_open} for the embedding
$U_\gammaof \hookrightarrow U \times \R$ and Proposition~\ref{prop:oim} we see that
the functor $\reim{q_U}$ sends $\Dertpn(\cor_{U_\gammaof})$ into $\Dertpn(\cor_U)$.
Moreover the morphism of functors $\alpha \cl \Psi_U \to \opb{q_U}$
in~\eqref{eq:def_alphabeta} and the adjunction morphism
$\reim{q_U} \epb{q_U} \simeq \reim{q_U} \opb{q_U} [1] \to \id$ induce:
\begin{equation}\label{eq:psiq-id}
  b_U(F) \cl \reim{q_U} \Psi_U (F) [1] \to F,
\qquad \text{for all $F\in \Dertpn(\cor_U)$.}
\end{equation}

\begin{lemma}\label{lem:adj-Psi-q}
The functor $\reim{q_U}[1] \cl \Derra(\cor_{U_\gammaof}) \to \Dertpn(\cor_U)$ is
left adjoint to $\Psi_U \cl \Dertpn(\cor_U) \to \Derra(\cor_{U_\gammaof})$.
In particular we have an adjunction morphism
\begin{equation}\label{eq:id-psiq}
  b'_U(G) \cl G \to \Psi_U \reim{q_U} (G) [1] ,
\qquad \text{for all $G\in \Derra(\cor_{U_\gammaof})$.}
\end{equation}
\end{lemma}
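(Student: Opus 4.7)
\medskip

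\noindent\textbf{Proof plan for Lemma~\ref{lem:adj-Psi-q}.}

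The plan is to establish, for $F\in\Derbtpn(\cor_U)$ and $G\in\Derbra(\cor_{U_\gammaof})$, a natural bijection
\[
\Hom(\reim{q_U}(G)[1], F) \;\simeq\; \Hom(G, \Psi_U(F))
\]
by comparing both sides to a common third group, namely $\Hom(G, \opb{q_U}F)$. First I would verify the two ``end-conditions'' that make the adjunction statement meaningful: that $\Psi_U$ actually lands in $\Derbra(\cor_{U_\gammaof})$, which is Lemma~\ref{lem:annulation_qr}(i), and that $\reim{q_U}$ lands in $\Derbtpn(\cor_U)$, which is the observation recorded just before the statement (combining Proposition~\ref{prop:oim} with Theorem~\ref{thm:oim_open}).

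Next I would use the two key geometric facts that both $q_U$ and $r_U$ are submersions whose fibers are open intervals in $\rspos$, so the relative dualizing sheaves are $\cor_{U_\gammaof}[1]$, giving
\[
\epb{q_U}(F)\simeq \opb{q_U}(F)[1],\qquad \epb{r_U}(F)\simeq \opb{r_U}(F)[1].
\]
Combined with the adjunction $(\reim{q_U},\epb{q_U})$, this yields the first identification
\[
\Hom(\reim{q_U}(G)[1], F)\simeq \Hom(G,\epb{q_U}(F)[-1])\simeq \Hom(G,\opb{q_U}(F)).
\]

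The second identification $\Hom(G,\Psi_U(F))\simeq \Hom(G,\opb{q_U}(F))$ comes from applying $\Hom(G,-)$ to the distinguished triangle of Lemma~\ref{lem:dtgrPsiq},
\[
\opb{r_U}(F)[-1]\;\to\;\Psi_U(F)\;\xrightarrow{\alpha(F)}\;\opb{q_U}(F)\;\xrightarrow{+1},
\]
and showing that $\Hom(G,\opb{r_U}(F)[n])=0$ for $n=-1,0$. This is exactly where the $\reim{r_U}$-acyclicity of $G$ enters: using the $(\reim{r_U},\epb{r_U})$ adjunction and $\epb{r_U}\simeq\opb{r_U}[1]$, I would rewrite
\[
\Hom(G,\opb{r_U}(F)[n])\simeq \Hom(\reim{r_U}(G), F[n-1])=0.
\]

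Composing the two isomorphisms and tracking through the construction gives the claimed adjunction, and unwinding the composite on the identity of $\Psi_U(F)$ produces exactly the counit $b_U(F)$ of~\eqref{eq:psiq-id}, namely $\reim{q_U}\alpha(F)[1]$ followed by the counit of $(\reim{q_U},\epb{q_U})$. This naturality and compatibility with $b_U$ is the part that needs to be checked carefully but is essentially formal. I do not expect a serious obstacle: the only nontrivial inputs are the two dualizing-sheaf identifications and the annihilation $\reim{r_U}G\simeq 0$, and both are already packaged in the setup. The adjunction unit $b'_U(G)\colon G\to\Psi_U\reim{q_U}(G)[1]$ in~\eqref{eq:id-psiq} is then simply the image of $\id_{\reim{q_U}(G)[1]}$ under the inverse bijection.
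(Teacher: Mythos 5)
Your proposal is correct and follows essentially the same route as the paper: both reduce the adjunction to the comparison $\Hom(G,\Psi_U(F))\simeq\Hom(G,\opb{q_U}F)\simeq\Hom(\reim{q_U}G[1],F)$, using the triangle~\eqref{eq:dtgamqr2}, the identification $\epb{r_U}\simeq\opb{r_U}[1]$ (and likewise for $q_U$), and the $\reim{r_U}$-acyclicity of $G$ to kill the $\opb{r_U}F$ term. The only cosmetic difference is that the paper applies $\RHom(G,-)$ to the triangle and kills the whole complex $\RHom(G,\opb{r_U}F[-1])$ at once, whereas you check the two individual Hom groups that matter; these are equivalent.
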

\begin{proof}
Since $\Dertpn(\cor_U)$ and $\Derra(\cor_{U_\gammaof})$ are full
subcategories of $\Der(\cor_U)$ and $\Der(\cor_{U_\gammaof})$, it is enough
to prove
\begin{equation}\label{eq:psiq-id1}
\Hom_{\Der(\cor_{U_\gammaof})}(G, \Psi_U(F))
\simeq \Hom_{\Der(\cor_U)}(\reim{q_U}G[1],F)
\end{equation}
for any $F\in \Dertpn(\cor_U)$ and $G\in \Derra(\cor_{U_\gammaof})$.  Since
$r_U$ is a smooth map with fibers homeomorphic to $\R$ we have a canonical
isomorphism of functors $\epb{r_U} \simeq \opb{r_U}[1]$; hence an adjunction
$(\reim{r_U},\opb{r_U}[1])$. The same holds for $q_U$.  Applying
$\RHom(G,\cdot)$ to~\eqref{eq:dtgamqr2} we obtain the distinguished triangle
\begin{align*}
\RHom(G,\opb{r_U}F[-1]) \to \RHom(G, \Psi_U(F)) \to
\RHom(G, \opb{q_U}F)  \to[+1] .
\end{align*}
The adjunction $(\reim{r_U},\opb{r_U}[1])$ and the hypothesis $G\in
\Derra(\cor_{U_\gammaof})$ give $\RHom(G,\opb{r_U}F[-1]) \simeq 0$. We deduce
$$
\RHom(G, \Psi_U(F)) \isoto \RHom(G, \opb{q_U}F) \simeq \RHom(\reim{q_U}G[1],F),
$$
which implies~\eqref{eq:psiq-id1}.
\end{proof}

\begin{lemma}\label{lem:qpsi-projA}
Let $F\in \Dertpn(\cor_U)$.  Then the morphisms $\Psi_U(b_U(F))$ and
$b'_U(\Psi_U (F))$ are mutually inverse isomorphisms:
\begin{align*}
\Psi_U(b_U(F)) &\cl \Psi_U \reim{q_U} \Psi_U (F) [1] \isoto \Psi_U(F) , \\
b'_U(\Psi_U (F)) &\cl \Psi_U (F) \isoto \Psi_U \reim{q_U} \Psi_U (F) [1] .
\end{align*}
\end{lemma}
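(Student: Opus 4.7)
The plan is to show that $\Psi_U(b_U(F))$ is an isomorphism by analyzing it inside a distinguished triangle obtained from~\eqref{eq:dtgamqr2}, and then to deduce that $b'_U(\Psi_U(F))$ is its inverse from the triangle identity of the adjunction in Lemma~\ref{lem:adj-Psi-q}.

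First, I would apply $\reim{q_U}$ to the distinguished triangle~\eqref{eq:dtgamqr2}
$$\opb{r_U}(F)[-1] \to \Psi_U(F) \to[\alpha(F)] \opb{q_U}(F) \to[+1]$$
and shift by $+1$, obtaining
$$\reim{q_U}\opb{r_U}(F) \to \reim{q_U}\Psi_U(F)[1] \to \reim{q_U}\opb{q_U}(F)[1] \to[+1].$$
Since the fibers of $q_U$ are intervals we have a canonical isomorphism $\epb{q_U} \simeq \opb{q_U}[1]$, and by Lemma~\ref{lem:annulation_qr}(i) the adjunction counit gives an isomorphism $\reim{q_U}\opb{q_U}(F)[1] \simeq \reim{q_U}\epb{q_U}(F) \isoto F$. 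Chasing the definition of $b_U(F)$ in~\eqref{eq:psiq-id}, the composed map $\reim{q_U}\Psi_U(F)[1] \to F$ appearing in the triangle above is exactly $b_U(F)$. Thus we have a distinguished triangle
$$\reim{q_U}\opb{r_U}(F) \to \reim{q_U}\Psi_U(F)[1] \to[b_U(F)] F \to[+1].$$

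Next I would apply the functor $\Psi_U$ to this triangle, obtaining
$$\Psi_U\reim{q_U}\opb{r_U}(F) \to \Psi_U\reim{q_U}\Psi_U(F)[1] \to[\Psi_U(b_U(F))] \Psi_U(F) \to[+1].$$
The leftmost term vanishes: Lemma~\ref{lem:annulation_qr}(ii) precisely asserts that $\reim{q_U}\opb{r_U}(F)$ is vertically locally constant, hence $\Psi_U\reim{q_U}\opb{r_U}(F) \simeq 0$ by Lemma~\ref{lem:suppPsiuL} (taking $V=U$). Consequently $\Psi_U(b_U(F))$ is an isomorphism.

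Finally, the adjunction $(\reim{q_U}[1],\Psi_U)$ of Lemma~\ref{lem:adj-Psi-q}, with unit $b'_U$ from~\eqref{eq:id-psiq} and counit $b_U$ from~\eqref{eq:psiq-id}, yields the triangle identity
$$\Psi_U(b_U(F)) \circ b'_U(\Psi_U(F)) = \id_{\Psi_U(F)}.$$
Since we have just shown the left factor to be an isomorphism, $b'_U(\Psi_U(F))$ must coincide with its inverse, so the two morphisms are mutually inverse isomorphisms. There is no real obstacle here; the only step requiring care is the careful identification of the connecting map $\reim{q_U}\Psi_U(F)[1] \to F$ in the distinguished triangle with the counit $b_U(F)$, which is essentially a bookkeeping exercise using that $\epb{q_U} \simeq \opb{q_U}[1]$ on a smooth map with $1$-dimensional fibers.
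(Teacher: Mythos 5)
Your proof is correct and takes essentially the same approach as the paper: apply $\reim{q_U}[1]$ to the triangle~\eqref{eq:dtgamqr2}, identify the resulting connecting map with $b_U(F)$, kill the third term $\reim{q_U}\opb{r_U}(F)$ after applying $\Psi_U$ via Lemma~\ref{lem:annulation_qr}(ii), and finish with the triangle identity of the adjunction. (The explicit appeal to Lemma~\ref{lem:suppPsiuL} is slightly redundant, since Lemma~\ref{lem:annulation_qr}(ii) already states the conclusion $\Psi_U(\reim{q_U}\opb{r_U}(F)) \simeq 0$ directly, but this does not affect correctness.)
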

\begin{proof}
  (i)   We prove the first
  isomorphism.  We apply $\reim{q_U}[1]$ to the distinguished
  triangle~\eqref{eq:dtgamqr2}.  Since $q_U$ has fibers isomorphic to $\R$ the
  adjunction morphism
  $\reim{q_U} \opb{q_U}(F)[1] \simeq \reim{q_U} \epb{q_U}(F) \to F$ is an isomorphism
  and we obtain the distinguished triangle:
\begin{equation}\label{eq:qpsi-projA1}
L \to \reim{q_U} \Psi_U (F) [1] \to[b_U(F)]  F \to[+1],
\end{equation}
where $L = \reim{q_U} \opb{r_U}(F)$. By Lemma~\ref{lem:annulation_qr}~(ii) we
have $\Psi_U(L) \simeq 0$. Hence applying $\Psi_U$ to~\eqref{eq:qpsi-projA1}
gives the lemma.

\medskip\noindent
(ii) The composition $\Psi_U(b_U(F)) \circ b'_U(\Psi_U (F))$ is the identity
morphism of $\Psi_U(F)$, by general properties of adjunctions.  Hence
the lemma follows from~(i).
\end{proof}

\begin{proposition}\label{prop:Psipresqueff}
We assume that $U = M \times \R$, that $F\in \Dertpn(\cor_U)$ and that
$\supp(F) \subset M \times [a,+\infty[$ for some $a\in \R$.  Then the
adjunction morphism $b_U(F) \cl \reim{q_U} \Psi_U (F) [1] \to F$
of~\eqref{eq:psiq-id} is an isomorphism and for any $G\in \Dertpn(\cor_U)$ we
have
$$
\Hom(F,G) \isoto \Hom(\Psi_U(F), \Psi_U(G)) .
$$
\end{proposition}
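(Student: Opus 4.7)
\medskip

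The plan is to deduce both assertions directly from the machinery assembled in Lemmas~\ref{lem:annulation_qr} and~\ref{lem:adj-Psi-q}. The additional hypothesis that $U = M\times\R$ and that $\supp(F)$ is bounded below in the $t$-direction is exactly what is needed to upgrade the unit/counit morphisms from isomorphisms on the image of $\Psi_U$ (as in Lemma~\ref{lem:qpsi-projA}) to genuine isomorphisms on $F$.

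For the first claim, I would reuse verbatim the distinguished triangle appearing in the proof of Lemma~\ref{lem:qpsi-projA}: applying $\reim{q_U}[1]$ to the triangle~\eqref{eq:dtgamqr2} and using that $\reim{q_U}\epb{q_U}(F)\simeq F$ (since the fibers of $q_U$ are intervals), one obtains
\begin{equation*}
\reim{q_U}\opb{r_U}(F) \to \reim{q_U}\Psi_U(F)[1] \to[b_U(F)] F \to[+1].
\end{equation*}
Now the hypotheses of Lemma~\ref{lem:annulation_qr}~(iii) are satisfied, hence $\reim{q_U}\opb{r_U}(F)\simeq 0$, and $b_U(F)$ is an isomorphism. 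This is the key step, though ``hard'' is a misnomer: everything reduces to invoking the boundedness-of-support hypothesis in the right lemma.

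For the second claim, I would appeal to the adjunction $(\reim{q_U}[1],\Psi_U)$ between $\Derbra(\cor_{U_\gammaof})$ and $\Derbtpn(\cor_U)$ given by Lemma~\ref{lem:adj-Psi-q}. Since $\Psi_U(F)$ lies in $\Derbra(\cor_{U_\gammaof})$ (by Lemma~\ref{lem:annulation_qr}~(i)) and $G\in\Derbtpn(\cor_U)$, the adjunction provides an isomorphism
\begin{equation*}
\Hom(\Psi_U(F),\Psi_U(G))\simeq \Hom(\reim{q_U}\Psi_U(F)[1],G).
\end{equation*}
Composing with the isomorphism $b_U(F)\colon \reim{q_U}\Psi_U(F)[1]\isoto F$ established in the first part yields $\Hom(\Psi_U(F),\Psi_U(G))\simeq\Hom(F,G)$, and tracing through the definitions shows this isomorphism is indeed induced by $\Psi_U$, as required.

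The main obstacle, if any, is simply to verify that the hypotheses of Lemma~\ref{lem:annulation_qr}~(iii) are met (which is immediate) and to check that the composite isomorphism is the one induced by the functor $\Psi_U$ rather than some other map; the latter follows formally from the unit-counit relations of the adjunction in Lemma~\ref{lem:adj-Psi-q}. No further geometric input is required beyond what has already been used.
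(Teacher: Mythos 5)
Your proposal is correct and follows essentially the same route as the paper: both arguments deduce the first claim from the triangle~\eqref{eq:dtgamqr2} together with Lemma~\ref{lem:annulation_qr}~(i),(iii), and deduce the second from the adjunction $(\reim{q_U}[1],\Psi_U)$ of Lemma~\ref{lem:adj-Psi-q}. Your remark that the resulting isomorphism is indeed the one induced by $\Psi_U$ (via Lemma~\ref{lem:qpsi-projA} and the unit--counit relations) is a detail the paper leaves implicit, and it is a worthwhile thing to spell out.
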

\begin{proof}
  By Lemma~\ref{lem:annulation_qr}~(i) and~(iii) we have
  $\reim{q_U} \epb{q_U} (F) \isoto F$ and $\reim{q_U} \opb{r_U} (F) \simeq 0$.
  Hence the first part follows from the distinguished
  triangle~\eqref{eq:dtgamqr2}.  Then the second part is given by the adjunction
  $(\reim{q_U}, \Psi_U)$ of Lemma~\ref{lem:adj-Psi-q}.
\end{proof}

\section{Link with microlocalization}

In this section we prove Proposition~\ref{prop:muhom=hompsi} below which says that
the group of homomorphisms from $\Psi_U(F)$ to $\Psi_U(G)$ is isomorphic to the group
of sections of $\muhom(F,G)$ outside the zero section.  We first deduce from
Proposition~\ref{prop:decomposition_muhomPsi} a morphism from
$\muhom(\Psi_U(F), \Psi_U(G))$ to $\muhom(F,G)$.  Then we use ``boundary values'' of
sheaves on $U_\gammaof$ in the following sense.  Let $U \subset M\times \R$ and
$V \subset M \times\R \times \rspos$ be open subsets satisfying  
\begin{equation}
  \label{eq:valbord_ouvert} 
  \text{$i_U(U) \sqcup j_V(V)$ is open in $M \times\R \times \rpos$,}
\end{equation}
where $i_U,j_V$ are the natural inclusions
\begin{equation}\label{def:iUjU}
  \begin{alignedat}{2}
i_U \cl U  &\to M\times \R \times \R  ,
& \qquad  (x,t) &\mapsto (x,t,0) ,   \\
j_V  \cl V &\to M\times\R \times \R .
  \end{alignedat}
\end{equation}
Then, for $G \in \Der(\cor_{V})$, its boundary value is
$\opb{i_U} \RR j_{V*}(G) \in \Der(\cor_U)$. We remark that we can shrink $V$ as long
as~\eqref{eq:valbord_ouvert} is satisfied:
\begin{lemma}\label{lem:valbord}
  Let $V' \subset V$ be an open subset such that $i_U(U) \sqcup j_{V'}(V')$ is open
  in $M \times\R \times \rpos$.  Then the morphism
  $\RR j_{V*}(G) \to \RR j_{V'*}(G|_{V'})$, obtained from
  $\opb{j_{V'}} \RR j_{V*}(G) \simeq G|_{V'}$ by the adjunction
  $(\opb{j_{V'}}, \RR j_{V'*})$, induces an isomorphism
  $\opb{i_U} \RR j_{V*}(G) \isoto \opb{i_U} \RR j_{V'*}(G|_{V'})$.
\end{lemma}
\begin{proof}
  To check that a given morphism is an isomorphism it is enough to see that it gives
  an isomorphism on the cohomology of the stalks at any point.  For $(x,t) \in U$ and
  $k\in \Z$ we have
  $H^k(\opb{i_U} \RR j_{V*}(G))_{(x,t)} \simeq \varinjlim_W H^k(W \cap V; G)$, where
  $W$ runs over the neighborhoods of $(x,t,0)$ in $M \times\R \times \rpos$.  For a
  given $(x,t) \in U$ we have $W\cap V = W\cap V'$ when $W$ is small enough and the
  result follows.  
\end{proof}

\begin{notation}\label{rem:valbord}
  Because of Lemma~\ref{lem:valbord} we can forget the subscript $V$ and write the
  boundary value as $\opb{i_U} \roim{j} (G)$ for $G \in \Der(\cor_{V})$, when $V$ is
  any open subset of $M \times\R \times \rspos$ satisfying~\eqref{eq:valbord_ouvert}.
  Most of the time $V$ will be $U_\gammaof$ or $U\times\rspos$ but sometimes it will
  be convenient to shrink $V$ in the course of a proof.
\end{notation}

Let $\pi_U^{>0} \colon T^*_{\tau>0}U \to U$,
$\pi_{U_\gammaof}^{>0} \colon T^*_{\tau>0}U_\gammaof \to U_\gammaof$ be the
projections.  Proposition~\ref{prop:decomposition_muhomPsi} yields a morphism, for
$F,G \in \Dertpn(\cor_U)$,
\begin{equation}
  \label{eq:hom_Psi-vers-muhom0}
  \begin{split}
  \rhom(\Psi_U(F)&,\Psi_U(G))   \\
 & \to  \roim{\pi_{U_\gammaof}^{>0}{}}
(\muhom(\opb{q_U}(F), \opb{q_U}(G))|_{T^*_{\tau>0}U_\gammaof})  
\end{split}
\end{equation}
which can be written as the composition
\begin{align}
\notag  \rhom(\Psi_U(F)&,\Psi_U(G))  \\
  \label{eq:hom_Psi-vers-muhomA}
                       & \to \rhom(\Psi_U(F),\opb{q_U}(G))  \\
  \notag & \simeq  \roim{\pi_{U_\gammaof}{}} \muhom(\Psi_U(F), \opb{q_U}(G)) \\
  \label{eq:hom_Psi-vers-muhomC}
                       & \to  \roim{\pi_{U_\gammaof}^{>0}{}}
   ( \muhom(\Psi_U(F), \opb{q_U}(G))|_{T^*_{\tau>0}U_\gammaof} ) \\
\notag  & \isofrom \roim{\pi_{U_\gammaof}^{>0}{}}
(\muhom(\opb{q_U}(F), \opb{q_U}(G))|_{T^*_{\tau>0}U_\gammaof})  
\end{align}
where the first line is induced by
$\alpha(G) \colon \Psi_U(G) \to \opb{q_U}(G)$, the second line
is~\eqref{eq:proj_muhom_oim}, the third line is the restriction to
$T^*_{\tau>0}U_\gammaof$ and the fourth line is given by
Proposition~\ref{prop:decomposition_muhomPsi}.  Now it is easy to describe the
boundary value of the right hand side of~\eqref{eq:hom_Psi-vers-muhom0} as
follows.

\begin{lemma}
  \label{lem:def_homPsiversmuhom}
  For $F,G,H \in \Der(\cor_U)$ and $\shf\in \Der(\cor_{T^*U})$ we have natural
  isomorphisms
  \begin{align}
     \label{eq:hom_Psi-vers-muhom1}
    \muhom(\opb{q_U}(F),\opb{q_U}(G))
    &\simeq \eim{{q_{U,d}}} \, \opb{q_{U,\pi}} \muhom(F,G) , \\
    \label{eq:hom_Psi-vers-muhom2}
    \roim{\pi_{U_\gammaof}^{>0}} 
((\eim{{q_{U,d}}} \, \opb{q_{U,\pi}}(\shf))|_{T^*_{\tau>0}U_\gammaof})
    &\simeq \opb{q_U} \RR\pi_{U*}^{>0}(\shf|_{T^*_{\tau>0}U}), \\
    \label{eq:hom_Psi-vers-muhom3}
    \opb{i_U} \roim{j} \opb{q_U} H &\simeq H 
  \end{align}
which induce
\begin{equation}\label{eq:hom_Psi-vers-muhom4}
  \begin{split}
  \opb{i_U} \roim{j} \roim{\pi_{U_\gammaof}^{>0}{}}
(\muhom(\opb{q_U}(F)&, \opb{q_U}(G))|_{T^*_{\tau>0}U_\gammaof})  \\
& \simeq  \RR\pi_{U*}^{>0}( \muhom(F,G)|_{T^*_{\tau>0}U} ) .
\end{split}
\end{equation}
\end{lemma}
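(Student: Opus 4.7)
The plan is to establish the three stated isomorphisms separately and then chain them. I proceed in the order~\eqref{eq:hom_Psi-vers-muhom1}, \eqref{eq:hom_Psi-vers-muhom2}, \eqref{eq:hom_Psi-vers-muhom3}, and deduce~\eqref{eq:hom_Psi-vers-muhom4} at the end.

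For~\eqref{eq:hom_Psi-vers-muhom1}: since $q_U \cl U_\gammaof \to U$ is a smooth submersion, $q_{U,d}$ is a closed embedding onto the subbundle $U_\gammaof \times_U T^*U = \{\upsilon = 0\} \subset T^*U_\gammaof$, and $q_{U,\pi}$ is a submersion. By the smooth case of Theorem~\ref{thm:iminv} the microsupports of $\opb{q_U}F$ and $\opb{q_U}G$ both lie in $\im(q_{U,d})$, so by~\eqref{eq:suppmuhom} so does the support of $\muhom(\opb{q_U}F,\opb{q_U}G)$. The natural morphism from the right hand side to the left hand side comes from the general functoriality of $\muhom$ under inverse image (\cite[Prop.~4.4.5]{KS90}); on the closed subset $\im(q_{U,d})$ where both sides are supported it can be checked to be an isomorphism by taking germs and using the identification $\mu_{\Delta_{U_\gammaof}} \circ \opb{(q_U\times q_U)} \simeq \eim{q_{U,d}} \opb{q_{U,\pi}} \circ \mu_{\Delta_U}$ which again follows from the smooth inverse image formula for microlocalization.

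For~\eqref{eq:hom_Psi-vers-muhom2}: consider the Cartesian square
\begin{equation*}
\begin{tikzcd}
U_\gammaof \times_U T^*_{\tau>0}U \ar[r, "\tilde{q}_\pi"] \ar[d, "\tilde{\pi}"] & T^*_{\tau>0}U \ar[d, "\pi_U^{>0}"] \\
U_\gammaof \ar[r, "q_U"] & U\pointdiag
\end{tikzcd}
\end{equation*}
The sheaf $(\eim{q_{U,d}} \opb{q_{U,\pi}}\shf)|_{T^*_{\tau>0}U_\gammaof}$ is, by construction, the extension by zero from the closed subset $\{\upsilon=0\} \cap T^*_{\tau>0}U_\gammaof \simeq U_\gammaof \times_U T^*_{\tau>0}U$ of $\opb{\tilde q_\pi}(\shf|_{T^*_{\tau>0}U})$. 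Hence pushing forward by $\pi_{U_\gammaof}^{>0}$ gives the same object as pushing forward by $\tilde\pi$, and the base change formula (applicable because $q_U$, and therefore $\tilde\pi$, is a submersion) identifies it with $\opb{q_U}\RR\pi_{U*}^{>0}(\shf|_{T^*_{\tau>0}U})$.

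For~\eqref{eq:hom_Psi-vers-muhom3}: the pair $(U,U_\gammaof)$ satisfies~\eqref{eq:valbord_ouvert} because $U_\gammaof$ accumulates onto all of $U\times\{0\}$ in $M\times\R\times\rpos$. For a point $(x_0,t_0,0)\in i(U)$ and a small product neighborhood $W = B\times(t_0-\varepsilon,t_0+\varepsilon)\times[0,\varepsilon)$ with $B\times(t_0-\varepsilon,t_0+\varepsilon)\subset U$, the intersection $W\cap U_\gammaof$ is the trivial bundle $B\times(t_0-\varepsilon,t_0+\varepsilon)\times(0,\varepsilon)$ over the base $B\times(t_0-\varepsilon,t_0+\varepsilon)$ along $q_U$, with contractible fibers, so $\rsect(W\cap U_\gammaof;\opb{q_U}H)\simeq \rsect(B\times(t_0-\varepsilon,t_0+\varepsilon);H)$. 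Passing to the limit yields $(\roim{j}\opb{q_U}H)_{(x_0,t_0,0)}\simeq H_{(x_0,t_0)}$, hence $\opb{i}\roim{j}\opb{q_U}H\simeq H$.

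Assembling these: starting from $\muhom(\opb{q_U}F,\opb{q_U}G)|_{T^*_{\tau>0}U_\gammaof}$, apply~\eqref{eq:hom_Psi-vers-muhom1} to rewrite it as $(\eim{q_{U,d}}\opb{q_{U,\pi}}\muhom(F,G))|_{T^*_{\tau>0}U_\gammaof}$, take $\roim{\pi_{U_\gammaof}^{>0}{}}$ and apply~\eqref{eq:hom_Psi-vers-muhom2} with $\shf = \muhom(F,G)$ to obtain $\opb{q_U}\RR\pi_{U*}^{>0}(\muhom(F,G)|_{T^*_{\tau>0}U})$, then apply $\opb{i}\roim{j}$ and~\eqref{eq:hom_Psi-vers-muhom3} with $H=\RR\pi_{U*}^{>0}(\muhom(F,G)|_{T^*_{\tau>0}U})$ to conclude~\eqref{eq:hom_Psi-vers-muhom4}. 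The main obstacle is~\eqref{eq:hom_Psi-vers-muhom1}: the other two isomorphisms are essentially base change and a stalk computation, but (1) requires invoking the compatibility of Sato's microlocalization with smooth inverse image, which is the substantive input from~\cite{KS90}.
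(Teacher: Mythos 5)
Your proposal is essentially correct and follows the same blueprint as the paper's proof: establish the three displayed isomorphisms and chain them to obtain~\eqref{eq:hom_Psi-vers-muhom4}. The only differences are in the technical routes taken for \eqref{eq:hom_Psi-vers-muhom2} and \eqref{eq:hom_Psi-vers-muhom3}, and one small inaccuracy worth flagging.

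For~\eqref{eq:hom_Psi-vers-muhom2} the paper remarks $\eim{q_{U,d}} = \oim{q_{U,d}}$, uses $\opb{q_U}\simeq\epb{q_U}[-1]$ and $\opb{q_{U,\pi}}\simeq\epb{q_{U,\pi}}[-1]$, and applies the $\epb{}$/$\roim{}$ form of base change $\roim{a}\epb{q_{U,\pi}}\simeq\epb{q_U}\RR\pi^{>0}_{U*}$ with $a = \pi_{U_\gammaof}^{>0}\circ q_{U,d}$; this version is the right adjoint of proper base change and holds with no smoothness hypothesis. You instead absorb the closed pushforward $\eim{q_{U,d}}$ into a change of base space and invoke the $\opb{}$/$\roim{}$ smooth base change for the Cartesian square, which is fine because $q_U$ is a submersion, but is a genuinely different lemma being invoked. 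For~\eqref{eq:hom_Psi-vers-muhom3} the paper replaces $\opb{q_U}H$ on $U_\gammaof$ by $\rsect_{U\times\rspos}(\opb{q_1}H)$ and argues via the microsupport bound of Theorem~\ref{thm:SSrhom} that $\rsect_{U\times\rspos}(\opb{q_1}H)\simeq\cor_{U\times\rpos}\otimes\opb{q_1}H$, from which $\opb{i}(\cdot)\simeq H$ is immediate; your stalk computation is more elementary and entirely valid, but the claim that $W\cap U_\gammaof$ equals the product $B\times(t_0-\varepsilon,t_0+\varepsilon)\times(0,\varepsilon)$ is not literally correct with a single $\varepsilon$: the membership $(x,t,u)\in U_\gammaof$ requires $\{x\}\times[t-u,t]\subset U$, which can fail near $t\approx t_0-\varepsilon$. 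One should take $W$ smaller relative to a larger product chart in $U$ (or, equivalently, observe only that the fibers of $q_U$ over $W\cap U_\gammaof$ are non-empty contractible intervals depending on $(x,t)$); either fix yields the same conclusion. For~\eqref{eq:hom_Psi-vers-muhom1} both arguments rest on the behaviour of $\muhom$ under smooth inverse image; the paper cites~\cite[Prop.~4.4.7]{KS90}, which is more directly tailored to this situation than your germ-by-germ argument via Prop.~4.4.5 of loc.\ cit., but the content is the same.
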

\begin{proof}
  (i) The behaviour of $\muhom$ under an inverse image by a submersion is
  described in~\cite[Prop.~4.4.7]{KS90} and gives in our case the
  isomorphism~\eqref{eq:hom_Psi-vers-muhom1}.

  \sui(ii)   We remark that
  $\eim{{q_{U,d}}} = \oim{{q_{U,d}}}$, since $q_{U,d}$ is an embedding, and
  $\opb{q_U} \simeq \epb{q_U}[-1]$, $\opb{q_{U,\pi}} \simeq \epb{q_{U,\pi}}[-1]$,
  since $q_U$ is a projection with fibers $\R$. Now~\eqref{eq:hom_Psi-vers-muhom2}
  follows from the base change formula
  $\roim{a} \epb{q_{U,\pi}} \simeq \epb{q_U} \RR\pi_{U*}^{>0}$ with
  $a = \pi_{U_\gammaof}^{>0} \circ q_{U,d}$.
  
  \sui(iii) We have
  $\opb{i_U} \roim{j} \opb{q_U} H \simeq \opb{i_U} \rsect_{U \times
    \rspos}(\opb{q_1} H)$, where $q_1 \cl U \times \R \to U$ is the projection.
  Since $\SSi(\cor_{U \times \rspos}) \subset T^*_UU \times T^*\R$,
  Theorem~\ref{thm:SSrhom} gives
$$
\rsect_{U \times \rspos}(\opb{q_1} H) \simeq \rhom(\cor_{U \times \rspos},
\opb{q_1} H) \simeq \cor_{U \times \rpos} \tens \opb{q_1} H
$$
and we obtain $\opb{i_U} \roim{j} \opb{q_U} H \simeq \opb{i_U} (\cor_{U \times
  \rpos} \tens \opb{q_1} H) \simeq H$.
\end{proof}

\begin{definition}
\label{def:hom_Psi-vers-muhom}
For $F,G \in \Dertpn(\cor_U)$ we define the
morphism (functorial in $F$ and $G$)
\begin{equation*}
b(F,G) \colon  \opb{i_U} \roim{j}  \rhom(\Psi_U(F),\Psi_U(G))
  \to  \RR\pi_{U*}^{>0}( \muhom(F,G)|_{T^*_{\tau>0}U} ) 
\end{equation*}
as the composition of~\eqref{eq:hom_Psi-vers-muhom0}
and~\eqref{eq:hom_Psi-vers-muhom4}.
\end{definition}

We prove below that $b(F,G)$ is an isomorphism if $G \in \Dertp(\cor_U)$.  We
need some remarks on the $\muhom$ functor.  We will use Sato's distinguished
triangle~\eqref{eq:SatoDTmuhom0} and introduce the following notation.

\begin{notation}\label{not:homprime}
Let $X$ be a manifold. Let $q_{X,1},q_{X,2}\cl X\times X \to X$ be the
projections and $\delta_X \cl X \to X\times X$ the diagonal embedding.  Let
$F,F'\in \Der(\cor_X)$. We set
\begin{equation}\label{eq:not_homprime}
\hom'(F,F') \eqdot \opb{\delta_X}\rhom(\opb{q_{X,2}}F,\opb{q_{X,1}}F').
\end{equation}
\end{notation}
Then Sato's distinguished triangle becomes, for $F,F'\in \Der(\cor_X)$,
\begin{equation}\label{eq:SatoDTmuhom2}
  \begin{split}
 \hom'(F,F') \to \rhom (F,F') &  \\
\to \roim{\dot\pi_X{}} & (\muhom  (F,F') |_{\dT^*X}) \to[+1].
  \end{split}
\end{equation}

\begin{lemma}\label{lem:im_inv_homprime}
(i) Let $f\cl X\to Y$ be a morphism of manifolds. Let $F,F'\in \Der(\cor_Y)$
such that $f$ is non-characteristic for $\SSi(F)$ and $\SSi(F')$. Then
$$
\opb{f}\hom'(F,F') \isoto \hom'(\opb{f}F,\opb{f}F') .
$$
(ii) For $F,F'\in \Der(\cor_X)$ we have
 $\SSi(\hom'(F,F'))\subset \SSi(F)^a \hplus \SSi(F')$.
\end{lemma}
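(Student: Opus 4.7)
The plan is to exploit the base change identities coming from the commutative squares
\[
\begin{tikzcd}[row sep=5mm]
X \ar[r, "\delta_X"] \ar[d, "f"'] & X\times X \ar[d, "f\times f"] \\
Y \ar[r, "\delta_Y"'] & Y\times Y
\end{tikzcd}
\qquad\text{and}\qquad
\begin{tikzcd}[row sep=5mm]
X\times X \ar[r, "q_{X,i}"] \ar[d, "f\times f"'] & X \ar[d, "f"] \\
Y\times Y \ar[r, "q_{Y,i}"'] & Y \virgdiag
\end{tikzcd}
\]
Taking inverse images everywhere gives
\[
\opb{f}\hom'(F,F') \simeq \opb{\delta_X}\opb{(f\times f)}\rhom(\opb{q_{Y,2}}F,\opb{q_{Y,1}}F'),
\]
so (i) reduces to showing that $\opb{(f\times f)}$ commutes with $\rhom$ applied to $\opb{q_{Y,2}}F$ and $\opb{q_{Y,1}}F'$. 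For this, Remark~\ref{rem:SSexttens} and Theorem~\ref{thm:SSrhom} give the bound $\SSi(\rhom(\opb{q_{Y,2}}F,\opb{q_{Y,1}}F')) \subset \SSi(F')\times \SSi(F)^a$; the non-characteristic hypothesis for $f$ on both $\SSi(F)$ and $\SSi(F')$ (hence also $\SSi(F)^a$) then implies that $f\times f$ is non-characteristic for this product. Theorem~\ref{thm:iminv} therefore yields an isomorphism $\opb{(f\times f)}(-)\otimes \omega_{X^2|Y^2} \isoto \epb{(f\times f)}(-)$ applied to $\rhom(\opb{q_{Y,2}}F,\opb{q_{Y,1}}F')$, and combining this with Proposition~\ref{prop:formulaire}(g) and the invertibility of $\omega_{X^2|Y^2}$ produces the desired commutation. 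Finally, rewriting $\opb{(f\times f)}\opb{q_{Y,i}} \simeq \opb{q_{X,i}}\opb{f}$ and applying $\opb{\delta_X}$ gives (i).

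For (ii), the argument is direct: Theorem~\ref{thm:SSrhom} combined with Remark~\ref{rem:SSexttens} yields
\[
\SSi(\rhom(\opb{q_{X,2}}F,\opb{q_{X,1}}F')) \subset \SSi(F')\times \SSi(F)^a.
\]
By Theorem~\ref{thm:iminv} applied to the embedding $\delta_X$ we then have
\[
\SSi(\hom'(F,F')) \subset \delta_X^\sharp(\SSi(F')\times \SSi(F)^a),
\]
and the definition~\eqref{eq:def_hplus} of $\hplus$, together with the symmetry $A\hplus B = B\hplus A$ inherited from that of the product, rewrites the right-hand side as $\SSi(F)^a \hplus \SSi(F')$.

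The main obstacle is the careful bookkeeping in part (i): one must verify that $f\times f$ is non-characteristic for the microsupport bound (so that the non-characteristic inverse image formula can be invoked), and then keep track of the relative dualizing factor $\omega_{X^2|Y^2}$ when translating between $\epb{(f\times f)}$ and $\opb{(f\times f)}$ on both sides of Proposition~\ref{prop:formulaire}(g) so that these factors cancel. Once this is in place, both assertions follow by straightforward manipulations with base change.
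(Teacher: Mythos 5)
Your proof is correct and follows essentially the same route as the paper: bound $\SSi(\rhom(\opb{q_{Y,2}}F,\opb{q_{Y,1}}F'))$ via Theorem~\ref{thm:SSrhom}, check that $f\times f$ is non-characteristic for it, trade $\opb{(f\times f)}$ for $\epb{(f\times f)}$ using Theorem~\ref{thm:iminv}, commute with $\rhom$ via Proposition~\ref{prop:formulaire}(g), and cancel the $\omega_{X^2|Y^2}$ factors. Part (ii) is likewise handled exactly as the paper does, and your care in recording the bound as $\SSi(F')\times\SSi(F)^a$ (rather than the paper's transposed $\SSi(F)^a\times\SSi(F')$) is in fact the accurate version, though both lead to the same non-characteristic conclusion.
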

\begin{proof}
  (i) We set $G = \rhom(\opb{q_{Y,2}}F,\opb{q_{Y,1}}F')$ (using similar
  notations as in~\eqref{eq:not_homprime}). Then $\SSi(\opb{q_{Y,1}}F')$ and
  $\SSi(G) \subset \SSi(F)^a \times \SSi(F')$ are non-characteristic for
  $f\times f$. By Theorem~\ref{thm:iminv} we deduce
\begin{align*}
\epb{(f\times f)} \opb{q_{Y,1}}F' &\simeq \opb{(f\times f)} \opb{q_{Y,1}}F'
\tens \omega_{X\times X| Y\times Y}, \\
\opb{(f\times f)} G &\simeq
\epb{(f\times f)}G \tens \omega_{X\times X| Y\times Y}^{\otimes -1}.
\end{align*}
By Proposition~\ref{prop:formulaire}-(h) this gives the third isomorphism in the
following sequence:
\begin{align*}
\opb{f}\hom'(F,F')
& \simeq  \opb{f}\opb{\delta_Y}\rhom(\opb{q_{Y,2}}F,\opb{q_{Y,1}}F')  \\
& \simeq \opb{\delta_X}\opb{(f\times f)} \rhom(\opb{q_{Y,2}}F,\opb{q_{Y,1}}F') \\
& \isoto \opb{\delta_X}
\rhom(\opb{(f\times f)}\opb{q_{Y,2}}F,\opb{(f\times f)}\opb{q_{Y,1}}F') \\
& \simeq \opb{\delta_X}\rhom(\opb{q_{X,2}}\opb{f}F,\opb{q_{X,1}}\opb{f}F').
\end{align*}

\noindent
(ii) follows from Remark~\ref{rem:SSexttens} and Theorem~\ref{thm:iminv}.
\end{proof}

\begin{proposition}
  \label{prop:muhom=hompsi}
  Let $F, G \in \Dertpn(\cor_U)$.  If $\dot\SSi(F) \cap \dot\SSi(G)$ is
  contained in $\{\tau>0\}$, then we have
  $$
  \roim{\dot\pi_U{}}(\muhom(F,G)|_{\dT^*U} ) \isoto
\RR\pi_{U*}^{>0}( \muhom(F,G)|_{T^*_{\tau>0}U} ) 
  $$
  and the morphism $b(F,G)$ of Definition~\ref{def:hom_Psi-vers-muhom} is an
  isomorphism.
\end{proposition}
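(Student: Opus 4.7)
\medskip\noindent
\emph{Plan.} The first isomorphism is immediate. By the hypothesis $\dot\SSi(F)\cap\dot\SSi(G)\subset T^*_{\tau>0}U$ and by~\eqref{eq:suppmuhom} the sheaf $\muhom(F,G)|_{\dT^*U}$ is supported in the open subset $T^*_{\tau>0}U\subset\dT^*U$, so the direct image from $\dT^*U$ factors through and agrees with the one from $T^*_{\tau>0}U$. All the work therefore lies in proving that $b(F,G)$ is an isomorphism.

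Unwinding Definition~\ref{def:hom_Psi-vers-muhom}, $b(F,G)$ is the composition of the morphism induced by $\alpha(G)$, Sato's isomorphism~\eqref{eq:proj_muhom_oim} identifying $\rhom(-,-)$ with $\roim{\pi_{U_\gammaof}}\muhom(-,-)$, the restriction from $T^*U_\gammaof$ to $T^*_{\tau>0}U_\gammaof$, the inverse of the isomorphism from Proposition~\ref{prop:decomposition_muhomPsi}, and finally the identification~\eqref{eq:hom_Psi-vers-muhom4}. The first step is actually an isomorphism before taking boundary values: the cone of $\Psi_U(G)\to \opb{q_U}(G)$ is $\opb{r_U}(G)$ by Lemma~\ref{lem:dtgrPsiq}, and since $r_U$ is a submersion with one-dimensional fibers the adjunction $(\reim{r_U},\opb{r_U}[1])$ gives $\rhom(\Psi_U(F),\opb{r_U}(G))\simeq \rhom(\reim{r_U}\Psi_U(F)[1],G)=0$ by Lemma~\ref{lem:annulation_qr}(i). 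Steps two, four and five are already isomorphisms (they come from Sato's formula, Proposition~\ref{prop:decomposition_muhomPsi}, and Lemma~\ref{lem:def_homPsiversmuhom}). Thus the entire question reduces to showing that the restriction step
\[
\opb{i}\roim{j}\roim{\pi_{U_\gammaof}}\muhom(\Psi_U(F),\opb{q_U}G) \longrightarrow
\opb{i}\roim{j}\roim{\pi^{>0}_{U_\gammaof}}\bigl(\muhom(\Psi_U(F),\opb{q_U}G)|_{T^*_{\tau>0}U_\gammaof}\bigr)
\]
is an isomorphism. Equivalently, writing $C$ for the cone $\roim{\pi_{U_\gammaof}}\rsect_{T^*_{\tau\leq 0}U_\gammaof}\muhom(\Psi_U(F),\opb{q_U}G)$, one has to prove $\opb{i}\roim{j}C\simeq 0$.

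The hard part is precisely this last vanishing, and I would handle it through a microlocal analysis at $u=0$. Using Lemma~\ref{lem:SSPsiF} the microsupport of $\Psi_U(F)$ near $\{u=0\}$ splits into a ``$q$-part'' contained in $\{\upsilon=0\}$ and an ``$r$-part'' contained in $\{\upsilon=-\tau\}$, and $\SSi(\opb{q_U}G)$ lies in $\{\upsilon=0\}$. Restricting to $\{\tau\leq 0\}$ the intersection coming from the $q$-part is forced into $\SSi(F)\cap\SSi(G)\cap\{\tau\leq 0\}$, which by the hypothesis is contained in the zero section; the intersection coming from the $r$-part forces $\tau=\upsilon=0$ together with $(x,t-u;\xi,0)\in\SSi(F)$ and $(x,t;\xi,0)\in\SSi(G)$. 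As $u\to 0$ the two base points collapse to the same point $(x,t)$, so the hypothesis again forces $\xi=0$. Consequently the closure of $\supp(C)$ does not meet $U\times\{0\}$ off the zero section in a direction relevant for $\opb{i}\roim{j}$. A careful argument using Sato's triangle~\eqref{eq:SatoDTmuhom2} (or equivalently a direct support/microsupport computation along the lines of Lemma~\ref{lem:SSPsiF} combined with Theorem~\ref{thm:oim_open} applied to the open embedding $j$) then yields $\opb{i}\roim{j}C\simeq 0$. This microlocal analysis at the boundary $\{u=0\}$, where several half-spaces in the fibre of $T^*U_\gammaof$ conspire, is the main obstacle; the rest of the proof is a formal combination of the results already established in this section.
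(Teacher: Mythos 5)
Your reduction of the statement to two vanishing claims is structured sensibly, and the first isomorphism is handled correctly. However, the step you carry out ``before taking boundary values'' contains a genuine error.

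You assert that $\rhom(\Psi_U(F),\opb{r_U}(G))\simeq 0$ as a sheaf on $U_\gammaof$, deducing it from the adjunction $(\reim{r_U},\epb{r_U})$ and $\reim{r_U}\Psi_U(F)\simeq 0$. But the adjunction relates $\roim{r_U}\rhom(A,\epb{r_U}B)$ to $\rhom(\reim{r_U}A,B)$ --- sheaves on $U$, not on $U_\gammaof$ --- or at best global $\Hom$ groups; it does \emph{not} give an isomorphism of internal $\rhom$'s, and $\reim{f}A\simeq 0$ does not imply $\rhom(A,\epb{f}B)\simeq 0$. (A toy counterexample: for $f\colon\R\to\pt$ and $A=\cor_{[0,1[}$, one has $\reim{f}A\simeq 0$, but $\rhom(A,\epb{f}\cor)\simeq\cor_{]0,1]}[1]\ne 0$.) So the cone of $\rhom(\Psi_U(F),\Psi_U(G))\to\rhom(\Psi_U(F),\opb{q_U}(G))$ is in general nonzero \emph{on $U_\gammaof$}; what one actually needs, and what the paper proves, is that its image under $\opb{i}\roim{j}$ vanishes. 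The paper does this by computing germs of $\opb{i}\roim{j}\rhom(\Psi_U(F),\opb{r_U}(G))$ at each $(x,t)\in U$: by~\eqref{eq:restr-PsiU-ouvert} one may shrink to small $V_\gammaof$ where $\Psi_U(F)|_{V_\gammaof}\simeq\Psi_V(F|_V)$, and only \emph{then} applies the adjunction at the level of $\Hom$ groups together with $\reim{r_V}\Psi_V(F|_V)\simeq 0$. This localisation step is not optional.

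For the second vanishing (the restriction to $\{\tau>0\}$), your sketch goes in the right direction --- Sato's triangle~\eqref{eq:SatoDTmuhom2}, Lemma~\ref{lem:SSPsiF}, and a microsupport estimate --- but it is only a plan; the paper carries it out concretely via the functor $\hom'$ and Lemma~\ref{lem:im_inv_homprime}, showing that the cone $\opb{i}\roim{j}\hom'(\Psi_U(F),\opb{q_U}(G))$ vanishes because $i$ is non-characteristic for the microsupports involved and $\opb{i}\reim{j}\Psi_U(F)\simeq 0$.
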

\begin{proof}
  (i) We recall that $\supp \muhom(F_1,F_2) \subset \SSi(F_1) \cap \SSi(F_2)$.  Hence
  the hypothesis implies that $\muhom(F,G)|_{\dT^*U}$ is supported in $T^*_{\tau>0}U$
  and this gives the first isomorphism.  Let us call $u$ and $v$ the
  morphisms~\eqref{eq:hom_Psi-vers-muhomA} and~\eqref{eq:hom_Psi-vers-muhomC}.  To
  see that $b(F,G)$ is an isomorphism, we prove that the induced morphisms
  $\opb{i_U}\roim{j}(u)$ and $\opb{i_U}\roim{j}(v)$ are isomorphisms, respectively
  in~(ii) and~(iii) below.

  \sui(ii) Let us prove that $\opb{i_U}\roim{j}(u)$ is an isomorphism.  By the
  distinguished triangle~\eqref{eq:dtgamqr2} its cone is
  $A = \opb{i_U}\roim{j} \rhom(\Psi_U(F), \opb{r_U}(G))$ and we have to prove the
  vanishing of $A$.  For a given $(x,t) \in U$ and $k\in \Z$ we have
\begin{equation}\label{eq:germopbiroimj}
  H^k(A)_{(x,t)} \simeq \varinjlim_W \Hom(\Psi_U(F)|_W, \opb{r_U}(G)|_W[k]),
\end{equation}
where $W$ runs over the open subsets of $M\times\R\times\rspos$ such that
$\ol{W}$ is a neighborhood of $(x,t,0)$ in $U\times [0,+\infty[$.  We may take
$W = V_\gammaof$, where $V$ runs over the open neighborhoods of $(x,t)$ in $U$.
By~\eqref{eq:restr-PsiU-ouvert} we have $\Psi_U(F)|_{V_\gammaof} \simeq
\Psi_V(F|_V)$.  We also have $\opb{r_U}(G)|_{V_\gammaof} \simeq
\opb{r_V}(G|_V)$.  Since $\epb{r_V} \simeq \opb{r_V}[1]$, the adjunction
$(\reim{r_V}, \epb{r_V})$ gives
$$
\Hom(\Psi_V(F|_V), \opb{r_V}(G|_V)[k])
 \simeq \Hom(\reim{r_V}\Psi_V(F|_V), G|_V[k-1]).
$$
By Lemma~\ref{lem:annulation_qr} we have $\reim{r_V}\Psi_V(F|_V) \simeq 0$ and
we deduce the vanishing of~\eqref{eq:germopbiroimj} for all $(x,t)$ in $U$.
Hence $A \simeq 0$, as required.

\sui(iii) Now we prove that $\opb{i_U}\roim{j}(v)$ is an isomorphism.  Using the
hypothesis on the microsupport, we remark as in~(i) that the right hand side
of~\eqref{eq:hom_Psi-vers-muhomC} is isomorphic to
$\roim{\dot\pi_{U_\gammaof}{}} ( \muhom(\Psi_U(F), \opb{q_U}(G))|_{\dT^*U_\gammaof}
)$.  Hence, by the triangle~\eqref{eq:SatoDTmuhom2}, the cone of
$\opb{i_U}\roim{j}(v)$ is (up to a shift by $1$)
$B := \opb{i_U} \roim{j} \hom'(\Psi_U(F),\opb{q_U}(G))$.  Let us prove that $B$
vanishes.

Let $p_U \colon U\times\R \to U$ be the projection.  We set
$U_+ = U \times \rspos$ and $C = \hom'(\reim{j} \Psi_U(F),\opb{p_U}(G))$.  Then
$$
\roim{j} \hom'(\Psi_U(F),\opb{q_U}(G)) \simeq  \rsect_{U_+}C .
$$
By Lemma~\ref{lem:SSPsiF} we have
$\SSi(\reim{j} \Psi_U(F)) \subset \{(\xi,\tau, \upsilon)$;
$-\tau \leq \upsilon \leq 0\}$.  We also have
$\SSi(\opb{p_U}(G)) \subset \{(\xi,\tau, 0)$; $\tau \geq 0\}$ and we obtain
$\dot\SSi(C) \subset \{ \upsilon \geq 0\}$ by Lemma~\ref{lem:im_inv_homprime}.  Since
$\dot\SSi(\cor_{U_+}) = T^*_UU \times \{\upsilon < 0\}$, we deduce
$\rsect_{U_+}C \simeq \DD'(\cor_{U_+}) \otimes C \simeq C_{\ol{U_+}}$ by
Theorem~\ref{thm:SSrhom}.  In particular $B \simeq \opb{i_U}C$.

We also obtain that $i_U$ is non-characteristic for $\SSi(\reim{j} \Psi_U(F))$ and
$\SSi(\opb{p_U}(G))$.  Hence
$\opb{i_U}C \simeq \hom'(\opb{i_U}\reim{j} \Psi_U(F), G)$ by
Lemma~\ref{lem:im_inv_homprime}.  Since $\opb{i_U}\reim{j} \Psi_U(F) \simeq 0$, we
obtain $\opb{i_U}C \simeq 0$, as required.
\end{proof}

For   the next result we use the notion of
pure sheaves (see Definition~\ref{def:simple_pure}) along a Lagrangian
submanifold $\Lambda \subset \dT^*U$ and the stack $\kss(\cor_\Lambda)$ together
with the functor
$\kssfunc_\Lambda \cl \Derb_{(\Lambda)}(\cor_M) \to \kss(\cor_\Lambda)$ (see
Definition~\ref{def:KSstack}).

\begin{corollary}\label{cor:kssfuncPsi}
  Let $\Lambda \subset T^*_{\tau >0}U$ be a closed conic Lagrangian submanifold.
  Let $F, G \in \Der_{[\Lambda]}(\cor_U)$ be pure sheaves with the same shift.
  Then
  $$
  \varinjlim_V \Hom(\Psi_U(F)|_V, \Psi_U(G)|_V)
\simeq \Hom(\kssfunc_\Lambda(F), \kssfunc_\Lambda(G)) ,
  $$
  where $V$ runs over the open subsets of $U_\gammaof$ such that
  $(U \times \{0\}) \sqcup V$ is open in $M \times\R \times \rpos$.  In
  particular, if $\kssfunc_\Lambda(F) \simeq \kssfunc_\Lambda(G)$, then there
  exists such an open subset $V$ such that $\Psi_U(F)|_V \simeq \Psi_U(G)|_V$.
\end{corollary}
\begin{proof}
  (i) We recall that the $\hom$ sheaf in $\kss(\cor_\Lambda)$ is induced by
  $H^0\muhom$.  By the purity hypothesis $\muhom(F,G)$ is concentrated in degree $0$
  and~\eqref{eq:hom_sheaf_kss} gives
  $\Hom(\kssfunc_\Lambda(F), \kssfunc_\Lambda(G)) \simeq H^0(\Lambda; \muhom(F,G))$.
  Hence the isomorphism of the corollary follows from
  Proposition~\ref{prop:muhom=hompsi} and the remark that, for any
  $F' \in \Der(\cor_{U_\gammaof})$,
  $$
  H^0(U; \opb{i_U} \roim{j}(F'))
  \simeq \varinjlim_W H^k(W \cap U_\gammaof; F'),
  $$
  where $W$ runs over the neighborhoods of $U$ in $M \times\R \times \rpos$.
  
  \sui(ii) Let $u \colon \kssfunc_\Lambda(F) \isoto \kssfunc_\Lambda(G)$ be an
  isomorphism. By~(i) there exist $V$ as in the statement of the corollary and
  $a \colon \Psi_U(F)|_V \to \Psi_U(G)|_V$,
  $b \colon \Psi_U(G)|_V \to \Psi_U(U)|_V$ representing $u$, $u^{-1}$.
  Using~(i) again, and maybe shrinking $V$, the relations $u\circ u^{-1} = \id$,
  $u^{-1}\circ u = \id$ give $a\circ b = \id$, $b\circ a = \id$.  Hence $a$ is
  an isomorphism.  
\end{proof}

\section{Doubled sheaves}
\label{sec:dblsh}

The isomorphism of Proposition~\ref{prop:muhom=hompsi} will be used mainly when
$\dot\SSi(F) = \dot\SSi(G) = \Lambda$ is a smooth Lagrangian submanifold of
$\dT^*_{\tau>0}(M\times\R)$.  In this case  
$\muhom(F,G)|_{\dT^*(M\times\R)}$ is a locally constant sheaf on $\Lambda$.

Unfortunately we will also need to consider the case where $F$ and $G$ have
microsupport $\Lambda$ only in a neighborhood of some open subset $\Lambda_0$ of
$\Lambda$.  To easily handle this case we actually assume that $\Lambda_0$ is
locally of the form $\Lambda \cap T^*(U \times I)$, for some open set $U$ of $M$
and some open interval $I$ of $\R$, and that $F$ (and $G$ as well) is locally of
the form $\rsect_UF'$ for some $F'$ with $\SSi(F') \subset \Lambda$.  In the
proof of Theorem~\ref{thm:quant_legendrian_dbl} below we will glue such sheaves
$F$ or, rather, their images $\Psi(F)$. For this reason we define a subcategory
of $\Der(\cor_{M\times\R \times\rspos})$ of sheaves which are locally of the
form $\Psi(\rsect_UF')$.  We then give the analog of
Proposition~\ref{prop:muhom=hompsi} for this subcategory (see
Proposition~\ref{prop:muhom=hompsiEtendu} below).

  Since we deal with
sheaves of the form $\rsect_UF'$, we will face the problem of the commutation of the
main functors introduced so far, $\Psi$ and $\opb{i}\roim{j}$, with the functors
$\rsect_U$, $\rsect_{U_\gammaof}$,\dots  Lemma~\ref{lem:formules_dbl} answers this
question and its proof requires some conditions on $\Lambda$ and $\SSi(\cor_U)$,
essentially to be able to use Theorem~\ref{thm:SSrhom}.  This is the aim of the
following definition.

For a family of subsets $U_a$, $a\in A$, of some set $E$ and for $B\subset A$ we
set $U_B = \bigcap_{a\in B} U_a$ and $U^B = \bigcup_{a\in B} U_a$.

\begin{definition}\label{def:horiz_subset}
    Let $\Lambda \subset T^*_{\tau >0}(M\times \R)$
  be a conic Lagrangian submanifold such that $\Lambda/\rspos$ is compact, the map
  $\Lambda/\rspos \to M$ has finite fibers.    A finite
  family $\shv = \{V_a$; $a\in A\}$ of open subsets of $M\times\R$ is said to be {\em
    adapted} to $\Lambda$ if the following conditions hold:  
  \begin{itemize}
  \item [(i)] for each $a\in A$ we have $V_a = U_a \times I_a$ and
    $\Lambda \cap T^*V_a$ is contained in $\pi_{M\times\R}^{-1}(U_a \times K)$
    for some compact interval $K$ of $I_a$,
  \item [(ii)] for all $B,B' \subset A$ we have
    $\DD'(\cor_{V^B}) \simeq \cor_{\ol{V^B}}$ and, setting
    $\Lambda_+ = \Lambda \cup T^*_{M\times\R}(M\times\R)$,
    \begin{equation}
      \label{eq:def_horiz_subset}
      (\SSi(\cor_{V^B}) \hplus  \SSi(\cor_{V^{B'}})^a)
      \cap (\Lambda_+^a \hplus \Lambda_+) \subset T^*_{M\times \R}(M\times\R) .
    \end{equation}
  \end{itemize}
\end{definition}

\begin{notation}\label{not:cutwithadapfam}
  In this section we use adapted families of open subsets to cut sheaves defined on
  an open subset $W$ of $M\times\R$.  To avoid too heavy notations we set abusively,
  for $F \in \Der(\cor_W)$ and $B\subset A$:
  $$
\rsect_{V^B}(F) := \rsect_{W \cap V^B}(F) \in \Der(\cor_W)
$$
and similarly
$\rsect_{V^B\times \rspos}(-) := \rsect_{W_\gammaof \cap (V^B\times \rspos)}(-)$ for
sheaves defined over $W_\gammaof$ or
$\rsect_{T^*V^B}(-) := \rsect_{T^*W \cap T^*V^B}(-)$ over $T^*W$,\dots
\end{notation}

We first check that we have enough such adapted families.

\begin{lemma}\label{lem:exist_adapted_family}
  Let $\Lambda \subset T^*_{\tau >0}(M\times \R)$ be a conic Lagrangian submanifold
  such that $\Lambda/\rspos$ is compact and let
  $\Lambda = \bigcup_{i\in I} \Lambda_i$ be a finite covering by conic open
  subsets.   Then there exists a Hamiltonian
  isotopy $\Phi$, as closed to $\id$ as desired, and a finite family
  $\{V_a\}_{a\in A}$ of open subsets of $M\times\R$ which is adapted to
  $\Lambda' = \Phi_1(\Lambda)$ such that each connected component of
  $\Lambda' \cap T^*V_a$, for $a\in A$, is contained in $\Phi_1(\Lambda_i)$, for some
  $i\in I$.     Moreover we can assume that the family $\{V_a\}_{a\in A}$ is
  stable by intersection and that $\Lambda' \cap T^*V_a$ has finitely many connected
  components, for each $a\in A$.
\end{lemma}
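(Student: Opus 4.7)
The plan is to first apply a small Hamiltonian isotopy to bring $\Lambda$ into generic position, then to construct the adapted family as a collection of product boxes $V_a = U_a \times I_a$ whose boundaries avoid bad covector configurations. I would begin by using a $C^\infty$-small homogeneous Hamiltonian isotopy $\Phi$ of $\dT^*(M\times\R)$, and Theorem~\ref{thm:GKS}, to perturb $\Lambda$ and the cover $\{\Lambda_i\}$ so that $\Lambda' = \Phi_1(\Lambda)$ is in generic position with respect to $\pi_{M\times\R}$ (with $\pi_{M\times\R}(\Lambda')$ a Whitney-stratified subset having only transverse self-intersections and Whitney cusps) and so that the projection of $\Lambda^a + \Lambda$ to $M\times\R$ has image of positive codimension. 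By compactness of $\Lambda/\rspos$ I would then cover it by finitely many small connected open sets $W_\alpha$, each contained in some $\Lambda_{i(\alpha)}/\rspos$, with each $\pi_{M\times\R}(W_\alpha)$ contained in a small neighborhood of a single point.

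For each $\alpha$ I would build $V_\alpha = U_\alpha \times I_\alpha$ containing $\pi_{M\times\R}(W_\alpha)$: choose $\overline{I_\alpha}$ a compact interval with endpoints outside $\pi_{M\times\R}(\Lambda)$ (possible since, by genericity, $\pi_{M\times\R}(\Lambda)$ is nowhere dense in the $t$-direction), and $U_\alpha$ a small smoothly bounded open set in $M$ (a geodesic ball, say) small enough that each connected component of $\Lambda \cap T^*V_\alpha$ meeting $W_\alpha$ is contained in $\Lambda_{i(\alpha)}$. Condition (i) of Definition~\ref{def:horiz_subset} then holds with $K \subset I_\alpha$ the complement of a small neighborhood of $\partial \overline{I_\alpha}$. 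For the duality isomorphism $\DD'(\cor_{V^B}) \simeq \cor_{\overline{V^B}}$, I would choose the radii of the $U_\alpha$'s generically so that their boundaries intersect pairwise transversely and $\{\partial V_\alpha\}$ forms a normal-crossings configuration in $M\times\R$, then obtain the isomorphism by iterated use of~\eqref{eq:dual_bordlisse}, the excision triangles, and an induction on the number of faces.

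The main technical obstacle is verifying the non-characteristic condition~\eqref{eq:def_horiz_subset} simultaneously for all pairs $(B,B') \subset A \times A$. At a base point $(x,t)$ with $x \in \pi_{M\times\R}(\Lambda)$, since $\partial \overline{I_\alpha}$ is disjoint from $\pi_{M\times\R}(\Lambda)$, the relevant part of $\SSi(\cor_{V^B})$ above $(x,t)$ consists of non-negative combinations of outward conormals in the pure $\xi$-direction, of the form $(\lambda\,\nu(x),0)$ where $\nu(x)$ is an outward normal to some $\partial U_a$. The fiber of $\Lambda_+^a \hplus \Lambda_+$ over $(x,t)$ decomposes as the union of $\{0\}$, of $\pm\Lambda_{(x,t)}$, and of $\Lambda^a_{(x,t)} + \Lambda_{(x,t)}$; by the generic position assumption from the first step, the $\tau = 0$ slice of the non-zero part of this union is a proper subanalytic subset of $T^*_xM$. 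A finite number of generic choices of centers and radii for the geodesic balls $U_\alpha$ then suffices to ensure that no sum-difference of outward normals $\nu - \nu'$ lands in this bad set, simultaneously for all finitely many pairs $(B,B')$ and all $\alpha$. This genericity argument, combined with a Baire-category bookkeeping of the finitely many conditions involved, is the heart of the proof and depends crucially on the initial Hamiltonian perturbation to keep the exceptional set small enough.
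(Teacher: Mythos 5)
Your overall architecture matches the paper's: apply a small Hamiltonian isotopy to move $\Lambda$ into generic position, cover the front by product boxes $V_a = U_a \times I_a$, and then perturb the boundaries of the $U_a$'s to a generic position to obtain the non-characteristic condition~\eqref{eq:def_horiz_subset}. The normal-crossings requirement for $\DD'(\cor_{V^B}) \simeq \cor_{\ol{V^B}}$, the choice of $I_a$ so that condition (i) of Definition~\ref{def:horiz_subset} holds, and the selection of each component inside some $\Lambda_i$ are all handled essentially as in the paper.

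However, there is a genuine gap in the step you yourself identify as ``the heart of the proof.'' You claim that, at a fixed base point, the fiber of $\Lambda_+^a \hplus \Lambda_+$ is a proper subanalytic subset, and that finitely many generic choices of centers and radii keep the outward-normal directions of $\partial V^B \cup \partial V^{B'}$ away from it. This skips several technical difficulties: the $\hplus$ operation is not a pointwise fiber sum but includes limit covectors from nearby base points (so tangencies of the spheres $\partial U_a$ with the stratified front produce extra covectors); at corners of $\partial V^B$ the microsupport includes conic sums of outward normals, so the relevant set is larger than ``pure-$\xi$ outward normals''; and ``generic choice of center and radius'' needs to be tied to a transversality statement that can actually be verified (the exceptional set is a union over all base points, so it is not obviously nowhere dense in the parameter space). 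Without these points a reader cannot check that the ``Baire-category bookkeeping'' closes up.

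The paper resolves all of this at once by replacing the $\hplus$ analysis with a $\mu$-stratification argument. It chooses a stratification $\Sigma$ of $M\times\R$ satisfying $\Lambda_\Sigma \hplus \Lambda_\Sigma = \Lambda_\Sigma$ and refining the front of $\Lambda' = \Phi_1(\Lambda)$, so that $\Lambda' \subset \Lambda_\Sigma$ and hence $\Lambda_+^a \hplus \Lambda_+ \subset \Lambda_\Sigma \hplus \Lambda_\Sigma = \Lambda_\Sigma$. It then perturbs the levels $\varepsilon_a$ so that the hypersurfaces $X_{a,\varepsilon_a}$ form a normal-crossings divisor whose strata are transverse to those of $\Sigma$; this produces a second $\mu$-stratification $\Sigma'(\varepsilon)$ with $\SSi(\cor_{V^B}) \subset \Lambda_{\Sigma'(\varepsilon)}$ for every $B$, and $\Lambda_{\Sigma'(\varepsilon)} \cap \Lambda_\Sigma \subset 0_{M\times\R}$. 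Condition~\eqref{eq:def_horiz_subset} then follows immediately and uniformly in $(B,B')$, and the parameter-space genericity reduces to the standard transversality theorem for a finite family of submanifolds. You should either adopt this $\mu$-stratification bound or carry out the direct limit/corner analysis of $\hplus$ that you are currently asserting without proof.
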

\begin{proof}
  (i) We recall Definition~\ref{def:mustrat}: a stratification $\Sigma = \{\Sigma_j$,
  $j\in J\}$ of $M\times\R$ satisfies the {\em $\mu$-condition} if
  $\Lambda_\Sigma \hplus \Lambda_\Sigma = \Lambda_\Sigma$, where
  $\Lambda_\Sigma = \bigcup_{j\in J} T^*_{\Sigma_j}(M\times\R)$.    By
  Proposition~\ref{prop:mustratif}, any sheaf $F\in \Der(\cor_{M\times\R})$ which is
  constructible with respect to $\Sigma$ satisfies $\SSi(F) \subset \Lambda_\Sigma$.
  By Proposition~\ref{prop:deformeversmustrat}, there exists an $\rspos$-homogeneous
  Hamiltonian isotopy $\Phi$ and a stratification $\Sigma$ of $M\times\R$ satisfying
  the $\mu$-condition such that $\Lambda' := \Phi_1(\Lambda) \subset \Lambda_\Sigma$.
  Now to have~\eqref{eq:def_horiz_subset} it is enough that the family
  $\{V_a\}_{a\in A}$ satisfies
  \begin{equation}\label{eq:verifier_def_horiz_subset}
    \begin{aligned}
    (\SSi(\cor_{V^B}) \hplus \SSi(\cor_{V^{B'}})^a) \cap \Lambda_\Sigma
    \cap \pi_{M\times\R}^{-1}(& \dot\pi_{M\times\R}(\Lambda') ) \\
   & \subset   T^*_{M\times \R}(M\times\R) .
  \end{aligned}
  \end{equation}

  \sui(ii) We can assume that $\Lambda'$ is also contained in
  $T^*_{\tau >0}(M\times \R)$.  Hence, for a given point $y_0= (x_0,t_0)$ in
  $\dot\pi_{M\times\R}(\Lambda')$ the strata $\Sigma_j$ such that
  $y_0 \in \ol{\Sigma_j} \subset \dot\pi_{M\times\R}(\Lambda')$ do not meet the
  truncated cone given in local coordinates by $\{\eta > |t-t_0| > C||x-x_0||\}$, for
  $\eta>0$ small enough and $C$ big enough.  Then, for
  $U = \{ ||x-x_0|| < \eta/2C\} \subset M$ and $I = \{ |t-t_0| < \eta \} \subset \R$,
  we have $y_0 \in U \times I$ and
  $\dot\pi_{M\times\R}(\Lambda') \cap (\ol{U} \times \partial I) = \emptyset$.  We
  can then cover $\dot\pi_{M\times\R}(\Lambda')$ by open subsets of this kind and,
  taking a finite subcover, we obtain a finite family of open subsets of $M\times\R$,
  $W_a$, $a\in A$, such that
  $\dot\pi_{M\times\R}(\Lambda') \subset \bigcup_{a\in A}W_a$, $W_a = U_a \times I_a$
  as in~(i) of Definition~\ref{def:horiz_subset} and $U_a = \{ f_a < 0\}$ for a
  $\Cinf$ function $f_a \colon M \to \R$.  Moreover, for each $a\in A$ there exists a
  compact interval $K_a \subset I_a$ such that $\Lambda' \cap T^*W_a$ is contained in
  $\pi_{M\times\R}^{-1}(U_a \times K_a)$.  We can also assume that each $W_a$ is
  small enough so that each component of $\Lambda' \cap T^*W_a$ is contained in
  $\Phi_1(\Lambda_i)$, for some $i\in I$.

  Let $\ul \varepsilon = \{\varepsilon_a$; $a\in A\}$ be a family of negative
  numbers.  We define $W_{a,\varepsilon_a} = \{ f_a < \varepsilon_a \} \times I_a$.
  We choose $\delta>0$ such that
  $\dot\pi_{M\times\R}(\Lambda') \subset \bigcup_{a\in A} W_{a,-\delta}$.  Let
  $E \subset [-\delta, 0[^A$ be the subset formed by the $\ul \varepsilon$ such that:
  \begin{itemize}
  \item [(a)] the hypersurfaces
    $X_{a,\varepsilon_a} = \{ f_a = \varepsilon_a \}$, $a\in A$, are smooth and
    intersect transversely, in the sense that their union is locally
    diffeomorphic to the embedding of coordinates hyperplanes in $\R^n$,
  \item [(b)] for any $B \subset A$, the manifold
    $X_{B,\ul\varepsilon} = \bigcap_{a\in B} (X_{a,\varepsilon_a} \times I_a)$
    intersects each stratum $\Sigma_j$ of $\Sigma$ transversely.
  \end{itemize}
  By the transversality theorem $E$ is dense in $[-\delta, 0[^A$.  Hence we only have
  to prove that, for a given $\ul \varepsilon \in E$, the family
  $V_a = W_{a,\varepsilon_a}$, $a\in A$, satisfies the conclusion of the lemma.

  The condition $\DD'(\cor_{V^B}) \simeq \cor_{\ol{V^B}}$ is local on $M\times\R$ and
  follows from the above condition~(a), up to modifying slightly the intervals
  $I_a$'s so that they have distinct ends.

  We set $Y = \bigcup_{a\in A} (X_{a,\varepsilon_a} \times \partial I_a)$ and
  $\Omega = (M\times\R) \setminus Y$.  We remark that
  $\dot\pi_{M\times\R}(\Lambda') \cap Y = \emptyset$.   Hence, to prove~\eqref{eq:verifier_def_horiz_subset} it is
  enough to see
  $(\SSi(\cor_{V^B}) \hplus \SSi(\cor_{V^{B'}})^a) \cap \Lambda_\Sigma \cap T^*\Omega
  \subset T^*_\Omega\Omega$.

  In $\Omega$ we have the family of hypersurfaces
  $(X_{a,\varepsilon_a} \times I_a) \cap \Omega$, $a\in A$, which are closed (not
  only locally closed), smooth and intersect transversely.    This family
  generates a stratification $\Sigma'(\varepsilon)$ of $\Omega$ such that the
  closures of the strata are the $X_{B,\ul\varepsilon}\cap \Omega$, $B\subset A$.  By
  the transversality assumptions
  $\Lambda_{\Sigma'(\varepsilon)} \hplus \Lambda_{\Sigma'(\varepsilon)} =
  \Lambda_{\Sigma'(\varepsilon)}$ and
  $\Lambda_{\Sigma'(\varepsilon)} \cap \Lambda_\Sigma \cap T^*\Omega \subset
  T^*_\Omega\Omega$.  For $B\subset A$ the open set $V^B \cap \Omega$ is
  constructible with respect to $\Sigma'(\varepsilon)$ and we deduce
  $\SSi(\cor_{V^B}) \cap T^*\Omega \subset \Lambda_{\Sigma'(\varepsilon)}$.  This
  gives~\eqref{eq:verifier_def_horiz_subset} and the family $\{V_a\}_{a\in A}$ is
  adapted to $\Lambda'$.

  \sui(iii) Let $A'$ be the set of subsets of $A$ and, for $a'\in A'$, set
  $\tilde V_{a'} = \bigcap_{a\in a'} V_a$.  An open subset $\tilde V^{B'}$, for
  $B'\subset A'$, is still bounded by the hypersurfaces $X_{a,\varepsilon_a}$
  introduced in the condition~(a) of~(ii).  So we also have
  $\DD'(\cor_{\tilde V^{B'}}) \simeq \cor_{\ol{\tilde V^{B'}}}$ and
  $\SSi(\cor_{\tilde V^{B'}}) \subset \Lambda_{\Sigma'(\varepsilon)}$ as
  in~(ii). Replacing $A$ by $A'$ we have an adapted family which is stable by
  intersection.  To ensure that $\Lambda \cap T^*\tilde V_{a'}$ has finitely many
  connected components, for each $a' \in A'$, we choose $\ul\varepsilon$ so that
  $\varepsilon_a$ is a regular value of
  $f_a \circ \pi \colon \Lambda'\cap T^*W_a \to \R$, for each $a\in A$, where $\pi$
  is the projection $\Lambda'\to M$.
\end{proof}

\begin{definition}\label{def:cat_dbl_sheaves}
    Let $\Lambda$ and $\shv = \{V_a$; $a\in A\}$ be as in
  Definition~\ref{def:horiz_subset}.  Let $V \subset M\times\R$ be an open subset.
  We denote by $\Der_{\Lambda, \shv}^\dbl(\cor_V)$ the subcategory of
  $\Derb(\cor_{V_\gammaof})$ formed by the $G$ such that any point of $V$ has an
  open neighborhood $W \subset V$ such that $\Lambda \cap T^*W$ has finitely many
  connected components, say $\{\Lambda_i\}_{i\in I}$, and for each $i\in I$ there
  exist $A_i \subset A$ and $F_i \in \Derb_{[\Lambda_i]}(\cor_W)$ satisfying
  \begin{equation}
    \label{eq:defDdbl}
    G|_{W_\gammaof}
    \simeq \bigoplus_{i\in I} \rsect_{V^{A_i} \times \rspos}  (\Psi_W(F_i))
  \end{equation}
  where $V^{A_i} = \bigcup_{a\in A_i} V_a$ and we use the abusive
  Notation~\ref{not:cutwithadapfam}.
\end{definition}

For $G$ and $W$ as in~\eqref{eq:defDdbl} we have
$\supp(G) \subset W_\gammaof \cap (\gammaf \star \dot\pi_{M\times \R}(\Lambda))$ by
Lemma~\ref{lem:suppPsiuL} (see~\eqref{eq:convset} for $\star$).  We can cover $V$ by
open subsets $W_j$, $j\in J$, for which a decomposition~\eqref{eq:defDdbl} holds;
setting $V' = \bigcup_{j\in J} W_{j,\gammaof}$ we then have
\begin{equation}
  \label{eq:supp_faiscdbl}
\supp(G) \cap V' \subset (\gammaf \star \dot\pi_{M\times \R}(\Lambda)) \cap V'  
\end{equation}
and $(V \times \{0\}) \sqcup V'$ is open in $M \times\R \times \rpos$ (as
in~\eqref{eq:valbord_ouvert}).

    For an open subset
$W\subset M\times\R$ we recall the maps $q_W,r_W \colon W_\gammaof \to M\times \R$
of~\eqref{eq:def_projU}.  For subsets $U$ of $M\times\R$ or $\Xi$ of $T^*(M\times\R)$
we introduce the notations  
\begin{equation}
  \label{eq:def_iminvpr}
  \begin{split}
    U^q &= U^{q_W}= q_W^{-1}(U \cap W) , \\
    \Xi^q &= \Xi^{q_W} = q_{W,d}\opb{q_{W,\pi}}(\Xi \cap T^*W)
    = (\Xi \times T^*_{\rspos}\rspos) \cap T^*W_\gammaof , \\
\Xi^r &= \Xi^{r_W} = r_{W,d}\opb{r_{W,\pi}}(\Xi \cap T^*W) .
\end{split}
\end{equation}
($W$   is in general
suppressed from notation if there is no ambiguity.)  The next lemma gives results
about the sheaves appearing in~\eqref{eq:defDdbl}.

\begin{lemma}\label{lem:formules_dbl}
  Let $\Lambda$ and $\shv = \{V_a$; $a\in A\}$ be as in
  Definition~\ref{def:horiz_subset}.  Let $W$ be an open subset of $M\times\R$
  and let $\Lambda_1$ be a connected component of $\Lambda \cap T^*W$.  Let
  $F\in \Derb_{[\Lambda_1]}(\cor_W)$ and $B \subset A$ be given.  Then, using
  Notation~\ref{rem:valbord} and~\eqref{eq:def_iminvpr}, we have
  \begin{itemize}
  \item [(i)]
    $\rsect_{V^{B,q}}(\Psi_W(F)) \simeq  (\Psi_W(F))_{\ol{V^{B,q}}}$,
  \item [(ii)]
    $\SSi(\rsect_{V^{B,q}}(\Psi_W(F))) \cap \Lambda^q 
  \subset (\ol{\Lambda_1 \cap T^*V^B})^q$,
  \item [(iii)] for any $G \in \Der_{\Lambda, \shv}^\dbl(\cor_W)$ we have
    \begin{equation}
    \label{eq:formules_dblA}
    \begin{split}
      \opb{i_W}\roim{j} \rhom(&G, \rsect_{V^{B,q}}(\Psi_W(F))) \\
      &\simeq \rsect_{V^B} (\opb{i_W}\roim{j} \rhom(G, \Psi_W(F)) ).
    \end{split}
  \end{equation}
\item [(iv)] For any $x\in W \setminus \dot\pi_{M\times \R}(\Lambda_1)$ there exists a
  neighborhood $W'$ of $x$ such that
  $(\rsect_{V^{B,q}}(\Psi_W(F)))|_{W'_\gammaof} \simeq 0$,
\item [(v)] For any $x\in W \cap \dot\pi_{M\times \R}(\Lambda_1)$ there exists a
  neighborhood $W'$ of $x$ such that
    \begin{equation}
    \label{eq:formules_dblB}  
  (\rsect_{V^{B,q}}(\Psi_W(F)))|_{W'_\gammaof} \simeq
  \Psi_{W'}(\rsect_{W' \cap V^B}(F|_{W'}))
\end{equation}
and $\SSi(\rsect_{W' \cap V^B}(F|_{W'})) \subset T^*_{\tau\geq0}W'$.
\end{itemize}
\end{lemma}
\begin{proof}
  (i)-(ii)   We will show that
  the first two claims follow from Theorem~\ref{thm:SSrhom}, the bound
  $\dot\SSi(\Psi_W(F)) \,\subset\, \Lambda_1^q \cup \Lambda_1^r$ of
  Lemma~\ref{lem:SSPsiF} and the hypotheses of Definition~\ref{def:horiz_subset}.

  More precisely, to deduce~(i) from Theorem~\ref{thm:SSrhom} we need to check that
  $\dot\SSi(\Psi_W(F))$ and $\SSi(\cor_{V^{B,q}}) = (\SSi(\cor_{V^B}))^q$ do not
  intersect  
  (here $F$ is a sheaf on $W$ and $\cor_{V^B}$ means
  $\cor_{V^B}|_W = \cor_{V^B\cap W}$ with the same abuse as in
  Notation~\ref{rem:valbord}).  We take coordinates $(\xi,\tau,\upsilon)$ in a fiber
  of $T^*(M\times\R\times\rspos)$.  The points of $\Lambda_1^q$, $\Lambda_1^r$ and
  $\SSi(\cor_{V^{B,q}})$ are respectively of the form $(\xi_1,\tau_1,0)$,
  $(\xi_2,\tau_2,-\tau_2)$, $(\xi_3,\tau_3,0)$ with $\tau_1,\tau_2>0$ (since
  $\Lambda_1 \subset \Lambda$). Hence the subsets $\Lambda_1^r$ and
  $\SSi(\cor_{V^{B,q}})$ cannot intersect. So it remains to check that
  $\SSi(\cor_{V^{B}})$ and $\dot\SSi(F)$ do not intersect.  We recall that
  $\dot\SSi(\cor_{V^{B}})$ is contained in the fibers over $\partial V^B$ and that
  $V^B$ is a finite union of products $U_a\times I_a$.  We remark that
  $\partial V^B \subset \bigcup_{a\in B} \partial (U_a\times I_a)$.  Let us call the
  ``vertical part'' of $\partial V^B$ its subset
  $\partial V^B \setminus (\partial V^B \cap \bigcup_{a\in B} (U_a\times \partial
  I_a))$.  Near a point in the vertical part $\partial V^B$ is then of the form
  $\partial V \times I$ for some open subsets $V\subset M$, $I\subset \R$ and a point
  of $\SSi(\cor_{V^{B,q}})$ is of the form $(\xi_3,0,0)$.  The hypothesis~(i) in
  Definition~\ref{def:horiz_subset} implies that $\pi_{M\times \R}(\Lambda)$ and
  $\partial V^B$ can only meet at points in the vertical part of $\partial V^B$.
  Near such a point $\SSi(\cor_{V^{B}})$ is contained in $\{\tau=0\}$ and cannot meet
  $\Lambda$. This proves~(i).

  \smallskip

  Since $\dot\SSi(\Psi_W(F))$ and $\SSi(\cor_{V^{B,q}})$ do not
  intersect, we know by Theorem~\ref{thm:SSrhom} and
  Example~\ref{ex:hplus_noncar} that
  $\SSi(\rsect_{V^{B,q}}(\Psi_W(F)))$ is bounded by the pointwise sum
  $\Xi = \Lambda_1' + \SSi(\cor_{V^{B,q}})$, where
  $\Lambda_1' = \Lambda_1^q \cup \Lambda_1^r \cup T^*_{W_\gammaof}W_\gammaof$.
  We need to understand the intersection of $\Xi$ and $\Lambda^q$.  It is clear
  that $\Lambda_1' + (T^*_{V^B}V^B)^q = \Lambda_1' \cap T^*(q^{-1}(V^B))$ and
  this satisfies the required bound. Hence we can concentrate on the part of
  $(\SSi(\cor_{V^B}))^q$ outside the zero section, hence above
  $q^{-1}(\partial V^B)$.  Since we are interested in $\Lambda^q \cap \Xi$, by
  the preceding discussion we only need to consider points above the vertical
  part of $q^{-1}(\partial V^B)$ and a point of $(\SSi(\cor_{V^B}))^q$ is then
  of the form $(\xi_0,0,0)$. It follows that a point of
  $\Lambda_1^r + (\SSi(\cor_{V^B}))^q$ is of the form $(\xi,\tau,-\tau)$,
  $\tau>0$, and cannot belong to $\Lambda^q$.  We also see that
  $T^*_{W_\gammaof}W_\gammaof + (\SSi(\cor_{V^B}))^q$ does not meet $\Lambda^q$.
  So it only remains to understand
  $(\Lambda_1^q + (\SSi(\cor_{V^B}))^q) \cap \Lambda^q$ and the statement
  follows from the hypothesis~(ii) of Definition~\ref{def:horiz_subset}.

  \sui(iii-a) The isomorphism~(iii) is local on $W$ and we can shrink $W$ if
  necessary.  Since $G \in \Der_{\Lambda, \shv}^\dbl(\cor_W)$ we can thus assume
  that $G$ satisfies~\eqref{eq:defDdbl}.  Taking one summand we assume in fact
  $G = \rsect_{V^{B',q}} (\Psi_{W}(F'))$, for some $B' \subset A$ and
  $F'\in \Derb_{[\Lambda]}(\cor_W)$.  We will prove
  \begin{equation}
    \label{eq:formules_dbl3}
    \begin{split}
      \opb{i_W}\roim{j} &\rhom(\rsect_{V^{B',q}} (\Psi_{W}(F')),
      \rsect_{V^{B,q}} (\Psi_{W}(F)) )  \\
      &\simeq \rsect_{\ol{V^{B'}} \cap V^B} (\opb{i_W}\roim{j}
      \rhom(\Psi_W(F'),  \Psi_W(F)) ) .
    \end{split}
  \end{equation}
  This implies~\eqref{eq:formules_dblA}: use~\eqref{eq:formules_dbl3} as it is
  stated and also in the case where $V^B \supset W$, together with
  $\rsect_{\ol{V^{B'}} \cap V^B}(-) \simeq \rsect_{V^B}(
  \rsect_{\ol{V^{B'}}}(-))$.

  We set $H = \rhom(\Psi_W(F'), \Psi_W(F))$.  Using part~(i) of the lemma and
  the isomorphism
  $\rhom((-)_{Z'}, \rsect_Z(-)) \simeq \rsect_{Z'\cap Z}\rhom(-,-)$, we deduce
  that the left hand side of~\eqref{eq:formules_dbl3} is
  $$
  \opb{i_W}\roim{j} \rsect_{(\ol{V^{B'}} \cap V^B)^q}(H)
  \simeq \opb{i_W}\rsect_{(\ol{V^{B'}} \cap V^B)\times \R}(\roim{j}  H)
  $$
  and we are reduced to prove
    \begin{equation*}
      \opb{i_W}\rsect_{(\ol{V^{B'}} \cap V^B)\times \R}(\roim{j} H)
      \simeq   \rsect_{\ol{V^{B'}} \cap V^B} (\opb{i_W}\roim{j}H)  .
    \end{equation*}
    For this it is enough to check, for some $\varepsilon>0$, setting
    $J_\varepsilon = \mo]-\infty,\varepsilon[$,
      \begin{equation}
    \label{eq:formules_dbl5}
    \begin{split}
    (\SSi(\cor_{\ol{V^{B'}} \cap V^B})\times T^*_{J_\varepsilon}J_\varepsilon)
    \cap \dot\SSi(\roim{j} (H )) = \emptyset, \\
    \SSi(\cor_{\ol{V^{B'}} \cap V^B})
    \cap \dot\SSi(\opb{i_W}\roim{j} (H )) = \emptyset .
  \end{split}
  \end{equation}
  Indeed, \eqref{eq:formules_dbl5},~\eqref{eq:def_horiz_subset} and
  Theorem~\ref{thm:SSrhom} imply the isomorphisms  
  \begin{gather*}
  \begin{split}
    \rsect_{(\ol{V^{B'}} \cap V^B)\times J_\varepsilon}(\roim{j} H)
    &\simeq
    \DD'(\cor_{(\ol{V^{B'}} \cap V^B)\times J_\varepsilon}) \ltens \roim{j} H  \\
    &\simeq (\DD'(\cor_{\ol{V^{B'}} \cap V^B}) \boxtimes \cor_{\ol J_\varepsilon})
    \ltens \roim{j} H , 
\end{split}  \\
\rsect_{\ol{V^{B'}} \cap V^B} (\opb{i_W}\roim{j}H) \simeq
\DD'(\cor_{\ol{V^{B'}} \cap V^B}) \ltens \opb{i_W}\roim{j} H .
\end{gather*}
We then use the commutativity of $\opb{i_W}$ with the tensor product.

  \sui(iii-b) Let us prove \eqref{eq:formules_dbl5}.
  Proposition~\ref{prop:formulaire}-(i) gives
  $$
  \roim{j}H \simeq \rhom( \reim{j}\Psi_W(F'), \roim{j}\Psi_W(F)) .
  $$
  For   $F'' \in \Dertpn(\cor_W)$
  Lemma~\ref{lem:SSPsiF} says $\reim{j}\Psi_W(F'') \simeq \roim{j}\Psi_W(F'')$ and
  gives a bound for $\SSi(\reim{j}\Psi_W(F''))$.  We deduce the following less
  precise bound which is easier to handle.  For $S \subset T^*W$ and $\varepsilon>0$
  we define $N_\varepsilon(S) = \bigcup_{c\in [0,\varepsilon]} T_c(S)$ where $T_c$ is
  the vertical translation $T_c(x,t;\xi,\tau) = (x,t+c;\xi,\tau)$.
  Lemma~\ref{lem:SSPsiF} implies
$$
\SSi((\reim{j}\Psi_W(F''))|_{W\times J_\varepsilon}) \subset
N_\varepsilon(\SSi(F'')) \times T^*J_\varepsilon .
$$
By Theorem~\ref{thm:SSrhom} we deduce
  \begin{equation}
    \label{eq:formules_dbl1}
    \SSi(\roim{j} (H )) \subset
    N_\varepsilon(\Lambda_+^a \hplus \Lambda_+) \times T^*J_\varepsilon.
  \end{equation}
  When   we take the inverse image by $i_W$
  in~\eqref{eq:formules_dbl1} we can assume $\varepsilon$ as small as we want and
  Theorem~\ref{thm:iminv} gives
  \begin{equation}
    \label{eq:formules_dbl2}
    \SSi(\opb{i_W}\roim{j} (H )) \subset  \Lambda_+^a \hplus \Lambda_+ .
  \end{equation}
  Finally we remark
  $\SSi(\cor_{\ol{V^{B'}} \cap V^B}) \subset \SSi(\cor_{V^B}) \hplus
  \SSi(\cor_{V^{B'}})^a$.  Now we deduce the relations~\eqref{eq:formules_dbl5}
  from~\eqref{eq:formules_dbl1} (taking $\varepsilon$ as small as required),
  \eqref{eq:formules_dbl2} and~\eqref{eq:def_horiz_subset}.

  \sui (iv) follows from the bound~\eqref{eq:supp_faiscdbl} and the remark that, for
  any subset $S$ of $M\times \R$ and any $\varepsilon>0$,
$$
(\gammaf \star S) \cap (M\times\R \times \mo]0,\varepsilon])
\subset \big( \bigcup_{c\in [0,\varepsilon]} t_c(S) \big)\times \mo]0,\varepsilon] ,
$$
where $t_c$ is the vertical translation $t_c(x,t) = (x,t+c)$.

\sui(v) If $x\in V^B$, then we choose $W' = W \cap V^B$ and the result is obvious. If
$x\not\in \ol{V^B}$, we choose $W'$ such that $W'\cap V^B = \emptyset$ and both sides
are zero.  So we can assume $x \in \partial V^B$.  Since
$x \in \dot\pi_{M\times \R}(\Lambda_1)$ we have seen in the proof of~(i) that $x$
belongs to the ``vertical part'' of $\partial V^B$.   
Hence we can find a neighborhood $W'$ of $x$ such that
$W' \cap V^B = W' \cap (U \times \R)$, for some open subset $U$ of $M$: recalling
that $V^B = \bigcup_{a\in B} (U_a\times I_a)$, we have
$U = \bigcup_{a \in B, x \in \ol{U_a \times I_a}} U_a$.  It follows from the
projection formula (see Proposition~\ref{prop:formulaire}-(g)) that
$\Psi_{W'}((H)_{W' \cap (Z \times \R)}) \simeq (\Psi_{W'}(H))_{W'_\gammaof \cap (Z
  \times \R \times \rspos)}$, for any sheaf $H$ on $W'$ and any locally closed subset
$Z$ of $M$.  Using part~(i) of the lemma (and the similar isomorphism
$\rsect_{V^B}(F) \simeq (F)_{\ol{V^B}}$) we deduce the result.
\end{proof}

We will state the analog of Proposition~\ref{prop:muhom=hompsi} for
$\Der_{\Lambda, \shv}^\dbl(\cor_V)$. We recall that the restriction of
$\muhom(\Psi_U(F), \Psi_U(G))$ to $\{\tau>0\}$ is decomposed, by
Proposition~\ref{prop:decomposition_muhomPsi}, as the sum of
$\muhom(\opb{q_U}(F), \opb{q_U}(G))$ and $\muhom(\opb{r_U}(F), \opb{r_U}(G))$.  Hence
in an analog of Proposition~\ref{prop:muhom=hompsi} we can expect that $\muhom(F,G)$
(for $F,G \in \Der(\cor_V)$) should be replaced by $\muhom(F',G')|_{\Lambda^q}$ (for
$F', G' \in \Der_{\Lambda, \shv}^\dbl(\cor_V)$). We will make this more precise soon
and we will use the following lemma.

\begin{lemma}\label{lem:muhom_dbl0}
  Let $G, G' \in \Der_{\Lambda, \shv}^\dbl(\cor_V)$. Then there exist a uniquely
  defined sheaf $\muhom^\dbl(G,G')$ in $\Derb(\cor_{\Lambda \cap T^*V})$ and an open
  subset $V'$ of $V_\gammaof$ such that $(V \times \{0\}) \sqcup V'$ is open in
  $M \times\R \times \rpos$ and
  $$
  \muhom(G,G')|_{\Lambda^q \cap T^*V'} \simeq
  \opb{p_\Lambda}(\muhom^\dbl(G,G'))|_{\Lambda^q \cap T^*V'} ,
  $$
  where $p_\Lambda \colon \Lambda^q \cap T^*V' \to \Lambda$ is the projection.
    Moreover, for $W \subset V$ open, if
  $G|_{W_\gammaof} \simeq \Psi_W(F)$, $G'|_{W_\gammaof} \simeq \Psi_W(F')$ for some
  $F,F' \in \Derb_{\tau\geq0}(\cor_W)$, then
  $\muhom^\dbl(G,G')|_{\Lambda \cap T^*W} \simeq \muhom(F,F')|_{\Lambda \cap T^*W}$.
\end{lemma}
\begin{proof}
  By Lemma~\ref{lem:formules_dbl}-(iv-v), any point $x\in V$ has a neighborhood $W$
  such that $G|_{W_\gammaof}$, $G'|_{W_\gammaof}$ are of the form $\Psi_W(F)$,
  $\Psi_W(F')$ for some $F,F' \in \Derb_{\tau\geq0}(\cor_W)$.  Then
  Proposition~\ref{prop:decomposition_muhomPsi} implies that the restriction of
  $\muhom(G,G')$ to $\Lambda^q \cap T^*W_\gammaof$ is of the form
  $\muhom(\opb{q_W}(F), \opb{q_W}(F')) \simeq \eim{{q_{W,d}}} \, \opb{q_{W,\pi}}
  \muhom(F,F')$.  In particular
  $$
  \muhom(G,G')|_{\Lambda^q \cap T^*W_\gammaof}
  \simeq (p_\Lambda|_{\Lambda^q \cap T^*W_\gammaof})^{-1} \muhom(F,F') ,
  $$
  which proves the last assertion and shows that
  $\muhom(G,G')|_{\Lambda^q \cap T^*W_\gammaof}$ is constant on the fibers of the
  projection $\Lambda^q \cap T^*W_\gammaof \to \Lambda$.

  Now we choose a family $W_i$, $i\in I$, of such open subsets $W$ which covers $V$
  and is locally finite (each compact subset of $V$ meets finitely many $W_i$'s).  We
  set $V' = \bigcup_{i\in I} W_{i,\gammaof}$.  Then $(V \times \{0\}) \sqcup V'$ is
  open in $M \times\R \times \rpos$ and, for any $x\in V$, there exists $i\in I$ such
  that $(\{x\} \times \rspos )\cap V' = (\{x\} \times \rspos )\cap W_{i,\gammaof}$.
  It follows that $\muhom(G,G')|_{\Lambda^q \cap T^*V'}$ is constant on the fibers of
  the projection $p_\Lambda \colon \Lambda^q \cap T^*V' \to \Lambda$, which are open
  intervals.  Hence it is the inverse image of some uniquely defined sheaf
  $\muhom^\dbl(G,G')$ by $p_\Lambda$ (and we have
  $\muhom^\dbl(G,G') = \roim{p_\Lambda}(\muhom(G,G')|_{\Lambda^q \cap T^*V'})$).
\end{proof}

The following result follows easily from the definition of
$\Der_{\Lambda, \shv}^\dbl(\cor_V)$ and Lemma~\ref{lem:formules_dbl}-(ii).
\begin{lemma}\label{lem:SS_dbl}
  For $G \in \Der_{\Lambda, \shv}^\dbl(\cor_V)$ there exists a unique open subset
  $\Lambda_G \subset \Lambda\cap T^*V$, that we denote $\SSi^\dbl(G)$,
  such that
  \begin{itemize}
  \item [(i)] for any open subset $W \subset V$ where a
    decomposition~\eqref{eq:defDdbl} holds we have
    $\Lambda_G \cap T^*W = \bigcup_{i\in I} \dot\SSi(F_i) \cap T^*V^{A_i}$ (with the
    notations of Definition~\ref{def:cat_dbl_sheaves} -- remark that $\dot\SSi(F_i)$
    is $\emptyset$ or $\Lambda_i$).
  \item [(ii)] there exist an open subset $V'$ of $V_\gammaof$ and a neighborhood
    $\Omega$ of $\Lambda_G$ in $T^*V$ such that $(V \times \{0\}) \sqcup V'$ is open
    in $M \times\R \times \rpos$ and
    \begin{align*}
      \SSi(G) \cap \Lambda^q \cap T^*V' &= {\ol{\Lambda_G}}^q \cap T^*V' , \\
      \SSi(G) \cap \Omega^q \cap T^*V' &= \Lambda_G^q \cap T^*V'.
    \end{align*}    
\end{itemize}
\end{lemma}

\begin{remark}   The microsupport of
  $G \in \Der_{\Lambda, \shv}^\dbl(\cor_V)$ is in general bigger than
  $(\SSi^\dbl(G))^q \cup (\SSi^\dbl(G))^r$; the last formula in
  Lemma~\ref{lem:SS_dbl} says $\dot\SSi(G)$ coincides with $(\SSi^\dbl(G))^q$ in some
  neighborhood of $(\SSi^\dbl(G))^q$, but $\dot\SSi(G)$ is a priori not contained in
  $\Lambda^q \cup \Lambda^r$.  Namely, the microsupport of
  $\rsect_{V^{A_i} \times \rspos} (\Psi_W(F_i))$ could be as big as the bound given
  in the proof of Lemma~\ref{lem:formules_dbl}~(ii).
\end{remark}

    We can define the
notions of pure or simple doubled sheaf and also $\kssfunc_{\Lambda_0}(G)$ for a
doubled sheaf $G$; this will be used in Part~\ref{part:quant}.  Let
$G \in \Der_{\Lambda, \shv}^\dbl(\cor_V)$.  Lemma~\ref{lem:SS_dbl} defines an open
subset $\SSi^\dbl(G)$ of $\Lambda$. Let $\Lambda_0$ be an open subset of
$\SSi^\dbl(G)$.  With $V'$ as in the lemma we have
$G|_{V'} \in \Derb_{(\Lambda_0^q \cap T^*V')}(\cor_{V'})$ (we recall that this means
that $\SSi(G)$ coincides with $\Lambda_0^q$ on some neighborhood of $\Lambda_0^q$).
We say that $G$ is pure (or simple) along $\Lambda_0$ if $G|_{V'}$ is pure (or
simple) in the usual sense along $\Lambda_0^q \cap T^*V'$.  Since
$G|_{V'} \in \Derb_{(\Lambda_0^q \cap T^*V')}(\cor_{V'})$, we can also consider
$\kssfunc_{\Lambda_0^q \cap T^*V'}(G|_{V'})$ which is an object of
$\kss(\cor_{\Lambda_0^q \cap T^*V'})$.  Up to shrinking $V'$ we can assume that the
fibers of the projection $p_{\Lambda_0} \colon \Lambda_0^q \cap T^*V' \to \Lambda_0$
are open intervals.  Then the inverse image by $p_{\Lambda_0}$ induces an equivalence
\begin{equation}
  \label{eq:equivksspourdbl}
  \kss(\cor_{\Lambda_0}) \isoto \kss(\cor_{\Lambda_0^q \cap T^*V'}) .
\end{equation}
In this way we can identify $\kssfunc_{\Lambda_0^q \cap T^*V'}(G|_{V'})$ with an
object of $\kss(\cor_{\Lambda_0})$ that we denote by $\kssfunc_{\Lambda_0}^\dbl(G)$.
We obtain a functor, for any open subset $\Lambda_0$ of $\Lambda \cap T^*V$,
\begin{equation}
  \label{eq:def_kssfuncLambda0dbl}
  \kssfunc_{\Lambda_0}^\dbl \cl \{G \in \Der_{\Lambda, \shv}^\dbl(\cor_V); \;
  \Lambda_0 \subset \SSi^\dbl(G)\}  \to \kss(\cor_{\Lambda_0}) 
\end{equation}
and~\eqref{eq:hom_sheaf_kss} together with~\eqref{eq:equivksspourdbl} give, for
$G, G' \in \Der_{\Lambda, \shv}^\dbl(\cor_V)$ such that
$\Lambda_0 \subset \SSi^\dbl(G)$, $\Lambda_0 \subset \SSi^\dbl(G')$,
\begin{equation}
  \label{eq:hom_kssfuncLambda0dbl}
  \Hom(\kssfunc_{\Lambda_0}^\dbl(G), \kssfunc_{\Lambda_0}^\dbl(G'))
\simeq H^0(\Lambda_0; H^0(\muhom^\dbl(G,G'))) .
\end{equation}
(Here   the condition
$\Lambda_0 \subset \SSi^\dbl(G)$ ensures that
$G \in \Derb_{(\Lambda_0^q)}(\cor_{V'})$.)  Let us set
$V_u = V_\gammaof \cap (M\times\R \times \{u\})$ for $u>0$.  We have a natural
inclusion $T^*V_u \subset T^*V_\gammaof$.  We remark that, if $V = V^B$ for some
$B\subset A$ and $u$ is small enough, we have $\Lambda_0^q \cap T^*V_u = \Lambda_0$.
Then $G|_{V_u}$ belongs to $\Derb_{(\Lambda_0)}(\cor_{V_u})$ and we have, by the
construction of $\kssfunc_{\Lambda_0}^\dbl$,
\begin{equation}
  \label{eq:mLambda=mLambdadbl}
  \kssfunc_{\Lambda_0}(G|_{V_u}) \simeq \kssfunc_{\Lambda_0}^\dbl(G) .
\end{equation}

Now we define a version of the morphism $b(F,G)$ of
Definition~\ref{def:hom_Psi-vers-muhom}.  Let
$G, G' \in \Der_{\Lambda, \shv}^\dbl(\cor_V)$ and set $\Lambda_0 = \SSi^\dbl(G)$,
$\Lambda'_0 = \SSi^\dbl(G')$ (they are open subsets of $\Lambda$).  We choose an open
subset $V'$ of $W_\gammaof$ satisfying the conclusions of Lemmas~\ref{lem:muhom_dbl0}
and~\ref{lem:SS_dbl} for $G,G'$.  We denote by $\pi_{\Lambda^q}$, $\pi_\Lambda$ the
projections from $\Lambda^q \cap T^*V'$, $\Lambda \cap T^*V$ respectively to $V'$,
$V$. The support of $\muhom(G,G')|_{\Lambda^q \cap T^*V'}$ is contained in
$\Lambda^q \cap T^*V'\cap {\ol{\Lambda_0}}^q \cap {\ol{\Lambda'_0}}^q$, hence the
support of $\muhom^\dbl(G,G')$ is contained in $\ol{\Lambda_0} \cap \ol{\Lambda'_0}$.
Since $\Lambda'_0$ is open in $\Lambda$ we have a natural morphism
$(-) \to \rsect_{\Lambda'_0}(-)$ and we deduce the following sequence of morphisms
\begin{equation*}
  \begin{split}
    \rhom(G,G')|_{V'}   &\simeq  \RR \pi_{V'*} (\muhom(G,G')|_{T^*V'}) \\
    &\to    \RR \pi_{\Lambda^q *} \,  (\muhom(G,G')|_{\Lambda^q \cap T^*V'}) \\
    &\simeq (\opb{q_V} \RR\pi_{\Lambda *} \, \muhom^\dbl(G,G') )|_{V'} \\
    &\to (\opb{q_V} \RR\pi_{\Lambda *}  \rsect_{\Lambda'_0} \, \muhom^\dbl(G,G'))|_{V'} .
  \end{split}
\end{equation*}
Let  
$i_V \colon V\times \{0\} \to M\times\R^2$ and $j \colon V' \to M \times\R^2$ be the
inclusions.  We apply the functor $\opb{i_V} \roim{j}$ to the above sequence and we
obtain a version of the morphism $b(F,G)$ of Definition~\ref{def:hom_Psi-vers-muhom}
for $\Der_{\Lambda, \shv}^\dbl(\cor_V)$:  
\begin{equation}\label{eq:hom_Psi-vers-muhomdbl}
\begin{split}
  b'(G,G') \colon  \opb{i_V} \roim{j}  &\rhom(G, G') \\
  &\to  \roim{\pi_\Lambda} \rsect_{\Lambda'_0} \, \muhom^\dbl(G,G') ,
\end{split}
\end{equation}
where $\Lambda'_0 = \SSi^\dbl(G')$.  The Proposition~\ref{prop:muhom=hompsi}
generalizes to this setting as follows.

\begin{proposition}
  \label{prop:muhom=hompsiEtendu}
  Let $G, G' \in \Der_{\Lambda, \shv}^\dbl(\cor_V)$.  Then the morphism $b'(G,G')$
  in~\eqref{eq:hom_Psi-vers-muhomdbl} is an isomorphism.  In particular, taking
  global sections gives
$$
  \varinjlim_{V'} \Hom(G|_{V'},G'|_{V'}) \isoto H^0(\Lambda'_0; \muhom^\dbl(G,G')),
$$
  where $V'$ runs over the open subsets of $M \times\R \times \rspos$ such that
  $(V \times \{0\}) \sqcup V'$ is open in $M \times\R \times \rpos$ and
  $\Lambda'_0 = \SSi^\dbl(G')$ is the open subset of $\Lambda \cap T^*V$ defined in
  Lemma~\ref{lem:SS_dbl}.
\end{proposition}
\begin{proof}
  Since the statement is local on $V$ we may as well assume that $G$ and $G'$ are
  decomposed as in~\eqref{eq:defDdbl} and it is enough to consider one summand in
  their decompositions.  By Lemma~\ref{lem:formules_dbl}-(iv-v) we can assume that
  $G = \Psi_W(\rsect_{V^B}(F))$ and $G' = \Psi_W(\rsect_{V^{B'}}(F'))$ for some open
  subset $W\subset V$, $F, F' \in \Derb_{[\Lambda]}(\cor_W)$ and $B,B' \subset A$.  By
  Lemma~\ref{lem:formules_dbl}-(iii) the restriction to $W$ of the left hand side
  of~\eqref{eq:hom_Psi-vers-muhomdbl} becomes
  \begin{equation}
    \label{eq:muhom=hompsiEtendu1}
    \opb{i_W}\roim{j} \rhom( G,G') 
 \simeq \rsect_{V^{B'}} (\opb{i_W}\roim{j} \rhom(G, \Psi_W(F'))) .
\end{equation}
We have  
$\muhom^\dbl(G,G') \simeq \muhom(\rsect_{V^B}(F),\rsect_{V^{B'}}(F'))|_\Lambda$ by
the construction of $\muhom^\dbl$ in Lemma~\ref{lem:muhom_dbl0}.  Let us set
$\Lambda' = \dot\SSi(F')$.  We have $\SSi^\dbl(G') = \Lambda' \cap T^*V^{B'}$ and the
restriction to $W$ of the right hand side of~\eqref{eq:hom_Psi-vers-muhomdbl} becomes
  \begin{align*}
    \RR\pi_{\Lambda *}  \rsect_{ \Lambda' \cap T^*V^{B'}} \,
    &(\muhom(\rsect_{V^B}(F),\rsect_{V^{B'}}(F'))|_\Lambda ) \\
    & \simeq \RR\pi_{\Lambda *}  \rsect_{ \Lambda' \cap T^*V^{B'}}\,
     ( \muhom(\rsect_{V^B}(F),F') |_\Lambda )      \\
    & \simeq \RR\pi_{\Lambda *}  \rsect_{T^*V^{B'}}\,
     ( \muhom(\rsect_{V^B}(F),F') |_\Lambda )      \\
    &  \simeq
      \rsect_{V^{B'}}\RR\pi_{\Lambda *}( \muhom(\rsect_{V^B}(F), F')|_\Lambda  ) ,
  \end{align*}
  where the first isomorphism follows from
  $\rsect_{ \Lambda' \cap T^*V^{B'}} \simeq \rsect_{ \Lambda' } \rsect_{T^*V^{B'}}$
  and the fact that $\rsect_U(H)$ only depends on $H|_U$ when $U$ is open (here
  $U = T^*V^{B'}$) and the second isomorphism follows from the inclusion
  $\supp(\muhom(H,F')|_\Lambda) \subset \dot\SSi(F') = \Lambda'$, whatever $H$.
  Since $\dot\SSi(F') \subset \{\tau>0\}$ we can apply
  Proposition~\ref{prop:muhom=hompsi} and we obtain that the right hand side
  of~\eqref{eq:hom_Psi-vers-muhomdbl} is isomorphic to
  $$
  \rsect_{V^{B'}} (\opb{i_W}\roim{j} \rhom(\Psi_W(\rsect_{V^B}(F)), \Psi_W(F'))).
  $$
  Comparing with~\eqref{eq:muhom=hompsiEtendu1} we obtain the result.

  The last assertion follows by applying the functor $H^0(V;-)$ to both sides
  of~\eqref{eq:hom_Psi-vers-muhomdbl} and using
  $H^0(V'; \rhom(G,G')) \simeq \Hom(G|_{V'}, G'|_{V'})$.
\end{proof}

  We have seen a notion of pure doubled sheaf
and we have defined $\kssfunc_{\Lambda_0}^\dbl$ in the paragraph
before~\eqref{eq:def_kssfuncLambda0dbl}.  In the same way that we deduced
Corollary~\ref{cor:kssfuncPsi} from Proposition~\ref{prop:muhom=hompsi}, we obtain
the following result, using Proposition~\ref{prop:muhom=hompsiEtendu}
and~\eqref{eq:hom_kssfuncLambda0dbl}:

\begin{corollary}\label{cor:kssfuncPsidbl}
  Let $\Lambda_0 \subset \Lambda \cap T^*V$ be an open subset.  Let
  $G, G' \in \Der_{\Lambda, \shv}^\dbl(\cor_V)$ be such that
  $\Lambda_0 \subset \SSi^\dbl(G)$ and $\SSi^\dbl(G') = \Lambda_0$.  We assume
  that $G$ and $G'$ are pure with the same shift.  Then
\begin{equation}\label{eq:Homdbl=Homkssfunc}
  \varinjlim_V \Hom(G|_V, G'|_V) \simeq
  \Hom(\kssfunc_{\Lambda_0}^\dbl(G), \kssfunc_{\Lambda_0}^\dbl(G')) ,
\end{equation}
  where $V$ runs over the open subsets of $U_\gammaof$ such that
  $(U \times \{0\}) \sqcup V$ is open in $M \times\R \times \rpos$.  In particular,
  if $\SSi^\dbl(G) = \SSi^\dbl(G') = \Lambda_0$ and
  $\kssfunc_{\Lambda_0}^\dbl(G) \simeq \kssfunc_{\Lambda_0}^\dbl(G')$, then there
  exists such an open subset $V$ such that $G|_V \simeq G'|_V$.
\end{corollary}
We remark that the hypothesis $\SSi^\dbl(G') = \Lambda_0$ implies that $\Lambda_0$ is
not any open subset of $\Lambda$ but of the form described in~(i) of
Lemma~\ref{lem:SS_dbl}.    In
Lemma~\ref{lem:descrptionfaiscdbl} below we describe the
isomorphism~\eqref{eq:Homdbl=Homkssfunc} locally when we have a decomposition of $G$
and $G'$ as in~\eqref{eq:defDdbl}.  Before that we show that the subsets
$\Lambda \cap T^*V^B$, $B\subset A$, have a local connectedness property near the
boundary.  For example the following situation is excluded.  Assume $M=\R$ and
$\Lambda$ is half of the conormal bundle of the cusp $\{(x,t)$; $x^3 = t^2 \}$, say
$\Lambda = \{(z^2,z^3; -\frac32 z\tau,\tau)$; $z\in \R$, $\tau>0\}$ (this example is
explained in Lemma~\ref{lem:sheafcusp}) and set $V = \mo]0,1\mc[\times \mo]-1,1[$.
Then $\Lambda \cap T^*V$ has two connected components but
$\Lambda \cap \pi_{M\times\R}^{-1}(\ol V)$ has only one component.  In fact $V$
cannot belong to an adapted family for $\Lambda$.  The next lemma says a bit more.

\begin{lemma}\label{lem:connex-LambdainterTV}
  Let $\Lambda$ and $\shv = \{V_a$; $a\in A\}$ be as in
  Definition~\ref{def:horiz_subset}.  Then, for any $B\subset A$,
  $\Lambda \cap T^*V^B$ has the following local connectedness property. For any
  $(x,t) \in M\times\R$ and any small enough neighborhood $W$ of $(x,t)$, denoting by
  $\Lambda \cap T^*W = \bigsqcup_{j\in J} \Lambda_j$ and
  $\Lambda_j \cap T^*V^B= \bigsqcup_{k\in K_j} \Lambda_j^k$ the decompositions into
  connected components we have: for any $j\in J$ there exists at most one $k\in K_j$
  such that $\ol{\Lambda_j^k} \cap T^*_{(x,t)}(M\times\R)$ is non empty.  In other
  words, there exists a smaller neighborhood $W'$ of $(x,t)$ such that the inclusion
  of $\Lambda_j \cap T^*V^B \cap T^*W'$ in $\Lambda_j \cap T^*V^B$ factorizes through
  a connected set.
\end{lemma}

\begin{remark}\label{rem:connex-LambdainterTV}
  A stronger statement would be that the subsets $\Lambda_j \cap T^*V^B$ in the lemma
  are connected (that is, $K_j$ is a singleton), but this would require a good choice
  of $W$.  The lemma only says that, when restricting to a smaller neighborhood of
  $T^*_{(x,t)}(M\times\R)$, at most one component of $\Lambda_j \cap T^*V^B$
  survives.
\end{remark}

\begin{proof}
  (i) We set $F_\Lambda = \roim{(\dot\pi_{M\times\R})}(\cor_\Lambda)$.  Let us prove
  that the natural morphism
  $u\colon (F_\Lambda)_{\ol{V^B}} \to \rsect_{V^B}(F_\Lambda)$ is an isomorphism, for
  any $B \subset A$.  By Theorem~\ref{thm:SSrhom} and the condition
  $\DD'(\cor_{V^B}) \simeq \cor_{\ol{V^B}}$, it is enough to see that
  $\SSi(\cor_{V^B})$ and $\SSi(F_\Lambda)$ do not meet outside the zero section.
  This follows from~\eqref{eq:def_horiz_subset} and the inclusion
  $\SSi(F_\Lambda) \subset \Lambda_+^a \hplus \Lambda_+$, where
  $\Lambda_+ = \Lambda \cup T^*_{M\times\R}(M\times\R)$, that we prove now.  Since
  this is a local problem, we can assume by Lemma~\ref{lem:simple_local_base} that
  there exists $F$ such that $\SSi(F) \subset \Lambda_+$ and
  $\muhom(F,F)|_{\dT^*(M\times\R)} \simeq\cor_\Lambda$ (see~\eqref{eq:carc_Fsimple}).
  Then the inclusion follows from the triangle~\eqref{eq:SatoDTmuhom2}, the
  triangular inequality for the microsupport and the bounds in
  Lemma~\ref{lem:im_inv_homprime} and Theorem~\ref{thm:SSrhom}.

  \sui(ii) Let $J$ be as in the lemma and let $J'$ be the set of half lines in
  $\Lambda \cap T^*_{(x,t)}(M\times\R)$.  We choose $W$ small enough so that the
  obvious map $J' \to J$ is injective.  Applying $H^0(W;-)$ to the isomorphism $u$
  of~(i) we obtain
  \begin{equation*}
  H^0(\dT^*W; \cor_{\Lambda \cap \pi_{M\times\R}^{-1}(\ol{V^B})}) 
  \isoto
  H^0(\dT^*W \cap \pi_{M\times\R}^{-1}(V^B); \cor_{\Lambda  }) .
  \end{equation*}
  This says that the connected components of
  $\Lambda \cap \pi_{M\times\R}^{-1}(\ol{V^B}) \cap \dT^*W$ and
  $\Lambda \cap \pi_{M\times\R}^{-1}(V^B) \cap \dT^*W$ are in bijection.  Now we
  assume that there exist $j\in J$ and $k,k' \in K_j$ such that $\ol{\Lambda_j^{k'}}$
  and $\ol{\Lambda_j^k}$ both meet $T^*_{(x,t)}(M\times\R)$.  Then
  $\ol{\Lambda_j^{k'}}$ and $\ol{\Lambda_j^k}$ both contain the same half line of
  $J'$. In particular they have a non empty intersection and must be the same
  connected component of $\Lambda \cap \pi_{M\times\R}^{-1}(\ol{V^B}) \cap \dT^*W$,
  that is, $k=k'$, as required.
\end{proof}

The next result says that the local decomposition~\eqref{eq:defDdbl} can be written
in a canonical way and describe the morphism~\eqref{eq:Homdbl=Homkssfunc} locally
when we have such a decomposition.

\begin{lemma}\label{lem:descrptionfaiscdbl}
  Let $V$ be an open subset of $M\times\R$ and let $(x,t) \in V$ be given.  Let
  $\{\lambda_i\}_{i\in I}$ be the set of half lines in
  $\Lambda \cap T^*_{(x,t)}(M\times\R)$. Let $W_0$ be a neighborhood of $(x,t)$ small
  enough so that the map $I \to \pi_0(\Lambda\cap T^*W_0)$ is injective; let
  $\Lambda_i$ be the connected component of $\Lambda \cap T^*W_0$ containing
  $\lambda_i$. We assume that, for each $i\in I$, there exists a simple sheaf
  $F_i \in \Der_{[\Lambda_i]}(\cor_{W_0})$.  Now let
  $G, G' \in \Der_{\Lambda, \shv}^\dbl(\cor_V)$ be pure objects with the same
  shift. We set $\Lambda_0 = \SSi^\dbl(G)$, $\Lambda'_0 = \SSi^\dbl(G')$.  Then we
  have:

  \sui{(i)} There exists an isomorphism, for some smaller neighborhood $W$ of
  $(x,t)$,
  $$
  G|_{W_\gammaof}
  \simeq \bigoplus_{i\in I} \rsect_{W_i \times \rspos}  (\Psi_W(F_i \ltens (E_i)_W)),
  $$
  where $W_i = W \cap \dot\pi_{M\times\R}(\Lambda_i \cap \Lambda_0)$ and
  $E_i \in \Der(\cor)$ is given by $E_i = (\muhom^\dbl(\Psi_W(F_i), G))_{p_i}$ for
  any $p_i\in \Lambda_i \cap \Lambda_0$ (and $E_i=0$ if $\Lambda_i \cap \Lambda_0$ is
  empty).

  \sui{(ii)} We assume that $\Lambda'_0 \subset \Lambda_0$ and we define $W'_i$,
  $E'_i$ like $W_i$, $E_i$ in~(i), choosing the same $p_i$ for $G$ and $G'$ when
  $\Lambda_i \cap \Lambda'_0 \not=\emptyset$.  For a given $u$ in
  $$
  \Hom(\kssfunc_{\Lambda'_0}^\dbl(G), \kssfunc_{\Lambda'_0}^\dbl(G'))
  \simeq
  H^0(\Lambda'_0; \muhom^\dbl(G,G'))
  $$
  we let $u_i \colon E_i \to E'_i$, $e \mapsto u_{p_i} \mucirc e$, be the
  morphism induced by the composition~\eqref{eq:comp_muhom} (we use the
  notation~\ref{not:mucomposition}).  Then, up to shrinking $W$, the inverse image of
  $u|_{T^*W} \in \Hom(\kssfunc_{T^*W \cap \Lambda'_0}^\dbl(G), \kssfunc_{T^*W \cap
    \Lambda'_0}^\dbl(G'))$ through~\eqref{eq:Homdbl=Homkssfunc} (replacing $V$ in the
  corollary by $W$) is represented by $\bigoplus_{i\in I} v_i$, where
  $$
  v_i \colon \rsect_{W_i \times \rspos} (\Psi_W(F_i \ltens (E_i)_W)) \to
  \rsect_{W'_i \times \rspos} (\Psi_W(F_i \ltens (E'_i)_W))
  $$
  is the composition of the morphism
  $\rsect_{W_i \times \rspos} (-) \to \rsect_{W'_i \times \rspos} (-)$ induced by the
  open inclusion $W'_i \subset W_i$ and the morphism
  $\Psi_W(F_i \ltens (E_i)_W) \to \Psi_W(F_i \ltens (E'_i)_W)$ induced by $u_i$.
\end{lemma}
\begin{proof}
  (i) We first take $W$ so that we have a decomposition~\eqref{eq:defDdbl}
  $G|_{W_\gammaof} \simeq \bigoplus_{i\in I'} \rsect_{V^{A_i} \times \rspos}
  (\Psi_W(F'_i))$ where $I' = \pi_0(\Lambda\cap T^*W)$ and
  $F'_i \in \Derb_{[\Lambda]}(\cor_W)$ ($I'$ contains $I$ but could be bigger).
  Shrinking $W$ we can forget the components in $I'\setminus I$ (maybe
  $\Lambda_i \cap T^*W$ will be no longer connected but we don't care).  Hence we can
  assume $I'=I$.

  Up to shrinking $W$ several times, Lemma~\ref{lem:simple_local} implies that, for
  each $i\in I$, $\kssfunc_{\Lambda_i}(F'_i)$ is isomorphic to
  $\kssfunc_{\Lambda_i}(F_i \ltens (E^0_i)_W)$, where
  $E^0_i = (\muhom(F_i,F'_i))_{p_i}$, and Corollary~\ref{cor:kssfuncPsi} implies
  $\Psi_W(F'_i) \simeq \Psi_W(F_i \ltens (E^0_i)_W)$.  We remark that
  \begin{align*}
  \muhom(F_i,F'_i)|_{\Xi_i}
  &\simeq \muhom^\dbl(\Psi_W(F_i), \Psi_W(F'_i))|_{\Xi_i} \\
  &\simeq \muhom^\dbl(\Psi_W(F_i), G)|_{\Xi_i}, 
  \end{align*}
  where $\Xi_i = \Lambda_i \cap \SSi^\dbl(G) \cap T^*W$. Hence $E^0_i \simeq
  E_i$. Finally
  $W \cap V^{A_i} = W \cap \dot\pi_{M\times\R}(\Lambda_i \cap \Lambda_0)$ by
  Lemma~\ref{lem:SS_dbl}~(i), proving the formula in~(i).
  
  \sui(ii) Using the decomposition in~(i) for $G$ and $G'$, we can assume that
  $G = \rsect_{W_i \times \rspos} (\Psi_W(F_i \ltens (E_i)_W))$,
  $G' = \rsect_{W'_i \times \rspos} (\Psi_W(F_i \ltens (E'_i)_W))$.  The hypothesis
  that $G,G'$ are pure with the same shifts says that the complexes $E_i$, $E'_i$ are
  concentrated in the same degree.  Hence
  $\muhom^\dbl(G,G')|_{\Lambda'_0 \cap \Lambda_i}$ is a constant sheaf concentrated
  in degree $0$ with stalks $\Hom(E_i, E'_i)$.  By
  Lemma~\ref{lem:connex-LambdainterTV} we may consider that
  $\Lambda'_0 \cap \Lambda_i$ is connected, up to shrinking $W$.  Then $u$ is
  determined by its germ at $p_i$.  By the construction of $v_i$ in the lemma, the
  morphism $v_i^\mu$ has the same germ as $u$ at $p_i$.  Hence $v_i$ represents $u$.
\end{proof}

\begin{corollary}
  \label{cor:decompo_dblsh}
  Let $G \in \Der_{\Lambda, \shv}^\dbl(\cor_V)$ and
  $\Lambda_0 = \SSi^\dbl(G) \subset T^*V$ (see Lemma~\ref{lem:SS_dbl}).  Let
  $\Lambda_0 = \Lambda_0^1 \sqcup \Lambda_0^2$ be a decomposition of $\Lambda_0$ into
  two open and closed subsets.  Then there exists an open subset $V'$ of $V_\gammaof$
  such that $(V \times \{0\}) \sqcup V'$ is open in $M \times\R \times \rpos$ and
  there exist $G_1 ,G_2 \in \Der_{\Lambda, \shv}^\dbl(\cor_{V'})$ such that
  $\SSi^\dbl(G_i) = \Lambda_0^i\cap T^*V'$, $i=1,2$, and
  $G|_{V'} \simeq G_1|_{V'} \oplus G_2|_{V'}$.
\end{corollary}
\begin{proof}
  (i) By Proposition~\ref{prop:muhom=hompsiEtendu} we have
  \begin{equation}
    \label{eq:decompo_dblsh1}
 \varinjlim_{V'} \Hom(G|_{V'},G|_{V'}) \isoto H^0(\Lambda_0; \muhom^\dbl(G,G)),
\end{equation}
where $V'$ runs over the open subsets of $M \times\R \times \rspos$ such that
$(V \times \{0\}) \sqcup V'$ is open in $M \times\R \times \rpos$. The identity
morphism of $G$ induces an element $\mathbf{1}_G$ of the left hand side
of~\eqref{eq:decompo_dblsh1}.  We let $\mathbf{1}^\mu_G$ be the corresponding section
of $\muhom^\dbl(G,G)$.  Since $\Lambda_0$ is split into two open and closed subsets
we have
  $$
  H^0(\Lambda_0; \muhom^\dbl(G,G)) \simeq
\bigoplus_{i = 1,2}  H^0(\Lambda_0^i; \muhom^\dbl(G,G))
  $$
  and we write $\mathbf{1}^\mu_G = e_1 + e_2$ according to this decomposition.  Since
  $\Lambda_0^1$ and $\Lambda_0^2$ are disjoint we have $e_1 e_2 = e_2 e_1 = 0$. Hence
  $e_1$ and $e_2$ are orthogonal idempotents.  By~\eqref{eq:decompo_dblsh1} again we
  deduce a decomposition $\mathbf{1}_G = f_1 + f_2$ where $f_1$ and $f_2$ are also
  orthogonal idempotents.

  \sui(ii) We can find $V'$ as in~\eqref{eq:decompo_dblsh1} and $f'_1$, $f'_2$ in
  $\Hom(G|_{V'}, G|_{V'})$ which represent $f_1$, $f_2$.  Up to shrinking $V'$ we can
  also assume that $f'_1$, $f'_2$ are orthogonal idempotents
  (using~\eqref{eq:decompo_dblsh1} again).  By~\cite[Prop.~3.2]{BN93} we deduce a
  corresponding decomposition $G|_{V'} \simeq G_1 \oplus G_2$ in $\Derb(\cor_{V'})$.
  
  \sui(iii) We extend $G_1, G_2$ arbitrarily to $V_\gammaof$.  It remains to check
  that $G_i \in \Der_{\Lambda, \shv}^\dbl(\cor_V)$ and
  $\SSi^\dbl(G_i) = \Lambda_0^i\cap T^*V'$.  We will prove that we have a local
  decomposition~\eqref{eq:defDdbl} of $G_i$ in a neighborhood $W$ of any given point
  $(x,t) \in V$. We first choose $W$ so that $W_\gammaof \subset V'$ and a
  decomposition~\eqref{eq:defDdbl} holds for $G|_{W_\gammaof}$ (and we use the
  corresponding notations $I$, $A_i$).    For $i\in I$ the set
  $\Lambda_i \cap T^* V^{A_i}$ may be non connected, but, by
  Lemma~\ref{lem:connex-LambdainterTV}, if we start with $W$ small enough, at most
  one component of $\Lambda_i \cap T^* V^{A_i}$ can meet $T^*W'$ for some smaller
  neighborhood $W'$.  Hence $\Lambda_i \cap T^* V^{A_i} \cap T^*W'$ is contained in
  either $\Lambda_0^1$ or $\Lambda_0^2$. We choose $W'$ so that this holds for all
  $i\in I$.

  We define $G'_1$ (resp. $G'_2$) to be the sum of the summands of $G|_{W'_\gammaof}$
  in~\eqref{eq:defDdbl} indexed by the $j\in I$ so that
  $\Lambda_j \cap T^* V^{A_j} \cap T^*W'$ is contained in $\Lambda_0^1$
  (resp. $\Lambda_0^2$).  Then $G'_i \in \Der_{\Lambda, \shv}^\dbl(\cor_V)$.  This
  gives another decomposition of $G|_{W'_\gammaof}$ and corresponding idempotents
  $f_1''$, $f_2''$. Since
  $\SSi(G'_i) \cap \Lambda = T^*W'_\gammaof \cap \Lambda_0^i$, the section of
  $\muhom^\dbl(G,G)|_{T^*W' \cap \Lambda}$ associated with $f''_i$
  by~\eqref{eq:decompo_dblsh1} must be $e_i|_{T^*W' \cap \Lambda}$. Up to restricting
  $W'$ once more, we deduce $f''_i = f'_i|_{W'_\gammaof}$ and then
  $G'_i \simeq G_i|_{W'\times \mo]0,\varepsilon[}$, which proves the result.
\end{proof}

\part{Quantization}
\label{part:quant}

The main result of this part is that, for any global object $\shf$ of the
Kashiwara-Schapira stack $\kss(\cor_{\Lambda})$, there exists
$F\in \Derb(\cor_{M\times\R})$ with $\dot\SSi(F) = \Lambda$ which represents
$\shf$, when $\Lambda$ is the conification of a compact exact Lagrangian
submanifold of $T^*M$ (see~\eqref{eq:conif_Lambdatilde} below).  This recovers a result
of Viterbo in~\cite{V19} who proves the existence of such a sheaf using Floer
theory (the proof of~\cite{V19} was sketched in~\cite{V11} in 2011).

\smallskip

We first consider a compact Legendrian submanifold of $J^1M$, or equivalently, a
closed conic Lagrangian submanifold $\Lambda$ of $T^*_{\tau >0}(M\times \R)$ such
that $\Lambda/\rspos$ is compact.  We apply the procedure sketched in the
introduction of Part~\ref{chap:conv_mic} and we prove that any object
$\shf \in \kss(\cor_{\Lambda})$, or $\shf \in \kss_{/[1]}(\cor_{\Lambda})$, is
represented by some $F\in \Derb(\cor_{M\times\R})$, or
$F\in \Der_{/[1]}(\cor_{M\times\R})$, such that
$\dot\SSi(F) = \Lambda \sqcup T_\varepsilon(\Lambda)$ for $\varepsilon>0$ small,
where $T_\varepsilon$ is the translation along the factor $\R$.  Then, assuming that
$\Lambda/\rspos$ has no Reeb chord we prove that there exists another representative
$F'$ such that $\dot\SSi(F') = \Lambda$.  We recall that ``having no Reeb chord''
means that $\rho_M \colon T^*_{\tau >0}(M\times\R) \to T^*M$,
$(x,t;\xi,\tau) \mapsto (x;\xi/\tau)$, induces an injection
$\Lambda/\rspos \hookrightarrow T^*M$.  The image
$\widetilde\Lambda = \rho_M(\Lambda)$ is then a compact exact Lagrangian submanifold
of $T^*M$ in the sense that $\alpha_M|_{\widetilde\Lambda}$ is exact.  The link
between $\Lambda$ and $\widetilde\Lambda$ is given by
\begin{equation}
  \label{eq:conif_Lambdatilde}
\Lambda = \{(x,t;\xi,\tau);\; \tau>0, \; (x;\xi/\tau) \in \widetilde\Lambda,
\; t = -f(x;\xi/\tau) \} ,
\end{equation}
where $f$ is a primitive of $\alpha_M|_{\widetilde\Lambda}$.

\medskip

Now we introduce some notations.  We first remark that, by Theorem~\ref{thm:GKS}, we
can as well move $\Lambda$ by any contact isotopy of $J^1M$ and assume from the
beginning that it satisfies some genericity hypotheses.    Hence we can assume that the map $\Lambda/\rspos \to M$ has finite
fibers and, by Lemma~\ref{lem:exist_adapted_family}, we can assume that there exists
an adapted family $\shv = \{V_a$; $a\in A\}$ in the sense of
Definition~\ref{def:horiz_subset}.  We let $\Lambda_b$, $b\in B_a$, be the family of
components of $\Lambda \cap T^*V_a$.  We set $B = \bigsqcup_{a\in A} B_a$.

We have introduced the subcategory $\Der_{\Lambda, \shv}^\dbl(\cor_{M\times\R})$ of
$\Der(\cor_{M\times\R\times\rspos})$ in Definition~\ref{def:cat_dbl_sheaves}.  For
$G \in \Der_{\Lambda, \shv}^\dbl(\cor_{M\times\R})$ we have defined $\SSi^\dbl(G)$
in Lemma~\ref{lem:SS_dbl}; it is an open subset of $\Lambda$ of the form
$\bigcup_{b\in B'} \Lambda_b$ for some $B'\subset B$.  For an open subset $\Lambda_0$
of $\Lambda$ we also have a functor (see~\eqref{eq:def_kssfuncLambda0dbl})
\begin{equation*}
  \kssfunc_{\Lambda_0}^\dbl \cl \{G \in \Der_{\Lambda, \shv}^\dbl(\cor_V); \;
  \Lambda_0 \subset \SSi^\dbl(G)\}  \to \kss(\cor_{\Lambda_0}) .
\end{equation*}

The same definitions make sense for the orbit category.  We define the subcategory
$\Der_{/[1],\Lambda, \shv}^\dbl(\cor_{M\times\R})$ of
$\Der_{/[1]}(\cor_{M\times\R\times\rspos})$ as in
Definition~\ref{def:cat_dbl_sheaves}, replacing everywhere $\Der(\cor_X)$ by
$\Der_{/[1]}(\cor_X)$.  We can also define $\SSi^\dbl(G)$ for
$G \in \Der_{/[1],\Lambda, \shv}^\dbl(\cor_{M\times\R})$ and a functor
\begin{equation}
  \label{eq:def_kssfuncorbdbl}
  \kssfunc_{/[1],\Lambda_0}^\dbl \cl
 \{G \in \Der_{/[1],\Lambda, \shv}^\dbl(\cor_V); \; \Lambda_0 \subset \SSi^\dbl(G)\} \to
  \kss_{/[1]}(\cor_{\Lambda_0}) .
\end{equation}

In Theorems~\ref{thm:quant_legendrian_dbl} and~\ref{thm:quant_legendrian_orb_dbl} we
see that the functors $\kssfunc_{\Lambda_0}^\dbl$ and
$\kssfunc_{/[1],\Lambda_0}^\dbl$ are essentially surjective.  For a given
$G \in \Der_{\Lambda, \shv}^\dbl(\cor_{M\times\R})$ the microsupport of
$G|_{M\times\R \times \{\varepsilon\}}$, for $\varepsilon>0$ small enough, is made of
two copies of $\Lambda$.  In Corollary~\ref{cor:exist_quant} we see that, if we have
no Reeb chords, we can translate one copy of $\Lambda$ vertically using a Hamiltonian
isotopy and obtain an object of $\Derb_{[\Lambda]}(\cor_{M\times\R})$ from a given
one in $\Der_{\Lambda, \shv}^\dbl(\cor_{M\times\R})$.  In~\S\ref{sec:restr_infty} we
see a relation between objects of $\Derb_{[\Lambda]}(\cor_{M\times\R})$, their
restrictions to $M\times\{t_0\}$, $t_0\gg0$, and their microlocalizations; in
particular we see that $F \mapsto F|_{M\times\{t_0\}}$ induces a fully faithful
functor from sheaves on $M\times\R$ with microsupport $\Lambda$ and vanishing on
$M\times\{t\}$, $t\ll0$, and locally constant sheaves on $M$.

\section{Quantization for the doubled Legendrian}
\label{sec:quant-dbl}

Let $\Lambda$ be a closed conic Lagrangian submanifold of $T^*_{\tau >0}(M\times \R)$
such that $\Lambda/\rspos$ is compact.  By Lemma~\ref{lem:exist_adapted_family}, we
can assume that there exists an adapted family $\shv = \{V_a$; $a\in A\}$ for
$\Lambda$ which is stable by intersection. We can also assume that
$\Lambda \cap T^*V_a$ has finitely many connected components, for each $a\in A$.  We
let $\Lambda_b$, $b\in B_a$, be the family of components of $\Lambda \cap T^*V_a$.
We set $B = \bigsqcup_{a\in A} B_a$ and we let $\sigma \colon B\to A$ be the obvious
map.  Hence $\Lambda_b$ is a component of $\Lambda \cap T^*V_{\sigma(b)}$.
  When we use Lemma~\ref{lem:exist_adapted_family}
we can also assume that the family $\{\Lambda_b\}_{b\in B}$ refines any given family.
Since we know that simple sheaves along $\Lambda$ locally exist
(Lemma~\ref{lem:simple_local_base}), we can assume that, for each $b\in B$, there
exist a neighborhood $V'_b$ of $\ol{V_{\sigma(b)}}$, a contractible component
$\Lambda'_b$ of $\Lambda \cap T^*V'_b$ and $F_b \in \Derb(\cor_{V'_b})$ such that
$\Lambda_b$ is a component of $\Lambda'_b \cap T^*V_{\sigma(b)}$,
$\dot\SSi(F_b) = \Lambda'_b$ and $F_b$ is simple.

\begin{theorem}\label{thm:quant_legendrian_dbl}
  In the above setting let $B'$ be a subset of $B$ and set
  $\Lambda_0 = \bigcup_{b\in B'} \Lambda_b$,
  $\Lambda'_0 = \bigcup_{b\in B'} \Lambda'_b$.  Then, for any pure object
  $\shf \in \kss(\cor_{\Lambda'_0})$ there exists
  $F \in \Der_{\Lambda, \shv}^\dbl(\cor_{M\times\R})$ such that
  $\SSi^\dbl(F) = \Lambda_0$ and
  $\kssfunc_{\Lambda_0}^\dbl(F) \simeq \shf|_{\Lambda_0}$, where
  $\kssfunc_{\Lambda_0}^\dbl$ is defined in~\eqref{eq:def_kssfuncLambda0dbl}.
\end{theorem}
\newcommand{\UUU}{O} \newcommand{\VVV}{{O'}}

\begin{proof}
  (i) We proceed by induction on $|B'|$.  Let $b\in B$.  Since $\Lambda'_b$ is
  contractible, the objects of $\kss(\cor_{\Lambda'_b})$ are of the form
  $\kssfunc_{\Lambda'_b}(E_{V'_b} \ltens F_b)$ for some $E \in \Derb(\cor)$.  So we
  write $\shf|_{\Lambda'_b} = \kssfunc_{\Lambda'_b}(E_{V'_b} \ltens F_b)$ and we set
  $$
  G = \rsect_{(V_{\sigma(b)} \times \rspos) \cap (V'_b)_\gammaof}
  (\Psi_{V'_b}(E_{V'_b} \ltens F_b)),
  $$
  extended by zero outside $(V'_b)_\gammaof$ (the formula defines $G$ on
  $(V'_b)_\gammaof$ with a support contained in
  $(V'_b)_\gammaof \cap (\ol{V_{\sigma(b)}} \times \rspos)$).  Let us prove that $G$
  belongs to $\Der_{\Lambda, \shv}^\dbl(\cor_{M\times\R})$.  We check
  Definition~\ref{def:cat_dbl_sheaves} around a point $(x,t) \in M\times \R$:

  If $(x,t) \not\in \ol{V_{\sigma(b)}}$, we choose $W$ such that
  $W \cap \ol{V_{\sigma(b)}} = \emptyset$ and we have $G|_{W_\gammaof} =0$
  so~\eqref{eq:defDdbl} is trivial.

  If $(x,t) \in V'_b$, we choose $W = V'_b$ and the defining formula for $G$
  satisfies~\eqref{eq:defDdbl}. We remark that the family $I$ in~\eqref{eq:defDdbl}
  consists of one element, say $I=\{i_0\}$, and $A_{i_0} = \{\sigma(b)\}$.

  \smallskip
  We have $\SSi^\dbl(G) = \Lambda'_b \cap T^*V_{\sigma(b)}$ and $\Lambda_b$ is a
  connected component of $\SSi^\dbl(G)$.  By Corollary~\ref{cor:decompo_dblsh} there
  exists $G_b, G' \in \Der_{\Lambda, \shv}^\dbl(\cor_{M\times\R})$ such that
  $\SSi^\dbl(G_b) = \Lambda_b$, $\SSi^\dbl(G') = \SSi^\dbl(G) \setminus \Lambda_b$
  and $G|_\UUU \simeq G_b|_\UUU \oplus G'|_\UUU$ for some open subset $\UUU$ of
  $M\times\R\times\rspos$ satisfying
  \begin{equation}
    \label{eq:condvalbordV}
  \text{$(M\times\R\times \{0\})\cup \UUU$ is open in $M\times\R\times\rpos$.}  
  \end{equation}
  By construction we have
  $\kssfunc_{\Lambda_b}^\dbl(G_b) \simeq \shf|_{\Lambda_{\sigma(b)}}$, which proves
  the case $B' = \{b\}$.

  \sui(ii) Now we write $B' = B'' \cup \{b\}$ and assume that the result holds for
  $B''$. We set $\Lambda_1 = \bigcup_{c\in B''} \Lambda_c$ and
  $\Lambda_{1b} = \Lambda_1 \cap \Lambda_b$.  By the induction hypothesis and by~(i)
  there exist $G_1, G_b \in \Der_{\Lambda, \shv}^\dbl(\cor_{M\times\R})$ with
  $\SSi^\dbl(G_1) = \Lambda_1$, $\SSi^\dbl(G_b) = \Lambda_b$ and isomorphisms
  $\varphi_1 \colon \kssfunc_{\Lambda_1}^\dbl(G_1) \isoto \shf|_{\Lambda_1}$,
  $\varphi_b \colon \kssfunc_{\Lambda_b}^\dbl(G_b) \isoto \shf|_{\Lambda_b}$.  We
  have assumed that the adapted family $\{V_a\}_{a\in A}$ is stable by
  intersection. Hence
  $$
  G'_1 = \rsect_{(V_{\sigma(b)} \times \rspos)} ( G_1)
$$
belongs to $\Der_{\Lambda, \shv}^\dbl(\cor_{M\times\R})$ (this can be checked
directly on the local decomposition\eqref{eq:defDdbl}).  We have
$\SSi^\dbl(G'_1) = \Lambda_1 \cap T^*V_{\sigma(b)}$ and $\Lambda_{1b}$ is open and
closed in $\SSi^\dbl(G'_1)$.   As in~(i) we use Corollary~\ref{cor:decompo_dblsh} to find
$G_{1b}\in \Der_{\Lambda, \shv}^\dbl(\cor_{M\times\R})$ such that
$\SSi^\dbl(G_{1b}) = \Lambda_{1b}$ and $G_{1b}|_\VVV$ is the summand of $G'_1|_\VVV$
corresponding to $\Lambda_{1b}$, for some  open subset $\VVV$
satisfying~\eqref{eq:condvalbordV}.

We have a natural morphism $g_1 \colon G_1|_\VVV \to G_{1b}|_\VVV$ given by the composition
of the natural morphism $G_1 \to G'_1$ and the projection to the summand
$G'_1|_\VVV \to G_{1b}|_\VVV$.  By construction we have an isomorphism
$\varphi_{1b} \colon \kssfunc_{\Lambda_{1b}}^\dbl(G_{1b}) \isoto
\shf|_{\Lambda_{1b}}$ compatible with $\varphi_1$ in the sense that
$\varphi_1|_{\Lambda_{1b}} = \varphi_{1b} \circ \kssfunc_{\Lambda_{1b}}^\dbl(g_1)$
(we use
$\kssfunc_{\Lambda_{1b}}^\dbl(G_1) =
\kssfunc_{\Lambda_1}^\dbl(G_1)|_{\Lambda_{1b}}$).

We define
$\tilde g_b = \varphi_{1b}^{-1} \circ (\varphi_b|_{\Lambda_{1b}}) \colon
\kssfunc_{\Lambda_{1b}}^\dbl(G_b) \isoto \kssfunc_{\Lambda_{1b}}^\dbl(G_{1b})$.
Since $\shf$ is pure, we can apply Corollary~\ref{cor:kssfuncPsidbl} and, up to
shrinking $\VVV$, we obtain $g_b\colon G_b|_\VVV \to G_{1b}|_\VVV$ which represents
$\tilde g_b$. Finally we define $G \in \Derb(\cor_{M\times\R\times\rspos})$ by the
distinguished triangle on $\VVV$
\begin{equation}
  \label{eq:quant_legendrian1}
  G \to G_1|_\VVV \oplus G_b|_\VVV  \to[(g_1, -g_b)] G_{1b}|_\VVV \to[+1] 
\end{equation}
and by extending $G$ by zero outside $\VVV$. We prove in~(iii) that $G$ represents
$\shf$ over $\Lambda_0 = \Lambda_1 \cup \Lambda_b$.

\sui(iii) We first have to check that $G$ belongs to
$\Der_{\Lambda, \shv}^\dbl(\cor_{M\times\R})$, which means that it can be decomposed
as in~\eqref{eq:defDdbl}, around any given point $(x,t) \in M\times \R$.    We use
Lemma~\ref{lem:descrptionfaiscdbl} to describe $G_*$, for $* = 1$, $*=b$ or $*=1b$,
over a small enough neighborhood $W$ of $(x,t)$.  We have a partition
$\Lambda \cap T^*W = \bigsqcup_{i\in I} \Xi_i$ into connected components and a simple
sheaf $F_i \in \Der_{[\Xi_i]}(\cor_W)$ for each $i\in I$.  We choose $p_i \in \Xi_i$
and set $E_i = (\hom(\kssfunc_{\Xi_i}(F_i), \shf))_{p_i}$ (this does not depend on
$p_i$).  Over $\SSi^\dbl(G_*)$ we have
  \begin{align*}
    \muhom^\dbl(\Psi_W(F_i), G_*)
    &\simeq \hom(\kssfunc_{\Xi_i}^\dbl(\Psi_W(F_i)),  \kssfunc_{\Xi_i}^\dbl(G_*)) \\
    &\simeq \hom(\kssfunc_{\Xi_i}^\dbl(\Psi_W(F_i)), \shf) \\
    &\simeq \hom(\kssfunc_{\Xi_i}(F_i), \shf) . 
  \end{align*}
  Hence, setting for short $H_i = \Psi_W(F_i \ltens (E_i)_W)$ and maybe shrinking
  $W$, we have by Lemma~\ref{lem:descrptionfaiscdbl}~(i)
  \begin{equation*}
 G_*|_{W_\gammaof}
 \simeq \bigoplus_{i\in I} \rsect_{W^*_i \times \rspos}  (H_i),
 \qquad * = 1,\, b,\, 1b ,
  \end{equation*}
  where
  $W^*_i = W \cap \dot\pi_{M\times\R}(\Xi_i \cap \SSi^\dbl(G_*)) = W \cap
  \dot\pi_{M\times\R}(\Xi_i \cap \Lambda_*)$.

  By Lemma~\ref{lem:descrptionfaiscdbl}~(ii) the morphisms $g_1$, $g_b$
  in~\eqref{eq:quant_legendrian1} are obtained by composing morphisms
  $\alpha_i^* \colon \rsect_{W^*_i \times \rspos} (-) \to \rsect_{W^{1b}_i \times
    \rspos} (-)$, $*=1,b$, induced by the open inclusions $W^{1b}_i \subset W^*_i$,
  and morphisms $\beta_i^* \colon H_i \to H_i$, induced by
  $\kssfunc_{\Xi_i}^\dbl(g_*)$.  For the $i$'s such that
  $\Xi_i \cap \Lambda_{1b} \not=\emptyset$, $\kssfunc_{\Xi_i}^\dbl(g_*)$ is an
  isomorphism, and so is $\beta_i^*$ (for the other $i$'s we have
  $W_i^{1b} = \emptyset$).  Setting $\beta_i^* = \id$ when $W_i^{1b} = \emptyset$ we
  obtain the commutative square
  \begin{equation*}
    \def\flecheun{(g_1,-g_b)}
  \begin{tikzcd}
    \bigoplus_{i\in I} (\rsect_{W^1_i \times \rspos}  (H_i) \oplus 
    \rsect_{W^b_i \times \rspos}  (H_i) ) \ar[r, "\flecheun"]  \ar[d, "\beta", "\wr"']
   & \bigoplus_{i\in I} \rsect_{W^{1b}_i \times \rspos}  (H_i) \ar[d, equal]  \\
      \bigoplus_{i\in I} (\rsect_{W^1_i \times \rspos}  (H_i) \oplus 
    \rsect_{W^b_i \times \rspos}  (H_i) ) \ar[r, "\alpha"] 
   & \bigoplus_{i\in I} \rsect_{W^{1b}_i \times \rspos}  (H_i) ,
\end{tikzcd}
\end{equation*}
where $\alpha = \bigoplus_{i\in I} (\alpha^1_i , -\alpha^b_i)$ and
$\beta = \bigoplus_{i\in I} \bpmat \beta^1_i & 0 \\ 0& \beta^b_i\epmat$.  Since
$\beta$ is an isomorphism, we obtain that~\eqref{eq:quant_legendrian1} is isomorphic
to the sum of the Mayer-Vietoris distinguished triangles
\begin{align*}
  \rsect_{(W^1_i \cup W^b_i)\times\rspos}(H_i)
  \to \rsect_{W^1_i \times\rspos}(&H_i) \oplus \rsect_{W^b_i \times\rspos}(H_i) \\
 & \to \rsect_{W^{1b}_i \times\rspos}(H_i) \to[+1].
\end{align*}
It follows that
$G|_{W_\gammaof} \simeq \bigoplus_{i\in I} \rsect_{(W^1_i \cup
  W^b_i)\times\rspos}(H_i)$ which is a decomposition like~\eqref{eq:defDdbl}.  This
proves that $G \in \Der_{\Lambda, \shv}^\dbl(\cor_{M\times\R})$ and also that
$\SSi^\dbl(G) = \Lambda_0$.

By construction we also have $\kssfunc_{\Lambda_0}^\dbl(G) \simeq \shf|_{\Lambda_0}$
since $g_1$, $g_b$ are defined to represent the morphisms gluing $\shf|_{\Lambda_1}$
and $\shf|_{\Lambda_b}$ into $\shf|_{\Lambda_0}$.
\end{proof}

\section{The triangulated orbit category case}
\label{sec:quant_orbcat}

We checked in Sections~\ref{sec:def_orb_cat} and~\ref{sec:micsup_orb_cat} that the
results we used in the category $\Derb(\cor_M)$ have analogs in the category
$\Orb(\cor_M)$.  In particular we have already defined a Kashiwara-Schapira stack
$\kss_{/[1]}(\cor_{\Lambda})$ in this situation.  However the proof of
Theorem~\ref{thm:quant_legendrian_dbl} does not work because we used the fact that
some $\muhom$ sheaf was concentrated in degree $0$ (we used
Corollary~\ref{cor:kssfuncPsidbl}), which makes no sense in the orbit category.  We
prove the result for the triangulated orbit category by gluing two sheaves obtained
on two open subsets by Theorem~\ref{thm:quant_legendrian_dbl}.  For this we first
remark in Lemma~\ref{lem:decomp_deuxouverts} below that we can decompose $\Lambda$ in
two open subsets with vanishing Maslov classes.

\begin{lemma}\label{lem:decomp_deuxouverts}
  Let $X$ be a manifold and let $c\in H^1(X;\Z_X)$.  Then there exists a map
  $f\colon X \to S^1$ of class $\Cinf$ such that $c=f^*(\delta)$, where
  $\delta \in H^1(S^1;\Z_{S^1})$ is the canonical class.  In particular, for any
  covering $S_1 = I_1\cup I_2$ by two open intervals, the restrictions
  $c|_{f^{-1}(I_i)} \in H^1(f^{-1}(I_i);\Z_{f^{-1}(I_i)})$ vanish, for $i=1,2$.
\end{lemma}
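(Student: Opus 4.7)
The plan is to use the standard identification $H^1(X;\Z) \cong [X,S^1]$, since $S^1$ is a $K(\Z,1)$, and then handle the smoothness and the final assertion by elementary means.

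First I would recall (or construct by hand) the bijection between $H^1(X;\Z)$ and homotopy classes of continuous maps $X \to S^1$: given $c$, view it as a homomorphism $\pi_1(X) \to \Z$ (via Hurewicz), build the corresponding $\Z$-covering $p\colon \widetilde X \to X$, choose a $\Z$-equivariant continuous map $\widetilde f \colon \widetilde X \to \R$ (possible since $\R$ is contractible and we can construct $\widetilde f$ by gluing using partitions of unity once we have a $\Z$-equivariant section of the constant sheaf of locally constant functions with the right jump), and let $f\colon X\to \R/\Z = S^1$ be the induced map. By construction $f^*\delta = c$.

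Next I would pass from a continuous $f$ to a $\Cinf$ one. Since $X$ is a manifold, any continuous map $X\to S^1$ can be approximated uniformly and then homotoped to a smooth map (standard smoothing via convolution in local charts, together with the fact that two sufficiently close maps to $S^1$ are homotopic). The smoothed map still satisfies $f^*\delta = c$ since $f^*$ on cohomology depends only on the homotopy class.

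For the last assertion, by naturality of $f^*$,
\begin{equation*}
c|_{f^{-1}(I_i)} = (f|_{f^{-1}(I_i)})^*\bigl(\delta|_{I_i}\bigr),
\end{equation*}
so it suffices to observe that $\delta|_{I_i} = 0$ in $H^1(I_i;\Z_{I_i})$. Each $I_i$ is an open interval of $S^1$, hence diffeomorphic to $\R$ and in particular contractible, so $H^1(I_i;\Z_{I_i}) = 0$.

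The main (and really only) obstacle is the construction of the classifying map $f$; everything else is routine. I do not expect this to be a genuine difficulty since $S^1 = K(\Z,1)$ is the textbook example, but if one wants to avoid invoking general Eilenberg--MacLane theory one can produce $f$ concretely as above from the $\Z$-cover associated to $c$ (or, when $X$ has enough smooth structure, by integrating a closed $1$-form representing $c$ in $H^1(X;\R)$ whose periods lie in $\Z$, which exists since $c$ is an integral class).
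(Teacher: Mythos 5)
Your proposal is correct, and your closing remark — representing $c$ by a closed $1$-form with integer periods and integrating it on the universal cover — is precisely what the paper does: the primitive $g$ of $r^*\alpha$ on the universal cover $X'$ changes by $\langle c,\gamma\rangle\in\Z$ between points in the same fiber of $r$, hence descends to a smooth $f\colon X\to\R/\Z=S^1$ with $f^*\delta=c$. Your primary construction (the $\Z$-cover associated to $c$ together with a $\Z$-equivariant map $\widetilde X\to\R$ built by partitions of unity, then smoothing) is the same idea stated more topologically; the only thing the de Rham route buys is that the map is smooth from the start, avoiding the separate approximation step. The final observation that $\delta|_{I_i}=0$ on contractible intervals is exactly right and matches the paper's (implicit) reasoning.
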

\begin{proof}
  Since $H^1(X;\Z_X) \to H^1(X;\R_X)$ is injective, it is enough to prove the
  result for the image of $c$ in $H^1(X;\R_X)$, which we represent by a $1$-form
  $\alpha$.  Let $r\cl X' \to X$ be the universal covering of $X$ and let
  $g\cl X' \to \R$ be a primitive of $r^*(\alpha)$.  Then, for any
  $x_1,x_2 \in X'$ such that $r(x_1)=r(x_2)$ we have
  $g(x_1)-g(x_2) = \langle c,\gamma \rangle$, where $\gamma$ is the loop at
  $g(x_1)$ determined by $x_1,x_2$. Hence $g(x_1)-g(x_2)$ is an integer and $g$
  descends to a map $f\cl X \to S^1$ which satisfies the conclusion of the
  lemma.
\end{proof}

For the next result the ring is $\cor = \Z/2\Z$.
\begin{theorem}\label{thm:quant_legendrian_orb_dbl}
  For any global object $\shf$ of $\kss_{/[1]}(\cor_{\Lambda})$ there exists an
  object $F \in \Der_{/[1],\Lambda,\shv}^\dbl(\cor_{M\times\R})$, for some finite
  family $\shv$ of open subsets of $M\times\R$ which is adapted to $\Lambda$, such
  that $\kssfunc_{/[1],\Lambda}^\dbl(F) \simeq \shf$.
\end{theorem}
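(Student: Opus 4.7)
The plan is to decompose $\Lambda$ into two open pieces on which the Maslov obstruction vanishes, apply Theorem~\ref{thm:quant_legendrian_dbl} on each piece to obtain a quantization in the usual category, push these down to the orbit category, and glue. The whole point of working in the triangulated orbit category is precisely that the shift ambiguity that would obstruct the global lift is killed by the isomorphism $F \simeq F[1]$ in $\Der_{/[1]}$.

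\emph{Reducing to two pieces with vanishing obstruction.} Since $\cor = \Z/2\Z$, we have $\cor^\times = \{1\}$, so the class $\mu^{sh}_2(\Lambda) \in H^2(\Lambda;\cor^\times)$ is automatically zero, and only $c = \mu^{sh}_1(\Lambda) \in H^1(\Lambda;\Z)$ can obstruct the existence of a global simple object of $\kss^s(\cor_\Lambda)$. Apply Lemma~\ref{lem:decomp_deuxouverts} to $X=\Lambda/\rspos$ and $c$ to produce an open cover of $\Lambda/\rspos$ by two subsets $\Lambda'_1, \Lambda'_2$ on which $c$ restricts to zero. Take $\Lambda_i \subset \Lambda$ to be the corresponding conic open subsets. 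On each $\Lambda_i$ both obstruction classes vanish, so $\kss^s(\cor_{\Lambda_i})$ admits a global simple pure object $\shf_i^s$.

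\emph{Local quantization.} By Proposition~\ref{prop:KSstackorb} there is a unique global simple object $\shf_0 \in \kss^{\orb,s}(\cor_\Lambda)$, and the given $\shf$ corresponds via the equivalence $\ol{\muhom^\varepsilon}(\shf_0,-) \colon \kss^\orb(\cor_\Lambda) \isoto \loc(\cor_\Lambda)$ to a local system $L$. By Lemma~\ref{lem:unique-fais-simple-orb}, $\iota(\shf_i^s) \simeq \shf_0|_{\Lambda_i}$, so if we set
\begin{equation*}
  \shf_i \;:=\; \shf_i^s \epstens L|_{\Lambda_i} \;\in\; \kss(\cor_{\Lambda_i}),
\end{equation*}
(well-defined using $\kss(\cor_{\Lambda_i}) \simeq \Dloc(\cor_{\Lambda_i})$ from Proposition~\ref{prop:KSstack=Dloc}), then $\iota(\shf_i) \simeq \shf|_{\Lambda_i}$ in $\kss^\orb(\cor_{\Lambda_i})$. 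Applying Theorem~\ref{thm:quant_legendrian_dbl} to each $\shf_i$ yields $F_i \in \Der^{dbl}_{\Lambda_i,\shv}(\cor_{M\times\R})$ with $\kssfunc^{dbl}_{\Lambda_i}(F_i) \simeq \shf_i$; then $\iota(F_i) \in \Der^{dbl}_{/[1],\Lambda_i,\shv}(\cor_{M\times\R})$ represents $\shf|_{\Lambda_i}$.

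\emph{Gluing in the orbit category.} On the overlap $\Lambda_{12} := \Lambda_1 \cap \Lambda_2$, both $\iota(F_1)|_{\Lambda_{12}}$ and $\iota(F_2)|_{\Lambda_{12}}$ represent $\shf|_{\Lambda_{12}}$, giving a canonical isomorphism $u$ between them in $\kss^\orb(\cor_{\Lambda_{12}})$. I then lift $u$ to an actual morphism $v \colon \iota(F_1) \to \iota(F_2)$ defined on a neighborhood of $\Lambda_{12}$ inside $\Der_{/[1]}$, and glue by a Mayer-Vietoris triangle of the form
\begin{equation*}
  F \to \iota(F_1) \oplus \iota(F_2) \to[(r,-v)] \rsect_W(\iota(F_2)) \to[+1],
\end{equation*}
modeled on~\eqref{eq:quant_legendrian1} in the proof of Theorem~\ref{thm:quant_legendrian_dbl}. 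The resulting $F$ lies in $\Der^{dbl}_{/[1],\Lambda,\shv}(\cor_{M\times\R})$ and, by the sheaf property of $\kss^\orb$, satisfies $\kssfunc^{dbl}_{\Lambda}(F) \simeq \shf$.

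The main obstacle is the lifting step. The orbit-category isomorphism $u$ lives at the level of the Kashiwara-Schapira stack and must be realized as an honest morphism of sheaves on a neighborhood. This requires an orbit-category analog of Proposition~\ref{prop:muhom=hompsiEtendu}, obtained by combining that proposition (applied to suitable representatives in $\Derb(\Cor_{M\times\R})$) with the description of orbit-category morphisms as a filtered colimit over shifts (Proposition~\ref{prop:morph-orb}). The crucial point — and the reason why the orbit category is indispensable here — is that $\iota(F_1)$ and $\iota(F_2)$ may differ by a non-trivial shift over components of $\Lambda_{12}$, but in $\Der_{/[1]}$ this shift is trivialized by $s_M$ from~\eqref{eq:def_shift_morph0}; since $\iota(F_1)$ and $\iota(F_2)$ have locally bounded representatives and $\Lambda_{12}/\rspos$ is relatively compact, Proposition~\ref{prop:morph-orb} guarantees that a single uniform shift suffices, producing the required morphism $v$ on a suitable neighborhood of $V^{A_{12}} \times \{0\}$.
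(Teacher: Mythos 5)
Your decomposition, local quantization, and gluing strategy match the paper's skeleton (Lemma~\ref{lem:decomp_deuxouverts} to produce two pieces with vanishing Maslov class, Theorem~\ref{thm:quant_legendrian_dbl} on each piece, then a Mayer--Vietoris triangle in the orbit category). The genuine divergence, and the gap, is in the lifting step.

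The paper deliberately avoids ever needing an orbit-category analog of Proposition~\ref{prop:muhom=hompsiEtendu}. It instead makes explicit use of the splitting $\Lambda_+ \cap \Lambda_- = \Lambda_a \sqcup \Lambda_b$ of the overlap into two pieces, quantizes all \emph{four} sets $\Lambda_\pm, \Lambda_a, \Lambda_b$ in the ordinary category $\Derb(\cor_{M\times\R})$, and produces honest sheaf morphisms $\Psi^*_+\colon F_*\to F_+$ and $\Psi^*_-\colon F_*\to F_-[d_*]$ (for $*=a,b$, and $d_a,d_b$ possibly distinct) by applying the ordinary Proposition~\ref{prop:muhom=hompsiEtendu} after a purity argument (cf.\ step~(v) of the proof of Theorem~\ref{thm:quant_legendrian_dbl}) to upgrade a section of $H^0\muhom^{dbl}$ to a section of $\muhom^{dbl}$. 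The orbit category enters only in the final triangle
$F_a \oplus F_b \to F_+ \oplus F_- \to F \to[+1]$, where its sole role is to furnish the identifications $F_-\simeq F_-[d_a]\simeq F_-[d_b]$ when $d_a\neq d_b$.

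Your proposal pushes to the orbit category first and then tries to realize the $\kss^\orb$-isomorphism $u$ over $\Lambda_{12}$ as a morphism $v$ in $\Der_{/[1]}$. You correctly name the missing tool — an orbit-category analog of Proposition~\ref{prop:muhom=hompsiEtendu} — but the sketch you give (combining that proposition over $\Cor$ with Proposition~\ref{prop:morph-orb}) does not address the step that is actually delicate. Proposition~\ref{prop:morph-orb} controls the colimit over shifts representing a given orbit-morphism, but it does not tell you how to pass from a section of $\hom_{\kss^\orb}$ (which, via Proposition~\ref{prop:KSstackorb} and Lemma~\ref{lem:equiv_loc-Oloc}, lives in $h^0_\Lambda(\muhom^\varepsilon)$) to an actual element of $\Hom_{\Orb}$ near the boundary. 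In the ordinary category this passage is precisely the purity argument, and it has no obvious orbit-category counterpart, because concentration in a single degree is meaningless after quotienting by the shift. That is exactly the difficulty the paper's four-piece construction is engineered to sidestep, and it is the step your proposal leaves undone.
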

\begin{proof}
  (i) By Proposition~\ref{prop:KSstackorb} $\kss_{/[1]}(\cor_{\Lambda})$ has a unique
  simple object, $\shf_0$, and $\shf$ is of the type
  $\shf \simeq \shf_0 \epstens_{\cor_\Lambda} L$ for some $L \in \loc(\cor_\Lambda)$.

  We let $\mu_1(\Lambda) \in H^1(\Lambda;\Z_\Lambda)$ be the sheaf obstruction class
  of $\Lambda$ (which coincides with the Maslov class).  We apply
  Lemma~\ref{lem:decomp_deuxouverts} to obtain $f\colon \Lambda \to S^1$ such that
  $\mu_1(\Lambda)$ is the pull-back of the fundamental class of $S^1$.  We choose a
  covering $S_1 = I_+\cup I_-$ by two open intervals and set
  $\Lambda_\pm = f^{-1}(I_\pm)$.  Then $\mu_1(\Lambda)|_{\Lambda_\pm} = 0$ and the
  categories $\kss(\cor_{\Lambda_\pm})$ have simple objects, say
  $\shf_{0,\pm}$. Their images in $\kss_{/[1]}(\cor_{\Lambda_\pm})$ are
  $\shf_0|_{\Lambda_\pm}$ because $\kss_{/[1]}(\cor_{\Lambda_\pm})$ has a unique
  simple object.  The intersection $I_+ \cap I_-$ is the union of two intervals, say
  $I_a$, $I_b$.  We set $\Lambda_a = f^{-1}(I_a)$, $\Lambda_b = f^{-1}(I_b)$ and
  $\shf_{0,a} = \shf_{0,+}|_{\Lambda_a}$, $\shf_{0,b} = \shf_{0,+}|_{\Lambda_b}$.  We
  remark that simple objects in $\kss(\cor_{\Xi})$, for $\Xi$ open in $\Lambda$, with
  coefficients $\cor = \Z/2\Z$, are unique up to shift and up to a unique
  isomorphism.  Hence we have the following canonical isomorphisms
  \begin{alignat*}{2}
    \varphi_+^a = \id &\colon \shf_{0,a} \isoto \shf_{0,+}|_{\Lambda_a},
    &\quad
    \varphi_+^b = \id &\colon \shf_{0,b} \isoto \shf_{0,+}|_{\Lambda_b} , \\
    \varphi_-^a  &\colon \shf_{0,a} \isoto \shf_{0,-}|_{\Lambda_a}[d_a],
    &\quad
    \varphi_-^b  &\colon \shf_{0,b} \isoto \shf_{0,-}|_{\Lambda_b}[d_b] ,
  \end{alignat*}
  where $d_a,d_b \in \Z$ are locally constant functions on $\Lambda_a$, $\Lambda_b$.
  We set $\shf_{\pm} = \shf_{0,\pm} \otimes L$, $\shf_a = \shf_{0,a} \otimes L$,
  $\shf_b = \shf_{0,b} \otimes L$ and $\Phi_\pm^* = \varphi_\pm^* \otimes \id_L$, for
  $*=a,b$.

  \sui(ii) Up to shrinking $\Lambda_0$ and $\Lambda_1$ we can find a finite family
  $\shv = \{V_a$; $a\in A\}$ of open subsets of $M\times\R$ which is adapted to
  $\Lambda$ such that $\Lambda_+$ and $\Lambda_-$ are of the form $\Lambda^B$ for
  some $B \subset A$ (see Lemma~\ref{lem:exist_adapted_family}).  We can also assume
  that the adapted family is stable by intersection.  Hence $\Lambda_a$ and
  $\Lambda_b$ are also of the form $\Lambda^B$.

  By Theorem~\ref{thm:quant_legendrian_dbl} there exist
  $F_\bullet \in \Der_{\Lambda,\shv}^\dbl(\cor_{M\times\R})$, for
  $\bullet = +,-,a,b$, such that $\SSi^\dbl(F_\bullet) = \Lambda_\bullet$ and
  $\kssfunc_{/[1],\Lambda_\bullet}^\dbl(F_\bullet) \simeq \shf_\bullet$. Moreover,
  by Proposition~\ref{prop:muhom=hompsiEtendu} we have morphisms, for $*=a,b$,
  $\Psi_+^* \colon F_*|_V \to F_+|_V$ and $\Psi_-^* \colon F_*|_V \to F_-|_V[d_*]$
  representing $\Phi_\pm^*$, where $V$ is some open subset of $M\times\R\times\rspos$
  satisfying~\eqref{eq:condvalbordV}.  In $\Orb(\cor_V)$ we have
  $F_- \simeq F_-[d_*]$ and we can define $F \in \Orb(\cor_V)$ by the distinguished
  triangle
$$
F_a \oplus F_b \to[\left( \begin{smallmatrix}
  \Psi_+^a & \Psi_+^b \\   \Psi_-^a & \Psi_-^b
\end{smallmatrix} \right)] F_+ \oplus F_- \to F \to[+1] .
$$
We extend $F$ by zero outside $V$. We can then check as in part~(iii) of the proof of
Theorem~\ref{thm:quant_legendrian_dbl} that
$F \in \Der_{/[1],\Lambda,\shv}^\dbl(\cor_{M\times\R})$ and that
$\kssfunc_{/[1],\Lambda}^\dbl(F) \simeq \shf$.
\end{proof}

\begin{remark}\label{rem:exist_quantorb}
  By Proposition~\ref{prop:KSstackorb} $\kss_{/[1]}(\cor_{\Lambda})$ has a simple
  object. Hence Theorem~\ref{thm:quant_legendrian_orb_dbl} gives the existence of a
  simple object in $\Der_{/[1],\Lambda,\shv}^\dbl(\cor_{M\times\R})$, say $F_0$.
    In the case
  where $\Lambda$ has no Reeb chord, we will see that $\loc(\cor_\Lambda)$ is
  equivalent to $\loc(\cor_M)$ and all objects of
  $\Der_{/[1],\Lambda,\shv}^\dbl(\cor_{M\times\R})$ are of the form
  $F_0 \epstens_{\cor_{M\times\R}} L$ for some $L \in \loc(\cor_{M\times\R})$.
  (However in the course of the proof of the previous theorem this is unknown, so we
  cannot deduce a representative for $\shf$ from one for $\shf_0$.)
\end{remark}

\section{Translation of the microsupport}
\label{sec:transl_micsup}

Now we assume that the Legendrian submanifold $\Lambda/\rspos$ of $J^1M$ has no
Reeb chord, that is, the map $T^*_{\tau >0}(M\times\R) \to T^*M$,
$(x,t;\xi,\tau) \mapsto (x;\xi/\tau)$, induces an embedding
$\Lambda/\rspos \hookrightarrow T^*M$ (see~\eqref{eq:conif_Lambdatilde}).

For $u\in\R$ we define the translation $T_u \cl M\times\R \to M\times\R$, $(x,t)
\mapsto (x,t+u)$. We denote by $T'_u \cl T^*(M\times\R) \to T^*(M\times\R)$,
$(x,t;\xi,\tau) \mapsto (x,t+u;\xi,\tau)$, the induced map on the cotangent
bundle.  We also introduce some notations, for $\Lambda \subset
T^*_{\tau >0}(M\times\R)$:
\begin{equation}\label{eq:def-Lambdaplus}
\begin{split}
\Lambda_u &= \Lambda \cup T'_u(\Lambda) \quad \subset
 \quad T^*_{\tau >0}(M\times\R), \quad \text{for $u>0$,}  \\
\Lambda^+ &= q_d\opb{q_\pi}(\Lambda) \sqcup r_d\opb{r_\pi}(\Lambda)
\quad \subset \quad T^*_{\tau >0}(M\times\R\times\rspos) ,
\end{split}
\end{equation}
where $q,r \colon M\times\R\times\rspos \to M\times\R$ are given by
$q(x,t,u) = (x,t)$, $r(x,t,u) = (x,t-u)$.    We remark that
$\Lambda^+$ is non-characteristic for the inclusions
$i_u\colon M\times\R\times\{u\} \to M\times\R\times\rspos$, $u>0$, and that
$\Lambda_u = (i_u)_d(\opb{(i_u)_\pi}(\Lambda^+))$.

\begin{lemma}\label{lem:isotopy_transl}
  There exists $\phi\cl \dT^*(M\times\R) \times \rspos \to \dT^*(M\times\R)$, a
  homogeneous Hamiltonian isotopy, such that $\phi_1 = \id$ and, using the
  notations~\eqref{eq:def_compo_ensembles} and~\eqref{eq:def-Lambdaplus}, we
  have $\Gamma_\phi \circ^a \Lambda_1 = \Lambda^+$.  In particular
  $\phi_u(\Lambda_1) = \Lambda_u$, for all $u>0$.
\end{lemma}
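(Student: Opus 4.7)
The plan is to unwind what $\Lambda^+$ is telling us and then build the Hamiltonian by hand using the no-Reeb-chord hypothesis.

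First I would spell out the two components of $\Lambda^+$ in coordinates. The piece $q_d q_\pi^{-1}(\Lambda)$ is the family $\{(x,t,u;\xi,\tau,0):(x,t;\xi,\tau)\in\Lambda,\,u>0\}$, whose slice at time $u$ is $\Lambda$ with zero $du$-component. The piece $r_d r_\pi^{-1}(\Lambda)$ is $\{(x,t+u,u;\xi,\tau,-\tau):(x,t;\xi,\tau)\in\Lambda,\,u>0\}$, whose slice at time $u$ is $T'_u(\Lambda)$ with $du$-component equal to $-\tau$. Comparing with the formula for a trace of an isotopy analogous to~\eqref{eq:def_twistedgraph_global}, the equation $\Gamma_\phi\circ^a\Lambda_1=\Lambda^+$ is equivalent to asking that $\phi$ is the flow of a degree-$1$ homogeneous Hamiltonian $h(p,u)$ normalized by $\phi_1=\id$, and satisfying
\[
h(\cdot,u)\equiv 0 \text{ near } \Lambda,\qquad h(\cdot,u)=\tau \text{ near } T'_u(\Lambda),
\]
for every $u\in\rspos$; indeed the first condition gives $\phi_u|_\Lambda=\id$, and the second gives $\phi_u\circ T'_1=T'_u$ on $T'_1(\Lambda)$, which is exactly what is required for the $r$-component.

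Next I would construct such an $h$. Passing to the contact manifold $\dT^*(M\times\R)/\rspos$, introduce the two subsets
\[
\bar S_1=(\Lambda/\rspos)\times\rspos,\qquad
\bar S_2=\{([T'_u(p)],u): p\in\Lambda,\;u>0\}
\]
of $(\dT^*(M\times\R)/\rspos)\times\rspos$. The no-Reeb-chord hypothesis says exactly that $\Lambda\cap T'_u(\Lambda)=\emptyset$ for all $u>0$, hence $\bar S_1\cap\bar S_2=\emptyset$. Both are closed (compactness of $\Lambda/\rspos$ plus the fact that $u$ is bounded away from $0$ on $\bar S_2$ uniformly on compact sets), so by smooth partition of unity there exists a $\Cinf$ function $\bar\rho$ which vanishes on a neighborhood of $\bar S_1$ and equals $1$ on a neighborhood of $\bar S_2$. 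Pull $\bar\rho$ back to a $\rspos$-invariant function $\rho$ on $\dT^*(M\times\R)\times\rspos$ and set $h(p,u)=\rho(p,u)\cdot\tau(p)$; this $h$ is smooth, homogeneous of degree $1$ in the fiber, and satisfies the two conditions above by construction.

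Finally I would check that the flow of $h$ has the required properties. Near $\Lambda$, $X_h=0$, so each point of $\Lambda$ is fixed for all $u$. Near $\bar S_2$, $h=\tau$ and $X_h=\partial_t$; for $p\in\Lambda$ the curve $u\mapsto(T'_u(p),u)$ lies inside $\bar S_2$, so it stays in the region where $\rho\equiv 1$, and integration against $\partial_t$ takes $T'_1(p)$ at time $1$ to $T'_u(p)$ at time $u$. Hence $\phi_u(\Lambda_1)=\Lambda_u$ as sets, and the identification $\Gamma_\phi\circ^a\Lambda_1=\Lambda^+$ follows by matching the $du$-components $-h(\phi_u(p),u)$ with $0$ on the $\Lambda$-branch and $-\tau$ on the $T'_u(\Lambda)$-branch.

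The main technical obstacle is the existence of the separating cutoff $\bar\rho$: although $\bar S_1$ and $\bar S_2$ are disjoint, as $u\to 0$ the set $T'_u(\Lambda)$ collapses onto $\Lambda$, so any neighborhoods must shrink near $u=0$. This is harmless for partition-of-unity purposes since we only need closedness and disjointness in $(\dT^*(M\times\R)/\rspos)\times\rspos$ (where $u=0$ is excluded), but it does mean that $\rho$ cannot be made compactly supported in $u$. One then has to argue that the flow of $h=\rho\tau$ nevertheless exists for all $u\in\rspos$; this follows because $\rho$ is locally compactly supported in $u$ on any compact subinterval, and the underlying Reeb Hamiltonian $\tau$ generates a globally defined $t$-translation, so the integrated isotopy is well defined throughout $\rspos$.
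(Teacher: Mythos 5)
Your overall strategy is the same as the paper's: express the equality $\Gamma_\phi\circ^a\Lambda_1=\Lambda^+$ as the pair of conditions $h\equiv 0$ near $\Lambda$ and $h\equiv\tau$ near $T'_u(\Lambda)$ (you even get the sign of $\tau$ right, whereas the paper's condition (c) has a sign slip that contradicts its own flow formula), use the no-Reeb-chord hypothesis to see that $\Lambda$ and $T'_u(\Lambda)$ are disjoint for $u>0$, build $h$ by a cutoff construction, and integrate.

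There is, however, a genuine gap in the flow-existence step, and it is not cosmetic. You choose $\bar\rho$ to vanish on a neighbourhood of $\bar S_1=(\Lambda/\rspos)\times\rspos$ and to equal $1$ on a neighbourhood of $\bar S_2$. That construction places no constraint on $\bar\rho$ away from both sets; since $\dT^*(M\times\R)/\rspos$ is non-compact (the $t$-direction is unbounded), the resulting $h=\rho\tau$ need not have compact support per slice, and the assertion "$\rho$ is locally compactly supported in $u$ on any compact subinterval" does not follow from what you built. Without per-slice compact support, completeness of the contact flow on the non-compact quotient is not automatic: the vector field $X_h=\rho\,\partial_t+\tau X_\rho$ involves derivatives of $\rho$ that you have not bounded. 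The paper avoids this by first choosing a conic neighbourhood $\Omega$ of $\Lambda^1=\bigsqcup_{u>0}(T'_u(\Lambda)\times\{u\})$ whose closure misses $\Lambda^0=\Lambda\times\rspos$ and is proper over $\rspos$ after dividing by $\rspos$ (so each slice is compact), and then requires $h$ to vanish \emph{outside} $\Omega$; this is what guarantees the flow exists for all $u>0$. Your construction should be repaired in the same way: do not merely kill $\bar\rho$ near $\bar S_1$, but choose a neighbourhood $W$ of $\bar S_2$ which is disjoint from $\bar S_1$ and whose slices $W\cap(\cdot\times\{u\})$ are relatively compact, and take $\bar\rho$ to equal $1$ near $\bar S_2$ and to vanish outside $W$. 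With that modification the rest of your argument goes through.
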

\begin{proof}
  (i) We set $I=\rspos$.  Since the map $(x,t;\xi,\tau) \mapsto (x;\xi/\tau)$
  induces an injection $\Lambda/\rspos \hookrightarrow T^*M$, the sets $\Lambda$
  and $T'_u(\Lambda)$ are disjoint for all $u>0$.  Considering all $u>0$ at once
  we define the following closed subsets of $\dT^*(M\times \R) \times I$:
$$
\Lambda^0 = \Lambda \times \rspos ,  \qquad
\Lambda^1 = \bigsqcup_{u>0} (T'_u(\Lambda) \times \{u\}) .
$$
Then $\Lambda^0$ and $\Lambda^1$ are disjoint and the projections
$\Lambda^i/\rspos \to I$ are proper for $i=0,1$.  Hence we can find a conic
neighborhood $\Omega$ of $\Lambda^1$ in $\dT^*(M\times \R) \times I$ such that
$\ol\Omega \cap \Lambda^0 = \emptyset$ and the projection $\ol\Omega/\rspos \to
I$ is proper, that is, $\ol\Omega \cap (\dT^*(M\times \R) \times \{u\})$ is
compact for all $u>0$.

\medskip\noindent
(ii) We choose a $C^\infty$-function $h\cl \dT^*(M\times\R) \times I \to \R$
such that,
\begin{itemize}
\item [(a)] $h_u \eqdot h|_{\dT^*(M\times\R) \times \{u\}}$ is homogeneous of
  degree $1$, for all $u\in I$,
\item [(b)] $h$ vanishes outside $\Omega$,
\item [(c)] there exists a neighborhood $\Omega'$ of $\Lambda^1$ such that
  $h(x,t;\xi,\tau) = -\tau$, for all $(x,t;\xi,\tau) \in \Omega'$.
\end{itemize}
By~(a), (b) and the compactness of
$(\ol\Omega \cap (\dT^*(M\times \R) \times \{u\})) / \rspos$, the Hamiltonian
flow of $h$, say $\phi$, is defined on $I$ (the initial time here is $t_0=1$).
Then $\phi_u$ is the identity map outside $\Omega$ for all $u\in I$ and
$\phi_u(x,t;\xi,\tau) = (x,t+u-1;\xi,\tau)$, for all
$((x,t;\xi,\tau),u) \in \Omega'$.  Since
$\Lambda^0 \subset (\dT^*(M\times\R) \setminus \Omega)$ and
$\Lambda^1 \subset \Omega'$, the lemma follows.
\end{proof}

\begin{corollary}
  \label{cor:exist_quant}
  Let $\Lambda$ be a closed conic Lagrangian submanifold of
  $T^*_{\tau >0}(M\times \R)$ coming from a compact exact submanifold of $T^*M$ as
  in~\eqref{eq:conif_Lambdatilde}.  Then, for any pure object   $\shf \in \kss(\cor_\Lambda)$ there
  exists $F \in \Derb_{[\Lambda]}(\cor_{M\times\R})$ such that
  $\kssfunc_\Lambda(F) \simeq \shf$ and $F|_{M \times \{t\}} \simeq 0$ for $t\ll0$.
  In the same way, for any $\shf \in \kss_{/[1]}(\cor_\Lambda)$ there exists
  $F \in \Der_{/[1],[\Lambda]}(\cor_{M\times\R})$ such that
  $\kssfunc_{/[1],\Lambda}(F) \simeq \shf$ and $F|_{M \times \{t\}} \simeq 0$ for
  $t\ll0$.
\end{corollary}
\begin{proof}
  (i) The proofs are the same for $\shf \in \kss(\cor_\Lambda)$ or
  $\shf \in \kss_{/[1]}(\cor_\Lambda)$. We first consider
  $\shf \in \kss(\cor_\Lambda)$ and then emphasize some points for the other case.

  By Theorem~\ref{thm:quant_legendrian_dbl} there exists
  $F_0 \in \Der_{\Lambda,\shv}^\dbl(\cor_{M\times\R})$, for some finite family $\shv$
  of open subsets of $M\times\R$ which is adapted to $\Lambda$, such that
  $\SSi^\dbl(F_0) = \Lambda$ and $\kssfunc_{\Lambda}^\dbl(F_0) \simeq \shf$.  By the
  definition of $\Der_{\Lambda,\shv}^\dbl(\cor_{M\times\R})$ any point
  $x\in M\times\R$ has a neighborhood $W_x$ such that a
  decomposition~\eqref{eq:defDdbl} holds for $F_0|_{W_{x,\gammaof}}$.  Since
  $\SSi^\dbl(F_0) = \Lambda$, we have in fact
  $F_0|_{W_{x,\gammaof}} \simeq \Psi_{W_x}(F_x)$ for some
  $F_x \in \Derb_{[\Lambda]}(\cor_{W_x})$ (in Lemma~\ref{lem:descrptionfaiscdbl}-(i)
  all $W_i$ coincide with $W_x$).  Setting
  $V = \bigcup_{x\in M\times\R} W_{x,\gammaof}$ we then have
  $\supp(F_0) \cap V \subset (\gammaf \star \dot\pi_{M\times \R}(\Lambda)) \cap V$
  and $\dot\SSi(F_0) \cap T^*V \subset \Lambda^+\cap T^*V$ by
  Lemmas~\ref{lem:suppPsiuL} and~\ref{lem:SSPsiF}.  In particular we can find a
  compact neighborhood $C$ of $\dot\pi_{M\times \R}(\Lambda)$ such that, for
  $u\leq 1$, the support of $F_0|_{V \cap (M\times\R\times\mo]0,u[)}$ is contained in
  $C \times \mo]0,u[$. For $u>0$ small enough we have $(C \times \mo]0,u[) \subset V$
  and we obtain $F_1 \in \Derb_{[\Lambda^+]}(\cor_{M\times\R\times\mo]0,u[})$ by
  extending $F_0|_{V \cap (M\times\R\times\mo]0,u[)}$ by zero.  For any
  $v\in \mo]0,u[$ we have $\dot\SSi(F_1|_{M\times\R\times\{v\}}) = \Lambda_v$ and it
  follows from~\eqref{eq:mLambda=mLambdadbl} that
  $\kssfunc_\Lambda(F_1|_{M\times\R\times\{v\}}) \simeq \shf$.
  
  \sui(ii) We set $F_2 = F_1|_{M\times\R\times\{\demi u\}}$.  Then
  $\dot\SSi(F_2) = \Lambda_{\demi u}$.  We consider the isotopy $\phi$ of
  Lemma~\ref{lem:isotopy_transl} and let
  $K_\phi \in \Derlb(\cor_{(M\times\R)^2\times \rspos})$ be the sheaf associated with
  $\phi$ by Theorem~\ref{thm:GKS}.  For given $0<u_1<u_2$ the restriction of $\phi$
  to $\dT^*(M\times\R\times\mo]u_1,u_2[)$ has compact support (after quotienting by
  the $\rspos$ action in the fibers) and
  $K_\phi |_{(M\times\R)^2\times \mo]u_1,u_2[}$ coincides with
  $\cor_{\Delta_{M\times\R} \times \mo]u_1,u_2[}$ outside a compact set, hence is
  bounded.  In particular, for any $v>0$, the sheaf
  $F_{2,v} = K_{\phi,v} \circ K_{\phi, \demi u}^{-1} \circ F_2$ belongs to
  $\Derb(\cor_{M\times\R})$ and satisfies
  $\dot\SSi(F_{2,v}) = (\phi_v \circ \phi_{\demi u}^{-1})(\Lambda_{\demi u}) =
  \Lambda_v$.  Moreover, for given $0<u_1<u_2$, $\phi_u$ is the identity map on some
  neighborhood $\Omega$ of $\Lambda$, for any $u \in \mo]u_1,u_2[$, and it follows
  that $K_{\phi,u} \circ -$ induces the identity functor on
  $\Der(\cor_{M\times\R}; \Omega)$. Hence
  $\kssfunc_\Lambda(F_{2,v}) \simeq \kssfunc_\Lambda(F_2) \simeq \shf$.

  \sui(iii) Since $\Lambda/\rspos$ is compact we can choose $A>0$ such that
  $\Lambda \subset T^*_{\tau >0}(M\times \mo]-A,A[)$.  For $v\geq 2A+1$ we have
  $T'_v(\Lambda) \subset T^*_{\tau >0}(M\times \mo]A+1,+\infty[)$.  We set
  $F_3 = F_{2,2A+1}|_{M\times \mo]-\infty, A+1[}$.  Then $\dot\SSi(F_3) = \Lambda$.
  We choose a diffeomorphism $f \colon \mo]-\infty, A+1\mc[ \isoto \R$ such that $f$
  is the identity map on $\mo]-\infty, A[$.  Then $F = \roim{(\id_M\times f)}(F_3)$
  satisfies $\dot\SSi(F) = \Lambda$ and $\kssfunc_\Lambda(F) \simeq \shf$.

  We set $U_A = M \times \mo]-\infty,-A\mc[$.    We remark that $\dot\SSi(F_{2,v}|_{U_A})$ is
  empty for all $v>0$, hence $F_{2,v}$ is locally constant on $U_A$.  If
  $F_{2,v}|_{U_A}$ does not vanish, then $\supp(F_{2,v})$ must contain $U_A$.  We have
  seen that $\supp(F_1)$ is contained in $C \times \mo]0,u[$ for some compact set
  $C$.  Hence $F_2$ and all $F_{2,v}$ have compact support.  It follows that
  $F_{2,v}|_{U_A}$ vanishes and so does $F|_{U_A}$.  This concludes the proof for the
  case $\shf \in \kss(\cor_\Lambda)$.

  \sui(iv) Now we assume $\shf \in \kss_{/[1]}(\cor_\Lambda)$.  We use
  Theorem~\ref{thm:quant_legendrian_orb_dbl} instead
  of~\ref{thm:quant_legendrian_dbl} in~(i).  We didn't state Theorem~\ref{thm:GKS}
  for the orbit category but there is nothing really new to say: as soon as the sheaf
  $K_{\phi}$ belongs to $\Derb(\cor_{(M\times\R)^2\times \mo]u_1,u_2[})$ (rather than
  the locally bounded category) we can take its image in
  $\Der_{/[1]}(\cor_{(M\times\R)^2\times \mo]u_1,u_2[})$. The relation
  $K_{\phi,u} \circ K_{\phi,u}^{-1} \simeq \cor_{\Delta_{M\times\R}}$ still holds in
  the orbit category and we deduce the same equivalence
  $K_{\phi,u} \circ - \colon \Der_{/[1],[A]}(\cor_{M\times\R}) \isoto
  \Der_{/[1],[\phi_u(A)]}(\cor_{M\times\R})$, for any $A \subset \dT^*(M\times\R)$.
  Now the step~(ii) works the same way.  The step~(iii) also, using
  Proposition~\ref{prop:SSorb-sectnulle} to see that $F_{2,v}$ is locally constant on
  $U_A$.
\end{proof}

\begin{remark}\label{rem:Lambdacompacte}   In the proof of Theorem~\ref{thm:quant_legendrian_dbl} the compactness
  of $\Lambda/\rspos$ was used since we glued doubled sheaves defined on open subsets
  of a covering of $\dot\pi_{M\times\R}(\Lambda)$ by a finite induction.  We could
  probably use a countable covering (with some care to take the limit) and the
  compactness of $\Lambda/\rspos$ was not essential.  However in
  Corollary~\ref{cor:exist_quant} this compactness is necessary to ensure that the
  sheaf given by Theorem~\ref{thm:quant_legendrian_dbl} can be modified to have the
  required microsupport over $M\times\R\times\mo]0,u[$ for some $u>0$.
\end{remark}

\begin{remark}\label{rem:exist_quantorb2}
  Using Remark~\ref{rem:exist_quantorb} and Corollary~\ref{cor:exist_quant} we see
  that, if $\Lambda$ comes from a compact exact submanifold of $T^*M$, then there
  exists a simple object $F \in \Der_{/[1],[\Lambda]}(\cor_{M\times\R})$ such that
  $F|_{M \times \{t\}} \simeq 0$ for $t\ll0$.
\end{remark}

\section{Restriction at infinity}
\label{sec:restr_infty}

As in~\S\ref{sec:transl_micsup} we assume that $\Lambda/\rspos$ has no Reeb
chord.  Since $\Lambda/\rspos$ is compact, we can choose $A>0$ such that
$\Lambda \subset T^*_{\tau >0}(M\times \mo]-A,A[)$.  Then, for any
$F \in \Derlb_{[\Lambda]}(\cor_{M\times\R})$, the restrictions
$F|_{M\times \mo]-\infty,-A[}$ and $F|_{M\times \mo]A,+\infty[}$ have locally
constant cohomology sheaves.

\begin{definition}\label{def:DerlbLambdaplus}
  For $F \in \Derlb_{[\Lambda]}(\cor_{M\times\R})$ we let
  $F_-, F_+ \in \Derlb(\cor_{M})$ be the restrictions at infinity
  $F_- = F|_{M\times \{-t\}}$, $F_+ = F|_{M\times \{t\}}$, for any
  $t\in [A,+\infty[$.  Then $F_-, F_+$ are indeed independent of
  $t \in [A,+\infty[$ and have locally constant cohomology sheaves.  We let
  $\Derlb_{[\Lambda],+}(\cor_{M\times\R})$ be the full subcategory of
  $\Derlb_{[\Lambda]}(\cor_{M\times\R})$ consisting of the $F$ such that
  $F_- \simeq 0$.

  Replacing  
  $\Derlb_{[\Lambda]}(-)$ by $\Der_{/[1],[\Lambda]}(-)$ we also define
  $F_-, F_+ \in \Der_{/[1]}(\cor_{M})$ for
  $F \in \Der_{/[1],[\Lambda]}(\cor_{M\times\R})$ and a similar category
  $\Der_{/[1],[\Lambda],+}(\cor_{M\times\R})$.
\end{definition}

For $F \in \Derlb_{[\Lambda],+}(\cor_{M\times\R})$ we have by definition
\begin{equation}\label{eq:F_restr-infini}
F|_{M\times ]A,+\infty[} \simeq F_+\etens \cor_{]A,+\infty[} ,
\quad  F|_{M\times ]-\infty,-A[} \simeq 0 \quad \text{ for $A\gg0$.}
\end{equation}

\begin{lemma}\label{lem:annulation_reim-pM}
  Let $F\in \Derlb(\cor_{M\times\R})$ (or $F\in\Der_{/[1]}(\cor_{M\times\R})$). We
  assume that there exists $A>0$ such that $\supp(F) \subset M\times [-A,A]$. We also
  assume either $\dot\SSi(F) \subset T^*_{\tau> 0}(M\times\R)$ or
  $\dot\SSi(F) \subset T^*_{\tau<0}(M\times\R)$.  Let $p_M \cl M\times\R \to M$ be
  the projection.  Then $\reim{p_M}F \simeq \roim{p_M}F \simeq 0$.
\end{lemma}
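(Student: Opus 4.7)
The plan is to reduce the claim to a pointwise computation on each fiber $\{x_0\}\times\R$, where $F$ restricts to a complex of compact support whose hypercohomology can be killed via the microlocal Morse lemma (Corollary~\ref{cor:Morse}).

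First, since $\supp(F)\subset M\times[-A,A]$ and $[-A,A]$ is compact, $p_M$ is proper on $\supp(F)$, so $\reim{p_M}F\simeq\roim{p_M}F$ and it suffices to show $\reim{p_M}F\simeq 0$. I will do so stalkwise. Fix $x_0\in M$ and let $j_{x_0}\colon\R\hookrightarrow M\times\R$, $t\mapsto(x_0,t)$. Base change for the Cartesian square with sides $p_M$ and $\{x_0\}\hookrightarrow M$ gives
$$
(\reim{p_M}F)_{x_0}\;\simeq\;\rsect_c(\R;G),\qquad G:=\opb{j_{x_0}}F.
$$
Since $\supp(G)\subset[-A,A]$ is compact, $\rsect_c(\R;G)\simeq\rsect(\R;G)$, and $G$ is bounded because $F$ is locally bounded and $\{x_0\}\times[-A,A]$ is compact.

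Next I estimate $\SSi(G)$: Theorem~\ref{thm:iminv} together with Example~\ref{ex:fsharp_embed} yields $\SSi(G)\subset j_{x_0}^\sharp\SSi(F)$, which sits in $T^*_{\tau\geq 0}\R$ in the first case of the hypothesis and in $T^*_{\tau\leq 0}\R$ in the second. I then apply Corollary~\ref{cor:Morse} to $G$ with $\phi(t)=-t$ in the $\tau\geq 0$ case and $\phi(t)=t$ in the $\tau\leq 0$ case. In each case $d\phi$ is the constant covector $\mp 1$, so $(t;d\phi(t))$ has $\tau=\mp 1\notin\SSi(G)$, and $\phi$ is trivially proper on the compact $\supp(G)$. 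Choosing $a<-A$ and $b>A$, the corollary yields
$$
\rsect(\opb{\phi}(\mo]-\infty,b[);G)\;\isoto\;\rsect(\opb{\phi}(\mo]-\infty,a[);G).
$$
The left side equals $\rsect(\R;G)$, since $\opb{\phi}(\mo]-\infty,b[)\supset\supp(G)$; the right side vanishes, since $\opb{\phi}(\mo]-\infty,a[)$ is disjoint from $\supp(G)$. Hence $\rsect(\R;G)\simeq 0$, which is what was needed.

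The argument is essentially an assembly of tools already developed in the paper, and no step presents a real obstacle; the only point requiring slight care is matching the sign of $\phi$ with the direction of the half-space containing $\SSi(G)$, so that $d\phi$ avoids $\SSi(G)$ on the nose.
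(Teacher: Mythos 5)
Your proof is correct, and it follows the same strategy as the paper's (much terser) argument: reduce by base change to the case where $M$ is a point and then conclude with the microlocal Morse lemma (Corollary~\ref{cor:Morse}). You have simply filled in the details that the paper leaves implicit, including the microsupport estimate for $\opb{j_{x_0}}F$ and the choice of sign for $\phi$.
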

\begin{proof}
  By Proposition~\ref{prop:oim} (or~\ref{prop:im-dir-SSorb}), we have
  $\SSi(\reim{p_M}F) \subset T^*_MM$.  Hence $\reim{p_M}F$ is locally constant (for
  $F\in\Der_{/[1]}(\cor_{M\times\R})$, use Proposition~\ref{prop:SSorb-sectnulle})
  and it is enough to prove that $(\reim{p_M}F)_x \simeq 0$ for one $x\in M$.

  The base change formula gives
  $(\reim{p_M}F)_x \simeq \rsect(\R; F|_{\{x\}\times\R})$.  Now $F|_{\{x\}\times\R}$
  has a compact support and a microsupport in $T^*_{\tau\geq 0}\R$ (or
  $T^*_{\tau\leq 0}\R$) and the result follows from Corollary~\ref{cor:Morse} (or
  Corollary~\ref{cor:Morse-orb} for the case of $\Der_{/[1]}(\cor_{M\times\R})$: take
  $a,b$ in the corollary so that $]a,b[$ contains the support of $F$).
\end{proof}

\begin{lemma}\label{lem:projFsurM}
  Let $F \in \Derlb_{[\Lambda],+}(\cor_{M\times\R})$ (or
  $F \in \Der_{/[1],[\Lambda],+}(\cor_{M\times\R})$).  We have
  $\roim{p_M}F \simeq F_+$ and $\reim{p_M}F \simeq 0$.
\end{lemma}
\begin{proof}
  Let us choose $A>0$ so that~\eqref{eq:F_restr-infini} holds.  We set
  $G = F\tens \cor_{M\times ]-\infty, A+1[}$.  Then $\supp(G) \subset [-A, A+1]$ and
  $\SSi(G) \subset T^*_{\tau> 0}(M\times\R)$ by Theorem~\ref{thm:SSrhom}.  By
  Lemma~\ref{lem:annulation_reim-pM} we obtain $\roim{p_M}G \simeq 0$.
  
  By~\eqref{eq:F_restr-infini} we have the distinguished triangle
  $G \to F \to F_+ \etens \cor_{[A+1,+\infty[} \to[+1]$ and we deduce
  $\roim{p_M} F \simeq \roim{p_M} ( F_+ \etens \cor_{[A+1,+\infty[} ) \simeq F_+$ and
  $\reim{p_M} F \simeq \reim{p_M} ( F_+ \etens \cor_{[A+1,+\infty[} ) \simeq 0$.
\end{proof}

We recall the maps $q,r,T_u$ (see around~\eqref{eq:def-Lambdaplus}).  By
Lemma~\ref{lem:dtgrPsiq} we have a morphism $\opb{q}F \to \opb{r}F$, for any
$F\in \Dertpn(\cor_{M\times\R})$.  Restricting to $M\times\R \times\{u\}$ we obtain a
morphism $F \to \oim{T_u}F$.

\begin{lemma}\label{lem:Hom-isomorphes}
  For all $F,F'$ in $\Derlb_{[\Lambda],+}(\cor_{M\times\R})$ (or in
  $\Der_{/[1],[\Lambda],+}(\cor_{M\times\R})$) and any $u\geq 0$, the morphism
  $F' \to \oim{T_u}F'$ induces the isomorphism
  \begin{equation*}
\RHom(F,F')  \isoto \RHom(F, \oim{T_u}F')  .
\end{equation*}
Moreover, for any $u>0$, we have $\RHom(\oim{T_u}F, F') \simeq 0$.
\end{lemma}
\begin{proof}
  (i) We extend $q,r$ to $M\times\R \times\R$ (with the same formulas) and we define
  $p_2 \colon M\times\R \times\R \to \R$, $(x,t,u) \mapsto u$.  We introduce
  $G = \roim{p_2} \rhom(\opb{q}F, \epb{r}F')$.  For $u\in \R$, letting
  $i_u \colon M\times\R \times\{u\} \to M\times\R \times\R$ be the inclusion we have
  $q\circ i_u =\id$, $r\circ i_u = T_{-u}$, and hence, by
  Proposition~\ref{prop:formulaire}-(h-j),
  \begin{align*}
    \rsect_{\{u\}}(G)  &\simeq \rsect(M\times\R; \epb{i_u}\rhom(\opb{q}F, \epb{r}F')) \\
    &\simeq \rsect(M\times\R; \rhom(F, \epb{T_{-u}}F')) \\
    &\simeq \RHom(F, \oim{T_u}F') ,
  \end{align*}
  Using the microsupport bounds of
  \S\ref{sec:microsupport} or \S\ref{sec:micsup_orb_cat} we obtain
  \begin{align*}
    \SSi(G) \subset \{(u&;\upsilon);\; \exists (x,t;\xi,\tau) \in \SSi(F), \,
    \exists (x',t';\xi',\tau') \in \SSi(F'), \\
&    x=x',\, t-u=t',\, (-\xi,-\tau,0)+(\xi',\tau',-\tau') = (0,0,\upsilon)\}.
  \end{align*}
  Using $\dot\SSi(F) = \dot\SSi(F') = \Lambda$ and the fact that $\Lambda$ has no
  Reeb chord (hence $\Lambda \cap T'_u(\Lambda) = \emptyset$ for $u\not=0$), we
  deduce $\dot\SSi(G) \subset \{(0;\upsilon)$; $\upsilon<0\}$.

  \sui(ii) Using Corollary~\ref{cor:Morse} (or Corollary~\ref{cor:Morse-orb} for
  $\Der_{/[1]}(\cor_{M\times\R})$) we deduce $\rsect_{[a,b[}(\R;G) \simeq 0$ for any
  $a<b$ and $\rsect_{]a',b']}(\R;G) \simeq 0$ for any $0\leq a'<b'$.  In particular
  $\rsect_{\{u\}}(G) \isoto \rsect_{[u,u']}(\R;G) \isofrom \rsect_{\{u'\}}(G)$ for
  $0\leq u \leq u'$ and the first isomorphism follows.

  By the same argument $\RHom(\oim{T_u}F, F') \simeq \RHom(F, \oim{T_{-u}}F')$ is
  independent of $u>0$.  If we choose $u>2A$, where $A$
  satisfies~\eqref{eq:F_restr-infini} both for $F$ and $F'$, then $\oim{T_{-u}}F'$
  coincides with $\opb{p_M}F'_+$ over $\supp(F)$, where $p_M$ is the projection
  $M\times\R \to M$.  Hence
  $$
  \RHom(F, \oim{T_{-u}}F') \simeq \RHom(F, \opb{p_M}F'_+)
  \simeq \RHom(\reim{p_M} F, F'_+)[-1]
  $$
  vanishes by Lemma~\ref{lem:projFsurM}.
\end{proof}

\begin{theorem}\label{thm:restr-infini-ff}
Let $F,F' \in \Derlb_{[\Lambda],+}(\cor_{M\times\R})$. We let $F_+, F'_+ \in
\Derlb(\cor_{M})$ be their restrictions to $M\times\{t\}$, $t\gg0$, as in
Definition~\ref{def:DerlbLambdaplus}.  Then
\begin{equation}\label{eq:restr-infini-ff1}
\RHom(F,F')  \isoto \RHom(F_+,F'_+) .
\end{equation}
In particular the functor $\Derlb_{[\Lambda],+}(\cor_{M\times\R}) \to
\Derlb(\cor_{M})$ given by $F \mapsto F_+$ is fully faithful and we have:
$F\simeq F'$ if and only if $F_+ \simeq F'_+$.

The same statement holds with $\Derlb_{[\Lambda],+}(\cor_{M\times\R})$,
$\Derlb(\cor_{M})$ replaced by $\Der_{/[1],[\Lambda],+}(\cor_{M\times\R})$,
$\Der_{/[1]}(\cor_{M})$ (using $\RHom^\varepsilon$ defined
before~\ref{eq:Hom_RHomepsilon}).
\end{theorem}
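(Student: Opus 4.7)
The plan is to reduce the statement to a microlocal Morse argument via a Tamarkin translation, so that the ``positive end'' $F_+$ captures all the cohomological content.

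Fix $A>0$ with $\Lambda\subset T^*_{\tau>0}(M\times\mo]-A,A\mc[)$ and pick $u>2A$. By Corollary~\ref{cor:Hom-isomorphes} there is a natural isomorphism
\[
\RHom(F,F') \isoto \RHom(F,\oim{T_u}F'),
\]
compatible with the vertical translation morphism $\tau_u(F')\colon F'\to \oim{T_u}F'$. So it suffices to produce a natural isomorphism $\RHom(F,\oim{T_u}F')\isoto \RHom(F_+,F'_+)$ corresponding to restriction at $t\gg 0$. Set $K=\rhom(F,\oim{T_u}F')$. Since $F'$ vanishes on a neighborhood of $M\times\mo]-\infty,-A]$ (using that $\Lambda$ lies strictly inside $M\times\mo]-A,A\mc[$), its translate $\oim{T_u}F'$, and hence $K$, vanishes on a neighborhood of $M\times\mo]-\infty,u-A]$. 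On $M\times\mo]u+A,+\infty\mc[$ both $F$ and $\oim{T_u}F'$ are locally constant in $t$ with values $F_+$ and $F'_+$ respectively, so $K|_{M\times\mo]u+A,+\infty\mc[}\simeq \rhom(F_+,F'_+)\etens \cor_{\mo]u+A,+\infty\mc[}$. Finally, on $M\times\mo]A,+\infty\mc[\supset M\times\mo]u-A,u+A\mc[$, $F$ is locally constant and $\SSi(\oim{T_u}F')=T'_u\Lambda\subset\{\tau>0\}$, so by Theorem~\ref{thm:SSrhom} $\SSi(K)$ is contained in $T'_u\Lambda \cup T^*_{M\times\R}(M\times\R)$, in particular in $\{\tau\geq 0\}$ on this region.

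Setting $Z_1=M\times[u-A,u+A]$ and using the excision triangle together with the vanishing/stability of $K$ just established yields
\[
\rsect_{Z_1}(M\times\R;K) \to \RHom(F,\oim{T_u}F') \to \RHom(F_+,F'_+) \to[+1],
\]
where the second map is restriction to $M\times\mo]u+A,+\infty\mc[$. So it suffices to prove $\rsect_{Z_1}(M\times\R;K)\simeq 0$. Push forward along $q\colon M\times\R\to M$: the sheaf $\rsect_{Z_1}(K)$ is supported in $Z_1$, on which $q$ is proper, so $\roim{q}=\reim{q}$ there and proper base change gives, for each $x\in M$,
\[
(\roim{q}\,\rsect_{Z_1}(K))_x \;\simeq\; \rsect_{[u-A,u+A]}(\R;K_x), \qquad K_x \eqdot K|_{\{x\}\times\R}.
\]
It is enough to prove the right-hand side vanishes for every $x$. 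Each $K_x$ vanishes on a neighborhood of $\mo]-\infty,u-A]$, is locally constant on $\mo]u+A,+\infty\mc[$, and (since $\SSi(K)\subset\{\tau\geq 0\}$ on $M\times\mo]u-A,u+A\mc[$ is non-characteristic for the fiber inclusion $i_x$) satisfies $\SSi(K_x)\subset\{\tau\geq 0\}$ on a neighborhood of $[u-A,u+A]$. Corollary~\ref{cor:Morse} applied with $\phi=-t$ then gives $\rsect(\R;K_x)\isoto \rsect(\mo]u+A,+\infty\mc[;K_x)$, equivalently $\rsect_{[u-A,u+A]}(\R;K_x)\simeq 0$, as required.

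Combining these isomorphisms one obtains $\RHom(F,F')\simeq \RHom(F_+,F'_+)$; tracing through the constructions, the composite map coincides with $\opb{i_{t_0}}$ for any $t_0>A$, which is precisely the map induced by the functor $F\mapsto F_+$. Full faithfulness and the isomorphism criterion then follow formally. The main obstacle is the fourth-paragraph step: one must verify the $\SSi$-bound on $K_x$ near the endpoints of $[u-A,u+A]$ and that $i_x$ is non-characteristic there, then orchestrate the base change so as to transfer the global vanishing problem on $M\times\R$ to fiber-wise Morse arguments. The crucial geometric input making this work is that $\Lambda$ lies strictly inside $T^*(M\times\mo]-A,A\mc[)$, so $F'$ extends to a genuine neighborhood of the boundary slices $t=\pm A$ as a locally constant sheaf (in fact zero on the left), giving the clean microsupport estimates on $K_x$ needed to run Morse theory.
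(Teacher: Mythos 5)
Your proof is correct and reaches the conclusion by a genuinely different route from the paper's, once both have made the identical first move of invoking Corollary~\ref{cor:Hom-isomorphes} to pass from $\RHom(F,F')$ to $\RHom(F,\oim{T_u}F')$. The paper then argues at the level of $\RHom$: because $\oim{T_u}F'$ is supported where $F$ agrees with $\opb{p_M}F_+$, it replaces $F$ by $\opb{p_M}F_+$, applies the adjunction $(\opb{p_M},\roim{p_M})$, and finishes by computing $\roim{p_M}\oim{T_u}F'\simeq F'_+$ from the distinguished triangle $G\to\oim{T_u}F'\to F'_+\etens\cor_{[A+u,+\infty[}\to[+1]$ together with Lemma~\ref{lem:annulation_reim-pM}. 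You instead work with the internal hom $K=\rhom(F,\oim{T_u}F')$, establish its vanishing on the left, local constancy on the right, and the microsupport bound $\SSi(K)\subset T'_u\Lambda\cup 0_{M\times\R}\subset\{\tau\geq 0\}$ in between, and then use the excision triangle for $Z_1=M\times[u-A,u+A]$ and a pushforward along $q=p_M$ to reduce to a fiberwise application of Corollary~\ref{cor:Morse}. The paper's argument is shorter and purely formal; yours makes the geometry more transparent at the cost of having to verify a few supplementary points that are implicit in your write-up: the identification $\opb{i_x}\rsect_{Z_1}(K)\simeq\rsect_{[u-A,u+A]}(K_x)$ needed to make your base-change step precise (which holds here because $\rsect_{M\times\R\setminus Z_1}(K)$ is an external tensor $\rhom(F_+,F'_+)\etens\cor_{[u+A,+\infty[}$, thanks to the vanishing of $K$ on the left), the non-characteristicity of $i_x$ for $\SSi(K)$ — which requires the full bound $T'_u\Lambda\cup 0_{M\times\R}$ rather than just $\{\tau\geq 0\}$ as you state — and the properness of $\phi=-t$ on $\supp(K_x)$, which uses the vanishing of $K_x$ for $t<u-A$. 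All of these check out, so the argument is sound.
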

\begin{proof}
  Let $p_M\cl M\times\R \to M$ be the projection.  Let us choose $A>0$ so
  that~\eqref{eq:F_restr-infini} holds for $F$ and $F'$ and let $u>2A$.  Then
\begin{align*}
\RHom(F,F') &\simeq  \RHom(F,\oim{T_u} F') \\
& \simeq \RHom(\opb{p_M}F_+,\oim{T_u} F')  \\
&\simeq \RHom(F_+, \roim{p_M} \oim{T_u} F') \\
&\simeq \RHom(F_+,F'_+) ,
\end{align*}
where the first isomorphism follows from Lemma~\ref{lem:Hom-isomorphes}, the second
one from $\supp(\oim{T_u} F') \subset M\times ]A,+\infty[$ and the last one from
Lemma~\ref{lem:projFsurM}.
\end{proof}

With Theorem~\ref{thm:restr-infini-ff} we can recover a classical result~\cite{LS91}
of Lalonde and Sikorav:
\begin{corollary}\label{cor:LambdasurjsurM}
  Let $\tilde \Lambda \subset T^*M$ be a compact exact Lagrangian submanifold. Then
  the map $\tilde \Lambda \to M$ is onto. In particular $M$ is compact.
\end{corollary}
\begin{proof}
  We let $\Lambda \subset \dT^*(M\times\R)$ be the conification of $\tilde \Lambda$
  as in~\eqref{eq:conif_Lambdatilde}.  By Remark~\ref{rem:exist_quantorb2} there
  exists a simple object $F \in \Der_{/[1],[\Lambda],+}(\cor_{M\times\R})$. In
  particular $F\not= 0$. By Theorem~\ref{thm:restr-infini-ff} (for the case of the
  orbit category) we have $\RHom(F_+,F_+) \not=0$, hence $F_+ \not=0$.  Let us assume
  that $\tilde \Lambda \to M$ is not onto. Then there exists an open subset $U$ of
  $M$ such that $\Lambda \cap \dT^*(U\times\R) = \emptyset$.  Hence
  $F|_{U \times \R}$ is locally constant, $F|_{U \times \{t\}} \simeq 0$ for $t\ll0$
  and $F|_{U \times \{t\}} \not\simeq 0$ for $t\gg0$, which is a contradiction.
\end{proof}

\begin{theorem}\label{thm:muhom=hom}
Let $F,F' \in \Derlb_{[\Lambda],+}(\cor_{M\times\R})$. Then we have an isomorphism
\begin{equation}\label{eq:muhom=hom}
\RHom(F,F')  \isoto \rsect(\Lambda; \mu hom(F,F'))  .
\end{equation}
Its composition with~\eqref{eq:restr-infini-ff1} gives a canonical isomorphism
\begin{equation}\label{eq:muhom=hom-infini1}
\RHom(F_+,F'_+) \simeq \rsect(\Lambda; \mu hom(F,F')). 
\end{equation}
As in Theorem~\ref{thm:restr-infini-ff} the same statement holds with
$\Derlb_{[\Lambda],+}(\cor_{M\times\R})$ replaced by
$\Der_{/[1],[\Lambda],+}(\cor_{M\times\R})$.
\end{theorem}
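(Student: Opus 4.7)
The plan is to deduce~\eqref{eq:muhom=hom} by combining the microlocal identification of Proposition~\ref{prop:muhom=hompsi} with the invariance properties furnished by Proposition~\ref{prop:Psipresqueff} and Corollary~\ref{cor:restr_it_equiv}; the second isomorphism~\eqref{eq:muhom=hom-infini1} will then follow at once from Theorem~\ref{thm:restr-infini-ff}. Set $X = M\times\R$ and $U = X$; since $F, F' \in \Derb_{\Lambda,+}(\cor_X)$ one has $\dot\SSi(F), \dot\SSi(F') \subset \Lambda \subset T^*_{\tau>0}X$ (in particular $F, F' \in \Derbtpn(\cor_X)$) and $\supp(F), \supp(F') \subset M\times[-A,+\infty[$ for some $A > 0$, so the hypotheses of the quoted results are all satisfied.

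I will then chain four natural isomorphisms. Proposition~\ref{prop:Psipresqueff} gives $\RHom(F, F') \isoto \RHom(\Psi_U F, \Psi_U F')$, using the support condition on $F$. Next, Lemma~\ref{lem:SSPsiF} shows that $\Psi_U F$ and $\Psi_U F'$ lie in $\Derlb_{\Lambda^+}(\cor_{X\times\rspos})$, and Corollary~\ref{cor:restr_it_equiv}(ii) then yields, for every $\epsilon > 0$, an isomorphism
\begin{equation*}
\RHom(\Psi_U F, \Psi_U F') \isoto \RHom(\Psi_U F|_{X\times\mo]0,\epsilon[}, \Psi_U F'|_{X\times\mo]0,\epsilon[})
\end{equation*}
induced by restriction. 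Combining this with the cofinal description
\begin{equation*}
\rsect(X; \opb{i}\roim{j}H) \simeq \varinjlim_{\epsilon>0}\rsect(X\times\mo]0,\epsilon[; H)
\end{equation*}
valid for any sheaf $H$ on $X\times\rspos$ (where $i, j$ are as in~\eqref{eq:def_imm}), applied to $H = \rhom(\Psi_U F, \Psi_U F')$, the directed system is essentially constant and we obtain $\RHom(\Psi_U F, \Psi_U F') \simeq \rsect(X; \opb{i}\roim{j}\rhom(\Psi_U F, \Psi_U F'))$.

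Finally, since $\dot\SSi(F) \cap \dot\SSi(F') \subset \Lambda \subset \{\tau>0\}$, Proposition~\ref{prop:muhom=hompsi} supplies the identification $\opb{i}\roim{j}\rhom(\Psi_U F, \Psi_U F') \simeq \RR\pi_{U*}^{>0}(\muhom(F, F')|_{T^*_{\tau>0}U})$; taking global sections on $X$ yields $\rsect(T^*_{\tau>0}X; \muhom(F, F'))$, which coincides with $\rsect(\Lambda; \muhom(F, F'))$ because $\muhom(F, F')|_{\dT^*X}$ is supported on $\SSi(F)\cap\SSi(F')\cap\dT^*X \subset \Lambda$. Chaining all four isomorphisms produces~\eqref{eq:muhom=hom}; composing with Theorem~\ref{thm:restr-infini-ff} then gives~\eqref{eq:muhom=hom-infini1}. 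The only delicate point is checking that the restriction maps in the second paragraph's directed system are mutually compatible, which is immediate from the functoriality of restriction along open inclusions, so the colimit of a system of pairwise compatible isomorphisms indeed realizes their common value.
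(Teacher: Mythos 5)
Your proof matches the paper's argument, which also combines Propositions~\ref{prop:Psipresqueff} and~\ref{prop:muhom=hompsi} with the constancy from Corollary~\ref{cor:restr_it_equiv}~(ii) (equation~\eqref{eq:hLambda-hLambdaI}) and Lemma~\ref{lem:SSPsiF}, and then feeds the result into Theorem~\ref{thm:restr-infini-ff} for~\eqref{eq:muhom=hom-infini1}. One small imprecision: the assertion that $\rsect(X;\opb{i}\roim{j}H)\simeq\varinjlim_{\varepsilon>0}\rsect(X\times\mo]0,\varepsilon[;H)$ holds for any sheaf $H$ is not correct when $X$ is noncompact, because the family $\{X\times[0,\varepsilon)\}_{\varepsilon}$ of neighbourhoods of $X\times\{0\}$ in $X\times\rpos$ is not cofinal; the identification works here because the boundary value $\opb{i}\roim{j}\rhom(\Psi_U F,\Psi_U F')$ is supported in the compact set $\pi_{M\times\R}(\Lambda)$ (via the isomorphism of Proposition~\ref{prop:muhom=hompsi} and $\supp\muhom(F,F')\subset\Lambda$ away from the zero-section), and the paper's written proof glosses over the same point.
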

\begin{proof}
  (i) For an interval $I$ of $\rspos$ we set $N_I = M\times\R\times I$.
  Proposition~\ref{prop:muhom=hompsi} and the fact that $N_{]0,\varepsilon[}$ is open
  give the isomorphisms, for any $i\in \Z$,
\begin{align*}
  H^i&\rsect(\Lambda; \mu hom(F,F')) \\
                   &\simeq \varinjlim_{\varepsilon>0}
H^i\RHom(\Psi_{M\times \R}(F) |_{N_{]0,\varepsilon[}},
 \Psi_{M\times \R}(F')|_{N_{]0,\varepsilon[}} ))
                   \simeq \varinjlim_{\varepsilon>0} H^iA_\varepsilon ,
\end{align*}
where we set
$A_\varepsilon = \RHom((\Psi_{M\times \R}(F) )_{N_{]0,\varepsilon[}}, \Psi_{M\times
  \R}(F'))$.  It is enough to prove that $A_\varepsilon \simeq \RHom(F,F')$ for all
$\varepsilon>0$.  The triangle~\eqref{eq:dtgamqr2},
$\Psi_{M\times \R}(F') \to \opb{q}(F') \to \opb{r}(F') \to[+1]$, yields another
distinguished triangle $A_\varepsilon \to B_\varepsilon \to C_\varepsilon \to[+1]$
with
\begin{align*}
  B_\varepsilon &=  \RHom((\Psi_{M\times \R}(F) )_{N_{]0,\varepsilon[}}, \opb{q}(F')) \\
  & \simeq \RHom(\reim{q}((\Psi_{M\times \R}(F) )_{N_{]0,\varepsilon[}}), F')[-1], \\
  C_\varepsilon &=  \RHom((\Psi_{M\times \R}(F) )_{N_{]0,\varepsilon[}}, \opb{r}(F')) \\
  & \simeq \RHom(\reim{r}((\Psi_{M\times \R}(F) )_{N_{]0,\varepsilon[}}), F')[-1] .
\end{align*}
We check in~(ii) below that $B_\varepsilon \simeq \RHom(F,F')$ and in~(iii) that
$C_\varepsilon \simeq 0$, which proves the theorem.

\sui(ii) We compute $B_\varepsilon$.  The triangle~\eqref{eq:dtgamqr2} again gives
the distinguished triangle
\begin{equation}\label{eq:muhom=hom2}
  \begin{aligned}
\reim{q}((\Psi_{M\times \R}(F) )_{N_{]0,\varepsilon[}}) \to
\reim{q}&((\opb{q}F)_{N_{]0,\varepsilon[}})  \\
&\to \reim{q}((\opb{r}F)_{N_{]0,\varepsilon[}}) \to[+1] .
\end{aligned}
\end{equation}
In the following computations we consider $q$, $r$ as maps defined on $M\times\R^2$
(with the same formulas -- see after~\eqref{eq:def-Lambdaplus}). The projection
formula (Proposition~\ref{prop:formulaire}-(g)) gives
$\reim{q}((\opb{q}F)_{N_{]0,\varepsilon[}}) \simeq F[-1]$.  We have
$\dot\SSi(\opb{r}F) \subset \{ -\upsilon = \tau > 0\}$ and
$\SSi(\cor_{N_{]0,\varepsilon]}}) \subset \{\upsilon\leq 0\}$.  Hence
$\dot\SSi((\opb{r}F)_{N_{]0,\varepsilon]}}) \subset \{\upsilon< 0\}$ and
Lemma~\ref{lem:annulation_reim-pM} (used with $q\colon M\times\R^2 \to M\times\R$
instead of $p_M \colon M\times\R \to M$) gives
$\reim{q}((\opb{r}F)_{N_{]0,\varepsilon]}}) \simeq 0$.  Using the triangle
$\cor_{N_{\{\varepsilon\}}}[-1] \to \cor_{N_{]0,\varepsilon[}} \to
\cor_{N_{]0,\varepsilon]}} \to[+1]$ we deduce
$$
\reim{q}((\opb{r}F)_{N_{]0,\varepsilon[}})
\simeq \reim{q}((\opb{r}F)_{N_{\{\varepsilon\}}})[-1]
\simeq \oim{T_\varepsilon} F[-1].
$$
Lemma~\ref{lem:Hom-isomorphes} gives $\RHom(\oim{T_\varepsilon} F,F') \simeq 0$ for
any $\varepsilon>0$.  Applying $\RHom(-,F')$ to the triangle~\eqref{eq:muhom=hom2} we
deduce $B_\varepsilon \simeq \RHom(F,F')$, as claimed.

\sui(iii) Now we prove
$\reim{r}((\Psi_{M\times \R}(F) )_{N_{]0,\varepsilon[}}) \simeq 0$, which implies
$C_\varepsilon \simeq 0$. As in~(ii) we have the triangle
\begin{equation}\label{eq:muhom=hom3}
  \begin{aligned}
\reim{r}((\Psi_{M\times \R}(F) )_{N_{]0,\varepsilon[}}) \to
\reim{r}&((\opb{q}F)_{N_{]0,\varepsilon[}})  \\
&\to[\alpha] \reim{r}((\opb{r}F)_{N_{]0,\varepsilon[}}) \to[+1] 
\end{aligned}
\end{equation}
and an isomorphism $\reim{r}((\opb{r}F)_{N_{]0,\varepsilon[}}) \simeq F[-1]$.  The
microsupport bound $\dot\SSi((\opb{q}F)_{N_{[0,\varepsilon[}}) \subset \{\tau>0$,
$\upsilon\geq 0\}$ and Lemma~\ref{lem:annulation_reim-pM} again give
$\reim{r}((\opb{q}F)_{N_{[0,\varepsilon[}}) \simeq 0$.  We deduce
$$
\reim{r}((\opb{q}F)_{N_{]0,\varepsilon[}})
\simeq \reim{q}((\opb{r}F)_{N_{\{0\}}})[-1] \simeq F[-1] .
$$
Moreover the morphism $\alpha$ in~\eqref{eq:muhom=hom3} corresponds to $\id_F$
through these isomorphisms and we deduce
$\reim{r}((\Psi_{M\times \R}(F) )_{N_{]0,\varepsilon[}}) \simeq 0$.
\end{proof}

\begin{remark}\label{rem:comp_muhom=hom-infini}
We have recalled in~\eqref{eq:comp_muhom} that $\mu hom$ admits a composition
morphism (denoted by $\mucirc$ in Notation~\ref{not:mucomposition}) compatible
with the composition morphism for $\rhom$.  In particular the
isomorphism~\eqref{eq:muhom=hom} is compatible with the composition
morphisms $\circ$ and $\mucirc$.  Since~\eqref{eq:restr-infini-ff1} is
clearly compatible with $\circ$, we deduce that~\eqref{eq:muhom=hom-infini1}
also is compatible with $\circ$ and $\mucirc$.
\end{remark}

\part{Exact Lagrangian submanifolds in cotangent bundles}
\label{chap:ex_Lag}
In this part $M$ is a connected manifold and $\Lambda_0 \subset T^*M$ is a
compact exact Lagrangian submanifold.  We use the sheaves constructed in
Corollary~\ref{cor:exist_quant} and Theorems~\ref{thm:restr-infini-ff},
\ref{thm:muhom=hom} to recover some results on the topology of $\Lambda_0$,
namely that the projection $\Lambda_0 \to M$ is a homotopy equivalence and that
the first and second Maslov classes of $\Lambda_0$ vanish.

The fact that $\Lambda$ and $M$ have the same homology was proved by Fukaya,
Seidel, Smith in~\cite{FSS08} and also by Nadler in~\cite{N09} using the Fukaya
category of the cotangent bundle and assuming that the first Maslov class
vanishes.  The fact that the projection $\Lambda_0 \to M$ is a homotopy
equivalence was proved by Abouzaid in~\cite{A12}, also assuming the vanishing of
the Maslov class.  Then Kragh in~\cite{Kr13} proved that this vanishing holds.

Abouzaid and Kragh gave more precise results on the topology of
$\Lambda$. In~\cite{AK18} they proved that the map $\Lambda_0 \to M$ is a simple
homotopy equivalence and in~\cite{AK16} they proved a vanishing result for the
higher Maslov classes (very roughly the images of the Maslov classes by the map
$\mathrm{BO} \to \mathrm{BH}$ vanish, where $H$ is the group of homotopy
equivalences of the sphere -- this gives the vanishing of obstructions for the
existence of a sheaf in spectra -- see~\cite{JT17, Jin19}).

\smallskip

In the following we choose $f\cl \Lambda_0 \to \R$ such that
$df = \alpha_M|_{\Lambda_0}$ and define as in~\eqref{eq:conif_Lambdatilde}
\begin{equation*}
\Lambda = \{(x,t;\xi,\tau);\; \tau>0, \; (x;\xi/\tau) \in \Lambda_0,
\; t = -f(x;\xi/\tau) \} .  
\end{equation*}
We have $\Lambda/\rspos = \Lambda_0$ and we work with $\Lambda$ instead of
$\Lambda_0$.

\section{Fundamental groups}

We   let
$\pi_\Lambda \cl \Lambda \to M$ be the projection to the base and we denote by
$\pi_1(\pi_\Lambda) \cl \pi_1(\Lambda) \to \pi_1(M)$ the induced morphism of
fundamental groups.

\begin{proposition}\label{prop:morph-Poincare-inj}
The morphism $\pi_1(\pi_\Lambda) \cl \pi_1(\Lambda) \to \pi_1(M)$ is injective.
\end{proposition}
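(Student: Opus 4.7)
The plan is to use the orbit-category sheaf quantization to build a fully faithful functor from local systems on $\Lambda$ to local systems on $M$, and extract injectivity of $\pi_1(\pi_\Lambda)$ from it. The switch to the orbit category is motivated by the fact that $\kss(\cor_\Lambda)(\Lambda)$ may have no global object (the Maslov-class obstruction), whereas by Proposition~\ref{prop:KSstackorb} the orbit stack $\kss^{\orb,s}(\cor_\Lambda)$ has a canonical global simple object $\shf_0$ and $\ol{\muhom^\varepsilon}(\shf_0,-)$ gives an equivalence $\kss^\orb(\cor_\Lambda)\simeq\loc(\cor_\Lambda)$.

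First I would associate to each $L\in\loc(\cor_\Lambda)$ a sheaf $F_L\in\Der_{/[1],\Lambda}(\cor_{M\times\R})$ representing $\shf_0\otimes L$, using Corollary~\ref{cor:exist_quant_orb}. Using the vertical Hamiltonian isotopy of Lemma~\ref{lem:isotopy_transl} together with Corollary~\ref{cor:isot_equivcat}, $\Lambda$ can be translated far in the negative $t$-direction and the representative chosen so that $(F_L)_-\simeq 0$, placing $F_L$ in the subcategory $\Der_{/[1],\Lambda,+}$ of Definition~\ref{def:DerlbLambdaplus}. Setting $\Phi(L):=(F_L)_+$ then defines a functor $\Phi\colon\loc_{/[1]}(\cor_\Lambda)\to\loc_{/[1]}(\cor_M)$.

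Second, the orbit-category analogs of Theorems~\ref{thm:restr-infini-ff} and~\ref{thm:muhom=hom} combine to give, for $L,L'\in\loc(\cor_\Lambda)$,
\[
\RHom_{\loc_{/[1]}(\cor_M)}(\Phi(L),\Phi(L'))\simeq\RHom(F_L,F_{L'})\simeq\rsect(\Lambda;\rhomeps(L,L')),
\]
so $\Phi$ is fully faithful. Moreover, for $\shl\in\loc(\cor_M)$ the pullback $\pi_\Lambda^*\shl$ is represented by $F_{\shf_0}\otimes\opb{p}\shl$ (with $p\colon M\times\R\to M$ the projection), and its restriction at $+\infty$ is $L_0\otimes\shl$ with $L_0:=\Phi(\cor_\Lambda)=(F_{\shf_0})_+$. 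This gives the compatibility $\Phi\circ\pi_\Lambda^*\simeq L_0\otimes(-)$.

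The hard part is translating these categorical statements into injectivity of $\pi_1(\pi_\Lambda)$: given $\gamma\in\pi_1(\Lambda,p_0)$ mapping to $1$ in $\pi_1(M)$, one wants to show that $\gamma$ acts trivially on the stalk at $p_0$ of every local system $L$ on $\Lambda$, which by a standard Tannakian-type argument (applied e.g.\ to the regular representation of a suitable finite quotient) forces $\gamma=1$. The key point to make rigorous is that the monodromy of $L$ along $\gamma$ coincides, through $\Phi$, with the monodromy of $\Phi(L)$ along $\pi_\Lambda\gamma$; the latter is trivial because $\pi_\Lambda\gamma$ is null-homotopic. The compatibility is proven by tracing the parallel transport in $\muhom^\varepsilon$ (which by~\eqref{eq:muhom=hom-infini1} is what underlies the isomorphism of Step 2) along a lift of $\pi_\Lambda\gamma$ to $\Lambda$, and checking that it agrees with parallel transport in $\Phi(L)$ via the identification of stalks of $F_L$ near $(x,+\infty)$ with the contributions of the points of $\Lambda$ above $x$. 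The main technical obstacle lies here, in making this stalk-level comparison precise while staying in the orbit category; one expects to reduce to a local model near a regular point of $\pi_\Lambda$ where both transports coincide with the canonical identification of fibers of the simple sheaf $F_{\shf_0}$.
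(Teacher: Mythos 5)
Your first two steps are sound and match the paper's setup: pass to the orbit category to sidestep the Maslov-class obstruction, quantize using Corollary~\ref{cor:exist_quant_orb}, and use the orbit-category versions of Theorems~\ref{thm:restr-infini-ff} and~\ref{thm:muhom=hom} to compare $\RHom$'s. But the final step — reading off injectivity of $\pi_1(\pi_\Lambda)$ from a ``monodromy comparison'' between $L$ and $\Phi(L)=(F_L)_+$ — is where the argument breaks down, and you rightly flag this as the ``main technical obstacle'': the obstacle is real and is not overcome by the route you describe. The stalks of $\Phi(L)$ at a point $x\in M$ aggregate contributions from all points of $\Lambda$ above $x$, so the monodromy of $\Phi(L)$ around $\pi_\Lambda\gamma$ is \emph{not} simply the monodromy of $L$ around $\gamma$; having $\Phi$ fully faithful does not yield the stalk-level transport identification you need, and a Tannakian argument run through $\Phi$ does not by itself give $\pi_1$-injectivity (compare: pushing forward along a finite cover gives a fully faithful functor on local systems, but the injectivity of $\pi_1$ of a cover is proved differently).

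The paper avoids this entirely by not asking for a functor $\loc(\cor_\Lambda)\to\loc(\cor_M)$ and a monodromy comparison. It takes a \emph{single} local system $\shl_\rho$ on $\Lambda$ — the regular representation of $G=\pi_1(\Lambda)$ over $\Z/2\Z$ — and produces from it a concrete isomorphism \emph{of local systems on $\Lambda$}, not a statement about $M$. Concretely, with $F_0$, $F_\rho$ the quantizations of $\shf_0$ and $\shf_0\epstens\shl_\rho$ and $L_0=(F_0)_+$, $L_1=(F_\rho)_+$, the two twisted sheaves $F_0\epstens \opb{p}L_1$ and $F_\rho\epstens\opb{p}L_0$ on $M\times\R$ have the same restriction at $+\infty$ (namely $L_0\epstens L_1$), hence are isomorphic by the orbit version of Theorem~\ref{thm:restr-infini-ff}. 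Applying $\kssfunc^\orb_\Lambda$ and the equivalence $\kss^\orb(\cor_\Lambda)\simeq\loc(\cor_\Lambda)$ turns this into $\opb{\pi_\Lambda}L'_1\simeq\shl_\rho\tens\opb{\pi_\Lambda}L'_0$ in $\loc(\cor_\Lambda)$. Restricting the corresponding $G$-representations to $K=\ker\pi_1(\pi_\Lambda)$ — where both $\opb{\pi_\Lambda}L'_0$ and $\opb{\pi_\Lambda}L'_1$ are trivial by construction — forces $\rho|_K$ to be trivial, and faithfulness of the regular representation gives $K=\{1\}$. This is the part you are missing: the work is done by an isomorphism that lives on $\Lambda$ itself, so no stalk-level transport comparison between $L$ and $\Phi(L)$ is ever needed.
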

\begin{proof}
(i) We set $\cor=\Z/2\Z$ and $G= \pi_1(\Lambda)$.  We let $\rho \cl G \to
GL(\cor[G])$ be the regular representation of $G$. This means that $\cor[G]$ is
the vector space with basis $\{e_g\}_{g \in G}$ and the action of $G$ is given
by $g\cdot e_h = e_{gh}$, for all $g,h \in G$.  We let $\shl_\rho$ be the local
system on $\Lambda$ with stalks $\cor[G]$ corresponding to this representation
$\rho$.

\sui(ii) We recall some results of~\S\ref{sec:KSstack_orbit}.  The $\hom$ sheaf
in $\kss_{/[1]}(\cor_\Lambda)$ is induced by $\ol{\muhom^\varepsilon}$ through the
equivalence $h_\Lambda \colon \Oloc(\cor_\Lambda) \simeq \loc(\cor_\Lambda)$ of
Lemma~\ref{lem:equiv_loc-Oloc}.  By Proposition~\ref{prop:KSstackorb} there
exists a simple object $\shf_0 \in \kss_{/[1]}(\cor_\Lambda)$ and
$\hom(\shf_0,\cdot)$ gives an equivalence
$\kss_{/[1]}(\cor_\Lambda) \isoto \loc(\cor_\Lambda)$.

We let $\shf_\rho \in \kss_{/[1]}(\cor_\Lambda)$ be the object associated with
  $\shl_\rho$ by this equivalence.   By Corollary~\ref{cor:exist_quant}, there exist
$F_0, F_\rho \in \Der_{/[1],[\Lambda],+}(\cor_{M\times \R})$ such that
$\kssfunc_{/[1],\Lambda}(F_0) \simeq \shf_0$ and
$\kssfunc_{/[1],\Lambda}(F_\rho) \simeq \shf_\rho$.  Moreover  
$$
\shl_\rho \simeq \hom(\shf_0, \shf_\rho)
\simeq h_\Lambda(\muhom^\varepsilon(F_0,F_\rho)|_\Lambda) .
$$
We define $L_0, L_1 \in \Orb(\cor_M)$ by $L_0 = F_0|_{M\times \{t\}}$ and
$L_1 = F_\rho|_{M\times \{t\}}$ for $t\gg 0$.  We let $p\cl M\times\R \to M$ be the
projection and we set $F=F_0 \epstens_{\cor_{M\times \R}} \opb{p}L_1$ and
$F' = F_\rho \epstens_{\cor_{M\times \R}} \opb{p}L_0$.  Taking the tensor product
with a locally constant sheaf (like $L_0$, $L_1$) does not increase the microsupport
and we still have $F, F' \in \Der_{/[1],[\Lambda],+}(\cor_{M\times \R})$.  Then
$F|_{M\times \{t\}} \simeq L_0 \epstens_{\cor_M} L_1 \simeq F'|_{M\times \{t\}}$ for
$t\gg 0$ and Theorem~\ref{thm:restr-infini-ff} (for the orbit category) implies
\begin{equation}\label{eq:morph-Poincare-inj1}
  F_0 \epstens_{\cor_{M\times \R}} \opb{p}L_1
  \simeq F_\rho \epstens_{\cor_{M\times \R}} \opb{p}L_0 .
\end{equation}

\sui(iii)   Applying $\kssfunc_{/[1],\Lambda}$
to~\eqref{eq:morph-Poincare-inj1} we find
$\shf_0 \epstens_{\cor_\Lambda} \opb{\pi_\Lambda} L_1 \simeq \shf_\rho
\epstens_{\cor_\Lambda} \opb{\pi_\Lambda} L_0$ in $\kss_{/[1]}(\cor_\Lambda)$.  Using
the equivalences $\hom(\shf_0,\cdot)$ and $h_\Lambda$ recalled in~(ii) we obtain
$\opb{\pi_\Lambda} L'_1 \simeq \shl_\rho \tens \opb{\pi_\Lambda} L'_0$, where
$L'_i = h_M(L_i)$, for $i=0, 1$ (see Lemma~\ref{lem:equiv_loc-Oloc} -- $L'_i$ is the
locally constant sheaf on $M$ associated with the presheaf
$U \mapsto \Hom_{\Orb(\cor_U)}(\cor_U,L_i)$).

\sui(iv) We let $\rho'_0$ and $\rho'_1$ be the representations of $\pi_1(M)$
corresponding to the local systems $L'_0$ and $L'_1$.  They induce
representations of $G= \pi_1(\Lambda)$, say $\rho''_0$ and $\rho''_1$, through
the morphism $\pi_1(\pi_\Lambda)$. Then the result of~(iii) gives the
isomorphism of representations of $G$, $\rho''_1 \simeq \rho \tens \rho''_0$.
We restrict these representations to the subgroup $K =
\ker(\pi_1(\pi_\Lambda))$ of $G$. Then $\rho''_0|_K$ and $\rho''_1|_K$ are
trivial representations and we deduce that $\rho|_K$ also is trivial.  Since
$\rho$ is a faithful representation of $G$, this gives $K= \{1\}$, as required.
\end{proof}

We will see later (see Theorem~\ref{thm:quant_canon} and
Proposition~\ref{prop:equiv_Poinc}) that the sheaf $F_0$ introduced in part~(ii) of
the proof of Proposition~\ref{prop:morph-Poincare-inj} satisfies
$F_0|_{M\times \{t\}} \simeq \cor_{M}$ for $t\gg0$ (in general it is of rank one but
here $\cor = \Z/2\Z$ so it is constant).  If we already knew this,
\eqref{eq:morph-Poincare-inj1} would give
$F_0 \epstens_{\cor_{M\times \R}} \opb{p}L_1 \simeq F_\rho$ and we would have
directly $\opb{\pi_\Lambda} L'_1 \simeq \shl_\rho$, simplifying the end of the proof.

Let $r\cl M' \to M$ be a covering. The derivative of $r$ induces a covering
$r' \cl T^*M' \to T^*M$.  We let $\Lambda'_0$ be a connected component of
  $\opb{r'}(\Lambda)$.  Then
$\Lambda'_0 \to \Lambda$ is a covering and $\pi_1(\Lambda'_0)$ is a subgroup of
$\pi_1(\Lambda)$.  We have the commutative diagram
\begin{equation}\label{eq:diag_Poinc-groups}
\begin{tikzcd}
\pi_1(\Lambda'_0) \ar[r, hook]  \ar[d]
  & \pi_1(\Lambda)  \ar[d, hook, "{\pi_1(\pi_\Lambda)}"]  \\
\pi_1(M') \ar[r] & \pi_1(M) ,  
\end{tikzcd}
\end{equation}
where $\pi_1(\pi_\Lambda)$ is injective by
Proposition~\ref{prop:morph-Poincare-inj}.  This implies that the morphism
$\pi_1(\Lambda'_0) \to \pi_1(M')$ is injective.  In particular, if $M'$ is the
universal cover of $M$, then $\pi_1(\Lambda'_0)$ vanishes, that is, $\Lambda'_0$
is the universal cover of $\Lambda$.

We let $m_\Lambda \cl \pi_1(\Lambda) \to \Z$ be the group morphism induced by
the Maslov class $\mu^{sh}_1(\Lambda) \in H^1(\Lambda;\Z_\Lambda)$ introduced
in~\S\ref{sec:obstr_classes}. We remark that $m_\Lambda$ determines
$\mu^{sh}_1(\Lambda)$.

\begin{corollary}\label{cor:revet-Maslov}
  There exists a covering map $r\colon M' \to M$ and a closed conic connected
  Lagrangian submanifold $\Lambda' \subset \dT^*(M' \times \R)$ such that the
  derivative of $r$ and the projection $\Lambda' \to M'$ induce isomorphisms
  $\Lambda' \isoto \Lambda$ and  
  $\pi_1(\Lambda) \isofrom \pi_1(\Lambda') \isoto \pi_1(M')$:
\begin{equation}\label{eq:diag_revet-Maslov0}
\begin{tikzcd}
\Lambda'  \ar[r, "\sim"] \ar[d]  & \Lambda  \ar[d]  \\
 M' \ar[r,"r"]  & M . 
\end{tikzcd}
\end{equation}
Moreover the isomorphism $\Lambda' \isoto \Lambda$ identifies $\mu^{sh}_1(\Lambda')$
and $\mu^{sh}_1(\Lambda)$.
\end{corollary}
By Corollary~\ref{cor:LambdasurjsurM} $M$ and its covering $M'$ are compact.
\begin{proof}
  As noticed around the diagram~\eqref{eq:diag_Poinc-groups}, if
  $\tilde r \colon \tilde M \to M$ is the universal cover of $M$ and $\Lambda'_0$ a
  connected component of the pull-back of $\Lambda$ by $\tilde r$, then $\Lambda'_0$
  is the universal cover of $\Lambda$.  We can see that the action of
  $\pi_1(\Lambda)$ on $\tilde M$ (via the inclusion
  $\pi_1(\Lambda) \subset \pi_1(M)$) induces an action on $T^*\tilde M$ which
  preserves $\Lambda'_0$. The result follows by setting
  $M' = \tilde M / \pi_1(\Lambda)$ and $\Lambda' = \Lambda'_0 / \pi_1(\Lambda)$.
\end{proof}

\section{Vanishing of the Maslov class}

By   Corollary~\ref{cor:mush_et_mu} the vanishing of $\mu^{sh}_1(\Lambda)$
implies the vanishing of the usual Maslov class $\mu_1(\Lambda)$.  We recall that
$m_\Lambda \cl \pi_1(\Lambda) \to \Z$ is the group morphism induced by
$\mu^{sh}_1(\Lambda)$ and that $m_\Lambda$ determines $\mu^{sh}_1(\Lambda)$.
  In this section we prove
the vanishing of $\mu^{sh}_1(\Lambda)$ as follows.  Assuming
$\mu^{sh}_1(\Lambda)\not=0$ we consider the cyclic cover $M'$ of $M$ associated with
$\mu^{sh}_1(\Lambda)$. We let $\psi_n$ be the action of $\Z$ on $M'$ and we let
$\Lambda'$ be the pull-back of $\Lambda$. We construct
$G \in \Der_{[\Lambda']}(\cor_{M'\times\R})$ which is quasi-periodic in the sense
$\opb{\psi_n}(G) \simeq G[-n]$, for each $n\in \Z$.  We then check that
$G|_{M' \times \{t\}}$ is bounded for $t\gg0$, obtaining a contradiction.  Since
$\Lambda'$ is not compact, we cannot apply Corollary~\ref{cor:exist_quant}
immediately and we will use the $\Z$-action to construct $G$.

We first give the statement at the level of stacks.  In this section we take
$\cor=\Z/2\Z$.  By Corollary~\ref{cor:revet-Maslov}, up to replacing $M$ by some
covering (and without changing $\Lambda$), we can assume that
$\pi_1(\Lambda) \isoto \pi_1(M)$.  Hence we have
$\pi_\Lambda^* \colon H^1(M; \Z_M) \isoto H^1(\Lambda; \Z_\Lambda)$. By
Lemma~\ref{lem:decomp_deuxouverts} there exists a map $f\colon M \to S^1$ such that
$\mu^{sh}_1(\Lambda) = \pi_\Lambda^*f^*(\delta)$, where $\delta \in H^1(S^1;\Z_{S^1})$ is
the canonical class.   We set $M' = M \times_{S^1} \R$ and
$\Lambda' = \Lambda \times_{S^1} \R$:
\begin{equation}\label{eq:diag-Maslovzero}
\begin{tikzcd}
\Lambda'  \ar[r] \ar[d, "r'"] & M'  \ar[r, "f'"] \ar[d, "r"] & \R  \ar[d]  \\
\Lambda \ar[r, "\pi_\Lambda"] & M \ar[r, "f"]  & S^1 
\end{tikzcd}
\end{equation}
and we have $\mu^{sh}_1(\Lambda') = r'^*(\mu^{sh}_1(\Lambda)) = 0$.  By construction
$M'$ comes with a $\Z$-action, denoted $\psi_n \colon M' \to M'$ for $n\in \Z$, which
translates by $n$ in the fibers of $r$.  We abusively denote by $\psi_n$ the
diffeomorphisms induced by $\psi_n$ on $M' \times \R$, $M' \times \R \times \rspos$
or $\Lambda'$.

\begin{lemma}\label{lem:Maslovzerostack}
  There exists $\shf \in \kss(\cor_{\Lambda'})$ which is simple and satisfies
  $\opb{\psi_n}(\shf) \simeq \shf[-n]$, for each $n\in \Z$.
\end{lemma}
If $\Lambda'$ is connected (that is, if $\mu^{sh}_1(\Lambda) \not=0$),
Proposition~\ref{prop:KSstack=Dloc} then implies that all objects
$\shf' \in \kss(\cor_{\Lambda'})$ satisfy $\opb{\psi_n}(\shf') \simeq \shf'[-n]$.
\begin{proof}
  In~\S\ref{sec:obstr_classes} we have defined $\mu^{sh}_1(\Lambda)$ by a \v Cech
  cocycle as follows.  First we choose a covering $\{\Lambda_i\}_{i\in I}$ of
  $\Lambda$ by suitable small open subsets. We choose
  $F_i \in \Derb_{(\Lambda_i)}(\cor_{M\times\R})$, for each $i\in I$, which is simple
  along $\Lambda_i$.  We have seen that there exist isomorphisms
  $\kssfunc_{\Lambda_i}(F_i)|_{\Lambda_{ij}} \isoto
  \kssfunc_{\Lambda_j}(F_j)|_{\Lambda_{ij}}[d_{ij}]$, for some integers $d_{ij}$, and
  that the cochain $\{d_{ij}\}_{i,j \in I}$ is a cocyle whose cohomology class,
  $\mu^{sh}_1(\Lambda)$, is independent of choices.

  The relation $\mu^{sh}_1(\Lambda)= \pi_\Lambda^* f^*(\delta)$ gives a
  representative of $\mu^{sh}_1(\Lambda)$, as follows.  We cover $S^1$ by three arcs,
  say $A_0$, $A_1$, $A_2$, and represent $\delta$ by the cocycle
  $\{\delta_{ij}\}_{i,j=0,1,2}$, $\delta_{01} = \delta_{12} = 0$, $\delta_{20} = 1$
  (and $\delta_{ij} = - \delta_{ij}$).  Taking the $\Lambda_i$'s small enough, we can
  assume that there exists $\sigma \colon I \to \{0,1,2\}$ such that
  $f(\pi_\Lambda(\Lambda_i)) \subset A_{\sigma(i)}$ for each $i\in I$.  Then
  $d'_{ij} := \delta_{\sigma(i), \sigma(j)}$ defines a cocycle representing
  $\mu^{sh}_1(\Lambda)$. Hence $\{d_{ij}\}_{i,j \in I}$ and $\{d'_{ij}\}_{i,j \in I}$
  differ by a coboundary and, shifting the $F_i$'s by this coboundary, we obtain
  \begin{equation}\label{eq:Maslovzerostack1}
  \kssfunc_{\Lambda_i}(F_i)|_{\Lambda_{ij}} \isoto
  \kssfunc_{\Lambda_j}(F_j)|_{\Lambda_{ij}}[d'_{ij}].
\end{equation}
We write the pull-back of $A_i$ to $\R$ as $\bigsqcup_{n\in \Z} A_i^n$ in such a way
that $A_i^{n+1} = 1+ A_i^n$ and $A_1^0$ meets $A_0^0$ and $A_2^0$.  We numerate the
pull-backs of the $\Lambda_i$'s to $T^*(M'\times\R)$ accordingly and obtain a
covering $\{\Lambda_i^n\}_{i\in I, n\in \Z}$ of $\Lambda'$. The pull-back of $F_i$
yields $F_i^n \in \Derb_{(\Lambda_i^n)}(\cor_{M'\times\R})$, $n\in \Z$ (actually
$F_i^n = F_i^m$ but we see $F_i^n$ and $F_i^m$ in different categories), and we set
$\shf_i^n = \kssfunc_{\Lambda_i^n}(F_i^n)[n]$.  Then the
relation~\eqref{eq:Maslovzerostack1} gives
$\shf_i^n |_{\Lambda_i^n \cap \Lambda_j^m} \simeq \shf_j^m |_{\Lambda_i^n \cap
  \Lambda_j^m}$.  Since $\cor = \Z/2\Z$ and the $\shf_i^n$'s are simple, the
compatibility conditions on the triple intersections are trivial.  Hence the
$\shf_i^n$'s glue together in an object $\shf$ which satisfies
$\opb{\psi_n}(\shf) \simeq \shf[-n]$, for each $n\in \Z$.
\end{proof}

\begin{theorem}\label{thm:Maslovzero}
  We have $\mu_1(\Lambda) = 0$.
\end{theorem}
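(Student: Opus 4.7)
The plan is to deduce the theorem from the vanishing of the sheaf-theoretic Maslov class $\mu^{sh}_1(\Lambda) \in H^1(\Lambda;\Z)$. Working with $\cor = \Z/2\Z$, the second obstruction $\mu^{sh}_2(\Lambda) \in H^2(\Lambda; \cor^\times)$ vanishes automatically since $\cor^\times = \{1\}$, so by the analysis of \S\ref{sec:obstr_classes}, $\mu^{sh}_1(\Lambda) = 0$ is equivalent to the existence of a global simple object in $\kss(\cor_\Lambda)(\Lambda)$, and Corollary~\ref{cor:mush_et_mu} then yields $\mu_1(\Lambda) = \mu_1^{gf}(\Lambda) = 0$. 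Since $\mu^{sh}_1(\Lambda)$ is encoded by the homomorphism $m_\Lambda \colon \pi_1(\Lambda) \to \Z$, it is enough to show $m_\Lambda \equiv 0$.

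I will argue by contradiction. If $m_\Lambda \neq 0$, Corollary~\ref{cor:revet-Maslov} produces the tower
\[
\begin{tikzcd}
\Lambda_0 \ar[r, "\Z"] \ar[d] & \Lambda_1 \ar[r, "\sim"] \ar[d] & \Lambda \ar[d] \\
M_0 \ar[r, "\Z", "f"'] & M_1 \ar[r, "g"] & M
\end{tikzcd}
\]
with $\mu^{sh}_1(\Lambda_0) = 0$. Let $T$ denote a generator of the deck group, which acts freely on $M_0$ and on $\Lambda_0$; the loop $\gamma_T$ in $\Lambda \cong \Lambda_0/\Z$ traced out by $T$ satisfies $m_\Lambda(\gamma_T) = \pm 1$. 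Since $\mu^{sh}_1(\Lambda_0) = 0$ and $\cor^\times$ is trivial, the stack $\kss(\cor_{\Lambda_0})(\Lambda_0)$ admits a global simple object $\shg_0$. I will use a variant of Corollary~\ref{cor:exist_quant} to produce a lift $G \in \Der_{\Lambda_0, +}(\cor_{M_0 \times \R})$: a genuine simple sheaf with $\dot\SSi(G) = \Lambda_0$ and $G_- = 0$.

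The deck translate $T_* G$ is then another object of $\Der_{\Lambda_0, +}(\cor_{M_0 \times \R})$ whose image in $\kss(\cor_{\Lambda_0})$ is $T_* \shg_0$. Comparing $\shg_0$ and $T_*\shg_0$ via the rank-one local system $\ol{\muhom}(\shg_0, T_* \shg_0)$ on $\Lambda_0$ and using Lemma~\ref{lem:equiv_loc-Oloc} to identify orbit-category local systems with actual ones, one concludes that $T_* \shg_0 \simeq \shg_0[d]$ in $\kss(\cor_{\Lambda_0})$, with $d = m_\Lambda(\gamma_T) \neq 0$. The fully faithfulness of Theorem~\ref{thm:muhom=hom} lifts this to a genuine isomorphism $T_* G \simeq G[d]$ in $\Der(\cor_{M_0 \times \R})$, and Theorem~\ref{thm:restr-infini-ff} restricts it to $T_* G_+ \simeq G_+[d]$ in $\Der(\cor_{M_0})$. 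But $G_+$ is a locally constant complex on the connected manifold $M_0$ with bounded cohomology, while $G_+[d]$ is concentrated in a shifted range of degrees; since $T$ is a homeomorphism it cannot send a bounded locally constant complex to a nontrivial shift of itself. Hence $G_+ \simeq 0$, and then $G \simeq 0$ by the fully faithfulness of $F \mapsto F_+$, contradicting the simplicity of $G$.

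The main obstacle will be the quantization: Corollary~\ref{cor:exist_quant} and Theorem~\ref{thm:quant_legendrian_dbl} are stated for a $\Lambda$ coming from a compact exact Lagrangian, whereas $\Lambda_0 \subset \dT^*(M_0 \times \R)$ is non-compact, being a $\Z$-cover of the compact $\Lambda$. One must adapt the proof of Theorem~\ref{thm:quant_legendrian_dbl} to the present $\Z$-cofinite setting in which the quotient $\Lambda_0/\Z \cong \Lambda$ is compact, either by performing the gluing $\Z$-equivariantly on the cover or by exhausting $M_0$ by compact subsets and patching the resulting sheaves. Once the quantization is available, reading off the Maslov shift $d$ from deck equivariance and deducing the contradiction through the restriction at infinity is an immediate application of the tools developed in Part~\ref{part:quant}.
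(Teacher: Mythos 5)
Your proposal follows the paper's strategy closely: reduce via Corollary~\ref{cor:revet-Maslov} to a cyclic cover $M_0$ with $\mu^{sh}_1(\Lambda_0)=0$, quantize $\Lambda_0$ on $M_0\times\R$, extract a deck-shift relation $\opb{\psi_1}(G_+)\simeq G_+[d]$ with $d\neq 0$, and contradict the boundedness of $G_+$ (which comes from Lemma~\ref{lem:stalk_simple_sh}). The main place where you diverge cosmetically is in how the shift relation is obtained: you derive $T_*\shg_0\simeq\shg_0[d]$ abstractly from uniqueness of simple objects in $\kss(\cor_{\Lambda_0})$ over $\Z/2\Z$ (where all rank-one local systems are trivial), and then lift it using Theorem~\ref{thm:muhom=hom}, whereas the paper builds the shift $[nd]$ directly into the \v{C}ech gluing of $G'$ across the deck translates $U_i^n$, so that $\opb{\psi_n}(G')\simeq G'[nd]$ holds by construction. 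Your route is slightly slicker conceptually but trades one technical burden for another: you must still justify that Theorems~\ref{thm:restr-infini-ff} and~\ref{thm:muhom=hom}, stated for compact $\Lambda/\rspos$, apply to the non-compact $\Lambda_0$—the same issue you flag for Theorem~\ref{thm:quant_legendrian_dbl}. You correctly identify the non-compactness of $\Lambda_0$ as the real obstacle and propose the right fix (equivariant gluing on the cover), but do not mention the key device the paper actually uses: Lemma~\ref{lem:isotopy_transl} is applied downstairs on the \emph{compact} $\Lambda\subset \dT^*(M\times\R)$ and the resulting homogeneous Hamiltonian isotopy is \emph{lifted} to $\dT^*(M_0\times\R)$, which is what lets Corollary~\ref{cor:restr_it_equiv} be applied upstairs to collapse the doubled microsupport and produce $G$. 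Without this observation, a direct adaptation of the compact arguments to $\Lambda_0$ is not automatic, so that step would need to be spelled out to complete the proof.
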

\begin{proof}
  Let $\shf \in \kss(\cor_{\Lambda'})$ be given by Lemma~\ref{lem:Maslovzerostack}.
  We first build in~(i) a doubled sheaf $F$ on $M'\times\R$ which represents
  $\shf$. Then in~(ii) we deduce from $F$ a usual sheaf, as in the proof of
  Corollary~\ref{cor:exist_quant}, and in~(iii) we prove that $\mu^{sh}_1(\Lambda)$
  vanishes.

  \sui(i) As in \S\ref{sec:quant-dbl} we choose an adapted family $\shv = \{V_a$;
  $a\in A\}$ for $\Lambda$ which is stable by intersection.  We can assume
  $f(V_a) \not= S^1$, for each $a\in A$, and hence $(r\times \id_\R)^{-1}(V_a)$
  decomposes as a disjoint union $\bigsqcup_{n\in \Z} V'_{a,n}$ with
  $V'_{a,n} \isoto V_a$, for each $n$.  In this way the pull-back of $\shv$ to
  $M'\times\R$ gives an adapted family $\shv' = \{V'_b$; $b\in B\}$ for $\Lambda'$
  which is stable by intersection and by the $\Z$-action.

  We set $B_0 = \{b\in B$; $V'_b \cap f'^{-1}([0,1]) \not= \emptyset\}$ and
  $U_0 = \bigcup_{b\in B_0} V'_b$, $\Lambda'_0 = \Lambda' \cap T^*U_0$,
  $U_n = \psi_n^{-1}(U_0)$, $\Lambda'_n = \psi_n^{-1}(\Lambda'_0)$.  By
  Theorem~\ref{thm:quant_legendrian_dbl} there exists
  $F_0 \in \Der_{\Lambda', \shv'}^\dbl(\cor_{M'\times\R})$ such that
  $\SSi^\dbl(F_0) = \Lambda'_0$ and
  $\kssfunc_{\Lambda'_0}^\dbl(F_0) \simeq \shf|_{\Lambda'_0}$.  By $\Z$-invariance
  $F_1 := \opb{\psi_1}F_0$ belongs to
  $\Der_{\Lambda', \shv'}^\dbl(\cor_{M'\times\R})$, $\SSi^\dbl(F_1) = \Lambda'_1$,
  and
  $\kssfunc_{\Lambda'_1}^\dbl(F_1) \simeq (\opb{\psi_1}\shf)|_{\Lambda'_1} \simeq
  \shf[-1]|_{\Lambda'_1}$.

  We set $U_n^+ = U_n \times \rspos$, $U_{01}^+ = U_0^+ \cap U_1^+$ and
  $\Lambda'_{01} = \Lambda'_0 \cap \Lambda'_1$.  Since $\shv'$ is stable by
  intersection, $\rsect_{U_{01}^+}F_0$ and $\rsect_{U_{01}^+}F_1$ belong to
  $\Der_{\Lambda', \shv'}^\dbl(\cor_{M'\times\R})$ and we have
  $\SSi^\dbl(\rsect_{U_{01}^+}F_0) = \SSi^\dbl(\rsect_{U_{01}^+}F_0) = \Lambda'_{01}$
  and
  $\kssfunc_{\Lambda'_{01}}^\dbl(\rsect_{U_{01}^+}F_0) \simeq
  \kssfunc_{\Lambda'_{01}}^\dbl(\rsect_{U_{01}^+}F_1)[1] \simeq
  \shf|_{\Lambda'_{01}}$.  By Corollary~\ref{cor:kssfuncPsidbl} there exists an open
  subset $V$ of $M'\times\R\times\rspos$ such that
  $(M'\times\R \times \{0\}) \sqcup V$ is open in $M' \times\R \times \rpos$ and an
  isomorphism
  $u\colon (\rsect_{U_{01}^+} F_1[1])|_V \isoto (\rsect_{U_{01}^+} F_0)|_V$.

  Now we can glue $F_0$ and $F_1$ and define
  $F^{01}\in \Der(\cor_{M'\times\R\times\rspos})$ by the triangle
  $F^{01} \to \rsect_{V\cap U_0^+}F_0 \oplus \rsect_{V\cap U_1^+}F_1[1]) \to[u']
  \rsect_{V\cap U_{01}^+} F_0 \to[+1]$, where $u' = (\id,-u)$.  Then
  $F^{01}|_{(U_i\times\rspos) \cap V} \simeq F_i[i]|_{(U_i\times\rspos) \cap V}$ for
  $i=0,1$ and hence $F^{01}$ belongs to
  $\Der_{\Lambda', \shv'}^\dbl(\cor_{M'\times\R})$ and represents
  $\shf|_{\Lambda'_0 \cup \Lambda'_1}$.  More generally, assuming that the $V'_b$'s are
  small enough so that $U_0 \cap U_2 = \emptyset$, we can glue all translates of
  $F_0$ at once and we define $F$ by the triangle
  $$
  F \to \bigoplus_{n\in \Z} \opb{\psi_n}(\rsect_{V\cap U_0^+}F_0)[n] \to[v]
  \bigoplus_{n\in \Z} \opb{\psi_n}(\rsect_{V\cap U_{01}^+} F_0)[n] \to[+1],
  $$
  where the restriction of $v$ to the summand $\opb{\psi_n}(\rsect_{V\cap U_0^+}F_0)$
  is $\opb{\psi_n}(\id) \oplus \opb{\psi_{n-1}}(-u)$.  Then $F$ belongs to
  $\Der_{\Lambda', \shv'}^\dbl(\cor_{M'\times\R})$ (but is only locally boun\-ded in
  cohomological degrees) and represents $\shf$.  Moreover, we have by construction
  $\opb{\psi_n}F \simeq F[-n]$, for all $n\in\Z$.

  \sui(ii) As in~(i) of the proof of Corollary~\ref{cor:exist_quant} we can see that
  there exists $u>0$ such that, truncating $F$ near
  $(\gammaf \star \dot\pi_{M'\times \R}(\Lambda'))$, we obtain
  $G \in \Derlb_{[\Lambda'^+]}(\cor_{M'\times\R\times\mo]0,u[})$, representing
  $\shf$, where
  $\Lambda'^+ = q_d\opb{q_\pi}(\Lambda') \sqcup r_d\opb{r_\pi}(\Lambda')$ as
  in~\eqref{eq:def-Lambdaplus} (here $\Lambda'$ is not compact but we can first
  consider the restriction of $F$ to a neighborhood of $\ol{U_0}$ and then get the
  result by $\Z$-equivariance).  We still have $\opb{\psi_n}G \simeq G[-n]$, for all
  $n\in\Z$.

  As in the proof of Corollary~\ref{cor:exist_quant} we deduce
  $G' \in \Derlb_{[\Lambda']}(\cor_{M'\times\R})$, representing $\shf$, and such that
  $\opb{\psi_n}G' \simeq G'[-n]$, for all $n\in\Z$.  (To define an isotopy $\phi$
  which translates $\Lambda'$ as in Lemma~\ref{lem:isotopy_transl}, we first apply
  the lemma to $\Lambda$ and pull back the isotopy to $\dT^*(M'\times\R)$.)

  \sui(iii) We have $G'|_{M' \times \{t\}} \simeq 0$ for $t\ll0$ and we define
  $G'_+ = G'|_{M' \times \{t\}}$ for $t\gg 0$ as in
  Definition~\ref{def:DerlbLambdaplus}.  Theorem~\ref{thm:restr-infini-ff} gives
  $\RHom(G',G') \isoto \RHom(G'_+,G'_+)$.  Since $G'\not\simeq 0$ it follows that
  $G'_+ \not\simeq 0$.  We also know that $G'_+$ has locally constant cohomology
  sheaves.

  Let us assume that $\mu^{sh}_1(\Lambda) \not=0$. Then $M'$ is connected and the
  stalk $(G'_+)_x$ is independent of $x\in M'$.  By Lemma~\ref{lem:stalk_simple_sh}
  $\dim(H^*(G'_+)_x)$ is finite.  In particular $G'_+$ is bounded in degrees.  On the
  other hand $\opb{\psi_n}G'_+ \simeq G'_+[-n]$, for all $n\in \Z$, and we have a
  contradiction.
\end{proof}

\section{Restriction at infinity}

We recall the notation $\Derb_{[\Lambda],+}(\cor_{M\times\R})$ of
Definition~\ref{def:DerlbLambdaplus} and $F_+ = F|_{M \times \{t_0\}}$ for
$t_0\gg 0$, for $F \in \Derb_{[\Lambda],+}(\cor_{M\times\R})$.

\begin{proposition}\label{prop:restr-inf-corps}
We assume that $\cor = \Z$ or $\cor$ is a finite field.  Let $F \in
\Derb_{[\Lambda],+}(\cor_{M\times\R})$.  We assume that $F$ is simple along
$\Lambda$.  Then $F_+$ is concentrated in one degree, say $i$, and $H^iF_+$ is a
local system with stalks isomorphic to $\cor$.
\end{proposition}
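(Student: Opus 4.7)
The plan is to reduce the proposition to a computation of $\RHom(F_+,F_+)$ using the simplicity of $F$, and then to analyze $F_+$ as a bounded complex with locally constant cohomology on $M$. Since $F$ is simple along $\Lambda$, the canonical section $\id_F^\mu$ yields an isomorphism $\cor_\Lambda \isoto \muhom(F,F)|_\Lambda$ (this is~\eqref{eq:carc_Fsimple} for $\cor$ a field; for $\cor=\Z$ the same local-to-global argument applies since the microstalks of a simple sheaf are $\cor[d]$). Combining Theorems~\ref{thm:muhom=hom} and~\ref{thm:restr-infini-ff} then yields
\begin{equation*}
\RHom(F_+,F_+) \;\simeq\; \rsect(\Lambda;\cor_\Lambda) \;\simeq\; H^*(\Lambda;\cor),
\end{equation*}
and reducing to each connected component of $\Lambda$ gives $\End(F_+)=\cor$.

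To propagate finite-rankness of the stalks of $F_+$, I would apply Lemma~\ref{lem:stalk_simple_sh} on $M\times\R$: since $F_-\simeq 0$, the stalk of $F$ at $(x,-t_0)$ vanishes for $t_0\gg 0$, and along a generic path from $(x,-t_0)$ to $(x,t_0)$ crossing $\pi_{M\times\R}(\Lambda)$ only at smooth transverse points each crossing modifies the stalk by a free rank-one module, so finite-rankness propagates to $(F_+)_x$. Pulling $F_+$ back to the universal cover $\widetilde M$ of $M$ then produces, by simple-connectedness, a constant complex $L_{\widetilde M}$ for some $L\in\Derb(\cor)$ with finitely generated cohomology. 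Since $\cor$ has global dimension at most one, Lemma~\ref{lem:compl_scinde} gives $L\simeq\bigoplus_k H^kL[-k]$, and the deck action of $\pi_1(M)$ induces a conjugation action on each $\End_\cor(H^kL)$. From
\begin{equation*}
\cor \;=\; \End(F_+) \;\supseteq\; \bigoplus_k \End_\cor(H^kL)^{\pi_1(M)}
\end{equation*}
together with the fact that every non-zero summand contains the scalars $\cor\cdot\id$, I conclude that exactly one $H^dL=:V$ is non-zero: thus $F_+$ is concentrated in degree $d$, and the stalk $V$ satisfies $\End_\cor(V)^{\pi_1(M)}=\cor$.

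The main obstacle is to show that $V$ has rank one, since over $\cor=\Z$ or a non-algebraically-closed finite field the Schur-type condition $\End_\cor(V)^{\pi_1(M)}=\cor$ does not by itself force $\dim V = 1$. My plan to overcome this is to use the orbit-category framework of Part~\ref{part:orbcat}: under Proposition~\ref{prop:KSstackorb} and Lemma~\ref{lem:equiv_loc-Oloc}, simple objects of $\kss^\orb(\cor_\Lambda)(\Lambda)$ are classified by rank-one local systems on $\Lambda$, and $\Oloc(\cor_M)\simeq\loc(\cor_M)$. The image $\iota(F)$ is simple and corresponds to some rank-one local system $\mathcal{L}$ on $\Lambda$, so transporting this through the orbit analogue of Theorem~\ref{thm:restr-infini-ff} should exhibit $\iota(F_+)\in\Oloc(\cor_M)\simeq\loc(\cor_M)$ as a rank-one local system on $M$; combined with the Schur simplicity already established, this yields $V\simeq\cor$ and completes the proof.
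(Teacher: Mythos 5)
Your proposal diverges from the paper's proof in a way that opens several genuine gaps.

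First, you assert $\End(F_+)\simeq\rsect(\Lambda;\cor_\Lambda)\simeq\cor$ "by reducing to each connected component of~$\Lambda$.'' But $H^0(\Lambda;\cor_\Lambda)\simeq\cor^{|I|}$ where $I$ is the set of connected components, and nothing at this stage forbids $\Lambda$ from being disconnected -- indeed Part~\ref{chap:ex_Lag} never assumes $\Lambda_0$ connected, and connectedness is actually a \emph{consequence} of this proposition together with the rest of the argument. The paper's proof turns the possible disconnectedness into the engine of the argument rather than an obstacle: after Theorem~\ref{thm:muhom=hom} and Remark~\ref{rem:comp_muhom=hom-infini} it identifies $\End(F'_+)\simeq H^0(\Lambda';\cor_{\Lambda'})$ as an isomorphism of \emph{algebras}, decomposes the right-hand unit into $|I|$ orthogonal idempotents indexed by the components, transports them to $\Mat_{d\times d}(\cor)$, and uses the elementary bound ``at most $d$ nonzero orthogonal idempotents in $\Mat_{d\times d}(\cor)$'' to get $d^2=|I|\leq d$, hence $d=1$ (and, incidentally, $|I|=1$). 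This one-step algebra argument is what your proposal is missing; the orbit-category route you sketch cannot substitute for it because $\Orb(\cor_M)$ as defined in Part~\ref{part:orbcat} is set up only for $\cor=\Z/2\Z$, so it says nothing about $\cor=\mathbb{F}_q$ in odd characteristic, nor directly about $\cor=\Z$.

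Second, the ``one degree'' step also has a gap. The inclusion $\End(F_+)\supseteq\bigoplus_k\End_\cor(H^kL)^{\pi_1(M)}$ would require $F_+$ to split as $\bigoplus_k H^kF_+[-k]$ \emph{in $\Derb(\cor_M)$}, not merely at the level of stalks. Lemma~\ref{lem:compl_scinde} applied to $L\in\Derb(\cor)$ gives the stalk splitting only; to split $F_+$ one would need $\Ext^j(H^aF_+,H^bF_+)=0$ for $j\geq 2$ in $\Derb(\cor_M)$, which involves the cohomology of $M$ and has no reason to vanish. The paper avoids this entirely by working with $H^{a-b}\RHom(F'_+,F'_+)$ rather than $\End$: the nonvanishing of $\hom(H^bF'_+,H^aF'_+)$ in degree $a-b$ requires no splitting, and comparing with $H^{a-b}\rsect(\Lambda';\cor_{\Lambda'})$ (concentrated in degrees $\geq 0$) forces $a=b$. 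Note also that the paper first passes to a \emph{finite} cover trivializing only the extremal cohomology sheaves (possible precisely because $\cor$ is finite), which keeps $\Lambda'$ compact; your universal cover detour adds nothing here and is where the argument drifts. Finally, for $\cor=\Z$ you do not address torsion: the paper reduces to the finite-field case by showing each $M_i$ is free (a $p$-torsion summand would, after $\ltens_\Z\Z/p\Z$, produce cohomology in two consecutive degrees, contradicting the field case) and then tensors with $\Z/2\Z$ to pin down the rank. Without this reduction your conclusion for $\Z$ is unsupported.
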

\begin{proof}
  (i) We first assume that $\cor$ is a finite field.  Let us prove that $F_+$ is
  concentrated in one degree.  Let $a\leq b$ be respectively the minimal and maximal
  integers $i$ such that $H^iF_+ \not\simeq 0$. By Lemma~\ref{lem:stalk_simple_sh}
  the local systems $H^iF_+$ are of finite rank.  Since $\cor$ is finite we can find
  a finite cover $r\cl M' \to M$ such that $\opb{r}( H^iF_+)$ is a constant sheaf,
  for $i=a,b$.  We set $F' = \opb{(r\times \id_\R)} F$ and
  $\Lambda' = \opb{d(r\times \id_\R)}(\Lambda)$. Then
  $\opb{r}( H^iF_+) \simeq H^iF'_+$, $F'$ is simple along $\Lambda'$ and we have
  $\mu hom(F',F') \simeq \cor_{\Lambda'}$. Since $\Lambda'/\rspos$ is compact,
  Theorem~\ref{thm:muhom=hom} gives
\begin{equation}\label{eq:restr-inf-corps1}
\RHom(F'_+,F'_+) \simeq \rsect(\Lambda'; \cor_{\Lambda'}).
\end{equation}
On the other hand the complex $G=\rhom(F'_+,F'_+)$ is concentrated in degrees
greater than $a-b$ and $H^{a-b}G \simeq \hom(H^aF'_+,H^bF'_+)$ is a non zero
constant sheaf.  Hence $H^{a-b}\RHom(F'_+,F'_+)$ is non zero.
By~\eqref{eq:restr-inf-corps1} we deduce that $H^{a-b}\rsect(\Lambda';
\cor_{\Lambda'})$ also is non zero, which implies $a-b\geq 0$.  Hence $a=b$ and
$F_+$ is concentrated in a single degree.

\medskip\noindent
(ii) Now we prove that $H^aF_+$ is of rank one, that is, $H^aF'_+ \simeq
\cor_{M'}$. There exists $d\geq 1$ such that $H^aF'_+ \simeq \cor^d_{M'}$. The
isomorphism~\eqref{eq:restr-inf-corps1} gives in degree $0$:
\begin{equation}\label{eq:restr-inf-corps2}
\Hom(\cor^d,\cor^d) \simeq H^0(\Lambda'; \cor_{\Lambda'}) .
\end{equation}
By Remark~\ref{rem:comp_muhom=hom-infini} this isomorphism is compatible with
the algebra structures of both terms.  Let $I$ be the set of connected
components of $\Lambda'$. We obtain $|I| = d^2$.  The natural decomposition
$H^0(\Lambda'; \cor_{\Lambda'}) \simeq \bigoplus_{i\in I} H^0(\Lambda'_i;
\cor_{\Lambda'_i})$ gives an expression of the unit as a sum of orthogonal
idempotents, $1 = \sum_{i\in I} e_i$, where $e_i$ is the projection
$$
e_i \cl H^0(\Lambda'; \cor_{\Lambda'}) 
 \to H^0(\Lambda'_i; \cor_{\Lambda'_i}), \qquad i\in I.
$$
We let $m_i \in \Hom(\cor^d,\cor^d) = \Mat_{d\times d}(\cor)$ be the image of
$e_i$ by~\eqref{eq:restr-inf-corps2}.  The relation $1 = \sum_{i\in I} e_i$
gives a decomposition of the identity matrix $I_d = \sum_{i\in I} m_i$ as a sum
of $|I|$ non-zero orthogonal projections, that is, $m_i^2 = m_i$ and $m_im_j=0$,
for $i\not= j$. We deduce that $|I| \leq d$, that is, $d^2 \leq d$. Hence $d=1$,
as claimed.

\medskip\noindent (iii) Now we assume that $\cor = \Z$.  By
Lemma~\ref{lem:stalk_simple_sh} the stalks of $F_+$ are of finite rank over
$\Z$.  We recall that any object of $\Derb(\Z)$ is the sum of its
cohomology. Hence, for $z= (x,t)$, $t\gg 0$, we can write
$(F_+)_z = \bigoplus_{i=a}^b M_i[-i]$, where the $M_i$'s are abelian groups of
finite rank.

We first prove that the $M_i$'s are free. If this is not the case, there exist
$i\in \Z$ and a prime $p$ such that $M_i$ has $p$-torsion.  We set
$G = F \ltens_\Z \Z/p\Z$.  Then $G$ is simple along $\Lambda$ and
$(G_+)_z \simeq (F_+)_z \ltens_\Z \Z/p\Z$.  We have seen that $(G_+)_z$ is
isomorphic to $\Z/p\Z[j]$ for some shift $j$.  On the other hand  
$H^{-1}(M_i \ltens_\Z \Z/p\Z)$ and $H^0(M_i \ltens_\Z \Z/p\Z)$ are both non zero,
since $M_i$ has $p$-torsion.  This gives a contradiction and proves that, for
each $i$, we have $M_i \simeq \Z^{d_i}$ for some $d_i$.

We set $G = F \ltens_\Z \Z/2\Z$.  We have again
$(G_+)_z \simeq (F_+)_z \tens_\Z \Z/2\Z \simeq \Z/2\Z[j]$.  Hence $d_i=0$ for
all $i\not= j$ and $d_j=1$, as claimed.
\end{proof}

\begin{corollary}\label{cor:cohomology_Lambda}
We assume that $\cor = \Z$ or $\cor$ is a finite field and that
$\kss(\cor_\Lambda)$ has at least one global simple object. Then the projection
$\Lambda \to M$ induces an isomorphism $\rsect(M;\cor_M) \isoto
\rsect(\Lambda;\cor_\Lambda)$.
\end{corollary}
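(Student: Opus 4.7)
The plan is to use the quantization $F$ of $\shf$ together with two descriptions of $\RHom(F,F)$ provided by Theorems~\ref{thm:muhom=hom} and~\ref{thm:restr-infini-ff}, and to identify the resulting abstract isomorphism with the pullback $\pi_\Lambda^*$. First, I would apply Corollary~\ref{cor:exist_quant} to obtain $F \in \Der_\Lambda(\cor_{M\times\R})$ with $\kssfunc_\Lambda(F) \simeq \shf$. Since $\pi_\R(\Lambda)$ is compact, there exists $A>0$ such that $F|_{M \times \mo]-\infty,-A\mc[} \simeq F_- \etens \cor_{\mo]-\infty,-A\mc[}$ for a locally constant sheaf $F_-$ on $M$; I would modify $F$ (subtracting the contribution of $F_-$ via an appropriate cofiber and cut-off construction) so that the result lies in $\Derb_{\Lambda,+}(\cor_{M\times\R})$ and still represents $\shf$ as a simple object.

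Second, I would assemble two chains of ring isomorphisms through $\RHom(F,F)$. Simplicity of $F$ together with Corollary~\ref{cor:muhom_loccst} yields a canonical isomorphism $\muhom(F,F)|_{\dT^*(M\times\R)} \simeq \cor_\Lambda$ sending $\id^\mu_F$ to the unit section, and Theorem~\ref{thm:muhom=hom} then gives
\[\RHom(F,F) \simeq \rsect(\Lambda; \muhom(F,F)) \simeq \rsect(\Lambda; \cor_\Lambda) .\]
For the second chain, Theorem~\ref{thm:restr-infini-ff} yields $\RHom(F,F) \isoto \RHom(F_+, F_+)$, and Proposition~\ref{prop:restr-inf-corps} (which uses the hypothesis $\cor = \Z$ or $\cor$ finite) gives $F_+ \simeq L[i]$ for a rank-one local system $L$ on $M$ with stalk $\cor$; since $\rhom(L,L) \simeq \cor_M$ canonically, this produces $\RHom(F,F) \simeq \rsect(M; \cor_M)$. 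Combining the two chains yields an abstract isomorphism of graded rings $\alpha \colon \rsect(\Lambda;\cor_\Lambda) \simeq \rsect(M;\cor_M)$.

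Third, to identify $\alpha^{-1}$ with $\pi_\Lambda^*$, I would consider the ring map
\[\rho \colon \rsect(M;\cor_M) \to \RHom(F,F), \qquad f \mapsto \opb{p}(f) \otimes \id_F ,\]
where $p \colon M \times \R \to M$ is the projection. Following the first chain and using the compatibility of Theorem~\ref{thm:muhom=hom} with composition (Remark~\ref{rem:comp_muhom=hom-infini}), $\rho$ corresponds to $\pi_\Lambda^* \colon \rsect(M;\cor_M) \to \rsect(\Lambda;\cor_\Lambda)$. Following the second chain, restriction to $F_+$ sends $\opb{p}(f) \otimes \id_F$ to $f \otimes \id_{F_+}$, which matches $f$ itself under $\RHom(F_+, F_+) \simeq \rsect(M;\cor_M)$. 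Hence $\alpha \circ \pi_\Lambda^*$ is the identity of $\rsect(M;\cor_M)$, and since $\alpha$ is an isomorphism, so is $\pi_\Lambda^*$.

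The hard part will be the first step: Corollary~\ref{cor:exist_quant} only produces $F \in \Der_\Lambda$, and the naive cofiber construction killing $F_-$ introduces spurious microsupport in the direction $\tau<0$ at the slice $t = -A$, violating $\dot\SSi(F) \subset \{\tau > 0\}$. Overcoming this will likely require either a sharper inspection of the construction in Corollary~\ref{cor:exist_quant} showing that $F_-$ can be arranged to vanish from the outset, or a more delicate microlocal cut-off argument that removes $F_-$ without corrupting the microsupport condition $\dot\SSi(F) = \Lambda$.
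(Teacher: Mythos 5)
Your proposal follows the same strategy as the paper: produce a simple $F \in \Derb_{\Lambda,+}(\cor_{M\times\R})$, identify $\RHom(F,F)$ with $\rsect(\Lambda;\cor_\Lambda)$ via Theorem~\ref{thm:muhom=hom}, identify it with $\rsect(M;\cor_M)$ via Theorem~\ref{thm:restr-infini-ff} together with Proposition~\ref{prop:restr-inf-corps}, and then check that the composite isomorphism is $\pi_\Lambda^*$ by tracing $\id_F$ through both chains. Your step 3 is done in the paper with an explicit commutative diagram of sheaves on $M\times\R$ relating $\cor_{M\times\R}$, $\rhom(F,F)$ and $\roim{\dot\pi}(\muhom(F,F)|_\Lambda)$; your ring-map formulation is equivalent.

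Your worry about $F_- \simeq 0$ is legitimate as a matter of what is literally stated: Corollary~\ref{cor:exist_quant} only asserts $F \in \Der_\Lambda(\cor_{M\times\R})$. The paper's proof (and again Theorem~\ref{thm:quant_canon}(i)) simply cites it as producing $F \in \Derb_{\Lambda,+}(\cor_{M\times\R})$, i.e.\ with $F_- \simeq 0$; this is true but requires the ``sharper inspection of the construction'' that you name as the first of your two options, and that is indeed the right move. Concretely: the doubled sheaf $F_0$ of Theorem~\ref{thm:quant_legendrian_dbl} has, by clause~(i) of Definition~\ref{def:cat_dbl_sheaves}, support contained in $S_\Lambda = \gammaf \star \dot\pi_{M\times\R}(\Lambda)$ near $u=0$, hence $F_0$ vanishes on $M\times\mo]-\infty,-A[\times\mo]0,\varepsilon[$; and the Hamiltonian isotopy $\phi$ of Lemma~\ref{lem:isotopy_transl} used to extend it is the identity off a conic neighbourhood of $\Lambda^1 = \bigsqcup_{u>0} T'_u(\Lambda)\times\{u\}$, which is contained in $\{t > -A\}$ because $T'_u$ shifts $t$ by $u>0$. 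By Lemma~\ref{lem:restr_isotopie} the extension $F_2$ therefore still vanishes on $\{t<-A\}\times\rspos$, and so does $F$. Your alternative fix (cofiber or cut-off on $F$) is, as you yourself note, unworkable without corrupting $\dot\SSi(F)=\Lambda$, so it was correct to set it aside; what would complete your proof is carrying out the inspection above rather than leaving it as one of two options.
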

\begin{proof}
  We choose a simple object $\shf \in \kss(\cor_\Lambda)$. By
  Corollary~\ref{cor:exist_quant} there exists
  $F \in \Derb_{[\Lambda],+}(\cor_{M\times\R})$ such that
  $\kssfunc_{\Lambda}(F) \simeq \shf$.  By
  Proposition~\ref{prop:restr-inf-corps} $F_+$ is concentrated in one degree,
  say $i$, and $H^iF_+$ is a local system with stalks isomorphic to $\cor$.
  Hence $\rhom(F_+,F_+) \simeq \cor_M$ and
  $\RHom(F_+,F_+) \simeq \rsect(M;\cor_M)$.  Since $F$ is simple we also have
  $\mu hom(F,F)|_\Lambda \simeq \cor_\Lambda$.  By Theorem~\ref{thm:muhom=hom}
  we deduce an isomorphism
\begin{equation}\label{eq:cohomology_Lambda1}
\rsect(M;\cor_M) \simeq \rsect(\Lambda;\cor_\Lambda) .
\end{equation}
By construction~\eqref{eq:cohomology_Lambda1} is given by taking the global
sections in the bottom morphism of the commutative diagram:
$$
\begin{tikzcd}
\cor_{M\times\R} \ar[rr, "a"] \ar[d, "b"]
 &&  \roim{\dot\pi}(\cor_\Lambda) \ar[d, "c", "\wr"']  \\
\rhom(F,F) \ar[r, "\sim"] & \roim{\pi} \mu hom(F,F) \ar[r]
& \roim{\dot\pi} (\mu hom(F,F)|_\Lambda) , 
\end{tikzcd}
$$
where
$\pi = \pi_{M\times\R}$,
$b$ and $c$ map the sections $1$ to the identity morphisms.  When taking
global sections, $b$ and $c$ induce isomorphisms and~$a$ induces the natural
morphism $\rsect(M;\cor_M) \to \rsect(\Lambda;\cor_\Lambda)$ given by the
projection of $\Lambda$ to the base $M$. The bottom horizontal arrow
induces~\eqref{eq:cohomology_Lambda1}.  This shows
that~\eqref{eq:cohomology_Lambda1} is indeed induced by the projection to the
base.
\end{proof}

\begin{remark}\label{rem:cohom_Z2Z}
  By Theorem~\ref{thm:Maslovzero} the first Maslov class of $\Lambda$ vanishes.
  Hence, when $\cor = \Z/2\Z$ the stack $\kss(\cor_\Lambda)$ has a global simple
  object and Corollary~\ref{cor:cohomology_Lambda} applies: the projection
  $\Lambda \to M$ induces an isomorphism
$$
\rsect(M;\Z/2\Z_M) \isoto \rsect(\Lambda;\Z/2\Z_\Lambda).
$$
\end{remark}

\section{Vanishing of the second obstruction class}

We have seen the class $\mu^{sh}_2(\Lambda) \in H^2(\Lambda; \cor^\times)$
in~\S\ref{sec:obstr_classes}. By Corollary~\ref{cor:mush_et_mu}, if $\cor=\Z$,
the vanishing of $\mu^{sh}_2(\Lambda)$ implies the vanishing of the usual
obstruction class $\mu^{gf}_2(\Lambda)$.  Here we prove that
$\mu^{sh}_2(\Lambda) \in H^2(\Lambda;\Z/2\Z_\Lambda)$ vanishes.  For this we
will use Corollary~\ref{cor:exist_quant} in the framework of twisted sheaves.
Let $c\in H^2(M;\Z/2\Z)$ be given and let $\check c = \{c_{ijk}\}$,
$i,j,k \in $, be a \v Cech cocycle representing $c$ with respect to a finite
covering $\{U_i\}_{i\in I}$ of $M$. We view $\Z/2\Z$ as the multiplicative group
$\{\pm 1\}$ and $c_{ijk} = \pm 1$, for all $i,j,k$.
\begin{definition}\label{def:twisted-sheaves}
A $\check c$-twisted sheaf $F$ on $M$ is the data of sheaves $F_i \in
\Mod(\cor_{U_i})$ and isomorphisms $\varphi_{ij} \cl F_j|_{U_{ij}} \isoto
F_i|_{U_{ij}}$ satisfying the condition
$$
\varphi_{ij} \circ \varphi_{jk} = c_{ijk} \, \varphi_{ik} .
$$
The $\check c$-twisted sheaves form an abelian category that we denote by
$\Mod(\cor^{\check c}_M)$. We denote by $\Derb(\cor^{\check c}_M)$ its derived
category.
\end{definition}
The prestack $U \mapsto \Mod(\cor^{\check c|_U}_U)$ is a stack which is locally
equivalent to the stack of sheaves.  The usual operations on sheaves extend to
twisted sheaves. In particular if $\check c$, $\check d$ are \v Cech cocycles on
$M$ and $F \in \Derb(\cor^{\check c}_M)$, $F' \in \Derb(\cor^{\check d}_M)$, we
have a tensor product $F \ltens F' \in \Der(\cor^{\check c + \check d}_M)$ and a
homomorphism sheaf $\rhom(F,F')\in \Der(\cor^{\check d - \check c}_M)$. If $f
\cl M \to N$ is a morphism of manifolds and $\check d$ is a \v Cech cocycle on
$N$ with values in $\{\pm 1\}$, we have inverse images $\opb{f}, \epb{f} \cl
\Derb(\cor^{\check d}_N) \to \Derb(\cor^{f^* \check d}_M)$ and direct images
$\roim{f}, \reim{f} \cl \Derb(\cor^{f^* \check d}_M) \to \Derb(\cor^{\check
  d}_N)$ with the usual adjunction properties.  The notion of microsupport also
generalizes to the twisted case (since this is a local notion and twisted
sheaves are locally equivalent to sheaves) with the same behaviour with respect
to the sheaves operations.

We can define a Kashiwara-Schapira stack $\kss(\cor^{\check c}_{\Lambda})$ and
formulate a version of Corollary~\ref{cor:exist_quant} in this framework: for
$\shf \in \kss(\cor^{\check c}_{\Lambda})$ there exists
$F \in \Derb(\cor^{\check c}_{M\times\R})$ such that $\dot\SSi(F) = \Lambda$,
$F|_{M\times \{t\}} \simeq 0$ for $t\ll 0$ and
$\kssfunc^{\check c}_{\Lambda}(F) \simeq \shf$.

\begin{proposition}\label{prop:vanishingSW}
  The class $\mu^{gf}_2(\Lambda) \in H^2(\Lambda;\Z/2\Z_\Lambda)$ is zero.
\end{proposition}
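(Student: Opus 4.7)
The plan is to exploit the twisted-sheaf formalism introduced just before the statement to reduce $\mu^{gf}_2(\Lambda)=0$ to the vanishing of the second sheaf-theoretic obstruction class $\mu^{sh}_2(\Lambda)\in H^2(\Lambda;\Z^\times)=H^2(\Lambda;\Z/2\Z)$ with integer coefficients. By Theorem~\ref{thm:Maslovzero} the first Maslov class vanishes, so $\mu^{sh}_1(\Lambda)=0$ (both classes encode the same data), and $\mu^{sh}_2(\Lambda)\in H^2(\Lambda;\Z/2\Z)$ is defined. By Remark~\ref{rem:cohom_Z2Z} the projection $\pi_\Lambda\colon\Lambda\to M$ induces an isomorphism on $\Z/2\Z$-cohomology (over $\cor=\Z/2\Z$ both obstructions are automatically zero since $\cor^\times=\{1\}$, so $\kss(\Z/2\Z_\Lambda)$ has a global simple object and Corollary~\ref{cor:cohomology_Lambda} applies). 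Consequently there is a unique $c\in H^2(M;\Z/2\Z)$ with $\pi_\Lambda^*(c)=\mu^{sh}_2(\Lambda)$, and I fix a \v Cech representative $\check c=\{c_{ijk}\}$ on $M$ with values in $\{\pm1\}$.

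Next I consider the twisted Kashiwara--Schapira stack $\kss(\Z^{\check c}_\Lambda)$. Twisting by $\check c$ modifies the gluing data in $\kss$ by the $2$-cocycle $\pi_\Lambda^*(\check c)$, hence shifts the second obstruction class by $-\pi_\Lambda^*(c)$; the obstruction for $\kss(\Z^{\check c}_\Lambda)$ is therefore $\mu^{sh}_2(\Lambda)-\pi_\Lambda^*(c)=0$, so this stack admits a global simple object $\shf$. Applying the twisted version of Corollary~\ref{cor:exist_quant} stated in the preamble yields $F\in\Derb(\Z^{\check c}_{M\times\R})$ with $\dot\SSi(F)=\Lambda$, $F$ simple along $\Lambda$, and $F|_{M\times\{t\}}\simeq 0$ for $t\ll 0$. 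Applying the twisted analog of Proposition~\ref{prop:restr-inf-corps}---which goes through verbatim, since the proof is local and simplicity together with Theorem~\ref{thm:muhom=hom} are unaffected by the twist---the restriction $F_+=F|_{M\times\{t_0\}}$ for $t_0\gg 0$ is concentrated in a single degree $i$, and $H^iF_+$ is a rank one $\check c$-twisted local system on $M$.

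The existence of such a rank one twisted local system forces $[\check c]=0$. Indeed, a rank one $\check c$-twisted local system consists of free rank one $\Z$-modules on a refinement of the covering together with isomorphisms $\varphi_{ij}$ over the double intersections satisfying $\varphi_{ij}\varphi_{jk}=c_{ijk}\varphi_{ik}$; each $\varphi_{ij}$ is multiplication by a unit $\epsilon_{ij}\in\{\pm1\}$, so $c_{ijk}=\epsilon_{ij}\epsilon_{jk}\epsilon_{ik}^{-1}$ is a coboundary. Hence $c=0$, and so $\mu^{sh}_2(\Lambda)=\pi_\Lambda^*(c)=0$. Combined with the already known $\mu^{sh}_1(\Lambda)=0$, this gives a global simple object in $\kss(\Z_\Lambda)$, and Corollary~\ref{cor:mush_et_mu} applied over $\cor=\Z$ (where $2\neq 0$) yields $\mu^{gf}_2(\Lambda)=0$.

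The main thing to verify carefully is the compatibility of the construction of $\mu^{sh}_2$ with twisting: precisely, that the obstruction class of $\kss(\Z^{\check c}_\Lambda)$ differs from that of $\kss(\Z_\Lambda)$ by $\pi_\Lambda^*([\check c])$. This is essentially formal because twisted sheaves are locally equivalent to ordinary sheaves (so the construction of local simple representatives and their pairwise comparison isomorphisms is unchanged), and the threefold comparison $w_{ik}\circ w_{kj}\circ w_{ij}$ used to define $\mu^{sh}_2$ picks up exactly the factor $\pi_\Lambda^*(c_{ijk})$ from the twisted gluing axiom. The remaining ingredient---that Proposition~\ref{prop:restr-inf-corps} and the rank-one conclusion survive the twist---is routine, since all inputs (simplicity, the isomorphism $\mu hom(F,F)\simeq \Z_\Lambda$, the algebra isomorphism \eqref{eq:restr-inf-corps2}, finiteness of stalks via Lemma~\ref{lem:stalk_simple_sh}) are local or microlocal and insensitive to global twists.
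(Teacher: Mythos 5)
Your proof follows exactly the same route as the paper's: pull $\mu^{sh}_2(\Lambda)$ back along the isomorphism $H^2(M;\Z/2\Z)\isoto H^2(\Lambda;\Z/2\Z)$ from Corollary~\ref{cor:cohomology_Lambda}, use the twisted Kashiwara--Schapira stack $\kss(\Z^{\check c}_\Lambda)$ to produce (via the twisted Corollary~\ref{cor:exist_quant} and Proposition~\ref{prop:restr-inf-corps}) a rank-one $\check c$-twisted local system, deduce that $\check c$ is a coboundary, and conclude by Corollary~\ref{cor:mush_et_mu}. Your exposition spells out the obstruction-shift argument for why the twisted stack has a global simple object, which the paper asserts more tersely, but the proof is the same.
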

\begin{proof}
  (i) By Corollary~\ref{cor:cohomology_Lambda} and Remark~\ref{rem:cohom_Z2Z} we
  have an isomorphism $H^2(M;\Z/2\Z_M) \isoto H^2(\Lambda;\Z/2\Z_\Lambda)$.  We
  let $c\in H^2(M;\Z/2\Z_M)$ be the inverse image of $\mu^{sh}_2(\Lambda)$ by
  this isomorphism and we choose a \v Cech cocycle $\check c$ representing $c$.
  Then the twisted Kashiwara-Schapira stack $\kss(\Z^{\check c}_{\Lambda})$ has
  a simple global object and the twisted version of
  Corollary~\ref{cor:exist_quant} gives
  $F \in \Derb_{[\Lambda],+}(\Z^{\check c}_{M\times\R})$ which is simple along
  $\Lambda$.  By Proposition~\ref{prop:restr-inf-corps} we have
  $F_+ \simeq L[d]$ where $L \in \Mod(\Z^{\check c}_M)$ is a twisted locally
  constant sheaf with stalks isomorphic to $\Z$ and $d$ is some integer.

  \sui(ii) Now we prove that the existence of $L \in \Mod(\Z^{\check c}_M)$ as
  in~(i) implies that $\check c$ is a boundary, that is,
  $\mu^{sh}_2(\Lambda) = 0$.  The cocycle $\check c$ is associated with a
  covering $\{U_i\}_{i\in I}$ of $M$.  The object $L \in \Mod(\Z^{\check c}_M)$
  is given by sheaves $L_i \in \Mod(\Z_{U_i})$ and isomorphisms
  $\varphi_{ij} \cl L_j|_{U_{ij}} \isoto L_i|_{U_{ij}}$, for any $i,j\in I$,
  such that $\varphi_{ij} \circ \varphi_{jk} = c_{ijk} \, \varphi_{ik}$ for all
  $i,j,k \in I$.  We can assume that $U_i$ is contractible and that $U_{ij}$ is
  connected for any $i,j\in I$.  Since $L$ is locally constant, we can choose an
  isomorphism $\varphi_i \cl L|_{U_i} \simeq \Z_{U_i}$ for each $i\in I$. Then
  the composition $b_{ij} = \varphi_i \varphi_{ij} \varphi_j^{-1}$ is an
  isomorphism $\Z\isoto \Z$, that is, $b_{ij} = \pm 1$.  We let $\check b$ be
  the $1$-cochain defined by $\{b_{ij}\}_{i,j\in I}$.  Then the equality
  $\varphi_{ij} \circ \varphi_{jk} = c_{ijk} \, \varphi_{ik}$ says that
  $\check c$ is the boundary of $\check b$, as required.
\end{proof}

\section{Homotopy equivalence}

Here we recover a result of~\cite{A12} that the projection
$\pi_\Lambda \cl \Lambda \to M$ is a homotopy equivalence, that is, it induces
an isomorphism of all fundamental groups.  It is well know that this is
equivalent to the following: $\pi_1(\Lambda) \isoto \pi_1(M)$ and, for any local
system $L$ on $M$, $\rsect(M;L) \isoto \rsect(\Lambda; \opb{\pi_\Lambda}L)$.

We recall the subcategory $\Derlb_{[\Lambda],+}(\cor_{M\times\R})$ of
$\Derlb_{[\Lambda]}(\cor_{M\times\R})$ introduced in
Definition~\ref{def:DerlbLambdaplus}.  It consists of the $F$ such that
$F_- \simeq 0$, where we have set $F_\pm = F|_{M\times \{\pm t\}}$ for $t\gg0$.

\begin{theorem}\label{thm:quant_canon}
Let $\cor$ be a ring with finite global dimension.
\begin{itemize}
\item [(i)] There exists $F \in\Derb_{[\Lambda],+}(\cor_{M\times\R})$ such that
  $F_+ \simeq \cor_M$; it is a simple sheaf.
\item [(ii)] The functor $G \mapsto G_+$,
  $\Derb_{[\Lambda],+}(\cor_{M\times\R}) \to \Derb(\cor_M)$, induces an equivalence
  between $\Derb_{[\Lambda],+}(\cor_{M\times\R})$ and the full subcategory of
  $\Derb(\cor_M)$ formed by the locally constant sheaves.  In particular the object
  $F$ in~{\rm(i)} is unique up to a unique isomorphism.
\item [(iii)] The projection $\pi_\Lambda \cl \Lambda \to M$ induces an
  isomorphism in cohomology
  $\rsect(M;\cor_M) \isoto \rsect(\Lambda;\cor_\Lambda)$.
\item [(iv)] More generally we have
  $\rsect(M;L) \isoto \rsect(\Lambda; \opb{\pi_\Lambda}L)$ for any local system
  $L$ on $M$.
\end{itemize}
\end{theorem}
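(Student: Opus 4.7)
The plan is to build the distinguished quantization $F$ in part (i) by choosing a global simple object of $\kss(\cor_\Lambda)$, quantizing it via Corollary~\ref{cor:exist_quant}, and then normalizing its restriction at $+\infty$ to be the constant sheaf $\cor_M$; parts (ii)--(iv) will follow formally from Theorems~\ref{thm:restr-infini-ff} and~\ref{thm:muhom=hom}.

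For (i), I would first invoke the vanishing of both obstruction classes: $\mu^{sh}_1(\Lambda) \in H^1(\Lambda;\Z)$ vanishes by Theorem~\ref{thm:Maslovzero} (which identifies the sheaf Maslov class with $\mu_1(\Lambda)$), and $\mu^{sh}_2(\Lambda) \in H^2(\Lambda;\Z^\times) = H^2(\Lambda;\Z/2\Z)$ vanishes by the proof of Proposition~\ref{prop:vanishingSW}. By naturality under $\Z \to \cor$, the class $\mu^{sh}_2(\Lambda) \in H^2(\Lambda;\cor^\times)$ also vanishes, so the discussion of \S\ref{sec:obstr_classes} furnishes a global simple object $\shf \in \kss(\cor_\Lambda)$. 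Corollary~\ref{cor:exist_quant}, whose construction yields a sheaf supported in $M \times [-A,+\infty[$, produces a simple $F_0 \in \Derb_{\Lambda,+}(\cor_{M\times\R})$ with $\kssfunc_\Lambda(F_0) \simeq \shf$. Next I would show that $(F_0)_+$ is, up to shift, a rank-one local system on $M$: Proposition~\ref{prop:restr-inf-corps} treats $\cor = \Z$ or a finite field, and for a general ring of finite global dimension one reduces to those cases by base change $- \ltens_\cor \cor/\mathfrak{m}$ along maximal ideals (detecting the degree and rank of each stalk). Writing $(F_0)_+ \simeq L[-d]$ for a rank-one local system $L$, the sheaf
\begin{equation*}
F := F_0[d] \otimes \opb{p}(L^{\otimes -1}),
\end{equation*}
where $p\colon M\times\R \to M$ is the projection, lies in $\Derb_{\Lambda,+}(\cor_{M\times\R})$, remains simple along $\Lambda$ (tensoring with a rank-one pullback preserves both microsupport and simplicity), and satisfies $F_+ \simeq \cor_M$.

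Part (ii) is then immediate: Theorem~\ref{thm:restr-infini-ff} provides fully faithfulness of $G \mapsto G_+$, and essential surjectivity onto the locally constant subcategory follows by sending $L \in \Derb(\cor_M)$ locally constant to $F \otimes \opb{p}L \in \Derb_{\Lambda,+}$, whose restriction at $+\infty$ is $L$; the uniqueness in (i) is the special case $F_+ = F'_+ = \cor_M$. For (iii) and (iv), given a local system $L$ on $M$ I would set $G := F \otimes \opb{p}L$ and combine Theorems~\ref{thm:restr-infini-ff} and~\ref{thm:muhom=hom}:
\begin{equation*}
\rsect(M;L) \simeq \RHom(\cor_M, L) \simeq \RHom(F, G) \simeq \rsect(\Lambda; \muhom(F,G)|_\Lambda).
\end{equation*}
Since $\opb{p}L$ is locally constant on $M\times\R$ and $F$ is simple, a standard projection-formula argument gives $\muhom(F, F\otimes \opb{p}L)|_\Lambda \simeq \muhom(F,F)|_\Lambda \otimes \opb{\pi_\Lambda}L \simeq \opb{\pi_\Lambda}L$, so that $\rsect(M;L) \isoto \rsect(\Lambda; \opb{\pi_\Lambda}L)$. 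Taking $L = \cor_M$ yields (iii); general $L$ gives (iv). A diagram chase as in the proof of Corollary~\ref{cor:cohomology_Lambda} verifies the isomorphism is induced by $\pi_\Lambda$.

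The main obstacle is the rank-one concentration of $(F_0)_+$ over a general ring of finite global dimension: the endomorphism identity $\RHom((F_0)_+, (F_0)_+) \simeq \rsect(\Lambda; \cor_\Lambda)$ (from Theorems~\ref{thm:restr-infini-ff}, \ref{thm:muhom=hom} and simplicity) controls only the endomorphisms of $(F_0)_+$, and one must carry out the base-change reduction to the case $\cor = \Z$ of Proposition~\ref{prop:restr-inf-corps} with care, controlling the interaction between derived tensor products and the locally constant stalk structure.
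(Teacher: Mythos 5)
Your overall strategy matches the paper's, and parts (ii)--(iv) are argued essentially as in the text. The problem is in part (i): you try to work over a general ring $\cor$ from the start, producing $F_0 \in \Derb_{\Lambda,+}(\cor_{M\times\R})$ and then attempting to show $(F_0)_+$ is concentrated in one degree with rank-one stalks. To do this you propose base-changing along maximal ideals, $-\ltens_\cor \cor/\mathfrak{m}$, and invoking Proposition~\ref{prop:restr-inf-corps}. But that proposition only applies to $\cor = \Z$ or $\cor$ a \emph{finite} field, precisely because its proof uses a finite cover of $M$ trivializing the outermost local systems $H^aF_+$, $H^bF_+$. A quotient $\cor/\mathfrak{m}$ can be an infinite field (e.g.\ $\cor=\Q$, or $\cor = \R$, or $k[x]_{(x)}$ with residue field $k$ infinite), and over such a field a finite-rank local system need not be trivialized by a finite cover; the proposition, as stated, does not cover this case, so the reduction does not close.

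The paper sidesteps this entirely: it constructs the normalized object over $\Z$ first, where Proposition~\ref{prop:restr-inf-corps} applies directly to give $(F^0)_+ \simeq L[d]$ with $L$ a $\Z$-local system of rank one, normalizes to get $F^1$ with $F^1_+ \simeq \Z_M$, and only then sets $F := F^1 \ltens_{\Z_{M\times\R}} \cor_{M\times\R}$. Since $(-)_+$ commutes with this base change, $F_+ \simeq \cor_M$ is automatic, and no concentration or rank-one argument is needed over $\cor$. This is the key move you are missing. As a smaller point, your naturality argument for the vanishing of $\mu^{sh}_2(\Lambda) \in H^2(\Lambda;\cor^\times)$ is correct but worth making precise: the cocycle $\{c_{ijk}\}$ can be chosen to take values in $\{\pm 1\} \subset \cor^\times$ by building the local simple sheaves over $\Z$ and tensoring with $\cor$, and since $\mu^{sh}_2$ is independent of the choice of local simple sheaves, the class over $\cor$ is the image of the class over $\Z$ under $H^2(\Lambda;\{\pm1\}) \to H^2(\Lambda;\cor^\times)$. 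In any case, once you switch to the paper's order of operations, you no longer even need the vanishing of $\mu^{sh}_2$ over $\cor$ directly; only its vanishing over $\Z$ is used.
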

\begin{proof}
  (i) We first assume that $\cor=\Z$.  By Theorem~\ref{thm:Maslovzero} and
  Proposition~\ref{prop:vanishingSW} we know that $\mu_1(\Lambda) = 0$ and
  $\mu^{sh}_2(\Lambda)= 0$.  By Corollary~\ref{cor:exist_quant} there exists
  $F^0 \in \Derb_{[\Lambda],+}(\cor_{M\times\R})$ which is simple along $\Lambda$.
  By Proposition~\ref{prop:restr-inf-corps} we have $F^0_+ \simeq L[d]$ where
  $L \in \Mod(\cor_M)$ is locally constant with stalks isomorphic to $\Z$ and
  $d$ is some integer. Let $p\cl M\times \R \to M$ be the projection. Then
  $F^1 = F^0 \tens \opb{p}L^{\otimes-1}[-d]$ satisfies the required properties.
  For a general ring $\cor$ we set
  $F = F^1 \ltens_{\Z_{M\times\R}} \cor_{M\times\R}$.

  \sui(ii) By Theorem~\ref{thm:restr-infini-ff} the functor $G \mapsto G_+$ is
  fully faithful.  Let $L \in \Derb(\cor_M)$ be a locally constant sheaf.  Then
  $F \ltens \opb{p}L$ belongs to $\Derb_{[\Lambda],+}(\cor_{M\times\R})$ and we
  have $(F \ltens \opb{p}L)_+ \simeq F_+ \ltens L \simeq L$, which proves that
  $G \mapsto G_+$ is essentially surjective.
  
  \sui (iii) follows from Corollary~\ref{cor:cohomology_Lambda}.

\sui(iv) We apply Theorem~\ref{thm:muhom=hom} with $F$ given by~(i) and
$F' = F \ltens (L \etens \cor_\R)$.  We have in this case
$F'_+ \simeq F_+ \ltens L \simeq L$, hence $\rhom(F_+, F'_+) \simeq L$, and also
$\muhom(F,F') \simeq \muhom(F,F) \ltens \opb{\pi_\Lambda}L \simeq
\opb{\pi_\Lambda}L$. The theorem then gives
$\rsect(M;L) \simeq \rsect(\Lambda; \opb{\pi_\Lambda}L )$.
\end{proof}

Now we prove that $\pi_1(\Lambda) \to \pi_1(M)$ is an isomorphism.  It is
equivalent to show that the inverse image by $\pi_\Lambda$ induces an
equivalence of categories $\loc(\cor_M) \isoto \loc(\cor_\Lambda)$, for some
field $\cor$.

\begin{proposition}\label{prop:equiv_Poinc}
Let $\cor$ be a field. Let $\pi_\Lambda \cl \Lambda \to M$ be the
projection. Then the inverse image functor $\opb{\pi_\Lambda} \cl \loc(\cor_M)
\to \loc(\cor_\Lambda)$ is an equivalence of categories.
\end{proposition}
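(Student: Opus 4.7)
The plan is to reduce the statement to showing that the induced homomorphism $\pi_1(\pi_\Lambda) \colon \pi_1(\Lambda) \to \pi_1(M)$ is a group isomorphism. Since both $M$ and $\Lambda$ are connected, the stalk-at-basepoint functor together with monodromy gives an equivalence between $\loc(\cor_M)$ (resp. $\loc(\cor_\Lambda)$) and the category of $\cor$-linear representations of $\pi_1(M)$ (resp.\ $\pi_1(\Lambda)$), and under these equivalences $\opb{\pi_\Lambda}$ corresponds to restriction of representations along $\pi_1(\pi_\Lambda)$. Restriction along a group isomorphism is manifestly an equivalence of categories, so establishing that $\pi_1(\pi_\Lambda)$ is bijective will suffice.

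Injectivity of $\pi_1(\pi_\Lambda)$ is already given by Proposition~\ref{prop:morph-Poincare-inj}, so the focus is on surjectivity. I would argue by contradiction: suppose $H := \pi_1(\pi_\Lambda)(\pi_1(\Lambda)) \subsetneq G := \pi_1(M)$, and apply Theorem~\ref{thm:quant_canon}(iv) to the (possibly infinite-rank) local system $L := \cor[G/H]$ on $M$ corresponding to the permutation representation of $G$ on left cosets. Taking $H^0$ of the resulting isomorphism $\rsect(M;L) \simeq \rsect(\Lambda;\opb{\pi_\Lambda}L)$ yields $L^G \simeq L^H$. Since $G$ acts transitively on $G/H$, a direct check gives $\dim L^G = 1$ when $[G:H]<\infty$ and $\dim L^G = 0$ otherwise, whereas $\dim L^H$ equals the number of finite $H$-orbits on $G/H$. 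The trivial coset $H$ is always a singleton $H$-orbit, so $\dim L^H \geq 1$; and when $[G:H] < \infty$ we have $\dim L^H = |H\backslash G/H| \geq 2$, since any $g \in G \setminus H$ yields a double coset $HgH$ distinct from $H$. In both cases $\dim L^G < \dim L^H$, giving the required contradiction.

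The main subtlety I expect is verifying that Theorem~\ref{thm:quant_canon}(iv) really does apply to the possibly-infinite-rank local system $\cor[G/H]$. Tracing through its proof, the essential input is Theorem~\ref{thm:muhom=hom} applied to $F' = F \ltens (L \etens \cor_\R)$, which only requires $F'$ to be bounded with microsupport controlled by $\Lambda$; since $L$ sits in cohomological degree $0$ and $F$ is bounded with $\dot\SSi(F) = \Lambda$, these conditions hold irrespective of the rank of $L$, so no restriction is needed. Once surjectivity of $\pi_1(\pi_\Lambda)$ is established, combining with Proposition~\ref{prop:morph-Poincare-inj} yields the desired isomorphism $\pi_1(\Lambda) \isoto \pi_1(M)$ and hence the equivalence of categories.
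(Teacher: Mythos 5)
Your proof is correct but takes a genuinely different route from the paper. You reduce the categorical statement to the group-theoretic assertion that $\pi_1(\pi_\Lambda)$ is bijective, cite Proposition~\ref{prop:morph-Poincare-inj} for injectivity, and derive surjectivity by a neat contradiction using the permutation local system $\cor[G/H]$ together with Theorem~\ref{thm:quant_canon}(iv). The paper instead proves fully-faithfulness and essential surjectivity of $\opb{\pi_\Lambda}$ directly: fully-faithfulness by applying~\eqref{eq:muhom=hom-infini1} to $F\tens\opb{p}L$ and $F\tens\opb{p}L'$ (so that $\Hom(L,L') \simeq \Hom(\opb{\pi_\Lambda}L,\opb{\pi_\Lambda}L')$), and essential surjectivity by taking an arbitrary $L_1\in\loc(\cor_\Lambda)$, quantizing the corresponding object of $\kss(\cor_\Lambda)$ via Corollary~\ref{cor:exist_quant} to get $F_1$, and then using Theorem~\ref{thm:restr-infini-ff} to identify $F_1$ with $F\tens\opb{p}(F_1)_+$, whence $L_1 \simeq \opb{\pi_\Lambda}(F_1)_+$. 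Your argument avoids the essential-surjectivity step entirely (and hence avoids Proposition~\ref{prop:KSstack=Dloc} and the need to quantize an arbitrary microlocal local system), at the cost of the detour through $\pi_1$. The one point that deserves care is exactly the one you flag: $\cor[G/H]$ can have infinite rank, so one must confirm Theorem~\ref{thm:quant_canon}(iv) applies beyond finite rank. This is consistent with the paper's own practice — the proof of Proposition~\ref{prop:morph-Poincare-inj} already runs the analogous machinery (in the orbit-category setting) through the regular representation $\cor[\pi_1(\Lambda)]$, which is likewise infinite-dimensional in general — so your appeal is legitimate, but it is indeed the step a careful referee would scrutinize, since several of the ingredients of Theorem~\ref{thm:muhom=hom} involve direct images and $\muhom$, which do not automatically commute with infinite direct sums.
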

\begin{proof}
(i) We first prove that $\opb{\pi_\Lambda}$ is fully faithful. Let $F \in
\Derb_{[\Lambda],+}(\cor_{M\times\R})$ be the simple object given by
Theorem~\ref{thm:quant_canon}.  Since $F$ is simple we have $\mu
hom(F,F)|_\Lambda \simeq \cor_\Lambda$ and we deduce, for $L, L' \in
\loc(\cor_M)$,
\begin{equation}\label{eq:equiv_Poinc1}
\mu hom(F\tens \opb{p}L,F\tens \opb{p}L') \simeq 
\hom(\opb{\pi_\Lambda} L, \opb{\pi_\Lambda} L') ,
\end{equation}
where $p\cl M\times\R \to M$ is the projection. Using $(F\tens \opb{p}L)_+ \simeq L$
  and the
isomorphisms~\eqref{eq:muhom=hom-infini1} and~\eqref{eq:equiv_Poinc1}, we obtain
\begin{align*}
\Hom(L,L') & \simeq H^0(\Lambda; \mu hom(F\tens \opb{p}L,F\tens \opb{p}L')) \\
& \simeq  \Hom(\opb{\pi_\Lambda} L, \opb{\pi_\Lambda} L') ,
\end{align*}
which means that $\opb{\pi_\Lambda}$ is fully faithful.

\medskip\noindent (ii) We prove that $\opb{\pi_\Lambda}$ is essentially surjective.
Let $L_1 \in \loc(\cor_\Lambda)$ be given.  We view $\loc(\cor_\Lambda)$ as a
subcategory of $\Dloc(\cor_\Lambda)$ of objects concentrated in degree $0$, as
justified by Remark~\ref{rem:Dloc}.   We recall that the functor $\mu hom(F,\cdot)$
induces an equivalence $\kss(\cor_\Lambda) \isoto \Dloc(\cor_\Lambda)$ (see
Proposition~\ref{prop:KSstack=Dloc}, where the induced functor is denoted
$\ol{\mu hom}(F,\cdot)$).    Hence there exists $\shl_1\in \kss(\cor_\Lambda)$
such that $\ol{\mu hom}(F,\shl_1) \simeq L_1$.  By Corollary~\ref{cor:exist_quant}
there exists $F_1 \in \Derb(\cor_{M\times\R})$ such that
$\kssfunc_\Lambda(F_1) \simeq \shl_1$. Then we have an isomorphism in
$\Derb(\cor_\Lambda)$
\begin{equation}
  \label{eq:equiv_Poinc2}
  \mu hom(F,F_1)|_\Lambda \simeq L_1 .
\end{equation}
Indeed   it holds first in
$\Dloc(\cor_\Lambda)$. Then, applying the functors $H^i$ of Remark~\ref{rem:Dloc}
to~\eqref{eq:equiv_Poinc2}, we see that $\mu hom(F,F_1)|_\Lambda$ is concentrated in
degree $0$. Hence~\eqref{eq:equiv_Poinc2} is an isomorphism in $\loc(\cor_\Lambda)$
and then in $\Derb(\cor_\Lambda)$ because $\loc(\cor_\Lambda)$ is also a subcategory
of $\Derb(\cor_\Lambda)$.

We set $L = (F_1)_+ \in \Derb(\cor_M)$.  Then $\dot\SSi(L) = \emptyset$ and,
since $F_+ \simeq \cor_M$, we also have $L \simeq (F\tens \opb{p}L)_+$.  Hence
$(F_1)_+ \simeq (F\tens \opb{p}L)_+$ and Theorem~\ref{thm:restr-infini-ff} gives
$F_1 \simeq F\tens \opb{p}L$.  We deduce
$$
\mu hom(F,F_1)|_\Lambda \simeq \mu hom(F, F\tens \opb{p}L)|_\Lambda 
 \simeq \opb{\pi_\Lambda} L .
$$
Hence $L_1 \simeq \opb{\pi_\Lambda} L$.  Taking $H^0$ of both sides we have
$L_1 \simeq \opb{\pi_\Lambda} H^0L$.  Since $H^0L \in \loc(\cor_M)$, we have
$L_1 \in \opb{\pi_\Lambda}(\loc(\cor_M))$, as required.  (We could see in fact that
$L$ is concentrated in degree $0$.)
\end{proof}

\begin{corollary}\label{cor:equiv_homot}
The projection $\Lambda \to M$ is a homotopy equivalence.
\end{corollary}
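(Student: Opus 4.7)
The plan is to show that $\pi_\Lambda \colon \Lambda \to M$ induces an isomorphism on $\pi_1$ and that the induced map on universal covers is a cohomological isomorphism, then conclude by Whitehead's theorem.

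For the isomorphism on $\pi_1$: Proposition~\ref{prop:morph-Poincare-inj} already gives injectivity of $\phi := \pi_1(\pi_\Lambda) \colon H \to G$, where $H = \pi_1(\Lambda)$ and $G = \pi_1(M)$. For surjectivity I unpack Proposition~\ref{prop:equiv_Poinc}, which states that $\opb{\pi_\Lambda} \colon \loc(\cor_M) \isoto \loc(\cor_\Lambda)$ is an equivalence for any field $\cor$; translated into group-theoretic language via the standard dictionary between local systems on a connected manifold and representations of its fundamental group, this is an equivalence $\Rep_\cor(G) \isoto \Rep_\cor(H)$ by restriction along $\phi$. Applying fully faithfulness to the pair $V = \cor[G]$ (left regular representation, realized as an infinite-rank local system on $M$) and $W = \cor$ (trivial representation), and using that $\cor[G]$ decomposes as an $H$-module into $|H\backslash G|$ copies of the regular representation of $H$, yields
\[ \cor = \Hom_G(\cor[G],\cor) \isoto \Hom_H(\cor[G],\cor) = \cor^{H\backslash G} ,  \]
which forces $|H\backslash G|=1$. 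Hence $\phi$ is surjective and thus an isomorphism.

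With $\phi$ an isomorphism, the pullback $\widetilde \Lambda := \Lambda \times_M \widetilde M$ along the universal cover $p\colon \widetilde M \to M$ is precisely the universal cover of $\Lambda$, and the natural lift $\widetilde{\pi_\Lambda}\colon \widetilde\Lambda \to \widetilde M$ is a map between simply connected CW complexes. Apply Theorem~\ref{thm:quant_canon}(iv) to the local system $L = p_* \cor_{\widetilde M}$ on $M$; proper base change for the Cartesian square of coverings identifies $\opb{\pi_\Lambda} L$ with $q_* \cor_{\widetilde \Lambda}$, where $q\colon \widetilde \Lambda \to \Lambda$. Hence
\[ \rsect(\widetilde M; \cor) \simeq \rsect(M;L) \isoto \rsect(\Lambda; \opb{\pi_\Lambda} L) \simeq \rsect(\widetilde \Lambda; \cor), \]
and this isomorphism is induced by $\widetilde{\pi_\Lambda}$. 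Taking $\cor = \mathbb{Z}$, Whitehead's theorem for simply connected CW complexes gives that $\widetilde{\pi_\Lambda}$ is a weak homotopy equivalence, whence $\pi_i(\pi_\Lambda)$ is an isomorphism for every $i \geq 2$; combined with the first step this shows that $\pi_\Lambda$ is a weak homotopy equivalence of CW complexes, hence a homotopy equivalence since $\Lambda$ and $M$ are compact manifolds.

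The main obstacle is verifying that Theorem~\ref{thm:quant_canon}(iv) genuinely applies to the infinite-rank local system $L = p_* \cor_{\widetilde M}$—the proof goes via $F \epstens \opb{p_{M\times\R}} L$ and Theorem~\ref{thm:muhom=hom}, and is formal in $L$, requiring only that the tensor product and $\muhom$ estimates survive with infinite-rank coefficients (which they do, since the arguments concern cohomological degree and microsupport, not fiber rank). A secondary technical point is that $\widetilde M$ and $\widetilde \Lambda$ need not be of finite type over $\mathbb{Z}$, so the form of Whitehead's theorem one cites must accommodate CW complexes with possibly infinite-dimensional cohomology; this can be addressed either by invoking the local-coefficient version directly on $M$ (for which Theorem~\ref{thm:quant_canon}(iv) provides the required isomorphisms for \emph{every} local system on $M$, bypassing the passage to covers altogether) or by observing that the cellular chain complex of $\widetilde M$ is finitely generated over $\mathbb{Z}[\pi_1(M)]$, making the UCT comparison between cohomological and homological isomorphisms go through.
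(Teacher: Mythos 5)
Your proof is correct and uses the same two key results as the paper (Proposition~\ref{prop:equiv_Poinc} and Theorem~\ref{thm:quant_canon}(iv), plus Proposition~\ref{prop:morph-Poincare-inj}), but it unfolds the argument where the paper simply cites the criterion that a map inducing an isomorphism on $\pi_1$ and on cohomology with all local coefficients is a homotopy equivalence. Two small remarks on your version. First, your extraction of surjectivity of $\pi_1(\pi_\Lambda)$ from the fully faithfulness of $\opb{\pi_\Lambda}$ via $\Hom_G(\cor[G],\cor) \isoto \Hom_H(\cor[G],\cor) = \cor^{H\backslash G}$ is a nice explicit argument; the paper instead notes as background that an equivalence $\loc(\cor_M) \isoto \loc(\cor_\Lambda)$ already forces $\pi_1(\pi_\Lambda)$ to be an isomorphism (which also uses both fully faithfulness and essential surjectivity; your argument only needs fully faithfulness and injectivity, so it is a genuinely different mechanism for the surjectivity half). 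Second, your ``proper base change'' terminology for $p_* \cor_{\widetilde M}$ is misplaced since an infinite covering $p$ is not proper, but the base change you need does hold for covering maps (the stalk of $p_*\cor_{\widetilde M}$ at $y$ is $\cor^{p^{-1}(y)}$ and this is compatible with pullback along the Cartesian square), so the slip is terminological rather than mathematical. Your closing caveat about infinite-rank local systems in Theorem~\ref{thm:quant_canon}(iv) is well taken, and the alternative you suggest there---invoking the local-coefficient criterion directly on $M$ rather than passing to universal covers---is exactly the route the paper takes; it avoids both the regular-representation argument and the appeal to Whitehead on non-compact covers, at the cost of quoting the criterion without proof.
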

\begin{proof}
  As already recalled this follows from Theorem~\ref{thm:quant_canon}-(iv) and
  Proposition~\ref{prop:equiv_Poinc}.
\end{proof}

\providecommand{\bysame}{\leavevmode\hbox to3em{\hrulefill}\thinspace}

\end{document}